\theoremstyle{plain}
\newtheorem{thm}{Theorem}[chapter]
\newtheorem{lem}[thm]{Lemma}
\newtheorem{prop}[thm]{Proposition}
\newtheorem*{thm2}{Theorem}
\newtheorem{claim}[thm]{Claim}
\newtheorem*{prop2}{Proposition}
\theoremstyle{definition}
\newtheorem{defin}[thm]{Definition}
\theoremstyle{remark}
\newtheorem{rem}[thm]{Remark}
\numberwithin{equation}{chapter}
\numberwithin{section}{chapter}
\DeclareMathOperator{\im}{Im}
\DeclareMathOperator{\gal}{Gal}
\DeclareMathOperator{\Ad}{Ad}
\DeclareMathOperator{\normed}{N}
\DeclareMathOperator{\SL}{SL}
\DeclareMathOperator{\GL}{GL}
\DeclareMathOperator{\Su}{SU}
\newcommand{\SU}{\ensuremath{\Su( 2, 1 )}}
\newcommand{\wtilde}[1]{\widetilde{#1}}
\newcommand{\ol}{\overline}
\newcommand{\Qp}{\ensuremath{\mathbb{Q}_{p}}}
\newcommand{\Zp}{\ensuremath{\mathbb{Z}_{p}}}
\newcommand{\lra}{\longrightarrow}
\newcommand{\thet}{\ensuremath{\theta_{0}}}
\newcommand{\ha}[1]{\ensuremath{h_{\alpha}\left( #1 \right)}}
\newcommand{\norm}[1]{\ensuremath{\normed\left( #1 \right)}}
\newcommand{\xa}[2]{\ensuremath{x_{\alpha}\left( #1,#2 \right)}}
\newcommand{\xam}[2]{\ensuremath{x_{- \alpha}\left( #1,#2 \right)}}
\newcommand{\wa}[2]{\ensuremath{w_{\alpha}\left( #1,#2 \right)}}
\newcommand{\del}[2]{\ensuremath{\delta_{#1}\left( #2 \right)}}
\newcommand{\txa}[2]{\ensuremath{\widetilde{x}_{\alpha}\left( #1,#2 \right)}}
\newcommand{\twa}[2]{\ensuremath{\widetilde{w}_{\alpha}\left( #1,#2 \right)}}
\newcommand{\txam}[2]{\ensuremath{\widetilde{x}_{- \alpha}\left( #1,#2 \right)}}
\newcommand{\twam}[2]{\ensuremath{\widetilde{w}_{- \alpha}\left( #1,#2 \right)}}
\newcommand{\sigu}[2]{\ensuremath{\sigma_{u} \left( #1,#2 \right)}}
\newcommand{\delh}[1]{\ensuremath{\delta\left( h_{\alpha}\left( #1 \right) \right)}}
\newcommand{\ba}[2]{\ensuremath{b_{\alpha}\left( #1,#2 \right)}}
\newcommand{\hilkn}[2]{\ensuremath{\left( #1,#2 \right)_{k, n}}}
\DeclareMathOperator{\homo}{Hom}
\newcommand{\hilk}[2]{\ensuremath{\left( #1,#2 \right)_{k, 2}}}
\newcommand{\sig}[2]{\ensuremath{\sigma\left( #1,#2 \right)}}
\newcommand{\hilbk}[2]{\ensuremath{\left( #1,#2 \right)_{K, 2}}}
\newcommand{\Ok}{\ensuremath{\mathcal{O}_{k}}}
\newcommand{\OK}{\ensuremath{\mathcal{O}_{K}}}
\DeclareMathOperator{\Tr}{Tr}
\newcommand{\trace}[1]{\ensuremath{\Tr\left( #1 \right)}}
\newcommand{\HC}{\ensuremath{\mathcal{H}_{\mathbb{C}}}}
\newcommand{\Sig}[2]{\ensuremath{\Sigma\left( #1,#2 \right)}}
\newcommand{\ess}[2]{\ensuremath{S\left( #1,#2 \right)}}
\newcommand{\spl}[2]{\ensuremath{\left( #1,#2 \right)}}
\newcommand{\Xg}[1]{\ensuremath{X( \gamma_{#1} )}}
\newcommand{\ug}[2]{\ensuremath{u\left( #1, #2  \right)}}
\newcommand{\kup}[1]{\ensuremath{\kappa_{\mathfrak{p}}\left( #1 \right)}}
\newcommand{\lp}{\ensuremath{l_{\mathfrak{p}}}}
\newcommand{\Lp}{\ensuremath{L_{\mathfrak{p}}}}
\newcommand{\Olp}{\ensuremath{\mathcal{O}_{l_{\mathfrak{p}}}}}
\newcommand{\OLp}{\ensuremath{\mathcal{O}_{L_{\mathfrak{p}}}}}
\DeclareMathOperator{\Res}{Res}
\DeclareMathOperator{\ord}{ord}
\DeclareMathOperator{\Spec}{Spec}
\newcommand{\hilbkn}[2]{\ensuremath{\left( #1,#2 \right)_{K, n}}}
\newcommand{\kxam}[1]{\ensuremath{\rho\left( #1 \right)}}
\begin{document}

\title{\Large Declaration}
\author{\begin{flushleft} \normalsize I, Lina Jalal, confirm that the work presented in this thesis is my own. Where information has been derived from other sources, I confirm that this has been indicated in the thesis. \end{flushleft} \begin{flushright} \normalsize \textsc{Signed} $ \phantom{\qquad \qquad} $ \end{flushright}}
\date{}

\begin{center}
\thispagestyle{empty}
 

\textsc{\LARGE Department of Mathematics, UCL}\\[3.2cm]


{ \huge \bfseries A calculation of a half-integral weight multiplier system on \SU{} }\\[1.5cm]

\LARGE

Lina Jalal\\[1.5cm]

\textsc{\Large A thesis submitted for the degree of Doctor of Philosophy}\\[3cm]
 

\begin{minipage}{0.4\textwidth}
\begin{center} \Large
\emph{Supervisor:} \\
Dr. Richard~\textsc{Hill}
\end{center}
\end{minipage}
 
\vfill
 
{\large December 2010}
 
\end{center}

\begin{abstract}
In this thesis, we construct a half-integral weight multiplier system on the group $ \SU $.  In order to do so, we first find a formula for a 2-cocycle representing the double cover of $ \SU( k ) $, where $ k $ is a local field.  For each non-archimedean local field $ k $, we describe how the cocycle splits on a compact open subgroup.  The multiplier system is then expressed in terms of the product of the local splittings at each prime.
\end{abstract}

\maketitle

\newpage

\begin{center}
\thispagestyle{empty}
{\itshape
For Baba, Mama, Nuha, `Alia, `Aida and Adam.  You know why.
\\[1.5cm]
And for Auntie Mah, for making it all possible in the first place all those years ago, and now.
\\[2.5cm]
Of course, my ultimate thanks must go to my supervisor, Dr. Richard Hill.  Without him, this thesis would have been only a poor shadow of what it is, and his knowledge, insight and support during my PhD, as well as my undergraduate years, is much valued and much appreciated.
\\[1.5cm]
Also, I will never forget what UCL has done for me.  I came as an undergraduate wanting to experience higher education in a foreign land and I will always appreciate the generosity in awarding me both the Overseas Research Student Award and the Graduate School Research Scholarship, which has been vital for my continued stay in London, one of the greatest cities in the world.  Thank you for giving me an experience I will cherish my whole life.
}
\end{center}

\tableofcontents


\chapter*{Introduction} \label{c:intro}
Modular forms of half-integral weight have been known to exist for some time, and standard examples may be found in the form of theta functions and the Dedekind eta function.  But although modular forms of half-integral weight have been found on groups such as special linear groups, symplectic groups and orthogonal groups, it is not possible to obtain modular forms of half-integral weight on the group $ \SU{} $ by restriction, even though they are known to exist.

This thesis is concerned with finding a half-integral weight multiplier system on \SU{}.  With this half-integral weight multiplier system in place, it would be possible to write down a modular form of half-integral weight on \SU{}.

\section*{Modular forms on $ \SL_{2} $}

We first recall the definition of a modular form on the group $ \SL_{2} $.
The \emph{modular group} is defined to be
	\[ \SL_{2}( \mathbb{Z} ) = \left\{ \begin{pmatrix} a & b \\ c & d \end{pmatrix} \colon a, b, c, d \in \mathbb{Z}, a d - b c = 1 \right\}. \]
This group acts on the \emph{upper half-plane}
	\[ \mathcal{H} = \{ \tau \in \mathbb{C} \colon \im( \tau ) > 0 \} \]
by fractional linear transformations, i.e.
	\[ \gamma = \begin{pmatrix} a & b \\ c & d \end{pmatrix} \colon \tau \to \gamma( \tau ) = \frac{a \tau + b}{c \tau + d}. \]
Let $k$ be a positive integer.
A \emph{modular form of weight $k$}
is a holomorphic function $ f \colon \mathcal{H} \to \mathbb{C} $
such that for each $\gamma\in \SL_{2}(\mathbb{Z})$,
	\[ f( \gamma ( \tau ) ) = ( c \tau + d )^{k} f( \tau ), \]
and such that $ f $ is holomorphic at the cusp $ \infty $.

Now let $k/2$ be a half-integer and let $\Gamma \subset \SL_{2}(\mathbb{Z})$ be a subgroup of finite index.
By a weight $k / 2$ multiplier system, we shall mean a continuous function
	\[ j \colon \Gamma \times \mathcal{H} \to \mathbb{C}, \]
such that for $ \gamma $, $ \gamma' \in \Gamma $, $ \tau \in \mathcal{H} $,
	\[ j( \gamma \gamma', \tau ) = j( \gamma, \gamma'( \tau ) ) \cdot j( \gamma', \tau ), \]
and
	\[ j( \gamma, \tau )^{2} = ( c \tau + d )^{k}, \quad \gamma = \begin{pmatrix} a & b \\ c & d \end{pmatrix}. \]
A modular form of weight $k / 2$ on $ \Gamma$ is defined to be a function $ f \colon \mathcal{H} \to \mathbb{C} $ such that for $\gamma\in\Gamma$ and $\tau\in\mathcal{H}$, we have
	\[ f( \gamma( \tau ) ) = j( \gamma, \tau ) f( \tau ), \]
and such that $ f $ is holomorphic on $ \mathcal{H} $ and the cusps of $ \Gamma $.
An example of a half-integral weight modular form on $ \SL_{2} $ is the \emph{Dedekind eta function}
	\[ \eta( z ) = q^{1 / 24} \prod_{1}^{\infty} ( 1 - q^{n} ), \]
where $ q = e^{2 \pi i z} $, and it can be shown that for $ \gamma = \begin{pmatrix} a & b \\ c & d \end{pmatrix} \in \SL_{2}( \mathbb{Z} ) $,
	\[ \eta\left( \frac{a z + b}{c z + d} \right) = \epsilon( \gamma ) ( c z + d )^{1 / 2} \eta( z ), \]
where $ \epsilon( \gamma ) $ is a $ 24 $th root of unity (see Section~1.3 of \cite{bump98}).
When one restricts to elements $\gamma$ in the commutator subgroup
$\Gamma=[\SL_{2}(\mathbb{Z}),\SL_{2}(\mathbb{Z})]$ we have
$\epsilon(\gamma)=\pm 1$.

\section*{Modular forms on \SU{}}

We can define similar notions for the group \SU{}.
To describe this group, let $ \thet = \sqrt{- d}$, where $d\ge 2$ is a square-free natural number (fixed once and for all).
We shall write the non-trivial Galois automorphism of $\mathbb{Q}(\thet)$ by
	\[ \ol{a + b \thet} = a - b \thet. \]
We define $\SU$ as an algebraic group over $\mathbb{Q}$ as follows:
for a commutative $ \mathbb{Q} $-algebra $ A $ we let
	\[ \SU( A ) = \{ \nu \in \SL_{3}( A \otimes_{\mathbb{Q}} \mathbb{Q}( \thet ) ) \colon \nu^{t} J' \ol{\nu} = J' \}. \]
In this definition, the matrix $J'$ is given by
	\[ J' = \begin{pmatrix} 0 & 0 & 1 \\ 0 & 1 & 0 \\ 1 & 0 & 0 \end{pmatrix}. \]
The notation $\nu^{t}$ denotes the transpose of $\nu$, and $\ol\nu$ denoted the image of $\nu$
under conjugation in $\mathbb{Q}( \thet )$.
We shall also on occasion regard $\SU$ as a group scheme over $\mathbb{Z}$,
 defined (for a commutative ring $A$) by
	\[ \SU( A ) = \{ \nu \in \SL_{3}( A \otimes_{\mathbb{Z}} \mathcal{O}_{\mathbb{Q}( \thet )} ) \colon \nu^{t} J' \ol{\nu} = J' \}. \]
Here $\mathcal{O}_{\mathbb{Q}(\thet)}$ denotes the ring of algebraic integers in $\mathbb{Q}(\thet)$.

Consider the Hermitian form on the vector space $V=\mathbb{C}^{3}$ defined by
	\[ \langle u, v \rangle = u^{t} J' \ol{v}. \]
The group $\SU{}(\mathbb{R})$ acts on $ V $, and hence on $ X = \mathbb{P}^{2}( \mathbb{C} ) $ in an obvious way.
Furthermore $ \SU{}( \mathbb{R} ) $ preserves the subsets
\begin{gather*}
	X^{-} = \{ [ v ] \in \mathbb{P}^{2}( \mathbb{C} ) \colon \langle v, v \rangle < 0 \}, \\
	V^{-} = \{ v \in \mathbb{C}^{3} \colon \langle v, v \rangle < 0 \},
\end{gather*}
where $ [ v ] $ denotes the image in projective space of a vector $v$.
Hence $ V^{-} $ is the preimage of $ X^{-} $ in $ V \backslash 0 $.

Let $k$ be a positive integer. We define a weight $k$ modular form on an arithmetic subgroup $ \Gamma \subset \SU( \mathbb{Q} ) $ as follows.
Let $ F \colon V^{-} \to \mathbb{C} $ be a holomorphic function such that
\begin{gather*}
	F( \gamma v ) = F( v ), \quad \text{for all } \gamma \in \Gamma, \\
	F( \lambda v ) = \lambda^{- k} F( v ), \quad \text{for all } \lambda \in \mathbb{C}.
\end{gather*}
It turns out that no growth conditions at the cusps are required.

We shall now explain how this definition is related to the definition for $\SL_{2}(\mathbb{Z})$.
Any point of $ X^{-} $ has a unique representative in $V^{-}$ of the form $ \begin{pmatrix} \tau_{1} \\ \tau_{2} \\ 1 \end{pmatrix} $, where $ \norm{\tau_{2}} + \trace{\tau_{1}} < 0 $.
Here we are using the notation $ \norm{x} = x \ol{x} $ and $ \trace{x} = x + \ol{x} $ for a complex number $ x $.
We let \HC{} be the set of all such pairs $ \begin{pmatrix} \tau_{1} \\ \tau_{2} \end{pmatrix} $, so we have a bijection $ X^{-} \cong \HC $.
Given a modular form $ F $ in the sense just described, we define a function $ f $ on $\HC$ by
	\[ f \left( \begin{pmatrix} \tau_{1} \\ \tau_{2} \end{pmatrix} \right)
	=
	F \left( \begin{pmatrix} \tau_{1} \\ \tau_{2} \\ 1 \end{pmatrix} \right).
	 \]
The action of $ \SU( \mathbb{R} ) $ on $ X^{-} $ gives us an action on $ \HC $, which we will now examine.
Let
	\[ g = \begin{pmatrix} g_{11} & g_{12} & g_{13} \\ g_{21} & g_{22} & g_{23} \\ g_{31} & g_{32} & g_{33} \end{pmatrix} \in \SU( \mathbb{R} ). \]
We decompose this matrix into blocks as follows:
	\[ A = \begin{pmatrix} g_{11} & g_{12} \\ g_{21} & g_{22} \end{pmatrix},\ B = \begin{pmatrix} g_{13} \\ g_{23} \end{pmatrix},\ C = \begin{pmatrix} g_{31} & g_{32} \end{pmatrix},\ D = g_{33}. \]
This implies that
	\[ g = \begin{pmatrix} A & B \\ C & D \end{pmatrix}. \]
The action of $ \SU( \mathbb{R} ) $ on \HC{} is described by
	\[ g (\tau) = \frac{A \tau + B}{C \tau + D},
	\qquad
	\tau = \begin{pmatrix} \tau_{1} \\ \tau_{2} \end{pmatrix} \in \HC.
	\]
Now let $f$ be as above, and suppose that $g$ is in the arithmetic group $\Gamma$.
Then we have:
\begin{align*}
	f( g ( \tau ) )
	&= F \left( \begin{pmatrix} \frac{A \tau + B}{C \tau + D} \\ 1 \end{pmatrix} \right) \\
	&= ( C \tau + D )^{k} F \left( \begin{pmatrix} A \tau + B \\ C \tau + D \end{pmatrix} \right) \\
	&= ( C \tau + D )^{k} F\left( g \begin{pmatrix} \tau \\ 1 \end{pmatrix} \right) \\
	&= ( C \tau + D )^{k} F\left( \begin{pmatrix} \tau \\ 1 \end{pmatrix} \right) \\
	&= ( C \tau + D )^{k} f( \tau ).
\end{align*}

We can thus similarly define a weight $ k / 2 $ multiplier system on \SU{} as a holomorphic function $ j \colon \Gamma \times \HC \to \mathbb{C} $ such that for $ \gamma $, $ \gamma' \in \Gamma $, $ \tau \in \HC $, and $ \gamma $ defined in the same way as $ g $ above:
	\[ j( \gamma \gamma', \tau ) = j( \gamma, \gamma' ( \tau ) ) \cdot j( \gamma', \tau ), \]
and
	\[ j( \gamma, \tau )^{2} = ( C \tau + D )^{k}, \quad \gamma = \begin{pmatrix} A & B \\ C & D \end{pmatrix}. \]

We may also define half-integral weight modular forms on $\SU$ entirely analogous to the case of $\SL_{2}$.
However, no example of a half-integral weight modular form on $\SU$ has ever been found.
There are no standard examples such as theta series. The other standard way of writing down a half-integral weight form would be to write down an Eisenstein series. However, this cannot be done without knowing the multiplier system in advance, and no example of a half-integral weight multiplier system has previously been found (although they were known to exist; see \cite{deligne96}). The aim of this thesis is to give a half-integral weight multiplier system on \SU.

\section*{Strategy for constructing the multiplier system}

Let $\mathbb{A}$ denote the ad\`{e}le ring of $ \mathbb{Q} $ and write $\mu_{2}$ for the group $\{1,-1\}$.
The group $\SU(\mathbb{A})$ has a canonical double cover, called the ``metaplectic cover'':
	\[ 1 \lra \mu_{2} \lra \wtilde{\SU}( \mathbb{A} ) \lra \SU( \mathbb{A} ) \lra 1. \]
This is a central extension of topological groups. It is a ``cover'' in the following sense:
there is a neighbourhood $U$ of the identity in $\SU( \mathbb{A} )$, such that the restriction preimage of
$U$ is topologically a product $U \times\mu_{2}$.
The word ``metaplectic'' means that the extension splits over the rational points $\SU(\mathbb{Q})$.
In fact, every reductive group over a number field has a canonical mateplectic cover, with kernel the
roots of unity in the field (see \cite{deligne96}). 

For a place $p$ of $\mathbb{Q}$ we shall write $\wtilde{\SU}( \mathbb{Q}_{p} )$ for the pre-image of
$\SU(\mathbb{Q}_{p})$ in $\wtilde{\SU}( \mathbb{A} )$. This means that we have local extensions:
	\[ 1 \lra \mu_{2} \lra \wtilde{\SU}( \mathbb{Q}_{p} ) \lra \SU( \mathbb{Q}_{p} ) \lra 1. \]
Our first aim is to describe a 2-cocycle $\sigma_{p}$ on $\SU(\mathbb{Q}_{p})$ corresponding to this
extension.
In fact, our cocycle will be expressed in terms of Hilbert symbols $(-,-)_{\Qp,2}$.
This has the consequence (by the quadratic reciprocity law) that for $g,g'\in \SU(\mathbb{Q})$
we have
$$
	\prod_{p} \sigma_{p}(g,g') = 1.
$$
This product formula reflects the fact that our extension splits on the rational points.

One cannot define a cocycle $\sigma_{\mathbb{A}}$ on $\SU(\mathbb{A})$
 to be simply the product of the local cocycles, since this product will usually have infinite support.
However, we can do something rather similar. For each finite prime $p$, there is a compact open subgroup
$\Gamma_{p} \subset \SU(\Qp)$ on which the extension splits, and for almost all $p$ we may take
$\Gamma_{p}=\SU(\Zp)$. This means that there is a function $\kappa_{p}\colon\Gamma_{p}\to\mu_{2}$,
such that for $g,g'\in \Gamma_{p}$ we have
$$
	\sigma_{p}(g,g')
	=
	\partial\kappa_{p}(g,g')
	=
	\frac{\kappa_{p}(gg')}{\kappa_{p}(g)\kappa_{p}(g')}.
$$
The functions $\kappa_{p}\colon\Gamma_{p}\to\mu_{2}$ are called ``local Kubota symbols''.
If we extend $\kappa_{p}$ in some arbitrary way to $\SU(\Qp)$, then we may now form the product
$$
	\sigma_{\mathbb{A}}(g,g')
	=
	\sigma_{\infty}(g_{\infty},g'_{\infty})
	\prod_{p \  \mathrm{finite}}
	\frac{\sigma_{p}(g_{p},g'_{p})}{\partial\kappa_{p}(g_{p},g'_{p})},
	\qquad
	g,g' \in \SU(\mathbb{A}).
$$
This product does have finite support, and is a 2-cocycle representing the full metaplectic extension.
The second main aim of the thesis is to calculate the local Kubota symbols.

Now consider the following congruence subgroup:
$$
	\Gamma
	=
	\SU(\mathbb{Q}) \cap \Big( \SU(\mathbb{R}) \times \prod_{p}\Gamma_{p}\Big).
$$
We define a map $\kappa\colon\Gamma\to \mu_{2}$ (called the ``global Kubota symbol'') by
$$
	\kappa(\gamma)
	=
	\prod_{p < \infty} \kappa_{p}(\gamma).
$$
(This product always converges: see Chapter~\ref{c:global}.)  From the formulae above, we immediately have
$$
	\sigma_{\infty}(\gamma,\gamma')
	=
	\frac{\kappa(\gamma)\kappa(\gamma')}{\kappa(\gamma\gamma')}.
$$

Our next step is to examine the cocycle $\sigma_{\infty}$ more closely.
It turns out that there is another way of constructing the group $ \wtilde{\SU}( \mathbb{R} ) $.
Let
	\[ \wtilde{\SU}( \mathbb{R} )
	 =
	 \left\{ \left( g, \phi \colon \HC \to \mathbb{C}^{\times} \right) \colon g \in \SU( \mathbb{R} ) \right\}, \]
where $ \phi $ is continuous, and for every
	\[ g = \begin{pmatrix} A & B \\ C & D \end{pmatrix} \in \SU( \mathbb{R} ) \]
(as defined earlier), we have $ \phi( \tau )^{2} = C \tau + D $.
Multiplication in this group is defined by
$$
	(g,\phi)(g',\phi')
	=
	(gg', (\phi\circ g')\phi').
$$
There is an obvious homomorphism $(g,\phi) \mapsto g$, which makes this group a double cover
of $\SU(\mathbb{R})$.
We prove in Chapter~\ref{c:real} that this is the unique connected double cover of $\SU(\mathbb{R})$ and
is isomorphic to the local factor at infinity of the metaplectic group.
The final aim of the thesis is to describe explicitely a section $g \mapsto (g,\phi_{g})$,
which corresponds to the 2-cocycle $\sigma_{\infty}$.
In more elementary terms this means
$$
	\sigma_{\infty}(g,g')
	=
	\frac{\phi_{g}(g' ( \tau ) ) \phi_{g'}(\tau)}{\phi_{gg'}(\tau)},
	\qquad
	g,g'\in\SU(\mathbb{R}),\ \tau \in \HC.
$$
If we now define for $\gamma\in \Gamma$,
$$
	j(\gamma,\tau)
	=
	\kappa(\gamma) \phi_{\gamma}(\tau),
$$
then the formulae above show that $j(\gamma,\tau)$ is a multiplier system of weight $1/2$.

In fact we shall work almost entirely in a more general setting than was described above.
We shall replace the rational numbers by an arbitrary number field $l$ and $\mathbb{Q}(\thet)$ by an arbitrary quadratic extension $L/l$.
In the case that $l$ is totally complex, the cocycles $\sigma_{p}$ for complex places $p$ are all trivial,
and hence the global Kubota symbol $\kappa$ is a group homomorphism.

The plan of the thesis is divided into four parts.  In order to calculate the local Kubota symbol, we will first need to find an explicit formula in terms of quadratic Hilbert symbols for the 2-cocycle representing $ \wtilde{\SU}( \Qp ) $, for $ p $ finite.  Deodhar worked on the computation of the fundamental group of quasi-split groups in \cite{deodhar78}.  We will be extending the methods described in this paper to find the explicit formula of the 2-cocycle that we need.  Hence, the first part of the thesis is concerned with establishing some important facts and results for later use.  In the second part, we will give an explicit formula for the 2-cocycle representing $ \wtilde{\SU}( \Qp ) $.  The third part concerns the calculation of the local Kubota symbol for every finite prime $ p $.  In the fourth part, we will look at some calculations of the global Kubota symbol, and find the section for $ \SU( \mathbb{R} ) $.  We will then establish what the half-integral weight multiplier system is.

\section*{Summary of the results of the thesis}

We shall fix once and for all a number field $l$ and a quadratic extension
$L=l(\thet)$. We define our group over $l$ by
\[
	\SU( - )
	=
	\{ \nu \in \SL_{3}( - \otimes_{l} L ) \colon \nu^{t} J' \overline{\nu} = J' \}.
\]
Let $k=l_{\mathfrak{p}}$ be a local completion of $l$, either archimedean or non-archimedean
and let $K=k\otimes_{l}L$. Thus $K$ is either a quadratic extension of $k$ or a sum of two copies of $k$.
Recall that we have a double cover $\wtilde{\SU}(k)$ of the group $\SU(k)$.
We begin by specifying a section $\delta'\colon\SU(k) \to \wtilde{\SU}(k)$.
This section defines a 2-cocycle $\sigma$ corresponding to the cover:
$$
	\sigma(g,h)
	=
	\delta'(g)\delta'(h)\delta'(gh)^{-1},
	\quad
	g,h\in\SU(k).
$$
Our first results are an expression for $\sigma$ in terms of quadratic Hilbert symbols on $k$.
In the case that $K/k$ is a field extension, our result completely describes $\sigma$.
In the split case, we obtain expressions which are valid on (a) the maximal torus and
(b) the subgroup $\SU(l)$.
This will be enough for our purposes.

\subsection*{The cocycle on the torus}
Let $k$ be a local field and let $K=k(\thet)$ be either a quadratic extension of $k$
or a sum of two copies of $k$.
As before, we write $\lambda\mapsto\ol\lambda$ for the non-trivial Galois automorphism when $K$ is a field.
When $K= k \oplus k$, this notation will mean
$$
	\ol{(x,y)}
	=
	(y,x).
$$
We shall also use the following notation in either case:
$$
	\trace{\lambda}= \lambda +\ol{\lambda},
	\quad
	\norm{\lambda}
	=
	\lambda\ol{\lambda}.
$$
We shall always assume that $\trace{\thet}=0$.
The symbol $(-,-)_{k,2}$ will be the quadratic Hilbert symbol on $k$.
When $K$ is a field, we shall write $(-,-)_{K,2}$ for the quadratic Hilbert symbol on $K$.
In the case that $K=k\oplus k$, this symbol will be defined as follows:
$$
	\hilbk{\spl{x}{y}}{\spl{x'}{y'}}
	=
	\hilk{x}{x'} \cdot \hilk{y}{y'}.
$$

Before describing the cocycle $\sigma$ in general, we first study its restriction to the following maximal torus
$$
	T(k)
	=
	\{\ha{\lambda} \colon
	\lambda\in K^{\times}
	\},
$$
where
$$
	\ha{\lambda}
	=
	\begin{pmatrix}
		\lambda & 0&0\\
		0& \ol{\lambda} / \lambda & 0 \\
		0&0& \ol{\lambda}^{- 1}
	\end{pmatrix}.
$$

\begin{thm2}
For $\lambda,\mu\in K^{\times}$ we have
\begin{multline*}
	\sig{\ha{\lambda}}{\ha{\mu}} \\
	=
	\begin{cases}
		\displaystyle \hilk{\lambda}{\mu},
		&\hbox{if }\lambda,\mu\in k^{\times};\\
		\displaystyle \hilk{\trace{\lambda \thet}}{\mu},
		&\hbox{if }\lambda\notin k^{\times},\mu\in k^{\times};\\
		\displaystyle \hilbk{\lambda}{\mu} \cdot \hilk{\lambda}{\trace{\mu \thet}},
		&\hbox{if }\lambda\in k^{\times},\mu\notin k^{\times};\\
		\displaystyle \hilbk{\lambda}{\mu}
		\cdot \hilk{\lambda \mu}{-\trace{\lambda \thet}},
		&\hbox{if }\lambda\not\in k^{\times},\mu\notin k^{\times},\lambda\mu\in k^{\times};\\
		\displaystyle \hilk{\norm{\lambda}}{\norm{\mu}} \cdot
		\hilk{\trace{\lambda\thet} \norm{\mu}}{\trace{\mu\thet}} \\
		\cdot \hilk{\trace{\lambda\mu\thet}}{-\trace{\lambda\thet}\norm{\mu}\trace{\mu\thet}}, & \hbox{otherwise.}
	\end{cases}
\end{multline*}
\end{thm2}

Along the way, we also find the following formula for the commutator of the cocycle on $T(k)$:
$$
	\frac{\sig{\ha{\lambda}}{\ha{\mu}}}{\sig{\ha{\mu}}{\ha{\lambda}}}
	=
	(\lambda,\ol{\mu})_{K,2}.
$$

\subsection*{The cocycle on the whole group}
Let $N$ be the following unipotent subgroup
of $\SU$:
$$
	N(k)
	=
	\{ \xa{r}{m} \in \SU( k ) \colon
	r,m\in K \hbox{ and } \norm{r}+\trace{m} = 0
	\},
$$
where
$$
	\xa{r}{m} := \begin{pmatrix} 1 & r & m \\ 0 & 1 & - \ol{r} \\ 0 & 0 & 1 \end{pmatrix}.
$$
The section $\delta'\colon\SU(k)\to\wtilde{\SU}(k)$ is chosen in such a way that for $g\in \SU(k)$ and
$n\in N(k)$ we always have
$$
	\delta'(gn) = \delta'(g)\delta'(n)
	\quad\hbox{and}\quad
	\delta'(ng) = \delta'(n)\delta'(g).
$$
As a consequence, our cocycle $\sigma$ satisfies the following:
$$
	\sigma(g,n)=\sigma(n,g)=1,
	\quad
	n\in N(k), \ g \in \SU( k ).
$$
Combining this property with the Bruhat decomposition, we are able to calculate
$\sigma$ on the bigger group $\SU(k)$, at least when $K$ is a field.
For the moment we assume that $K/k$ is a field extension
rather than a sum of two copies of $k$.
We shall discuss how the results must be modified in the split case later.

In order to describe the cocycle, we first introduce some notation.
For $\lambda,\mu\in K^{\times}$ we define
\[
	\ug{\lambda}{\mu}
	=
	\begin{cases}
		\hilk{\lambda}{- \mu}, &\text{if $ \lambda $, $ \mu \in k^{\times} $;} \\
		\hilk{\norm{\lambda}}{- \norm{\mu}}, &\text{otherwise.}
	\end{cases}
\]
We also define a function $\delta_{2} \colon K^{\times} \to k^{\times} \thet$ by
$$
	\del{2}{\lambda}
	=
	\begin{cases}
		\displaystyle \frac{1}{\ol{\lambda} - \lambda}, 
		&
		\hbox{if }\lambda \notin k^{\times};\\
		\thet,
		&
		\hbox{if }\lambda \in k^{\times}.
	\end{cases}
$$
Given an element
\[
	\gamma
	=
	\begin{pmatrix} * & * & * \\ * & * & * \\ g & h & j \end{pmatrix} \in \SU( k ),
 \]
we define $\Xg{}\in K^{\times}$ by
\[
	\Xg{}
	=
	\begin{cases}
		(\ol{g} \thet )^{- 1}, &\text{if $ g \neq 0 $;} \\
		\ol{j}^{- 1}, &\text{if $ g = 0 $.}
	\end{cases}
\]

We prove the following in Chapter~\ref{c:univ}:

\begin{thm2}
Let $ \gamma_{i} \in \SU( k ) $, where $ i = 1 $, $ 2 $, $ 3 $, with $ \gamma_{3} = \gamma_{1} \gamma_{2} $ and
	\[ \gamma_{i} = \begin{pmatrix} * & * & * \\ * & * & * \\ g_{i} & h_{i} & j_{i} \end{pmatrix}. \]
If $ \Xg{3} / ( \Xg{1} \Xg{2} ) \in k^{\times} $, then we have
\[
	\sig{\gamma_{1}}{\gamma_{2}} =
	\begin{aligned}[t]
			&\ug{\frac{\Xg{3}}{\Xg{2}}}{\Xg{1} \Xg{2}} \cdot \hilk{\frac{\del{2}{\Xg{3}}}{\del{2}{\Xg{2}}}}{- \frac{\norm{\Xg{2}} \del{2}{\Xg{2}}}{\del{2}{\Xg{1}}}} \\
			&\phantom{\ } \cdot \hilk{\frac{\Xg{3}}{\Xg{1} \Xg{2}}}{\frac{\del{2}{\Xg{1}} \del{2}{\Xg{2}}}{\del{2}{\Xg{3}} \thet}}.
	\end{aligned} \]
If on the other hand $ \Xg{3} / ( \Xg{1} \Xg{2} ) \notin k^{\times} $, then we let
\[
	r
	=
	r( \gamma_{1}, \gamma_{2} ) = \frac{h_{2} g_{3} - h_{3} g_{2}}{g_{1} \ol{g_{2}}}.
\]
And we have
\[ \sig{\gamma_{1}}{\gamma_{2}} =
		\begin{aligned}[t]
			&\hilk{- \del{2}{- \frac{r \Xg{3}}{\Xg{1} \Xg{2}}} \thet^{- 1}}{\norm{- \frac{r \Xg{3}}{\Xg{1} \Xg{2}}}} \\
			&\phantom{\ } \cdot \hilk{\norm{r}}{\frac{\del{2}{r}}{\thet}} \cdot \ug{\Xg{1}}{\frac{\Xg{3}}{\Xg{2}}} \cdot \ug{\frac{\Xg{3}}{\Xg{2}}}{\Xg{3}} \\
			&\phantom{\ } \cdot \bigg( \frac{\del{2}{\Xg{3} / ( \Xg{1} \Xg{2} )}}{\del{2}{\Xg{3} / \Xg{2}}}, \\
			&\phantom{\del{2}{\Xg{3}}} - \frac{\norm{\Xg{3} / ( \Xg{1} \Xg{2} )} \del{2}{\Xg{3} / ( \Xg{1} \Xg{2} )}}{\del{2}{\Xg{1}}} \bigg)_{k, 2} \\
			&\phantom{\ } \cdot \hilk{\frac{\del{2}{\Xg{2}}}{\del{2}{\Xg{3}}}}{- \frac{\norm{\Xg{2}} \del{2}{\Xg{2}}}{\del{2}{\Xg{3} / \Xg{2}}}}.
		\end{aligned}
\]
\end{thm2}

In fact we obtain a more general theorem describing a cocycle corresponding to an $n$-fold cover
of $\SU(k)$, where $k$ contains a primitive $n$-th root of unity;
however this cocycle is a little more complicated and is not required for our
main aim, which is to produce a half-integral weight multiplier system.

Fortunately, our formula for the local Kubota symbols will be rather simpler than the formula for
$\sigma$. Nevertheless, we require the formula for $\sigma$ in order to calculate the Kubota symbol.

\subsection*{The split case}
In the case $K=k\oplus k$ the theorem above does not completely describe the cocycle $\sigma$.
This is because there are numbers in $K$ which are neither zero nor invertible, and so there are a number
of extra cases to consider.
One can see why this happens from a different point of view: the group $\SU$ has rank $1$ over $l$,
and so there are two cells in the Bruhat decomposition of $\SU(l)$.
However if $K$ is split, then $\SU(k)$ is isomorphic to $\SL_{3}(k)$, which has 6 cells in
its Bruhat decomposition (one for each element of the Weyl group, which in this case is $S_{3}$).
There are therefore four Bruhat cells in $\SU(k)$ which contain no elements of $\SU(l)$. In fact only the biggest
and the smallest Bruhat cells of $\SU(k)$ contain elements of $\SU(l)$.
Our formula for $\sig{\gamma_{1}}{\gamma_{2}}$ is valid whenever $\gamma_{1} $, $ \gamma_{2} $, $ \gamma_{3} = \gamma_{1} \gamma_{2} $
are in one of these two cells.
We may ignore these extra cells since we are interested in the restriction of $\sigma$ to $\SU(l)$.

\subsection*{The level of the multiplier system}

Let $k$ be a non-archimedean local field,
 and assume again that $K/k$ is either a quadratic extension of local fields, or that
 $K$ is a sum of two copies of $k$.
Before we can calculate the local Kubota symbols and the multiplier system,
we must first determine the compact open subgroups $\hat\Gamma_{\mathfrak{p}}$
on which each local extension splits.
These compact open subgroups determine the arithmetic subgroup on which the multiplier system will be defined.
Our result is the following:

\begin{thm2}
	\begin{itemize}
		\item
		If $K/k$ is unramified or split and $k$ has odd residue characteristic
		then the cocycle $\sigma$ splits on $\SU(\Ok)$.
		\item
		Suppose $K/k$ is a ramified field extension and $k$ has odd residue characteristic.
		Let $\mathfrak{P}$ be a prime in $K$.
		Then the cocycle $\sigma$ splits on the subgroup
		$$
			\SU(\Ok,\mathfrak{P})
			=
			\{g \in \SU(\Ok) \colon g \equiv I_{3}\bmod \mathfrak{P} \}.
		$$
		\item
		If $k$ has even residue characteristic and $K=k\oplus k$ then the cocycle splits
		on the subgroup
		$$
			\SU(\Ok,4)
			=
			\{g \in \SU(\Ok) \colon g \equiv I_{3}\bmod 4 \}.
		$$
	\end{itemize}
\end{thm2}

Note that if $L/l$ is a quadratic extension of number fields in which every even prime splits,
then the theorem determines the compact open subgroups
$\hat\Gamma_{\mathfrak{p}} \subset \SU(l_{\mathfrak{p}})$
at all primes $\mathfrak{p}$.

\subsection*{The local Kubota symbol}
Let $ \mathfrak{p} $ be the maximal ideal of $ k $.  Recall that the local Kubota symbol $ \kappa_{\mathfrak{p}} $ is a map $\hat\Gamma_{\mathfrak{p}}\to \mu_{2}$,
satisfying the following:
$$
	\sigma(g,h)=\frac{\kup{g}\kup{h}}{\kup{gh}}.
$$
This condition does not always determine the local Kubota symbol, since we may always
multiply by a character of $\hat\Gamma_{\mathfrak{p}}$.
We therefore let $\Gamma_{\mathfrak{p}}$ be the intersection of the kernels
of the homomorphisms  $\hat\Gamma_{\mathfrak{p}}\to\mu_{2}$.
The restriction of the Kubota symbol to $\Gamma_{\mathfrak{p}}$ is unique,
and we shall only calculate this restriction.
Fortunately we have not lost much, since for all odd primes we have
$\Gamma_{\mathfrak{p}}=\hat\Gamma_{\mathfrak{p}}$.
Suppose $\mathfrak{p}$ is even.
If we assume (as is the case for split primes) that the cocycle splits at level $4$,
then we will have
$$
	\Gamma_{\mathfrak{p}}
	=
	\SU(\mathcal{O}_{k},8) = \{g \in \SU(\Ok) \colon g \equiv I_{3}\bmod 8 \}.
$$
We will also assume that when $ K / k $ is ramified, $ \thet $ is a prime element of $ K $.

Recall that $N$ denotes a unipotent subgroup of $\SU$ described above.
Our first observation is the following:

\begin{prop2}
For any $n \in \Gamma_{\mathfrak{p}} \cap N(k)$ we have
$\kup{n} = 1$.
More generally, for any $g\in\Gamma_{\mathfrak{p}}$ we have
$$
	\kup{ng} =\kup{gn} = \kup{g}.
$$
\end{prop2}

The relation $\kup{ng} = \kup{g}$ implies that $\kup{g}$ is determined by
the bottom row of the matrix $g$. The other relation shows that $\kup{g}$
is unchanged by certain column operations.
To describe our next result we need a little more notation.
Let
\begin{gather*}
	\xa{r}{m} := \begin{pmatrix} 1 & r & m \\ 0 & 1 & - \ol{r} \\ 0 & 0 & 1 \end{pmatrix},
	\quad
	\xam{r}{m} := \begin{pmatrix} 1 & 0 & 0 \\ r & 1 & 0 \\ m & - \ol{r} & 1 \end{pmatrix}
\end{gather*}
be elements of $ \SU( k ) $.
This entails $ r $, $ m \in K $ and $ \trace{m} = - \norm{r} $.
For a number $\lambda\in K^{\times}$ we shall write
$$
	\kxam{\lambda}
	=
	\begin{cases}
		\displaystyle \hilk{-\trace{\lambda}}{\norm{\lambda\thet}},
		& \hbox{if }\trace{\lambda} \ne 0;\\
		\displaystyle 1,
		& \hbox{otherwise.}
	\end{cases}
$$

\begin{prop2}
Let $ \xam{s_{1}}{n_{1}} \in \Gamma_{\mathfrak{p}}$ with $s_{1} $, $ n_{1} \in L$.  Then we have
$$
	\kup{\xam{s_{1}}{n_{1}}}
	=
	\kxam{s_{1}} \cdot \kxam{-\frac{s_{1}\thet}{n_{1}}}.
$$
(If $n_{1}=0$ then $s_{1}$ must also be zero, and $\kup{\xam{s_{1}}{n_{1}}}=1$.)
\end{prop2}


We next consider elements of the maximal torus $T(k)$.
Our results for such elements are as follows:

\begin{prop2}
If $ \mathfrak{p}$ is odd and unramified (either inert or split), then for $ \lambda \in \OK^{\times} $,
 we have
\[
	\kup{\ha{\lambda}}
	=
	\begin{cases} \displaystyle \hilk{a}{b}, &\text{if $ \lambda = a + b \thet $, $ a $, $ b \neq 0 $ and $ b \notin \Ok^{\times} $;} \\
	\displaystyle 1, & \text{otherwise.} \end{cases}
\]
If $\mathfrak{p}$ is even or ramified in $K$,
 then for $ \ha{\lambda} \in T(k)\cap \Gamma_{\mathfrak{p}}$ we have
\[
	\kup{\ha{\lambda}} = 1.
\]
\end{prop2}

We next obtain an expression for the Kubota symbol, expressed in terms of the special cases
already described. The following theorem is proven in Section~\ref{s:other}:

\begin{thm2}
Let
\[
	\gamma
	=
	\begin{pmatrix} * & * & * \\ * & * & * \\ g & h & j \end{pmatrix}
	\in
	\Gamma_{\mathfrak{p}}.
\]
Then
\[
	\kup{\gamma} \\
	=
	\begin{cases}
			\displaystyle \kup{\ha{\ol{j}^{-1}}}, &\text{if $ g = 0 $;} \\
			\displaystyle \kup{\ha{( \ol{g} \thet )^{- 1}}}, &\text{if $ g \in \OK^{\times} $;} \\
			\begin{aligned}[b]
				&\kup{\ha{\ol{j}^{- 1}}} \cdot \kup{\xam{- \frac{\ol{h}}{\ol{j}}}{\frac{g}{j}}} \\
				&\phantom{\ } \cdot \sig{\ha{{\ol{j}^{- 1}}}}{\ha{\frac{\ol{j}}{\ol{g} \thet}}},
			\end{aligned}
			&\text{if $ g \neq 0 $, $ g \notin \OK^{\times} $ and $j\in \OK^{\times}$.}
	\end{cases}
\]
\end{thm2}

Again, note that if $\mathfrak{p}$ is split, then we have not covered all possibilities
since it is possible for neither $g$ nor $j$ to be a unit in this case (and only in this case).
However, note that if $g$ is not a unit then there is always an element
$\xa{s_{1}}{n_{1}}\in N \cap \Gamma_{\mathfrak{p}}$ such that
$$
	\begin{pmatrix}g&h&j\end{pmatrix} \cdot
	\xa{s_{1}}{n_{1}}
	=
	\begin{pmatrix}g&h'&j'\end{pmatrix},
$$
where $j'$ is a unit, and we will always have
$$
	\kup{\gamma}
	=
	\kup{\gamma \cdot \xa{s_{1}}{n_{1}} }.
$$
We may apply the theorem to calculate $\kup{\gamma  \cdot \xa{s_{1}}{n_{1}} }$.

\subsection*{The section over the real points, and the half-integral weight multiplier system}

As was described above, we shall choose for each element $g\in \SU(\mathbb{R})$ a continuous
square root $\phi_{g}(\tau)$ of the function $\tau\mapsto C\tau+D$, satisfying the condition
$$
	\phi_{g}(g' ( \tau ) )\phi_{g'}(\tau) = \sigma_{\infty}(g,g')\phi_{gg'}(\tau).
$$
In fact there is only one such choice, since any two choices would differ by a homomorphism
$\SU(\mathbb{R})\to\mu_{2}$ and $\SU(\mathbb{R})$ is generated by commutators.
In Chapter~\ref{c:real} we determine the signs of these square roots.
Our result is:

\begin{thm2}
Let $n_{1} $, $ n_{2} \in N(\mathbb{R})$, $h=\ha{\lambda}\in T(\mathbb{R})$ and let
$w = \begin{pmatrix} 0&0&i \\ 0&1&0 \\ i & 0&0 \end{pmatrix}$.
The assignment $g\mapsto \phi_{g}$ defined above is given by:
\begin{itemize}
	\item $ \phi_{h \cdot n_{1}}( \tau ) = \ol{\lambda}^{- 1 / 2} $, where $ \arg\left( \ol{\lambda}^{- 1 / 2} \right) \in ( - \pi / 2, \pi / 2 ] $;
	\item $ \arg( \phi_{w}( \tau ) ) \in ( - \pi / 2, 0 ) $;
	\item $ \phi_{n_{2} \cdot w \cdot h \cdot n_{1}}( \tau ) = \phi_{w}( ( h \cdot n_{1} ) ( \tau ) ) \phi_{h \cdot n_{1}}(\tau ) $.
\end{itemize}
\end{thm2}

In particular, this means that we always have $\arg(\phi_{g}(\tau))\in (-\pi/2,\pi/2]$.
As a consequence, we have the following:

\begin{thm2}
Suppose $\mathbb{Q}(\thet)$ is a quadratic extension in which the prime $2$ splits.
Define, for $\gamma\in \SU(\mathbb{Z},8\thet)$,
	\[ j( \gamma, \tau ) =  \phi_{\gamma}( \tau ) \prod_{p < \infty} \kappa_{p}(\gamma ). \]
Then $j(\gamma,\tau)$ is a multiplier system of weight $1/2$.
\end{thm2}

\section*{Verification of the results}

The thesis contains rather a lot of calculations, and it would be useful to know that the results
are genuinely correct, rather than perhaps being out by a sign here and there.
To give some evidence of this, we can look at the restriction of the global Kubota symbol to some subgroups
to check that it has the expected properties.

\subsection*{Restriction to the torus}

We examine first the restriction of the Kubota symbol to $\Gamma\cap T(l)$,
where $\Gamma$ is the level $8\thet$ principal congruence subgroup described above.
This intersection consists of elements $\ha{\lambda}$ where $\lambda$ is a unit in
$\mathcal{O}_{L}$ and is congruent to $1$ modulo $8\thet$.

Recall that for elements $a,b\in\mathcal{O}_{l}$ with $b$ coprime to $2a$,
the quadratic Legendre symbol is defined by
$$
	\left(\frac{a}{b}\right)_{l,2}
	=
	\prod_{\mathfrak{p} | b} (a,b)_{\lp,2}.
$$
Our results imply the following:

\begin{prop2}
	Let $\lambda=a+b\thet$ be a unit in $L$ congruent to $1$ modulo $8\thet$.
	Then we have
	$$
		\kappa(\ha{\lambda})
		=
		\begin{cases}
			\displaystyle \left(\frac{b}{a}\right)_{l,2} \cdot\prod_{\mathfrak{p}|\infty} ( a, b )_{\lp, 2}, &\hbox{if }b \ne 0;\\
			\displaystyle 1, &\hbox{otherwise.}
		\end{cases}
	$$
\end{prop2}

In the case that $l$ is totally complex, this implies that the map
$a+b\thet \mapsto \left(\frac{b}{a}\right)_{l,2}$ is a group homomorphism.
This is indeed the case, and can be verified directly using the quadratic reciprocity law in $k$.

\subsection*{Restriction to $\SL_{2}$}

The group $\SL_{2}$ embeds into $\SU$ as follows:
$$
	\begin{pmatrix} a & b \\ c & d\end{pmatrix}
	\mapsto
	\begin{pmatrix}
		a&0& b \thet\\
		0&1&0\\
		c/\thet&0&d
	\end{pmatrix}.
$$
We may therefore examine the restriction of the Kubota symbol to $\SL_{2}(\mathcal{O}_{l},8\thet^{2})$.
Our results imply the following:
$$
	\kappa\left( \begin{pmatrix} a & b \\ c & d\end{pmatrix} \right)
	=
	\begin{cases}
		\displaystyle \left(\frac{c}{d}\right)_{l,2}, & c\ne 0;\\
		\displaystyle 1, & c=0.
	\end{cases}
$$
When $l$ is totally complex, our results imply that this map is a homomorphism.
Again, this turns out to be true, as was shown by Kubota (see \cite{kubota69}).


\part{Preliminaries} \label{p:prel}


\chapter{The group \SU} \label{c:su}

In this chapter, we will outline the definition of the group \SU{} that we will use along with some of its subgroups, the ad\`{e}le group, the Bruhat decomposition of \SU{} and the Iwahori factorisation.

\section{The structure of \SU} \label{s:sustruc}

Let $k$ be an arbitrary field of characteristic zero, and $ K / k $ is a quadratic extension where $ K = k( \thet ) $, $ \thet = \sqrt{- d} $, $ d \in k^{\times} $.  Then the Galois group $ \gal( K / k ) $ has two elements, and the non-trivial element may be described by
	\[ \ol{a + b \thet} \mapsto a - b \thet. \]

To describe \SU{}, suppose $ A $ is a $ k $-algebra.  Then
	\[ \SU( A ) = \{ \nu \in \SL_{3}( A \otimes_{k} K ) \colon \nu^{t} J \overline{\nu} = J \}, \]
where
	\[ J = \begin{pmatrix}  1 &  0 &   0  \\
                            0 &  1 &   0  \\
                            0 &  0 & - 1  \end{pmatrix}, \]
and $ \nu^{t} $ denotes the transpose of a matrix $ \nu $.  This is the ``usual'' definition of \SU{}, but there are other definitions which are isomorphic to the above.  In fact, we will work with another definition, where $ J $ is replaced by
	\[ J' = \begin{pmatrix} 0 & 0 & 1  \\
                            0 & 1 & 0  \\
                            1 & 0 & 0  \end{pmatrix}, \]
as it is a more convenient presentation of \SU{}.  (It is possible to show that if
	\[ V = \begin{pmatrix} 1 & 0 & - 1 \\ 1 & 1 & - 1 \\ 0 & - 1 & 1 \end{pmatrix}, \]
then $ J = V^{t} J' \ol{V} $.)

Let $ G = \SU $.  We consider 
\begin{equation} \label{e:gk}
	G( k ) = \{ \nu \in \SL_{3}( k \otimes_{k} K ) \colon \nu^{t} J' \ol{\nu} = J' \}.
\end{equation}
Let $ S $ be a maximal $ k $-split torus of $ G $, with $ T $ a maximal torus of  $ G $ containing $ S $.  By Section~2.5 of \cite{deodhar78}, we may choose these as follows:
	\[ S( k ) := \left\{ \begin{pmatrix} t & 0 & 0 \\ 0 & 1 & 0 \\ 0 & 0 & t^{-1} \end{pmatrix} \colon t \in k^{\times} \right\} \]
as a maximal $ k $-split torus of $ G( k ) $, and
	\[ T( k ) := \left\{ \begin{pmatrix} \lambda & 0 & 0 \\ 0 & \ol{\lambda} / \lambda & 0 \\ 0 & 0 & \ol{\lambda}^{-1} \end{pmatrix} \colon \lambda \in K^{\times} \right\} \]
as a maximal torus of $ G( k ) $.  As $ T( k ) \cong K^{\times} $, we will denote an element in $ T( k ) $ by
	\[ \ha{\lambda} = \begin{pmatrix} \lambda & 0 & 0 \\ 0 & \ol{\lambda} / \lambda & 0 \\ 0 & 0 & \ol{\lambda}^{-1} \end{pmatrix}, \]
where $ \lambda \in K^{\times} $, and $ \alpha $ is defined below.

The root system of $ G $ with respect to $ S $, $ \Phi $, consists of 4 roots with one simple root which we will call $ \alpha $.  Thus, $ \Phi = \{ \alpha, 2 \alpha, - \alpha, - 2 \alpha \} $ and $ \alpha $ may be described by
	\[ \alpha( \ha{t} ) = t, \]
where $ t \in k^{\times} $.  This implies that if we let $ \mathfrak{g} $ be the Lie algebra of $ G $, and for  $ \beta \in \Phi $, let
	\[ \mathfrak{g}_{\beta} = \{ X \in \mathfrak{g} \colon ( \Ad s )( X ) = \beta( s ) \cdot X \  \forall s \in S \} \]
be the corresponding root space, then
\begin{gather*}
	\mathfrak{g}_{0} = \left\{ \begin{pmatrix} a & 0 & 0 \\ 0 & \ol{a} - a & 0 \\ 0 & 0 & - \ol{a} \end{pmatrix} \colon a \in K \right\}, \\
	\mathfrak{g}_{\alpha} = \left\{ \begin{pmatrix} 0 & b & 0 \\ 0 & 0 & - \ol{b} \\ 0 & 0 & 0 \end{pmatrix} \colon b \in K \right\}, \\
	\mathfrak{g}_{2 \alpha} = \left\{ \begin{pmatrix} 0 & 0 & t \thet \\ 0 & 0 & 0 \\ 0 & 0 & 0 \end{pmatrix} \colon t \in k \right\}, \\
	\mathfrak{g}_{- \alpha} = \left\{ \begin{pmatrix} 0 & 0 & 0 \\ b & 0 & 0 \\ 0 & - \ol{b} & 0 \end{pmatrix} \colon b \in K \right\}, \\
	\mathfrak{g}_{- 2 \alpha} = \left\{ \begin{pmatrix} 0 & 0 & 0 \\ 0 & 0 & 0 \\ t \thet & 0 & 0 \end{pmatrix} \colon t \in k \right\}.
\end{gather*}
Thus, the root space decomposition of $ \mathfrak{g} $ is
	\[ \mathfrak{g} = \mathfrak{g}_{0} \oplus \bigoplus_{\beta \in \Phi} \mathfrak{g}_{\beta}. \]

\section{The Bruhat decomposition of \SU} \label{s:bruh}

Recall that $ G = \SU $.  We shall use the following system of positive roots: $ \Phi^{+} = \{ \alpha, 2 \alpha \} $.  Let $ N $, denoted as $ U^{+} $ in \cite{deodhar78}, be the unipotent algebraic subgroup of $ G $ whose Lie algebra is $ \bigoplus_{\beta \in \Phi^{+}} \mathfrak{g}_{\beta} $ (and similarly $ \ol{N} $, denoted by $ U^{-} $ in \cite{deodhar78}, is the unipotent algebraic subgroup of $ G $ whose Lie algebra is $ \bigoplus_{- \beta \in \Phi^{+}} \mathfrak{g}_{\beta} $).  Hence, since $ G( k ) = \SU( k ) $, $ N( k ) $ and $ \ol{N}( k ) $ may be explicitly described, i.e.
\begin{gather*}
	N( k ) = \left\{ \begin{pmatrix} 1 & r & m \\ 0 & 1 & - \ol{r} \\ 0 & 0 & 1 \end{pmatrix} \colon ( r, m ) \in K \times K, \; \trace{m} = - \norm{r} \right\}, \\
	\ol{N}( k ) = \left\{ \begin{pmatrix} 1 & 0 & 0 \\ r & 1 & 0 \\ m & - \ol{r} & 1 \end{pmatrix} \colon ( r, m ) \in K \times K, \; \trace{m} = - \norm{r} \right\},
\end{gather*}
where $ \norm{s} = s \ol{s} $ and $ \trace{s} = s + \ol{s} $ are the norm and trace of $ s \in K $ over $ k $.  We will let
	\[ \xa{r}{m} := \begin{pmatrix} 1 & r & m \\ 0 & 1 & - \ol{r} \\ 0 & 0 & 1 \end{pmatrix}, \quad \xam{r}{m} := \begin{pmatrix} 1 & 0 & 0 \\ r & 1 & 0 \\ m & - \ol{r} & 1 \end{pmatrix}, \]
where $ r $, $ m \in K $ and $ \trace{m} = - \norm{r} $.

By Proposition~2.7 of \cite{deodhar78}, if we define
\begin{equation} \label{e:warm}
	\wa{r}{m} = \xa{r}{m} \cdot \xam{\frac{\ol{r}}{\ol{m}}}{\frac{1}{\ol{m}}} \cdot \xa{r \cdot \frac{\ol{m}}{m}}{m},
\end{equation}
then
	\[ \wa{r}{m} \cdot N( k ) \cdot \wa{r}{m}^{-1} = \ol{N}( k ). \]
Thus by the above definitions for \xa{r}{m} and \xam{r}{m},
	\[ \wa{r}{m} = \begin{pmatrix} 0 & 0 & m \\ 0 & - \ol{m} / m & 0 \\ \ol{m}^{-1} & 0 & 0 \end{pmatrix}. \]
(This would imply for any $ m \in K $ such that for $ r $, $ r' \in K $, $ \trace{m} = - \norm{r} = - \norm{r'} $, $ \wa{r}{m} = \wa{r'}{m} $.)

If $ N_{G}( S ) $ is the normaliser of $ S $ in $ G $, and $ Z_{G}( S ) $ is the centraliser of $ S $ in $ G $, we define $ W_{0} = N_{G}( S ) / Z_{G}( S ) $ as the Weyl group in $ G $.  Thus, we may choose (as we need this to define the section for the 2-cocycle in Section~\ref{s:sec})
	\[ W = \left\{ 1, \wa{0}{\thet} \right\} \]
as a complete set of representatives for the Weyl group in $ G( k ) $ (this is the same Weyl group chosen by Deodhar in Section~2.21 in \cite{deodhar78}).  Thus, by V.21.29 of \cite{borel91}, the Bruhat decomposition may be described as
	\[ G( k ) = N( k ) \cdot T( k ) \sqcup N( k ) \cdot T( k ) \cdot \wa{0}{\thet} \cdot N( k ). \]

This implies that a matrix in $ G( k ) $ is either upper triangular or has a non-zero $ ( 3, 1 ) $-entry.  It can be easily shown that for $ a $, $ b $, $ c $, $ d $, $ e \in K $ such that
	\[ a \ol{c} + \ol{a} c = - \norm{b},\ e \ol{c} + \ol{e} c = - \norm{d}, \]
the Bruhat decomposition of any matrix of $ G( k ) $ with a non-zero $ ( 3, 1 ) $-entry (i.e. $ c \neq 0 $) may be described as
\begin{equation} \label{e:bruc}
	\begin{pmatrix}
		a & * & * \\ b & * & * \\ c & d & e
	\end{pmatrix} = \xa{- \frac{\ol{b}}{\ol{c}}}{\frac{a}{c}} \cdot \ha{\frac{1}{\ol{c} \thet}} \cdot \wa{0}{\thet} \cdot \xa{\frac{d}{c}}{\frac{e}{c}}.
\end{equation}
Otherwise, an upper triangular matrix (i.e. an element of the Borel subgroup of $ G $) will have the Bruhat decomposition
\begin{equation} \label{e:bruf}
	\begin{pmatrix}
		f & g & h \\ 0 & \ol{f} / f & - \ol{g} / f \\ 0 & 0 & \ol{f}^{-1}
	\end{pmatrix} = \ha{f} \cdot \xa{\frac{g}{f}}{\frac{h}{f}} = \xa{\frac{g f}{\ol{f}}}{h \ol{f}} \cdot \ha{f}
\end{equation}
where $ h $, $ f $, $ g \in K $ with $ h \ol{f} + \ol{h} f = - \norm{g} $.

\section{The Iwahori factorisation} \label{s:iwahori}

In this section, we shall assume that $ k $ is a non-archimedean local field.  We shall write $ \Ok $ for the valuation ring in $ k $.  We shall use the notation
\begin{equation} \label{e:gok}
	G( \Ok ) = \{ \nu \in \SL_{3}( \Ok \otimes_{\Ok} \OK ) \colon \nu^{t} J' \ol{\nu} = J' \},
\end{equation}
where $ J' $ is as defined in Section~\ref{s:sustruc}.  Let $ \mathfrak{a} $ be an ideal of \OK, and let
\begin{gather*}
	G( \Ok )_{0}( \mathfrak{a} ) = \left\{ \begin{pmatrix} a & b & c \\ d & e & f \\ g & h & j \end{pmatrix} \in G( \Ok ) \colon d \equiv g \equiv h \equiv 0 \pod{\mathfrak{a}} \right\}, \\
	G( \Ok )_{1}( \mathfrak{a} ) = \left\{ \begin{pmatrix} a & b & c \\ d & e & f \\ g & h & j \end{pmatrix} \in G( \Ok ) \colon \begin{aligned} &b \equiv c \equiv d \equiv f \equiv g \equiv h \equiv 0 \pod{\mathfrak{a}}, \\ &a \equiv e \equiv j \equiv 1 \pod{\mathfrak{a}} \end{aligned} \right\}.
\end{gather*}

We may define
\begin{gather*}
	T( \Ok ) = T( k ) \cap G( \Ok ), \\
	N( \Ok ) = N( k ) \cap G( \Ok ), \\
	\ol{N}( \Ok ) = \ol{N}( k ) \cap G( \Ok ). \\
\end{gather*}
Let us also define
\begin{gather*}
	T( \mathfrak{a} ) = \left\{ \begin{pmatrix} \lambda & 0 & 0 \\ 0 & \ol{\lambda} / \lambda & 0 \\ 0 & 0 & \ol{\lambda}^{- 1} \end{pmatrix} \in T( \Ok ) \colon \lambda \equiv 1 \pod{\mathfrak{a}} \right\}, \\ 
	N( \mathfrak{a} ) = \left\{ \begin{pmatrix} 1 & r & m \\ 0 & 1 & - \ol{r} \\ 0 & 0 & 1 \end{pmatrix} \in N( \Ok ) \colon r \equiv m \equiv 0 \pod{\mathfrak{a}} \right\}, \\
	\ol{N}( \mathfrak{a} ) = \left\{ \begin{pmatrix} 1 & 0 & 0 \\ r & 1 & 0 \\ m & - \ol{r} & 1 \end{pmatrix} \in \ol{N}( \Ok ) \colon r \equiv m \equiv 0 \pod{\mathfrak{a}} \right\}.
\end{gather*}

\begin{prop} \label{p:iwahori}
We have the Iwahori factorisations
\begin{gather*}
	G( \Ok )_{0}( \mathfrak{a} ) = N( \Ok ) \cdot T( \Ok ) \cdot \ol{N}( \mathfrak{a} ), \\
	G( \Ok )_{1}( \mathfrak{a} ) = N( \mathfrak{a} ) \cdot T( \mathfrak{a} ) \cdot \ol{N}( \mathfrak{a} ).
\end{gather*}
\end{prop}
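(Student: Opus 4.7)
I would prove both decompositions by combining a column-reduction step with the Bruhat factorisation of upper-triangular matrices already recorded in (\ref{e:bruf}). The idea is: given $\nu\in G(\Ok)_0(\mathfrak{a})$, right-multiply by a carefully chosen element of $\ol{N}(\mathfrak{a})$ to clear the bottom-left entries, producing an upper-triangular matrix to which (\ref{e:bruf}) applies.

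The inclusion $\supseteq$ is immediate by direct computation. Expanding $\xa{r_1}{m_1}\cdot\ha{\lambda}\cdot\xam{r_2}{m_2}$, the bottom row comes out as $(\bar{\lambda}^{-1}m_2,\,-\bar{\lambda}^{-1}\bar{r_2},\,\bar{\lambda}^{-1})$, which reduces to $(0,0,\bar{\lambda}^{-1})$ modulo $\mathfrak{a}$ whenever $r_2,m_2\in\mathfrak{a}$; this gives the first factorisation. The principal-congruence version follows by tracking all three factors modulo $\mathfrak{a}$ under the additional hypotheses $r_1,m_1\in\mathfrak{a}$ and $\lambda\equiv 1\pmod{\mathfrak{a}}$, each of which reduces to $I_3$.

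For the reverse inclusion, take $\nu=\begin{pmatrix}a&b&c\\d&e&f\\g&h&j\end{pmatrix}\in G(\Ok)_0(\mathfrak{a})$ with $d,g,h\in\mathfrak{a}$. Using the explicit formula $\nu^{-1}=J'\ol{\nu}^{t}J'$, the $(3,3)$-entry of $\nu\nu^{-1}=I_3$ reads $j\bar{a}+h\bar{b}+g\bar{c}=1$, so $j\bar{a}\equiv 1\pmod{\mathfrak{a}}$ and hence $j\in\OK^\times$. The $(3,3)$-entry of the row-unitary relation $\nu J'\ol{\nu}^{t}=J'$ supplies the key identity
$$
\trace{g\bar{j}}+\norm{h}=0.
$$
Now set $r=\bar{h}/\bar{j}$ and $m=-g/j-\norm{h}/\norm{j}$. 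Both lie in $\mathfrak{a}$ because $j$ is a unit and $g,h\in\mathfrak{a}$; the key identity translates directly into $\trace{m}+\norm{r}=0$, so $\xam{r}{m}\in\ol{N}(\mathfrak{a})$. A direct calculation shows the bottom row of $\nu\cdot\xam{r}{m}$ is $(0,0,j)$, and for the $(2,1)$-entry the further unitary relation $d\bar{j}+e\bar{h}+f\bar{g}=0$ (from the $(2,3)$-entry of $\nu J'\ol{\nu}^{t}=J'$) combines with the key identity to give
$$
j\bar{j}\bigl(d+er+fm\bigr)=-f\bigl(\trace{g\bar{j}}+\norm{h}\bigr)=0.
$$
Hence $\nu\cdot\xam{r}{m}$ is upper triangular with entries in $\OK$ and $(3,3)$-entry $j\in\OK^\times$. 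Applying (\ref{e:bruf}) factors it as $n\cdot t$ with $t=\ha{\bar{j}^{-1}}\in T(\Ok)$ and $n\in N(\Ok)$, and setting $\bar{n}=\xam{r}{m}^{-1}\in\ol{N}(\mathfrak{a})$ yields the required $\nu=n\cdot t\cdot\bar{n}$.

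The principal-congruence refinement follows by rerunning the same argument: when $\nu\equiv I_3\pmod{\mathfrak{a}}$ we have $j\equiv 1$, so $t=\ha{\bar{j}^{-1}}\in T(\mathfrak{a})$, and $\nu\cdot\xam{r}{m}\equiv I_3\pmod{\mathfrak{a}}$ as well, so the above-diagonal entries of $n$ land in $\mathfrak{a}$ and $n\in N(\mathfrak{a})$. The main obstacle throughout is the $(2,1)$-cancellation; all other steps are routine bookkeeping, and that cancellation rests on the single row-unitary identity $\trace{g\bar{j}}+\norm{h}=0$.
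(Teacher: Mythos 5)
Your proof is correct and is essentially the paper's argument run in mirror image: the paper clears the entries $d,g,h$ by left-multiplying with $\xa{\ol{f}/\ol{j}}{\ol{c}/\ol{j}}$ and $\ha{\ol{j}}$ and reads off the remainder $\xam{-\ol{h}/\ol{j}}{g/j}\in\ol{N}(\mathfrak{a})$, while you right-multiply by the single element $\xam{\ol{h}/\ol{j}}{-g/j-\norm{h}/\norm{j}}\in\ol{N}(\mathfrak{a})$ and then invoke \eqref{e:bruf}, with the cancellations in both cases coming from the same unitarity identities $\trace{g\ol{j}}+\norm{h}=0$ and $d\ol{j}+e\ol{h}+f\ol{g}=0$. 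A minor bonus of your write-up is that you actually justify $j\in\OK^{\times}$ via the $(3,3)$-entry of $\nu\nu^{-1}=I_{3}$, a fact the paper's proof asserts without argument.
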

\begin{proof}
Let
	\[ \begin{pmatrix} a & b & c \\ d & e & f \\ g & h & j \end{pmatrix} \in G( \Ok )_{0}( \mathfrak{a} ) \]
(resp. $ G( \Ok )_{1}( \mathfrak{a} ) $), where $ g \neq 0 $.  Consider the Hermitian form $ \langle - , - \rangle $ defined by
	\[ \langle u, v \rangle = u^{t} \begin{pmatrix} 0 & 0 & 1 \\ 0 & 1 & 0 \\ 1 & 0 & 0 \end{pmatrix} \ol{v}, \]
where $ u $, $ v \in K^{3} $.  Since
	\[ \left\langle \begin{pmatrix} c \\ f \\ j \end{pmatrix}, \begin{pmatrix} c \\ f \\ j \end{pmatrix} \right\rangle = 0, \] 
this implies that
	\[ \trace{\frac{c}{j}} = - \norm{\frac{f}{j}}. \]
Thus, $ \xa{\ol{f} / \ol{j}}{\ol{c} / \ol{j}} \in N( \Ok ) $ (resp. $ N ( \mathfrak{a} ) $), and hence
	\[ \xa{\frac{\ol{f}}{\ol{j}}}{\frac{\ol{c}}{\ol{j}}} \cdot \begin{pmatrix} a & b & c \\ d & e & f \\ g & h & j \end{pmatrix} = \begin{pmatrix} a + d \ol{f} / \ol{j} + \ol{c} g / \ol{j} & b + e \ol{f} / \ol{j} + \ol{c} h / \ol{j} & 0 \\ d - f g / j & e - f h / j & 0 \\ g & h & j \end{pmatrix}. \]
But
	\[ \left\langle \begin{pmatrix} a + d \ol{f} / \ol{j} + \ol{c} g / \ol{j} \\ b + e \ol{f} / \ol{j} + \ol{c} h / \ol{j} \\ 0 \end{pmatrix}, \begin{pmatrix} a + d \ol{f} / \ol{j} + \ol{c} g / \ol{j} \\ b + e \ol{f} / \ol{j} + \ol{c} h / \ol{j} \\ 0 \end{pmatrix} \right\rangle = 0, \]
which implies that $ b + e \ol{f} / \ol{j} + \ol{c} h / \ol{j} = 0 $, i.e.
	\[ \xa{\frac{\ol{f}}{\ol{j}}}{\frac{\ol{c}}{\ol{j}}} \cdot \begin{pmatrix} a & b & c \\ d & e & f \\ g & h & j \end{pmatrix} = \begin{pmatrix} a + d \ol{f} / \ol{j} + \ol{c} g / \ol{j} & 0 & 0 \\ d - f g / j & e - f h / j & 0 \\ g & h & j \end{pmatrix}. \]
Since $ j \in \Ok^{\times} $ and $ \ha{\ol{j}} \in T( \Ok ) $ (resp. $ T( \mathfrak{a} ) $), we have
	\[ \ha{\ol{j}} \cdot \xa{\frac{\ol{f}}{\ol{j}}}{\frac{\ol{c}}{\ol{j}}} \cdot \begin{pmatrix} a & b & c \\ d & e & f \\ g & h & j \end{pmatrix} = \begin{pmatrix} a \ol{j} + d \ol{f} + \ol{c} g & 0 & 0 \\ d j / \ol{j} - f g / \ol{j} & e j / \ol{j} - f h / \ol{j} & 0 \\ g / j & h / j & 1 \end{pmatrix}. \]
But
	\[ \left\langle \begin{pmatrix} a \\ d \\ g \end{pmatrix}, \begin{pmatrix} c \\ f \\ g \end{pmatrix} \right\rangle = 1, \]
i.e. $ a \ol{j} + d \ol{f} + \ol{c} g = 1 $.  This implies that $ e j / \ol{j} - f h / \ol{j} = 1 $, so that
	\[ \ha{\ol{j}} \cdot \xa{\frac{\ol{f}}{\ol{j}}}{\frac{\ol{c}}{\ol{j}}} \cdot \begin{pmatrix} a & b & c \\ d & e & f \\ g & h & j \end{pmatrix} = \begin{pmatrix} 1 & 0 & 0 \\ d j / \ol{j} - f g / \ol{j} & 1 & 0 \\ g / j & h / j & 1 \end{pmatrix}. \]
As
	\[ \left\langle \begin{pmatrix} 1 \\ d j / \ol{j} - f g / \ol{j} \\ g / j \end{pmatrix}, \begin{pmatrix} 0 \\ 1 \\ h / j \end{pmatrix} \right\rangle = 0, \]
this implies that $ d j / \ol{j} - f g / \ol{j} = - \ol{h} / \ol{j} $.  Thus,
	\[ \ha{\ol{j}} \cdot \xa{\frac{\ol{f}}{\ol{j}}}{\frac{\ol{c}}{\ol{j}}} \cdot \begin{pmatrix} a & b & c \\ d & e & f \\ g & h & j \end{pmatrix} = \begin{pmatrix} 1 & 0 & 0 \\ - \ol{h} / \ol{j} & 1 & 0 \\ g / j & h / j & 1 \end{pmatrix} = \xam{- \frac{\ol{h}}{\ol{j}}}{\frac{g}{j}}. \]
Note that $ \xam{- \ol{h} / \ol{j}}{g / j} \in \ol{N}( \Ok ) $ (resp. $ \ol{N}( \mathfrak{a} ) $), so we have in the end
	\[ \begin{pmatrix} a & b & c \\ d & e & f \\ g & h & j \end{pmatrix} = \xa{- \frac{\ol{f}}{\ol{j}}}{\frac{c}{j}} \cdot \ha{\ol{j}^{- 1}} \cdot \xam{- \frac{\ol{h}}{\ol{j}}}{\frac{g}{j}}. \]

If $ g = 0 $, then we have, by \eqref{e:bruf},
	\[ \begin{pmatrix} \ol{j}^{- 1} & - \ol{f} / j & c \\ 0 & \ol{j} / j & f \\ 0 & 0 & j \end{pmatrix} = \xa{- \frac{\ol{f}}{\ol{j}}}{\frac{c}{j}} \cdot \ha{\ol{j}^{- 1}}, \]
where $ \xa{b j / \ol{j}}{c / j} \in N( \Ok ) $ (resp. $ N( \mathfrak{a} ) $) and $ \ha{\ol{j}^{- 1}} \in T( \Ok ) $ (resp. $ T( \mathfrak{a} ) $).

This completes the proof of the Iwahori factorisation.
\end{proof}

\section{The ad\`{e}le group of \SU} \label{s:adele}

\subsection{Some notation for the local field} \label{ss:local}

Using the notation of Section~V.1 of \cite{neukirch99}, we first let $ v_{k} $ be the discrete valuation normalised by $ v_{k}( k^{\times} ) = \mathbb{Z} $, for any local field $ k $.  This implies that the valuation ring may be described by
	\[ \mathcal{O}_{k} = \{ b \in k \colon v_{k}( b ) \geq 0 \}, \]
with maximal ideal
	\[ \mathfrak{p} = \mathfrak{p}_{k} = \{ b \in k \colon v_{k}( b ) > 0 \}. \]
By defining $ q = | \mathcal{O}_{k} / \mathfrak{p}_{k} | $, we have the normalised $ \mathfrak{p} $-adic absolute value (multiplicative valuation)
	\[ | b |_{\mathfrak{p}} = q^{- v_{k}( b )}, \]
where $ b \in k $.  This implies that
	\[ \mathcal{O}_{k} = \{ b \in k \colon | b |_{\mathfrak{p}} \leq 1 \}, \]
and
	\[ \mathfrak{p}_{k} = \{ b \in k \colon | b |_{\mathfrak{p}} < 1 \}. \]
We define a \emph{prime element} of $ k $, $ \pi = \pi_{k} $, such that $ v_{k}( \pi ) = 1 $.  This implies that $ \mathfrak{p}_{k} = \pi \mathcal{O}_{k} $.

We also have the following lemma from Section~11 of \cite{cassels67}:

\begin{lem} \label{l:extval}
Let $ k $ be complete with respect to the normalised valuation $ | \cdot | $ and let $ K $ be an extension of $ k $ of degree $ [ K : k ] = N < \infty $.  Then the normalised valuation $ \| \cdot \| $ of $ K $ which is equivalent to the unique extension of $ | \cdot | $ to $ K $ is given by the formula
	\[ \| b \| = | \normed_{K / k}( b ) |, \]
where $ b \in K $ and $ \normed_{K / k} $ is the norm of an element of $ K $ over $ k $.
\end{lem}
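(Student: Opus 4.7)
The approach is the standard one: invoke the uniqueness of extensions of absolute values over a complete base field, define the candidate $\phi(b)=|N_{K/k}(b)|$, check it is a valuation, and then compare normalizations.

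The plan is first to recall (or to use as a black box from Cassels) the key fact that over a complete field $k$ there is a unique absolute value $|\cdot|_{K}^{\ast}$ on $K$ extending $|\cdot|$, given that $K/k$ is finite. Next I would define $\phi\colon K \to \mathbb{R}_{\ge 0}$ by $\phi(b)=|N_{K/k}(b)|$. Multiplicativity $\phi(bb')=\phi(b)\phi(b')$ is immediate from multiplicativity of the norm. For $b\in k$ we have $N_{K/k}(b)=b^{N}$, so $\phi(b)=|b|^{N}$, which shows $\phi$ restricts on $k$ to a power of the original valuation.

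The main obstacle is the ultrametric (or more generally, archimedean-type) inequality for $\phi$. The standard reduction is that it suffices to show $\phi(b)\le 1 \Rightarrow \phi(1+b)\le 1$, and this in turn follows from the characterization
\[
\phi(b)\le 1 \iff b \text{ is integral over } \mathcal{O}_{k}.
\]
To obtain this characterization one uses that $N_{K/k}(b)$ is, up to sign, the constant term of a power of the minimal polynomial of $b$; combined with the completeness of $k$ (so that the integral closure $\mathcal{O}_{K}$ is the unit ball of the unique extension), this yields the equivalence. Then $1+b$ remains integral whenever $b$ is, so $\phi(1+b)\le 1$.

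Having shown $\phi$ is a (non-archimedean) absolute value on $K$, both $\phi$ and the normalized $\|\cdot\|$ restrict on $k$ to $|\cdot|^{N}$: indeed, with $e$ the ramification index, $f$ the residue degree, and $ef=N$, the normalized valuation satisfies $\|b\|=q_{K}^{-v_{K}(b)}=q^{-efv_{k}(b)}=|b|^{N}$ for $b\in k$, matching $\phi(b)=|b^{N}|=|b|^{N}$. By the uniqueness of extension of $|\cdot|^{N}$ from $k$ to a multiplicative valuation on $K$, we conclude $\|b\|=\phi(b)=|N_{K/k}(b)|$ for every $b\in K$, as required. The only subtlety to double-check is that the two normalizations (the one coming from $q_{K}$ on $K$ versus $q$ on $k$) match precisely under the relation $ef=N$; this is where the specific choice of normalisation made in Section~\ref{ss:local} is used.
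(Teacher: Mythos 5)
Your proposal is correct, but there is nothing in the paper to compare it against: the paper does not prove this lemma at all, it simply quotes it from Section~11 of Cassels' article (the reference \cite{cassels67}), and your argument is essentially the standard proof found in that source. The two pillars you identify are exactly the right ones: the Hensel-type lemma that a monic irreducible polynomial over complete $k$ with integral constant term has all coefficients in $\mathcal{O}_{k}$ (which yields $|N_{K/k}(b)|\le 1 \iff b$ integral, hence the ultrametric inequality for $\phi(b)=|N_{K/k}(b)|$), and the normalisation count $\|b\| = q_{K}^{-v_{K}(b)} = q^{-ef\,v_{k}(b)} = |b|^{N}$ for $b\in k$, using $ef=N$.

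One small point worth making explicit: the uniqueness theorem is usually stated for extensions of a fixed absolute value, so in your final step you are applying it to $|\cdot|^{N}$ rather than to $|\cdot|$ itself. This is legitimate in the non-archimedean setting at hand, since a positive power of an ultrametric absolute value is again an ultrametric absolute value defining the same topology, so $k$ remains complete for it; alternatively one can phrase the same step as: $\phi^{1/N}$ extends $|\cdot|$ and hence equals the unique extension, while $\|\cdot\|$, being equivalent to that extension, equals its $c$-th power for some $c>0$, and restricting to $k$ forces $c=N$, giving $\|b\|=|N_{K/k}(b)|$. Note also that your argument is genuinely non-archimedean (as is the paper's usage in Subsection~\ref{ss:local}); the statement in Cassels also covers archimedean normalised valuations such as $\|z\|=|z|^{2}$ on $\mathbb{C}$, where $\|\cdot\|$ fails the triangle inequality and a separate (trivial, case-by-case) verification is used instead.
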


Lemma~\ref{l:extval} will be useful as we can work out the normalised valuation of $ K $ by only knowing what the normalised valuation of $ k $ is.

\subsection{The ad\`{e}le ring} \label{ss:adelering}

Now let $ l $ be any global field.  Recall (see Section VI.1 of \cite{neukirch99}) that an ad\`{e}le is a family
	\[ a = ( a_{\mathfrak{p}} ), \]
of elements $ a_{\mathfrak{p}} \in \lp $, where $ \mathfrak{p} $ runs through all the primes of $ l $, $ \lp $ is the completion of $ l $ with respect to $ \mathfrak{p} $, and $ a_{\mathfrak{p}} $ is integral in $ \lp $ for almost all $ \mathfrak{p} $.  (Note that $ \mathfrak{p} $ is thus the maximal ideal of $ \Olp $, as defined in the previous subsection.)  The ad\`{e}les form a ring
	\[ \mathbb{A}_{l} = \sideset{}{'}{\prod}_{\mathfrak{p}} \lp, \]
where $ \mathbb{A}_{l} $ is the \emph{restricted product} of the $ \lp $ with respect to the subrings $ \Olp \subseteq \lp $.  Addition and multiplication are defined componentwise.  Let $ \mathbb{A}_{l_{f}} $ denote the finite component of $ \mathbb{A}_{l} $, i.e. let
	\[ \mathbb{A}_{{l}_{f}} = \sideset{}{'}{\prod}_{\text{$ \mathfrak{p} $ finite}} \lp, \]
and we similarly define the infinite component of $ \mathbb{A}_{l} $ as
	\[ \mathbb{A}_{{l}_{\infty}} = \sideset{}{'}{\prod}_{\text{$ \mathfrak{p} $ infinite}} \lp. \]

As stated in Section~10 of \cite{cassels67}, there is a natural mapping
\begin{align*}
	l &\to \mathbb{A}_{l} \\
	b &\mapsto ( b ),
\end{align*}
i.e. an injective map of $ l $ into $ \mathbb{A}_{l} $ since $ b \in \Olp $ for almost all $ \mathfrak{p} $ and the map of $ l $ into any $ \lp $ is an injection.  The image of $ l $ under this map is the ring of principal ad\`{e}les, and we can identify $ l $ with this ring.  Hence $ l $ is a subring of $ \mathbb{A}_{l} $.

\subsection{A description of the ad\`{e}le group $ \SU( \mathbb{A}_{l} ) $} \label{ss:descad}

Now let $ L $ be a quadratic extension of our global field $ l $.  We are interested in
	\[ G( \mathbb{A}_{l} ) = \{ \nu \in \SL_{3}( \mathbb{A}_{l} \otimes_{l} L ) \colon \nu^{t} J' \ol{\nu} = J' \}, \]
where $ J' $ is as defined in Section~\ref{s:sustruc}.  We want to calculate the global Kubota symbol on an arithmetic subgroup of $ G( \mathbb{A}_{{l}_{f}} ) $.  In order to do so, we will need to calculate the local Kubota symbol on a compact open subgroup of
	\[ G( \lp ) = \{ \nu \in \SL_{3}( \lp \otimes_{l} L ) \colon \nu^{t} J' \ol{\nu} = J' \}, \]
where $ \mathfrak{p} $ is finite.  Thus, we want to calculate the Kubota symbol on a subgroup of
	\[ G( \Olp ) = \{ \nu \in \SL_{3}( \Olp \otimes_{\mathcal{O}_{l}} \mathcal{O}_{L} ) \colon \nu^{t} J' \ol{\nu} = J' \}. \]

We should first describe $ \Lp := \lp \otimes_{l} L $ in order to understand $ G( \lp ) $.  Let $ \thet = \sqrt{- d} $, where $ d \in \mathcal{O}_{l} $ such that $ - d $ is not a square in $ l $ and $ L = l( \thet ) $.  Then the theorem in Section~10 of \cite{cassels67} states that there are at most 2 extensions of the valuation $ | \cdot |_{\mathfrak{p}} $ to $ L $, and if $ \mathfrak{P} $ is a prime above $ \mathfrak{p} $ in $ L $ (written as $ \mathfrak{P} \mid \mathfrak{p} $), we have
	\[ \Lp = \lp \otimes_{l} L = \bigoplus_{\mathfrak{P} \mid \mathfrak{p}} L_{\mathfrak{P}}, \]
where $ L_{\mathfrak{P}} $ denotes the completion of $ L $ with respect to $ \mathfrak{P} $.  This implies that for a finite prime $ \mathfrak{p} $,
	\[ \Lp = \begin{cases} \lp( \thet ), &\text{if $ \mathfrak{p} \mathcal{O}_{L} $ does not split in $ \mathcal{O}_{L} $;} \\ \lp \oplus \lp, &\text{if $ \mathfrak{p} \mathcal{O}_{L} $ splits in $ \mathcal{O}_{L} $.} \end{cases} \]

Thus for every finite prime $ \mathfrak{p} $, we have to consider if the extension $ \Lp / \lp $ is non-split (hence unramified or ramified) or split.  Also, note that the first lemma in Section~14 of \cite{cassels67} states that
	\[ \mathbb{A}_{l} \otimes_{l} L = \mathbb{A}_{L}, \]
in both an algebraic and a topological sense, and $ l \otimes_{l} L = L \subset \mathbb{A}_{l} \otimes_{l} L $, where $ l \subset \mathbb{A}_{l} $, is mapped identically on to $ L \subset \mathbb{A}_{L} $.  This implies that
	\[ G( \mathbb{A}_{l} ) = \left\{ \nu \in \SL_{3}\left( \mathbb{A}_{L} \right) \colon \nu^{t} J' \ol{\nu} = J' \right\}, \]
and hence for a finite prime $ \mathfrak{p} $,
	\[ G( \Olp ) = \{ \nu \in \SL_{3}( \OLp ) \colon \nu^{t} J' \ol{\nu} = J'\}. \]

Note that we will be putting $ k = \lp $ and $ K = \Lp $ as defined in Section~\ref{s:sustruc} in Part~\ref{p:kubota}, hence we get the same definition for $ G( \lp ) $ and $ G( \Olp ) $ using \eqref{e:gk} and \eqref{e:gok}.  We will also assume that when $ \Lp / \lp $ is ramified, $ \thet $ is a prime element of $ \Lp $.



\chapter{With reference to Deodhar's paper} \label{c:deodhar} 

As stated in the Introduction, \cite{deodhar78} is heavily relied upon when calculating the 2-cocycle of the universal central extension of $ G( k ) $.  We need to establish a few more facts from \cite{deodhar78} in order to show that our result will be valid.

\section{Some properties of $ \SU( k ) $} \label{s:suprop}

As before, we let $ k $ be an arbitrary field of characteristic zero and $ K = k( \thet ) $ a quadratic extension of $ k $.  Proposition~2.11 of \cite{deodhar78} states the following:

\begin{prop} \label{p:deltas}
There exists a well-defined function $ \delta = ( \delta_{1}, \delta_{2} ) \colon K^{\times} \to L \times L_{2} $, where
	\[ L = \{ m \in K^{\times} \colon \trace{m} = - \norm{r} \text{ for some } r \in K \} \]
and
	\[ L_{2} = \{q \thet \colon q \in k^{\times} \} \subset L \]
as follows:
\begin{enumerate}[\upshape (i)]

	\item If $ \lambda \in k^{\times} $, then
		\[ \del{1}{\lambda} = \lambda \thet, \quad \del{2}{\lambda} = \thet. \]
		
	\item If $ \lambda = a + b \thet $, $ b \neq 0 $, then
		\[ \del{1}{\lambda} = -\frac{1}{2} - \frac{a}{2 b \thet}, \quad \del{2}{\lambda} = - \frac{1}{2 b \thet}. \]
		
\end{enumerate}
\end{prop}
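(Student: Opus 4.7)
The plan is to verify that the given piecewise definition yields a well-defined function landing in $L\times L_2$. Every $\lambda\in K^\times$ has a unique representation $\lambda=a+b\thet$ with $a,b\in k$, and $\lambda\in k^\times$ is equivalent to $b=0$, so the two cases partition $K^\times$ and determine $\delta(\lambda)$ unambiguously. What remains is to check in each case that $\delta_1(\lambda)\in L$ and $\delta_2(\lambda)\in L_2$.

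For membership in $L$, I would exhibit an explicit $r\in K$ satisfying $\trace{\delta_1(\lambda)}=-\norm{r}$. In case (i), since $\lambda\in k$ and $\trace{\thet}=0$, we immediately get $\trace{\lambda\thet}=0=-\norm{0}$, so $r=0$ works. In case (ii), a short direct trace computation using $\ol\thet=-\thet$ gives
\[
	\trace{-\tfrac{1}{2}-\tfrac{a}{2b\thet}} \;=\; -1 \;=\; -\norm{1},
\]
so $r=1$ works. For membership in $L_2$, I need each $\delta_2(\lambda)$ to lie in $k^\times\thet$. Case (i) is immediate with $q=1$. In case (ii), I would rationalize using $\thet^2=-d$ to rewrite
\[
	\delta_2(\lambda)\;=\;-\frac{1}{2b\thet}\;=\;\frac{1}{2bd}\,\thet,
\]
which lies in $k^\times\thet$ since $b\neq 0$ and $d\in k^\times$. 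Nonvanishing of both components in $K^\times$ follows from these same formulas.

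The argument is entirely elementary, so there is no real obstacle. The only step requiring any care at all is the rationalization of $1/\thet$, which is what ensures $\delta_2$ lands in $L_2$ rather than in a larger subgroup of $K^\times$.
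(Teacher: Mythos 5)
Your verification is correct and complete. Note that the thesis offers no proof of this statement at all --- it is quoted from Proposition~2.11 of Deodhar's paper --- so the elementary check you give (the trace computation with $\ol{\thet}=-\thet$ yielding $\trace{\del{1}{\lambda}}=0=-\norm{0}$ in case (i) and $-1=-\norm{1}$ in case (ii), plus the rationalization $-\tfrac{1}{2b\thet}=\tfrac{1}{2bd}\thet$) is precisely what a proof consists of, and there is no alternative route to compare against. The one point you only gesture at, nonvanishing of $\del{1}{\lambda}$ in case (ii), is settled in one line by writing $-\tfrac{1}{2}-\tfrac{a}{2b\thet}=-\tfrac{a+b\thet}{2b\thet}=\lambda\,\del{2}{\lambda}$, which is nonzero since $\lambda\in K^{\times}$; this also explains the identity $\lambda=\del{1}{\lambda}/\del{2}{\lambda}$ that the thesis records immediately after the proposition.
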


(Note that $ L $ has also been defined as a field, but there should be no overlap in notation as the definition of $ L $ as used in this section will not occur elsewhere.)

As a consequence, $ \lambda = \del{1}{\lambda} / \del{2}{\lambda} $.  Also, for any $ m \in \del{1}{K^{\times}} $ with $ m \notin k^{\times} $ and $ m' \in \del{2}{K^{\times}} $,
	\[ \del{1}{\frac{m}{m'}} = m, \quad \del{2}{\frac{m}{m'}} = m'. \]

Proposition~\ref{p:deltas} implies that for $ \lambda \in K^{\times} $, we have
	\[ \ha{\lambda} = \wa{y( \lambda)}{\del{1}{\lambda}} \cdot \wa{0}{\del{2}{\lambda}}^{- 1}, \]
where
\begin{equation} \label{e:ylam}
	y( \lambda ) =	\begin{cases}
						0, &\text{if $ \lambda \in k^{\times} $;} \\
						1, &\text{otherwise.}
					\end{cases}
\end{equation}
Thus by \eqref{e:warm} and the above, all elements of $ T( k ) $ are generated by elements of $ N( k ) $ and $ \ol{N}( k ) $.  This in turn implies that by the Bruhat decomposition for $ G( k ) $ (see \eqref{e:bruc} and \eqref{e:bruf}), every element of $ \SU( k ) $ is generated by $ N( k ) $ and $ \ol{N}( k ) $.  This verifies the theorem obtained from Sections~1.2 and~2.3 of \cite{deodhar78}:

\begin{thm} \label{t:quasi}
If a group $ G $ is quasi-split, then $ G( k ) $ is generated by the unipotent elements in $ G( k ) $ which belong to the radical of a parabolic subgroup $ P $ defined over $ k $.
\end{thm}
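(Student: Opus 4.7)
The plan is to deduce the theorem from the Bruhat decomposition of $G(k)$, with $P$ taken to be a Borel subgroup $B$ defined over $k$---such a $B$ exists precisely because $G$ is quasi-split. Fix a maximal $k$-torus $T \subset B$, let $S \subset T$ be a maximal $k$-split torus, and let $B^{-}$ denote the opposite Borel relative to $T$. The unipotent radicals $U = R_{u}(B)$ and $U^{-} = R_{u}(B^{-})$ are both defined over $k$, so $U(k) \cup U^{-}(k)$ is already contained in the set of unipotent elements described in the theorem. It therefore suffices to show that $U(k) \cup U^{-}(k)$ generates $G(k)$.

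By the relative Bruhat decomposition, every element of $G(k)$ lies in a double coset $U(k) T(k) \dot{w} U(k)$ for some $w$ in the relative Weyl group $W_{k} = N_{G}(S)(k)/Z_{G}(S)(k)$. Writing $H$ for the subgroup of $G(k)$ generated by $U(k) \cup U^{-}(k)$, it remains to establish: (i) every $w \in W_{k}$ admits a representative $\dot{w} \in H$; and (ii) $T(k) \subset H$.

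For (i), $W_{k}$ is generated by simple reflections $s_{\alpha}$, one per simple $k$-root $\alpha$, and for each such $\alpha$ the rank-one subgroup generated by the root groups $U_{(\alpha)}$ and $U_{(-\alpha)}$ produces an explicit representative $\dot{s}_{\alpha}$ as a product of three elements drawn from $U_{(\alpha)}(k) \cup U_{(-\alpha)}(k)$. This is precisely the content of formula~\eqref{e:warm}, generalised from the $\SL_{2}$-case to allow the positive root group to be non-abelian, as happens in the quasi-split but non-split case when $2\alpha$ is also a root (for instance, for $\SU$). Hence every $\dot{w}$ can be chosen in $H$.

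For (ii), one uses that $T(k)$ is generated---at least modulo the connected centre---by elements of the form $h_{\alpha}(\lambda) := \dot{s}_{\alpha}(\lambda) \dot{s}_{\alpha}(1)^{-1}$, each of which lies in $H$ by (i). The hard part will be verifying (ii) for the full torus: for a general reductive $G$, the torus need not be contained in the subgroup generated by the $h_{\alpha}$'s, and one either restricts to the semisimple simply connected case (which covers the motivating example $\SU$), or invokes the finer structural description of $T(k)$ in Deodhar's setup, where each $\lambda\in K^{\times}$ yields an element of $T(k)$ accessible through the root groups (as used in the very next section of this thesis). Once (ii) is established, combining it with (i) and the Bruhat decomposition yields $G(k) = H$, completing the proof.
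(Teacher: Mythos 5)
Your proposal is correct and follows essentially the same route as the thesis: the paper verifies the theorem (citing Deodhar for the general statement) via the Bruhat decomposition \eqref{e:bruc}--\eqref{e:bruf} together with the identity $ \ha{\lambda} = \wa{y(\lambda)}{\del{1}{\lambda}} \cdot \wa{0}{\del{2}{\lambda}}^{-1} $ from Proposition~\ref{p:deltas}, each $ w_{\alpha} $ being a product of three unipotents by \eqref{e:warm} --- precisely your steps (i) and (ii). Your caveat about the full torus in the general reductive case is well placed, and the thesis sidesteps it exactly as you suggest: it works in Deodhar's simply connected quasi-split setting, where Proposition~\ref{p:deltas} delivers every $ \lambda \in K^{\times} $, hence all of $ T(k) $, into the subgroup generated by $ N(k) $ and $ \ol{N}(k) $.
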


We also have the following definition:

\begin{defin} \label{d:perfect}
A perfect group $ G $ is a group which is its own commutator subgroup, i.e. if we express the commutator of $ g $, $ h \in G $ as
	\[ [ g, h ] = g \cdot h \cdot g^{-1} \cdot h^{-1}, \]
then
	\[ G = [ G, G ]. \]
\end{defin}

Recall that we have set $ G = \SU{} $.  As stated in Section~1.1 of \cite{deodhar78}, a necessary and sufficient condition for an abstract group to have a universal central extension is that the group is perfect.  As $ N( k ) $ and $ \ol{N}( k ) $ consist of commutators, and $ G( k ) $ is generated by $ N( k ) $ and $ \ol{N}( k ) $, this implies that $ G( k ) $ is perfect.  Thus there is a universal central extension of $ G( k ) $, which may be expressed by
	\[ 1 \lra \pi_{1} \lra \wtilde{G} \xrightarrow{\ \pi\ } G( k ) \lra 1, \]
where $ \pi_{1} $ denotes the kernel of $ \pi \colon \wtilde{G} \to G( k ) $.  (Note that $ \pi_{1} $ is also known as the Schur multiplier or the fundamental group of $ G( k ) $, and it is central in $ \wtilde{G} $.)


By Lemma~1.10 of \cite{deodhar78}, there is a unique lift of $ N( k ) $ to $ \wtilde{G} $.  This implies that we can write for $ \xa{r}{m} \in N( k ) $ the corresponding element in $ \wtilde{G} $ by \txa{r}{m}, and if we define
	\[ \wtilde{N}( k ) = \{ \txa{r}{m} \colon \xa{r}{m} \in N( k ) \}, \]
then $ \pi \colon \wtilde{N}( k ) \to N( k ) $ is an isomorphism.  Similarly $ \pi \colon \wtilde{\ol{N}}( k ) \to \ol{N}( k ) $ is also an isomorphism, and we may define the corresponding element of $ \xam{r}{m} $ in $ \wtilde{G} $ as $ \txam{r}{m} $.  Thus we may define, similar to \eqref{e:warm}, the element
	\[ \twa{r}{m} = \txa{r}{m} \cdot \txam{\frac{\ol{r}}{\ol{m}}}{\frac{m}{\ol{m}}} \cdot \txa{r \cdot \frac{\ol{m}}{m}}{m}. \]

Furthermore, Proposition~2.9 of \cite{deodhar78} lists a few relations relevant to our discussion.  We list the most important relations for reference here.

\begin{prop} \label{p:wa}
Let
\begin{equation} \label{e:defA1}
	A = \{ ( r, m ) \in K \times K \colon ( r, m ) \neq ( 0, 0 ), \trace{m} = - \norm{r} \},
\end{equation} 
and define $ f $, $ g \colon A \to K \times K $ by
	\[ f( r, m ) = \left( \frac{\ol{r}}{\ol{m}}, \frac{1}{\ol{m}} \right), \quad g( r, m ) = \left( \frac{\ol{r}}{m}, \frac{1}{\ol{m}} \right). \]
The following hold in $ \wtilde{G} $, and hence in $ G( k ) $ too:
\begin{enumerate}[\upshape (1)]

	\item $ \displaystyle \twa{r}{m} = \txa{r}{m} \cdot \txam{\frac{\ol{r}}{\ol{m}}}{\frac{1}{\ol{m}}} \cdot \txa{r \cdot \frac{\ol{m}}{m}}{m}, $
	\item[] $ \twa{r}{m}^{-1} = \twa{- r}{\ol{m}} $,
	\item[] $ \displaystyle \twa{r}{m} = \twa{r \cdot \frac{\ol{m}}{m}}{m} = \wtilde{w}_{\alpha}( ( g \circ f )^{i} ( r, m ) ) \qquad \forall i $
	\item[]	$ \displaystyle \phantom{\twa{r}{m}} = \twam{\frac{\ol{r}}{\ol{m}}}{\frac{1}{\ol{m}}} = \wtilde{w}_{- \alpha}( ( f \circ g )^{i} ( f( r, m ) ) ) \qquad \forall i. $

	\item $ \displaystyle \twa{r}{m} \cdot \txa{r'}{m'} \cdot \twa{r}{m}^{-1} = \txam{\frac{\ol{r'} \cdot \ol{m}}{m^{2}}}{\frac{m'}{\norm{m}}} $.
	
	\item $ \displaystyle \twa{r}{m} \cdot \txam{r'}{m'} \cdot \twa{r}{m}^{-1} = \txa{\frac{\ol{r'} \cdot m^{2}}{\ol{m}}}{m' \cdot \norm{m}} $.
	
	\item $ \displaystyle \twa{r}{m} \cdot \twa{r'}{m'} \cdot \twa{r}{m}^{-1} = \twam{\frac{\ol{r'} \cdot \ol{m}}{m^{2}}}{\frac{m'}{\norm{m}}} $ \\
		\[ \phantom{\qquad} = \twa{\frac{r' \cdot m^{2}}{\ol{m'} \cdot \ol{m}}}{\frac{\norm{m}}{\ol{m'}}} = \twa{\frac{r' \cdot m^{2}}{m' \cdot \ol{m}}}{\frac{\norm{m}}{\ol{m'}}}. \]
	
	\item $ \displaystyle [ \twa{p_{1}}{l_{1}} \cdot \twa{p_{1}'}{l_{1}'} ] \cdot \twa{r}{m} \cdot \twa{r'}{m'} \cdot [ \twa{p_{1}}{l_{1}} \cdot \twa{p_{1}'}{l_{1}'} ]^{-1} $ \\
		\[ \phantom{\qquad} = \twa{\frac{r l_{1}' l_{1}^{2}}{(\ol{l_{1}'})^{2} \ol{l_{1}}}}{\frac{m \norm{l_{1}}}{\norm{l_{1}'}}} \cdot \twa{\frac{r' l_{1}' l_{1}^{2}}{(\ol{l_{1}'})^{2} \ol{l_{1}}}}{\frac{m' \norm{l_{1}}}{\norm{l_{1}'}}}. \]
		
\end{enumerate}
\end{prop}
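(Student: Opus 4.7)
The plan is to verify each relation by combining matrix identities in $G(k)$ with the Steinberg-type commutator relations built into the universal central extension $\wtilde{G}$. Two structural facts make this feasible. First, by uniqueness of the lift of $N(k)$ (and $\ol{N}(k)$), any conjugate subgroup $\twa{r}{m} \cdot \wtilde{N}(k) \cdot \twa{r}{m}^{-1}$ projects isomorphically onto $\wa{r}{m} \cdot N(k) \cdot \wa{r}{m}^{-1} = \ol{N}(k)$ and therefore must coincide with $\wtilde{\ol{N}}(k)$. Second, any element of $\wtilde{G}$ whose image in $G(k)$ lies in $N(k)$ or $\ol{N}(k)$ is determined by that image. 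Hence, once we know that a given expression in $\wtilde{G}$ projects into $N(k)$ or $\ol{N}(k)$, it is pinned down by its projection and no correction from the central kernel $\pi_{1}$ can intervene.

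For part~(1), I would verify the alternative expressions for $\twa{r}{m}$ by first computing $\wa{r}{m}$ explicitly as the matrix
\[
	\wa{r}{m}
	=
	\begin{pmatrix} 0 & 0 & m \\ 0 & -\ol{m}/m & 0 \\ \ol{m}^{-1} & 0 & 0 \end{pmatrix},
\]
and checking at the level of $G(k)$ that each asserted formula produces this same matrix, using the defining constraint $\trace{m} = -\norm{r}$. The lift to $\wtilde{G}$ then follows by manipulating the defining product using the fact that $\wtilde{N}(k) \cong N(k)$ and $\wtilde{\ol{N}}(k) \cong \ol{N}(k)$ as groups, together with the commutation rules between these two subgroups recorded in Deodhar's presentation of $\wtilde{G}$.

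For parts~(2) and~(3), a direct $3 \times 3$ computation gives the explicit form of $\wa{r}{m} \cdot \xa{r'}{m'} \cdot \wa{r}{m}^{-1}$ and $\wa{r}{m} \cdot \xam{r'}{m'} \cdot \wa{r}{m}^{-1}$ in $G(k)$; by the structural observation above, the analogous identities hold in $\wtilde{G}$ automatically, since the left-hand sides lie in $\wtilde{\ol{N}}(k)$ and $\wtilde{N}(k)$ respectively. For parts~(4)--(6), I would decompose each inner $\twa{r'}{m'}$, and in~(6) also the conjugating product $\twa{p_{1}}{l_{1}} \cdot \twa{p_{1}'}{l_{1}'}$, into its canonical factorisation from~(1), reducing the calculation to iterated applications of~(2) and~(3). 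The resulting product of $\wtilde{N}$- and $\wtilde{\ol{N}}$-elements is then re-identified as a $\twa{\ldots}{\ldots}$ or $\twam{\ldots}{\ldots}$ by matching against the alternative forms in~(1).

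The principal obstacle is the bookkeeping in parts~(5) and~(6): the iterated conjugations produce rather intricate rational expressions in $r$, $m$, $r'$, $m'$, $p_{1}$, $l_{1}$, $p_{1}'$, $l_{1}'$, and one must invoke $\trace{m} = -\norm{r}$ (and its partners for the other variables) repeatedly to reduce them to the canonical form of the statement. Conceptually, however, the argument is uniform: the uniqueness of the lifts eliminates all potential $\pi_{1}$-ambiguity, and every identity reduces to matrix algebra in the explicit presentation of $\SU(k)$ from Section~\ref{s:sustruc}.
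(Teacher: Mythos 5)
First, a point of orientation: the thesis does not prove this proposition at all --- it is quoted, as the text says, from Proposition~2.9 of \cite{deodhar78}, so your sketch is really being measured against Deodhar's original argument rather than anything in this document. Measured that way, your outline reconstructs the right mechanism. The uniqueness of the lifts of $ N( k ) $ and $ \ol{N}( k ) $ (Lemma~1.10 of \cite{deodhar78}, quoted in Section~\ref{s:suprop}) is indeed what eliminates all $ \pi_{1} $-ambiguity, and parts (2) and (3) follow exactly as you say: $ \twa{r}{m} \cdot \wtilde{N}( k ) \cdot \twa{r}{m}^{-1} $ projects isomorphically onto $ \wa{r}{m} \cdot N( k ) \cdot \wa{r}{m}^{-1} = \ol{N}( k ) $, hence by uniqueness equals $ \wtilde{\ol{N}}( k ) $, where the projection is injective and the identity reduces to a $ 3 \times 3 $ computation. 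Your termwise reduction of (4)--(6) to (1)--(3) is likewise sound, because matching the three unipotent factors of the canonical factorisations exactly leaves no room for a central discrepancy.

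Two caveats, one cosmetic and one substantive. Your second ``structural fact'' is false as literally stated: an element of $ \wtilde{G} $ whose image lies in $ N( k ) $ is determined only up to $ \pi_{1} $ (consider $ z \cdot \txa{r}{m} $ for $ z \in \pi_{1} $ nontrivial). What is true, and what your argument actually uses, is that an element known to lie in the designated lift $ \wtilde{N}( k ) $ or $ \wtilde{\ol{N}}( k ) $ is determined by its projection; your conjugate-subgroup observation supplies precisely this membership, so (2) and (3) survive, but the fact should be stated in that corrected form. More substantively, part (1) is where the real work sits and your sketch is vaguest there. Identities such as $ \twa{r}{m}^{-1} = \twa{- r}{\ol{m}} $ and $ \twa{r}{m} = \twam{\ol{r} / \ol{m}}{1 / \ol{m}} $ equate two words neither of which is unipotent, so the ``pinned down by projection'' mechanism does not apply directly, and checking that both sides give the same matrix in $ G( k ) $ only shows they agree up to $ \pi_{1} $. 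There are also no free-standing ``commutation rules between these two subgroups'' to invoke --- the interaction of $ \wtilde{N}( k ) $ and $ \wtilde{\ol{N}}( k ) $ is exactly where the central extension lives. The correct order of operations is the bootstrap you implicitly use later: establish (2) and (3) first (they need only the unique-lift argument plus matrix algebra, not (1)), then derive each identity in (1) by expanding both sides into their canonical $ \txa{-}{-} \cdot \txam{-}{-} \cdot \txa{-}{-} $ words, pushing factors through with (2) and (3), and cancelling inside $ \wtilde{N}( k ) $ and $ \wtilde{\ol{N}}( k ) $, where the projections are faithful. With that reordering your plan is a faithful, if compressed, reconstruction of Deodhar's proof.
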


Note that we will use the above definition of $ A $ when we are finding an equation for the 2-cocycle of the universal central extension of $ G( k ) $.  We can also define $ A $ as
\begin{equation} \label{e:defA2}
	A = \{ ( z, - \norm{z} / 2 + t \thet ) \in K \times K \colon t \in k, ( z, - \norm{z} / 2 + t \thet ) \neq ( 0, 0 ) \}.
\end{equation}
This definition will be useful later in Part~\ref{p:kubota}.

\section{The universal topological central extension} \label{s:2coctop}

In this section we assume that $ k $ is a non-archimedean local field.  We shall regard $ G( k ) $ as a locally compact topological group, in which the topology is given by the norm on $ k $.  We shall write $ H_{m}^{i} $ for measurable cohomology as defined by Calvin Moore in \cite{moore68}, and we shall write $ H^{i} $ for continuous cohomology.

In Deodhar's paper, $ G( k ) $ is first regarded as an abstract group when constructing the universal central extension. It is subsequently regarded as a topological group, with the topology given by the norm on $k$.
It was shown in that paper that the universal topological covering group for $ G( k ) $ exists and that the topological fundamental group is a quotient of $ \mu( k ) $, where $ \mu( k ) $ is the group of roots of unity of $ k $.


We first recall what a topological central extension is.  This is a central extension of topological groups
	\[ 1 \lra K \lra \wtilde{G}^{top} \lra G \lra 1, \]
where $ \wtilde{G}^{top} $ is the universal topological covering group of $ G $, and in which $ K $ is discrete and there is a neighbourhood $ U $ of the identity in $ G $, such that the projection $ \wtilde{U} \to U $ is topologically isomorphic to $ K \times U \to U $.

We now turn to a paper by Prasad and Raghunathan which was published in two parts, \cite{prarag184} and \cite{prarag284}.  In 10.3 of \cite{prarag284}, we have the following proposition:

\begin{prop} \label{p:prarag10.3}
Let $ \mathcal{G} $ be a locally compact, second countable topological group.  Assume that $ \mathcal{G} = [ \mathcal{G}, \mathcal{G} ] $, and $ H^{2}_{m}( \mathcal{G}, \mathbb{R} / \mathbb{Z} ) $ is a finite group.  Then $ \mathcal{G} $ admits a universal topological covering and its topological fundamental group is isomorphic to the dual of $ H^{2}_{m}( \mathcal{G}, \mathbb{R} / \mathbb{Z} ) $.
\end{prop}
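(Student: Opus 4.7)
The plan is to prove this via Moore's measurable cohomology theory, which classifies topological central extensions of a locally compact second countable group $\mathcal{G}$ by a locally compact abelian group $A$ as $H_{m}^{2}(\mathcal{G}, A)$. I would construct the universal covering with discrete (necessarily finite) kernel and identify that kernel with the Pontryagin dual $\hat{B}$ of $B = H_{m}^{2}(\mathcal{G}, \mathbb{R}/\mathbb{Z})$.

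The first step is to exploit perfectness to obtain $H_{m}^{1}(\mathcal{G}, A) = 0$ for every locally compact abelian $A$: a measurable homomorphism $\mathcal{G} \to A$ is automatically continuous (a standard result for Polish groups) and hence trivial since $\mathcal{G} = [\mathcal{G},\mathcal{G}]$. This ensures extensions admit at most one splitting, so the universal property is well-posed, and it feeds into the following computation. From the Bockstein short exact sequence
\[
	0 \lra \mathbb{Z}/n\mathbb{Z} \lra \mathbb{R}/\mathbb{Z} \xrightarrow{\cdot n} \mathbb{R}/\mathbb{Z} \lra 0,
\]
the attached long exact sequence yields $H_{m}^{2}(\mathcal{G}, \mathbb{Z}/n\mathbb{Z}) \cong B[n]$ (using $H_{m}^{1}(\mathcal{G}, \mathbb{R}/\mathbb{Z}) = 0$). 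Choosing $n$ to be the exponent of the finite group $B$ gives $H_{m}^{2}(\mathcal{G}, \mathbb{Z}/n\mathbb{Z}) \cong B$; decomposing a general finite abelian $A$ into cyclic summands then produces a canonical isomorphism $H_{m}^{2}(\mathcal{G}, A) \cong \homo(\hat{A}, B)$.

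To construct the universal covering, specialise to $A = \hat{B}$, obtaining $H_{m}^{2}(\mathcal{G}, \hat{B}) \cong \homo(B, B)$. Let $\tilde{\sigma}$ be the class corresponding to the identity endomorphism. By Moore's classification $\tilde{\sigma}$ represents a topological central extension
\[
	1 \lra \hat{B} \lra \wtilde{\mathcal{G}}^{top} \lra \mathcal{G} \lra 1,
\]
and since $\hat{B}$ is discrete this is a topological covering in the required sense. For universality, given any topological covering $1 \to K \to E \to \mathcal{G} \to 1$ with $K$ discrete, its class $[E] \in H_{m}^{2}(\mathcal{G}, K)$ defines a homomorphism $\hat{K} \to B$ via $\chi \mapsto \chi_{*}[E]$. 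Pontryagin dualisation (valid because $B$ is finite) yields $\phi \colon \hat{B} \to K$, and pushing $\wtilde{\mathcal{G}}^{top}$ forward along $\phi$ gives an extension with the same class in $H_{m}^{2}(\mathcal{G}, K)$ as $E$. Moore's theory then produces a map over $\mathcal{G}$, with uniqueness following from $H_{m}^{1}(\mathcal{G}, K) = 0$. Consequently $\wtilde{\mathcal{G}}^{top}$ is the universal topological covering, and its fundamental group is $\hat{B}$.

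The main obstacle is the underlying machinery of Moore's cohomology theory itself: that $H_{m}^{2}$ truly classifies topological central extensions, that the correspondence is functorial under pushforward of coefficient groups, and that measurable sections of quotients by Polish groups can be upgraded to the desired topological structure. A secondary delicate point is the compatibility of Pontryagin duality with the cohomological identification $H_{m}^{2}(\mathcal{G}, A) \cong \homo(\hat{A}, B)$ in the form needed to match $\phi_{*}\tilde{\sigma}$ with $[E]$. Both inputs are precisely what the Prasad--Raghunathan framework supplies, and granting them the construction above assembles into a complete proof.
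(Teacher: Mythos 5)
Your proposal cannot be compared against an in-paper proof, because there is none: the thesis imports this statement verbatim as Proposition~10.3 of Prasad--Raghunathan \cite{prarag284} and only verifies its hypotheses for $G(k)$ (local compactness and second countability, perfectness via generation by unipotents, and finiteness of $H^{2}_{m}$ via Wigner's theorem and Theorem~\ref{t:prarag5.11}). What you have written is a reconstruction of the argument in the cited source, and it is the right one: kill $H^{1}_{m}(\mathcal{G},A)$ by perfectness plus automatic continuity of measurable homomorphisms, run the long exact sequence of $0 \to \mathbb{Z}/n \to \mathbb{R}/\mathbb{Z} \to \mathbb{R}/\mathbb{Z} \to 0$ to get $H^{2}_{m}(\mathcal{G},A) \cong \homo(\hat{A},B)$ for finite $A$, take the class in $H^{2}_{m}(\mathcal{G},\hat{B}) \cong \homo(B,B)$ corresponding to the identity as the candidate cover, and obtain universality by dualising classifying homomorphisms. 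This is essentially Moore's and Prasad--Raghunathan's construction, and granting the machinery you name, the outline assembles correctly.

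Two of the points you defer are where the real work in the cited source lies, and one is a genuine soft spot as you have stated it. Your universality step dualises the homomorphism $f \colon \hat{K} \to B$, $\chi \mapsto \chi_{*}[E]$; when $K$ is an infinite discrete group, $\hat{K}$ is compact and can admit discontinuous homomorphisms to a finite group, so $\hat{f} \colon \hat{B} \to K$ only exists once $f$ is known to be continuous --- this requires the natural (Hausdorff) topology on $H^{2}_{m}(\mathcal{G},\mathbb{R}/\mathbb{Z})$ and continuity of pushforward, not mere group theory. Likewise, concluding $\phi_{*}\wtilde{\sigma} = [E]$ from equality of all character pushforwards needs injectivity of $H^{2}_{m}(\mathcal{G},K) \to \homo(\hat{K},B)$, which your cyclic-decomposition computation establishes only for finite $K$; for infinite discrete $K$ one must, e.g., embed $K$ in its double dual and use vanishing of $H^{1}_{m}$ with the quotient coefficients together with Moore's compatibility of $H^{2}_{m}$ with products. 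With those two supplements --- both supplied by the Moore/Prasad--Raghunathan framework you invoke --- your sketch is a complete proof; without them it proves universality only against coverings with finite kernel.
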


We first note that $ G( k ) $ is a locally compact second countable group, and we have already noted that $ G( k ) = [ G( k ), G( k ) ] $.  In order to use the above proposition, we must show $ H^{2}_{m}( G( k ), \mathbb{R} / \mathbb{Z} ) $ is a finite group, and that the dual of this group is equal to the fundamental group $ \pi_{1} = \mu( k ) $.

Theorem~1 of \cite{wigner73} states the following:

\begin{thm} \label{t:wigner}
Let $ G $ be a topological group, and $ A $ be a $ G $-module.  If $ G $ is a locally compact, $ \sigma $-compact, zero-dimensional, then $ H^{i}_{m}( G, A ) \cong H^{i}( G, A ) $.
\end{thm}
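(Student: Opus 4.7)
The plan is to show that the natural inclusion of continuous cochains into measurable cochains induces an isomorphism on cohomology in every degree.  The essential structural fact to exploit is that a locally compact zero-dimensional group admits a neighbourhood base of the identity consisting of compact open subgroups; for any such subgroup $ K $ the quotient $ G / K $ is discrete, so a function on $ G^{i} $ which is right $ K $-invariant in each argument automatically factors through the discrete space $ ( G / K )^{i} $ and is therefore continuous.  This observation pushes the whole proof onto a single averaging operation.

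I would first establish surjectivity, which is the concrete direction.  Given a measurable $ i $-cocycle $ c $, fix a compact open subgroup $ K $ and define the averaged cochain
\[
	c_{K}( g_{1}, \ldots, g_{i} )
	=
	\int_{K^{i}} c( g_{1} k_{1}, \ldots, g_{i} k_{i} ) \, dk_{1} \cdots dk_{i},
\]
where each $ dk_{j} $ is normalised Haar measure on $ K $.  By left invariance of Haar measure, $ c_{K} $ is right $ K $-invariant in each argument and hence continuous.  Using $ \partial c = 0 $ together with Fubini's theorem (whose applicability is guaranteed by $ \sigma $-compactness), a combinatorial manipulation yields a measurable $ (i - 1) $-cochain $ b_{K} $, assembled from partial averages of $ c $ over $ K^{j} $ with $ j < i $, whose coboundary is exactly $ c - c_{K} $.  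Hence every measurable cohomology class contains a continuous representative.

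For injectivity I would package the averaging into a short exact sequence of cochain complexes $ 0 \to C^{\bullet}_{\mathrm{cont}}( G, A ) \to C^{\bullet}_{m}( G, A ) \to Q^{\bullet} \to 0 $ and show that, as $ K $ ranges over a cofinal system of compact open subgroups, the averaging operators exhibit $ Q^{\bullet} $ as acyclic.  The associated long exact sequence in cohomology then degenerates to the claimed isomorphism; equivalently, any continuous cocycle that is a measurable coboundary is, after averaging the measurable primitive, a continuous coboundary.

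The hard part will be the combinatorial construction of the primitive $ b_{K} $ of $ c - c_{K} $: one must write down the correct alternating sum of the partial averages of $ c $ over $ K^{j} $ for $ j < i $ and verify, using $ \partial c = 0 $, that its coboundary telescopes to $ c - c_{K} $.  This identity is the technical heart of Wigner's argument in \cite{wigner73}.  Zero-dimensionality is what promotes the averaged cochain from merely measurable to genuinely continuous, while $ \sigma $-compactness is what permits the repeated interchanges of integrals along the way.
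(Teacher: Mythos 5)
You should first be aware that the thesis contains no proof of this statement at all: it is quoted verbatim as Theorem~1 of \cite{wigner73}, so there is no in-paper argument to match. What you have attempted is a reconstruction of the proof itself, and your sketch follows the averaging technique (in the style of Casselman--Wigner for totally disconnected groups) rather than Wigner's actual route, which compares several cohomology theories through semi-simplicial sheaf machinery and uses zero-dimensionality via the vanishing of sheaf cohomology on zero-dimensional paracompact spaces.

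The genuine gap is the averaging integral itself. The expression $\int_{K^{i}} c(g_{1}k_{1}, \ldots, g_{i}k_{i})\, dk_{1} \cdots dk_{i}$ presupposes that $A$-valued integration makes sense, i.e.\ that $A$ is (at least) a quasi-complete topological vector space. In Moore's theory $A$ is merely a Polish abelian $G$-module, and in the application in this thesis the coefficients are $\mathbb{R}/\mathbb{Z}$ and $\mu_{2}$: there is no well-defined Haar mean of a measurable function into $\{\pm 1\}$ or into the circle, since these groups are not uniquely divisible. So even if completed, your argument proves the theorem only for vector-space coefficients and cannot reach $H^{2}_{m}(G(k), \mathbb{R}/\mathbb{Z})$, which is exactly the case the thesis needs (Theorem~\ref{t:prarag5.11} and Proposition~\ref{p:prarag10.3}); a repair would require reduction via $0 \to \mathbb{Z} \to \mathbb{R} \to \mathbb{R}/\mathbb{Z} \to 0$ together with a separate argument for discrete coefficients, where averaging gives nothing. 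There is also a technical slip independent of coefficients: in inhomogeneous (bar) coordinates your $c_{K}$ is not a cocycle, because slot-wise averaging does not commute with the differential --- the term $c(\ldots, (g_{j}k_{j})(g_{j+1}k_{j+1}), \ldots)$ arising from $\partial c$ is not the term $c(\ldots, (g_{j}g_{j+1})k, \ldots)$ arising from $\partial c_{K}$ --- so $c - c_{K}$ cannot be a coboundary as claimed. This particular defect is fixable by passing to homogeneous cochains $F(g_{0}, \ldots, g_{i})$, where right averaging over $K$ in each variable commutes with the simplicial differential and right $K$-invariance still forces continuity through the discreteness of $G/K$; but the coefficient problem remains and is why Wigner's proof takes a different path.
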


In other words, the cohomology groups based on continuous cochains and the cohomology groups based on measurable cochains coincide for our group $ G( k ) $.  This implies that $ H^{2}_{m}( G( k ), \mathbb{R} / \mathbb{Z} ) \cong H^{2}( G( k ), \mathbb{R} / \mathbb{Z} ) $.  In 5.10 and 5.11 of \cite{prarag184}, it was established that if $ \mathcal{G} $ is an absolutely simple, simply connected group defined and quasi-split over $ F $ (where $ F $ is a non-archimedean local field) and $ \mathcal{H} $ is the $ F $-subgroup of $ \mathcal{G} $, $ F $-isomorphic to $ \SL_{2} $ and determined by a long root, then the following theorem holds:

\begin{thm} \label{t:prarag5.11}
The restriction $ H^{2}( \mathcal{G}( F ), \mathbb{R} / \mathbb{Z} ) \to H^{2}( \mathcal{H}( F ), \mathbb{R} / \mathbb{Z} ) $ is an isomorphism.  Hence, $ H^{2}( \mathcal{G}( F ), \mathbb{R} / \mathbb{Z} ) $ is isomorphic to $ \hat{\mu}( F ) = \homo( \mu( F ), \mathbb{R} / \mathbb{Z} ) $, the Pontrjagin dual.
\end{thm}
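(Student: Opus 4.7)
The plan is to reduce the calculation of $H^{2}(\mathcal{G}(F),\mathbb{R}/\mathbb{Z})$ to the known case of $\SL_{2}(F)$, whose second cohomology with $\mathbb{R}/\mathbb{Z}$ coefficients was computed by Moore to be isomorphic to $\hat\mu(F)$, via an explicit pairing given by the Hilbert symbol. Since $\mathcal{H}$ is the long-root $\SL_{2}$-subgroup of $\mathcal{G}$, Moore's theorem already identifies $H^{2}(\mathcal{H}(F),\mathbb{R}/\mathbb{Z})$ with $\hat\mu(F)$, so it suffices to show that the restriction map in the theorem is an isomorphism. I would treat injectivity and surjectivity separately.

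For surjectivity, I would start with a continuous $2$-cocycle $\sigma_{\mathcal{H}}$ on $\mathcal{H}(F)$ and try to build a compatible cocycle $\sigma$ on $\mathcal{G}(F)$. The strategy is to mimic Matsumoto's presentation-based construction: since $\mathcal{G}$ is quasi-split and simply connected, $\mathcal{G}(F)$ is generated by the root subgroups $U_{\alpha}(F)$ (a fact already recalled in the excerpt via Theorem~\ref{t:quasi}), subject to the Steinberg-style commutator relations together with the torus relations arising from the Weyl elements $w_{\alpha}(r,m)$. One lifts the long-root $\SL_{2}$ cocycle to the torus using the normaliser elements, then propagates it along the commutator relations listed in Proposition~\ref{p:wa} to extend to the whole group, verifying at each stage that the Steinberg relations force a consistent choice.

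For injectivity, suppose $\sigma$ is a continuous $2$-cocycle on $\mathcal{G}(F)$ whose restriction to $\mathcal{H}(F)$ is a coboundary. After modifying $\sigma$ by a $1$-cochain, I may assume $\sigma$ is trivial on $\mathcal{H}(F)$, and hence in particular on the long-root torus $h_{\alpha}(F^{\times})$ sitting inside $\mathcal{H}(F)$. I would then exploit the Bruhat decomposition of $\mathcal{G}(F)$ together with the fact that the cocycle is automatically trivial on a unipotent radical by Lemma~1.10 of \cite{deodhar78}, so that $\sigma$ is determined by its restriction to the torus $T(F)$. Quasi-splitness ensures $T(F)$ is generated (up to short-root contributions) by conjugates of $h_{\alpha}(F^{\times})$, and the commutator relations in Proposition~\ref{p:wa}(5) propagate triviality on the long-root torus to triviality on all of $T(F)$, forcing $\sigma$ to be a coboundary on $\mathcal{G}(F)$.

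The main obstacle is handling the short-root contributions, because in the quasi-split (non-split) setting the short-root subgroups are not copies of $\SL_{2}$ but rather rank-one unitary groups of the shape $\SU$ itself; one cannot quote Moore's theorem directly for them. The key technical input is that in the quasi-split reduction carried out in \cite{prarag184}, the short-root data can be expressed entirely in terms of long-root generators via the Steinberg relations coming from the Galois action on the absolute root system, so that no genuinely new cohomological data is introduced. Once this is in place, combining surjectivity, injectivity and Theorem~\ref{t:wigner} (which identifies the measurable and continuous $H^{2}$ for the locally compact, totally disconnected group $\mathcal{G}(F)$) gives the chain of isomorphisms $H^{2}_{m}(\mathcal{G}(F),\mathbb{R}/\mathbb{Z})\cong H^{2}(\mathcal{G}(F),\mathbb{R}/\mathbb{Z})\cong H^{2}(\mathcal{H}(F),\mathbb{R}/\mathbb{Z})\cong \hat\mu(F)$, completing the theorem.
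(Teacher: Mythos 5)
First, a point of comparison: the paper contains no proof of Theorem~\ref{t:prarag5.11} at all. It is quoted directly from 5.10 and 5.11 of \cite{prarag184}, and the thesis uses it as a black box (together with Theorem~\ref{t:wigner} and Proposition~\ref{p:prarag10.3}) to identify the topological fundamental group of $G(k)$ with $\mu(k)$. So your proposal is not a variant of the paper's argument but an attempt to reprove Prasad--Raghunathan from scratch, and as written it has two genuine gaps, both at load-bearing points.

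The first gap is in your injectivity step. From triviality of $\sigma$ on $\mathcal{H}(F)$, hence on $h_{\alpha}(F^{\times})$, you conclude via the Bruhat decomposition and unipotent triviality that ``$\sigma$ is determined by its restriction to the torus $T(F)$'' and is therefore a coboundary. A $2$-cocycle class on $\mathcal{G}(F)$ is not a function of its restriction to a maximal torus; what the Bruhat decomposition and Lemma~1.10 of \cite{deodhar78} actually give is a section of the \emph{universal} central extension for which the cocycle is supported on torus data, and one must then show that the kernel $\pi_{1}$ is generated by the long-root symbols $b_{\alpha}(s,t)$ with $s,t\in F^{\times}$. That is precisely your unproven ``key technical input'': the claim that the short-root contributions $b_{\alpha}(\lambda,\mu)$, $\lambda,\mu\in K^{\times}$, lie in the subgroup generated by the $b_{\alpha}(s,t)$, $s,t\in F^{\times}$, is the heart of the theorem --- it occupies Deodhar's Lemmas~2.12--2.23 in the abstract setting (and is redone explicitly in Chapter~\ref{c:t2coc} of this thesis) and Sections~5.10--5.11 of \cite{prarag184} in the topological one --- and you assert it rather than prove it; it does not follow formally from ``Steinberg relations coming from the Galois action.'' The second gap is topological: the theorem concerns continuous (equivalently, by Theorem~\ref{t:wigner}, measurable) cohomology, and both halves of your argument are carried out at the level of abstract cocycles. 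Propagating a continuous cocycle on $\mathcal{H}(F)$ along a presentation produces a priori only an abstract cocycle on $\mathcal{G}(F)$, and the measurability of the Bruhat-decomposition section needed to stay inside $H^{2}_{m}$ is never addressed; this is where Moore's classification of \emph{continuous} Steinberg symbols (Theorem~3.1 of \cite{moore68}, used in Section~\ref{s:klocal}) must enter, and without it the chain of isomorphisms at the end of your proposal does not close.
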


Theorem~\ref{t:prarag5.11} shows that $ H^{2}( G( k ), \mathbb{R} / \mathbb{Z} ) = \hat{\mu}( k ) $, i.e. $ H^{2}( G( k ), \mathbb{R} / \mathbb{Z} ) $ is finite.  Thus using Theorem~\ref{t:wigner}, by Proposition~\ref{p:prarag10.3} $ H^{2}_{m}( G( k ), \mathbb{R} / \mathbb{Z} ) $ is a finite group and thus $ G( k ) $ admits a universal covering and its topological fundamental group is isomorphic to the dual of $ H^{2}_{m}( G( k ), \mathbb{R} / \mathbb{Z} ) = \hat{\mu}( k ) $, i.e. the topological fundamental group is isomorphic to $ \mu( k ) $.




We should also mention the cases when $ k = \mathbb{R} $ and $ k = \mathbb{C} $.  In these cases, $ G( k ) $ is a connected Lie group, and its universal cover $ \wtilde{G} $, in the topological sense, has the structure of a Lie group and is the universal topological central extension.  If we denote the topological fundamental group of $ G( k ) $ as $ \pi_{1}^{top} $, then when $ k = \mathbb{R} $ we have $ \pi_{1}^{top} = \mathbb{Z} $ and when $ k = \mathbb{C} $ we have $ \pi_{1}^{top} = 1 $.

\section{A section for $ \pi \colon \wtilde{\SU} \to \SU( k ) $} \label{s:sec}

Again let $ k $ be a local field and let $\wtilde{G}$ be the universal topological central extension
of $G(k)$.
We can now define the section $ \delta \colon G( k ) \to \wtilde{G} $ as in Section~2.21 of \cite{deodhar78}.  This section will give rise to a 2-cocycle $ \sigma_{u} $ defined by
	\[ \sigu{g}{h} = \delta( g ) \cdot \delta( h ) \cdot \delta( g \cdot h )^{-1}, \]
where $ g $, $ h \in G( k ) $.

By the Bruhat decomposition (see Section~\ref{s:bruh}), any element of $ G( k ) $ can be uniquely written in the form $ u h w v $, where $ h \in T( k ) $, $ w \in W $ (recall that $ W $ is the set of representatives of the Weyl group of $ G( k ) $) and $ u $, $ v \in N( k ) $.  We will choose, as in \cite{deodhar78}, \wa{0}{\thet} as the representative of the non-trivial element of $ W $.  So we may define $ \delta( \wa{0}{\thet} ) = \twa{0}{\thet} $, and since $ \pi \colon \wtilde{N}( k ) \to N( k ) $ is an isomorphism, we may choose $ \delta( \xa{r}{m} ) = \txa{r}{m} $ (see Section~\ref{s:suprop}).

As for $ h \in T( k ) $, suppose that $ h = \ha{\lambda} $.  Then we define
\begin{equation} \label{e:delh}
	\delh{\lambda} = \twa{y( \lambda )}{\del{1}{\lambda}} \cdot \twa{0}{\del{2}{\lambda}}^{- 1},
\end{equation} 
where $ y( \lambda ) $ is defined as in \eqref{e:ylam}.

Thus, for $ u h w v \in G( k ) $, we may define $ \delta( u h w v ) = \delta( u ) \cdot \delta( h ) \cdot \delta( w ) \cdot \delta( v ) $, and hence $ \delta $ is a section for $ \pi $.

Our next aim will be to express the 2-cocycle $ \sigma_{u} $ in terms of $ K_{2} $ symbols.  Deodhar performed this calculation on the split torus, but we shall need to extend his formula to the whole group.

For $ \lambda $, $ \mu \in K^{\times} $, let us define
\begin{equation} \label{e:ba}
	\ba{\lambda}{\mu} = \delh{\lambda} \cdot \delh{\mu} \cdot \delh{\lambda \mu}^{-1},
\end{equation}
i.e. $ \ba{\lambda}{\mu} = \sigu{\ha{\lambda}}{\ha{\mu}} $.  It is established in Section~2.30 of \cite{deodhar78} that 
\begin{align}
	\ba{s}{t} \cdot \ba{st}{r} &= \ba{s}{tr} \cdot \ba{t}{r}, & \forall s, t, r \in k^{\times}; \label{e:prop1e} \\
	\ba{1}{1} &= 1; & \notag \\
	\ba{s}{t} &= \ba{t^{-1}}{s}, & \forall s, t \in k^{\times}; \label{e:prop2e} \\
	\ba{s}{t} &= \ba{s}{-s t}, & \forall s, t \in k^{\times}; \label{e:prop3e} \\
	\ba{s}{t} &= \ba{s}{( 1 - s ) t}, & \forall s, t \in k^{\times},\; s \neq 1. \label{e:prop4e}
\end{align}
Indeed these relations all hold in the universal central extension, and so they are also true in the universal topological central extension.
Note that the section $ \delta $ is continuous on the split torus $ S( k ) $, and thus the restriction of $ b_{\alpha} $ to $ k^{\times} \times k^{\times} $ is continuous.
It is also established that $ \pi_{1} $ (as opposed to $\pi_{1}^{top}$)
 is given in terms of generators and relations as
	\[ \pi_{1} = \left\langle \{ \ba{s}{t} \colon s, t \in k^{\times} \} \mid \eqref{e:prop1e} \text{ - } \eqref{e:prop4e} \right\rangle. \]
Since $ \ba{\lambda}{\mu} \in \pi_{1}^{top} $ for any $ \lambda $, $ \mu \in K^{\times} $ (by the proof of Lemma~2.12 of \cite{deodhar78}), this implies that \ba{\lambda}{\mu} can be written in terms of these \ba{s}{t}'s.

Thus, we would establish what $ \sigma_{u} $ is on $ T( k ) $.  Unfortunately in \cite{deodhar78}, Deodhar only proves that the \ba{s}{t}'s ($ s $, $ t \in k^{\times} $) satisfy the relations above, and not what \ba{\lambda}{\mu} is explicitly for all values of $ \lambda $, $ \mu \in K^{\times} $.  But we can adapt the methods used in \cite{deodhar78} to achieve this goal, and by using the section $ \delta $, we will be able to express what $ \sigma_{u} $ is on the whole of $ G( k ) $ in terms of \ba{s}{t}'s, where $ s $, $ t \in k^{\times} $.

\section{$ k $ as a local field} \label{s:klocal}

Let $ k $ be a non-archimedean local field.  We first note that by Theorem~3.1 of \cite{moore68}, that \ba{s}{t}, for $ s $, $ t \in k^{\times} $, is bilinear, i.e. for $ s $, $ t $, $ r \in k^{\times} $,
\begin{align}
	\ba{st}{r} &= \ba{s}{r} \cdot \ba{t}{r}, \label{e:bastre} \\
	\ba{s}{tr} &= \ba{s}{t} \cdot \ba{s}{r}, \notag
\end{align}
since \ba{s}{t} is continuous (as $ k $ is a local field; see previous section).  A corollary of the abovementioned theorem is that the $ n $-th power Hilbert symbol \hilkn{-}{-} (where $ n $ is the number of roots of unity of $ k $) and its powers are the only functions which are continuous and satisfy the equations \eqref{e:prop1e} -- \eqref{e:prop4e} (a definition of the Hilbert symbol may be found in Chapter~\ref{c:propext}).  We have the following proposition:

\begin{prop} \label{p:baisom}
For any local field $ k $, if $ n $ denotes the number of roots of unity in $ k $ and $ \mu_{n} $ denotes the group of $ n $-th roots of unity of $ k $, then $ \pi_{1}^{top} \cong \mu( k ) = \mu_{n} $, and the isomorphism may be described by
\begin{align*}
	\Phi \colon \ \qquad \pi_{1}^{top} &\cong \mu_{n} \\
	\ba{s}{t} &\mapsto \hilkn{s}{t},
\end{align*}
where $ s $, $ t \in k^{\times} $ and $ \hilkn{-}{-} $ is the $ n $-th power Hilbert symbol.
\end{prop}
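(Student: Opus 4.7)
The plan is to combine three facts already assembled in the excerpt. First, the preceding subsection (via Prasad--Raghunathan plus Wigner's theorem) establishes that $\pi_{1}^{top}$ is finite and abstractly isomorphic to $\mu(k)=\mu_{n}$. Second, the elements $\ba{s}{t}$ for $s,t\in k^{\times}$ lie in $\pi_{1}^{top}$ and by Deodhar's arguments they generate it, satisfying the Steinberg-type relations \eqref{e:prop1e}--\eqref{e:prop4e}; because $\delta$ is continuous on the split torus $S(k)$, the restriction of $b_{\alpha}$ to $k^{\times}\times k^{\times}$ is continuous, and by Moore's Theorem~3.1 it is also bilinear, i.e.\ it satisfies \eqref{e:bastre}. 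Third, the $n$-th power Hilbert symbol $(-,-)_{k,n}$ is itself a continuous bilinear map $k^{\times}\times k^{\times}\to \mu_{n}$ satisfying these same relations.

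The first step is to construct $\Phi$ in the direction stated. Because the Hilbert symbol satisfies the Steinberg relations, and because $\pi_{1}$ (as an abstract group) is presented by the generators $\ba{s}{t}$ modulo precisely \eqref{e:prop1e}--\eqref{e:prop4e}, the assignment $\ba{s}{t}\mapsto \hilkn{s}{t}$ extends uniquely to a group homomorphism $\pi_{1}\to \mu_{n}$. Continuity of the Hilbert symbol, together with the fact that $\pi_{1}^{top}$ is the quotient of $\pi_{1}$ obtained by imposing the continuity relations among the $b_{\alpha}(s,t)$, then forces this map to descend to a homomorphism
\[
\Phi\colon \pi_{1}^{top} \longrightarrow \mu_{n}.
\]

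The second step is surjectivity of $\Phi$: its image contains every $\hilkn{s}{t}$, and the $n$-th Hilbert symbol on a local field containing $\mu_{n}$ is well-known to be surjective onto $\mu_{n}$ (for any primitive $n$-th root of unity $\zeta$ there is some $t\in k^{\times}$ with $\hilkn{\zeta}{t}=\zeta$). The third and final step is injectivity, which is automatic: both $\pi_{1}^{top}$ and $\mu_{n}$ are finite groups of the same order $n$, so any surjective homomorphism between them is an isomorphism.

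The main obstacle, and the point that requires the most care, is keeping straight the distinction between the abstract universal central extension kernel $\pi_{1}$, which is presented purely algebraically, and its topological quotient $\pi_{1}^{top}$, in which continuity imposes \emph{additional} relations (most importantly the bilinearity \eqref{e:bastre}). One must verify that the $\ba{s}{t}$ still generate $\pi_{1}^{top}$ (clear, since they generate the larger group $\pi_{1}$) and that the presentation of $\pi_{1}^{top}$ obtained by adjoining continuity to \eqref{e:prop1e}--\eqref{e:prop4e} is exactly what Moore's theorem characterises, so that $\Phi$ is both well-defined on $\pi_{1}^{top}$ and captured by the Hilbert symbol. Once this bookkeeping is in place, the proposition follows by a counting argument with no further computation.
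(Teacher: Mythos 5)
Your overall skeleton --- produce a homomorphism onto $\mu_{n}$ and then use the Prasad--Raghunathan computation $\left| \pi_{1}^{top} \right| = n$ to upgrade surjectivity to an isomorphism --- matches the paper's, and your first and third steps are sound: the Hilbert symbol does satisfy \eqref{e:prop1e}--\eqref{e:prop4e}, so the stated presentation of $\pi_{1}$ yields a homomorphism $\Phi_{0} \colon \pi_{1} \to \mu_{n}$ with $\ba{s}{t} \mapsto \hilkn{s}{t}$, and $\Phi_{0}$ is surjective by nondegeneracy of the Hilbert pairing (your sharper parenthetical claim that $\hilkn{\zeta}{t} = \zeta$ for some $t$ is more than is needed and not quite the standard fact; surjectivity already follows because $k^{\times} / ( k^{\times} )^{n}$ contains an element of order $n$ and the pairing is nondegenerate). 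The genuine gap is the descent of $\Phi_{0}$ to $\pi_{1}^{top}$. You justify it by asserting that $\pi_{1}^{top}$ is ``the quotient of $\pi_{1}$ obtained by imposing the continuity relations among the $\ba{s}{t}$'', but this characterisation is established nowhere in the paper and is not precise enough to use: $\pi_{1}^{top}$ is defined through the universal property of the universal topological central extension (Proposition~\ref{p:prarag10.3}), so a homomorphism $\pi_{1} \to \mu_{n}$ factors through $\pi_{1}^{top}$ precisely when the pushed-out central extension of $G( k )$ by $\mu_{n}$ is a \emph{topological} central extension. Continuity of $\hilkn{-}{-}$ as a function on $k^{\times} \times k^{\times}$ does not by itself give this; one would need to know that the resulting cocycle on all of $G( k )$ is continuous near the identity and defines a cover, which is the substantive content of the Matsumoto--Moore--Deodhar construction and cannot simply be asserted at this point. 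Note also that $\pi_{1}$ is presented by the cocycle relation \eqref{e:prop1e} rather than by full bilinearity, so it is in general a very large group admitting many surjections onto $\mu_{n}$, and ``continuity'' is not a property of an abstract quotient that could select the right one.

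The paper avoids descent entirely by running the argument in the opposite direction: $\ba{s}{t}$ already takes values in $\pi_{1}^{top}$ (it is defined via the section $\delta$ into the universal \emph{topological} central extension) and is continuous there; composing with any abstract isomorphism $\pi_{1}^{top} \cong \mu_{n}$ supplied by Prasad--Raghunathan gives a continuous $\mu_{n}$-valued Steinberg symbol, which by the corollary to Moore's Theorem~3.1 quoted in Section~\ref{s:klocal} must equal $\hilkn{s}{t}^{r}$ for some $r$; since the $\ba{s}{t}$ generate $\pi_{1}$ and $\pi_{1} \twoheadrightarrow \pi_{1}^{top}$, the values $\hilkn{s}{t}^{r}$ generate $\mu_{n}$, forcing $\gcd( r, n ) = 1$, and replacing the isomorphism by its $r'$-th power with $r r' \equiv 1 \bmod n$ normalises $r = 1$. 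If you wish to keep your presentation-based construction, the descent can be repaired the same way: writing $q \colon \pi_{1} \to \pi_{1}^{top}$ and choosing an isomorphism $\chi \colon \pi_{1}^{top} \to \mu_{n}$, Moore's uniqueness statement gives $\chi \circ q = \Phi_{0}^{r}$ on generators, hence on $\pi_{1}$, with $\gcd( r, n ) = 1$, whence $\ker q \subseteq \ker \Phi_{0}$ --- but then you are invoking exactly the uniqueness result the paper's proof rests on, and the detour through the presentation buys nothing.
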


\begin{proof}
We have shown that there is an isomorphism of the form $ \ba{s}{t} \mapsto \hilkn{s}{t}^{r} $ for some $ r $.  However, the result of Prasad and Raghunathan shows that the order of the topological fundamental group is $ n $.  Hence we may take $ r = 1 $.
\end{proof}

Thus from Chapter~\ref{c:t2coc} onwards, when both $ s $, $ t \in k^{\times} $, we will use the Hilbert symbol $ \hilkn{s}{t} $ instead of $ \ba{s}{t} $.  This implies that when we apply $ \Phi $ to \eqref{e:bastre} and \eqref{e:prop2e} -- \eqref{e:prop4e}, we get
\begin{align}
	\hilkn{st}{r} &= \hilkn{s}{r} \cdot \hilkn{t}{r}, \label{e:bastr} \\
	\hilkn{s}{tr} &= \hilkn{s}{t} \cdot \hilkn{s}{r}, & \forall s, t \in k^{\times}; \notag \\
	\hilkn{s}{t} &= \hilkn{t^{-1}}{s}, & \forall s, t \in k^{\times}; \label{e:prop2} \\
	\hilkn{s}{t} &= \hilkn{s}{-s t}, & \forall s, t \in k^{\times}; \label{e:prop3} \\
	\hilkn{s}{t} &= \hilkn{s}{( 1 - s ) t}, & \forall s, t \in k^{\times},\; s \neq 1. \label{e:prop4}
\end{align}

\chapter{Some properties of quadratic extensions} \label{c:propext}

In this chapter, we gather some information to be used in later chapters, mostly to do with quadratic field extensions.

\section{The Hilbert symbol} \label{s:hilbert}

We define the $ n $-th power Hilbert symbol, where the group $ \mu_{n} $ of $ n $-th roots of unity is contained in a local field $ k $, with $ n $ a natural number relatively prime to the characteristic of $ k $.  This definition may be found in Section~V.3 of \cite{neukirch99}.  Letting $ K = k\left( \sqrt[n]{k^{\times}} \right) $ be the maximal abelian extension of exponent $ n $, it has been established that $ \gal( K / k ) \cong k^{\times} / ( k^{\times} )^{n} $ and $ \homo( \gal( K / k ), \mu_{n} ) \cong k^{\times} / ( k^{\times} )^{n}. $  The bilinear map
	\[ \gal( K / k ) \times \homo( \gal( K / k ), \mu_{n} ) \to \mu_{n}, \quad \spl{\gamma}{\chi} \mapsto \chi( \gamma ) \]
therefore defines a nondegenerate bilinear pairing
	\[ ( - , - )_{k, n} \colon k^{\times} / ( k^{\times} )^{n} \times k^{\times} / ( k^{\times} )^{n} \to \mu_{n}, \]
which we call the $ n $-th power Hilbert symbol.  Now using the notation introduced in Subsection~\ref{ss:local}, let $ p $ be the characteristic of the residue field $ \Ok / \mathfrak{p}_{k} $, and $ q = | \Ok / \mathfrak{p}_{k} | $.  We have the following proposition:

\begin{prop} \label{p:tame}
If $ n $ and $ p $ are relatively prime and $ a $, $ b \in k^{\times} $, then
	\[ ( a, b )_{k, n} \equiv \left( ( - 1 )^{v_{k}( a ) v_{k}( b )} \cdot \frac{b^{v_{k}( a )}}{a^{v_{k}( b )}} \right)^{( q - 1 ) / n} \pod{\mathfrak{p}_{k}}. \]
\end{prop}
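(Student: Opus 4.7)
The plan is to use bilinearity of the Hilbert symbol, established in \eqref{e:bastr}, to reduce the computation to three elementary cases. Fix a uniformiser $\pi$ of $k$ and write $a = u\pi^{i}$ and $b = v\pi^{j}$ with $u,v \in \mathcal{O}_{k}^{\times}$, $i = v_{k}(a)$, $j = v_{k}(b)$. Bilinearity then yields
$$
    (a,b)_{k,n}
    = (u,v)_{k,n}\cdot (u,\pi)_{k,n}^{j}\cdot (\pi,v)_{k,n}^{i}\cdot (\pi,\pi)_{k,n}^{ij},
$$
so it suffices to evaluate each of the four factors modulo $\mathfrak{p}_{k}$.

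Since $\gcd(n,p)=1$, the polynomial $x^{n}-u$ has separable reduction modulo $\pi$, so by Hensel's lemma the extension $k(\sqrt[n]{u})/k$ is unramified. Under the definition of the pairing via local reciprocity, $(v,u)_{k,n}$ equals $\sigma_{v}(\sqrt[n]{u})/\sqrt[n]{u}$, where $\sigma_{v}$ is the image of $v$ in $\gal(k(\sqrt[n]{u})/k)$; but units map to the inertia subgroup, which is trivial in an unramified extension, so $(u,v)_{k,n}=1$. On the other hand, $\pi$ maps to the Frobenius $\phi$, which acts on residues by $x \mapsto x^{q}$, and since $\sqrt[n]{u}^{\,q} = u^{(q-1)/n}\sqrt[n]{u}$ we obtain $(\pi,u)_{k,n} \equiv u^{(q-1)/n} \pmod{\mathfrak{p}_{k}}$. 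Antisymmetry $(s,t)_{k,n}(t,s)_{k,n}=1$, which follows from \eqref{e:prop2} and bilinearity, gives $(u,\pi)_{k,n} \equiv u^{-(q-1)/n} \pmod{\mathfrak{p}_{k}}$. Finally, applying \eqref{e:prop3} with $t=1$ yields $(s,-s)_{k,n}=1$ for every $s$, so specialising to $s=\pi$ gives $(\pi,\pi)_{k,n} = (\pi,-1)_{k,n}^{-1} \equiv (-1)^{(q-1)/n} \pmod{\mathfrak{p}_{k}}$, using $(-1)^{-1}=-1$.

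Assembling the four factors,
$$
    (a,b)_{k,n}
    \equiv u^{-j(q-1)/n}\cdot v^{i(q-1)/n}\cdot (-1)^{ij(q-1)/n}
    = \left((-1)^{ij}\cdot \frac{v^{i}}{u^{j}}\right)^{(q-1)/n} \pmod{\mathfrak{p}_{k}},
$$
and since $b^{v_{k}(a)}/a^{v_{k}(b)} = v^{i}\pi^{ij}/(u^{j}\pi^{ij}) = v^{i}/u^{j}$, this matches the stated formula exactly. The main obstacle is the input from local class field theory: the vanishing of $(u,v)_{k,n}$ for units and the explicit Frobenius description of $(\pi,u)_{k,n}$. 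Both rely crucially on the hypothesis $\gcd(n,p)=1$, which keeps $k(\sqrt[n]{u})/k$ unramified and so permits the Artin reciprocity map to be described through inertia and Frobenius. Once these two facts are in hand, the rest is a matter of bilinearity bookkeeping.
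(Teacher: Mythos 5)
Your proof is correct, and since the paper gives no proof of Proposition~\ref{p:tame} itself but cites V.3.4 of \cite{neukirch99}, your argument is essentially that standard one: reduce by bilinearity to units and a uniformiser, note that $k(\sqrt[n]{u})/k$ is unramified when $p\nmid n$, and use the local reciprocity facts that units land in inertia while $\pi$ maps to Frobenius. The only blemish is the sign in $(\pi,\pi)_{k,n}=(\pi,-1)_{k,n}^{-1}$, which should read $(\pi,\pi)_{k,n}=(\pi,-1)_{k,n}$ via $(\pi,-\pi)_{k,n}=1$; this is immaterial, since $(\pi,-1)_{k,n}^{2}=(\pi,1)_{k,n}=1$ makes the symbol its own inverse.
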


A proof of the above proposition may be found in V.3.4 of \cite{neukirch99}.  When $ n $ and $ p $ are relatively prime, we call this the case of the \emph{tame Hilbert symbol}, as defined above.  Note that a consequence of the tame Hilbert symbol is that whenever $ a $, $ b \in \Ok^{\times} $, $ ( a, b )_{k, n} = 1 $.

We should also note that the Hilbert symbol obeys the \emph{product formula}, which is proved in Theorem~VI.8.1 of \cite{neukirch99}.  We state this theorem for later reference.

\begin{thm} \label{t:product}
Let $ l $ be a global field, $ \mathfrak{p} $ a prime of $ l $ with $ l_{\mathfrak{p}} $ the localisation of $ l $ at $ \mathfrak{p} $.  Also, let $ l $ contain the group $ \mu_{n} $ of $ n $-th roots of unity.  Then for $ a $, $ b \in l^{\times} $,
	\[ \prod_{\mathfrak{p}} ( a, b )_{\lp, n} = 1, \]
where $ \mathfrak{p} $ runs through all the primes of $ l $.
\end{thm}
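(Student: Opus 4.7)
The plan is to deduce the product formula from the global Artin reciprocity law of class field theory. The key idea is that each local Hilbert symbol $ ( a, b )_{l_{\mathfrak{p}}, n} $ is, by its very definition, encoded in the local reciprocity map at $ \mathfrak{p} $, and the product of all local reciprocity maps assembles into the global reciprocity map, which is trivial on the diagonal image of $ l^{\times} $ in $ \mathbb{A}_{l}^{\times} $.

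First I would unravel the definition given in Section~\ref{s:hilbert} to make the connection to class field theory explicit: for a local field $ k $ containing $ \mu_{n} $, the identification $ \gal( K / k ) \cong k^{\times} / ( k^{\times} )^{n} $ is induced by the local Artin map $ \psi_{k} \colon k^{\times} \to \gal( k^{\mathrm{ab}} / k ) $, and the identification $ \homo( \gal( K / k ), \mu_{n} ) \cong k^{\times} / ( k^{\times} )^{n} $ is the Kummer isomorphism sending $ b $ to the character $ \sigma \mapsto \sigma( \sqrt[n]{b} ) / \sqrt[n]{b} $. Chasing these identifications through the pairing in Section~\ref{s:hilbert} gives the working characterisation $ \psi_{k}( a )( \sqrt[n]{b} ) = ( a, b )_{k, n} \cdot \sqrt[n]{b} $ for all $ a, b \in k^{\times} $.

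Next I would fix $ a, b \in l^{\times} $ (with $ b $ not already an $ n $-th power, as otherwise the statement is vacuous) and form the Kummer extension $ l( \sqrt[n]{b} ) / l $. Define $ \chi \colon \gal( l( \sqrt[n]{b} ) / l ) \to \mu_{n} $ by $ \sigma \mapsto \sigma( \sqrt[n]{b} ) / \sqrt[n]{b} $, and let $ \tilde{\chi} \colon \mathbb{A}_{l}^{\times} \to \mu_{n} $ be the composition of $ \chi $ with the global Artin map $ \psi \colon \mathbb{A}_{l}^{\times} \to \gal( l^{\mathrm{ab}} / l ) $, restricted to $ l( \sqrt[n]{b} ) $. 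By the previous paragraph the $ \mathfrak{p} $-component of $ \tilde{\chi} $ is exactly $ x_{\mathfrak{p}} \mapsto ( x_{\mathfrak{p}}, b )_{l_{\mathfrak{p}}, n} $. Proposition~\ref{p:tame}, applied at the primes where $ a $, $ b $ and $ n $ are simultaneously units, forces $ ( a, b )_{l_{\mathfrak{p}}, n} = 1 $ for almost all $ \mathfrak{p} $, so the global product in the statement is well-defined.

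Finally, the Artin reciprocity law states that $ \psi $ vanishes on the diagonal embedding $ l^{\times} \hookrightarrow \mathbb{A}_{l}^{\times} $. Evaluating $ \tilde{\chi} $ at the principal idèle attached to $ a $ therefore gives $ 1 $, and unwinding the local components yields $ \prod_{\mathfrak{p}} ( a, b )_{l_{\mathfrak{p}}, n} = 1 $. The main obstacle is that the whole argument rests on the Artin reciprocity law, itself a central and deep theorem of global class field theory; a proof that avoids class field theory would have to proceed prime-by-prime using explicit formulas for the tame and wild Hilbert symbols, and would in effect amount to reproving the reciprocity law for $ n $-th power residues, since that reciprocity law is essentially equivalent to the statement we wish to establish.
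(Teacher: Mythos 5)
Your proof is correct and is essentially the paper's own: the thesis does not prove Theorem~\ref{t:product} itself but cites Theorem~VI.8.1 of \cite{neukirch99}, whose proof is exactly the argument you give --- identify $ ( a, b )_{l_{\mathfrak{p}}, n} $ with the action of the local norm-residue symbol at $ \mathfrak{p} $ on $ \sqrt[n]{b} $, check via the tame symbol (Proposition~\ref{p:tame}) that almost all factors are trivial, and then invoke the vanishing of the global Artin map on principal id\`{e}les. Your closing remark is also accurate: the statement is essentially equivalent to $ n $-th power reciprocity, so no genuinely easier route exists.
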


In our case, we are more interested in quadratic Hilbert symbols (i.e. when $ n = 2 $).  It has been established that the quadratic Hilbert symbol has a more concrete meaning:
	\[ \hilk{a}{b} = 1 \iff aX^{2} + bY^{2} - Z^{2} = 0 \text{ has a non-trivial solution $ ( X, Y, Z ) $ in $ k^{3} $}. \]
(See Chapter~III of \cite{serre73}.)

Now using our usual definition of $ k $ and $ K $, we list the properties of quadratic Hilbert symbols here for convenient reference later, a proof of which may be found in Section~V.3 of \cite{neukirch99}.  For $ s $, $ t $, $ r \in k^{\times} $, $ m \in \mathbb{Z} $,
\begin{align}
	&\hilk{s}{t} = \hilk{s}{t}^{- 1} = \hilk{s^{- 1}}{t} = \hilk{s}{t^{- 1}} = \hilk{t}{s}; \label{e:prop2i} \\
	&\hilk{s}{t} = \hilk{s}{- s t} = \hilk{- s t}{t}; \label{e:prop3i} \\
	&\hilk{s}{t} = \hilk{s}{( 1 - s ) t} = \hilk{( 1 - t ) s}{t}, &\text{(for $ s \neq 1 $);} \label{e:prop4i} \\
	&\hilk{s t}{r} = \hilk{s}{r} \cdot \hilk{t}{r}, \label{e:bastri} \\
	&\hilk{s}{t r} = \hilk{s}{t} \cdot \hilk{s}{r}; \notag \\
	&\hilk{s}{t}^{m} = \hilk{s}{t^{m}} = \hilk{s^{m}}{t}; \label{e:prop5i} \\
	&\hilk{s}{t}^{2} = \hilk{s}{t^{2}} = \hilk{s^{2}}{t} = 1. \label{e:prop5ii}
\end{align}
(Note that the above is true for any field.) In addition, for all $ \lambda \in k^{\times} $, $ \mu \in K^{\times} $,
\begin{equation} \label{e:norm}
	\hilk{\lambda}{\norm{\mu}} = \hilbk{\lambda}{\mu} = \hilbk{\mu}{\lambda} = \hilk{\norm{\mu}}{\lambda}.
\end{equation}
(A statement and proof of this may be found in Chapter~2, Section~1, Theorem~2.14 on Page~101 of \cite{koch92}.  In fact,
\begin{gather}
	\hilkn{\lambda}{\norm{\mu}} = \hilbkn{\lambda}{\mu}, \label{e:norm1} \\
	\hilkn{\norm{\mu}}{\lambda} = \hilbkn{\mu}{\lambda} \label{e:norm2}
\end{gather}
by the same statement and proof.)

\section{Non-split and split quadratic extensions}

Recall that in Subsection~\ref{ss:descad}, we established that we will be calculating the local Kubota symbol on a compact open subgroup of
	\[ G( \lp ) = \{ \nu \in \SL_{3}( \Lp ) \colon \nu^{t} J' \ol{\nu} = J' \}, \]
where $ J' $ is as described in Section~\ref{s:sustruc}, $ l $ is a global field, $ \mathfrak{p} $ is a finite prime of $ l $, $ \lp $ is the completion of $ l $ with respect to $ \mathfrak{p} $, $ L = l( \thet ) $, $ \thet = \sqrt{- d} $, $ d \in \mathcal{O}_{l} $ such that $ - d $ is not a square in $ l $, and $ \Lp = \lp \otimes_{l} L $.

In our calculation of the Kubota symbol, we will often use the properties of the extension $ \Lp / \lp $.  We gather the statements of these properties in the next two subsections.

\subsection{The non-split case} \label{ss:nonsplit}

Consider any local field $ k $.  Let $ K $ be a finite extension of $ k $.  Then using the notation of Subsection~\ref{ss:local}, there is only one prime $ \mathfrak{p}_{K} $ above $ \mathfrak{p}_{k} $, i.e. $ \mathfrak{p}_{K} \mid \mathfrak{p}_{k} $ and
	\[ \mathfrak{p}_{k} \mathcal{O}_{K} = \mathfrak{p}_{K}^{e}, \]
where $ e = e( K / k ) \in \mathbb{N} $ is called the \emph{ramification index} of the extension $ K / k $.  If we define the \emph{residue class degree} of the extension $ K / k $ as
	\[ f = f( K / k ) = [ \mathcal{O}_{K} / \mathfrak{p}_{K} : \mathcal{O}_{k} / \mathfrak{p}_{k} ], \]
then it has been established in Proposition~3 of Section~5 of \cite{frohlich67} that
	\[ [ K : k ] = e( K / k ) f( K / k ). \]


An extension of local fields $ K / k $ is \emph{ramified} if $ e( K / k ) > 1 $ (and \emph{unramified} or \emph{inert} if $ e( K / k ) = 1 $), and is \emph{totally ramified} if $ e( K / k ) = [ K : k ] $.  

From Section~6 of \cite{frohlich67}, an \emph{Eisenstein polynomial} in $ k[ X ] $ is defined as a separable polynomial
	\[ E( X ) = X^{m} + b_{m - 1} X^{m - 1} + \cdots + b_{1} X + b_{0}, \]
with
	\[ v_{k}( b_{i} ) \geq 1 \text{ for } i = 1, \dots, m - 1, \text{ and } v_{k}( b_{0} ) = 1. \]
We have the following theorem (Theorem~1 in Section~6 of \cite{frohlich67}):

\begin{thm} \label{t:ram}
\begin{enumerate}[\upshape (i)]
	\item An Eisenstein polynomial $ E( X ) $ is irreducible.  If $ \Pi $ is a root of $ E( X ) $, then $ K = k[ \Pi ] $ is totally ramified and $ v_{K}( \Pi ) = 1 $.
	\item If $ K $ is totally ramified over $ k $ and $ v_{K}( \Pi ) = 1 $, then the minimal polynomial of $ \Pi $ over $ k $ is Eisenstein and
		\[ \mathcal{O}_{K} = \Ok [ \Pi ], \quad K = k[ \Pi ]. \]
\end{enumerate}
\end{thm}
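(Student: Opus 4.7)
The plan is to treat the two parts separately, both following the standard local-field toolkit: reduce everything to statements about $v_k$ and $v_K$, use the fundamental identity $[K:k]=ef$ (with $f=1$ for totally ramified extensions), and exploit the fact that in a totally ramified extension of degree $m$, the valuations of $c\,\Pi^i$ for varying $i$ lie in distinct residue classes modulo $m$.

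For part (i), I would first establish irreducibility by a version of the Eisenstein/Gauss argument. Suppose $E(X) = F(X)G(X)$ in $k[X]$ with $F,G$ monic of positive degrees. Since the coefficients of $E$ lie in $\mathcal{O}_k$ and $E$ is monic, Gauss's lemma in the local setting forces $F,G \in \mathcal{O}_k[X]$. Reducing modulo $\mathfrak{p}_k$ gives $\bar{E}(X) = X^m$, so $\bar{F}(X) = X^{\deg F}$ and $\bar{G}(X) = X^{\deg G}$. Then the constant terms of $F$ and $G$ both lie in $\mathfrak{p}_k$, so $v_k(b_0) \ge 2$, contradicting the Eisenstein hypothesis $v_k(b_0)=1$. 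Once irreducibility is known, $[K:k]=m$. Reading the relation $\Pi^m = -\sum_{i<m} b_i \Pi^i$ through $v_K$: the constant term contributes $v_K(b_0) = e$ while each other summand contributes $v_K(b_i\Pi^i) \ge e + i \ge e+1$. The ultrametric inequality then pins $m\,v_K(\Pi) = e$. Since $v_K(\Pi)$ is a positive integer and $e \le m$, this forces $v_K(\Pi) = 1$ and $e = m$, i.e.\ $K/k$ is totally ramified.

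For part (ii), totally ramified means $e = m = [K:k]$, so $v_K$ restricted to $k^\times$ equals $m\,v_k$. Let $E(X) = X^m + b_{m-1}X^{m-1} + \cdots + b_0$ be the minimal polynomial of $\Pi$. Passing to a splitting field and extending $v_K$, each Galois conjugate $\Pi_j$ of $\Pi$ satisfies $v_K(\Pi_j) = 1$ (valuations are preserved under $k$-automorphisms). Since $b_{m-i}$ is, up to sign, the $i$-th elementary symmetric polynomial in the $\Pi_j$, we get $v_K(b_{m-i}) \ge i$, hence $v_k(b_{m-i}) \ge i/m$. For $1 \le i \le m-1$ this integer valuation must be at least $1$, and for $i = m$ we get $v_K(b_0) = m$ exactly (as the single monomial $\prod_j \Pi_j$), so $v_k(b_0) = 1$. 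This is precisely the Eisenstein condition.

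The remaining claim $\mathcal{O}_K = \mathcal{O}_k[\Pi]$ follows by the distinct-residues argument. Any $\xi \in K$ can be written uniquely as $\xi = \sum_{i=0}^{m-1} c_i \Pi^i$ with $c_i \in k$. The valuations $v_K(c_i \Pi^i) = m\,v_k(c_i) + i$ are pairwise distinct modulo $m$, so they are pairwise distinct and
\[
v_K(\xi) = \min_{0 \le i \le m-1}\bigl( m\,v_k(c_i) + i \bigr).
\]
If $\xi \in \mathcal{O}_K$, i.e.\ $v_K(\xi) \ge 0$, then each term satisfies $m\,v_k(c_i) + i \ge 0$, which together with $0 \le i \le m-1$ forces $v_k(c_i) \ge 0$, so $c_i \in \mathcal{O}_k$. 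Conversely any such combination is manifestly in $\mathcal{O}_K$. The hardest piece conceptually is the Eisenstein-type reduction step (the Gauss-lemma input in (i)), which is where the local hypothesis is really used; everything else is bookkeeping with $v_K$ and the identity $e = m$.
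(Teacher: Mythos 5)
You should first be aware that the thesis does not prove this statement at all: it is imported verbatim as Theorem~1 of Section~6 of \cite{frohlich67}, so there is no in-paper proof to compare yours against. Judged on its own, your argument is the standard one and is essentially sound, but two steps are assumed where they should be derived. In part (i), the bound $v_{K}( b_{i} \Pi^{i} ) \geq e + i$ silently uses $v_{K}( \Pi ) \geq 1$ before you have established it. The patch is easy: $\Pi$ is integral over $\mathcal{O}_{k}$ (it satisfies a monic equation with integral coefficients), so $v_{K}( \Pi ) \geq 0$; and $v_{K}( \Pi ) = 0$ is impossible, since then $v_{K}( \Pi^{m} ) = 0$ while every term on the right of $\Pi^{m} = - \sum_{i < m} b_{i} \Pi^{i}$ has valuation at least $e \geq 1$. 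With $v_{K}( \Pi ) \geq 1$ in hand, your unique-minimum argument pins $m\, v_{K}( \Pi ) = e$ exactly as you say.

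The more substantive gap is in part (ii): you take the minimal polynomial of $\Pi$ to have degree $m = [ K : k ]$ and write every $\xi \in K$ uniquely as $\sum_{i = 0}^{m - 1} c_{i} \Pi^{i}$, i.e.\ you assume $K = k( \Pi )$ --- which is part of the conclusion to be proved. Fortunately your own distinct-residues device closes this: if $\sum_{i = 0}^{m - 1} c_{i} \Pi^{i} = 0$ with some $c_{i} \neq 0$, then the valuations of the nonzero terms,
\[
	v_{K}\left( c_{i} \Pi^{i} \right) = m\, v_{k}( c_{i} ) + i,
\]
are pairwise incongruent modulo $m$, hence pairwise distinct, so the sum has finite valuation equal to their minimum --- contradicting that it vanishes. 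Thus $1, \Pi, \dots, \Pi^{m - 1}$ are $k$-linearly independent, $[ k( \Pi ) : k ] = m$, and $K = k( \Pi )$. You should state this before invoking the degree-$m$ minimal polynomial; once inserted, the rest of your part (ii) --- conjugate valuations being equal (which, as you note parenthetically, rests on the uniqueness of the extension of the valuation to a splitting field over a complete base), the symmetric-function estimates giving $v_{k}( b_{m - i} ) \geq 1$ for $1 \leq i \leq m - 1$ and $v_{k}( b_{0} ) = 1$, and the bookkeeping for $\mathcal{O}_{K} = \mathcal{O}_{k}[ \Pi ]$ --- goes through as written. (Separability of the minimal polynomial, which the thesis builds into its definition of an Eisenstein polynomial, is automatic here since the fields in play have characteristic zero.)
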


The above theorem implies that since we have a prime element $ \pi = \pi_{\Lp} \in \OLp $ such that $ v_{\Lp}( \pi ) = 1 $, then if $ \Lp / \lp $ is totally ramified, 
	\[ \OLp = \Olp [ \pi ], \quad \Lp = \lp( \pi ). \]

We can find an analogue for the unramified case of the above theorem.  Let the image of an element $ a \in \Ok $ in the residue class field $ \mathcal{O}_{k} / \mathfrak{p}_{k} $ be denoted by $ \hat{a} $ and similarly let the image of an element $ f( X ) \in \Ok [ X ] $ in the polynomial ring $ ( \Ok / \mathfrak{p}_{k} ) [ X ] $ be denoted by $ \hat{f}( X ) $.  Proposition~1 of Section~7 of \cite{frohlich67} states the following:

\begin{prop} \label{p:unram}
\begin{enumerate}[\upshape (i)]
	\item Suppose $ K $ to be unramified over $ k $.  Then there exists an element $ c \in \mathcal{O}_{K} $ with $ \mathcal{O}_{K} / \mathfrak{p}_{K} = ( \Ok / \mathfrak{p}_{k} ) [ \hat{c} ] $.  If $ c $ is such an element and $ f( X ) $ is its minimal polynomial over $ k $, then $ \mathcal{O}_{K} = \Ok[ c ] $, $ K = k [ c ] $ and $ \hat{f}( X ) $ is irreducible in $ ( \Ok / \mathfrak{p}_{k} ) [ X ] $ and separable.
	\item Suppose $ f( X ) $ is a monic polynomial in $ \Ok[ X ] $, such that $ \hat{f}( X ) $ is irreducible in $ ( \Ok / \mathfrak{p}_{k} ) [ X ] $ and separable.  If $ c $ is a root of $ f( X ) $ then $ K = k[ c ] $ is unramified over $ k $ and $ \mathcal{O}_{K} / \mathfrak{p}_{K} = ( \Ok / \mathfrak{p}_{k} ) [ \hat{c} ] $.
\end{enumerate}
\end{prop}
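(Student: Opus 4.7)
The plan is to handle the two parts separately, using two standard facts about local fields: (a) the residue field extension $(\mathcal{O}_{K}/\mathfrak{p}_{K})/(\Ok/\mathfrak{p}_{k})$ is a separable simple extension of finite fields, and (b) $\mathcal{O}_{K}$ is a finitely generated (in fact free) $\Ok$-module with $[K:k] = e(K/k) f(K/k)$.

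For part~(i), the unramified hypothesis gives $[K:k] = f(K/k) = [\mathcal{O}_{K}/\mathfrak{p}_{K} : \Ok/\mathfrak{p}_{k}]$. I would first pick a primitive element $\hat{c}$ for the residue extension (which exists by (a)), lift it arbitrarily to $c \in \mathcal{O}_{K}$, and let $f(X) \in \Ok[X]$ be the minimal polynomial of $c$ over $k$ (the coefficients lie in $\Ok$ by integrality of $c$). Let $g(X) \in (\Ok/\mathfrak{p}_{k})[X]$ denote the minimal polynomial of $\hat{c}$. Then $\hat{f}(\hat{c})=0$ forces $g \mid \hat{f}$, whence
\[
	f(K/k) = \deg g \leq \deg \hat{f} = \deg f = [k[c]:k] \leq [K:k] = f(K/k).
\]
Equality throughout forces $\hat{f} = g$ (both monic), so $\hat{f}$ is irreducible and separable, and $K = k[c]$.

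The last conclusion, $\mathcal{O}_{K} = \Ok[c]$, is the step I expect to be the main obstacle, since the inclusion $\Ok[c] \subseteq \mathcal{O}_{K}$ is not in general an equality without a further argument. I plan to invoke Nakayama's lemma on the quotient $M = \mathcal{O}_{K}/\Ok[c]$, whose finite generation over $\Ok$ follows from $\mathcal{O}_{K}$ being free of rank $[K:k]$ over the DVR $\Ok$. Since $e(K/k)=1$, one has $\mathfrak{p}_{k}\mathcal{O}_{K} = \mathfrak{p}_{K}$, so $\mathcal{O}_{K}/\mathfrak{p}_{k}\mathcal{O}_{K} = \mathcal{O}_{K}/\mathfrak{p}_{K} = (\Ok/\mathfrak{p}_{k})[\hat{c}]$, which is already the image of $\Ok[c]$. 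Hence $M = \mathfrak{p}_{k}M$, and Nakayama yields $M=0$.

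For part~(ii), I would first use Gauss's lemma to deduce that $\hat{f}$ irreducible forces $f$ irreducible in $k[X]$: a nontrivial monic factorisation over $k$ would, by Gauss, give one over $\Ok$, and reducing modulo $\mathfrak{p}_{k}$ would contradict irreducibility of $\hat{f}$. Hence $[K:k] = \deg f$ for $K = k[c]$. Since $\hat{c} \in \mathcal{O}_{K}/\mathfrak{p}_{K}$ is a root of the irreducible $\hat{f}$, the subfield $(\Ok/\mathfrak{p}_{k})[\hat{c}] \subseteq \mathcal{O}_{K}/\mathfrak{p}_{K}$ has degree $\deg f$ over $\Ok/\mathfrak{p}_{k}$, so $f(K/k) \geq \deg f = [K:k]$. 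Combined with $e(K/k) f(K/k) = [K:k]$, this forces $e(K/k) = 1$ and $f(K/k) = \deg f$, yielding both that $K/k$ is unramified and that $\mathcal{O}_{K}/\mathfrak{p}_{K} = (\Ok/\mathfrak{p}_{k})[\hat{c}]$.
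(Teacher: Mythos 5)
The paper itself gives no proof of this proposition: it is quoted verbatim from Proposition~1 of Section~7 of \cite{frohlich67}, so there is no in-paper argument to compare yours against. Judged on its own, your proof is correct and is essentially the classical argument for this result. In part~(i) the degree count $f(K/k) = \deg g \le \deg \hat f = \deg f = [k[c]:k] \le [K:k] = f(K/k)$ is exactly right, and your Nakayama step is a clean way to get $\OK = \Ok[c]$: from $\OK/\mathfrak{p}_K = (\Ok/\mathfrak{p}_k)[\hat c]$ and $\mathfrak{p}_K = \mathfrak{p}_k\OK$ you obtain $\OK = \Ok[c] + \mathfrak{p}_k\OK$, and since $\OK$ is module-finite over the local ring $\Ok$ (which uses completeness, or separability of $K/k$ --- automatic in the thesis's characteristic-zero setting), Nakayama kills the quotient. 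Fr\"ohlich's own proof replaces Nakayama by an equivalent successive-approximation argument exploiting completeness, so the two routes differ only cosmetically. Part~(ii) is likewise sound: monic Gauss over the integrally closed $\Ok$ gives irreducibility of $f$, and then $[K:k] = \deg f \le f(K/k)$ together with $e(K/k)f(K/k) = [K:k]$ forces $e = 1$ and the residue-field identification. Two small remarks: separability of $\hat f$ in part~(i) should be noted as automatic because the residue fields here are finite (hence perfect); and in part~(ii) the separability hypothesis on $\hat f$ is never invoked in your argument --- it is needed only in Fr\"ohlich's more general setting of imperfect residue fields, so its omission is harmless here but worth flagging.
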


We will be using the above to establish whether $ \Lp / \lp $ is ramified or unramified if $ \mathfrak{p} \OLp $ is not split.  We will also need to use Hensel's Lemma.  Appendix~C of \cite{cassels67} states this lemma, which we will write as a theorem.

\begin{thm}[Hensel's Lemma] \label{t:hensel}
Let $ k $ be a field complete with respect to the non-archimedean valuation $ | \cdot | $ and let
	\[ f( X ) \in \Ok[ X ]. \]
Let $ a_{0} \in \Ok $ be such that
	\[ | f( a_{0} ) | < | f'( a_{0} ) |^{2}, \]
where $ f'( X ) $ is the (formal) derivative of $ f( X ) $.  Then there is a solution of
	\[ f( a ) = 0, \quad | a - a_{0} | \leq \frac{| f( a_{0} ) |}{| f'( a_{0} ) |}. \]
\end{thm}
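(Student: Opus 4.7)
The plan is to prove this by Newton's iteration adapted to the non-archimedean setting. Starting from $a_0$, I would define a sequence $(a_n)_{n \geq 0}$ in $\mathcal{O}_k$ by the Newton recursion
\[
	a_{n+1} = a_n - \frac{f(a_n)}{f'(a_n)},
\]
then show that this sequence is Cauchy, and that its limit $a$ is the desired root.

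The key technical ingredient is a Taylor-type expansion in $\mathcal{O}_k[X]$: since $f \in \mathcal{O}_k[X]$, for any $a,h \in \mathcal{O}_k$ one may write
\[
	f(a+h) = f(a) + f'(a)h + h^{2} R(a,h),
	\qquad
	f'(a+h) = f'(a) + h\, S(a,h),
\]
with $R(a,h), S(a,h) \in \mathcal{O}_k$ (the coefficients being integer-linear combinations of coefficients of $f$). Setting $c = |f(a_0)|/|f'(a_0)|^{2} < 1$, I would prove by induction on $n$ the three statements:
\begin{enumerate}[\upshape (i)]
	\item $a_n \in \mathcal{O}_k$ and $|f'(a_n)| = |f'(a_0)|$;
	\item $|f(a_n)| \leq c^{2^{n}} |f'(a_0)|^{2}$;
	\item $|a_{n+1} - a_n| \leq c^{2^{n}} |f'(a_0)|$.
\end{enumerate}
Statement (iii) is immediate from (i) and (ii). For the inductive step, apply the Taylor expansion with $h = a_{n+1}-a_n = -f(a_n)/f'(a_n)$: the formula for $f(a_{n+1})$ reduces to $h^{2}R(a_n,h)$, so $|f(a_{n+1})| \leq |h|^{2} = |f(a_n)|^{2}/|f'(a_n)|^{2}$, which together with (ii) gives the squaring $c^{2^{n}} \mapsto c^{2^{n+1}}$. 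For the preservation of $|f'(a_n)|$, the Taylor expansion for $f'$ gives $|f'(a_{n+1}) - f'(a_n)| \leq |h| < |f'(a_0)|$, and the ultrametric inequality yields $|f'(a_{n+1})| = |f'(a_n)|$.

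From (iii), $(a_n)$ is a Cauchy sequence, so by completeness it converges to some $a \in \mathcal{O}_k$. Passing to the limit in (ii) gives $f(a) = 0$. The bound on $|a - a_0|$ follows from the ultrametric inequality
\[
	|a - a_0| \leq \max_{n \geq 0} |a_{n+1} - a_n| = |a_1 - a_0| = \frac{|f(a_0)|}{|f'(a_0)|},
\]
since the terms $c^{2^{n}}|f'(a_0)|$ are decreasing in $n$. The main (minor) obstacle is bookkeeping: carefully verifying that at every stage one stays in $\mathcal{O}_k$ (so that $R$ and $S$ can be evaluated there) and that the inductive control on $|f'(a_n)|$ does not degrade; both are handled cleanly by the ultrametric inequality once the Taylor expansions are in hand.
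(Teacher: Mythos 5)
Your proof is correct and complete: the Newton recursion, the Taylor expansions $f(a+h)=f(a)+f'(a)h+h^{2}R(a,h)$ and $f'(a+h)=f'(a)+hS(a,h)$ with $R,S\in\Ok$, and the three-part induction together give exactly the squaring $c^{2^{n}}\mapsto c^{2^{n+1}}$ and the stability $|f'(a_{n})|=|f'(a_{0})|$ needed, and the ultrametric inequality correctly yields both the Cauchy property and the final bound $|a-a_{0}|\leq|f(a_{0})|/|f'(a_{0})|$ (the supremum of the increments is attained at $n=0$, where your bound is an equality). Note that the thesis itself offers no proof of this statement --- it is quoted verbatim from Appendix~C of \cite{cassels67} --- and the argument given there is precisely the Newton-approximation construction you carried out, so your proposal in effect supplies the proof the paper delegates to its reference. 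The only points worth making explicit in a final write-up are the two hypotheses you use silently: $|f'(a_{0})|>0$ (forced by $|f(a_{0})|<|f'(a_{0})|^{2}$) and $|f'(a_{0})|\leq 1$ (from $f'\in\Ok[X]$, $a_{0}\in\Ok$), the latter being what guarantees $h=-f(a_{n})/f'(a_{n})\in\Ok$ so that $R$ and $S$ may be evaluated at each stage.
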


$ a_{0} $ is known as the \emph{approximate root} of the polynomial $ f( X ) $.  The above lemma will be useful when we want to apply Hensel's Lemma with respect to our field $ \Lp $, as we will know the normalised valuation of $ \lp $ but may not know directly the normalised valuation on $ \Lp $.

\subsection{The split case} \label{ss:split}

In the split case, we have $ \thet \in \Olp \subset \lp $, and $ \Lp = \lp \oplus \lp $.  We can think of $ \Lp $ as being isomorphic to $ \lp( \thet ) $ by the bijective map
\begin{align}
	\lp( \thet ) &\cong \lp \oplus \lp \notag \\
	a + b \thet &\mapsto \spl{a + b \thet}{a - b \thet} \label{e:splmap}
\end{align}
for any $ a $, $ b \in \lp $.  Hence, we identify $ \lp $ with $ \{ \spl{a}{a} \in \Lp \colon a \in \lp \} $.  Note that where no confusion can occur, we will denote an element $ \spl{a}{a} \in \Lp $ as $ a \in \lp $ and an element $ \spl{a + b \thet}{a - b \thet} \in \Lp $ as $ a + b \thet \in \Lp $.

Also, the norm and trace of $ \spl{a}{b} \in \Lp $ are defined respectively as
	\[ \norm{\spl{a}{b}} = a b, \quad \trace{\spl{a}{b}} = a + b. \]
Thus, for any element $ a \in \lp $, we can always choose an element in $ c \in \Lp $ such that $ \norm{c} = a $.  An obvious choice would be $ c = \spl{a}{1} $.


\part{The 2-cocycle of the universal central extension of $ \SU( k ) $} \label{p:su2coc}


\chapter{The 2-cocycle on $ T( k ) $} \label{c:t2coc}

Suppose that $k$ is a local field containing an $n$-th root of unity and let $ G = \SU $.
Recall that we have a homomorphism $\pi_{1}(G(k)) \to \mu_{n}$
given by $b_{\alpha}(s,t) \mapsto (s,t)_{k,n}$ for $s,t\in k^{\times}$.
This map gives rise to a central extension
$$
	1 \to \mu_{n} \to \wtilde G \to G(k) \to 1.
$$
We have seen that if $k$ is non-archimedean and $n$ is the number of roots of unity in $k$,
then this extension is the universal topological central extension of $G(k)$.
We have described a section $\delta \colon G(k)\to \wtilde{G}$. This section gives rise to
a 2-cocycle $\sigma_{u}$ on $G(k)$ with values in $\mu_{n}$.

Recall that we are actually interested in the 2-cocycle corresponding to the double cover
of $G(k)$, which represents a cohomology class $\sigma\in H^{2}(G(k),\mu_{2})$.
By the existence of a universal central extension we have (by Theorem~1.1 of \cite{moore68})
 for any trivial $G(k)$-module $A$,
	\[ H^{2}( G( k ), A ) = \homo ( \pi_{1}, A ) = \homo( \mu_{n}, A ).\]
This implies that
	\[ H^{2}( G( k ), \mu_{2} ) = \homo ( \mu_{n}, \mu_{2} ) = \mathbb{Z} / 2. \]
Thus, $ \sigma = \sigma_{u}^{n / 2} $.  We will use this fact in Chapter~\ref{c:univ} to describe
the cocycle $\sigma$.

%

We will first calculate the cocycle $ \sigma_{u} $ on $ \SU( k ) $.  This calculation is divided into 2 chapters.  This chapter introduces the notation we will use, cites a few results, and we will calculate the 2-cocycle on a maximal torus $ T( k ) $ of $ G( k ) $.  Recall that we write $ \hilkn{s}{t} $ instead of $ \ba{s}{t} $ when both $ s $, $ t \in k^{\times} $.  Also, recall the definitions of $ \delta_{1} $ and $ \delta_{2} $ as stated in Proposition~\ref{p:deltas}.  We will get the following theorem:

\begin{thm2}
For $ \lambda $, $ \mu \in K^{\times} $,
\begin{multline*}
	\sigu{\ha{\lambda}}{\ha{\mu}} \\
		=	\begin{cases}
				\displaystyle \hilkn{\lambda}{\mu}, &\text{if $ \lambda $, $ \mu \in k^{\times} $;} \\
				\displaystyle \hilkn{\mu}{- \del{2}{\lambda} / \thet}, &\text{if $ \lambda \notin k^{\times} $, $ \mu \in k^{\times} $;} \\
				\displaystyle \hilkn{\lambda}{\mu \ol{\del{1}{\mu}} / \thet}, &\text{if $ \lambda \in k^{\times} $, $ \mu \notin k^{\times} $;} \\
				\displaystyle \hilkn{- 1}{\norm{\lambda}} \cdot \hilkn{- \frac{\lambda \ol{\del{1}{\lambda}}}{\thet}}{\lambda \mu}, &\text{if $ \lambda $, $ \mu \notin k^{\times} $, $ \lambda \mu \in k^{\times} $;} \\
				\displaystyle
					\begin{aligned}[b]
						&\hilkn{- \frac{\norm{\del{1}{\mu}}}{\norm{\del{1}{\lambda}}}}{\frac{\norm{\lambda}}{q^{2}}} \cdot \hilkn{q}{\frac{\mu \ol{\del{1}{\mu}}}{\del{2}{\lambda}}} \\
						&\phantom{\ } \cdot \Sigma'( \lambda, \mu ),
					\end{aligned} &\text{otherwise,}
			\end{cases}
\end{multline*}
where, if $  \lambda = a + b \thet $, $ \mu = c + d \thet $, with $ a $, $ c \in k $, $ b $, $ d \in k^{\times} $,
	\[ q = a + \frac{b c}{d}, \]
and
	\[ \Sigma'( \lambda, \mu )
		=	\begin{cases}
				\displaystyle\hilkn{- \frac{\norm{\lambda}}{4 a^{2} \norm{\del{1}{\mu}}}}{\frac{( \norm{\lambda} )^{2} b^{4} \thet^{4}}{( ( a - q ) a - b^{2} \thet^{2} )^{4}}}, &\text{if $ \lambda \notin k^{\times} \thet $, $ a q \neq \norm{\lambda} $;} \\
			\displaystyle\hilkn{\frac{( \norm{\lambda} )^{2} b^{4}\thet^{4}}{( ( a - q ) a + b^{2} \thet^{2} )^{4}}}{- \frac{\norm{\lambda}}{4 a^{2} \norm{\del{1}{\mu}}}}, &\text{if $ \lambda \notin k^{\times} \thet $, $ a q = \norm{\lambda} $;} \\
				\displaystyle 1, &\text{if $ \lambda \in k^{\times} \thet $.}
			\end{cases} \]
\end{thm2}

It should be noted that the right-hand side is given in terms of $ n $-th power Hilbert symbols on $ k $.

The next chapter will use the results of this chapter to prove what the 2-cocycle is on the whole of $ G( k ) $. We will then get an expression for the 2-cocycle $ \sigma $ corresponding to the double cover of $ G( k ) $ using the relation above.

\section{Initial results} \label{s:initial}

We first note that for all $ s $, $ t \in k^{\times} $, by \eqref{e:bastr},
\begin{equation} \label{e:prop5}
	\hilkn{s}{t}^{m} = \hilkn{s^{m}}{t} = \hilkn{s}{t^{m}},
\end{equation}
for $ m \in \mathbb{Z} $.

For $ s \in k^{\times} $, since $ \hilkn{s}{- s^{- 1}} = 1 $, by \eqref{e:ba},
\begin{equation} \label{e:delinv}
	\delh{s}^{-1} = \delh{- \frac{1}{s}} \cdot \delh{-1}^{-1}.
\end{equation}

Also, by Proposition~\ref{p:wa}, for $ s $, $ t \in k^{\times} $,
\begin{multline*}
	[ \twa{0}{s \thet} \cdot \twa{0}{- \thet} ] \cdot \twa{0}{t \thet} \cdot \twa{0}{- \thet} \cdot [ \twa{0}{s \thet} \cdot \twa{0}{- \thet} ]^{-1} \\
	= \twa{0}{\frac{( t \thet ) ( - s^{2} \thet^{2} )}{- \thet^{2}}} \cdot \twa{0}{\frac{( - \thet ) ( - s^{2} \thet^{2} )}{- \thet^{2}}};
\end{multline*}
i.e. by \eqref{e:delh},
\begin{align*}
	\delh{s} &\cdot \delh{t} \cdot \delh{s}^{-1} \\
	&= \twa{0}{s^{2} t \thet} \cdot \twa{0}{- s^{2} \thet} \\
	&= \twa{0}{s^{2} t \thet} \cdot \left[ \twa{0}{\thet}^{- 1} \cdot \twa{0}{\thet} \right] \cdot \twa{0}{s^{2} \thet}^{-1} \\
	&= \delh{s^{2} t} \cdot \delh{s^{2}}^{-1}.
\end{align*}
Thus by \eqref{e:ba}, \eqref{e:prop2} and \eqref{e:prop5},
\begin{equation} \label{e:stcomm}
	\delh{s} \cdot \delh{t} \cdot \delh{s}^{-1} \cdot \delh{t}^{-1} = \hilkn{t}{s^{2}}^{- 1} = \hilkn{s}{t^{2}}.
\end{equation}
Also, since \hilkn{s}{t} is central in $ \wtilde{G} $ for all $ s $, $ t \in k^{\times} $, the above implies that
\begin{multline}  \label{e:stcomm2}
	\delh{s} \cdot \delh{t} \cdot \delh{s}^{-1} \cdot \delh{t}^{-1} \\
	= \delh{t}^{-1} \cdot \delh{s} \cdot \delh{t} \cdot \delh{s}^{-1}.
\end{multline}

\begin{rem} \label{r:dm1}
By \eqref{e:prop5}, $ \hilkn{- 1}{s^{2}} = 1 $ for all $ s \in k^{\times} $.  This implies, by \eqref{e:stcomm} and \eqref{e:stcomm2}, that \delh{- 1} commutes with \delh{s} for all $ s \in k^{\times} $.
\end{rem}

Hence by the above, we can now work out the ``easy'' cases, which are summarised in the following proposition based on Lemma~2.23 of \cite{deodhar78}:

\begin{prop} \label{p:2.23ii}
For $ q \in k^{\times} $, $ \lambda \notin k^{\times} $,
\begin{enumerate}[\upshape (i)]
	\item $ \displaystyle \ba{\lambda}{q} = \hilkn{q}{- \del{2}{\lambda} / \thet} $,
	\item $ \displaystyle \ba{q}{\lambda} = \hilkn{q}{\lambda \ol{\del{1}{\lambda}} / \thet} $.
\end{enumerate}
\end{prop}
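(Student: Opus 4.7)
The plan is to compute both symbols by direct expansion, following the strategy of Deodhar's Lemma~2.23 in \cite{deodhar78}. The main tools are: the section formula $ \delh{\mu} = \twa{y(\mu)}{\del{1}{\mu}} \cdot \twa{0}{\del{2}{\mu}}^{-1} $; the consequence $ \twa{0}{t \thet} = \delh{t} \cdot \twa{0}{\thet} $ for $ t \in k^{\times} $, which converts every factor $ \twa{0}{s} $ with $ s \in k^{\times} \thet $ into a $ \delh{-} $ factor up to the fixed element $ w_{0} := \twa{0}{\thet} $; the elementary identities $ \del{1}{\lambda q} = \del{1}{\lambda} $ and $ \del{2}{\lambda q} = q^{-1} \del{2}{\lambda} $, valid for $ q \in k^{\times} $, $ \lambda \notin k^{\times} $ (a direct consequence of Proposition~\ref{p:deltas}); the centrality of $ \pi_{1} $ in $ \wtilde{G} $; and Proposition~\ref{p:baisom}, which identifies $ \ba{s}{t} $ with $ \hilkn{s}{t} $ on $ k^{\times} \times k^{\times} $.

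For part~(i), I would substitute these into $ \ba{\lambda}{q} = \delh{\lambda} \cdot \delh{q} \cdot \delh{\lambda q}^{-1} $. The outer copies of $ \twa{1}{\del{1}{\lambda}} $ and its inverse frame an interior block $ M $ of four factors of the form $ \twa{0}{-} $, each with second argument in $ k^{\times} \thet $. Setting $ t_{\lambda} := \del{2}{\lambda} / \thet \in k^{\times} $ and rewriting each factor via the identity above, the copies of $ w_{0} $ pair off and cancel, leaving $ M = \delh{t_{\lambda}}^{-1} \cdot \delh{q} \cdot \delh{q^{-1} t_{\lambda}} $. A single application of the cocycle relation $ \delh{q} \cdot \delh{q^{-1} t_{\lambda}} = \delh{t_{\lambda}} \cdot \ba{q}{q^{-1} t_{\lambda}} $ then collapses $ M $ to the central element $ \ba{q}{q^{-1} t_{\lambda}} = \hilkn{q}{q^{-1} t_{\lambda}} $. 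Because $ \ba{\lambda}{q} $ is central, the surrounding conjugation by $ \twa{1}{\del{1}{\lambda}} $ is trivial, and bilinearity together with the identity $ \hilkn{q}{q^{-1}} = \hilkn{q}{-1} $ reduces the answer to $ \hilkn{q}{- t_{\lambda}} = \hilkn{q}{- \del{2}{\lambda} / \thet} $, which is (i).

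For part~(ii) the same machinery applies but with a different bracketing: the two copies of $ \twa{1}{\del{1}{\lambda}} $ now arise from $ \delh{\lambda} $ (in the middle) and from $ \delh{q \lambda}^{-1} $ (on the right), so I would cancel them first and process the intervening $ \twa{0}{-} $ block as before. Alternatively, (ii) follows from (i) by evaluating the commutator $ \ba{\lambda}{q} / \ba{q}{\lambda} = \delh{\lambda} \cdot \delh{q} \cdot \delh{\lambda}^{-1} \cdot \delh{q}^{-1} $, which projects to the identity in $ G(k) $ and can be computed directly from the conjugation rules in Proposition~\ref{p:wa}; the result is a Hilbert symbol involving $ \norm{\lambda} $ and $ q $, and combining it with (i) and using $ \del{1}{\lambda} = \lambda \del{2}{\lambda} $ together with $ \ol{\del{2}{\lambda}} = - \del{2}{\lambda} $ produces $ \hilkn{q}{\lambda \ol{\del{1}{\lambda}} / \thet} $. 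The main obstacle throughout is the careful bookkeeping of the central Hilbert-symbol corrections that appear whenever a $ \twa{0}{-} $ is rewritten as $ \delh{-} \cdot w_{0} $ and then rearranged; the signs introduced by $ \ol{\thet} = - \thet $ and the slight asymmetry between (i) and (ii) make this tedious rather than conceptually deep.
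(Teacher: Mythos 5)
Your proposal is correct, and for part~(i) it is essentially the paper's own argument: the paper likewise expands $\ba{\lambda}{q}=\delh{\lambda}\delh{q}\delh{\lambda q}^{-1}$ via the section, converts the $\twa{0}{-}$ factors (all with argument in $k^{\times}\thet$) into $\delh{-}$ factors, uses $\del{1}{\lambda q}=\del{1}{\lambda}$, $\del{2}{\lambda q}=q^{-1}\del{2}{\lambda}$ and centrality of $\pi_{1}$, and finishes with $\hilkn{q}{q^{-1}\del{2}{\lambda}/\thet}=\hilkn{q}{-\del{2}{\lambda}/\thet}$. One small caution about your phrasing in part~(ii): the two copies of $\twa{1}{\del{1}{\lambda}}$ are \emph{not} adjacent, so ``cancelling them first'' really means conjugating the intervening block past them via Proposition~\ref{p:wa}, and that conjugation is not innocuous --- it sends $\twa{0}{m'}$ to $\twa{0}{\norm{\del{1}{\lambda}}/\ol{m'}}$, which is precisely how $\lambda\ol{\del{1}{\lambda}}=\norm{\del{1}{\lambda}}/\del{2}{\lambda}$ enters and why (ii) is not simply (i) with the arguments swapped. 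This is exactly the route the paper takes, computing $\twa{1}{\del{1}{\lambda}}\cdot\twa{0}{\del{2}{\lambda}/q}\cdot\twa{1}{\del{1}{\lambda}}^{-1}=\twa{0}{\norm{\del{1}{\lambda}}q/\del{2}{\lambda}}^{-1}$ explicitly.

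Your alternative derivation of (ii) from (i) via the commutator is a genuinely different route from the paper's proof of this proposition, and it checks out. The commutator $\delh{\lambda}\delh{q}\delh{\lambda}^{-1}\delh{q}^{-1}$ for $\lambda\notin k^{\times}$, $q\in k^{\times}$ can indeed be computed directly from Proposition~\ref{p:wa}(5); the paper carries out this very computation later, inside the proof of Lemma~\ref{l:comm}, obtaining $\hilkn{\norm{\lambda}}{q}$, and that computation does not invoke Proposition~\ref{p:2.23ii}, so there is no circularity. Combining with (i):
\begin{equation*}
	\ba{q}{\lambda}
	=
	\hilkn{q}{-\del{2}{\lambda}/\thet}\cdot\hilkn{\norm{\lambda}}{q}^{-1}
	=
	\hilkn{q}{-\norm{\lambda}\del{2}{\lambda}/\thet}
	=
	\hilkn{q}{\lambda\ol{\del{1}{\lambda}}/\thet},
\end{equation*}
where the last step uses $\lambda\ol{\del{1}{\lambda}}=\norm{\lambda}\,\ol{\del{2}{\lambda}}=-\norm{\lambda}\del{2}{\lambda}$, exactly as you indicate. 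What this buys you is a cleaner bookkeeping for (ii): all the Proposition~\ref{p:wa} conjugation work is concentrated in one commutator evaluation rather than threaded through the full expansion, at the cost of having to know (or prove) the commutator formula first --- whereas the paper's direct computation is self-contained within the proof of the proposition itself.
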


\begin{proof}
We first prove \textnormal{(i)}.  We have by \eqref{e:ba} that
	\[ \ba{\lambda}{q} = \delh{\lambda} \cdot \delh{q} \cdot \delh{\lambda q}^{-1}, \]
and by \eqref{e:delh},
\begin{align*}
	\ba{\lambda}{q}
	&=	\begin{aligned}[t]
			&\twa{1}{\del{1}{\lambda}} \cdot \twa{0}{\del{2}{\lambda}}^{-1} \cdot \delh{q} \cdot \twa{0}{\del{2}{\lambda q}} \\
			&\phantom{\ } \cdot \twa{1}{\del{1}{\lambda q}}^{-1}
		\end{aligned} \\
	&=	\begin{aligned}[t]
			&\twa{1}{\del{1}{\lambda}} \cdot \twa{0}{\del{2}{\lambda}}^{-1} \cdot \delh{q} \cdot \twa{0}{\del{2}{\lambda q}} \\
			&\phantom{\ } \cdot \left[ \twa{0}{\thet}^{-1} \cdot \twa{0}{\thet} \right] \cdot \Big[ \twa{0}{q \del{2}{\lambda q}}^{-1} \\
			&\phantom{\ } \cdot \twa{0}{q \del{2}{\lambda q}} \Big] \cdot \twa{1}{\del{1}{\lambda q}}^{-1}
		\end{aligned} \\
	&=	\begin{aligned}[t]
			&\twa{1}{\del{1}{\lambda}} \cdot \twa{0}{\del{2}{\lambda}}^{-1} \cdot \delh{q} \cdot \delh{\del{2}{\lambda q} / \thet} \\
			&\phantom{\ } \cdot \delh{q \del{2}{\lambda q} / \thet}^{-1} \cdot \twa{0}{q \del{2}{\lambda q}} \cdot \twa{1}{\del{1}{\lambda q}}^{-1}.
		\end{aligned}
\end{align*}
Hence by \eqref{e:ba} again,
	\[ \ba{\lambda}{q}
	=	\begin{aligned}[t]
			&\hilkn{q}{\del{2}{\lambda q} / \thet} \cdot \twa{1}{\del{1}{\lambda}} \cdot \twa{0}{\del{2}{\lambda}}^{-1} \\
			&\phantom{\ } \cdot \twa{0}{q \del{2}{\lambda q}} \cdot \twa{1}{\del{1}{\lambda q}}^{-1}.
		\end{aligned}\]
But since $ q \in k^{\times} $ and $ \lambda \notin k^{\times} $, we have by Proposition~\ref{p:deltas} that
\begin{gather}
	\del{1}{\lambda q} = \del{1}{\lambda}, \label{e:del1q} \\
	\del{2}{\lambda q} = ( 1 / q ) \del{2}{\lambda}. \label{e:del2q}
\end{gather}
Thus,
\begin{align*}
	\ba{\lambda}{q}
	&=	\begin{aligned}[t]
			&\hilkn{q}{\del{2}{\lambda} / ( q \thet )} \cdot \twa{1}{\del{1}{\lambda}} \cdot \twa{0}{\del{2}{\lambda}}^{-1} \\
			&\phantom{\ } \cdot \twa{0}{\del{2}{\lambda}} \cdot \twa{1}{\del{1}{\lambda}}^{-1}
		\end{aligned} \\
	&= \hilkn{q}{- \del{2}{\lambda} / \thet}
\end{align*}
by \eqref{e:prop3}.  Similarly for \textnormal{(ii)}, by \eqref{e:ba},
	\[ \ba{q}{\lambda} = \delh{q} \cdot \delh{\lambda} \cdot \delh{\lambda q}^{-1}. \]
Thus,
	\[ \ba{q}{\lambda}
	=	\begin{aligned}[t]
			&\delh{q} \cdot \twa{1}{\del{1}{\lambda}} \cdot \twa{0}{\del{2}{\lambda}}^{-1} \cdot \twa{0}{\del{2}{\lambda q}} \\
			&\phantom{\ } \cdot \twa{1}{\del{1}{\lambda q}}^{-1}
		\end{aligned} \]
by \eqref{e:delh}.  By Proposition~\ref{p:wa},
	\[ \twa{1}{\del{1}{\lambda}} \cdot \twa{0}{\del{2}{\lambda}}^{- 1} \cdot \twa{1}{\del{1}{\lambda}}^{- 1} = \twa{0}{\norm{\del{1}{\lambda}} / \del{2}{\lambda}}. \]
With the above and \eqref{e:del2q}, the equation becomes
	\[ \ba{\lambda}{q}
=	\begin{aligned}[t]
			&\delh{q} \cdot \twa{0}{\frac{\norm{\del{1}{\lambda}}}{\del{2}{\lambda}}} \cdot \twa{1}{\del{1}{\lambda}} \\
			&\phantom{\ } \cdot \twa{0}{\del{2}{\lambda} / q} \cdot \twa{1}{\del{1}{\lambda}}^{-1}.
		\end{aligned} \]
Also by Proposition~\ref{p:wa},
\begin{align*}
	\twa{1}{\del{1}{\lambda}} \cdot \twa{0}{\del{2}{\lambda} / q} \cdot \twa{1}{\del{1}{\lambda}}^{-1}
	&= \twa{0}{\norm{\del{1}{\lambda}} q / \ol{\del{2}{\lambda}}} \\
	&= \twa{0}{\norm{\del{1}{\lambda}} q / \del{2}{\lambda}}^{- 1}.
\end{align*}
This implies by \eqref{e:delh} that
\begin{align*}
	\ba{\lambda}{q}
	&= \delh{q} \cdot \twa{0}{\frac{\norm{\del{1}{\lambda}}}{\del{2}{\lambda}}} \cdot \twa{0}{\frac{\norm{\del{1}{\lambda}} q}{\del{2}{\lambda}}}^{- 1} && \\
	&=	\begin{aligned}[t]
			&\delh{q} \cdot \twa{0}{\frac{\norm{\del{1}{\lambda}}}{\del{2}{\lambda}}} \cdot \left[ \twa{0}{\thet}^{- 1} \cdot \twa{0}{\thet} \right] \\
			&\phantom{\ } \cdot \twa{0}{\frac{\norm{\del{1}{\lambda}} q}{\del{2}{\lambda}}}^{- 1}
		\end{aligned} && \\
	&= \delh{q} \cdot \delh{\lambda \ol{\del{1}{\lambda}} / \thet} \cdot \delh{\lambda \ol{\del{1}{\lambda}} q / \thet}^{-1},
\end{align*}
since $ \lambda = \del{1}{\lambda} / \del{2}{\lambda} $.  Hence,
	\[ \ba{\lambda}{q} = \hilkn{q}{\lambda \ol{\del{1}{\lambda}} / \thet} \]
by \eqref{e:ba}.
\end{proof}

Thus we are left with finding an explicit expression for \ba{\lambda}{\mu} where $ \lambda $, $ \mu \notin k^{\times} $, in terms of \hilkn{s}{t}'s, with $ s $, $ t \in k^{\times} $.  But before proving the main theorem, we must first establish a few lemmas.

\section{Some useful lemmas} \label{s:useful}

Lemma~2.15 of \cite{deodhar78} states (with some change in notation) that

\begin{lem} \label{l:2.15}
For $ ( r, m ) $, $ ( 0, m' ) \in A $ (see \eqref{e:defA1}),
\begin{enumerate}[\upshape (i)]

	\item $ \twa{r}{m} \cdot \twa{0}{m'} = \twa{u}{v} \cdot \twa{u'}{v'} $, with
		\[ u = \frac{r m m'}{\ol{m} ( m + m' )}, \quad v = \frac{m m'}{m + m'}, \quad u' = \frac{r m'}{\ol{m} + \ol{m'}}, \quad v' = \frac{\norm{m'}}{\ol{m} + \ol{m'}}. \]
	
	\item $ \twa{0}{m'} \cdot \twa{r}{m} = \twa{u}{v} \cdot \twa{u''}{v''} $, with $ u $, $ v $ the same as in \textnormal{(i)} and
		\[ u'' = \frac{r \ol{m}}{\ol{m} + \ol{m'}}, \quad v'' = \frac{\norm{m}}{\ol{m} + \ol{m'}}. \]

\end{enumerate}
\end{lem}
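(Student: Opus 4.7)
My approach is a direct calculation in the central extension $\wtilde{G}$, exploiting (a) the uniqueness of the lift of $N(k)$ and $\ol N(k)$, and (b) the exact commutation formulas in Proposition~\ref{p:wa}. I focus on (i); (ii) will follow by a symmetric argument with left and right swapped.

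First I verify the underlying matrix identity in $G(k)$. Since $\wa{r}{m}$ (as a matrix) depends only on $m$, the LHS equals $\wa{0}{m}\cdot\wa{0}{m'}$. A direct $3\times 3$ computation using
$\wa{0}{m} = \begin{pmatrix} 0 & 0 & m \\ 0 & -\ol m/m & 0 \\ \ol m^{-1} & 0 & 0 \end{pmatrix}$
shows $\wa{0}{m}\cdot\wa{0}{m'} = \ha{m/\ol{m'}}$. An analogous calculation gives $\wa{u}{v}\cdot\wa{u'}{v'} = \ha{v/\ol{v'}}$, and substituting the proposed values one checks $v/\ol{v'} = mm'/\norm{m'} = m/\ol{m'}$. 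The same substitution verifies that the constraint $\trace{v} = -\norm{u}$ (and analogously for $(u',v')$) holds, so both pairs lie in $A$.

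To upgrade the identity to $\wtilde{G}$, I expand each $\wtilde{w}$-term via the definition
$\twa{a}{b} = \txa{a}{b}\cdot\txam{\ol a/\ol b}{1/\ol b}\cdot\txa{a\ol b/b}{b}$
and collapse adjacent $\txa{}{}$'s (respectively $\txam{}{}$'s) using the isomorphisms $\pi\colon\wtilde N(k)\to N(k)$ and $\pi\colon\wtilde{\ol N}(k)\to\ol N(k)$. For the LHS, the middle $\txa{r\ol m/m}{m}\cdot\txa{0}{m'}$ collapses to $\txa{r\ol m/m}{m+m'}$. For the RHS, the analogous collapse of $\txa{u\ol v/v}{v}\cdot\txa{u'}{v'}$ takes place. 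The remaining mixed subword on each side (of the form $\txam{}{}\cdot\txa{}{}\cdot\txam{}{}$) is simplified by repeatedly applying relations (2) and (3) of Proposition~\ref{p:wa} to pass a $\twam{}{}$ or $\twa{}{}$ across single root vectors. Because every such move is an exact identity in $\wtilde{G}$ (not merely modulo the center), no central element is accumulated, and the two sides are matched term by term.

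The main obstacle will be the bookkeeping of the resulting rational expressions in $r, m, m'$. I expect to use the freedom provided by Proposition~\ref{p:wa}(1), namely $\twa{r}{m} = \twa{r\ol m/m}{m}$, to normalise $u$ and $u'$ into the stated form, and the relation $\trace{m} = -\norm r$ on the pair $(r,m)$ to verify that the collapsed $N$-entries on the two sides really do agree. Part (ii) then follows by applying the same calculation after noting that $\twa{0}{m'}\cdot\twa{r}{m} = (\twa{-r}{\ol m}\cdot\twa{0}{\ol{m'}})^{-1}$ together with Proposition~\ref{p:wa}(1); alternatively one simply repeats the above with the factors interchanged, which only alters the trailing $\txa{}{}$-collapse and yields the asserted $(u'',v'')$ in place of $(u',v')$.
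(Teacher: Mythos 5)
Your preliminary verifications are correct but they are the easy half: both sides of (i) do project to $\ha{m/\ol{m'}}$ in $G(k)$, and the pairs $(u,v)$, $(u',v')$ do lie in $A$ (using $\trace{m}=-\norm{r}$ and $\trace{m'}=0$). The lemma, however, is an identity in the universal central extension $\wtilde{G}$, where two elements with the same image in $G(k)$ may differ by an element of the central subgroup $\pi_{1}$; ruling out exactly this central discrepancy is the entire content of the statement. Note that the thesis itself offers no proof here — the lemma is quoted from Lemma~2.15 of Deodhar's paper, where the required word computation in $\wtilde{G}$ is actually carried out — so your blind attempt is measured against that explicit computation, and it does not reproduce it.

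The gap is in your lifting step, which is a plan rather than a proof, and whose asserted endpoint is false. After expanding each $\wtilde{w}$-letter and collapsing adjacent $\txa{}{}$'s, each side is a word of the form $\txa{\cdot}{\cdot}\,\txam{\cdot}{\cdot}\,\txa{\cdot}{\cdot}\,\txam{\cdot}{\cdot}\,\txa{\cdot}{\cdot}$, and these words are \emph{not} ``matched term by term'': the left side begins with $\txa{r}{m}$, the right with $\txa{u}{v}$, and $v=mm'/(m+m')\neq m$ in general; moreover, factorizations of a group element as a word in $N(k)$ and $\ol{N}(k)$ are not unique, so termwise comparison is not even available in principle. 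Relations (2) and (3) of Proposition~\ref{p:wa} are conjugation formulas and need $\twa{\cdot}{\cdot}$-letters to conjugate with; reassembling such letters from triples and tracking the resulting parameters is precisely the computation you never perform, and your assurance that ``no central element is accumulated'' is the conclusion to be proved, not a feature of the method. Indeed the thesis's own Lemma~\ref{l:2.18} shows that the innocuous-looking normalisation $\twa{r}{m}\mapsto\twa{r'}{m}$ (same $m$, different first parameter, which you invoke to ``normalise $u$ and $u'$'') already costs a generally nontrivial central factor given by explicit Hilbert symbols — the deferred ``bookkeeping'' is exactly where central contributions arise. Your shortcut for (ii) also fails as stated: inverting (i) applied to $\twa{-r}{\ol{m}}\cdot\twa{0}{\ol{m'}}$ yields $\twa{r\ol{m'}/(m+m')}{\norm{m'}/(\ol{m}+\ol{m'})}\cdot\twa{r\ol{m}\,\ol{m'}/(m(\ol{m}+\ol{m'}))}{mm'/(m+m')}$, whose factors occur in the opposite order (second parameters $v'$ then $v$, not $v$ then $v''$), so reducing (ii) to (i) would require a further exchange relation of the same difficulty as the lemma itself.
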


Thus, using the above, we will prove the following:

\begin{lem} \label{l:2.16}
For a general $ ( s_{1}, n_{1} ) \in A $, $ t \in K^{\times} $,
\begin{multline*}
\twa{s_{1}}{n_{1}}^{-1} \cdot \twa{\frac{s_{1}}{t}}{\frac{n_{1}}{\norm{t}}} \\
=
	\begin{cases}
		\displaystyle \delh{- \frac{n_{1}}{\thet}} \cdot \delh{- \frac{n_{1}}{\norm{t} \thet}}^{-1}, & \text{if $ n_{1} \in k^{\times} \thet $;} \\
		\displaystyle	\begin{aligned}[b]
				&\delh{- \frac{\norm{n_{1}}}{\thet^{2}}} \cdot \delh{- \frac{\norm{n_{1}}}{t \thet^{2}}}^{-1} \\
				&\phantom{\quad} \cdot \delh{t},
			\end{aligned} &\text{if $ t \in k^{\times} $;} \\
		\displaystyle \hilkn{- \frac{n_{1}}{t \thet}}{\norm{t}} \cdot \delh{\norm{t}}, &\text{if $ n_{1} \in k^{\times} $, $ t \in k^{\times} \thet $.}
	\end{cases}
\end{multline*}
Otherwise for $ n_{1} = a + b \thet $, $ t = c + d \thet $, $ a $, $ d \in k^{\times} $, $ b $, $ c \in k $, if
	\[ q_{1} = c + \frac{b d \thet^{2}}{a} \neq 0, \]
then
\begin{multline*}
	\twa{s_{1}}{n_{1}}^{-1} \cdot \twa{\frac{s_{1}}{t}}{\frac{n_{1}}{\norm{t}}} \\
	= \hilkn{- \frac{a^{2} \norm{t}}{d^{2} \norm{n_{1}} \thet^{2}}}{q_{1}} \cdot \hilkn{- \frac{d \norm{n_{1}}}{a}}{\norm{t}} \cdot \delh{\norm{t}}.
\end{multline*}
If $ q_{1} = 0 $, then we let
	\[ s' = \frac{s_{1}}{t}, \quad n' = \frac{n_{1}}{\norm{t}} \]
so that
	\[ \twa{s_{1}}{n_{1}}^{-1} \cdot \twa{\frac{s_{1}}{t}}{\frac{n_{1}}{\norm{t}}} = \left[ \twa{s'}{n'}^{-1} \cdot \twa{s' t}{n' \norm{t}} \right]^{-1}, \]
and calculate $ \twa{s'}{n'}^{-1} \cdot \twa{s' t}{n' \norm{t}} $ instead, using the above results.
\end{lem}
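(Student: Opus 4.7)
The plan is to reduce the claim to a torus identity. A direct matrix computation using the form of $\wa{r}{m}$ from Section~\ref{s:bruh} (note that $\wa{r}{m}$ depends only on $m$, not on $r$, once the compatibility $\trace{m} = -\norm{r}$ is imposed) yields
$$\wa{s_{1}}{n_{1}}^{-1} \cdot \wa{s_{1}/t}{n_{1}/\norm{t}} = \ha{\norm{t}}$$
in $G(k)$, so in $\wtilde{G}$ the product in question equals $\epsilon \cdot \delh{\norm{t}}$ for some central scalar $\epsilon \in \mu_{n}$. The content of the lemma is to identify $\epsilon$ with the stated Hilbert symbol expression (or, equivalently in the first two cases, to repackage $\epsilon \cdot \delh{\norm{t}}$ as a product of $\delh{\cdot}$'s using the relation $\delh{\lambda}\delh{\mu}\delh{\lambda\mu}^{-1} = \ba{\lambda}{\mu}$).

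The main tool is Lemma~\ref{l:2.15}, which evaluates $\twa{r}{m} \cdot \twa{0}{m'}$ and $\twa{0}{m'} \cdot \twa{r}{m}$. To apply it I would first use Proposition~\ref{p:wa}(1) to rewrite $\twa{s_{1}}{n_{1}}^{-1} = \twa{-s_{1}}{\ol{n_{1}}}$, then insert a resolution of the identity $\twa{0}{m'} \cdot \twa{0}{m'}^{-1}$ between the two factors. Applying Lemma~\ref{l:2.15}(i) to the left pair and Lemma~\ref{l:2.15}(ii) to the right pair (after rewriting $\twa{0}{m'}^{-1} = \twa{0}{\ol{m'}}$) produces four factors; the parameter $m'$ is chosen so that the two middle factors match up and combine into a torus element $\twa{0}{v'} \cdot \twa{0}{v''}^{-1}$, which is a $\delh{\cdot}$ by \eqref{e:delh}. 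The outer two factors then reassemble as $\delh{\norm{t}}$ up to scalar corrections extracted via \eqref{e:ba} and \eqref{e:bastr}.

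The cases in the statement correspond to algebraic degenerations and require slight variations of this scheme. When $n_{1} \in k^{\times}\thet$ we have $\ol{n_{1}} = -n_{1}$, and the product already fits the hypotheses of Proposition~\ref{p:wa} directly; the first formula follows from \eqref{e:delh} after a short manipulation. When $t \in k^{\times}$, $n_{1}/\norm{t} = n_{1}/t^{2}$, and the relations of Proposition~\ref{p:wa} permit $\twa{s_{1}/t}{n_{1}/t^{2}}$ to be expressed as a conjugate of $\twa{s_{1}}{n_{1}}$ by a torus element, the commutator supplying the scalar correction. The mixed case $n_{1} \in k^{\times}$, $t \in k^{\times}\thet$ combines both techniques. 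In the generic case the insertion trick above applies, and the required $m'$ exists precisely when $q_{1} \neq 0$, since $q_{1}$ is (up to a nonzero factor) the denominator one must invert to solve the matching condition on the middle factors. When $q_{1} = 0$, i.e.\ when $t$ is proportional to $\ol{n_{1}}$, the insertion fails; one then inverts the whole identity and computes $\twa{s'}{n'}^{-1} \cdot \twa{s't}{n'\norm{t}}$ instead (with $s' = s_{1}/t$, $n' = n_{1}/\norm{t}$), whose $q_{1}$-analog is non-zero, so the generic formula applies to it and the desired statement follows by taking the inverse.

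The main obstacle is not the conceptual outline but the combinatorial bookkeeping of scalars in the generic case: each application of Lemma~\ref{l:2.15} and each repackaging of $\twa{0}{\cdot} \cdot \twa{0}{\cdot}^{-1}$ as a $\delh{\cdot}$ via \eqref{e:delh} contributes a $\ba{-}{-}$ factor, and these must be simplified using \eqref{e:prop2}--\eqref{e:prop4} together with the bilinearity \eqref{e:bastr} into the compact form appearing in the statement. The $q_{1} = 0$ degeneracy is the only genuinely non-routine feature; handling it by inversion is conceptually clean, but one must verify that the generic formula, once inverted, produces no additional scalar beyond those predicted by the statement.
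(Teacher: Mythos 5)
Your structural framing is sound, and parts of it match the paper: the product does indeed project to $\ha{\norm{t}}$ in $G(k)$ (a nice observation the paper leaves implicit), the $n_{1} \in k^{\times}\thet$ case is handled exactly as you say via \eqref{e:delh}, and the $t \in k^{\times}$ case is done in the paper by conjugating $\twa{s_{1}}{n_{1}}$ by $\twa{0}{\thet} \cdot \twa{0}{- t \thet}$ (Proposition~\ref{p:wa}) and shifting the conjugating factors with \eqref{e:wa4k}, as you propose. But your generic-case mechanism would not close as described. If you insert $\twa{0}{m'} \cdot \twa{0}{m'}^{-1}$ and apply Lemma~\ref{l:2.15}(i) to the left pair and (ii) to the right pair, every one of the four resulting factors has nonzero first component whenever $s_{1} \neq 0$: the lemma's formulas give inner entries $u' = r m' / ( \ol{m} + \ol{m'} )$ and $u'' = r \ol{m} / ( \ol{m} + \ol{m'} )$, which vanish only for $r = 0$. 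So no choice of $m'$ makes the middle factors toral; and even after an exact cancellation you are left with a product of two generic $\wtilde{w}_{\alpha}$-lifts, which \eqref{e:ba} --- a relation among \emph{torus} lifts only --- cannot reassemble into $\delh{\norm{t}}$: at that point you would need precisely the identity being proved. The paper's actual mechanism is different and is the missing idea. First it uses the already-proved $t \in k^{\times}$ case to trade $\twa{s_{1}}{n_{1}}$ for $\twa{s_{1} q}{n_{1} q^{2}}$ at the cost of an explicit $\delh{\cdot}$-string (\eqref{e:nq2}), where $q \in k^{\times}$ is chosen so that $\trace{( q t - 1 ) n_{1} / \norm{t}} = 0$; this is solvable exactly when $q_{1} \neq 0$, with $q = q_{1}^{-1}$ --- so $q_{1}$ is the solvability condition for this rescaling, not for an insertion parameter, and $q_{1} = 0$ means $\trace{t n_{1}} = 0$, i.e.\ $t \in k^{\times} \thet \, \ol{n_{1}}$, not $t$ proportional to $\ol{n_{1}}$. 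Then Lemma~\ref{l:2.15}, applied to the \emph{single} product $\twa{p_{1}}{l_{1}} \cdot \twa{0}{m}$ in its two orders (with $m = ( q \ol{t} - 1 ) \ol{n_{1}} / \norm{t}$, $l_{1} = q ( q t - 1 ) n_{1} / t$, $p_{1} = ( q t - 1 ) s_{1} / t$), exhibits $\twa{s_{1} / t}{n_{1} / \norm{t}}$ and $\twa{s_{1} q}{n_{1} q^{2}}$ as the two right-hand factors sharing a common left factor, so their quotient is the commutator $\twa{p_{1}}{l_{1}}^{-1} \cdot \twa{0}{m}^{-1} \cdot \twa{p_{1}}{l_{1}} \cdot \twa{0}{m}$, which Proposition~\ref{p:wa} collapses to $\twa{0}{\norm{l_{1}} / \ol{m}}^{-1} \cdot \twa{0}{m}$, a ratio of $\delh{\cdot}$'s.

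The second genuine gap is the mixed case $n_{1} \in k^{\times}$, $t \in k^{\times} \thet$, which your plan does not cover. Your inversion fallback provably fails there: writing $n_{1} = a + b \thet$, $t = c + d \thet$, the original obstruction is $q_{1} \propto a c + b d \thet^{2}$, while the inverted product (parameters $s_{1}/t$, $n_{1}/\norm{t}$, multiplier $t^{-1}$) has obstruction proportional to $a c - b d \thet^{2}$; when $b = c = 0$ both vanish simultaneously, and this is exactly the mixed case --- which is why it appears as a separate clause in the statement. Nor can it be done by ``combining both techniques'': the $n_{1} \in k^{\times}\thet$ manipulation requires $s_{1} = 0$, and conjugation by $\twa{0}{v \thet} \cdot \twa{0}{v' \thet}$ only realizes multipliers in $k^{\times}$, never in $k^{\times}\thet$. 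The paper resolves it with a third device you are missing: interpolation through the auxiliary multiplier $t' = 1 + \thet$, setting $s'' = s_{1} t' / t$ and $n'' = n_{1} \norm{t'} / \norm{t}$, so that each of $\twa{s_{1}}{n_{1}}^{-1} \cdot \twa{s''}{n''}$ and $\twa{s''}{n''}^{-1} \cdot \twa{s_{1}/t}{n_{1}/\norm{t}}$ has nonvanishing obstruction; the generic formula \eqref{e:l2.16a} applies to each, and the product of the two answers simplifies to $\hilkn{- n_{1} / ( t \thet )}{\norm{t}} \cdot \delh{\norm{t}}$. Without the commutator device in the generic case and this interpolation in the mixed case, the proposal reduces the lemma to computations it cannot complete.
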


\begin{proof}
Recall from Proposition~\ref{p:wa} that for any $ ( r, m ) \in A $ (where $ A $ is as in \eqref{e:defA1}),
\begin{equation} \label{e:wainv}
	\twa{r}{m}^{-1} = \twa{- r}{\ol{m}}.
\end{equation}
If $ n_{1} \in k^{\times} \thet $, then let $ n_{1} = b \thet $, where $ b \in k^{\times} $.  Then $ s_{1} = 0 $, and by \eqref{e:wainv},
\begin{align*}
	\twa{s_{1}}{n_{1}}^{-1} \cdot \twa{\frac{s_{1}}{t}}{\frac{n_{1}}{\norm{t}}}
	&= \twa{0}{b \thet}^{-1} \cdot \twa{0}{\frac{b \thet}{\norm{t}}} \\
	&= \twa{0}{- b \thet} \cdot \twa{0}{- \frac{b \thet}{\norm{t}}}^{-1}.
\end{align*}
Hence by \eqref{e:delh},
\begin{align*}
	&\twa{s_{1}}{n_{1}}^{-1} \cdot \twa{\frac{s_{1}}{t}}{\frac{n_{1}}{\norm{t}}} \\
	&= \twa{0}{- b \thet} \cdot \left[ \twa{0}{\thet}^{- 1} \cdot \twa{0}{\thet} \right] \cdot \twa{0}{- \frac{b \thet}{\norm{t}}}^{-1} \\
	&= \delh{- b} \cdot \delh{- \frac{b}{\norm{t}}}^{-1} \\
	&= \delh{- \frac{n_{1}}{\thet}} \cdot \delh{- \frac{n_{1}}{\norm{t} \thet}}^{-1}.
\end{align*}

So assume that $ n_{1} \notin k^{\times} \thet $, so that $ s_{1} \neq 0 $.  Let $ t \in k^{\times} $.  Then we know by \eqref{e:wainv} and applying Proposition~\ref{p:wa} twice for $ v $, $ v' \in k^{\times} $, that
\begin{align}
	[ \twa{0}{v \thet} & \cdot \twa{0}{v' \thet} ] \cdot \twa{s_{1}}{n_{1}} \cdot [ \twa{0}{v \thet} \cdot \twa{0}{v' \thet} ]^{-1} \label{e:snvv} \\
	&= \twa{\frac{s_{1} ( v' \thet ) ( v \thet )^{2}}{( - v' \thet )^{2} ( - v \thet )}}{\frac{n_{1} ( - v^{2} \thet^{2} )}{- v'^{2} \thet^{2}}} \notag \\
	&= \twa{- \frac{s_{1} v}{v'}}{\frac{n_{1} v^{2}}{v'^{2}}}. \notag
\end{align}
By Proposition~\ref{p:wa}, for $ t' \in k^{\times} $,
\begin{equation} \label{e:wa4k}
	\twa{0}{- t' \thet} \cdot \twa{s_{1}}{n_{1}} = \twa{s_{1}}{n_{1}} \cdot \twa{0}{\frac{\norm{n_{1}}}{t' \thet}}.
\end{equation}
This implies that if we let $ v = 1 $, $ v' = - t $, then \eqref{e:snvv} becomes
	\[ \twa{\frac{s_{1}}{t}}{\frac{n_{1}}{t^{2}}} = \left[ \twa{0}{\thet} \cdot \twa{0}{- t \thet} \right] \cdot \twa{s_{1}}{n_{1}} \cdot \left[ \twa{0}{\thet} \cdot \twa{0}{- t \thet} \right]^{-1}. \]
By using \eqref{e:wa4k} twice, we get
	\[ \twa{\frac{s_{1}}{t}}{\frac{n_{1}}{t^{2}}} = \twa{s_{1}}{n_{1}} \cdot \twa{0}{\frac{\norm{n_{1}}}{- \thet}} \cdot \twa{0}{\frac{\norm{n_{1}}}{t \thet}} \cdot \twa{0}{t \thet} \cdot \twa{0}{- \thet}. \]
Hence,
\begin{align*}
	\twa{\frac{s_{1}}{t}}{\frac{n_{1}}{t^{2}}}
	&=	\begin{aligned}[t]
			&\twa{s_{1}}{n_{1}} \cdot \twa{0}{\frac{\norm{n_{1}}}{- \thet}} \cdot \left[ \twa{0}{\thet}^{- 1} \cdot \twa{0}{\thet} \right] \\
			&\phantom{\ } \cdot \twa{0}{- \frac{\norm{n_{1}}}{t \thet}}^{- 1} \cdot \twa{0}{t \thet} \cdot \twa{0}{- \thet}
		\end{aligned} \\
	&= \twa{s_{1}}{n_{1}} \cdot \delh{- \frac{\norm{n_{1}}}{\thet^{2}}} \cdot \delh{- \frac{\norm{n_{1}}}{t \thet^{2}}}^{-1} \cdot \delh{t}
\end{align*}
by \eqref{e:wainv} and \eqref{e:delh}; i.e.
\begin{multline*}
	\twa{s_{1}}{n_{1}}^{-1} \cdot \twa{\frac{s_{1}}{t}}{\frac{n_{1}}{t^{2}}} \\
	= \delh{- \frac{\norm{n_{1}}}{\thet^{2}}} \cdot \delh{- \frac{\norm{n_{1}}}{t \thet^{2}}}^{-1} \cdot \delh{t}.
\end{multline*}
(Note that the above also applies to $ n_{1} \in k^{\times} \thet $: for if $ n_{1} = b \thet $, $ b \in  k^{\times} $, by using \eqref{e:ba} several times,
\begin{align*}
	&\twa{0}{b \thet}^{-1} \cdot \twa{0}{\frac{b \thet}{t^{2}}} \\
	&= \delh{- \frac{- b^{2} \thet^{2}}{\thet^{2}}} \cdot \delh{- \frac{- b^{2} \thet^{2}}{t \thet^{2}}}^{-1} \cdot \delh{t} \\
	&=	\begin{aligned}[t]
		&\delh{b^{2}} \cdot \delh{\frac{b^{2}}{t}}^{- 1} \cdot \left[ \delh{t}^{- 1} \cdot \delh{t} \right] \\
		&\phantom{\ } \cdot \delh{t} \cdot \left[ \delh{t^{2}}^{- 1} \cdot \delh{t^{2}} \right] \\
		&\phantom{\ } \cdot \left[ \delh{- \frac{b}{t^{2}}} \cdot \delh{- b}^{- 1} \cdot \delh{- b} \cdot \delh{- \frac{b}{t^{2}}}^{-1} \right]
	\end{aligned} \\
	&= \hilkn{t}{\frac{b^{2}}{t}}^{- 1} \cdot \hilkn{t}{t} \cdot \hilkn{t^{2}}{- \frac{b}{t^{2}}} \cdot \delh{- b} \cdot \delh{- \frac{b}{t^{2}}}^{-1}.
\end{align*}
Now it is a matter of using \eqref{e:bastr}, \eqref{e:prop3} and \eqref{e:prop5} to get
\begin{align*}
	&\twa{0}{b \thet}^{-1} \cdot \twa{0}{\frac{b \thet}{t^{2}}} \\
	&= \hilkn{t}{- b^{- 2}} \cdot \hilkn{t}{- 1} \cdot \hilkn{t^{2}}{b} \cdot \delh{- b} \cdot \delh{- \frac{b}{t^{2}}}^{-1}\\
	&= \hilkn{t^{- 2}}{b} \cdot \hilkn{t^{2}}{b} \cdot \delh{- b} \cdot \delh{- \frac{b}{t^{2}}}^{-1} \\
	&= \delh{- b} \cdot \delh{- \frac{b}{t^{2}}}^{-1}.
\end{align*}
Thus the two abovementioned cases coincide for $ n_{1} \in k^{\times} \thet $, $ t \in k^{\times} $.)

Let $ t \notin k^{\times} $, i.e. $ t = c + d \thet $ with $ c \in k $, $ d \in k^{\times} $.  Also, let $ n_{1} = a + b \thet $, with $ a \in k^{\times} $, $ b \in k $.

It is sufficient to find an explicit formula for
	\[ \twa{s_{1} q}{n_{1} q^{2}}^{-1} \cdot \twa{\frac{s_{1}}{t}}{\frac{n_{1}}{\norm{t}}}, \]
for some $ q \in k^{\times} $,
since we know from previous calculation that
\begin{multline} \label{e:nq2}
	\twa{s_{1}}{n_{1}}^{-1} \cdot \twa{s_{1} q}{n_{1} q^{2}} \\
	= \delh{- \frac{\norm{n_{1}}}{\thet^{2}}} \cdot \delh{- \frac{\norm{n_{1}} q}{\thet^{2}}}^{-1} \cdot \delh{\frac{1}{q}}.
\end{multline}

By the proof of Lemma~2.16 of \cite{deodhar78}, assume that we can choose $ q \in k^{\times} $ such that
	\[ \trace{( q t - 1 ) \frac{n_{1}}{\norm{t}}} = 0. \]
Then,
\begin{align*}
( q t - 1 ) n_{1} &= ( q ( c + d \thet) - 1 ) ( a + b \thet ) \\
&= ( ( q c - 1 ) + q d \thet ) ( a + b \thet ) \\
&= ( ( q c - 1 ) a + q b d \thet^{2} ) + ( q a d + ( q c - 1 ) b ) \thet.
\end{align*}
Hence, since $ ( q c - 1 ) a + q b d \thet^{2} = 0 $,
	\[ q^{-1} = c + \frac{b d \thet^{2}}{a}. \]
Let $ m = ( q \ol{t} - 1 ) \ol{n_{1}} / ( \norm{t} ) $, $ l_{1} = q ( q t - 1 ) n_{1} / t  $ and $ p_{1} = ( q t - 1 ) s_{1} / t $.  Then $ ( 0, m ) $, $ ( p_{1}, l_{1} ) \in A $, hence \twa{p_{1}}{l_{1}} and \twa{0}{m} are well-defined.  Using Lemma~\ref{l:2.15}, we know that
\begin{align*}
	\frac{p_{1} m}{\ol{l_{1}} + \ol{m}} &= \frac{( q t - 1 ) ( s_{1} / t ) m}{( q t - 1 ) m} = \frac{s_{1}}{t}, \\
	\frac{\norm{m}}{\ol{l_{1}} + \ol{m}} &= \frac{\norm{m}}{( q t - 1 ) m} = \frac{\ol{m}}{( q t - 1 )} = \frac{n_{1}}{\norm{t}}, \\
	\frac{p_{1} \ol{l_{1}}}{\ol{l_{1}} + \ol{m}} &= \frac{p_{1} m}{\ol{l_{1}} + \ol{m}} \cdot \frac{\ol{l_{1}}}{m} = \frac{s_{1}}{t} \cdot ( q t ) = s_{1} q, \\
	\frac{\norm{l_{1}}}{\ol{l_{1}} + \ol{m}} &= \frac{\norm{m}}{\ol{l_{1}} + \ol{m}} \cdot \frac{\norm{l_{1}}}{\norm{m}} = \frac{n_{1}}{\norm{t}} \cdot ( q t ) ( q \ol{t} ) = n_{1} q^{2},
\end{align*}
so
\begin{multline*}
	\twa{s_{1} q}{n_{1} q^{2}}^{-1} \cdot \twa{\frac{s_{1}}{t}}{\frac{n_{1}}{\norm{t}}} \\
	= \twa{p_{1}}{l_{1}}^{-1} \cdot \twa{0}{m}^{-1} \cdot \twa{p_{1}}{l_{1}} \cdot \twa{0}{m}
\end{multline*}
by Lemma~\ref{l:2.15}.  By Proposition~\ref{p:wa},
	\[ \twa{p_{1}}{l_{1}}^{-1} \cdot \twa{0}{m}^{-1} \cdot \twa{p_{1}}{l_{1}} = \twa{0}{\norm{l_{1}} / \ol{m}}^{-1}, \]
thus by \eqref{e:wainv},
\begin{align*}
	&\twa{s_{1} q}{n_{1} q^{2}}^{-1} \cdot \twa{\frac{s_{1}}{t}}{\frac{n_{1}}{\norm{t}}} \\
	&= \twa{0}{\frac{\norm{l_{1}}}{\ol{m}}}^{-1} \cdot \twa{0}{m} \\
	&= \twa{0}{q^{2} ( q \ol{t} - 1 ) \ol{n_{1}}}^{-1} \cdot \twa{0}{( q \ol{t} - 1 ) \frac{\ol{n_{1}}}{\norm{t}}} \\
	&= \twa{0}{q^{2} (q t - 1 ) n_{1}} \cdot \twa{0}{( q t - 1 ) \frac{n_{1}}{\norm{t}}}^{-1}.
\end{align*}
Also,
\begin{align*}
	&\twa{s_{1} q}{n_{1} q^{2}}^{-1} \cdot \twa{\frac{s_{1}}{t}}{\frac{n_{1}}{\norm{t}}} \\
	&= \twa{0}{q^{2} (q t - 1 ) n_{1}} \cdot \left[ \twa{0}{\thet}^{- 1} \cdot \twa{0}{\thet} \right] \cdot \twa{0}{( q t - 1 ) \frac{n_{1}}{\norm{t}}}^{-1} \\
	&= \delh{\frac{q^{2} ( q t - 1 ) n_{1}}{\thet}} \cdot \delh{ \frac{( q t - 1 ) n_{1}}{\norm{t} \thet}}^{-1}
\end{align*}
by \eqref{e:delh}.  This implies, using \eqref{e:nq2}, that
\begin{align*}
	&\twa{s_{1}}{n_{1}}^{-1} \cdot \twa{\frac{s_{1}}{t}}{\frac{n_{1}}{\norm{t}}} \\
	&= 	\begin{aligned}[t]
			&\left[ \delh{- \frac{\norm{n_{1}}}{\thet^{2}}} \cdot \delh{- \frac{\norm{n_{1}} q}{\thet^{2}}}^{- 1} \cdot \delh{\frac{1}{q}} \right] \\
			&\phantom{\ } \cdot \left[ \delh{\frac{q^{2} ( q t - 1 ) n_{1}}{\thet}} \cdot \delh{ \frac{( q t - 1 ) n_{1}}{\norm{t} \thet}}^{-1} \right].
		\end{aligned}
\end{align*}
Also, by \eqref{e:delinv},
	\[ \delh{- \frac{\norm{n_{1}} q}{\thet^{2}}}^{- 1} = \delh{\frac{\thet^{2}}{\norm{n_{1}} q}} \cdot \delh{- 1}^{-1}. \]
Hence,
\begin{multline*}
	\twa{s_{1}}{n_{1}}^{-1} \cdot \twa{\frac{s_{1}}{t}}{\frac{n_{1}}{\norm{t}}} \\
	=	\begin{aligned}[t]
			&\delh{- \frac{\norm{n_{1}}}{\thet^{2}}} \cdot \left[ \delh{\frac{\thet^{2}}{\norm{n_{1}} q}} \cdot \delh{-1}^{-1} \right] \cdot \delh{\frac{1}{q}} \\ 
			&\phantom{\ } \cdot \delh{\frac{q^{2} ( q t - 1 ) n_{1}}{\thet}} \cdot \delh{ \frac{( q t - 1 ) n_{1}}{\norm{t} \thet}}^{-1}.
		\end{aligned}
\end{multline*}
Since by Remark~\ref{r:dm1},
	\[ \delh{- 1}^{- 1} \cdot \delh{t'} = \delh{t'} \cdot \delh{- 1}^{- 1} \]
for all $ t' \in k^{\times} $, this implies that
\begin{multline*}
	\twa{s_{1}}{n_{1}}^{-1} \cdot \twa{\frac{s_{1}}{t}}{\frac{n_{1}}{\norm{t}}} \\
	=	\begin{aligned}[t]
			&\delh{- \frac{\norm{n_{1}}}{\thet^{2}}} \cdot \delh{\frac{\thet^{2}}{\norm{n_{1}} q}}\cdot \delh{\frac{1}{q}} \\
			&\phantom{\ } \cdot \delh{\frac{q^{2} ( q t - 1 ) n_{1}}{\thet}} \cdot \delh{-1}^{-1} \cdot \delh{ \frac{( q t - 1 ) n_{1}}{\norm{t} \thet}}^{-1}.
		\end{aligned}
\end{multline*}
Use \eqref{e:ba} several times to get
\begin{align*}
	&\twa{s_{1}}{n_{1}}^{-1} \cdot \twa{\frac{s_{1}}{t}}{\frac{n_{1}}{\norm{t}}} \\
	&=	\begin{aligned}[t]
			&\delh{- \frac{\norm{n_{1}}}{\thet^{2}}} \cdot \delh{\frac{\thet^{2}}{\norm{n_{1}} q}}\cdot \left[ \delh{- \frac{1}{q}}^{- 1} \cdot \delh{- \frac{1}{q}} \right] \\
			&\phantom{\ } \cdot \delh{\frac{1}{q}} \cdot \left[ \delh{- \frac{1}{q^{2}}}^{- 1} \cdot \delh{- \frac{1}{q^{2}}} \right] \\
			&\phantom{\ } \cdot \delh{\frac{q^{2} ( q t - 1 ) n_{1}}{\thet}} \cdot \Bigg[ \delh{- \frac{( q t - 1 ) n_{1}}{\thet}}^{- 1} \\
			&\phantom{\ } \cdot \delh{- \frac{( q t - 1 ) n_{1}}{\thet}} \Bigg] \cdot \delh{- 1}^{- 1} \cdot \Bigg[ \delh{\frac{( q t - 1 ) n_{1}}{\thet}}^{- 1} \\
			&\phantom{\ } \cdot \delh{\frac{( q t - 1 ) n_{1}}{\thet}} \Bigg] \cdot \delh{ \frac{( q t - 1 ) n_{1}}{\norm{t} \thet}}^{- 1} \\
			&\phantom{\ } \cdot \left[ \delh{\norm{t}}^{- 1} \cdot\delh{\norm{t}} \right]
		\end{aligned} \\
	&=	\begin{aligned}[t]
			&\hilkn{- \frac{\norm{n_{1}}}{\thet^{2}}}{\frac{\thet^{2}}{\norm{n_{1}} q}} \cdot \hilkn{- \frac{1}{q}}{\frac{1}{q}} \cdot \hilkn{- \frac{1}{q^{2}}}{\frac{q^{2} ( q t - 1 ) n_{1}}{\thet}} \\ &\phantom{\ } \cdot \hilkn{\frac{( q t - 1 ) n_{1}}{\thet}}{-1}^{- 1} \cdot \hilkn{\norm{t}}{\frac{( q t - 1 ) n_{1}}{\norm{t} \thet}}^{- 1} \cdot \delh{\norm{t}}.
		\end{aligned}
\end{align*}
By using \eqref{e:prop2}, \eqref{e:prop3}, \eqref{e:prop5} and \eqref{e:bastr} in the above,
\begin{multline*}
	\twa{s_{1}}{n_{1}}^{-1} \cdot \twa{\frac{s_{1}}{t}}{\frac{n_{1}}{\norm{t}}} \\
	= \hilkn{- \frac{\ol{n_{1}}}{( q t - 1 )^{2} n_{1}}}{\frac{1}{q}} \cdot \hilkn{- \frac{( q t - 1 ) n_{1}}{\thet}}{\norm{t}} \cdot \delh{\norm{t}}.
\end{multline*}
Since $ q t - 1 = \ol{n_{1}} d \thet / ( a c + b d \thet^{2} ) $,
\begin{align*}
	&\twa{s_{1}}{n_{1}}^{-1} \cdot \twa{\frac{s_{1}}{t}}{\frac{n_{1}}{\norm{t}}} \\
	&=	\begin{aligned}[t]
			&\hilkn{- \frac{\ol{n_{1}}}{( \ol{n_{1}} d \thet / ( a c + b d \thet^{2} ) )^{2} n_{1}}}{\frac{a c + b d \thet^{2}}{a}} \\
			&\phantom{\ } \cdot \hilkn{- \frac{( \ol{n_{1}} d \thet / ( a c + b d \thet^{2} ) ) n_{1}}{\thet}}{\norm{t}} \cdot \delh{\norm{t}}
		\end{aligned} \\
	&= \hilkn{- \frac{( a c + b d \thet^{2} )^{2}}{d^{2} \norm{n_{1}} \thet^{2}}}{\frac{a c + b d \thet^{2}}{a}} \cdot \hilkn{- \frac{d \norm{n_{1}}}{a c + b d \thet^{2}}}{\norm{t}} \cdot \delh{\norm{t}}.
\end{align*}
By \eqref{e:prop3}, $ \hilkn{s^{- 2}}{s} = 1 $ for all $ s \in k^{\times} $, hence by also using \eqref{e:bastr},
\begin{align*}
	&\twa{s_{1}}{n_{1}}^{-1} \cdot \twa{\frac{s_{1}}{t}}{\frac{n_{1}}{\norm{t}}} \\
	&=	\begin{aligned}[t]
			&\hilkn{- \frac{a^{2}}{d^{2} \norm{n_{1}} \thet^{2}}}{\frac{a c + b d \thet^{2}}{a}} \cdot \hilkn{\frac{a}{a c + b d \thet^{2}}}{\norm{t}} \\
			&\phantom{\ } \cdot \hilkn{- \frac{d \norm{n_{1}}}{a}}{\norm{t}} \cdot \delh{\norm{t}}.
		\end{aligned}
\end{align*}
Lastly, by \eqref{e:prop2} and \eqref{e:bastr},
\begin{multline*}
	\twa{s_{1}}{n_{1}}^{-1} \cdot \twa{\frac{s_{1}}{t}}{\frac{n_{1}}{\norm{t}}} \\
	= \hilkn{- \frac{a^{2} \norm{t}}{d^{2} \norm{n_{1}} \thet^{2}}}{\frac{a c + b d \thet^{2}}{a}} \cdot \hilkn{- \frac{d \norm{n_{1}}}{a}}{\norm{t}} \cdot \delh{\norm{t}}.
\end{multline*}

Thus, the above method applies for $ t \notin k^{\times} $, $ n_{1} \notin k^{\times} \thet $ such that
	\[ q_{1} := c + \frac{b d \thet^{2}}{a} \neq 0, \]
so that
\begin{equation} \label{e:l2.16a}
	\twa{s_{1}}{n_{1}}^{-1} \cdot \twa{\frac{s_{1}}{t}}{\frac{n_{1}}{\norm{t}}}
	=	\begin{aligned}[t]
			&\hilkn{- \frac{a^{2} \norm{t}}{d^{2} \norm{n_{1}} \thet^{2}}}{q_{1}} \cdot \hilkn{- \frac{d \norm{n_{1}}}{a}}{\norm{t}} \\
			&\phantom{\ } \cdot \delh{\norm{t}}.
		\end{aligned}
\end{equation}

If $ q_{1} = 0 $, for the majority of these cases, we can interchange $ ( s_{1}, n_{1} ) $ and $ ( s_{1} / t, n_{1} / ( \norm{t} ) ) $ to get the result, i.e. let
	\[ s' = \frac{s_{1}}{t}, \quad n' = \frac{n_{1}}{\norm{t}} \]
and apply the above to
	\[ \twa{s'}{n'}^{-1} \cdot \twa{s' t}{n' \norm{t}}. \]

The only exception is when $ c = b d \thet^{2} / a = 0 $, i.e. when $ c = 0 $, $ b = 0 $.  Hence, $ t = d \thet $ and $ n_{1} \in k^{\times} $, where $ d \in k^{\times} $.  Let $ t' = 1 + \thet $, and let $ s'' $, $ n'' \in K^{\times} $ be such that
	\[ \frac{s''}{s_{1} / t} = t', \quad \frac{n''}{n_{1} / ( \norm{t} )} = \norm{t'}; \]
i.e.
	\[ s'' = s_{1} \cdot \frac{t'}{t}, \quad n'' = n_{1} \cdot \frac{\norm{t'}}{\norm{t}}. \]
Since $ s'' / ( s_{1} / t ) $, $ s_{1} / s'' \notin k^{\times} $, $ k^{\times} \thet $, we will be able to work out $ \twa{s_{1}}{n_{1}}^{-1} \cdot \twa{s''}{n''} $ and $ \twa{s''}{n''}^{-1} \cdot \twa{s_{1} / t}{n_{1} / ( \norm{t} )} $ using \eqref{e:l2.16a}; then
\begin{multline*}
	\twa{s_{1}}{n_{1}}^{-1} \cdot \twa{\frac{s_{1}}{t}}{\frac{n_{1}}{\norm{t}}} \\
	= \left[ \twa{s_{1}}{n_{1}}^{-1} \cdot \twa{s''}{n''} \right] \cdot \left[ \twa{s''}{n''}^{-1} \cdot \twa{\frac{s_{1}}{t}}{\frac{n_{1}}{\norm{t}}} \right].
\end{multline*}
Let us calculate $ \twa{s_{1}}{n_{1}}^{-1} \cdot \twa{s''}{n''} $ first: since
	\[ \frac{s_{1}}{s''} = \frac{t}{t'} = \frac{d \thet}{1 + \thet} = \frac{- d \thet^{2} + d \thet}{\norm{t'}}, \quad n_{1} \in k^{\times}, \]
then in this case, using \eqref{e:l2.16a},
	\[ q_{1} = \frac{- d \thet^{2}}{1 - \thet^{2}} = - \frac{t \thet}{\norm{t'}}, \]
and therefore,
\begin{align*}
	&\twa{s_{1}}{n_{1}}^{-1} \cdot \twa{s''}{n''} \\
	&= \hilkn{- \frac{n_{1}^{2} \norm{t} / ( \norm{t'} )}{( d / ( \norm{t'} ) )^{2} n_{1}^{2} \thet^{2}}}{q_{1}} \cdot \hilkn{- \frac{( d / ( \norm{t'} ) ) n_{1}^{2}}{n_{1}}}{\frac{\norm{t}}{\norm{t'}}} \cdot \delh{\frac{\norm{t}}{\norm{t'}}} \\
	&= \hilkn{\norm{t'}}{- \frac{t \thet}{\norm{t'}}} \cdot \hilkn{- \frac{t n_{1}}{\norm{t'} \thet}}{\frac{\norm{t}}{\norm{t'}}} \cdot \delh{\frac{\norm{t}}{\norm{t'}}}.
\end{align*}
Thus,
\begin{multline} \label{e:l2.16b}
	\twa{s_{1}}{n_{1}}^{-1} \cdot \twa{s''}{n''} \\
	= \hilkn{\norm{t'}}{t \thet} \cdot \hilkn{- \frac{n_{1}}{t \thet}}{\frac{\norm{t}}{\norm{t'}}} \cdot \delh{\frac{\norm{t}}{\norm{t'}}}
\end{multline}
by \eqref{e:prop3}.  As for $ \twa{s''}{n''}^{- 1} \cdot \twa{s_{1} / t}{n_{1} / ( \norm{t} )} $,
	\[ \frac{s''}{s_{1} / t} = t' = 1 + \thet, \quad n'' = \frac{n_{1} \norm{t'}}{\norm{t}} \in k^{\times}, \]
which implies that by \eqref{e:l2.16a}, $ q_{1} = 1 $ so that
\begin{multline*}
	\twa{s''}{n''}^{-1} \cdot \twa{\frac{s_{1}}{t}}{\frac{n_{1}}{\norm{t}}} \\
	= \hilkn{- \frac{n''^{2} \norm{t'}}{1^{2} n''^{2} \thet^{2}}}{1} \cdot \hilkn{- \frac{1 \cdot n''^{2}}{n''}}{\norm{t'}} \delh{\norm{t'}},
\end{multline*}
i.e.
\begin{equation} \label{e:l2.16c}
	\twa{s''}{n''}^{-1} \cdot \twa{\frac{s_{1}}{t}}{\frac{n_{1}}{\norm{t}}} = \hilkn{- \frac{n_{1} \norm{t'}}{\norm{t}}}{\norm{t'}} \cdot \delh{\norm{t'}}.
\end{equation}
Therefore, by \eqref{e:l2.16b} and \eqref{e:l2.16c},
\begin{align*}
	&\twa{s_{1}}{n_{1}}^{-1} \cdot \twa{\frac{s_{1}}{t}}{\frac{n_{1}}{\norm{t}}} \\
	&= \left[ \twa{s_{1}}{n_{1}}^{-1} \cdot \twa{s''}{n''} \right] \cdot \left[ \twa{s''}{n''}^{-1} \cdot \twa{\frac{s_{1}}{t}}{\frac{n_{1}}{\norm{t}}} \right] \\
	&=	\begin{aligned}[t]
			&\left[ \hilkn{\norm{t'}}{t \thet} \cdot \hilkn{- \frac{n_{1}}{t \thet}}{\frac{\norm{t}}{\norm{t'}}} \cdot \delh{\frac{\norm{t}}{\norm{t'}}} \right] \\
			&\phantom{\ } \cdot \left[ \hilkn{- \frac{n_{1} \norm{t'}}{\norm{t}}}{\norm{t'}} \cdot \delh{\norm{t'}} \right];
		\end{aligned}
\end{align*}
and by \eqref{e:bastr} and \eqref{e:prop5},
\begin{multline*}
	\twa{s_{1}}{n_{1}}^{-1} \cdot \twa{\frac{s_{1}}{t}}{\frac{n_{1}}{\norm{t}}} \\
	=	\begin{aligned}[t]
			&\hilkn{\frac{1}{t \thet}}{\norm{t'}} \cdot \hilkn{- \frac{n_{1}}{t \thet}}{\norm{t}} \cdot \hilkn{- \frac{n_{1}}{t \thet}}{\norm{t'}}^{- 1} \\
			&\phantom{\ } \cdot \hilkn{- \frac{n_{1} \norm{t'}}{\norm{t}}}{\norm{t'}} \cdot \delh{\frac{\norm{t}}{\norm{t'}}} \cdot \delh{\norm{t'}} \\
			&\phantom{\ } \cdot \left[ \delh{\norm{t}}^{-1} \cdot \delh{\norm{t}} \right].
		\end{aligned}
\end{multline*}
Again by \eqref{e:bastr} as well as \eqref{e:ba},
\begin{multline*}
	\twa{s_{1}}{n_{1}}^{-1} \cdot \twa{\frac{s_{1}}{t}}{\frac{n_{1}}{\norm{t}}} \\
	= \hilkn{\frac{\norm{t'}}{\norm{t}}}{\norm{t'}} \cdot \hilkn{- \frac{n_{1}}{t \thet}}{\norm{t}} \cdot \hilkn{\frac{\norm{t}}{\norm{t'}}}{\norm{t'}} \cdot \delh{\norm{t}}.
\end{multline*}
Hence we have
	\[ \twa{s_{1}}{n_{1}}^{-1} \cdot \twa{\frac{s_{1}}{t}}{\frac{n_{1}}{\norm{t}}} =	\hilkn{- \frac{n_{1}}{t \thet}}{\norm{t}} \cdot \delh{\norm{t}} \]
by \eqref{e:bastr} again.  Thus the lemma is proved.
\end{proof}

\begin{lem} \label{l:2.18}
If $ ( r, m ) $, $ ( r', m ) \in A $, then
	\[ \frac{r'}{r} = \frac{\ol{t}}{t}, \]
for some (non-unique) $ t \in K^{\times} $.

Hence, if $ r' / r \neq 1 $ and $ m = a + b \thet $, $ t = c + d \thet $ such that $ a $, $ b $, $ c $, $ d \in k $, then
\begin{multline*}
	\twa{r}{m} \cdot \twa{r'}{m}^{-1} \\
	=	\begin{cases}
			\displaystyle \hilkn{- 1}{- \frac{\norm{m}}{\thet^{2}}}, &\text{if $ c = 0 $;} \\
			\displaystyle \hilkn{\frac{a^{2} \norm{t}}{d^{2} \norm{m} \thet^{2}}}{\frac{\left( a c - b d \thet^{2} \right)^{2}}{a^{2} \norm{t}}}, &\text{if $ c \neq 0 $, $ a c - b d \thet^{2} \neq 0 $;} \\
			\displaystyle \hilkn{\frac{\left( a c + b d \thet^{2} \right)^{2}}{a^{2} \norm{t}}}{\frac{a^{2} \norm{t}}{d^{2} \norm{m} \thet^{2}}}, &\text{if $ c \neq 0 $, $ a c - b d \thet^{2} = 0 $.}
		\end{cases}
\end{multline*}
\end{lem}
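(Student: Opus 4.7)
The plan is to reduce Lemma~\ref{l:2.18} to Lemma~\ref{l:2.16} by means of Hilbert's Theorem~90. Since $(r,m), (r',m) \in A$, we have $\trace{m} = -\norm{r} = -\norm{r'}$, so $\norm{r'/r} = 1$ (the hypothesis $r'/r \ne 1$ forces $r \ne 0$). Hilbert's Theorem~90 for the quadratic extension $K/k$ then produces $t \in K^{\times}$, unique up to $k^{\times}$-scaling, with $r'/r = \ol{t}/t$. Since the matrix formula for $\wa{r}{m}$ in Section~\ref{s:bruh} depends only on $m$, the product $\twa{r}{m}\cdot\twa{r'}{m}^{-1}$ projects to the identity in $G(k)$, hence lies in the central subgroup $\pi_{1}$ of $\wtilde{G}$. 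In particular it equals $[\twa{r}{m}^{-1}\cdot\twa{r'}{m}]^{-1}$.

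I then apply Lemma~\ref{l:2.16} with $s_{1} := r$, $n_{1} := m$, and the variable ``$t$'' of that lemma set to $\tau := t/\ol{t}$: since $\norm{\tau} = 1$, we have $s_{1}/\tau = r\ol{t}/t = r'$ and $n_{1}/\norm{\tau} = m$, so the lemma computes $\twa{r}{m}^{-1}\cdot\twa{r'}{m}$, which I invert. The case split matches $t$. If $c = 0$ then $t = d\thet$ and $\tau = -1 \in k^{\times}$; the ``$t \in k^{\times}$'' branch of Lemma~\ref{l:2.16} gives $\delh{-\norm{m}/\thet^{2}}\cdot\delh{\norm{m}/\thet^{2}}^{-1}\cdot\delh{-1}$, which via \eqref{e:delinv}, \eqref{e:prop3} and the Steinberg identity $\hilkn{s}{-s} = 1$ collapses (after inversion) to $\hilkn{-1}{-\norm{m}/\thet^{2}}$. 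If $c \ne 0$ then $d \ne 0$ (otherwise $\tau = 1$ and $r = r'$, excluded) and $a \ne 0$ (since $\trace{m} = -\norm{r} \ne 0$); writing $\tau = A + B\thet$ with $A = (c^{2}+d^{2}\thet^{2})/\norm{t}$ and $B = 2cd/\norm{t} \in k^{\times}$, the final branch of Lemma~\ref{l:2.16} applies, and because $\norm{\tau} = 1$ both the $\delh{\norm{\tau}}$ factor and the middle Hilbert symbol are trivial, leaving
\[ \twa{r}{m}^{-1}\cdot\twa{r'}{m} = \hilkn{-\frac{a^{2}}{B^{2}\norm{m}\thet^{2}}}{q_{1}}, \qquad q_{1} = A + \frac{Bb\thet^{2}}{a}. \]

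The remaining step is to simplify this symbol into the stated form. Substituting the values of $A$, $B$ and clearing denominators one verifies the algebraic identity
\[ a^{2}\norm{t}\,q_{1} = (ac+bd\thet^{2})^{2} + d^{2}\thet^{2}\norm{m} = (ac-bd\thet^{2})^{2} + 4abcd\thet^{2} + d^{2}\thet^{2}\norm{m}, \]
so $q_{1}$ differs from either $(ac+bd\thet^{2})^{2}/(a^{2}\norm{t})$ or $(ac-bd\thet^{2})^{2}/(a^{2}\norm{t})$ by an additive correction $d^{2}\thet^{2}\norm{m}/(a^{2}\norm{t})$ which is (up to sign) a norm from $K/k$. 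Bilinearity \eqref{e:bastri}, the norm-Hilbert relation \eqref{e:norm}, and the Steinberg identities \eqref{e:prop3}--\eqref{e:prop4} then allow the correction to be absorbed into the first argument, leaving the appropriate square as the second argument. The case split $ac - bd\thet^{2} \ne 0$ versus $=0$ governs which of the two extractions is well defined: in the exceptional sub-case one must use the companion identity with $ac + bd\thet^{2}$ instead, which for general $n$ manifests as the formal swap of the two slots of the Hilbert symbol that appears in the third case of the statement.

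\textbf{Main obstacle.} The delicate point is this last simplification. Starting from a Hilbert symbol whose second argument is an inhomogeneous polynomial in $a,b,c,d,\thet$, one must use the Steinberg and norm relations to extract a clean perfect square $(ac \mp bd\thet^{2})^{2}$ while simultaneously absorbing the additive correction $d^{2}\thet^{2}\norm{m}$ into the first slot, and one must correctly identify the degenerate locus $ac - bd\thet^{2} = 0$ and replace $t$ by $\ol{t}$ there (equivalently, flip $d \mapsto -d$) to obtain a valid expression. Keeping track of signs and of the $k^{\times}$-scaling ambiguity in $t$ throughout this manipulation is the computational crux.
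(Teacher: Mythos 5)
Your reduction is genuinely different from the paper's, and its first half is sound. The paper proves the lemma by splitting $\twa{r}{m} \cdot \twa{r'}{m}^{-1}$ into the two bracketed factors of \eqref{e:l2.18a}, using the conjugation relations of Proposition~\ref{p:wa} (together with Lemmas~2.12 and~2.18 of \cite{deodhar78}), and then evaluating each factor by Lemma~\ref{l:2.16} with scalings $1/\ol{t}$ and $t$; the payoff of that decomposition is that the relevant $q_{1}$ in \eqref{e:l2.16a} comes out as $(ac - bd\thet^{2})/(a \norm{t})$, \emph{linear} in the data, so the statement's case split is immediate and the $\delh{\norm{t}}$, $\delh{1/\norm{t}}$ terms cancel in pairs. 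You instead apply Lemma~\ref{l:2.16} once, with the norm-one scaling $\tau = t/\ol{t}$, so that $\norm{\tau} = 1$ kills the $\delh{\norm{\tau}}$ factor and the middle symbol. This is legitimate (centrality does follow from $\wa{r}{m} = \wa{r'}{m}$ in $G(k)$), your $c = 0$ branch is correct, and your intermediate identity $a^{2}\norm{t}\, q_{1} = (ac + bd\thet^{2})^{2} + d^{2}\thet^{2}\norm{m}$ checks out.

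There are, however, two genuine gaps. First, you misidentify the degenerate locus of your own reduction. Your application of \eqref{e:l2.16a} fails exactly when $q_{1} = 0$, and since $2a\norm{t}\,q_{1} = \trace{m t^{2}}$, that locus is $\trace{mt^{2}} = 0$ --- which is \emph{independent} of the statement's condition $ac - bd\thet^{2} = 0$: if $ac = bd\thet^{2}$ then $a\norm{t}\,q_{1} = a(3c^{2} + d^{2}\thet^{2})$, generically nonzero. So ``replace $t$ by $\ol{t}$ when $ac - bd\thet^{2} = 0$'' addresses the wrong case: when $ac - bd\thet^{2} = 0$ but $\trace{mt^{2}} \neq 0$ your single symbol is still valid and you still owe a derivation of the third displayed expression from it, while when $\trace{mt^{2}} = 0$ you must instead invoke the $q_{1} = 0$ fallback built into Lemma~\ref{l:2.16} (or verify that passing to $\ol{t}$, for which $a^{2}\norm{t}\,q_{1}' = (ac - bd\thet^{2})^{2} + d^{2}\thet^{2}\norm{m}$, escapes the vanishing --- not automatic). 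Second, the decisive simplification is only asserted: since $\hilkn{-}{-}$ is the $n$-th power symbol, square factors such as $4c^{2}d^{2}/\norm{t}^{2}$ in your first slot cannot simply be discarded, and converting the inhomogeneous $q_{1}$ into a perfect-square second slot requires an explicit Steinberg argument --- the natural one being $\hilkn{u}{(1-u)v} = \hilkn{u}{v}$ with $u = (ac + bd\thet^{2})^{2}/(a^{2}\norm{t}\,q_{1})$ and $1 - u = d^{2}\thet^{2}\norm{m}/(a^{2}\norm{t}\,q_{1})$, which produces the $(ac + bd\thet^{2})$ form (your inversion of the central element then accounting, via \eqref{e:prop2}, for the swapped slots in the statement's third case), with the second case coming from the companion computation for $\ol{t}$, where no inversion occurs because $\twa{r}{m}\cdot\twa{r'}{m}^{-1} = \twa{r'}{m}^{-1}\cdot\twa{r}{m}$ by centrality. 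Until that endgame is actually carried out --- it is exactly the step you flag as the ``computational crux'' --- the proof is incomplete at its decisive point.
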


\begin{proof}
Since $ \trace{m} = - \norm{r} = - \norm{r'} $, it is necessary and sufficient that
	\[ \norm{\frac{r'}{r}} = 1. \]
Thus, the existence of $ t $ is Hilbert's Theorem 90.  A reference for this theorem is II.1.2 Proposition~1 of \cite{serre02}.

We may assume that $ r \neq r' $, otherwise there is nothing to prove.  So either $ r' / r = - 1 $ or $ r' / r = e + f \thet $, where $ e $, $ f \in k^{\times} $, therefore it is possible to choose
	\[ t =	\begin{cases}
				\thet, & \text{if $ r' / r = - 1 $;} \\
				- ( 1 + e ) + f \thet, & \text{otherwise.}
			\end{cases} \]
Note that these are not the only choices for $ t $.  The sequel is not dependent on the choice of $ t $, thus we may choose any $ t $ which satisfies $ r' / r = \ol{t} / t $.

Firstly, if $ r' / r = - 1 $ (so $ t \in k^{\times} \thet $), then by Lemma~\ref{l:2.16},  
\begin{align*}
	&\twa{r}{m} \cdot \twa{r'}{m}^{- 1} \\
	&= \twa{r}{m} \cdot \twa{\frac{r}{- 1}}{\frac{m}{( - 1 )^{2}}}^{- 1} \\
	&= \delh{- \frac{\norm{m}}{\thet^{2}}} \cdot \delh{- \frac{\norm{m}}{( - 1 ) \cdot \thet^{2}}}^{- 1} \cdot \delh{- 1}.
\end{align*}
And since by Remark~\ref{r:dm1}, \delh{- 1} commutes with \delh{s}, $ s \in k^{\times} $,
	\[ \twa{r}{m} \cdot \twa{r'}{m}^{- 1}= \delh{- 1} \cdot \delh{- \frac{\norm{m}}{\thet^{2}}} \cdot \delh{\frac{\norm{m}}{\thet^{2}}}^{- 1}. \]
This implies that by \eqref{e:ba},
	\[ \twa{r}{m} \cdot \twa{r'}{m}^{- 1} = \hilkn{- 1}{- \frac{\norm{m}}{\thet^{2}}}. \]

By Lemma~2.12 of \cite{deodhar78}, for arbitrary $ ( s_{1}, n_{1} ) $, $ ( p_{1}, l_{1} ) \in A $, $ t \in K^{\times} $,
	\[ \twa{s_{1}}{n_{1}} \cdot \twa{p_{1}}{l_{1}} \cdot \twa{\frac{p_{1}}{t}}{\frac{l_{1}}{\norm{t}}}^{- 1} \cdot \twa{\frac{s_{1}}{t}}{\frac{n_{1}}{\norm{t}}}^{- 1} \in \pi_{1}. \]
By choosing $ s_{1} = r $, $ n_{1} = m $, $ p_{1} = r \ol{t} $ and $ l_{1} = m \norm{t} $, by Lemma~2.18 of \cite{deodhar78}, the above is equal to $ \twa{r}{m} \cdot \twa{r'}{m}^{-1} $.

By Proposition~\ref{p:wa},
\begin{gather*}
	\twa{r}{m} \cdot \twa{r \ol{t}}{m \norm{t}} \cdot \twa{r}{m}^{- 1} = \twa{\frac{r \ol{t} \cdot m^{2}}{m \norm{t} \cdot \ol{m}}}{\frac{\norm{m}}{\ol{m} \norm{t}}}, \\
	\twa{r}{m} \cdot \twa{\frac{r \ol{t}}{t}}{m}^{-1} \cdot \twa{r}{m}^{- 1} = \twa{- \frac{r ( \ol{t} / t ) \cdot m^{2}}{ m \cdot \ol{m}}}{\frac{\norm{m}}{m}}.
\end{gather*}
Hence,
\begin{align*}
	&\twa{r}{m} \cdot \twa{r'}{m}^{-1} \\
	&= \twa{r}{m} \cdot \twa{r \ol{t}}{m \norm{t}} \cdot \twa{\frac{r \ol{t}}{t}}{m}^{-1} \cdot \twa{\frac{r}{t}}{\frac{m}{\norm{t}}}^{-1} \\
	&=	\begin{aligned}[t]
			&\twa{\frac{r \ol{t} \cdot m^{2}}{m \norm{t} \cdot \ol{m}}}{\frac{\norm{m}}{\ol{m} \norm{t}}} \cdot \twa{- \frac{r ( \ol{t} / t ) \cdot m^{2}}{m \cdot \ol{m}}}{\frac{\norm{m}}{m}} \cdot \twa{r}{m} \\
			&\phantom{\ } \cdot \twa{- \frac{r}{t}}{\frac{\ol{m}}{\norm{t}}}.
		\end{aligned}
\end{align*}
Thus by \eqref{e:wainv},
\begin{multline} \label{e:l2.18a}
	\twa{r}{m} \cdot \twa{r'}{m}^{-1} = \left[ \twa{- \frac{r m}{t \ol{m}}}{\frac{\ol{m}}{\norm{t}}}^{-1} \cdot \twa{- \frac{r \ol{t} m}{t \ol{m}}}{\ol{m}} \right] \\ \cdot \left[ \twa{- r}{\ol{m}}^{-1} \cdot \twa{- \frac{r}{t}}{\frac{\ol{m}}{\norm{t}}} \right].
\end{multline}

As in Lemma~\ref{l:2.16}, there are a few cases to consider.  We may assume that $ ( r, m ) \neq ( r', m ) $, otherwise there is nothing to prove; therefore $ m \notin k^{\times} \thet $.  Also, $ t \notin k^{\times} $, since $ r \neq r' $.  We have already looked at the case where $ t \in k^{\times} \thet $.  Thus, there is only one last case.

Let $ m = a + b \thet $ and  $ t = c + d \thet $, where $ a $, $ d \in k^{\times} $, $ c $, $ d \in k $.  We first consider 
	\[ \twa{- \frac{r m}{t \ol{m}}}{\frac{\ol{m}}{\norm{t}}}^{-1} \cdot \twa{- \frac{r \ol{t} m}{t \ol{m}}}{\ol{m}}. \]
By Lemma~\ref{l:2.16},
\begin{align*}
	&\twa{- \frac{r m}{t \ol{m}}}{\frac{\ol{m}}{\norm{t}}}^{-1} \cdot \twa{- \frac{r \ol{t} m}{t \ol{m}}}{\ol{m}} \\
	&=	\begin{aligned}[t]
			&\hilkn{- \frac{( a / ( \norm{t} ) )^{2} ( 1 / ( \norm{t} ) )}{( d / ( \norm{t} ) )^{2} ( \norm{m} / ( \norm{t} )^{2} ) \thet^{2}}}{\frac{c}{\norm{t}} + \frac{( - b / \norm{t} ) ( d / \norm{t} ) \thet^{2}}{a / \norm{t}}} \\
			&\phantom{\ } \cdot \hilkn{- \frac{( d / ( \norm{t} ) ) ( \norm{m} / ( \norm{t} )^{2} )}{a / ( \norm{t} )}}{\frac{1}{\norm{t}}} \cdot \delh{\frac{1}{\norm{t}}}
		\end{aligned} \\
	&= \hilkn{- \frac{a^{2} \norm{t}}{d^{2} \norm{m} \thet^{2}}}{\frac{a c - b d \thet^{2}}{a \norm{t}}} \cdot \hilkn{- \frac{d \norm{m}}{a ( \norm{t} )^{2}}}{\frac{1}{\norm{t}}} \cdot \delh{\frac{1}{\norm{t}}};
\end{align*}
note that this is only valid if $ a c - b d \thet^{2} \neq 0 $ (we will deal with the other case later in the proof).  Hence
\begin{multline} \label{e:l2.18b}
	\twa{- \frac{r m}{t \ol{m}}}{\frac{\ol{m}}{\norm{t}}}^{-1} \cdot \twa{- \frac{r \ol{t} m}{t \ol{m}}}{\ol{m}} \\
	= \hilkn{- \frac{a^{2} \norm{t}}{d^{2} \norm{m} \thet^{2}}}{\frac{a c - b d \thet^{2}}{a \norm{t}}} \cdot \hilkn{- \frac{d \norm{m}}{a}}{\frac{1}{\norm{t}}} \cdot \delh{\frac{1}{\norm{t}}}
\end{multline}
by \eqref{e:prop3}.  Similarly, for
	\[ \twa{- r}{\ol{m}}^{-1} \cdot \twa{- \frac{r}{t}}{\frac{\ol{m}}{\norm{t}}}, \]
Lemma~\ref{l:2.16} gives
\begin{multline} \label{e:l2.18c}
	\twa{- r}{\ol{m}}^{-1} \cdot \twa{- \frac{r}{t}}{\frac{\ol{m}}{\norm{t}}} \\
	= \hilkn{- \frac{a^{2} \norm{t}}{d^{2} \norm{m} \thet^{2}}}{c + \frac{( - b ) d \thet^{2}}{a}} \cdot \hilkn{- \frac{d \norm{m}}{a}}{\norm{t}} \cdot \delh{\norm{t}}.
\end{multline}

Using \eqref{e:l2.18a}, \eqref{e:l2.18b} and \eqref{e:l2.18c},
\begin{multline*}
	\twa{r}{m} \cdot \twa{r'}{m}^{- 1} \\
	=	\begin{aligned}[t]
			&\left[ \hilkn{- \frac{a^{2} \norm{t}}{d^{2} \norm{m} \thet^{2}}}{\frac{a c - b d \thet^{2}}{a \norm{t}}} \cdot \hilkn{- \frac{d \norm{m}}{a}}{\frac{1}{\norm{t}}} \cdot \delh{\frac{1}{\norm{t}}} \right] \\
			&\phantom{\ } \cdot \left[ \hilkn{- \frac{a^{2} \norm{t}}{d^{2} \norm{m} \thet^{2}}}{c + \frac{( - b ) d \thet^{2}}{a}} \cdot \hilkn{- \frac{d \norm{m}}{a}}{\norm{t}} \cdot \delh{\norm{t}} \right].
		\end{aligned}
\end{multline*}
We simplify the above using \eqref{e:bastr} and \eqref{e:ba} to get
	\[ \twa{r}{m} \cdot \twa{r'}{m}^{- 1}
		=	\begin{aligned}[t]
				&\hilkn{- \frac{a^{2} \norm{t}}{d^{2} \norm{m} \thet^{2}}}{\left( \frac{a c - b d \thet^{2}}{a \norm{t}} \right)^{2} \cdot \norm{t}} \\
				&\phantom{\ } \cdot \hilkn{- \frac{d \norm{m}}{a}}{1} \cdot \hilkn{\frac{1}{\norm{t}}}{\norm{t}}.
			\end{aligned} \]
Thus by \eqref{e:bastr} and \eqref{e:prop3},
\begin{multline*}
	\twa{r}{m} \cdot \twa{r'}{m}^{- 1} \\
	= \hilkn{\frac{a^{2} \norm{t}}{d^{2} \norm{m} \thet^{2}}}{\frac{\left( a c - b d \thet^{2} \right)^{2}}{a^{2} \norm{t}}} \cdot \hilkn{- 1}{\frac{\left( a c - b d \thet^{2} \right)^{2}}{a^{2} \norm{t}}} \cdot \hilkn{- 1}{\norm{t}}.
\end{multline*}
By using \eqref{e:bastr} and the fact that $ \hilkn{- 1}{s^{2}} = 1 $ for all $ s \in k^{\times} $ by \eqref{e:prop5}, we finally get
	\[ \twa{r}{m} \cdot \twa{r'}{m}^{- 1} = \hilkn{\frac{a^{2} \norm{t}}{d^{2} \norm{m} \thet^{2}}}{\frac{\left( a c - b d \thet^{2} \right)^{2}}{a^{2} \norm{t}}}. \]

If $ a c - b d \thet^{2} = 0 $, we may instead use the above for
	\[ \twa{r'}{m} \cdot \twa{r}{m}^{- 1}. \]
This would imply that since
	\[ \frac{r}{r'} = \frac{t}{\ol{t}} = \frac{\ol{t'}}{t'}, \]
where $ t' = \ol{t} = c - d \thet $, replacing $ t $ by $ t' $ would give
\begin{align*}
	\twa{r'}{m} \cdot \twa{r}{m}^{- 1}
	&= \hilkn{\frac{a^{2} \norm{t}}{( - d )^{2} \norm{m} \thet^{2}}}{\frac{( a c - b ( - d ) \thet^{2} )^{2}}{a^{2} \norm{t}}} \\
	&= \hilkn{\frac{a^{2} \norm{t}}{d^{2} \norm{m} \thet^{2}}}{\frac{( a c + b d \thet^{2} )^{2}}{a^{2} \norm{t}}}.
\end{align*}
Therefore,
\begin{align*}
	\twa{r}{m} \cdot \twa{r'}{m}^{- 1}
	&= \left[ \twa{r'}{m} \cdot \twa{r}{m}^{- 1} \right]^{- 1} \\
	&= \hilkn{\frac{a^{2} \norm{t}}{d^{2} \norm{m} \thet^{2}}}{\frac{( a c + b d \thet^{2} )^{2}}{a^{2} \norm{t}}}^{- 1} \\
	&= \hilkn{\frac{( a c + b d \thet^{2} )^{2}}{a^{2} \norm{t}}}{\frac{a^{2} \norm{t}}{d^{2} \norm{m} \thet^{2}}}
\end{align*}
by \eqref{e:prop2}.
\end{proof}

\section{A slightly more general result} \label{s:lammuk}

Now that we have established a few lemmas, we can work on \ba{\lambda}{\mu} for the cases where both $ \lambda $, $ \mu \notin k^{\times} $.

\begin{lem} \label{l:2.23i}
For all $ \lambda \notin k^{\times} $,
	\[ \ba{\lambda}{\lambda^{-1}} = \hilkn{- 1}{\norm{\lambda}}. \]
\end{lem}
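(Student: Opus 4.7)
The starting point is the observation that $\ha{1} = I$ and hence $\delh{1} = 1$ by the definition \eqref{e:delh}, so that $\ba{\lambda}{\lambda^{-1}} = \delh{\lambda} \cdot \delh{\lambda^{-1}}$. Setting $M = \del{1}{\lambda}$ and $M' = \del{2}{\lambda}$, a direct computation from Proposition~\ref{p:deltas} applied to $\lambda^{-1} = \ol{\lambda}/\norm{\lambda}$ (invoking the invariance of $\delta_{i}$ under $k^{\times}$-scaling) shows that $\del{1}{\lambda^{-1}} = \ol{M}$ and $\del{2}{\lambda^{-1}} = \norm{M}/M'$. Since $M'$ and $\norm{M}/M'$ both lie in $k^{\times}\thet$, Proposition~\ref{p:wa}(1) gives $\twa{0}{m'}^{-1} = \twa{0}{-m'}$ for these elements, so the four-term expression
\[
	\ba{\lambda}{\lambda^{-1}}
	=
	\twa{1}{M} \cdot \twa{0}{-M'} \cdot \twa{1}{\ol{M}} \cdot \twa{0}{-\norm{M}/M'}
\]
arises at once from \eqref{e:delh}.

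The crux is the commutation identity $\twa{1}{M} \cdot \twa{0}{-M'} = \twa{0}{\norm{M}/M'} \cdot \twa{1}{M}$. To prove it the plan is to apply Proposition~\ref{p:wa}(4) with $(r,m) = (1, M)$ and $(r',m') = (0, -M')$, giving $\twa{1}{M} \cdot \twa{0}{-M'} \cdot \twa{1}{M}^{-1} = \twam{0}{-M'/\norm{M}}$, and then invoke the inverse form of Proposition~\ref{p:wa}(1), namely $\twam{0}{m} = \twa{0}{1/\ol{m}}$, to recognise the right-hand side as $\twa{0}{\norm{M}/M'}$. Using this commutation together with the observation $\twa{0}{-\norm{M}/M'} = \twa{0}{\norm{M}/M'}^{-1}$, the product rearranges to a conjugation
\[
	\ba{\lambda}{\lambda^{-1}}
	=
	\twa{0}{\norm{M}/M'} \cdot \bigl( \twa{1}{M} \cdot \twa{1}{\ol{M}} \bigr) \cdot \twa{0}{\norm{M}/M'}^{-1}.
\]

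Next I would rewrite $\twa{1}{\ol{M}} = \twa{-1}{M}^{-1}$ via the identity $\twa{r}{m}^{-1} = \twa{-r}{\ol{m}}$ from Proposition~\ref{p:wa}(1), so that the inner factor becomes $\twa{1}{M} \cdot \twa{-1}{M}^{-1}$. This is precisely the setting of Lemma~\ref{l:2.18} with $r = 1$, $r' = -1$, $m = M$: one has $(1, M), (-1, M) \in A$ since $\trace{M} = -1$, and the case $r'/r = -1$ of the lemma then yields $\twa{1}{M} \cdot \twa{-1}{M}^{-1} = \hilkn{-1}{-\norm{M}/\thet^{2}}$. Since this value is central in $\wtilde{G}$, the outer conjugation is trivial, giving $\ba{\lambda}{\lambda^{-1}} = \hilkn{-1}{-\norm{M}/\thet^{2}}$.

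Finally, to match this with $\hilkn{-1}{\norm{\lambda}}$: from $\lambda = M/M'$ and $\ol{M'} = -M'$ one computes $\norm{\lambda} = -\norm{M}/M'^{2}$, so $\norm{\lambda}$ and $-\norm{M}/\thet^{2}$ differ by the factor $\thet^{2}/M'^{2}$, which is a square in $k^{\times}$ because $M' \in k^{\times}\thet$. By the bilinearity of the Hilbert symbol, $\hilkn{-1}{s^{2}} = \hilkn{(-1)^{2}}{s} = \hilkn{1}{s} = 1$ for any $s \in k^{\times}$, so this square factor contributes trivially and the desired equality follows. The main obstacle is establishing the commutation identity of the second paragraph; after that, the rest is a direct application of Lemma~\ref{l:2.18} combined with Hilbert symbol bilinearity.
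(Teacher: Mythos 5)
Your proof is correct and follows essentially the same route as the paper's: both reduce $\ba{\lambda}{\lambda^{-1}}$ to the product $\twa{1}{\del{1}{\lambda}} \cdot \twa{-1}{\del{1}{\lambda}}^{-1}$ via the commutation relations of Proposition~\ref{p:wa}, evaluate that product by the $r'/r = -1$ case of Lemma~\ref{l:2.18}, and pass from $\hilkn{-1}{-\norm{\del{1}{\lambda}}/\thet^{2}}$ to $\hilkn{-1}{\norm{\lambda}}$ by noting the discrepancy is a square in $k^{\times}$. The only (immaterial) difference is bookkeeping: the paper commutes $\twa{0}{\del{2}{\lambda}}^{-1}$ rightwards so the two $\twa{0}{\cdot}$ factors cancel outright, whereas you commute leftwards and discard the resulting conjugation by $\twa{0}{\norm{\del{1}{\lambda}}/\del{2}{\lambda}}$ using centrality of the value.
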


\begin{proof}
We know by Proposition~\ref{p:deltas} that $ \lambda = \del{1}{\lambda} / \del{2}{\lambda} $.  Thus,
	\[ \lambda^{-1} = \frac{\del{2}{\lambda}}{\del{1}{\lambda}} = \frac{\ol{\del{1}{\lambda}}}{\norm{\lambda} \ol{\del{2}{\lambda}}}. \]
This implies that $ \del{1}{\lambda^{-1}} = \ol{\del{1}{\lambda}} $ and $ \del{2}{\lambda^{-1}} = \norm{\lambda} \ol{\del{2}{\lambda}} $.  Hence, by \eqref{e:ba} and \eqref{e:delh},
\begin{align*}
	\ba{\lambda}{\lambda^{-1}}
	&= \delh{\lambda} \cdot \delh{\lambda^{-1}} \\
	&=	\begin{aligned}[t]
			&\twa{1}{\del{1}{\lambda}} \cdot \twa{0}{\del{2}{\lambda}}^{-1} \cdot \twa{1}{\ol{\del{1}{\lambda}}} \\
			&\phantom{\ } \cdot \twa{0}{\norm{\lambda} \ol{\del{2}{\lambda}}}^{-1}.
		\end{aligned}
\end{align*}
By Proposition~\ref{p:wa},
	\[ \twa{0}{\del{2}{\lambda}}^{-1} \cdot \twa{1}{\ol{\del{1}{\lambda}}} = \twa{1}{\ol{\del{1}{\lambda}}} \cdot \twa{0}{\frac{\norm{\del{1}{\lambda}}}{\del{2}{\lambda}}}, \]
which implies that
\begin{align*}
	\ba{\lambda}{\lambda^{-1}}
	&=	\begin{aligned}[t]
			&\twa{1}{\del{1}{\lambda}} \cdot \twa{1}{\ol{\del{1}{\lambda}}} \cdot \twa{0}{\frac{\norm{\del{1}{\lambda}}}{\del{2}{\lambda}}} \\
			&\phantom{\ }\cdot \twa{0}{\norm{\lambda} \ol{\del{2}{\lambda}}}^{-1}
		\end{aligned} \\
	&= \twa{1}{\del{1}{\lambda}} \cdot \twa{-1}{\del{1}{\lambda}}^{-1}.
\end{align*}
So by Lemma~\ref{l:2.18},
	\[ \ba{\lambda}{\lambda^{-1}} = \hilkn{- 1}{- \frac{\norm{\del{1}{\lambda}}}{\thet^{2}}}. \]
Also, for $ s \in k^{\times} $, $ \hilkn{- 1}{s^{2}} = 1 $ by \eqref{e:prop5}, hence
	\[ \hilkn{- 1}{- \frac{\thet^{2}}{\norm{\del{2}{\lambda}}}} = 1, \]
which implies, using \eqref{e:bastr}, that
\begin{align*}
	\ba{\lambda}{\lambda^{-1}}
	&= \hilkn{- 1}{- \frac{\norm{\del{1}{\lambda}}}{\thet^{2}}} \cdot \hilkn{- 1}{- \frac{\thet^{2}}{\norm{\del{2}{\lambda}}}} \\
	&= \hilkn{- 1}{\norm{\lambda}}. \qedhere
\end{align*}
\end{proof}

\begin{lem} \label{l:2.23ii}
For all $ \lambda $, $ \mu \notin k^{\times} $, $ q_{1} $, $ q_{2} \in k^{\times} $,
	\[ \ba{\lambda}{\mu}
		= \ba{\lambda q_{1}}{\mu q_{2}} \cdot \hilkn{- \frac{\mu \ol{\del{1}{\mu}} q_{2}}{\del{2}{\lambda}}}{q_{1}} \cdot a( q_{1}, q_{2}, \lambda, \mu ), \]
where
	\[ a( q_{1}, q_{2}, \lambda, \mu )
		=	\begin{cases}
				\displaystyle \hilkn{\frac{\del{2}{\lambda \mu}}{\del{2}{\mu}}}{q_{1} q_{2}}, &\text{if $ \lambda \mu \notin k^{\times} $;} \\
				\displaystyle \hilkn{- \frac{\thet}{\lambda \mu \del{2}{\mu}}}{q_{1} q_{2}}, &\text{if $ \lambda \mu \in k^{\times} $.}
			\end{cases} \]
\end{lem}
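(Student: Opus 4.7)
The strategy is to compute the product $\delh{\lambda}\delh{\mu}\delh{q_1}\delh{q_2}$ in two different ways using the cocycle relation $\delh{x}\delh{y}=\ba{x}{y}\cdot\delh{xy}$ together with the centrality of all $b_\alpha$-values. Associating first as $(\lambda\cdot\mu)(q_1\cdot q_2)\cdot(\lambda\mu\cdot q_1 q_2)$ gives
\begin{equation*}
\delh{\lambda}\delh{\mu}\delh{q_1}\delh{q_2}=\ba{\lambda}{\mu}\,\ba{q_1}{q_2}\,\ba{\lambda\mu}{q_1 q_2}\,\delh{\lambda\mu q_1 q_2}.
\end{equation*}
Alternatively, first transpose the middle two factors (which picks up the commutator $\ba{\mu}{q_1}\ba{q_1}{\mu}^{-1}$, deduced from $\delh{\mu}\delh{q_1}=\ba{\mu}{q_1}\delh{\mu q_1}$ and $\delh{q_1}\delh{\mu}=\ba{q_1}{\mu}\delh{\mu q_1}$) and then associate as $(\lambda\cdot q_1)(\mu\cdot q_2)\cdot(\lambda q_1\cdot\mu q_2)$ to obtain
\begin{equation*}
\delh{\lambda}\delh{\mu}\delh{q_1}\delh{q_2}=\ba{\mu}{q_1}\ba{q_1}{\mu}^{-1}\,\ba{\lambda}{q_1}\,\ba{\mu}{q_2}\,\ba{\lambda q_1}{\mu q_2}\,\delh{\lambda\mu q_1 q_2}.
\end{equation*}
Equating these expressions and cancelling the common tail $\delh{\lambda\mu q_1 q_2}$ gives the master identity
\begin{equation*}
\ba{\lambda}{\mu}=\ba{\lambda q_1}{\mu q_2}\cdot\frac{\ba{\mu}{q_1}\,\ba{\lambda}{q_1}\,\ba{\mu}{q_2}}{\ba{q_1}{\mu}\,\ba{q_1}{q_2}\,\ba{\lambda\mu}{q_1 q_2}}.
\end{equation*}

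Each factor on the right is already computed. Since $\lambda$, $\mu\notin k^{\times}$ and $q_1$, $q_2\in k^{\times}$, Proposition~\ref{p:2.23ii}(i) gives $\ba{\lambda}{q_1}=\hilkn{q_1}{-\del{2}{\lambda}/\thet}$ and $\ba{\mu}{q_i}=\hilkn{q_i}{-\del{2}{\mu}/\thet}$ for $i=1,2$, while Proposition~\ref{p:2.23ii}(ii) gives $\ba{q_1}{\mu}=\hilkn{q_1}{\mu\ol{\del{1}{\mu}}/\thet}$; the factor $\ba{q_1}{q_2}=\hilkn{q_1}{q_2}$ is the standard Hilbert symbol. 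The final factor $\ba{\lambda\mu}{q_1 q_2}$ splits into two subcases: when $\lambda\mu\notin k^{\times}$, Proposition~\ref{p:2.23ii}(i) again yields $\hilkn{q_1 q_2}{-\del{2}{\lambda\mu}/\thet}$; when $\lambda\mu\in k^{\times}$, both entries lie in $k^{\times}$ so the symbol collapses to $\hilkn{\lambda\mu}{q_1 q_2}$, and additionally $\del{2}{\lambda\mu}=\thet$ by Proposition~\ref{p:deltas}(i).

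Substituting these values and simplifying by bilinearity (\ref{e:bastr}) and the skew-symmetry $\hilkn{s}{t}=\hilkn{t}{s}^{-1}$ extracted from (\ref{e:prop2}), the product of six Hilbert symbols reorganises into a symbol with first slot $q_1$ and one with first slot $q_1 q_2$. A direct expansion shows the $q_1$-symbol is exactly $\hilkn{-\mu\ol{\del{1}{\mu}} q_2/\del{2}{\lambda}}{q_1}$ in both subcases, while the $q_1 q_2$-symbol produces $\hilkn{\del{2}{\lambda\mu}/\del{2}{\mu}}{q_1 q_2}$ when $\lambda\mu\notin k^{\times}$ and $\hilkn{-\thet/(\lambda\mu\del{2}{\mu})}{q_1 q_2}$ when $\lambda\mu\in k^{\times}$; in the latter case, the simplification exploits the observation that $\lambda\mu\in k^{\times}$ forces $\ol{\del{1}{\mu}}=\del{1}{\lambda}$, hence $\lambda/\ol{\del{1}{\mu}}=1/\del{2}{\lambda}$.

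The main obstacle is the bookkeeping: the factor $\ba{\mu}{q_2}$ contributes a $q_2$-symbol that must be redistributed between the eventual $q_1$-symbol and $q_1 q_2$-symbol, and $\ba{\lambda\mu}{q_1 q_2}^{-1}$ itself splits via bilinearity into a $q_1$-part and a $q_2$-part. Keeping track of the signs produced by the various $-\del{2}{-}/\thet$ terms, and verifying that the $\lambda\mu\in k^{\times}$ subcase really absorbs the extra $\lambda\mu$ factor coming from $\hilkn{\lambda\mu}{q_1 q_2}^{-1}$ to produce the stated $-\thet/(\lambda\mu\del{2}{\mu})$ rather than a superficially different expression, requires attentive use of the identities for $\del_1$, $\del_2$ coupled with the symmetry of $\hilkn{-}{-}$.
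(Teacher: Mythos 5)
Your proof is correct, but it takes a genuinely different route from the paper. The paper proves this lemma by a long computation inside the universal extension $\wtilde{G}$: it expands $\delh{\lambda}$, $\delh{\mu}$, $\delh{\lambda\mu}$ via the section's definition in terms of $\twa{}{}$-generators, commutes $\twa{1}{\del{1}{\mu}}$ past $\twa{0}{\del{2}{\lambda}}^{-1}$ using the conjugation relations of Proposition~\ref{p:wa}, inserts identity pairs $\twa{0}{\thet}^{-1}\twa{0}{\thet}$ to reinterpret products of $\twa{0}{\cdot}$'s as $\delh{\cdot}$'s, and only then reads off Hilbert symbols via \eqref{e:ba}, \eqref{e:del1q} and \eqref{e:del2q}. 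You instead never touch the $\twa{}{}$-generators: your master identity
\[
\ba{\lambda}{\mu}=\ba{\lambda q_{1}}{\mu q_{2}}\cdot\frac{\ba{\mu}{q_{1}}\,\ba{\lambda}{q_{1}}\,\ba{\mu}{q_{2}}}{\ba{q_{1}}{\mu}\,\ba{q_{1}}{q_{2}}\,\ba{\lambda\mu}{q_{1}q_{2}}}
\]
is a purely formal consequence of the defining relation $\delh{x}\delh{y}=\ba{x}{y}\delh{xy}$ with central values, obtained by reassociating $\delh{\lambda}\delh{\mu}\delh{q_{1}}\delh{q_{2}}$ two ways, after which everything on the right is already known from Proposition~\ref{p:2.23ii} and the value $\ba{q_{1}}{q_{2}}=\hilkn{q_{1}}{q_{2}}$. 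I checked the resulting Hilbert-symbol bookkeeping in both subcases using bilinearity \eqref{e:bastr} and the antisymmetry $\hilkn{s}{t}=\hilkn{t}{s}^{-1}$ (which follows from \eqref{e:prop2} and \eqref{e:bastr}), and it does reduce exactly to the stated formula; your auxiliary observation that $\lambda\mu\in k^{\times}$ forces $\ol{\del{1}{\mu}}=\del{1}{\lambda}$ is also correct (it follows from \eqref{e:del1q} together with $\del{1}{\lambda^{-1}}=\ol{\del{1}{\lambda}}$, as in the proof of Lemma~\ref{l:2.23i}), and there is no circularity since Proposition~\ref{p:2.23ii} is proved before this lemma. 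What your approach buys is brevity and transparency: the lemma is exposed as an abstract cocycle identity valid for any section whose restriction to the mixed torus pairs takes the values of Proposition~\ref{p:2.23ii}. What the paper's approach buys is continuity with its surrounding method — the same $\twa{}{}$-manipulations recur in Lemma~\ref{l:2.16}, Lemma~\ref{l:2.18} and Proposition~\ref{p:2.29}, where no analogue of your shortcut is available because those computations leave the torus.
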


\begin{proof}
We have, by \eqref{e:ba} and \eqref{e:delh},
\begin{align*}
	\ba{\lambda}{\mu}
	&= \delh{\lambda} \cdot \delh{\mu} \cdot \delh{\lambda \mu}^{- 1} \\
	&=	\begin{aligned}[t]
			&\twa{1}{\del{1}{\lambda}} \cdot \twa{0}{\del{2}{\lambda}}^{- 1} \cdot \twa{1}{\del{1}{\mu}} \cdot \twa{0}{\del{2}{\mu}}^{- 1} \\
			&\phantom{\ } \cdot \twa{0}{\del{2}{\lambda \mu}} \cdot \twa{y( \lambda \mu )}{\del{1}{\lambda \mu}}^{- 1},
		\end{aligned}
\end{align*}
where the function $ y $ is as defined in \eqref{e:ylam}.  Since by Proposition~\ref{p:wa},
	\[ \twa{1}{\del{1}{\mu}}^{- 1} \cdot \twa{0}{\del{2}{\lambda}}^{-1} \cdot \twa{1}{\del{1}{\mu}} = \twa{0}{\frac{\norm{\del{1}{\mu}}}{\del{2}{\lambda}}}, \]
the equation for \ba{\lambda}{\mu} becomes
\begin{equation} \label{e:2.23ii}
	\ba{\lambda}{\mu}
	=	\begin{aligned}[t]
			&\twa{1}{\del{1}{\lambda}} \cdot \twa{1}{\del{1}{\mu}} \cdot \twa{0}{\frac{\norm{\del{1}{\mu}}}{\del{2}{\lambda}}} \\
			&\phantom{\ } \cdot \twa{0}{\del{2}{\mu}}^{-1} \cdot \twa{0}{\del{2}{\lambda \mu}} \cdot \twa{y( \lambda \mu )}{\del{1}{\lambda \mu}}^{-1}.
		\end{aligned}
\end{equation}

Now, for any $ q_{1} $, $ q_{2} \in k^{\times} $, by \eqref{e:delh},
\begin{align*}
	&\begin{aligned}[t]
		&\twa{0}{\frac{\norm{\del{1}{\mu}}}{\del{2}{\lambda}}} \cdot \twa{0}{\del{2}{\mu}}^{-1} \cdot \twa{0}{\del{2}{\lambda \mu}} \\
		&\phantom{\ } \cdot \twa{0}{\frac{\del{2}{\lambda \mu}}{q_{1} q_{2}}}^{-1} \cdot \twa{0}{\frac{\del{2}{\mu}}{q_{2}}} \cdot \twa{0}{\frac{q_{1} \norm{\del{1}{\mu}}}{\del{2}{\lambda}}}^{-1}
	 \end{aligned} \\
	&=	\begin{aligned}[t]
			&\twa{0}{\frac{\norm{\del{1}{\mu}}}{\del{2}{\lambda}}} \cdot \left[ \twa{0}{\thet}^{- 1} \cdot \twa{0}{\thet} \right] \cdot \twa{0}{\del{2}{\mu}}^{-1} \\
			&\phantom{\ } \cdot \twa{0}{\del{2}{\lambda \mu}} \cdot \left[ \twa{0}{\thet}^{- 1} \cdot \twa{0}{\thet} \right] \cdot \twa{0}{\frac{\del{2}{\lambda \mu}}{q_{1} q_{2}}}^{-1} \\
			&\phantom{\ } \cdot \twa{0}{\frac{\del{2}{\mu}}{q_{2}}} \cdot \left[ \twa{0}{\thet}^{- 1} \cdot \twa{0}{\thet} \right] \cdot \twa{0}{\frac{q_{1} \norm{\del{1}{\mu}}}{\del{2}{\lambda}}}^{-1}
		\end{aligned} \\
	&=	\begin{aligned}[t]
			&\delh{\frac{\norm{\del{1}{\mu}}}{\del{2}{\lambda} \thet}} \cdot \delh{\frac{\del{2}{\mu}}{\thet}}^{-1} \cdot \delh{\frac{\del{2}{\lambda \mu}}{\thet}} \\
			&\phantom{\ } \cdot \delh{\frac{\del{2}{\lambda \mu}}{q_{1} q_{2} \thet}}^{-1} \cdot \delh{\frac{\del{2}{\mu}}{q_{2} \thet}} \cdot \delh{\frac{q_{1} \norm{\del{1}{\mu}}}{\del{2}{\lambda} \thet}}^{-1}.
		\end{aligned}
\end{align*}
We find by using \eqref{e:delinv} that
\begin{multline*}
	\begin{aligned}[t]
		&\twa{0}{\frac{\norm{\del{1}{\mu}}}{\del{2}{\lambda}}} \cdot \twa{0}{\del{2}{\mu}}^{-1} \cdot \twa{0}{\del{2}{\lambda \mu}} \\
		&\phantom{\ } \cdot \twa{0}{\frac{\del{2}{\lambda \mu}}{q_{1} q_{2}}}^{-1} \cdot \twa{0}{\frac{\del{2}{\mu}}{q_{2}}} \cdot \twa{0}{\frac{q_{1} \norm{\del{1}{\mu}}}{\del{2}{\lambda}}}^{-1}
	 \end{aligned} \\
	=	\begin{aligned}[t]
			&\delh{\frac{\norm{\del{1}{\mu}}}{\del{2}{\lambda} \thet}} \cdot \left[ \delh{- \frac{\thet}{\del{2}{\mu}}} \cdot \delh{- 1}^{- 1} \right] \\
			&\phantom{\ } \cdot \delh{\frac{\del{2}{\lambda \mu}}{\thet}} \cdot \left[ \delh{- \frac{q_{1} q_{2} \thet}{\del{2}{\lambda \mu}}} \cdot \delh{- 1}^{- 1} \right] \\
			&\phantom{\ } \cdot \delh{\frac{\del{2}{\mu}}{q_{2} \thet}} \cdot \delh{\frac{q_{1} \norm{\del{1}{\mu}}}{\del{2}{\lambda} \thet}}^{-1}.
		\end{aligned}
\end{multline*}
Since by Remark~\ref{r:dm1}, \delh{- 1} commutes with \delh{s}, $ s \in k^{\times} $,
\begin{multline*}
	\begin{aligned}[t]
		&\twa{0}{\frac{\norm{\del{1}{\mu}}}{\del{2}{\lambda}}} \cdot \twa{0}{\del{2}{\mu}}^{-1} \cdot \twa{0}{\del{2}{\lambda \mu}} \\
		&\phantom{\ } \cdot \twa{0}{\frac{\del{2}{\lambda \mu}}{q_{1} q_{2}}}^{-1} \cdot \twa{0}{\frac{\del{2}{\mu}}{q_{2}}} \cdot \twa{0}{\frac{q_{1} \norm{\del{1}{\mu}}}{\del{2}{\lambda}}}^{-1}
	 \end{aligned} \\
	=	\begin{aligned}[t]
			&\delh{\frac{\norm{\del{1}{\mu}}}{\del{2}{\lambda} \thet}} \cdot \delh{- \frac{\thet}{\del{2}{\mu}}} \cdot \left[ \delh{- \frac{\mu \ol{\del{1}{\mu}}}{\del{2}{\lambda}}}^{- 1} \right. \\
			&\left. \phantom{\Bigg|^{1}} \cdot \delh{- \frac{\mu \ol{\del{1}{\mu}}}{\del{2}{\lambda}}} \right] \cdot \delh{\frac{\del{2}{\lambda \mu}}{\thet}} \\
			&\left. \phantom{\Bigg|^{1}} \right. \cdot \left[ \delh{- \frac{\mu \ol{\del{1}{\mu}} \del{2}{\lambda \mu}}{\del{2}{\lambda} \thet}}^{- 1} \cdot \delh{- \frac{\mu \ol{\del{1}{\mu}} \del{2}{\lambda \mu}}{\del{2}{\lambda} \thet}} \right] \\
			&\left. \phantom{\Bigg|^{1}} \right. \cdot \delh{- \frac{q_{1} q_{2} \thet}{\del{2}{\lambda \mu}}} \cdot \left[ \delh{\frac{q_{1} q_{2} \mu \ol{\del{1}{\mu}}}{\del{2}{\lambda}}}^{- 1} \right. \\
			&\left. \phantom{\Bigg|^{1}} \cdot \delh{\frac{q_{1} q_{2} \mu \ol{\del{1}{\mu}}}{\del{2}{\lambda}}} \right] \cdot \delh{\frac{\del{2}{\mu}}{q_{2} \thet}} \\
			&\left. \phantom{\Bigg|^{1}} \right. \cdot \delh{\frac{q_{1} \norm{\del{1}{\mu}}}{\del{2}{\lambda} \thet}}^{-1} \cdot \delh{- 1}^{- 1} \cdot \delh{- 1}^{- 1};
		\end{aligned}
\end{multline*}
and so by \eqref{e:ba},
\begin{multline*}
	\begin{aligned}[t]
		&\twa{0}{\frac{\norm{\del{1}{\mu}}}{\del{2}{\lambda}}} \cdot \twa{0}{\del{2}{\mu}}^{-1} \cdot \twa{0}{\del{2}{\lambda \mu}} \\
		&\phantom{\ } \cdot \twa{0}{\frac{\del{2}{\lambda \mu}}{q_{1} q_{2}}}^{-1} \cdot \twa{0}{\frac{\del{2}{\mu}}{q_{2}}} \cdot \twa{0}{\frac{q_{1} \norm{\del{1}{\mu}}}{\del{2}{\lambda}}}^{-1}
	 \end{aligned} \\
	=	\begin{aligned}[t]
			&\hilkn{\frac{\norm{\del{1}{\mu}}}{\del{2}{\lambda} \thet}}{- \frac{\thet}{\del{2}{\mu}}} \cdot \hilkn{- \frac{\mu \ol{\del{1}{\mu}}}{\del{2}{\lambda}}}{\frac{\del{2}{\lambda \mu}}{\thet}} \\
			&\phantom{\ } \cdot \hilkn{- \frac{\mu \ol{\del{1}{\mu}} \del{2}{\lambda \mu}}{\del{2}{\lambda} \thet}}{- \frac{q_{1} q_{2} \thet}{\del{2}{\lambda \mu}}} \cdot \hilkn{\frac{q_{1} q_{2} \mu \ol{\del{1}{\mu}}}{\del{2}{\lambda}}}{\frac{\del{2}{\mu}}{q_{2} \thet}} \cdot \hilkn{- 1}{- 1}^{- 1}.
		\end{aligned}
\end{multline*}
Hence it is a matter of simplifying using \eqref{e:bastr}, \eqref{e:prop2} and \eqref{e:prop3} to get
\begin{multline*}
	\begin{aligned}[t]
		&\twa{0}{\frac{\norm{\del{1}{\mu}}}{\del{2}{\lambda}}} \cdot \twa{0}{\del{2}{\mu}}^{-1} \cdot \twa{0}{\del{2}{\lambda \mu}} \\
		&\phantom{\ } \cdot \twa{0}{\frac{\del{2}{\lambda \mu}}{q_{1} q_{2}}}^{-1} \cdot \twa{0}{\frac{\del{2}{\mu}}{q_{2}}} \cdot \twa{0}{\frac{q_{1} \norm{\del{1}{\mu}}}{\del{2}{\lambda}}}^{-1}
	 \end{aligned} \\
	= \hilkn{- \frac{\mu \ol{\del{1}{\mu}} q_{2}}{\del{2}{\lambda}}}{q_{1}} \cdot \hilkn{\frac{\del{2}{\lambda \mu}}{\del{2}{\mu}}}{q_{1} q_{2}}.
\end{multline*}
Therefore,
\begin{align*}
	&\twa{0}{\frac{\norm{\del{1}{\mu}}}{\del{2}{\lambda}}} \cdot \twa{0}{\del{2}{\mu}}^{-1} \cdot \twa{0}{\del{2}{\lambda \mu}} \\
	&=	\begin{aligned}[t]
			&\hilkn{- \frac{\mu \ol{\del{1}{\mu}} q_{2}}{\del{2}{\lambda}}}{q_{1}} \cdot \hilkn{\frac{\del{2}{\lambda \mu}}{\del{2}{\mu}}}{q_{1} q_{2}} \cdot \twa{0}{\frac{q_{1} \norm{\del{1}{\mu}}}{\del{2}{\lambda}}} \\
			&\phantom{\ } \cdot \twa{0}{\frac{\del{2}{\mu}}{q_{2}}}^{-1} \cdot \twa{0}{\frac{\del{2}{\lambda \mu}}{q_{1} q_{2}}}.
		\end{aligned}
\end{align*}

Replacing the above in \eqref{e:2.23ii} gives
	\[ \ba{\lambda}{\mu}
	=	\begin{aligned}[t]
			&\twa{1}{\del{1}{\lambda}} \cdot \twa{1}{\del{1}{\mu}} \cdot \Bigg[ \hilkn{- \frac{\mu \ol{\del{1}{\mu}} q_{2}}{\del{2}{\lambda}}}{q_{1}} \\
			&\phantom{\ } \cdot \hilkn{\frac{\del{2}{\lambda \mu}}{\del{2}{\mu}}}{q_{1} q_{2}} \cdot \twa{0}{\frac{q_{1} \norm{\del{1}{\mu}}}{\del{2}{\lambda}}} \cdot \twa{0}{\frac{\del{2}{\mu}}{q_{2}}}^{-1} \\
			&\phantom{\ } \cdot \twa{0}{\frac{\del{2}{\lambda \mu}}{q_{1} q_{2}}} \Bigg] \cdot \twa{y( \lambda \mu )}{\del{1}{\lambda \mu}}^{-1},
		\end{aligned} \]
and since by Proposition~\ref{p:wa},
	\[ \twa{1}{\del{1}{\mu}} \cdot \twa{0}{\frac{q_{1} \norm{\del{1}{\mu}}}{\del{2}{\lambda}}} = \twa{0}{\frac{\del{2}{\lambda}}{q_{1}}}^{- 1} \cdot \twa{1}{\del{1}{\mu}}, \]
using this in the equation above gives
	\[ \ba{\lambda}{\mu}
	=	\begin{aligned}[t]
			&\hilkn{- \frac{\mu \ol{\del{1}{\mu}} q_{2}}{\del{2}{\lambda}}}{q_{1}} \cdot \hilkn{\frac{\del{2}{\lambda \mu}}{\del{2}{\mu}}}{q_{1} q_{2}} \cdot \twa{1}{\del{1}{\lambda}} \\
			&\phantom{\ } \cdot \twa{0}{\frac{\del{2}{\lambda}}{q_{1}}}^{- 1} \cdot \twa{1}{\del{1}{\mu}} \cdot \twa{0}{\frac{\del{2}{\mu}}{q_{2}}}^{-1} \\
			&\phantom{\ } \cdot \twa{0}{\frac{\del{2}{\lambda \mu}}{q_{1} q_{2}}} \cdot \twa{y( \lambda \mu )}{\del{1}{\lambda \mu}}^{-1}.
		\end{aligned} \]
By \eqref{e:del1q} and \eqref{e:del2q},
	\[ \ba{\lambda}{\mu}
	=	\begin{aligned}[t]
			&\hilkn{- \frac{\mu \ol{\del{1}{\mu}} q_{2}}{\del{2}{\lambda}}}{q_{1}} \cdot \hilkn{\frac{\del{2}{\lambda \mu}}{\del{2}{\mu}}}{q_{1} q_{2}} \cdot \twa{1}{\del{1}{\lambda q_{1}}} \\
			&\phantom{\ } \cdot \twa{0}{\del{2}{\lambda q_{1}}}^{- 1} \cdot \twa{1}{\del{1}{\mu q_{2}}} \cdot \twa{0}{\del{2}{\mu q_{2}}}^{-1} \\
			&\phantom{\ } \cdot \twa{0}{\frac{\del{2}{\lambda \mu}}{q_{1} q_{2}}} \cdot \twa{y( \lambda \mu )}{\del{1}{\lambda \mu}}^{-1},
		\end{aligned} \]
and using \eqref{e:delh} on the right-hand side gives
	\[ \ba{\lambda}{\mu}
	=	\begin{aligned}[t]
			&\hilkn{- \frac{\mu \ol{\del{1}{\mu}} q_{2}}{\del{2}{\lambda}}}{q_{1}} \cdot \hilkn{\frac{\del{2}{\lambda \mu}}{\del{2}{\mu}}}{q_{1} q_{2}} \cdot \delh{\lambda q_{1}} \cdot \delh{\mu q_{2}} \\
			&\phantom{\ } \cdot \twa{0}{\frac{\del{2}{\lambda \mu}}{q_{1} q_{2}}} \cdot \twa{y( \lambda \mu )}{\del{1}{\lambda \mu}}^{-1}.
		\end{aligned} \]

By \eqref{e:del1q}, \eqref{e:del2q}, Proposition~\ref{p:deltas} and \eqref{e:delh},
\begin{align*}
	&\twa{0}{\frac{\del{2}{\lambda \mu}}{q_{1} q_{2}}} \cdot \twa{y( \lambda \mu )}{\del{1}{\lambda \mu}}^{-1} \\
	&=	\begin{cases}
			\displaystyle \twa{0}{\del{2}{\lambda \mu q_{1} q_{2}}} \cdot \twa{1}{\del{1}{\lambda \mu q_{1} q_{2}}}^{-1}, &\text{if $ \lambda \mu \notin k^{\times} $;} \\
			\displaystyle \twa{0}{\frac{\thet}{q_{1} q_{2}}} \cdot \twa{0}{\lambda \mu \thet}^{-1}, &\text{if $ \lambda \mu \in k^{\times} $,}
		\end{cases} \\
	&=	\begin{cases}
			\displaystyle \delh{\lambda \mu q_{1} q_{2}}^{-1}, &\text{if $ \lambda \mu \notin k^{\times} $;} \\
			\displaystyle \delh{\frac{1}{q_{1} q_{2}}} \cdot \delh{\lambda \mu}^{-1}, &\text{if $ \lambda \mu \in k^{\times} $.}
		\end{cases}
\end{align*}

So if $ \lambda \mu \notin k^{\times} $,
\begin{align*}
	\ba{\lambda}{\mu}
	&=	\begin{aligned}[t]
			&\hilkn{- \frac{\mu \ol{\del{1}{\mu}} q_{2}}{\del{2}{\lambda}}}{q_{1}} \cdot \hilkn{\frac{\del{2}{\lambda \mu}}{\del{2}{\mu}}}{q_{1} q_{2}} \cdot \delh{\lambda q_{1}} \cdot \delh{\mu q_{2}} \\
			&\phantom{\ } \cdot \delh{\lambda \mu q_{1} q_{2}}^{-1}
		\end{aligned} \\
	&=	\begin{aligned}[t]
			&\hilkn{- \frac{\mu \ol{\del{1}{\mu}} q_{2}}{\del{2}{\lambda}}}{q_{1}} \cdot \hilkn{\frac{\del{2}{\lambda \mu}}{\del{2}{\mu}}}{q_{1} q_{2}} \cdot \ba{\lambda q_{1}}{\mu q_{2}}
		\end{aligned}
\end{align*}
by \eqref{e:ba}.  If $ \lambda \mu \in k^{\times} $,
	\[ \ba{\lambda}{\mu}
	=	\begin{aligned}[t]
			&\hilkn{- \frac{\mu \ol{\del{1}{\mu}} q_{2}}{\del{2}{\lambda}}}{q_{1}} \cdot \hilkn{\frac{\del{2}{\lambda \mu}}{\del{2}{\mu}}}{q_{1} q_{2}} \cdot \delh{\lambda q_{1}} \cdot \delh{\mu q_{2}} \\
			&\phantom{\ } \cdot \left[ \delh{\lambda \mu q_{1} q_{2}}^{- 1} \cdot \delh{\lambda \mu q_{1} q_{2}} \right] \cdot \delh{\frac{1}{q_{1} q_{2}}} \\
			&\phantom{\ } \cdot \delh{\lambda \mu}^{-1},
		\end{aligned} \]
and since by Proposition~\ref{p:deltas}, $ \del{2}{\lambda \mu} = \thet $, this shows by using \eqref{e:ba} that
	\[ \ba{\lambda}{\mu}
	=	\begin{aligned}[t]
			&\hilkn{- \frac{\mu \ol{\del{1}{\mu}} q_{2}}{\del{2}{\lambda}}}{q_{1}} \cdot \hilkn{\frac{\thet}{\del{2}{\mu}}}{q_{1} q_{2}} \cdot \ba{\lambda q_{1}}{\mu q_{2}} \\
			&\phantom{\ } \cdot \hilkn{\lambda \mu q_{1} q_{2}}{\frac{1}{q_{1} q_{2}}}.
		\end{aligned} \]
Thus,
	\[ \ba{\lambda}{\mu}
	=	\hilkn{- \frac{\mu \ol{\del{1}{\mu}} q_{2}}{\del{2}{\lambda}}}{q_{1}} \cdot \hilkn{- \frac{\thet}{\lambda \mu \del{2}{\mu}}}{q_{1} q_{2}} \cdot \ba{\lambda q_{1}}{\mu q_{2}} \]
by \eqref{e:prop2}, \eqref{e:prop3} and \eqref{e:bastr}.  Therefore we have proven our result.
\end{proof}

\begin{prop} \label{p:bainv}
For $ \lambda \notin k^{\times} $, $ q \in k^{\times} $,
	\[ \ba{\lambda}{\lambda^{-1} q} = \hilkn{- 1}{\norm{\lambda}} \cdot \hilkn{- \frac{\lambda \ol{\del{1}{\lambda}}}{\thet}}{q}. \]
\end{prop}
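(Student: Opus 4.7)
The strategy is to invoke Lemma~\ref{l:2.23ii} to reduce $\ba{\lambda}{\lambda^{-1}q}$ to $\ba{\lambda}{\lambda^{-1}}$, which has already been computed in Lemma~\ref{l:2.23i}. Note that since $\lambda \notin k^{\times}$ and $q \in k^{\times}$, the element $\lambda^{-1}q$ also lies outside $k^{\times}$, and $\lambda\cdot(\lambda^{-1}q)=q\in k^{\times}$, so we are in the ``$\lambda\mu\in k^{\times}$'' case of Lemma~\ref{l:2.23ii}.

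First I apply Lemma~\ref{l:2.23ii} with $(\lambda,\mu):=(\lambda,\lambda^{-1}q)$ and with the scalars $q_{1}=1$, $q_{2}=q^{-1}$. The pair $(\lambda q_{1},\mu q_{2})$ then becomes $(\lambda,\lambda^{-1})$, so the lemma yields
\[
    \ba{\lambda}{\lambda^{-1}q}
    =
    \ba{\lambda}{\lambda^{-1}} \cdot \hilkn{\ast}{1} \cdot
    \hilkn{-\frac{\thet}{q \cdot \del{2}{\lambda^{-1}q}}}{q^{-1}}.
\]
The middle factor is $1$ because $1$ is an $n$-th power, and by Lemma~\ref{l:2.23i} the first factor equals $\hilkn{-1}{\norm{\lambda}}$. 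The only remaining work is to show that the last Hilbert symbol coincides with $\hilkn{-\lambda\ol{\del{1}{\lambda}}/\thet}{q}$.

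The simplification is routine: since $\lambda^{-1}\notin k^{\times}$, formula \eqref{e:del2q} gives $\del{2}{\lambda^{-1}q}=\del{2}{\lambda^{-1}}/q$, and from the computation made in the proof of Lemma~\ref{l:2.23i} we have $\del{2}{\lambda^{-1}}=\norm{\lambda}\ol{\del{2}{\lambda}}$. Therefore
\[
    \hilkn{-\frac{\thet}{q \cdot \del{2}{\lambda^{-1}q}}}{q^{-1}}
    =
    \hilkn{-\frac{\thet}{\norm{\lambda}\ol{\del{2}{\lambda}}}}{q^{-1}}.
\]
Using $\norm{\lambda}=\norm{\del{1}{\lambda}}/\norm{\del{2}{\lambda}}$ and $\del{2}{\lambda}\ol{\del{2}{\lambda}}=\norm{\del{2}{\lambda}}$, this becomes $\hilkn{-\thet \del{2}{\lambda}/\norm{\del{1}{\lambda}}}{q^{-1}}$. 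Bilinearity \eqref{e:bastr}, together with $(-a)^{-1}=-a^{-1}$, converts this to $\hilkn{-\norm{\del{1}{\lambda}}/(\thet\del{2}{\lambda})}{q}$, and the identity $\lambda\ol{\del{1}{\lambda}}=\norm{\del{1}{\lambda}}/\del{2}{\lambda}$ (immediate from $\lambda=\del{1}{\lambda}/\del{2}{\lambda}$) rewrites the first argument as $-\lambda\ol{\del{1}{\lambda}}/\thet$, giving the desired formula.

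There is no real obstacle here; once Lemma~\ref{l:2.23ii} is applied with the judicious choice $q_{1}=1,\ q_{2}=q^{-1}$, the rest is purely formal manipulation of the functions $\delta_{1},\delta_{2}$ and of the Hilbert symbol via bilinearity.
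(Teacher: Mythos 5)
Your proof is correct and takes essentially the same route as the paper: Lemma~\ref{l:2.23i} supplies the value $\hilkn{-1}{\norm{\lambda}}$, Lemma~\ref{l:2.23ii} (in its $\lambda\mu\in k^{\times}$ case with one scalar equal to $1$, which kills the middle factor) supplies the correction term, and the final simplification rests on $\del{2}{\lambda^{-1}}=\norm{\lambda}\ol{\del{2}{\lambda}}$ and $\lambda=\del{1}{\lambda}/\del{2}{\lambda}$, exactly as in the paper. The only difference is cosmetic: the paper applies the rescaling lemma to $\ba{\lambda}{\lambda^{-1}}$ with $q_{1}=1$, $q_{2}=q$ and then solves for $\ba{\lambda}{\lambda^{-1}q}$, whereas you apply it directly to $\ba{\lambda}{\lambda^{-1}q}$ with $q_{2}=q^{-1}$, at the harmless cost of one extra appeal to \eqref{e:del2q} to reduce $\del{2}{\lambda^{-1}q}$ to $\del{2}{\lambda^{-1}}/q$.
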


\begin{proof}
This is easily proved using Lemmas~\ref{l:2.23i} and~\ref{l:2.23ii}.

By Lemma~\ref{l:2.23i},
	\[ \ba{\lambda}{\lambda^{-1}} = \hilkn{- 1}{\norm{\lambda}}, \]
and by Lemma~\ref{l:2.23ii},
	\[ \ba{\lambda}{\lambda^{-1}}
	=	\begin{aligned}[t]
			&\ba{\lambda \cdot 1}{\lambda^{-1} q} \cdot \hilkn{- \frac{\lambda^{- 1} \ol{\del{1}{\lambda^{- 1}}} q}{\del{2}{\lambda}}}{1} \\
			&\phantom{\ } \cdot \hilkn{- \frac{\thet}{\lambda \lambda^{- 1} \del{2}{\lambda^{-1}}}}{1 \cdot q}.
		\end{aligned} \]

Hence,
	\[ \ba{\lambda}{\lambda^{-1} q}
	= \hilkn{- 1}{\norm{\lambda}} \cdot \hilkn{- \frac{\thet}{\del{2}{\lambda^{-1}}}}{q}^{- 1}. \]
By the proof of Lemma~\ref{l:2.23i}, $ \del{2}{\lambda^{- 1}} = \norm{\lambda} \ol{\del{2}{\lambda}} $.  The right-hand side becomes
	\[ \ba{\lambda}{\lambda^{-1} q}
	= \hilkn{- 1}{\norm{\lambda}} \cdot \hilkn{- \frac{\thet}{\norm{\lambda} \ol{\del{2}{\lambda}}}}{q}^{- 1}, \]
and so
	\[ \ba{\lambda}{\lambda^{-1} q} = \hilkn{- 1}{\norm{\lambda}} \cdot \hilkn{- \frac{\lambda \ol{\del{1}{\lambda}}}{\thet}}{q} \]
by \eqref{e:prop5}, since $ \lambda = \del{1}{\lambda} / \del{2}{\lambda} $.
\end{proof}

\begin{rem} \label{r:bainv}
Proposition~\ref{p:bainv} shows that for $ \lambda $, $ \mu \notin k^{\times} $ such that $ \lambda \mu \in k^{\times} $,
	\[ \ba{\lambda}{\mu} = \hilkn{- 1}{\norm{\lambda}} \cdot \hilkn{- \frac{\lambda \ol{\del{1}{\lambda}}}{\thet}}{\lambda \mu}. \]
Therefore we may assume that $ \lambda $, $ \mu $, $ \lambda \mu \notin k^{\times} $ for the last section.
\end{rem}

\section{The most general case}

Before we can calculate $ \ba{\lambda}{\mu} $ for $ \lambda $, $ \mu $, $ \lambda \mu \notin k^{\times} $, we must first calculate the commutator of the 2-cocycle $ \sigma_{u} $ on $ T( k ) $.

Let us define what a commutator is.  Let $ \sigma $ be a 2-cocycle on an abelian group $ T $ with values in $ \mu_{n} $.  Then the commutator of $ \sigma $ is defined by
	\[ [ x, y ] = \sigma( x, y ) / \sigma( y, x ), \]
where $ x $, $ y \in T $.  The commutator of $ \sigma $ is both bimultiplicative, i.e. for $ x $, $ x' $, $ y $, $ y' \in T $,
	\[ [ x x', y ] = [ x, y ] [ x', y ] \text{ and } [ x, y y' ] = [ x, y ] [ x, y' ]; \]
and skew-symmetric, i.e.
	\[ [ x, x ] = 1. \]
These are the standard properties of the commutator of $ \sigma $.  The commutator of $ \sigma $ depends only on the cohomology class of $ \sigma $, and if $ T $ is a locally compact topological group and $ \sigma $ is measurable then the commutator of $ \sigma $ is continuous.

\begin{lem} \label{l:comm}
Let $ \lambda $, $ \mu \in K^{\times} $, and the commutator of the 2-cocycle $ \sigma_{u} $ on $ T( k ) $ be
	\[ [ \lambda, \mu ]_{\sigma_{u}} = \frac{\sigu{\ha{\lambda}}{\ha{\mu}}}{\sigu{\ha{\mu}}{\ha{\lambda}}} = \delh{\lambda} \cdot \delh{\mu} \cdot \delh{\lambda}^{-1} \cdot \delh{\mu}^{-1}. \]
Then
	\[ [ \lambda, \mu ]_{\sigma_{u}} = \hilbkn{\lambda}{\mu}^{2} \cdot \hilbkn{\lambda}{\ol{\mu}}^{- 1}. \]
\end{lem}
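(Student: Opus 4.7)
The plan is to exploit the fact that $[\lambda,\mu]_{\sigma_u}$ is a bimultiplicative, skew-symmetric pairing $K^\times \times K^\times \to \mu_n$. Bimultiplicativity follows from the cocycle identity for $\sigma_u$ together with the commutativity of the torus $T(k)$; skew-symmetry is immediate from the definition. The right-hand side is also bimultiplicative and skew-symmetric, the latter using the Galois-invariance $\hilbkn{\ol{\lambda}}{\ol{\mu}} = \hilbkn{\lambda}{\mu}$ (valid since $\mu_n \subset k$) and the antisymmetry of the Hilbert symbol on $K$. It therefore suffices to verify the identity in a family of representative cases.

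I would split according to whether $\lambda$, $\mu$ and $\lambda\mu$ lie in $k^\times$. When $\lambda,\mu\in k^\times$, the formula $\ba{\lambda}{\mu}=\hilkn{\lambda}{\mu}$ combined with antisymmetry gives $[\lambda,\mu]_{\sigma_u}=\hilkn{\lambda}{\mu}^2$, while the right-hand side reduces via \eqref{e:norm1} and $\mu=\ol{\mu}$ to the same value. When $\lambda\in k^\times$ and $\mu\notin k^\times$, Proposition~\ref{p:2.23ii} gives
\[
[\lambda,\mu]_{\sigma_u} = \hilkn{\lambda}{-\mu\ol{\del{1}{\mu}}/\del{2}{\mu}};
\]
a direct computation from Proposition~\ref{p:deltas} (writing $\mu=a+b\thet$ with $b\ne 0$, so that $\ol{\del{1}{\mu}}/\del{2}{\mu}=-\ol{\mu}$) yields $-\mu\ol{\del{1}{\mu}}/\del{2}{\mu}=\norm{\mu}$, so the left-hand side is $\hilkn{\lambda}{\norm{\mu}}=\hilbkn{\lambda}{\mu}$; the right-hand side simplifies to the same expression since $\hilbkn{\lambda}{\ol{\mu}}=\hilbkn{\lambda}{\mu}$ when $\lambda\in k^\times$. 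The symmetric case ($\mu\in k^\times$, $\lambda\notin k^\times$) follows by skew-symmetry. When $\lambda,\mu\notin k^\times$ but $\lambda\mu\in k^\times$, Proposition~\ref{p:bainv} supplies explicit formulas for $\ba{\lambda}{\mu}$ and $\ba{\mu}{\lambda}$, and their quotient simplifies, using the identities $\lambda\ol{\del{1}{\lambda}}/\del{2}{\lambda}=-\norm{\lambda}$ and $\norm{\mu}=(\lambda\mu)^2/\norm{\lambda}$, to agree with the right-hand side.

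The main obstacle is the remaining case $\lambda,\mu,\lambda\mu\notin k^\times$, which is not covered by the earlier results. Here I would compute $\delh{\lambda}\,\delh{\mu}\,\delh{\lambda}^{-1}$ directly in $\wtilde{G}$. Writing $\delh{\lambda}=\twa{1}{\del{1}{\lambda}}\cdot\twa{0}{-\del{2}{\lambda}}$ (using $\ol{\del{2}{\lambda}}=-\del{2}{\lambda}$), an application of Proposition~\ref{p:wa}(6) with $p_1=1$, $l_1=\del{1}{\lambda}$, $p_1'=0$, $l_1'=-\del{2}{\lambda}$ and $(r,m,r',m')=(1,\del{1}{\mu},0,-\del{2}{\mu})$ expresses the conjugation as a product of two $\twa{}{}$-elements whose second coordinates are $\del{1}{\mu}\norm{\lambda}$ and $-\del{2}{\mu}\norm{\lambda}$ (so the projection to $G(k)$ is indeed $\ha{\mu}$). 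Comparing this to $\delh{\mu}$, repeated application of Lemma~\ref{l:2.16} and Lemma~\ref{l:2.18} converts the residual ``difference'' into a product of $n$-th power Hilbert symbols on $k$, which is then identified with $\hilbkn{\lambda}{\mu}^2\hilbkn{\lambda}{\ol{\mu}}^{-1}$ via \eqref{e:norm1}, \eqref{e:norm2} and the standard bilinearity and antisymmetry of the symbol on $K$. The principal difficulty is bookkeeping: tracking the proliferation of Hilbert-symbol factors generated by each rearrangement of the $\twa{}{}$-elements.
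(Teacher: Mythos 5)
Your treatment of the degenerate cases is correct and consistent with the paper: the cases $\lambda,\mu\in k^{\times}$, the mixed case (via Proposition~\ref{p:2.23ii} and the identity $\ol{\del{1}{\mu}}/\del{2}{\mu}=-\ol{\mu}$), and the case $\lambda\mu\in k^{\times}$ (via Proposition~\ref{p:bainv}) all check out, and your opening move in the hard case — computing $\delh{\lambda}\delh{\mu}\delh{\lambda}^{-1}=\twa{\lambda^{2}/\ol{\lambda}}{\norm{\lambda}\del{1}{\mu}}\cdot\twa{0}{\norm{\lambda}\del{2}{\mu}}^{-1}$ via Proposition~\ref{p:wa}(6) — is literally the paper's first step. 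But at exactly that point the paper abandons direct computation. Its actual proof runs: (i) in the split case $K=k\oplus k$ it quotes Kazhdan--Patterson's commutator formula for the metaplectic cover of $\SL_{3}$ and simplifies; (ii) when $K/k$ is unramified and $\mathfrak{p}\nmid n$, it observes that the commutator is a pairing on $\bigwedge^{2}\bigl(K^{\times}/(K^{\times})^{n}\bigr)\cong\mathbb{Z}/n$, generated by $a\wedge\pi$ with $\pi\in k^{\times}$, so the already-computed mixed case $[\lambda,\mu]_{\sigma_{u}}=\hilkn{\norm{\lambda}}{\mu}$ for $\mu\in k^{\times}$ settles everything; (iii) the remaining (ramified and wild) field cases are deduced globally, using that $\prod_{\mathfrak{p}}\sigma_{\mathfrak{p}}$ splits on $G(l)$, the product formula for Hilbert symbols, continuity of the commutator, and weak approximation to isolate one bad prime. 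Your purely local computation, if completed, would be uniform in the residue characteristic and would avoid the global detour — a genuine advantage — at the cost of the very bookkeeping the paper was designed to dodge.

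There are, however, two concrete gaps. First, the split case: the lemma is asserted and used for $K=k\oplus k$ as well (the paper's proof \emph{begins} there, and its global step needs the formula at the split and unramified primes, which are almost all primes). Your plan rests on Deodhar's $\twa{-}{-}$-calculus and on Lemmas~\ref{l:2.16} and~\ref{l:2.18}, whose case analyses presuppose field arithmetic: the trichotomy ``$\lambda\in k^{\times}$ or $\lambda=a+b\thet$ with $b\in k^{\times}$'', and the invertibility of intermediate quantities such as $\ol{n_{1}}+n'$ or the various $q_{1}$'s, fail in $k\oplus k$, where nonzero elements may be zero divisors. So the split case must be handled separately (e.g., by quoting the $\SL_{3}$ result, as the paper does); it does not follow from your computation. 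Second, a circularity warning: the only computation of this scale actually executed in the paper (Proposition~\ref{p:2.29}) invokes Lemma~\ref{l:comm} repeatedly, so your completion of the hard case must be arranged to avoid the commutator formula itself — I believe this is possible, since the commutator is simpler than the full cocycle, but that is precisely where your unexecuted ``bookkeeping'' lives, including the degenerate subcases ($q_{1}=0$, etc.) of Lemma~\ref{l:2.16}.

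One smaller point: your justification of skew-symmetry of the right-hand side is insufficient as stated. Galois invariance plus antisymmetry of $\hilbkn{-}{-}$ only give $\hilbkn{\lambda}{\ol{\lambda}}^{2}=1$, and since $n$ is even here this does not force $\hilbkn{\lambda}{\ol{\lambda}}=1$; the paper proves the latter via the Steinberg relation, writing $\lambda=s\lambda'$ with $\trace{\lambda'}=1$, so that $\ol{\lambda'}=1-\lambda'$, and then applying \eqref{e:norm1}. In your exhaustive-case architecture this is nearly harmless — the off-diagonal identity you actually use, namely that the right-hand side at $(\mu,\lambda)$ is the inverse of its value at $(\lambda,\mu)$, does follow formally from antisymmetry and Galois invariance — but the diagonal statement would be needed if you ever invoked the $\bigwedge^{2}$-reduction you gesture at, so it should be proved properly.
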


\begin{proof}
We first assume that $ K = k \oplus k $, i.e. $ K / k $ is a split extension.  In this case, $ \SU( k ) \cong \SL_{3}( k ) $.  Let $ \lambda = \spl{\lambda_{1}}{\lambda_{2}} $ and $ \mu = \spl{\mu_{1}}{\mu_{2}} $, where $ \lambda $, $ \mu \in K^{\times} $.  The isomorphism on $ T( k ) $ may be described by
\begin{align*}
	T( k ) &\cong \left\{ \begin{pmatrix} x & 0 & 0 \\ 0 & y & 0 \\ 0 & 0 & z \end{pmatrix} \in \SL_{3}( k ) \colon x, y, z \in k^{\times}, x y z = 1 \right\} \\
	\begin{pmatrix} \lambda & 0 & 0 \\ 0 & \ol{\lambda} / \lambda & 0 \\ 0 & 0 & \ol{\lambda}^{- 1} \end{pmatrix} &\mapsto \begin{pmatrix} \lambda_{1} & 0 & 0 \\ 0 & \lambda_{2} / \lambda_{1} & 0 \\ 0 & 0 & \lambda_{2}^{- 1} \end{pmatrix}.
\end{align*}

It has been established in Section~0.1 of \cite{kazhpat84} that the commutator of the 2-cocycle $ \sigma' \in H^{2}( \SL_{3}, \mu( k ) ) $ on the diagonal elements of $ \SL_{3}( k ) $ may be described as
	\[ \frac{\sigma'\left( \begin{pmatrix} h_{1} & 0 & 0 \\ 0 & h_{2} & 0 \\ 0 & 0 & h_{3} \end{pmatrix}, \begin{pmatrix} h_{1}' & 0 & 0 \\ 0 & h_{2}' & 0 \\ 0 & 0 & h_{3}' \end{pmatrix} \right)}{\sigma'\left( \begin{pmatrix} h_{1}' & 0 & 0 \\ 0 & h_{2}' & 0 \\ 0 & 0 & h_{3}' \end{pmatrix}, \begin{pmatrix} h_{1} & 0 & 0 \\ 0 & h_{2} & 0 \\ 0 & 0 & h_{3} \end{pmatrix} \right)} = \hilkn{h_{1}}{h_{1}'} \cdot \hilkn{h_{2}}{h_{2}'} \cdot \hilkn{h_{3}}{h_{3}'}. \]
Since $ \sigma' $ differs from $ \sigma_{u} $ by at most a 1-coboundary, this implies that we may use the above equation to calculate $ [ \lambda, \mu ]_{\sigma_{u}} $ in the case where $ K / k $ is a split extension.  Thus
	\[ [ \lambda, \mu ]_{\sigma_{u}} = \hilkn{\lambda_{1}}{\mu_{1}} \cdot \hilk{\lambda_{2} / \lambda_{1}}{\mu_{2} / \mu_{1}} \cdot \hilkn{\lambda_{2}^{- 1}}{\mu_{2}^{- 1}}. \]
By \eqref{e:bastr} and \eqref{e:prop5},
\begin{align*}
	[ \lambda, \mu ]_{\sigma_{u}}
	&=	\begin{aligned}[t]
			&\hilkn{\lambda_{1}}{\mu_{1}} \cdot \left[ \hilkn{\lambda_{2}}{\mu_{2}} \cdot \hilkn{\lambda_{1}}{\mu_{2}}^{- 1} \cdot \hilkn{\lambda_{2}}{\mu_{1}}^{- 1} \cdot \hilkn{\lambda_{1}}{\mu_{1}} \right] \\
			&\phantom{\ } \cdot \hilkn{\lambda_{2}}{\mu_{2}}
		\end{aligned} \\
	&= \left[ \hilkn{\lambda_{1}}{\mu_{1}} \cdot \hilkn{\lambda_{2}}{\mu_{2}} \right]^{2} \cdot \left[ \hilkn{\lambda_{1}}{\mu_{2}} \cdot \hilkn{\lambda_{2}}{\mu_{1}} \right]^{- 1}.
\end{align*}

Let us define (as in the Introduction)
	\[ \hilbkn{\lambda}{\mu} = \hilkn{\lambda_{1}}{\mu_{1}} \cdot \hilkn{\lambda_{2}}{\mu_{2}}. \]
Since $ \ol{\mu} = \spl{\mu_{2}}{\mu_{1}} $, this implies that
	\[ [ \lambda, \mu ]_{\sigma_{u}} = \hilbkn{\lambda}{\mu}^{2} \cdot \hilbkn{\lambda}{\ol{\mu}}^{- 1}. \]

We want to show that the above equation holds for all quadratic extensions $ K / k $.  In order to do so, we first assume that $ \pi $ is the prime element of $ K $ and that $ n $ is coprime to $ \pi $.  By the properties of the commutator of the 2-cocycle, the commutator of the 2-cocycle $ \sigma_{u} $ on the torus is a map
	\[ \sideset{}{^{2}}{\bigwedge} \left( K^{\times} / ( K^{\times} )^n \right) \to \mu_{n}. \]
When $ \pi $ does not divide $ n $, this means that
	\[ K^{\times} / ( K^{\times} )^n \cong \mathbb{Z} / n \oplus \mathbb{Z} / n, \]
and therefore
	\[ \sideset{}{^{2}}{\bigwedge} \left( K^{\times} / ( K^{\times} )^n \right) \cong \mathbb{Z} / n, \]
which is generated by the element $ a \wedge \pi $, where $ a $ is a generator for $ ( \OK / \pi )^{\times} / n $.  In particular, $ a $ is a unit. Thus, $ [ -, - ]_{\sigma_{u}} $ is determined by $ [ a, \pi ]_{\sigma_{u}} $, and we only need to check first that
	\[ [ a, \pi ]_{\sigma_{u}} = \hilbkn{a}{\pi}^{2} \cdot \hilbkn{a}{\ol{\pi}}^{- 1}, \]
	and second that the right hand side of this equation is bimultiplicative and skew-symmetric.
	We will demonstrate the bimultiplicativity and skew-symmetry at the end of this proof.

For $ \lambda $, $ \mu \in K^{\times} $, we have by \eqref{e:ba} that
	\[ [ \lambda, \mu ]_{\sigma_{u}} = \delh{\lambda} \cdot \delh{\mu} \cdot \delh{\lambda}^{-1} \cdot \delh{\mu}^{-1}. \]
By Proposition~\ref{p:wa},
\begin{align*}
	&\delh{\lambda} \cdot \delh{\mu} \cdot \delh{\lambda}^{-1} \\
	&=	\begin{aligned}[t]
			&\left[ \twa{y( \lambda )}{\del{1}{\lambda}} \cdot \twa{0}{\ol{\del{2}{\lambda}}} \right] \cdot \twa{y( \mu )}{\del{1}{\mu}} \cdot \twa{0}{\ol{\del{2}{\mu}}} \\
			&\phantom{\ } \cdot \left[ \twa{y( \lambda )}{\del{1}{\lambda}} \cdot \twa{0}{\ol{\del{2}{\lambda}}} \right]^{-1}
		\end{aligned} \\
	&= \twa{\frac{y( \mu ) \ol{\del{2}{\lambda}} \del{1}{\lambda}^{2}}{\del{2}{\lambda}^{2} \ol{\del{1}{\lambda}}}}{\frac{\del{1}{\mu} \norm{\del{1}{\lambda}}}{\norm{\del{2}{\lambda}}}} \cdot \twa{0}{\frac{\ol{\del{2}{\mu}} \norm{\del{1}{\lambda}}}{\norm{\del{2}{\lambda}}}}
\end{align*}
where $ y $ is defined as in \eqref{e:ylam}, and hence
\begin{multline*}
	 \delh{\lambda} \cdot \delh{\mu} \cdot \delh{\lambda}^{-1} \\
	 = \twa{\frac{y( \mu ) \lambda^{2}}{\ol{\lambda}}}{\norm{\lambda} \del{1}{\mu}} \cdot \twa{0}{\norm{\lambda} \del{2}{\mu}}^{- 1}.
\end{multline*}

Therefore, if $ \lambda \notin k^{\times} $, $ \mu \in k^{\times} $, then $ y( \mu ) = 0 $, hence by the above,
	\[ [ \lambda, \mu ]_{\sigma_{u}} = \twa{0}{\norm{\lambda} \del{1}{\mu}} \cdot \twa{0}{\norm{\lambda} \del{2}{\mu}}^{- 1} \cdot \delh{\mu}^{- 1}. \]
By Proposition~\ref{p:deltas}, $ \del{1}{\mu} = \mu \thet $ and $ \del{2}{\mu} = \thet $.  Hence by \eqref{e:delh},
\begin{align*}
	[ \lambda, \mu ]_{\sigma_{u}}
	&= \twa{0}{\norm{\lambda} \mu \thet} \cdot \twa{0}{\norm{\lambda} \thet}^{-1} \cdot \delh{\mu}^{-1} \\
	&=	\begin{aligned}[t]
			&\twa{0}{\norm{\lambda} \mu \thet} \cdot \left[ \twa{0}{\thet}^{- 1} \cdot \twa{0}{\thet} \right] \cdot \twa{0}{\norm{\lambda} \thet}^{-1} \\
			&\phantom{\ } \cdot \delh{\mu}^{-1}
		\end{aligned} \\
	&= \delh{\norm{\lambda} \mu} \cdot \delh{\norm{\lambda}}^{-1} \cdot \delh{\mu}^{-1}.
\end{align*}
This implies by \eqref{e:ba} that
	\[ [ \lambda, \mu ]_{\sigma_{u}} = \hilkn{\mu}{\norm{\lambda}}^{- 1}; \]
hence by \eqref{e:prop5} and \eqref{e:prop2},
	\[ [ \lambda, \mu ]_{\sigma_{u}} = \hilkn{\norm{\lambda}}{\mu}. \]

In the case where $ K / k $ is an unramified extension, we may take $ \pi \in k $.  So assuming that $ K / k $ is an unramified extension and that $ \pi $ is coprime to $ n $, we have
	\[ [ a, \pi ]_{\sigma_{u}} = \hilkn{\norm{a}}{\pi}. \]
Hence by the above and \eqref{e:norm2},
	\[ [ a, \pi ]_{\sigma_{u}} = \hilbkn{a}{\pi} = \hilbkn{a}{\pi}^{2} \cdot \hilbkn{a}{\pi}^{- 1} = \hilbkn{a}{\pi}^{2} \cdot \hilbkn{a}{\ol{\pi}}^{- 1}, \]
and hence for all unramified extensions $ K / k $ with $ \pi $ coprime to $ n $,
	\[ [ \lambda, \mu ]_{\sigma_{u}} = \hilbkn{\lambda}{\mu}^{2} \cdot \hilbkn{\lambda}{\ol{\mu}}^{- 1}. \]

We now use the product formula to show that the above is true for all quadratic extensions $ K / k $.  We will need to use the ad\`{e}le group (see Subsection~\ref{ss:descad}).  We define some notation.  Let $ L / l \supset \mu_{n} $ be a global quadratic extension.  Let $ \mathfrak{p} $ be a prime of $ l $, $ \lp $ the localisation of $ l $ at $ \mathfrak{p} $, and just as in Subsection~\ref{ss:descad}, let $ \Lp = \lp \otimes_{l} L $, i.e.
	\[ \Lp = \begin{cases} \lp( \thet ), &\text{if $ \mathfrak{p} \mathcal{O}_{L} $ does not split in $ \mathcal{O}_{L} $;} \\ \lp \oplus \lp, &\text{if $ \mathfrak{p} \mathcal{O}_{L} $ splits in $ \mathcal{O}_{L} $.} \end{cases} \]   
Now let $ \sigma_{\mathfrak{p}} \in H^{2}( G( \lp ), \mu_{n} ) $ such that $ \sigma_{\mathfrak{p}} = \sigma_{u} $ when $ k = \lp $ and $ K = \Lp $.  Then for almost all primes $ \mathfrak{p} $ of $ l $, we know that for $ \lambda $, $ \mu \in \Lp^{\times} $,
	\[ [ \lambda, \mu ]_{\sigma_{\mathfrak{p}}} = ( \lambda, \mu )_{\Lp, n}^{2} \cdot ( \lambda, \ol{\mu} )_{\Lp, n}^{- 1}. \]
We also know that since $ \prod_{\mathfrak{p}} \sigma_{\mathfrak{p}} $ splits on $ G( l ) $, if $ \lambda $, $ \mu \in L^{\times} $, then
	\[ \prod_{\mathfrak{p}} [ \lambda, \mu ]_{\sigma_{\mathfrak{p}}} = 1. \]
In addition, by the product formula (Theorem~\ref{t:product}), for $ \lambda $, $ \mu \in L^{\times} $,
	\[ \prod_{\mathfrak{p}} \left( ( \lambda, \mu )_{\Lp, n}^{2} \cdot ( \lambda, \ol{\mu} )_{\Lp, n}^{- 1} \right) = 1. \]

Now choose a prime $ \mathfrak{p} $ that is not unramified and coprime to $ n $, and also not split.  Let $ \lambda $, $ \mu \in \Lp^{\times} $, and choose $ \lambda' $, $ \mu' \in L^{\times} $ close to $ \lambda $ and $ \mu $ respectively such that
	\[ [ \lambda, \mu ]_{\sigma_{\mathfrak{p}}} = [ \lambda', \mu' ]_{\sigma_{\mathfrak{p}}}, \quad ( \lambda, \mu )_{\Lp, n}^{2} \cdot ( \lambda, \ol{\mu} )_{\Lp, n}^{- 1} = ( \lambda', \mu' )_{\Lp, n}^{2} \cdot ( \lambda', \ol{\mu'} )_{\Lp, n}^{- 1}; \]
and such that $ \lambda' $, $ \mu' $ are close to 1 in $ L_{\mathfrak{q}} $ for all other primes $ \mathfrak{q} $ which are not unramified and coprime to n and also not split, i.e.
	\[ [ \lambda', \mu' ]_{\sigma_{\mathfrak{q}}} = 1 = ( \lambda', \mu' )_{L_{\mathfrak{q}}, n}^{2} \cdot ( \lambda', \ol{\mu'} )_{L_{\mathfrak{q}}, n}^{- 1}. \]
Then by the product formula,
\begin{align*}
	( \lambda', \mu' )_{\Lp, n}^{2} \cdot ( \lambda', \ol{\mu'} )_{\Lp, n}^{- 1}
	&= \prod_{\mathfrak{v} \neq \mathfrak{p}} \left( ( \lambda', \mu' )_{L_{\mathfrak{v}}, n}^{2} \cdot ( \lambda', \ol{\mu'} )_{L_{\mathfrak{v}}, n}^{- 1} \right)^{- 1} \\
	&= \prod_{\mathfrak{q}} \left( ( \lambda', \mu' )_{L_{\mathfrak{q}}, n}^{2} \cdot ( \lambda', \ol{\mu'} )_{L_{\mathfrak{q}}, n}^{- 1} \right)^{- 1} \\
	&= \prod_{\mathfrak{q}} [ \lambda', \mu' ]_{\sigma_{\mathfrak{q}}}^{- 1} \\
	&= \prod_{\mathfrak{v} \neq \mathfrak{p}} [ \lambda', \mu' ]_{\sigma_{\mathfrak{v}}}^{- 1} \\
	&= [ \lambda', \mu' ]_{\sigma_{\mathfrak{p}}}.
\end{align*}

The above implies that for all local quadratic extensions $ K / k $, $ \lambda $, $ \mu \in K^{\times} $, we have
	\[ [ \lambda, \mu ]_{\sigma_{u}} = \hilbkn{\lambda}{\mu}^{2} \cdot \hilbkn{\lambda}{\ol{\mu}}^{- 1}, \]
but as stated earlier, we should show that the map $ \spl{\lambda}{\mu} \mapsto \hilbkn{\lambda}{\mu}^{2} \cdot \hilbkn{\lambda}{\ol{\mu}}^{- 1} $ is indeed a map $ \sideset{}{^{2}}{\bigwedge} \left( K^{\times} / ( K^{\times} )^n \right) \to \mu_{n} $.  Thus, we need to show that the map is bimultiplicative and skew-symmetric.

It is trivial to show that the map is bimultiplicative.  To show skew-symmetry, we first note that by \eqref{e:prop3},
	\[ \hilbkn{\lambda}{\lambda}^{2} = \hilbkn{\lambda}{- 1}^{2}; \]
and by \eqref{e:prop5},
	\[ \hilbkn{\lambda}{- 1}^{2} = \hilbkn{\lambda}{( - 1 )^{2}} = 1. \]
Now assume that $ \trace{\lambda} = 0 $.  Then $ \ol{\lambda} = - \lambda $, and by \eqref{e:prop3},
	\[ \hilbkn{\lambda}{\ol{\lambda}} = \hilbkn{\lambda}{- \lambda} = 1. \]
Hence in this case, $ \hilbkn{\lambda}{\lambda}^{2} \cdot \hilbkn{\lambda}{\ol{\lambda}}^{- 1} = 1 $.

As for the case where $ \trace{\lambda} \neq 0 $, let $ \lambda = s \lambda' $, where $ s \in k^{\times} $ and $ \trace{\lambda'} = 1 $.  Then $ \ol{\lambda'} = 1 - \lambda' $, and hence by \eqref{e:bastr},
	\[ \hilbkn{\lambda}{\ol{\lambda}} = \hilbkn{s \lambda'}{s( 1 - \lambda' )} = \hilbkn{s}{s} \cdot \hilbkn{s}{1 - \lambda'} \cdot \hilbkn{\lambda'}{s} \cdot \hilbkn{\lambda'}{1 - \lambda'}. \]
By \eqref{e:prop2}, \eqref{e:prop3} and \eqref{e:prop4},
	\[ \hilbkn{\lambda}{\ol{\lambda}} = \hilbkn{s}{- 1} \cdot \hilbkn{s}{\ol{\lambda'}} \cdot \hilbkn{s}{\lambda'^{- 1}} \cdot 1; \]
and by \eqref{e:bastr},
	\[ \hilbkn{\lambda}{\ol{\lambda}} = \hilbkn{s}{- \frac{\ol{\lambda'}}{\lambda'}}. \]
Hence by \eqref{e:norm1},
	\[ \hilbkn{\lambda}{\ol{\lambda}} = \hilkn{s}{\norm{- \frac{\ol{\lambda'}}{\lambda'}}} = 1. \]
Therefore $ \hilbkn{\lambda}{\lambda}^{2} \cdot \hilbkn{\lambda}{\ol{\lambda}}^{- 1} = 1 $ in this case as well, i.e. the map $ \spl{\lambda}{\mu} \mapsto \hilbkn{\lambda}{\mu}^{2} \cdot \hilbkn{\lambda}{\ol{\mu}}^{- 1} $ is skew-symmetric and hence the formula for the commutator on the torus is indeed $ [ \lambda, \mu ]_{\sigma_{u}} = \hilbkn{\lambda}{\mu}^{2} \cdot \hilbkn{\lambda}{\ol{\mu}}^{- 1} $.
\end{proof}

We are now ready to find the explicit formula for \ba{\lambda}{\mu}, with $ \lambda $, $ \mu $, $ \lambda \mu \notin k^{\times} $.

\begin{prop} \label{p:2.29}
For $ \lambda $, $ \mu \notin k^{\times} $ such that $ \lambda \mu \notin k^{\times} $,
	\[ \ba{\lambda}{\mu} = \hilkn{- \frac{\norm{\del{1}{\mu}}}{\norm{\del{1}{\lambda}}}}{\frac{\norm{\lambda}}{q^{2}}} \cdot \hilkn{q}{\frac{\mu \ol{\del{1}{\mu}}}{\del{2}{\lambda}}} \cdot \Sigma'( \lambda, \mu ), \]
where, if $  \lambda = a + b \thet $, $ \mu = c + d \thet $, with $ a $, $ c \in k $, $ b $, $ d \in k^{\times} $,
	\[ q = a + \frac{b c}{d}, \]
and
	\[ \Sigma'( \lambda, \mu )
		=	\begin{cases}
				\displaystyle\hilkn{- \frac{\norm{\lambda}}{4 a^{2} \norm{\del{1}{\mu}}}}{\frac{( \norm{\lambda} )^{2} b^{4} \thet^{4}}{( ( a - q ) a - b^{2} \thet^{2} )^{4}}}, &\text{if $ \lambda \notin k^{\times} \thet $, $ a q \neq \norm{\lambda} $;} \\
			\displaystyle\hilkn{\frac{( \norm{\lambda} )^{2} b^{4}\thet^{4}}{( ( a - q ) a + b^{2} \thet^{2} )^{4}}}{- \frac{\norm{\lambda}}{4 a^{2} \norm{\del{1}{\mu}}}}, &\text{if $ \lambda \notin k^{\times} \thet $, $ a q = \norm{\lambda} $;} \\
				\displaystyle 1, &\text{if $ \lambda \in k^{\times} \thet $.}
			\end{cases} \]
\end{prop}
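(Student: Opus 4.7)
Like Lemmas~\ref{l:2.16} and~\ref{l:2.18}, the proof will be a direct computation with the $\twa{}{}$-generators, starting from the six-factor expression for $\ba{\lambda}{\mu}$ provided by~\eqref{e:2.23ii} (with $y(\lambda\mu) = 1$ since $\lambda\mu \notin k^{\times}$ by hypothesis). The algebraic fact motivating the parameter $q = a + bc/d$ is the identity $d\lambda + b\ol\mu = dq$, which exhibits $q \in k^{\times}$ as a natural ``$k$-part'' attached to the pair $(\lambda,\mu)$.

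The plan is first to apply Lemma~\ref{l:2.23ii} with an appropriate choice of $q_{1}, q_{2} \in k^{\times}$---chosen so that $\del{2}{\lambda q_{1}} + \ol{\del{1}{\mu q_{2}}}$ becomes a simple multiple of $q$---and thereby reduce the computation of $\ba{\lambda}{\mu}$ to that of $\ba{\lambda q_{1}}{\mu q_{2}}$ up to explicit Hilbert-symbol corrections. The payoff is that, in the expression~\eqref{e:2.23ii} for $\ba{\lambda q_{1}}{\mu q_{2}}$, the adjacent factors $\twa{1}{\del{1}{\mu q_{2}}} \cdot \twa{0}{\norm{\del{1}{\mu q_{2}}}/\del{2}{\lambda q_{1}}}$ can be combined via Lemma~\ref{l:2.15}(i) into a product of two new $\twa{}{}$-generators whose second arguments are rational in $q$, reflecting the denominator $m + m'$ that appears in that lemma. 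I will then iteratively apply the conjugation identities of Proposition~\ref{p:wa} together with Lemmas~\ref{l:2.15},~\ref{l:2.16}, and~\ref{l:2.18} to collapse the remaining $\twa{}{}$-product (modulo~$\pi_{1}^{top}$) to the identity, and assemble the extracted Hilbert symbols using bilinearity~\eqref{e:bastri} and the Steinberg relations~\eqref{e:prop2i}--\eqref{e:prop5ii} into the form stated in the proposition.

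The main obstacle is the case analysis for $\Sigma'$. In the generic subcase ($a \neq 0$ and $aq \neq \norm\lambda$), Lemma~\ref{l:2.18} applies in its first nontrivial form and yields the first expression for $\Sigma'$. When $aq = \norm\lambda$---equivalently $ac + bd\thet^{2} = 0$, the condition for the ``real part'' of $\lambda\mu$ to vanish---the denominator $ac - bd\thet^{2}$ in Lemma~\ref{l:2.18} vanishes, so its alternative form (obtained by swapping $t$ with $\ol t$) must be used, producing the second expression for $\Sigma'$. When $\lambda \in k^{\times}\thet$ (i.e., $a = 0$), many of the intermediate Hilbert symbols involving $a$ degenerate and the correction $\Sigma'$ collapses to $1$. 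The delicate step throughout will be the bookkeeping: tracking and simplifying a long chain of Hilbert-symbol factors via bilinearity and the Steinberg relations until the clean final expression emerges.
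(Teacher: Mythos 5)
Your preparatory moves are fine --- starting from the six-factor expression \eqref{e:2.23ii}, the identity $d\lambda + b\ol{\mu} = dq$ motivating $q$, the use of Lemma~\ref{l:2.23ii} to rescale by $q_{1}, q_{2}$, and your reading of the case structure (including the equivalence $aq = \norm{\lambda} \iff ac + bd\thet^{2} = 0$, which is exactly how the two forms of Lemma~\ref{l:2.18} get triggered) all match the paper. But there is a genuine gap at the decisive step, where you propose to ``iteratively apply the conjugation identities of Proposition~\ref{p:wa} together with Lemmas~\ref{l:2.15}, \ref{l:2.16}, and~\ref{l:2.18} to collapse the remaining $\twa{}{}$-product (modulo $\pi_{1}^{top}$) to the identity.'' Those lemmas only evaluate products of $\twa{}{}$'s in very special configurations (first arguments in ratio $\ol{t}/t$, second arguments rescaled by $\norm{t}$), and their outputs are Hilbert symbols times factors $\delh{s}$ with $s \in k^{\times}$. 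When $\lambda$, $\mu$, $\lambda\mu$ are all irrational, any direct collapse of \eqref{e:2.23ii} forces you at some point to multiply or transpose two torus factors $\delh{\alpha}$, $\delh{\beta}$ with $\alpha, \beta \notin k^{\times}$; by \eqref{e:ba} such a product is $\ba{\alpha}{\beta}\cdot\delh{\alpha\beta}$, i.e.\ it contains precisely the unknown quantity the proposition is trying to compute, so the computation is circular. This is exactly why the mixed case (Proposition~\ref{p:2.23ii}) admits a direct collapse but the present case does not, and your proposal supplies no mechanism for eliminating these unknown factors.

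The paper (following Deodhar's Section~2.29) escapes the circularity with two inputs absent from your plan. First, the commutator formula on the torus, Lemma~\ref{l:comm}: $[\lambda,\mu]_{\sigma_{u}} = \hilbkn{\lambda}{\mu}^{2}\cdot\hilbkn{\lambda}{\ol{\mu}}^{-1}$, which is itself proved non-locally (Kazhdan--Patterson's formula in the split case, then a global product-formula argument to transfer to the remaining quadratic extensions); this is what allows irrational $\delh{\cdot}$ factors to be transposed at known cost. Second, and decisively, the $2$-cocycle identity: one forms three big-cell elements $e_{i} = \xa{p_{i}}{l_{i}}\cdot\ha{t_{i}}\cdot\wa{0}{\thet}\cdot\xa{r_{i}}{m_{i}}$ with $t_{1}^{-1} = t_{2} = \del{1}{\lambda}/\thet$ and $t_{3} = \ol{\del{2}{\lambda}}/\thet$, computes $\sigu{e_{1}}{e_{2}}$, $\sigu{e_{1}e_{2}}{e_{3}}$, $\sigu{e_{2}}{e_{3}}$ and $\sigu{e_{1}}{e_{2}e_{3}}$ using your lemmas together with Lemma~\ref{l:comm} and Proposition~\ref{p:2.23ii}, and observes that the one irreducible unknown, $\ba{\thet/\del{1}{\lambda}}{\del{1}{\lambda}/\thet}$, occurs on both sides of $\sigu{e_{1}}{e_{2}}\cdot\sigu{e_{1}e_{2}}{e_{3}} = \sigu{e_{1}}{e_{2}e_{3}}\cdot\sigu{e_{2}}{e_{3}}$ and cancels. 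The identity is then solved for $\ba{1/\ol{\lambda}}{\thet/\ol{n''}}$, and Lemma~\ref{l:2.23ii} with the substitution $\lambda \mapsto \lambda/q$ (here your observation about $q$ does enter, via $\ol{\del{1}{\lambda}} + q\,\del{2}{\lambda} = \del{1}{\mu}$ and $\del{2}{\mu} = q'\thet$) converts this into the stated formula, with $\Sigma'$ arising as $c_{1}(\lambda/q)^{-1}$ from the two branches of Lemma~\ref{l:2.18} and the degenerate branch $\lambda \in k^{\times}\thet$. Without this cancellation mechanism your case analysis, though correctly anticipated, is never reached.
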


\begin{proof}
This proof will follow closely the methods used in Section~2.29 of \cite{deodhar78}.  For some $ ( p_{i}, l_{i} ) $, $ ( r_{i}, m_{i} ) \in A $, $ t_{i} \in K^{\times} $, $ i = 1 $, $ 2 $, $ 3 $, define
	\[ g_{2}( ( p_{i}, l_{i} ), t_{i}, ( r_{i}, m_{i} ) ) = \xa{p_{i}}{l_{i}} \cdot \ha{t_{i}} \cdot \wa{0}{\thet} \cdot \xa{r_{i}}{m_{i}}, \]
and let
	\[ e_{i} = g_{2}( ( p_{i}, l_{i} ), t_{i}, ( r_{i}, m_{i} ) ). \]

We will consider the case when $ ( r_{j}, m_{j} ) \neq ( - p_{j + 1}, \ol{l}_{j + 1} ) $, $ j = 1 $, $ 2 $.  For any $ ( p', l' ) $, $ ( r', m' ) \in A $, let $ ( p', l' ) \circ ( r', m' ) $ be defined as the element $ ( s', n' ) \in A $ such that $ \xa{s'}{n'} = \xa{p'}{l'} \cdot \xa{r'}{m'} $, i.e. $ ( p', l' ) \circ ( r', m' ) = ( p' + r', l' + m' - p' \ol{r'} ) $.  (Clearly the operation is associative.)

As a first step, we let
	\[ ( r_{1}, m_{1} ) \circ ( p_{2}, l_{2} ) = ( s_{1}, n_{1} ). \]
Then,
	\[ \delta( e_{1} ) \cdot \delta( e_{2} )
	=	\begin{aligned}[t]
			&\txa{p_{1}}{l_{1}} \cdot \delh{t_{1}} \cdot \twa{0}{\thet} \cdot \txa{s_{1}}{n_{1}} \cdot \delh{t_{2}} \\
			&\phantom{\ } \cdot \twa{0}{\thet} \cdot \txa{r_{2}}{m_{2}}.
		\end{aligned} \]
By Proposition~\ref{p:wa},
	\[ \twa{0}{\thet} \cdot \txa{s_{1}}{n_{1}} \cdot \twa{0}{\thet}^{- 1} = \txam{- \frac{\ol{s_{1}}}{\thet}}{- \frac{n_{1}}{\thet^{2}}}. \]
This implies that
	\[ \delta( e_{1} ) \cdot \delta( e_{2} )
	=	\begin{aligned}[t]
			&\txa{p_{1}}{l_{1}} \cdot \delh{t_{1}} \cdot \txam{- \frac{\ol{s_{1}}}{\thet}}{- \frac{n_{1}}{\thet^{2}}} \cdot \twa{0}{\thet} \\
			&\phantom{\ } \cdot \delh{t_{2}} \cdot \twa{0}{\thet} \cdot \txa{r_{2}}{m_{2}}.
		\end{aligned} \]
By Proposition~\ref{p:wa} again,
	\[ \twa{- \frac{s_{1} \thet}{\ol{n_{1}}}}{- \frac{\thet^{2}}{\ol{n_{1}}}} = \txa{- \frac{s_{1} \thet}{\ol{n_{1}}}}{- \frac{\thet^{2}}{\ol{n_{1}}}} \cdot \txam{- \frac{\ol{s_{1}}}{\thet}}{- \frac{n_{1}}{\thet^{2}}} \cdot \txa{- \frac{s_{1} \thet}{n_{1}}}{- \frac{\thet^{2}}{\ol{n_{1}}}}. \]
Hence inserting the above into our equation,
	\[ \delta( e_{1} ) \cdot \delta( e_{2} )
	=	\begin{aligned}[t]
			&\txa{p_{1}}{l_{1}} \cdot \delh{t_{1}} \cdot \txa{\frac{s_{1} \thet}{\ol{n_{1}}}}{- \frac{\thet^{2}}{n_{1}}} \cdot \twa{- \frac{s_{1} \thet}{\ol{n_{1}}}}{- \frac{\thet^{2}}{\ol{n_{1}}}} \\
			&\phantom{\ } \cdot \txa{\frac{s_{1} \thet}{n_{1}}}{- \frac{\thet^{2}}{n_{1}}} \cdot \twa{0}{\thet} \cdot \delh{t_{2}} \cdot \twa{0}{\thet} \\
			&\phantom{\ } \cdot \txa{r_{2}}{m_{2}}.
		\end{aligned} \]

By Proposition~\ref{p:wa}, for arbitrary $ ( r, m ) $, $ ( r', m' ) $, $ ( p', l' ) \in A $,
\begin{align*}
	&\left[ \twa{r}{m} \cdot \twa{r'}{m'} \right] \cdot \xa{p'}{l'} \cdot \left[ \twa{r}{m} \cdot \twa{r'}{m'} \right]^{- 1} \\
	&= \twa{r}{m} \cdot \txam{\frac{\ol{p'} \cdot \ol{m'}}{( m' )^{2}}}{\frac{l'}{\norm{m'}}} \cdot \twa{r}{m}^{- 1} \\
	&= \txa{\frac{p' \cdot m'}{( \ol{m'} )^{2}} \cdot \frac{m^{2}}{\ol{m}}}{\frac{l'}{\norm{m'}} \cdot \norm{m}},
\end{align*}
therefore by the above, \eqref{e:delh} and \eqref{e:wainv},
\begin{align*}
	&\delh{t_{1}} \cdot \txa{\frac{s_{1} \thet}{\ol{n_{1}}}}{- \frac{\thet^{2}}{n_{1}}} \cdot \delh{t_{1}}^{- 1} \\
	&=	\begin{aligned}[t]
			&\left[ \twa{y( t_{1} )}{\del{1}{t_{1}}} \cdot \twa{0}{\del{2}{t_{1}}}^{- 1} \right] \cdot \txa{\frac{s_{1} \thet}{\ol{n_{1}}}}{- \frac{\thet^{2}}{n_{1}}} \\
			&\phantom{\ } \cdot \left[ \twa{y( t_{1} )}{\del{1}{t_{1}}} \cdot \twa{0}{\del{2}{t_{1}}}^{- 1} \right]^{- 1}
		\end{aligned} \\
	&=	\begin{aligned}[t]
			&\left[ \twa{y( t_{1} )}{\del{1}{t_{1}}} \cdot \twa{0}{\ol{\del{2}{t_{1}}}} \right] \cdot \txa{\frac{s_{1} \thet}{\ol{n_{1}}}}{- \frac{\thet^{2}}{n_{1}}} \\
			&\phantom{\ } \cdot \left[ \twa{y( t_{1} )}{\del{1}{t_{1}}} \cdot \twa{0}{\ol{\del{2}{t_{1}}}} \right]^{- 1}
		\end{aligned} \\
	&= \txa{\frac{s_{1} t_{1}^{2} \thet}{\ol{n_{1}} \ol{t_{1}}}}{- \frac{\norm{t_{1}} \thet^{2}}{n_{1}}},
\end{align*}
where $ y $ is as defined in \eqref{e:ylam}.  Also by the same method,
\begin{align*}
	&\begin{aligned}[t]
		&\left[ \twa{0}{\thet} \cdot \delh{t_{2}} \cdot \twa{0}{\thet} \right]^{- 1} \cdot \txa{\frac{s_{1} \thet}{n_{1}}}{- \frac{\thet^{2}}{n_{1}}} \\
		&\phantom{\ } \cdot \left[ \twa{0}{\thet} \cdot \delh{t_{2}} \cdot \twa{0}{\thet} \right]
	\end{aligned} \\
	&=	\begin{aligned}[t]
			&\left[ \twa{0}{- \thet} \cdot \twa{0}{\del{2}{t_{2}}} \cdot \twa{- y( t_{2} )}{\ol{\del{1}{t_{1}}}} \cdot \twa{0}{- \thet} \right] \\
			&\phantom{\ } \cdot \txa{\frac{s_{1} \thet}{n_{1}}}{- \frac{\thet^{2}}{n_{1}}} \cdot \Big[ \twa{0}{- \thet} \cdot \twa{0}{\del{2}{t_{2}}} \cdot \twa{- y( t_{2} )}{\ol{\del{1}{t_{1}}}} \\
			&\phantom{\ } \cdot \twa{0}{- \thet} \Big]^{- 1}
		\end{aligned} \\
	&=	\begin{aligned}[t]
			&\left[ \twa{0}{- \thet} \cdot \twa{0}{\del{2}{t_{2}}} \right] \cdot \txa{\frac{s_{1} \thet}{n_{1}} \cdot \frac{- \left( \ol{\del{1}{t_{2}}} \right)^{2}}{\del{1}{t_{2}} \thet}}{- \frac{\thet^{2}}{n_{1}} \cdot \frac{\norm{\del{1}{t_{2}}}}{- \thet^{2}}} \\
			&\phantom{\ } \cdot \left[ \twa{0}{- \thet} \cdot \twa{0}{\del{2}{t_{2}}} \right]^{- 1}
		\end{aligned} \\
	&= \txa{- \frac{s_{1} \left( \ol{\del{1}{t_{2}}} \right)^{2}}{n_{1} \del{1}{t_{2}}} \cdot \frac{\del{2}{t_{2}} \thet}{\left( \ol{\del{2}{t_{2}}} \right)^{2}}}{\frac{\norm{\del{1}{t_{2}}}}{n_{1}} \cdot \frac{- \thet^{2}}{\norm{\del{2}{t_{2}}}}} \\
	&= \txa{- \frac{s_{1} ( \ol{t_{2}} )^{2} \thet}{n_{1} t_{2}}}{- \frac{\norm{t_{2}} \thet^{2}}{n_{1}}}.
\end{align*}

Using these in the equation for $ \delta( e_{1} ) \cdot \delta( e_{2} ) $,
\begin{equation} \label{e:de1e2}
	\delta( e_{1} ) \cdot \delta( e_{2} )
		=	\begin{aligned}[t]
				&\txa{p_{1}}{l_{1}} \cdot \txa{\frac{s_{1} t_{1}^{2} \thet}{\ol{n_{1}} \ol{t_{1}}}}{- \frac{\norm{t_{1}} \thet^{2}}{n_{1}}} \cdot \delh{t_{1}} \\
				&\phantom{\ } \cdot \twa{- \frac{s_{1} \thet}{\ol{n_{1}}}}{- \frac{\thet^{2}}{\ol{n_{1}}}} \cdot \twa{0}{\thet} \cdot \delh{t_{2}} \cdot \twa{0}{\thet} \\
				&\phantom{\ } \cdot \txa{- \frac{s_{1} ( \ol{t_{2}} )^{2} \thet}{n_{1} t_{2}}}{- \frac{\norm{t_{2}} \thet^{2}}{n_{1}}} \cdot \txa{r_{2}}{m_{2}}.
			\end{aligned}
\end{equation}

Also, since the above applies also in the group $ G( k ) $ with the corresponding group elements, and
	\[ \wa{- \frac{s_{1} \thet}{\ol{n_{1}}}}{- \frac{\thet^{2}}{\ol{n_{1}}}} \cdot \wa{0}{\thet} = \ha{\frac{\thet}{\ol{n_{1}}}}, \]
this implies that
	\[ e_{1} e_{2}
	=	\begin{aligned}[t]
			&\xa{p_{1}}{l_{1}} \cdot \xa{\frac{s_{1} t_{1}^{2} \thet}{\ol{n_{1}} \ol{t_{1}}}}{- \frac{\norm{t_{1}} \thet^{2}}{n_{1}}} \cdot \ha{\frac{t_{1} t_{2} \thet}{\ol{n_{1}}}} \cdot \wa{0}{\thet} \\
			&\phantom{\ } \cdot \xa{- \frac{s_{1} ( \ol{t_{2}} )^{2} \thet}{n_{1} t_{2}}}{- \frac{\norm{t_{2}} \thet^{2}}{n_{1}}} \cdot \xa{r_{2}}{m_{2}};
		\end{aligned} \]
i.e.
\begin{equation} \label{e:e1e2}
	e_{1} e_{2}
	=	\begin{aligned}[t]
			g_{2} &\Bigg[ ( p_{1}, l_{1} ) \circ \left( \frac{s_{1} t_{1}^{2} \thet}{\ol{n_{1}} \ol{t_{1}}}, - \frac{\norm{t_{1}} \thet^{2}}{n_{1}} \right), \frac{t_{1} t_{2} \thet}{\ol{n}}, \\
			&\phantom{\qquad} \left( - \frac{s_{1} ( \ol{t_{2}} )^{2} \thet}{n_{1} t_{2}}, - \frac{\norm{t_{2}} \thet^{2}}{n_{1}} \right) \circ ( r_{2}, m_{2} ) \Bigg].
		\end{aligned}
\end{equation}

Let $ \lambda = a + b \thet $, with $ a \in k $, $ b \in k^{\times} $.  Define
	\[ n_{1} = \del{1}{\lambda} = - \frac{1}{2} - \frac{a}{2 b \thet}, \quad n' = \del{2}{\lambda} = - \frac{1}{2 b \thet}. \]
If $ \lambda \in k^{\times} \thet $, then $ n_{1} \in k^{\times} $.  As we will see later, this implies that there are two cases to consider.

Let $ t_{3} = \ol{n'} / \thet $, $ t_{1}^{-1} = t_{2} = n_{1} / \thet $.  Choose $ ( r_{1}, m_{1} ) $, $ ( p_{2}, r_{2} ) \in A $ such that
	\[ ( r_{1}, m_{1} ) \circ ( p_{2}, l_{2} ) = ( 1, n_{1} ), \]
which is always possible.  Also, choose $ ( r_{2}, m_{2} ) $ and $ ( p_{3}, l_{3} ) \in A $ such that
	\[ ( r_{2}, m_{2} ) \circ ( p_{3}, l_{3} ) = ( 0, n' ). \]
Then by \eqref{e:e1e2},
\begin{align*}
	e_{1} e_{2}
	&= g_{2} \left( ( p_{1}, l_{1} ) \circ \left( - \frac{\thet^{2}}{n_{1}^{2}}, \frac{\thet^{4}}{n_{1}^{2} \ol{n_{1}}} \right), \frac{\thet}{\ol{n_{1}}}, \left( - \frac{( \ol{n_{1}} )^{2}}{n_{1}^{2}}, \ol{n_{1}} \right) \circ ( r_{2}, m_{2} ) \right), \\
	e_{2} e_{3}
	&= g_{2} \left( ( p_{2}, l_{2} ) \circ \left( 0, \frac{\norm{n_{1}}}{n'} \right), \frac{n_{1}}{\thet}, \left( 0, \ol{n'} \right) \circ ( r_{3}, m_{3} ) \right).
\end{align*}

Let
	\[ \left[ \left( - \frac{( \ol{n_{1}} )^{2}}{n_{1}^{2}}, \ol{n_{1}} \right) \circ ( r_{2}, m_{2} ) \right] \circ ( p_{3}, l_{3} ) = \left( - \frac{( \ol{n_{1}} )^{2}}{n_{1}^{2}}, \ol{n_{1}} \right) \circ ( 0, n' ) = ( - \frac{( \ol{n_{1}} )^{2}}{n_{1}^{2}} , n'' ), \]
so that
	\[ n'' = \ol{n_{1}} + n' \neq 0; \]
and let
	\[ ( r_{1}, m_{1} ) \circ \left[ ( p_{2}, l_{2} ) \circ \left( 0, \frac{\norm{n_{1}}}{n'} \right) \right] = ( 1, n_{1} ) \circ \left( 0, \frac{\norm{n_{1}}}{n'} \right) = ( 1, n'''), \]
so that
	\[ n''' = n_{1} + \frac{\norm{n_{1}}}{n'} \neq 0. \]
This implies that by \eqref{e:e1e2},
\begin{align*}
	e_{1} e_{2} \cdot e_{3}
	&=	\begin{aligned}[t]
			g_{2} &\left[ ( p_{1}, l_{1} ) \circ \left( - \frac{\thet^{2}}{n_{1}^{2}}, \frac{\thet^{4}}{n_{1}^{2} \ol{n_{1}}} \right) \circ \left( \frac{\thet^{2}}{n_{1} \ol{n''}}, \frac{\thet^{4}}{\norm{n_{1}} n''} \right), \frac{\ol{n'} \thet}{\ol{n_{1}} \ol{n''}}, \right. \\
			&\left. \phantom{\qquad} \left( - \frac{( \ol{n_{1}} )^{2} n'}{n_{1}^{2} n''}, \frac{\norm{n'}}{n''} \right) \circ ( r_{3}, m_{3} ) \right],
		\end{aligned} \\
	e_{1} \cdot e_{2} e_{3}
	&=	\begin{aligned}[t]
			g_{2} &\left[ ( p_{1}, l_{1} ) \circ \left( - \frac{\ol{n_{1}} \thet^{2}}{n_{1}^{2} \ol{n'''}}, \frac{\thet^{4}}{\norm{n_{1}} n'''} \right), \frac{\thet}{\ol{n'''}}, \right. \\
			&\left. \phantom{\qquad} \left( - \frac{( \ol{n_{1}} )^{2}}{n''' n_{1}}, \frac{\norm{n_{1}}}{n'''} \right) \circ \left( 0, \ol{n'} \right) \circ ( r_{3}, m_{3} ) \right].
		\end{aligned}
\end{align*}

Since for all $ g $, $ g' \in G( k ) $,
	\[ \sigu{g}{g'} = \delta( g ) \cdot \delta( g' ) \cdot \delta( g g' )^{-1}, \]
we will work out \sigu{e_{1}}{e_{2}}, \sigu{e_{1} e_{2}}{e_{3}}, \sigu{e_{2}}{e_{3}} and \sigu{e_{1}}{e_{2} e_{3}} and use the 2-cocycle condition to get our result.

We first assume that $ \lambda \notin k^{\times} \thet $, so that $ n_{1} \notin k^{\times} $.  Firstly, by \eqref{e:de1e2},
	\[ \delta( e_{1} ) \cdot \delta( e_{2} ) 
		=	\begin{aligned}[t]
				&\txa{p_{1}}{l_{1}} \cdot \txa{- \frac{\thet^{2}}{n_{1}^{2}}}{\frac{\thet^{4}}{n_{1}^{2} \ol{n_{1}}}} \cdot \delh{\frac{\thet}{n_{1}}} \cdot \twa{- \frac{\thet}{\ol{n_{1}}}}{- \frac{\thet^{2}}{\ol{n_{1}}}} \\
				&\phantom{\ } \cdot \twa{0}{\thet} \cdot \delh{\frac{n_{1}}{\thet}} \cdot \twa{0}{\thet} \cdot \txa{- \frac{( \ol{n_{1}} )^{2}}{n_{1}^{2}}}{\ol{n_{1}}} \\
				&\phantom{\ } \cdot \txa{r_{2}}{m_{2}}. 
			\end{aligned} \]
Consider (using \eqref{e:wainv})
	\[ \twa{1}{n_{1}} \cdot \twa{- \frac{\thet}{\ol{n_{1}}}}{- \frac{\thet^{2}}{\ol{n_{1}}}}^{- 1} = \twa{- 1}{\ol{n_{1}}}^{- 1} \cdot \twa{\frac{\thet}{\ol{n_{1}}}}{- \frac{\thet^{2}}{n_{1}}}. \]
By Lemma~\ref{l:2.16},
\begin{align*}
	&\twa{1}{n_{1}} \cdot \twa{- \frac{\thet}{\ol{n_{1}}}}{- \frac{\thet^{2}}{\ol{n_{1}}}}^{- 1} \\
	&= \twa{- 1}{\ol{n_{1}}}^{- 1} \cdot \twa{\frac{\thet}{\ol{n_{1}}}}{- \frac{\thet^{2}}{n_{1}}} \\
	&=	\begin{aligned}[t]
			&\hilkn{- \frac{( - 1 / 2 )^{2} \norm{n_{1} / \thet}}{( 1 / ( 2 \thet^{2} ) )^{2} \norm{n_{1}} \thet^{2}}}{- \frac{a}{2 b \thet^{2}} + \frac{( a / ( 2 b \thet^{2} ) ) ( 1 / ( 2 \thet^{2} ) ) \thet^{2}}{- 1 / 2}} \\
			&\phantom{\ } \cdot \hilkn{- \frac{( 1 / ( 2 \thet^{2} ) ) \norm{n_{1}}}{- 1 / 2}}{\norm{\frac{n_{1}}{\thet}}} \cdot \delh{\norm{\frac{n_{1}}{\thet}}}
		\end{aligned} \\
	&= \hilkn{1}{- \frac{a}{b \thet^{2}}} \cdot \hilkn{\frac{\norm{n_{1}}}{\thet^{2}}}{- \frac{\norm{n_{1}}}{\thet^{2}}} \cdot \delh{- \frac{\norm{n_{1}}}{\thet^{2}}};
\end{align*}
and hence by \eqref{e:prop3},
	\[ \twa{1}{n_{1}} \cdot \twa{- \frac{\thet}{\ol{n_{1}}}}{- \frac{\thet^{2}}{\ol{n_{1}}}}^{- 1} = \delh{- \frac{\norm{n_{1}}}{\thet^{2}}}. \]
(Note that the calculation of
	\[ \twa{- \frac{\thet}{\ol{n_{1}}}}{- \frac{\thet^{2}}{\ol{n_{1}}}} \cdot \twa{1}{n_{1}}^{- 1} = \twa{\frac{\thet}{\ol{n_{1}}}}{- \frac{\thet^{2}}{n_{1}}}^{- 1} \cdot \twa{- 1}{\ol{n_{1}}} \]
is unsuitable as it gives $ q_{1} = 0 $ in Lemma~\ref{l:2.16}.)  Hence,
\begin{equation} \label{e:2.29a}
	\twa{- \frac{\thet}{\ol{n_{1}}}}{- \frac{\thet^{2}}{\ol{n_{1}}}} = \delh{- \frac{\norm{n_{1}}}{\thet^{2}}}^{- 1} \cdot \twa{1}{n_{1}},
\end{equation}
and this clearly applies to any $ n_{1} \in \del{1}{K^{\times}} $ with $ n_{1} \notin k^{\times} $.  By our previous calculations in this proof, we have
\begin{equation} \label{e:2.29b}
\sigu{e_{1}}{e_{2}}
	=	\begin{aligned}[t]
			&\txa{p_{1}}{l_{1}} \cdot \txa{- \frac{\thet^{2}}{n_{1}^{2}}}{\frac{\thet^{4}}{n_{1}^{2} \ol{n_{1}}}} \cdot \delh{\frac{\thet}{n_{1}}} \\
			&\phantom{\ } \cdot \twa{- \frac{\thet}{\ol{n_{1}}}}{- \frac{\thet^{2}}{\ol{n_{1}}}} \cdot \twa{0}{\thet} \cdot \delh{\frac{n_{1}}{\thet}}\\
			&\phantom{\ } \cdot \delh{\frac{\thet}{\ol{n_{1}}}}^{- 1} \cdot \txa{- \frac{\thet^{2}}{n_{1}^{2}}}{\frac{\thet^{4}}{n_{1}^{2} \ol{n_{1}}}}^{- 1} \cdot \txa{p_{1}}{l_{1}}^{- 1}.
		\end{aligned}
\end{equation}
Applying \eqref{e:2.29a} to the above,
	\[ \sigu{e_{1}}{e_{2}}
	=	\begin{aligned}[t]
			&\txa{p_{1}}{l_{1}} \cdot \txa{- \frac{\thet^{2}}{n_{1}^{2}}}{\frac{\thet^{4}}{n_{1}^{2} \ol{n_{1}}}} \cdot \delh{\frac{\thet}{n_{1}}} \\
			&\phantom{\ } \cdot \left[ \delh{- \frac{\norm{n_{1}}}{\thet^{2}}}^{- 1} \cdot \twa{1}{n_{1}} \right] \cdot \twa{0}{\thet} \cdot \delh{\frac{n_{1}}{\thet}} \\
			&\phantom{\ }\cdot \delh{\frac{\thet}{\ol{n_{1}}}}^{- 1} \cdot \txa{- \frac{\thet^{2}}{n_{1}^{2}}}{\frac{\thet^{4}}{n_{1}^{2} \ol{n_{1}}}}^{- 1} \cdot \txa{p_{1}}{l_{1}}^{- 1};
		\end{aligned} \]
and since $ \sigu{g}{g'} $ is central in $ \wtilde{G} $, the first two terms must cancel the last two terms, giving
	\[ \sigu{e_{1}}{e_{2}}
	=	\begin{aligned}[t]
			&\delh{\frac{\thet}{n_{1}}} \cdot \delh{- \frac{\norm{n_{1}}}{\thet^{2}}}^{- 1} \cdot \twa{1}{n_{1}} \cdot \twa{0}{\thet} \\
			&\phantom{\ } \cdot \delh{\frac{n_{1}}{\thet}} \cdot \delh{\frac{\thet}{\ol{n_{1}}}}^{- 1}.
		\end{aligned} \]
By \eqref{e:wainv},
	\[ \sigu{e_{1}}{e_{2}}
	=	\begin{aligned}[t]
			&\delh{\frac{\thet}{n_{1}}} \cdot \delh{- \frac{\norm{n_{1}}}{\thet^{2}}}^{- 1} \cdot \twa{1}{n_{1}} \cdot \twa{0}{- \thet}^{- 1} \\
			&\phantom{\ } \cdot \delh{\frac{n_{1}}{\thet}} \cdot \delh{\frac{\thet}{\ol{n_{1}}}}^{- 1},
		\end{aligned} \]
which implies by \eqref{e:delh} that
	\[ \sigu{e_{1}}{e_{2}}
	=	\begin{aligned}[t]
			&\delh{\frac{\thet}{n_{1}}} \cdot \delh{- \frac{\norm{n_{1}}}{\thet^{2}}}^{- 1} \cdot \delh{- \frac{n_{1}}{\thet}} \\
			&\phantom{\ } \cdot \delh{\frac{n_{1}}{\thet}} \cdot \delh{\frac{\thet}{\ol{n_{1}}}}^{- 1}.
		\end{aligned} \]
We have by \eqref{e:ba} that
	\[ \ba{- \frac{\norm{n_{1}}}{\thet^{2}}}{\frac{\thet}{\ol{n_{1}}}} = \delh{- \frac{\norm{n_{1}}}{\thet^{2}}} \cdot \delh{\frac{\thet}{\ol{n_{1}}}} \cdot \delh{- \frac{n_{1}}{\thet}}^{- 1}, \]
i.e.
	\[ \delh{- \frac{\norm{n_{1}}}{\thet^{2}}}^{- 1} \cdot \delh{- \frac{n_{1}}{\thet}} = \ba{- \frac{\norm{n_{1}}}{\thet^{2}}}{\frac{\thet}{\ol{n_{1}}}}^{- 1} \cdot \delh{\frac{\thet}{\ol{n_{1}}}}. \]
Replacing the above in the equation for \sigu{e_{1}}{e_{2}},
	\[ \sigu{e_{1}}{e_{2}}
	=	\begin{aligned}[t]
			&\delh{\frac{\thet}{n_{1}}} \cdot \left[ \ba{- \frac{\norm{n_{1}}}{\thet^{2}}}{\frac{\thet}{\ol{n_{1}}}}^{- 1} \cdot \delh{\frac{\thet}{\ol{n_{1}}}} \right] \\
			&\phantom{\ } \cdot \delh{\frac{n_{1}}{\thet}} \cdot \delh{\frac{\thet}{\ol{n_{1}}}}^{- 1}.
		\end{aligned} \]
Using the notation in Lemma~\ref{l:comm},
\begin{align*}
	\sigu{e_{1}}{e_{2}}
	&=	\begin{aligned}[t]
			&\ba{- \frac{\norm{n_{1}}}{\thet^{2}}}{\frac{\thet}{\ol{n_{1}}}}^{- 1} \cdot \delh{\frac{\thet}{n_{1}}} \cdot \Bigg[ \left[ \frac{\thet}{\ol{n_{1}}}, \frac{n_{1}}{\thet} \right]_{\sigma_{u}} \\
			&\phantom{\ } \cdot \delh{\frac{n_{1}}{\thet}} \cdot \delh{\frac{\thet}{\ol{n_{1}}}} \Bigg] \cdot \delh{\frac{\thet}{\ol{n_{1}}}}^{- 1}
		\end{aligned} \\
	&=	\begin{aligned}[t]
			&\ba{- \frac{\norm{n_{1}}}{\thet^{2}}}{\frac{\thet}{\ol{n_{1}}}}^{- 1} \cdot \left[ \frac{\thet}{\ol{n_{1}}}, \frac{n_{1}}{\thet} \right]_{\sigma_{u}} \cdot \delh{\frac{\thet}{n_{1}}} \\
			&\phantom{\ } \cdot \delh{\frac{n_{1}}{\thet}},
		\end{aligned}
\end{align*}
thus by \eqref{e:ba},
	\[ \sigu{e_{1}}{e_{2}} = \ba{- \frac{\norm{n_{1}}}{\thet^{2}}}{\frac{\thet}{\ol{n_{1}}}}^{- 1} \cdot \left[ \frac{\thet}{\ol{n_{1}}}, \frac{n_{1}}{\thet} \right]_{\sigma_{u}} \cdot \ba{\frac{\thet}{n_{1}}}{\frac{n_{1}}{\thet}}. \]
By Lemma~\ref{l:comm},
	\[ \left[ \frac{\thet}{\ol{n_{1}}}, \frac{n_{1}}{\thet} \right]_{\sigma_{u}} = \hilbkn{\frac{\thet}{\ol{n_{1}}}}{\frac{n_{1}}{\thet}}^{2} \cdot \hilbkn{\frac{\thet}{\ol{n_{1}}}}{- \frac{\ol{n_{1}}}{\thet}}^{- 1}. \]
Using the properties of Hilbert symbols, by \eqref{e:prop3},
	\[ \hilbkn{\frac{\thet}{\ol{n_{1}}}}{\frac{n_{1}}{\thet}} = \hilbkn{\frac{\thet}{\ol{n_{1}}}}{- \frac{n_{1}}{\thet} \cdot \frac{\ol{n_{1}}}{\thet}}; \]
and by \eqref{e:norm2} and \eqref{e:prop3},
	\[ \hilbkn{\frac{\thet}{\ol{n_{1}}}}{\frac{n_{1}}{\thet}} = \hilkn{\norm{\frac{\thet}{n_{1}}}}{\norm{\frac{n_{1}}{\thet}}} = \hilkn{\norm{\frac{\thet}{n_{1}}}}{- 1}.  \]
Therefore by \eqref{e:prop5},
	\[ \hilbkn{\frac{\thet}{\ol{n_{1}}}}{\frac{n_{1}}{\thet}}^{2} = \hilkn{\norm{\frac{\thet}{n_{1}}}}{( - 1 )^{2}} = 1. \]
Also by \eqref{e:prop3},
	\[ \hilbkn{\frac{\thet}{\ol{n_{1}}}}{- \frac{\ol{n_{1}}}{\thet}} = 1. \]
Hence, we have
	\[ \left[ \frac{\thet}{\ol{n_{1}}}, \frac{n_{1}}{\thet} \right]_{\sigma_{u}} = 1. \]
By Proposition~\ref{p:2.23ii},
\begin{align*}
	\ba{- \frac{\norm{n_{1}}}{\thet^{2}}}{\frac{\thet}{\ol{n_{1}}}}^{- 1}
	&= \hilkn{- \frac{\norm{n_{1}}}{\thet^{2}}}{\frac{( \thet / \ol{n_{1}} ) \ol{\del{1}{\thet / \ol{n_{1}}}}}{\thet}}^{- 1} \\
	&= \hilkn{- \frac{\norm{n_{1}}}{\thet^{2}}}{1}^{- 1} \\
	&= 1.
\end{align*}
Thus,
	\[ \sigu{e_{1}}{e_{2}} = \ba{\frac{\thet}{n_{1}}}{\frac{n_{1}}{\thet}}. \]

Using \eqref{e:de1e2} also gives
\begin{align*}
	\sigu{e_{1} e_{2}}{e_{3}}
	&= [ \delta( e_{1} e_{2} ) \cdot \delta( e_{3} ) ] \cdot \delta( e_{1} e_{2} \cdot e_{3} )^{-1} \\
	&=	\begin{aligned}[t]
			&\txa{p_{1}}{l_{1}} \cdot \txa{- \frac{\thet^{2}}{n_{1}^{2}}}{\frac{\thet^{4}}{n_{1}^{2} \ol{n_{1}}}} \cdot \txa{\frac{\thet^{2}}{n_{1} \ol{n''}}}{\frac{\thet^{4}}{\norm{n_{1}} n''}} \\
			&\phantom{\ } \cdot \delh{\frac{\thet}{\ol{n_{1}}}} \cdot \twa{\frac{( \ol{n_{1}} )^{2} \thet}{n_{1}^{2} \ol{n''}}}{- \frac{\thet^{2}}{\ol{n''}}} \cdot \twa{0}{\thet} \cdot \delh{\frac{\ol{n'}}{\thet}} \\
			&\phantom{\ } \cdot \delh{\frac{\ol{n'} \thet}{\ol{n_{1}} \ol{n''}}}^{-1} \cdot \bigg[ \txa{p_{1}}{l_{1}} \cdot \txa{- \frac{\thet^{2}}{n_{1}^{2}}}{\frac{\thet^{4}}{n_{1}^{2} \ol{n_{1}}}} \\
			&\phantom{\ } \cdot \txa{\frac{\thet^{2}}{n_{1} \ol{n''}}}{\frac{\thet^{4}}{\norm{n_{1}} n''}} \bigg]^{-1},
		\end{aligned}
\end{align*}
and since $ \sigu{e_{1} e_{2}}{e_{3}} $ is a central element of $ \wtilde{G} $,
\begin{equation} \label{e:2.29c}
	\sigu{e_{1} e_{2}}{e_{3}}
	=	\begin{aligned}[t]
			&\delh{\frac{\thet}{\ol{n_{1}}}} \cdot \twa{\frac{( \ol{n_{1}} )^{2} \thet}{n_{1}^{2} \ol{n''}}}{- \frac{\thet^{2}}{\ol{n''}}} \cdot \twa{0}{\thet} \\
			&\phantom{\ } \cdot \delh{\frac{\ol{n'}}{\thet}} \cdot \delh{\frac{\ol{n'} \thet}{\ol{n_{1}} \ol{n''}}}^{-1}.
		\end{aligned}
\end{equation}
Consider
\begin{align*}
	\twa{\frac{( \ol{n_{1}} )^{2} \thet}{n_{1}^{2} \ol{n''}}}{- \frac{\thet^{2}}{\ol{n''}}} &\cdot \twa{\frac{\thet}{\ol{n''}}}{- \frac{\thet^{2}}{\ol{n''}}}^{- 1} \\
	&=	\begin{aligned}[t]
			&\left[ \twa{\frac{( \ol{n_{1}} )^{2} \thet}{n_{1}^{2} \ol{n''}}}{- \frac{\thet^{2}}{\ol{n''}}} \cdot \twa{\frac{\ol{n_{1}} \thet}{n_{1} \ol{n''}}}{- \frac{\thet^{2}}{\ol{n''}}}^{- 1} \right] \\
			&\phantom{\ } \cdot \left[ \twa{\frac{\ol{n_{1}} \thet}{n_{1} \ol{n''}}}{- \frac{\thet^{2}}{\ol{n''}}} \cdot \twa{\frac{\thet}{\ol{n''}}}{- \frac{\thet^{2}}{\ol{n''}}}^{- 1} \right].
		\end{aligned}
\end{align*}
By Lemma~\ref{l:2.18}, it is obvious that
	\[ \twa{\frac{( \ol{n_{1}} )^{2} \thet}{n_{1}^{2} \ol{n''}}}{- \frac{\thet^{2}}{\ol{n''}}} \cdot \twa{\frac{\ol{n_{1}} \thet}{n_{1} \ol{n''}}}{- \frac{\thet^{2}}{\ol{n''}}}^{- 1} = \twa{\frac{\ol{n_{1}} \thet}{n_{1} \ol{n''}}}{- \frac{\thet^{2}}{\ol{n''}}} \cdot \twa{\frac{\thet}{\ol{n''}}}{- \frac{\thet^{2}}{\ol{n''}}}^{- 1}, \]
so this implies that
\begin{multline*}
	\twa{\frac{( \ol{n_{1}} )^{2} \thet}{n_{1}^{2} \ol{n''}}}{- \frac{\thet^{2}}{\ol{n''}}} \cdot \twa{\frac{\thet}{\ol{n''}}}{- \frac{\thet^{2}}{\ol{n''}}}^{- 1} \\
	= \left[ \twa{\frac{\ol{n_{1}} \thet}{n_{1} \ol{n''}}}{- \frac{\thet^{2}}{\ol{n''}}} \cdot \twa{\frac{\thet}{\ol{n''}}}{- \frac{\thet^{2}}{\ol{n''}}}^{- 1} \right]^{2}.
\end{multline*}
Hence, by Lemma~\ref{l:2.18},
\begin{align*}
	&\twa{\frac{\ol{n_{1}} \thet}{n_{1} \ol{n''}}}{- \frac{\thet^{2}}{\ol{n''}}} \cdot \twa{\frac{\thet}{\ol{n''}}}{- \frac{\thet^{2}}{\ol{n''}}}^{- 1} \\
	&=	\begin{aligned}[t]
			&\Bigg( \frac{\left( \thet^{2} / ( 2 \norm{n''} ) \right)^{2} \norm{\ol{n_{1}}}}{( a / \left( 2 b \thet^{2} \right) )^{2} \norm{- \thet^{2} / \ol{n''}} \thet^{2}}, \\
			&\phantom{\qquad} \frac{\left( \left( \thet^{2} / ( 2 \norm{n''} ) \right) ( - 1 / 2 ) - \left( - ( a - 1 ) / \left( 2 b \norm{n''} \right) \right) \left( a / \left( 2 b \thet^{2} \right) \right) \thet^{2} \right)^{2}}{\left( \thet^{2} / ( 2 \norm{n''} ) \right)^{2} \norm{\ol{n_{1}}}} \Bigg)_{k, n}
		\end{aligned} \\
	&= \hilkn{\frac{b^{2} \norm{n_{1}} \thet^{2}}{a^{2} \norm{n''}}}{\frac{( ( a - 1 ) a - b^{2} \thet^{2} )^{2}}{4 b^{4} \norm{n_{1}} \thet^{4}}}
\end{align*}
if $ ( a - 1 ) a - b^{2} \thet^{2} \neq 0 $ (we will deal with the case $ ( a - 1 ) a - b^{2} \thet^{2} = 0 $ later).  Since $ \lambda = n_{1} / n' $ and $ n' = - 1 / ( 2 b \thet ) $,
	\[ \twa{\frac{\ol{n_{1}} \thet}{n_{1} \ol{n''}}}{- \frac{\thet^{2}}{\ol{n''}}} \cdot \twa{\frac{\thet}{\ol{n''}}}{- \frac{\thet^{2}}{\ol{n''}}}^{- 1} = \hilkn{- \frac{\norm{\lambda}}{4 a^{2} \norm{n''}}}{- \frac{( ( a - 1 ) a - b^{2} \thet^{2} )^{2}}{\norm{\lambda} b^{2} \thet^{2}}}. \]
This implies that
\begin{align*}
	&\twa{\frac{( \ol{n_{1}} )^{2} \thet}{n_{1}^{2} \ol{n''}}}{- \frac{\thet^{2}}{\ol{n''}}} \cdot \twa{\frac{\thet}{\ol{n''}}}{- \frac{\thet^{2}}{\ol{n''}}}^{- 1} \\
	&= \hilkn{- \frac{\norm{\lambda}}{4 a^{2} \norm{n''}}}{- \frac{( ( a - 1 ) a - b^{2} \thet^{2} )^{2}}{\norm{\lambda} b^{2} \thet^{2}}}^{2} \\
	&= \hilkn{\frac{\norm{\lambda}}{4 a^{2} \norm{n''}}}{\frac{( ( a - 1 ) a - b^{2} \thet^{2} )^{4}}{( \norm{\lambda} )^{2} b^{4} \thet^{4}}}
\end{align*}
by \eqref{e:prop5}.

If $ ( a - 1 ) a - b^{2} \thet^{2} = 0 $, then by Lemma~\ref{l:2.18},
\begin{align*}
	&\twa{\frac{\ol{n_{1}} \thet}{n_{1} \ol{n''}}}{- \frac{\thet^{2}}{\ol{n''}}} \cdot \twa{\frac{\thet}{\ol{n''}}}{- \frac{\thet^{2}}{\ol{n''}}}^{- 1} \\
	&=	\begin{aligned}[t]
			&\Bigg( \frac{\left( \left( \thet^{2} / ( 2 \norm{n''} ) \right) ( - 1 / 2 ) + \left( - ( a - 1 ) / \left( 2 b \norm{n''} \right) \right) \left( a / \left( 2 b \thet^{2} \right) \right) \thet^{2} \right)^{2}}{\left( \thet^{2} / ( 2 \norm{n''} ) \right)^{2} \norm{\ol{n_{1}}}}, \\
			&\phantom{\qquad} \frac{\left( \thet^{2} / ( 2 \norm{n''} ) \right)^{2} \norm{\ol{n_{1}}}}{( a / \left( 2 b \thet^{2} \right) )^{2} \norm{- \thet^{2} / \ol{n''}} \thet^{2}} \Bigg)_{k, n}
		\end{aligned} \\
	&= \hilkn{\frac{( ( a - 1 ) a + b^{2} \thet^{2} )^{2}}{4 b^{4} \norm{n_{1}} \thet^{4}}}{\frac{b^{2} \norm{n_{1}} \thet^{2}}{a^{2} \norm{n''}}}.
\end{align*}
Since $ \lambda = n_{1} / n' $ and $ n' = - 1 / ( 2 b \thet ) $,
	\[ \twa{\frac{\ol{n_{1}} \thet}{n_{1} \ol{n''}}}{- \frac{\thet^{2}}{\ol{n''}}} \cdot \twa{\frac{\thet}{\ol{n''}}}{- \frac{\thet^{2}}{\ol{n''}}}^{- 1} = \hilkn{- \frac{( ( a - 1 ) a + b^{2} \thet^{2} )^{2}}{\norm{\lambda} b^{2}\thet^{2}}}{- \frac{\norm{\lambda}}{4 a^{2} \norm{n''}}}, \]
thus
\begin{align*}
	&\twa{\frac{( \ol{n_{1}} )^{2} \thet}{n_{1}^{2} \ol{n''}}}{- \frac{\thet^{2}}{\ol{n''}}} \cdot \twa{\frac{\thet}{\ol{n''}}}{- \frac{\thet^{2}}{\ol{n''}}}^{- 1} \\
	&= \hilkn{- \frac{( ( a - 1 ) a + b^{2} \thet^{2} )^{2}}{\norm{\lambda} b^{2}\thet^{2}}}{- \frac{\norm{\lambda}}{4 a^{2} \norm{n''}}}^{2} \\
	&= \hilkn{\frac{( ( a - 1 ) a + b^{2} \thet^{2} )^{4}}{( \norm{\lambda} )^{2} b^{4}\thet^{4}}}{- \frac{\norm{\lambda}}{4 a^{2} \norm{n''}}}
\end{align*}
by \eqref{e:prop5}.  Let $ c_{1} $ denote the function
	\[ c_{1}( \lambda )
	=	\begin{cases}
			\displaystyle \hilkn{- \frac{\norm{\lambda}}{4 a^{2} \norm{n''}}}{\frac{( ( a - 1 ) a - b^{2} \thet^{2} )^{4}}{( \norm{\lambda} )^{2} b^{4} \thet^{4}}}, &\text{if $ ( a - 1 ) a - b^{2} \thet^{2} \neq 0 $;} \\
			\displaystyle \hilkn{\frac{( ( a - 1 ) a + b^{2} \thet^{2} )^{4}}{( \norm{\lambda} )^{2} b^{4}\thet^{4}}}{- \frac{\norm{\lambda}}{4 a^{2} \norm{n''}}}, &\text{if $ ( a - 1 ) a - b^{2} \thet^{2} = 0 $.}
		\end{cases} \]

By \eqref{e:wainv} and Lemma~\ref{l:2.16},
\begin{align*}
	&\twa{1}{n''} \cdot \twa{\frac{\thet}{\ol{n''}}}{- \frac{\thet^{2}}{\ol{n''}}}^{- 1} \\
	&= \twa{- 1}{\ol{n''}}^{- 1} \cdot \twa{- \frac{\thet}{\ol{n''}}}{- \frac{\thet^{2}}{n''}} \\
	&=	\begin{aligned}[t]
			&\hilkn{- \frac{( - 1 / 2 )^{2} (- \norm{n''} / ( \thet^{2} ) )}{( - 1 / ( 2 \thet^{2} ) )^{2} \norm{n''} \thet^{2}}}{- \frac{a - 1}{2 b \thet^{2}} + \frac{(- ( a - 1 ) / ( 2 b \thet^{2} ) ) ( - 1 / ( 2 \thet^{2} ) ) \thet^{2}}{- 1 / 2}} \\
			&\phantom{\ } \cdot \hilkn{- \frac{( - 1 / ( 2 \thet^{2} ) ) \norm{n''}}{- 1 / 2}}{- \frac{\norm{n''}}{\thet^{2}}} \cdot \delh{- \frac{\norm{n''}}{\thet^{2}}}
		\end{aligned} \\
	&= \hilkn{1}{\frac{a - 1}{b \thet^{2}}} \cdot \hilkn{- \frac{\norm{n''}}{\thet^{2}}}{- \frac{\norm{n''}}{\thet^{2}}} \cdot \delh{- \frac{\norm{n''}}{\thet^{2}}}
\end{align*}
if $ a \neq 1 $.  (Note that
	\[ \twa{\frac{\thet}{\ol{n''}}}{- \frac{\thet^{2}}{\ol{n''}}} \cdot \twa{1}{n''}^{- 1} = \twa{- \frac{\thet}{\ol{n''}}}{- \frac{\thet^{2}}{n''}}^{- 1} \cdot \twa{- 1}{\ol{n''}} \]
gives $ q_{1} = 0 $ in Lemma~\ref{l:2.16}.)  This implies that by \eqref{e:prop3},
	\[ \twa{1}{n''} \cdot \twa{\frac{\thet}{\ol{n''}}}{- \frac{\thet^{2}}{\ol{n''}}}^{- 1} = \hilkn{- \frac{\norm{n''}}{\thet^{2}}}{- 1} \cdot \delh{- \frac{\norm{n''}}{\thet^{2}}}, \]
i.e.
\begin{equation} \label{e:2.29ci}
	\twa{\frac{\thet}{\ol{n''}}}{- \frac{\thet^{2}}{\ol{n''}}} = \hilkn{- \frac{\norm{n''}}{\thet^{2}}}{- 1} \cdot \delh{- \frac{\norm{n''}}{\thet^{2}}}^{- 1} \cdot \twa{1}{n''}
\end{equation}
(by \eqref{e:prop5}).

If  $ a = 1 $, then $ n'' = - 1 / 2 $.  This implies that by \eqref{e:wainv} and Lemma~\ref{l:2.16},
\begin{align*}
	\twa{\frac{\thet}{\ol{n''}}}{- \frac{\thet^{2}}{\ol{n''}}} \cdot \twa{1}{n''}^{- 1}
	&= \twa{- 2 \thet}{2 \thet^{2}} \cdot \twa{1}{- \frac{1}{2}}^{- 1} \\
	&= \twa{2 \thet}{2 \thet^{2}}^{- 1} \cdot \twa{- 1}{- \frac{1}{2}} \\
	&= \hilkn{- \frac{2 \thet^{2}}{( - 2 \thet ) \thet}}{\norm{- 2 \thet}} \cdot \delh{\norm{- 2 \thet}} \\
	&= \delh{\norm{- 2 \thet}},
\end{align*}
so that
	\[ \twa{\frac{\thet}{\ol{n''}}}{- \frac{\thet^{2}}{\ol{n''}}} = \delh{- 4 \thet^{2}} \cdot \twa{1}{- \frac{1}{2}}. \]
Thus, let $ c_{2} $ be the function
	\[ c_{2}( \lambda )
	=	\begin{cases}
			\displaystyle \hilkn{- \frac{\norm{n''}}{\thet^{2}}}{- 1} \cdot \delh{- \frac{\norm{n''}}{\thet^{2}}}^{- 1} \cdot \twa{1}{n''}, &\text{if $ a \neq 1 $;} \\
			\displaystyle \delh{- 4 \thet^{2}} \cdot \twa{1}{- \frac{1}{2}}, &\text{if $ a = 1 $.}
		\end{cases} \]
This implies that
\begin{align*}
	&\twa{\frac{( \ol{n_{1}} )^{2} \thet}{n_{1}^{2} \ol{n''}}}{- \frac{\thet^{2}}{\ol{n''}}} \\
	&= \left[ \twa{\frac{( \ol{n_{1}} )^{2} \thet}{n_{1}^{2} \ol{n''}}}{- \frac{\thet^{2}}{\ol{n''}}} \cdot \twa{\frac{\thet}{\ol{n''}}}{- \frac{\thet^{2}}{\ol{n''}}}^{- 1} \right] \cdot \twa{\frac{\thet}{\ol{n''}}}{- \frac{\thet^{2}}{\ol{n''}}} \\
	&= c_{1}( \lambda ) \cdot c_{2}( \lambda ).
\end{align*}
By the above, there are three cases to consider: $ ( a - 1 ) a - b^{2} \thet^{2} \neq 0 $, $ a \neq 1 $; $ ( a - 1 ) a - b^{2} \thet^{2} = 0 $, $ a \neq 1 $; and $ ( a - 1 ) a - b^{2} \thet^{2} \neq 0 $, $ a = 1 $.

We can consider the first two cases together, as they differ only by $ c_{1}( \lambda ) $ which is a central element in $ \wtilde{G} $.  Thus,
	\[ \sigu{e_{1} e_{2}}{e_{3}}
	=	\begin{aligned}[t]
			&\delh{\frac{\thet}{\ol{n_{1}}}} \cdot \Bigg[ c_{1}( \lambda ) \cdot \hilkn{- \frac{\norm{n''}}{\thet^{2}}}{- 1} \cdot \delh{- \frac{\norm{n''}}{\thet^{2}}}^{- 1} \\
			&\phantom{\ } \cdot \twa{1}{n''} \Bigg] \cdot \twa{0}{\thet} \cdot \delh{\frac{\ol{n'}}{\thet}} \cdot \delh{\frac{\ol{n'} \thet}{\ol{n_{1}} \ol{n''}}}^{-1}.
		\end{aligned} \]
Since by \eqref{e:wainv}, $ \twa{0}{\thet} = \twa{0}{- \thet}^{- 1} $, this implies by \eqref{e:delh} that
	\[ \sigu{e_{1} e_{2}}{e_{3}}
	=	\begin{aligned}[t]
			&c_{1}( \lambda ) \cdot \hilkn{- \frac{\norm{n''}}{\thet^{2}}}{- 1} \cdot \delh{\frac{\thet}{\ol{n_{1}}}} \cdot \delh{- \frac{\norm{n''}}{\thet^{2}}}^{- 1} \\
			&\phantom{\ } \cdot \delh{- \frac{n''}{\thet}} \cdot \delh{\frac{\ol{n'}}{\thet}} \cdot \delh{\frac{\ol{n'} \thet}{\ol{n_{1}} \ol{n''}}}^{-1}.
		\end{aligned} \]
By \eqref{e:ba},
	\[ \ba{- \frac{\norm{n''}}{\thet^{2}}}{\frac{\thet}{\ol{n''}}}^{- 1} \cdot \delh{\frac{\thet}{\ol{n''}}} = \delh{- \frac{\norm{n''}}{\thet^{2}}}^{- 1} \cdot \delh{- \frac{n''}{\thet}}, \]
which implies that
	\[ \sigu{e_{1} e_{2}}{e_{3}}
	=	\begin{aligned}[t]
			&c_{1}( \lambda ) \cdot \hilkn{- \frac{\norm{n''}}{\thet^{2}}}{- 1} \cdot \delh{\frac{\thet}{\ol{n_{1}}}} \cdot \Bigg[ \ba{- \frac{\norm{n''}}{\thet^{2}}}{\frac{\thet}{\ol{n''}}}^{- 1} \\
			&\phantom{\ } \cdot \delh{\frac{\thet}{\ol{n''}}} \Bigg] \cdot \delh{\frac{\ol{n'}}{\thet}} \cdot \delh{\frac{\ol{n'} \thet}{\ol{n_{1}} \ol{n''}}}^{-1}.
		\end{aligned} \]
By Lemma~\ref{l:comm},
	\[ \sigu{e_{1} e_{2}}{e_{3}}
	=	\begin{aligned}[t]
			&c_{1}( \lambda ) \cdot \hilkn{- \frac{\norm{n''}}{\thet^{2}}}{- 1} \cdot \ba{- \frac{\norm{n''}}{\thet^{2}}}{\frac{\thet}{\ol{n''}}}^{- 1} \cdot \delh{\frac{\thet}{\ol{n_{1}}}} \\
			&\phantom{\ } \cdot \left[ \left[ \frac{\thet}{\ol{n''}}, \frac{\ol{n'}}{\thet} \right]_{\sigma_{u}} \cdot \delh{\frac{\ol{n'}}{\thet}} \cdot \delh{\frac{\thet}{\ol{n''}}} \right]  \\
			&\phantom{\ } \cdot \delh{\frac{\ol{n'} \thet}{\ol{n_{1}} \ol{n''}}}^{-1}.
		\end{aligned} \]
Hence by \eqref{e:ba},
\begin{align*}
	\sigu{e_{1} e_{2}}{e_{3}}
	&=	\begin{aligned}[t]
			&c_{1}( \lambda ) \cdot \hilkn{- \frac{\norm{n''}}{\thet^{2}}}{- 1}  \cdot \ba{- \frac{\norm{n''}}{\thet^{2}}}{\frac{\thet}{\ol{n''}}}^{- 1} \cdot \left[ \frac{\thet}{\ol{n''}}, \frac{\ol{n'}}{\thet} \right]_{\sigma_{u}} \\
			&\phantom{\ } \cdot \delh{\frac{\thet}{\ol{n_{1}}}} \cdot \delh{\frac{\ol{n'}}{\thet}} \cdot \left[ \delh{\frac{1}{\ol{\lambda}}}^{- 1} \cdot \delh{\frac{1}{\ol{\lambda}}} \right] \\
			&\phantom{\ } \cdot \delh{\frac{\thet}{\ol{n''}}} \cdot \delh{\frac{\ol{n'} \thet}{\ol{n_{1}} \ol{n''}}}^{-1}
		\end{aligned} \\
	&=	\begin{aligned}[t]
			&c_{1}( \lambda ) \cdot \hilkn{- \frac{\norm{n''}}{\thet^{2}}}{- 1} \cdot \ba{- \frac{\norm{n''}}{\thet^{2}}}{\frac{\thet}{\ol{n''}}}^{- 1} \cdot \left[ \frac{\thet}{\ol{n''}}, \frac{\ol{n'}}{\thet} \right]_{\sigma_{u}} \\
			&\phantom{\ } \cdot \ba{\frac{\thet}{\ol{n_{1}}}}{\frac{\ol{n'}}{\thet}} \cdot \ba{\frac{1}{\ol{\lambda}}}{\frac{\thet}{\ol{n''}}}.
		\end{aligned}
\end{align*}
By Proposition~\ref{p:2.23ii} and \eqref{e:prop5},
\begin{gather*}
	\begin{aligned}
		\ba{- \frac{\norm{n''}}{\thet^{2}}}{\frac{\thet}{\ol{n''}}}^{- 1}
		&= \hilkn{- \frac{\norm{n''}}{\thet^{2}}}{\frac{( \thet / \ol{n''} ) \ol{\del{1}{\thet / \ol{n''}}}}{\thet}}^{- 1} \\
		&= \hilkn{- \frac{\norm{n''}}{\thet^{2}}}{1}^{- 1} \\
		&= 1,
	\end{aligned} \\
	\ba{\frac{\thet}{\ol{n_{1}}}}{\frac{\ol{n'}}{\thet}} = \hilkn{\frac{\ol{n'}}{\thet}}{- \frac{\del{2}{\thet / \ol{n_{1}}}}{\thet}} = \hilkn{\frac{\ol{n'}}{\thet}}{- \frac{\norm{n_{1}}}{\thet^{2}}}.
\end{gather*}
Also, Lemma~\ref{l:comm} shows that
	\[ \left[ \frac{\thet}{\ol{n''}}, \frac{\ol{n'}}{\thet} \right]_{\sigma_{u}} = \hilbkn{\frac{\thet}{\ol{n''}}}{\frac{\ol{n'}}{\thet}}^{2} \cdot \hilbkn{\frac{\thet}{\ol{n''}}}{- \frac{n'}{\thet}}^{- 1}. \]
Since $ n' / \thet \in k^{\times} $, by \eqref{e:bastr} and \eqref{e:norm2}, we have
	\[ \left[ \frac{\thet}{\ol{n''}}, \frac{\ol{n'}}{\thet} \right]_{\sigma_{u}} = \hilkn{- \frac{\thet^{2}}{\norm{n''}}}{\frac{\ol{n'}}{\thet}}. \]

Therefore by \eqref{e:prop2} and \eqref{e:bastr},
\begin{align*}
	\sigu{e_{1} e_{2}}{e_{3}}
	&=	\begin{aligned}[t]
			&c_{1}( \lambda ) \cdot \hilkn{- \frac{\norm{n''}}{\thet^{2}}}{- 1} \cdot 1 \cdot \hilkn{- \frac{\thet^{2}}{\norm{n''}}}{\frac{\ol{n'}}{\thet}} \\
			&\phantom{\ } \cdot \hilkn{\frac{\ol{n'}}{\thet}}{- \frac{\norm{n_{1}}}{\thet^{2}}} \cdot \ba{\frac{1}{\ol{\lambda}}}{\frac{\thet}{\ol{n''}}}
		\end{aligned} \\
	&= c_{1}( \lambda ) \cdot \hilkn{- \frac{\norm{n''}}{\thet^{2}}}{- 1} \cdot \hilkn{\frac{\ol{n'}}{\thet}}{\frac{\norm{n_{1}} \norm{n''}}{\thet^{4}}} \cdot \ba{\frac{1}{\ol{\lambda}}}{\frac{\thet}{\ol{n''}}}.
\end{align*}

As for the last case ($ ( a - 1 ) a - b^{2} \thet^{2} \neq 0 $, $ a = 1 $), recall that in this case $ n'' = - 1 / 2 $.  This implies that
	\[ \sigu{e_{1} e_{2}}{e_{3}}
	=	\begin{aligned}[t]
			&\delh{\frac{\thet}{\ol{n_{1}}}} \cdot \left[ c_{1}( \lambda ) \cdot \delh{- 4 \thet^{2}} \cdot \twa{1}{- \frac{1}{2}} \right] \cdot \twa{0}{\thet} \\
			&\phantom{\ } \cdot \delh{\frac{\ol{n'}}{\thet}} \cdot \delh{- \frac{2 \ol{n'} \thet}{\ol{n_{1}}}}^{-1}.
		\end{aligned} \]
Since by \eqref{e:wainv}, $ \twa{0}{\thet} = \twa{0}{- \thet}^{- 1} $, this implies by \eqref{e:delh} that
	\[ \sigu{e_{1} e_{2}}{e_{3}}
	=	\begin{aligned}[t]
			&c_{1}( \lambda ) \cdot \delh{\frac{\thet}{\ol{n_{1}}}} \cdot \delh{- 4 \thet^{2}} \cdot \delh{\frac{1}{2 \thet}} \\
			&\phantom{\ } \cdot \delh{\frac{\ol{n'}}{\thet}} \cdot \delh{- \frac{2 \ol{n'} \thet}{\ol{n_{1}}}}^{-1}.
		\end{aligned} \]
By \eqref{e:ba},
	\[ \ba{- 4 \thet^{2}}{\frac{1}{2 \thet}} \cdot \delh{- 2 \thet} = \delh{- 4 \thet^{2}} \cdot \delh{\frac{1}{2 \thet}}. \]
Hence, the equation we have becomes
	\[ \sigu{e_{1} e_{2}}{e_{3}}
	=	\begin{aligned}[t]
			&c_{1}( \lambda ) \cdot \delh{\frac{\thet}{\ol{n_{1}}}} \cdot \left[ \ba{- 4 \thet^{2}}{\frac{1}{2 \thet}} \cdot \delh{- 2 \thet} \right] \\
			&\phantom{\ } \cdot \delh{\frac{\ol{n'}}{\thet}} \cdot \delh{- \frac{2 \ol{n'} \thet}{\ol{n_{1}}}}^{-1}.
		\end{aligned} \]
Using Lemma~\ref{l:comm}, we get
	\[ \sigu{e_{1} e_{2}}{e_{3}}
	=	\begin{aligned}[t]
			&c_{1}( \lambda ) \cdot \ba{- 4 \thet^{2}}{\frac{1}{2 \thet}} \cdot \delh{\frac{\thet}{\ol{n_{1}}}} \cdot \Bigg[ \left[ - 2 \thet, \frac{\ol{n'}}{\thet} \right]_{\sigma_{u}} \\
			&\phantom{\ } \cdot \delh{\frac{\ol{n'}}{\thet}} \cdot \delh{- 2 \thet} \Bigg] \cdot \delh{- \frac{2 \ol{n'} \thet}{\ol{n_{1}}}}^{-1}.
		\end{aligned} \]
Thus by \eqref{e:ba},
\begin{align*}
	\sigu{e_{1} e_{2}}{e_{3}}
	&=	\begin{aligned}[t]
			&c_{1}( \lambda ) \cdot \ba{- 4 \thet^{2}}{\frac{1}{2 \thet}} \cdot \left[ - 2 \thet, \frac{\ol{n'}}{\thet} \right]_{\sigma_{u}} \cdot \delh{\frac{\thet}{\ol{n_{1}}}} \\
			&\phantom{\ } \cdot \delh{\frac{\ol{n'}}{\thet}} \cdot \left[ \delh{\frac{1}{\ol{\lambda}}}^{- 1} \cdot \delh{\frac{1}{\ol{\lambda}}} \right] \\
			&\phantom{\ } \cdot \delh{- 2 \thet} \cdot \delh{- \frac{2 \ol{n'} \thet}{\ol{n_{1}}}}^{-1}
		\end{aligned} \\
	&=	\begin{aligned}[t]
			&c_{1}( \lambda ) \cdot \ba{- 4 \thet^{2}}{\frac{1}{2 \thet}} \cdot \left[ - 2 \thet, \frac{\ol{n'}}{\thet} \right]_{\sigma_{u}} \cdot \ba{\frac{\thet}{\ol{n_{1}}}}{\frac{\ol{n'}}{\thet}} \\
			&\phantom{\ } \cdot \ba{\frac{1}{\ol{\lambda}}}{- 2 \thet}.
		\end{aligned}
\end{align*}
By Proposition~\ref{p:2.23ii},
\begin{align} \label{e:2.29f}
	\ba{- 4 \thet^{2}}{\frac{1}{2 \thet}} &= \hilkn{- 4 \thet^{2}}{\frac{( 1 / ( 2 \thet ) ) \ol{\del{1}{1 / ( 2 \thet )}}}{\thet}} \\ &= \hilkn{- 4 \thet^{2}}{- \frac{1}{4 \thet^{2}}}, \notag
\end{align}
and by \eqref{e:prop3},
	\[ \hilkn{- 4 \thet^{2}}{- \frac{1}{4 \thet^{2}}} = \hilkn{- 4 \thet^{2}}{- 1}. \]
Therefore by our previous calculations,
	\[ \sigu{e_{1} e_{2}}{e_{3}}
	=	\begin{aligned}[t]
			&c_{1}( \lambda ) \cdot \hilkn{- 4 \thet^{2}}{- 1} \cdot \hilkn{- 4 \thet^{2}}{\frac{\ol{n'}}{\thet}} \cdot \hilkn{\frac{\ol{n'}}{\thet}}{- \frac{\norm{n_{1}}}{\thet^{2}}} \\
			&\phantom{\ } \cdot \ba{\frac{1}{\ol{\lambda}}}{- 2 \thet},
		\end{aligned} \]
and by \eqref{e:prop2} and \eqref{e:bastr},
	\[ \sigu{e_{1} e_{2}}{e_{3}}
	= c_{1}( \lambda ) \cdot \hilkn{- 4 \thet^{2}}{- 1} \cdot \hilkn{\frac{\ol{n'}}{\thet}}{\frac{\norm{n_{1}}}{4 \thet^{4}}} \cdot \ba{\frac{1}{\ol{\lambda}}}{- 2 \thet}. \]
In fact,
\begin{multline*}
	c_{1}( \lambda ) \cdot \hilkn{- 4 \thet^{2}}{- 1} \cdot \hilkn{\frac{\ol{n'}}{\thet}}{\frac{\norm{n_{1}}}{4 \thet^{4}}} \cdot \ba{\frac{1}{\ol{\lambda}}}{- 2 \thet} \\
	= c_{1}( \lambda ) \cdot \hilkn{- \frac{\norm{n''}}{\thet^{2}}}{- 1} \cdot \hilkn{\frac{\ol{n'}}{\thet}}{\frac{\norm{n_{1}} \norm{n''}}{\thet^{4}}} \cdot \ba{\frac{1}{\ol{\lambda}}}{\frac{\thet}{\ol{n''}}};
\end{multline*}
i.e. we can use the same equation
\begin{equation} \label{e:2.29cii}
	\sigu{e_{1} e_{2}}{e_{3}}
	=	\begin{aligned}[t]
			&c_{1}( \lambda ) \cdot \hilkn{- \frac{\norm{n''}}{\thet^{2}}}{- 1} \cdot \hilkn{\frac{\ol{n'}}{\thet}}{\frac{\norm{n_{1}} \norm{n''}}{\thet^{4}}} \\
			&\phantom{\ } \cdot \ba{\frac{1}{\ol{\lambda}}}{\frac{\thet}{\ol{n''}}}
		\end{aligned}
\end{equation}
for all three cases.

Also, by similar methods to the above, we can show that
	\[ \sigu{e_{1}}{e_{2} e_{3}}
		=	\begin{aligned}[t]
				&\delh{\frac{\thet}{n_{1}}} \cdot \twa{- \frac{\thet}{\ol{n'''}}}{- \frac{\thet^{2}}{\ol{n'''}}} \cdot \twa{0}{\thet} \\
				&\phantom{\ } \cdot \delh{\frac{n_{1}}{\thet}} \cdot \delh{\frac{\thet}{\ol{n'''}}}^{-1}.
			\end{aligned} \]
We know that
	\[ n''' = n_{1} + \frac{\norm{n_{1}}}{n'} = \frac{a + b \thet}{- 2 b \thet} + \norm{\frac{a + b \thet}{- 2 b \thet}} \cdot \left( - \frac{1}{2 b \thet} \right)^{- 1} = - \frac{1}{2} + \frac{a ( a - 1 ) - b^{2} \thet^{2}}{2 b \thet}. \]
There are two cases to consider: $ a ( a - 1 ) - b^{2} \thet^{2} \neq 0 $; and $ a ( a - 1 ) - b^{2} \thet^{2} = 0 $.

When $ a ( a - 1 ) - b^{2} \thet^{2} \neq 0 $, we can use \eqref{e:2.29a} to get
	\[ \twa{- \frac{\thet}{\ol{n'''}}}{- \frac{\thet^{2}}{\ol{n'''}}} = \delh{- \frac{\norm{n'''}}{\thet^{2}}}^{- 1} \cdot \twa{1}{n'''}, \]
thus
	\[ \sigu{e_{1}}{e_{2} e_{3}}
	=	\begin{aligned}[t]
			&\delh{\frac{\thet}{n_{1}}} \cdot \left[ \delh{- \frac{\norm{n'''}}{\thet^{2}}}^{- 1} \cdot \twa{1}{n'''} \right] \cdot \twa{0}{\thet} \\
			&\phantom{\ } \cdot \delh{\frac{n_{1}}{\thet}} \cdot \delh{\frac{\thet}{\ol{n'''}}}^{-1}.
		\end{aligned} \]
Since by \eqref{e:wainv}, $ \twa{0}{\thet} = \twa{0}{- \thet}^{- 1} $, by \eqref{e:delh} the equation becomes
	\[ \sigu{e_{1}}{e_{2} e_{3}}
	=	\begin{aligned}[t]
			&\delh{\frac{\thet}{n_{1}}} \cdot \delh{- \frac{\norm{n'''}}{\thet^{2}}}^{- 1} \cdot \delh{- \frac{n'''}{\thet}} \\
			&\phantom{\ } \cdot \delh{\frac{n_{1}}{\thet}} \cdot \delh{\frac{\thet}{\ol{n'''}}}^{-1}.
		\end{aligned} \]
Using \eqref{e:ba},
	\[ \ba{- \frac{\norm{n'''}}{\thet^{2}}}{\frac{\thet}{\ol{n'''}}}^{- 1} \cdot \delh{\frac{\thet}{\ol{n'''}}} = \delh{- \frac{\norm{n'''}}{\thet^{2}}}^{- 1} \cdot \delh{- \frac{n'''}{\thet}}. \]
We thus get
	\[ \sigu{e_{1}}{e_{2} e_{3}}
	=	\begin{aligned}[t]
			&\delh{\frac{\thet}{n_{1}}} \cdot \left[ \ba{- \frac{\norm{n'''}}{\thet^{2}}}{\frac{\thet}{\ol{n'''}}}^{- 1} \cdot \delh{\frac{\thet}{\ol{n'''}}} \right] \\
			&\phantom{\ } \cdot \delh{\frac{n_{1}}{\thet}} \cdot \delh{\frac{\thet}{\ol{n'''}}}^{-1}.
		\end{aligned} \]
By Lemma \ref{l:comm},
	\[ \sigu{e_{1}}{e_{2} e_{3}}
	=	\begin{aligned}[t]
			&\ba{- \frac{\norm{n'''}}{\thet^{2}}}{\frac{\thet}{\ol{n'''}}}^{- 1} \cdot \delh{\frac{\thet}{n_{1}}} \cdot \Bigg[ \left[ \frac{\thet}{\ol{n'''}}, \frac{n_{1}}{\thet} \right]_{\sigma_{u}} \\
			&\phantom{\ } \cdot \delh{\frac{n_{1}}{\thet}} \cdot \delh{\frac{\thet}{\ol{n'''}}} \Bigg] \cdot \delh{\frac{\thet}{\ol{n'''}}}^{-1},
		\end{aligned} \]
and hence by \eqref{e:ba},
	\[ \sigu{e_{1}}{e_{2} e_{3}}
	=	\ba{- \frac{\norm{n'''}}{\thet^{2}}}{\frac{\thet}{\ol{n'''}}}^{- 1} \cdot \left[ \frac{\thet}{\ol{n'''}}, \frac{n_{1}}{\thet} \right]_{\sigma_{u}} \cdot \ba{\frac{\thet}{n_{1}}}{\frac{n_{1}}{\thet}}. \]
By Proposition~\ref{p:2.23ii},
\begin{align*}
	\ba{- \frac{\norm{n'''}}{\thet^{2}}}{\frac{\thet}{\ol{n'''}}}^{- 1}
	&= \hilkn{- \frac{\norm{n'''}}{\thet^{2}}}{\frac{( \thet / \ol{n'''} ) \ol{\del{1}{\thet / \ol{n'''}}}}{\thet}}^{- 1} \\
	&= \hilkn{- \frac{\norm{n'''}}{\thet^{2}}}{1}^{- 1} \\
	&= 1.
\end{align*}
Also, by Lemma~\ref{l:comm},
	\[ \left[ \frac{\thet}{\ol{n'''}}, \frac{n_{1}}{\thet} \right]_{\sigma_{u}} = \hilbkn{\frac{\thet}{\ol{n'''}}}{\frac{n_{1}}{\thet}}^{2} \cdot \hilbkn{\frac{\thet}{\ol{n'''}}}{- \frac{\ol{n_{1}}}{\thet}}^{- 1}. \]
Since $ n''' = n_{1} + \norm{n_{1}} / n' = n_{1} ( 1 - \ol{\lambda} ) $ and $ n_{1} = \lambda n' $, we have
	\[ \left[ \frac{\thet}{\ol{n'''}}, \frac{n_{1}}{\thet} \right]_{\sigma_{u}} = \hilbkn{- \frac{\thet}{\ol{\lambda} n' ( 1 - \lambda )}}{\frac{\lambda n'}{\thet}}^{2} \cdot \hilbkn{- \frac{\thet}{\ol{\lambda} n' ( 1 - \lambda )}}{\frac{\ol{\lambda} n'}{\thet}}^{- 1}. \]
By \eqref{e:bastr},
\begin{align*}
	\left[ \frac{\thet}{\ol{n'''}}, \frac{n_{1}}{\thet} \right]_{\sigma_{u}}
	&=	\begin{aligned}[t]
			&\left[ \hilbkn{- \frac{\thet}{\ol{\lambda} n' ( 1 - \lambda )}}{\lambda} \cdot \hilbkn{- \frac{\thet}{\ol{\lambda} n' ( 1 - \lambda )}}{\frac{n'}{\thet}} \right]^{2} \\
			&\phantom{\ } \cdot \left[ \hilbkn{- \frac{\thet}{\ol{\lambda} n' ( 1 - \lambda )}}{\ol{\lambda}} \cdot \hilbkn{- \frac{\thet}{\ol{\lambda} n' ( 1 - \lambda )}}{\frac{n'}{\thet}} \right]^{- 1}
		\end{aligned} \\
	&=	\begin{aligned}[t]
			&\hilbkn{- \frac{\thet}{\ol{\lambda} n' ( 1 - \lambda )}}{\lambda}^{2} \cdot \hilbkn{- \frac{\thet}{\ol{\lambda} n' ( 1 - \lambda )}}{\ol{\lambda}}^{- 1} \\
			&\phantom{\ } \cdot \hilbkn{- \frac{\thet}{\ol{\lambda} n' ( 1 - \lambda )}}{\frac{n'}{\thet}}.
		\end{aligned}
\end{align*}
By \eqref{e:prop3},
	\[ \left[ \frac{\thet}{\ol{n'''}}, \frac{n_{1}}{\thet} \right]_{\sigma_{u}}
	=	\begin{aligned}[t]
			&\hilbkn{\frac{\thet}{\norm{\lambda} n' ( 1 - \lambda )}}{\lambda}^{2} \cdot \hilbkn{\frac{\thet}{n' ( 1 - \lambda )}}{\ol{\lambda}}^{- 1} \\
			&\phantom{\ } \cdot \hilbkn{\frac{1}{\ol{\lambda} ( 1 - \lambda )}}{\frac{n'}{\thet}}.
		\end{aligned} \]
Using \eqref{e:prop4}, we have
	\[ \left[ \frac{\thet}{\ol{n'''}}, \frac{n_{1}}{\thet} \right]_{\sigma_{u}}
	=	\begin{aligned}[t]
			&\hilbkn{\frac{\thet}{\norm{\lambda} n'}}{\lambda}^{2} \cdot \hilbkn{\frac{\thet}{n' \norm{1 - \lambda}}}{\ol{\lambda}}^{- 1} \\
			&\phantom{\ } \cdot \hilbkn{\frac{1}{\ol{\lambda} ( 1 - \lambda )}}{\frac{n'}{\thet}}.
		\end{aligned} \]
By \eqref{e:norm1} and \eqref{e:norm2},
	\[ \left[ \frac{\thet}{\ol{n'''}}, \frac{n_{1}}{\thet} \right]_{\sigma_{u}}
	=	\begin{aligned}[t]
			&\hilkn{\frac{\thet}{\norm{\lambda} n'}}{\norm{\lambda}}^{2} \cdot \hilkn{\frac{\thet}{n' \norm{1 - \lambda}}}{\norm{\lambda}}^{- 1} \\
			&\phantom{\ } \cdot \hilkn{\frac{1}{\norm{\lambda} \norm{1 - \lambda}}}{\frac{n'}{\thet}}.
		\end{aligned} \]
Using \eqref{e:bastr} again,
\begin{align*}
	\left[ \frac{\thet}{\ol{n'''}}, \frac{n_{1}}{\thet} \right]_{\sigma_{u}}
	&=	\begin{aligned}[t]
			&\left[ \hilkn{\frac{1}{\norm{\lambda}}}{\norm{\lambda}} \cdot \hilkn{\frac{\thet}{n'}}{\norm{\lambda}} \right]^{2} \\
			&\phantom{\ } \cdot \left[ \hilkn{\frac{1}{\norm{1 - \lambda}}}{\norm{\lambda}} \cdot \hilkn{\frac{\thet}{n'}}{\norm{\lambda}} \right]^{- 1} \\
			&\phantom{\ } \cdot \hilkn{\frac{1}{\norm{\lambda} \norm{1 - \lambda}}}{\frac{n'}{\thet}}
		\end{aligned} \\
	&=	\begin{aligned}[t]
			&\hilkn{\frac{1}{\norm{\lambda}}}{\norm{\lambda}}^{2} \cdot \hilkn{\frac{1}{\norm{1 - \lambda}}}{\norm{\lambda}}^{- 1} \cdot \hilkn{\frac{\thet}{n'}}{\norm{\lambda}} \\
			&\phantom{\ } \cdot \hilkn{\frac{1}{\norm{\lambda} \norm{1 - \lambda}}}{\frac{n'}{\thet}}.
		\end{aligned}
\end{align*}
By \eqref{e:prop3},
	\[ \left[ \frac{\thet}{\ol{n'''}}, \frac{n_{1}}{\thet} \right]_{\sigma_{u}}
	=	\begin{aligned}[t]
			&\hilkn{- 1}{\norm{\lambda}}^{2} \cdot \hilkn{\frac{1}{\norm{1 - \lambda}}}{\norm{\lambda}}^{- 1} \cdot \hilkn{\frac{\thet}{n'}}{\norm{\lambda}} \\
			&\phantom{\ } \cdot \hilkn{\frac{1}{\norm{\lambda} \norm{1 - \lambda}}}{\frac{n'}{\thet}}.
		\end{aligned} \]
By \eqref{e:prop2} and \eqref{e:prop5},
	\[ \left[ \frac{\thet}{\ol{n'''}}, \frac{n_{1}}{\thet} \right]_{\sigma_{u}} = \hilkn{\norm{1 - \lambda}}{\norm{\lambda}} \cdot \hilkn{\frac{1}{\norm{\lambda}}}{\frac{\thet}{n'}} \cdot \hilkn{\norm{\lambda} \norm{1 - \lambda}}{\frac{\thet}{n'}}. \]
Hence by \eqref{e:bastr} again,
	\[ \left[ \frac{\thet}{\ol{n'''}}, \frac{n_{1}}{\thet} \right]_{\sigma_{u}} = \hilkn{\norm{1 - \lambda}}{\frac{\norm{\lambda} \thet}{n'}} = \hilkn{\frac{\norm{n'''}}{\norm{n}}}{\frac{\norm{\lambda} \thet}{n'}}. \]
Therefore,
	\[ \sigu{e_{1}}{e_{2} e_{3}} = \hilkn{\frac{\norm{n'''}}{\norm{n_{1}}}}{\frac{\norm{\lambda} \thet}{n'}} \cdot \ba{\frac{\thet}{n_{1}}}{\frac{n_{1}}{\thet}}. \]

As for the case $ a ( a - 1 ) - b^{2} \thet^{2} = 0 $, we have $ n''' = - 1 / 2 $.  This implies that
	\[ \sigu{e_{1}}{e_{2} e_{3}}
		=	\begin{aligned}[t]
				&\delh{\frac{\thet}{n_{1}}} \cdot \twa{2 \thet}{2 \thet^{2}} \cdot \twa{0}{\thet} \cdot \delh{\frac{n_{1}}{\thet}} \\
				&\phantom{\ } \cdot \delh{- 2 \thet}^{-1}.
			\end{aligned} \]
By \eqref{e:wainv} and Lemma~\ref{l:2.16},
\begin{align*}
	\twa{2 \thet}{2 \thet^{2}} \cdot \twa{1}{- \frac{1}{2}}^{- 1}
	&= \twa{- 2 \thet}{2 \thet^{2}}^{- 1} \cdot \twa{- 1}{- \frac{1}{2}} \\
	&= \hilkn{- \frac{2 \thet^{2}}{( 2 \thet ) \thet}}{\norm{2 \thet}} \cdot \delh{\norm{2 \thet}} \\
	&= \hilkn{- 1}{- 4 \thet^{2}} \cdot \delh{- 4 \thet^{2}},
\end{align*}
i.e.
\begin{equation} \label{e:2.29e}
	\twa{2 \thet}{2 \thet^{2}} = \hilkn{- 1}{- 4 \thet^{2}} \cdot \delh{- 4 \thet^{2}} \cdot \twa{1}{- \frac{1}{2}}.
\end{equation}
Thus,
	\[ \sigu{e_{1}}{e_{2} e_{3}}
		=	\begin{aligned}[t]
				&\delh{\frac{\thet}{n_{1}}} \cdot \left[ \hilkn{- 1}{- 4 \thet^{2}} \cdot \delh{- 4 \thet^{2}} \cdot \twa{1}{- \frac{1}{2}} \right] \\
				&\phantom{\ } \cdot \twa{0}{\thet} \cdot \delh{\frac{n_{1}}{\thet}} \cdot \delh{- 2 \thet}^{-1}.
			\end{aligned} \]
Since by \eqref{e:wainv}, $ \twa{0}{\thet} = \twa{0}{- \thet}^{- 1} $, utilising \eqref{e:delh} makes the above equation become
	\[ \sigu{e_{1}}{e_{2} e_{3}}
		=	\begin{aligned}[t]
				&\hilkn{- 1}{- 4 \thet^{2}} \cdot \delh{\frac{\thet}{n_{1}}} \cdot \delh{- 4 \thet^{2}} \cdot \delh{\frac{1}{2 \thet}} \\
				&\phantom{\ } \cdot \delh{\frac{n_{1}}{\thet}} \cdot \delh{- 2 \thet}^{-1}.
			\end{aligned} \]
By \eqref{e:ba},
	\[ \ba{- 4 \thet^{2}}{\frac{1}{2 \thet}} \cdot \delh{- 2 \thet} = \delh{- 4 \thet^{2}} \cdot \delh{\frac{1}{2 \thet}}, \]
hence
	\[ \sigu{e_{1}}{e_{2} e_{3}}
		=	\begin{aligned}[t]
				&\hilkn{- 1}{- 4 \thet^{2}} \cdot \delh{\frac{\thet}{n_{1}}} \cdot \left[ \ba{- 4 \thet^{2}}{\frac{1}{2 \thet}} \cdot \delh{- 2 \thet} \right] \\
				&\phantom{\ } \cdot \delh{\frac{n_{1}}{\thet}} \cdot \delh{- 2 \thet}^{-1}.
			\end{aligned} \]
We can use Lemma~\ref{l:comm} to get
	\[ \sigu{e_{1}}{e_{2} e_{3}}
		=	\begin{aligned}[t]
				&\hilkn{- 1}{- 4 \thet^{2}} \cdot \ba{- 4 \thet^{2}}{\frac{1}{2 \thet}} \cdot \delh{\frac{\thet}{n_{1}}} \cdot \Bigg[ \left[ - 2 \thet, \frac{n_{1}}{\thet} \right]_{\sigma_{u}} \\
				&\phantom{\ } \cdot \delh{\frac{n_{1}}{\thet}} \cdot \delh{- 2 \thet} \Bigg] \cdot \delh{- 2 \thet}^{-1}.
			\end{aligned} \]
Hence by \eqref{e:ba} and \eqref{e:2.29f},
	\[ \sigu{e_{1}}{e_{2} e_{3}}
		= \hilkn{- 1}{- 4 \thet^{2}} \cdot \hilkn{- 4 \thet^{2}}{- \frac{1}{4 \thet^{2}}} \cdot \left[ - 2 \thet, \frac{n_{1}}{\thet} \right]_{\sigma_{u}} \cdot \ba{\frac{\thet}{n_{1}}}{\frac{n_{1}}{\thet}}. \]
But since $ n''' = - 1 / 2 $, we have
	\[ \left[ - 2 \thet, \frac{n_{1}}{\thet} \right]_{\sigma_{u}} = \left[ \frac{\thet}{\ol{n'''}}, \frac{n_{1}}{\thet} \right]_{\sigma_{u}}. \]
This implies that we can use our previous result for $ n''' \neq - 1 / 2 $ so that
	\[ \left[ - 2 \thet, \frac{n_{1}}{\thet} \right]_{\sigma_{u}} = \hilkn{\frac{\norm{n'''}}{\norm{n_{1}}}}{\frac{\norm{\lambda} \thet}{n'}}. \]
In the end,
	\[ \sigu{e_{1}}{e_{2} e_{3}}
	=	\begin{aligned}[t]
			&\hilkn{- 1}{- 4 \thet^{2}} \cdot \hilkn{- 4 \thet^{2}}{- \frac{1}{4 \thet^{2}}} \cdot \hilkn{\frac{\norm{n'''}}{\norm{n_{1}}}}{\frac{\norm{\lambda} \thet}{n'}} \\
			&\phantom{\ } \cdot \ba{\frac{\thet}{n_{1}}}{\frac{n_{1}}{\thet}},
		\end{aligned} \]
which simplifies by \eqref{e:prop3} and \eqref{e:bastr} to
	\[ \sigu{e_{1}}{e_{2} e_{3}}
	= \hilkn{\frac{\norm{n'''}}{\norm{n_{1}}}}{\frac{\norm{\lambda} \thet}{n'}} \cdot \ba{\frac{\thet}{n_{1}}}{\frac{n_{1}}{\thet}}. \]
Thus, we may use the above equation for both cases.

Similarly for \sigu{e_{2}}{e_{3}},
	\[ \sigu{e_{2}}{e_{3}}
	=	\begin{aligned}[t]
			&\delh{\frac{n_{1}}{\thet}} \cdot \twa{0}{- \frac{\thet^{2}}{\ol{n'}}} \cdot \twa{0}{\thet} \cdot \delh{\frac{\ol{n'}}{\thet}} \\
			&\phantom{\ } \cdot \delh{\frac{n_{1}}{\thet}}^{-1}.
		\end{aligned} \]
By \eqref{e:delh},
\begin{align*}
	\sigu{e_{2}}{e_{3}}
	&=	\begin{aligned}[t]
			&\delh{\frac{n_{1}}{\thet}} \cdot \twa{0}{- \frac{\thet^{2}}{\ol{n'}}} \cdot \left[ \twa{0}{\thet}^{- 1} \cdot \twa{0}{\thet} \right] \\
			&\phantom{\ } \cdot \twa{0}{\thet} \cdot \delh{\frac{\ol{n'}}{\thet}} \cdot \delh{\frac{n_{1}}{\thet}}^{-1}
		\end{aligned} \\
	&=	\begin{aligned}[t]
			&\delh{\frac{n_{1}}{\thet}} \cdot \delh{- \frac{\thet}{\ol{n'}}} \cdot \delh{-1}^{-1} \cdot \delh{\frac{\ol{n'}}{\thet}} \\
			&\phantom{\ } \cdot \delh{\frac{n_{1}}{\thet}}^{-1}.
		\end{aligned}
\end{align*}
Thus using \eqref{e:delinv},
\begin{align*}
	\sigu{e_{2}}{e_{3}}
	&= \delh{\frac{n_{1}}{\thet}} \cdot \delh{\frac{\ol{n'}}{\thet}}^{-1} \cdot \delh{\frac{\ol{n'}}{\thet}} \cdot \delh{\frac{n_{1}}{\thet}}^{-1} \notag \\
	&= 1.
\end{align*}

Hence by the 2-cocycle condition, i.e.
	\[ \sigu{e_{1}}{e_{2}} \cdot \sigu{e_{1} e_{2}}{e_{3}} = \sigu{e_{1}}{e_{2} e_{3}} \cdot \sigu{e_{2}}{e_{3}}, \]
we have
\begin{multline*}
	\ba{\frac{\thet}{n_{1}}}{\frac{n_{1}}{\thet}} \cdot \Bigg[ c_{1}( \lambda ) \cdot \hilkn{- \frac{\norm{n''}}{\thet^{2}}}{- 1} \cdot \hilkn{\frac{\ol{n'}}{\thet}}{\frac{\norm{n_{1}} \norm{n''}}{\thet^{4}}} \\
	\cdot \ba{\frac{1}{\ol{\lambda}}}{\frac{\thet}{\ol{n''}}} \Bigg] = \left[ \hilkn{\frac{\norm{n'''}}{\norm{n_{1}}}}{\frac{\norm{\lambda} \thet}{n'}} \cdot \ba{\frac{\thet}{n_{1}}}{\frac{n_{1}}{\thet}} \right] \cdot 1.
\end{multline*}
Also, $ n''' = \lambda n'' $ and $ \lambda = n_{1} / n' $, hence we can rearrange the above as follows:
	\[ \ba{\frac{1}{\ol{\lambda}}}{\frac{\thet}{\ol{n''}}}
	=	\begin{aligned}[t]
			&\left[ c_{1}( \lambda ) \cdot \hilkn{- \frac{\norm{n''}}{\thet^{2}}}{- 1} \cdot \hilkn{\frac{\ol{n'}}{\thet}}{\frac{\norm{n_{1}} \norm{n''}}{\thet^{4}}} \right]^{- 1} \\
			&\phantom{\ } \cdot \hilkn{\frac{\norm{n''}}{\norm{n'}}}{\frac{\norm{\lambda} \thet}{n'}}.
		\end{aligned} \]
Thus by \eqref{e:prop5} and \eqref{e:prop2},
	\[ \ba{\frac{1}{\ol{\lambda}}}{\frac{\thet}{\ol{n''}}}
	=	\begin{aligned}[t]
			&c_{1}( \lambda )^{- 1} \cdot \hilkn{- \frac{\norm{n''}}{\thet^{2}}}{- 1} \cdot \hilkn{\frac{\norm{n_{1}} \norm{n''}}{\thet^{4}}}{\frac{\ol{n'}}{\thet}} \\
			&\phantom{\ } \cdot \hilkn{\frac{\norm{n''}}{\norm{n'}}}{\frac{\norm{\lambda} \thet}{n'}}.
		\end{aligned} \]
By \eqref{e:bastr},
\begin{align*}
	\ba{\frac{1}{\ol{\lambda}}}{\frac{\thet}{\ol{n''}}}
	&=	\begin{aligned}[t]
			&c_{1}( \lambda )^{- 1} \cdot \hilkn{- \frac{\norm{n''}}{\thet^{2}}}{- 1} \cdot \hilkn{- \frac{\norm{n''}}{\thet^{2}}}{\frac{\ol{n'}}{\thet}} \\
			&\phantom{\ } \cdot \hilkn{- \frac{\norm{n_{1}}}{\thet^{2}}}{\frac{\ol{n'}}{\thet}} \cdot \hilkn{- \frac{\norm{n''}}{\thet^{2}}}{\frac{\norm{\lambda} \thet}{n'}} \\
			&\phantom{\ } \cdot \hilkn{- \frac{\thet^{2}}{\norm{n'}}}{\norm{\lambda}} \cdot \hilkn{- \frac{\thet^{2}}{\norm{n'}}}{\frac{\thet}{n'}}
		\end{aligned} \\
	&=	\begin{aligned}[t]
			&c_{1}( \lambda )^{- 1} \cdot \hilkn{- \frac{\norm{n''}}{\thet^{2}}}{\norm{\lambda}} \cdot \hilkn{- \frac{\norm{n_{1}}}{\thet^{2}}}{\frac{\ol{n'}}{\thet}} \\
			&\phantom{\ } \cdot \hilkn{- \frac{\thet^{2}}{\norm{n'}}}{\norm{\lambda}} \cdot \hilkn{- \frac{\thet^{2}}{\norm{n'}}}{\frac{\thet}{n'}}
		\end{aligned} \\
	&=	\begin{aligned}[t]
			&c_{1}( \lambda )^{- 1} \cdot \hilkn{\frac{\norm{n''}}{\norm{n'}}}{\norm{\lambda}} \cdot \hilkn{- \frac{\norm{n_{1}}}{\thet^{2}}}{\frac{\ol{n'}}{\thet}} \\
			&\phantom{\ } \cdot \hilkn{- \frac{\thet^{2}}{\norm{n'}}}{\frac{\thet}{n'}}.
		\end{aligned}
\end{align*}
By \eqref{e:prop3},
\begin{gather*}
	\hilkn{- \frac{\norm{n_{1}}}{\thet^{2}}}{\frac{\ol{n'}}{\thet}} = \hilkn{\frac{\norm{n_{1}}}{\ol{n'} \thet}}{\frac{\ol{n'}}{\thet}} = \hilkn{\norm{\lambda}}{\frac{\ol{n'}}{\thet}}, \\
	\hilkn{- \frac{\thet^{2}}{\norm{n'}}}{\frac{\thet}{n'}} = \hilkn{- \frac{\thet}{n'}}{\frac{\thet}{n'}} = 1.
\end{gather*}
This implies that
	\[ \ba{\frac{1}{\ol{\lambda}}}{\frac{\thet}{\ol{n''}}} = c_{1}( \lambda )^{- 1} \cdot \hilkn{\frac{\norm{n''}}{\norm{n'}}}{\norm{\lambda}} \cdot \hilkn{\norm{\lambda}}{\frac{\ol{n'}}{\thet}}; \]
thus by \eqref{e:prop3} and \eqref{e:bastr},
\begin{align*}
	\ba{\frac{1}{\ol{\lambda}}}{\frac{\thet}{\ol{n''}}}
	&= c_{1}( \lambda )^{- 1} \cdot \hilkn{\frac{\norm{n''}}{\norm{n'}}}{\norm{\lambda}} \cdot \hilkn{\frac{\thet}{\ol{n'}}}{\norm{\lambda}} \\
	&= c_{1}( \lambda )^{- 1} \cdot \hilkn{\frac{\norm{n''} \thet}{( n' )^{3}}}{\norm{\lambda}}.
\end{align*}

We will now use Lemma~\ref{l:2.23ii} to get our desired result.  Firstly,
\begin{align*}
	&\ba{\lambda}{\frac{n''}{\thet}}
	= \ba{\frac{1}{\ol{\lambda}} \cdot \norm{\lambda}}{\frac{\thet}{\ol{n''}} \cdot \frac{\norm{n''}}{\thet^{2}}} \\
	&=	\begin{aligned}[t]
			&\ba{\lambda \cdot \frac{1}{\norm{\lambda}}}{\frac{n''}{\thet} \cdot \frac{\thet^{2}}{\norm{n''}}} \cdot \hilkn{- \frac{( n'' / \thet ) \ol{\del{1}{n'' / \thet}} ( \thet^{2} / \norm{n''} )}{n'}}{\frac{1}{\norm{\lambda}}} \\
			&\phantom{\ } \cdot \hilkn{\frac{\del{2}{\lambda ( n '' / \thet )}}{\del{2}{n'' / \thet}}}{\frac{1}{\norm{\lambda}} \cdot \frac{\thet^{2}}{\norm{n''}}},
		\end{aligned}
\end{align*}
since $ \lambda ( n'' / \thet ) \notin k^{\times} $.  Simplifying the above using the properties of Hilbert symbols, we get
	\[ \ba{\lambda}{\frac{n''}{\thet}} = \ba{\frac{1}{\ol{\lambda}}}{\frac{\thet}{\ol{n''}}} \cdot \hilkn{- \frac{\thet}{n'}}{\frac{1}{\norm{\lambda}}}. \]
Inserting the equation we have for \ba{1 / \ol{\lambda}}{\thet / \ol{n''}}, we have
	\[ \ba{\lambda}{\frac{n''}{\thet}} = c_{1}( \lambda )^{- 1} \cdot \hilkn{\frac{\norm{n''} \thet}{( n' )^{3}}}{\norm{\lambda}} \cdot \hilkn{- \frac{\thet}{n'}}{\frac{1}{\norm{\lambda}}}, \]
and by \eqref{e:prop2},
	\[ \ba{\lambda}{\frac{n''}{\thet}} = c_{1}( \lambda )^{- 1} \cdot \hilkn{\frac{\norm{n''} \thet}{( n' )^{3}}}{\norm{\lambda}} \cdot \hilkn{- \frac{n'}{\thet}}{\norm{\lambda}}. \]
Thus by \eqref{e:bastr},
	\[ \ba{\lambda}{\frac{n''}{\thet}} = c_{1}( \lambda )^{- 1} \cdot \hilkn{\frac{\norm{n''}}{\norm{n'}}}{\norm{\lambda}}, \]
and using \eqref{e:prop3} again,
	\[ \ba{\lambda}{\frac{n''}{\thet}} = c_{1}( \lambda )^{- 1} \cdot \hilkn{- \frac{\norm{n''}}{\norm{n_{1}}}}{\norm{\lambda}}. \]
Let $ q \in k^{\times} $ and replace $ \lambda $ with $ \lambda / q $.  This implies that by \eqref{e:del1q} and \eqref{e:del2q},
	\[ \ba{\frac{\lambda}{q}}{\frac{\ol{n_{1}} + q n'}{\thet}} = c_{1}\left( \frac{\lambda}{q} \right)^{- 1} \cdot \hilkn{- \frac{\norm{\ol{n_{1}} + q n'}}{\norm{n_{1}}}}{\frac{\norm{\lambda}}{q^{2}}}. \]
Therefore, choose $ q $, $ q' \in k^{\times} $ such that $ \mu = ( \ol{n_{1}} + q n' ) / ( q' \thet ) $, which is always possible due to Proposition~\ref{p:deltas}.  This implies by the same proposition that
	\[ \del{1}{\mu} = \ol{n_{1}} + q n', \quad \del{2}{\mu} = q' \thet. \]
Hence,
	\[ \ba{\frac{\lambda}{q}}{\frac{\del{1}{\mu}}{\thet}} = c_{1}\left( \frac{\lambda}{q} \right)^{- 1} \cdot \hilkn{- \frac{\norm{\del{1}{\mu}}}{\norm{n_{1}}}}{\frac{\norm{\lambda}}{q^{2}}}. \]
We know that by Lemma~\ref{l:2.23ii}, since $ \lambda \mu \notin k^{\times} $,
\begin{align*}
	\ba{\lambda}{\mu}
	&= \ba{\frac{\lambda}{q} \cdot q}{\frac{\del{1}{\mu}}{\thet} \cdot \frac{1}{q'}} \\
	&= \ba{\lambda \cdot \frac{1}{q}}{\mu \cdot q'} \cdot \hilkn{- \frac{\mu \ol{\del{1}{\mu}} q'}{n'}}{\frac{1}{q}} \cdot \hilkn{\frac{\del{2}{\lambda \mu}}{q' \thet}}{\frac{1}{q} \cdot q'}.
\end{align*}
It can be shown that $ \del{2}{\lambda \mu} = q' \thet / q $, thus with what we have so far the equation becomes
	\[ \ba{\lambda}{\mu} = c_{1}\left( \frac{\lambda}{q} \right)^{- 1} \cdot \hilkn{- \frac{\norm{\del{1}{\mu}}}{\norm{n_{1}}}}{\frac{\norm{\lambda}}{q^{2}}} \cdot \hilkn{- \frac{\mu \ol{\del{1}{\mu}} q'}{n'}}{\frac{1}{q}} \cdot \hilkn{\frac{1}{q}}{\frac{q'}{q}}. \]
By \eqref{e:bastr},
	\[ \ba{\lambda}{\mu}
	=	\begin{aligned}[t]
			&c_{1}\left( \frac{\lambda}{q} \right)^{- 1} \cdot \hilkn{- \frac{\norm{\del{1}{\mu}}}{\norm{n_{1}}}}{\frac{\norm{\lambda}}{q^{2}}} \cdot \Bigg[ \hilkn{\frac{\mu \ol{\del{1}{\mu}}}{n'}}{\frac{1}{q}} \\
			&\phantom{\ } \cdot \hilkn{- q'}{\frac{1}{q}} \Bigg] \cdot \hilkn{\frac{1}{q}}{\frac{q'}{q}};
		\end{aligned} \]
and by \eqref{e:prop2},
	\[ \ba{\lambda}{\mu}
	=	\begin{aligned}[t]
			&c_{1}\left( \frac{\lambda}{q} \right)^{- 1} \cdot \hilkn{- \frac{\norm{\del{1}{\mu}}}{\norm{n_{1}}}}{\frac{\norm{\lambda}}{q^{2}}} \cdot \hilkn{\frac{\mu \ol{\del{1}{\mu}}}{n'}}{\frac{1}{q}} \cdot \hilkn{\frac{1}{q}}{- \frac{1}{q'}} \\
			&\phantom{\ } \cdot \hilkn{\frac{1}{q}}{\frac{q'}{q}}.
		\end{aligned} \]
Thus by \eqref{e:bastr},
	\[ \ba{\lambda}{\mu} = c_{1}\left( \frac{\lambda}{q} \right)^{- 1} \cdot \hilkn{- \frac{\norm{\del{1}{\mu}}}{\norm{n_{1}}}}{\frac{\norm{\lambda}}{q^{2}}} \cdot \hilkn{\frac{\mu \ol{\del{1}{\mu}}}{n'}}{\frac{1}{q}} \cdot \hilkn{\frac{1}{q}}{- \frac{1}{q}}. \]
By \eqref{e:prop3},
	\[ \hilkn{\frac{1}{q}}{- \frac{1}{q}} = \hilkn{\frac{1}{q}}{1} = 1; \]
hence by also using \eqref{e:prop2}, we finally get
	\[ \ba{\lambda}{\mu} = c_{1}\left( \frac{\lambda}{q} \right)^{- 1} \cdot \hilkn{- \frac{\norm{\del{1}{\mu}}}{\norm{n_{1}}}}{\frac{\norm{\lambda}}{q^{2}}} \cdot \hilkn{q}{\frac{\mu \ol{\del{1}{\mu}}}{n'}}, \]
i.e.
\begin{equation} \label{e:2.29d}
	\ba{\lambda}{\mu} = c_{1}\left( \frac{\lambda}{q} \right)^{- 1} \cdot \hilkn{- \frac{\norm{\del{1}{\mu}}}{\norm{\del{1}{\lambda}}}}{\frac{\norm{\lambda}}{q^{2}}} \cdot \hilkn{q}{\frac{\mu \ol{\del{1}{\mu}}}{\del{2}{\lambda}}}.
\end{equation}
So if $ \mu = c + d \thet $, where $ c \in k $, $ d \in k^{\times} $, then $ q = a + ( b c ) / d $, and by \eqref{e:prop5},
	\[ c_{1}\left( \frac{\lambda}{q} \right)^{- 1}
	=	\begin{cases}
			\displaystyle \hilkn{- \frac{\norm{\lambda}}{4 a^{2} \norm{\del{1}{\mu}}}}{\frac{( \norm{\lambda} )^{2} b^{4} \thet^{4}}{( ( a - q ) a - b^{2} \thet^{2} )^{4}}}, &\text{if $ ( a - q ) a - b^{2} \thet^{2} \neq 0 $;} \\
			\displaystyle \hilkn{\frac{( \norm{\lambda} )^{2} b^{4}\thet^{4}}{( ( a - q ) a + b^{2} \thet^{2} )^{4}}}{- \frac{\norm{\lambda}}{4 a^{2} \norm{\del{1}{\mu}}}}, &\text{if $ ( a - q ) a - b^{2} \thet^{2} = 0 $.}
		\end{cases} \]

We are left with the case when $ \lambda \in k^{\times} \thet $.  This case is slightly different from the above because $ n_{1} = - 1 / 2 \in k^{\times} $, which alters \sigu{e_{1}}{e_{2}} and \sigu{e_{1} e_{2}}{e_{3}}.

Firstly, by \eqref{e:2.29b}, we know that
\begin{align*}
	\sigu{e_{1}}{e_{2}}
	&=	\begin{aligned}[t]
				&\delh{\frac{\thet}{n_{1}}} \cdot \twa{- \frac{\thet}{n_{1}}}{- \frac{\thet^{2}}{n_{1}}} \cdot \twa{0}{\thet} \cdot \delh{\frac{n_{1}}{\thet}} \\
				&\phantom{\ } \cdot \delh{\frac{\thet}{n_{1}}}^{-1}
		\end{aligned} \\
	&=	\begin{aligned}[t]
			&\delh{- 2 \thet} \cdot \twa{2 \thet}{2 \thet^{2}} \cdot \twa{0}{\thet} \cdot \delh{- \frac{1}{2 \thet}} \\
			&\phantom{\ } \cdot \delh{- 2 \thet}^{- 1}.
		\end{aligned}
\end{align*}
By \eqref{e:2.29e},
	\[ \sigu{e_{1}}{e_{2}} 
	=	\begin{aligned}[t]
			&\delh{- 2 \thet} \cdot \left[ \hilkn{- 1}{- 4 \thet^{2}} \cdot \delh{- 4 \thet^{2}} \cdot \twa{1}{- \frac{1}{2}} \right] \\
			&\phantom{\ } \cdot \twa{0}{\thet} \cdot \delh{- \frac{1}{2 \thet}} \cdot \delh{- 2 \thet}^{- 1}.
		\end{aligned} \]
Since by \eqref{e:wainv}, $ \twa{0}{\thet} = \twa{0}{- \thet}^{- 1} $, by \eqref{e:delh},
	\[ \sigu{e_{1}}{e_{2}} 
	=	\begin{aligned}[t]
			&\hilkn{- 1}{- 4 \thet^{2}} \cdot \delh{- 2 \thet} \cdot \delh{- 4 \thet^{2}} \cdot \delh{\frac{1}{2 \thet}} \\
			&\phantom{\ } \cdot \delh{- \frac{1}{2 \thet}} \cdot \delh{- 2 \thet}^{- 1}.
		\end{aligned} \]
Hence by \eqref{e:ba},
\begin{align*}
	\sigu{e_{1}}{e_{2}} 
	&=	\begin{aligned}[t]
			&\hilkn{- 1}{- 4 \thet^{2}} \cdot \delh{- 2 \thet} \cdot \delh{- 4 \thet^{2}} \cdot \delh{\frac{1}{2 \thet}} \\
			&\phantom{\ } \cdot \left[ \delh{- 2 \thet}^{- 1} \cdot \delh{- 2 \thet} \right] \cdot \delh{- \frac{1}{2 \thet}} \\
			&\phantom{\ } \cdot \delh{- 2 \thet}^{- 1}
		\end{aligned} \\
	&=	\begin{aligned}[t]
			&\hilkn{- 1}{- 4 \thet^{2}} \cdot \delh{- 2 \thet} \cdot \ba{- 4 \thet^{2}}{\frac{1}{2 \thet}} \cdot \ba{- 2 \thet}{- \frac{1}{2 \thet}} \\
			&\phantom{\ } \cdot \delh{- 2 \thet}^{- 1}
		\end{aligned} \\
	&= \hilkn{- 1}{- 4 \thet^{2}} \cdot \ba{- 4 \thet^{2}}{\frac{1}{2 \thet}} \cdot \ba{- 2 \thet}{- \frac{1}{2 \thet}}.
\end{align*}
By \eqref{e:2.29f},
	\[ \sigu{e_{1}}{e_{2}} = \hilkn{- 1}{- 4 \thet^{2}} \cdot \hilkn{- 4 \thet^{2}}{- \frac{1}{4 \thet^{2}}} \cdot \ba{- 2 \thet}{- \frac{1}{2 \thet}}. \]
Thus by \eqref{e:prop3} and \eqref{e:bastr},
\begin{align*}
	\sigu{e_{1}}{e_{2}}
	&= \hilkn{- 1}{- 4 \thet^{2}} \cdot \hilkn{- 1}{- \frac{1}{4 \thet^{2}}} \cdot \ba{- 2 \thet}{- \frac{1}{2 \thet}} \\
	&= \ba{- 2 \thet}{- \frac{1}{2 \thet}} \\
	&= \ba{\frac{\thet}{n_{1}}}{\frac{n_{1}}{\thet}},
\end{align*}
which coincides with the case when $ \lambda \notin k^{\times} \thet $.

As for \sigu{e_{1} e_{2}}{e_{3}}, by \eqref{e:2.29c}, we know that since $ n_{1} = \ol{n_{1}} $,
	\[ \sigu{e_{1} e_{2}}{e_{3}}
		=	\begin{aligned}[t]
				&\delh{\frac{\thet}{n_{1}}} \cdot \twa{\frac{\thet}{\ol{n''}}}{- \frac{\thet^{2}}{\ol{n''}}} \cdot \twa{0}{\thet} \cdot \delh{\frac{\ol{n'}}{\thet}} \\
				&\phantom{\ } \cdot \delh{\frac{\ol{n'} \thet}{n_{1} \ol{n''}}}^{- 1}.
		\end{aligned} \]
Since $ n'' \notin k^{\times} $, we can still use \eqref{e:2.29ci} in the above to obtain
	\[ \sigu{e_{1} e_{2}}{e_{3}}
		=	\begin{aligned}[t]
				&\delh{\frac{\thet}{n_{1}}} \cdot \Bigg[ \hilkn{- \frac{\norm{n''}}{\thet^{2}}}{- 1} \cdot \delh{- \frac{\norm{n''}}{\thet^{2}}}^{- 1} \\
				&\phantom{\ } \cdot \twa{1}{n''} \Bigg] \cdot \twa{0}{\thet} \cdot \delh{\frac{\ol{n'}}{\thet}} \cdot \delh{\frac{\ol{n'} \thet}{n_{1} \ol{n''}}}^{- 1},
			\end{aligned} \]
and in a similar fashion to what we have done previously in the first case when $ \lambda \notin k^{\times} \thet $, we can show that
	\[ \sigu{e_{1} e_{2}}{e_{3}} = \hilkn{- \frac{\norm{n''}}{\thet^{2}}}{- 1} \cdot \hilkn{\frac{\ol{n'}}{\thet}}{\frac{n_{1}^{2} \norm{n''}}{\thet^{4}}} \cdot \ba{\frac{1}{\ol{\lambda}}}{\frac{\thet}{\ol{n''}}}. \]
If we compare the above with \eqref{e:2.29cii}, we can see that they only differ by a factor $ c_{1}( \lambda ) $ which only exists if $ a \neq 0 $.  Consequently, since it can be checked that both \sigu{e_{1}}{e_{2} e_{3}} and \sigu{e_{2}}{e_{3}} remain unchanged, by same method we used above for $ \lambda \notin k^{\times} \thet $, this shows that for $ \lambda \in k^{\times} \thet $,
	\[ \sigu{\lambda}{\mu} = \hilkn{- \frac{\norm{\del{1}{\mu}}}{\norm{\del{1}{\lambda}}}}{\frac{\norm{\lambda}}{q^{2}}} \cdot \hilkn{q}{\frac{\mu \ol{\del{1}{\mu}}}{\del{2}{\lambda}}}, \]
where for $ \lambda = b \thet $, $ \mu = c + d \thet $, $ b $, $ c $, $ d \in k^{\times} $, $ q = b c / d $, i.e. the above only differs from \eqref{e:2.29d} by $ c_{1}( \lambda / q )^{- 1} $.
\end{proof}

Our results from Proposition~\ref{p:2.23ii}, Remark~\ref{r:bainv} and Proposition~\ref{p:2.29} can be summarised by the following theorem:

\begin{thm} \label{t:sigtk}
For $ \lambda $, $ \mu \in K^{\times} $,
\begin{multline*}
	\sigu{\ha{\lambda}}{\ha{\mu}} \\
		=	\begin{cases}
				\displaystyle \hilkn{\lambda}{\mu}, &\text{if $ \lambda $, $ \mu \in k^{\times} $;} \\
				\displaystyle \hilkn{\mu}{- \del{2}{\lambda} / \thet}, &\text{if $ \lambda \notin k^{\times} $, $ \mu \in k^{\times} $;} \\
				\displaystyle \hilkn{\lambda}{\mu \ol{\del{1}{\mu}} / \thet}, &\text{if $ \lambda \in k^{\times} $, $ \mu \notin k^{\times} $;} \\
				\displaystyle \hilkn{- 1}{\norm{\lambda}} \cdot \hilkn{- \frac{\lambda \ol{\del{1}{\lambda}}}{\thet}}{\lambda \mu}, &\text{if $ \lambda $, $ \mu \notin k^{\times} $, $ \lambda \mu \in k^{\times} $;} \\
				\displaystyle
					\begin{aligned}[b]
						&\hilkn{- \frac{\norm{\del{1}{\mu}}}{\norm{\del{1}{\lambda}}}}{\frac{\norm{\lambda}}{q^{2}}} \cdot \hilkn{q}{\frac{\mu \ol{\del{1}{\mu}}}{\del{2}{\lambda}}} \\
						&\phantom{\ } \cdot \Sigma'( \lambda, \mu ),
					\end{aligned} &\text{otherwise,}
			\end{cases}
\end{multline*}
where, if $  \lambda = a + b \thet $, $ \mu = c + d \thet $, with $ a $, $ c \in k $, $ b $, $ d \in k^{\times} $,
	\[ q = a + \frac{b c}{d}, \]
and
	\[ \Sigma'( \lambda, \mu )
		=	\begin{cases}
				\displaystyle\hilkn{- \frac{\norm{\lambda}}{4 a^{2} \norm{\del{1}{\mu}}}}{\frac{( \norm{\lambda} )^{2} b^{4} \thet^{4}}{( ( a - q ) a - b^{2} \thet^{2} )^{4}}}, &\text{if $ \lambda \notin k^{\times} \thet $, $ a q \neq \norm{\lambda} $;} \\
			\displaystyle\hilkn{\frac{( \norm{\lambda} )^{2} b^{4}\thet^{4}}{( ( a - q ) a + b^{2} \thet^{2} )^{4}}}{- \frac{\norm{\lambda}}{4 a^{2} \norm{\del{1}{\mu}}}}, &\text{if $ \lambda \notin k^{\times} \thet $, $ a q = \norm{\lambda} $;} \\
				\displaystyle 1, &\text{if $ \lambda \in k^{\times} \thet $.}
			\end{cases} \]
\end{thm}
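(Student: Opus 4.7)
The plan is to observe that the theorem is simply a compendium of the case-by-case formulae already established, so I would prove it by an exhaustive case analysis on which of $\lambda$, $\mu$, and $\lambda\mu$ lie in $k^\times$. These three binary conditions in principle give eight subcases, but since $\lambda,\mu \in k^\times$ forces $\lambda\mu \in k^\times$, only five cases actually arise---exactly those listed in the theorem. Nothing genuinely new needs to be computed; the work is simply in matching the hypotheses.

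First I would dispatch the diagonal case $\lambda,\mu \in k^\times$ by unwinding the definitions: \eqref{e:ba} gives $\sigu{\ha{\lambda}}{\ha{\mu}} = \ba{\lambda}{\mu}$, and Proposition~\ref{p:baisom} identifies this with the $n$-th power Hilbert symbol $\hilkn{\lambda}{\mu}$. The two mixed cases (exactly one of $\lambda$, $\mu$ in $k^\times$) then follow verbatim from parts (i) and (ii) of Proposition~\ref{p:2.23ii}, applied with $q=\mu$ or $q=\lambda$ respectively, after writing $\sigu{\ha{\lambda}}{\ha{\mu}} = \ba{\lambda}{\mu}$.

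For the remaining two cases I would invoke: Remark~\ref{r:bainv} (which summarises Proposition~\ref{p:bainv} together with Lemmas~\ref{l:2.23i} and~\ref{l:2.23ii}) for the sub-case $\lambda,\mu \notin k^\times$ with $\lambda\mu \in k^\times$; and Proposition~\ref{p:2.29} for the truly generic case where none of $\lambda$, $\mu$, $\lambda\mu$ lies in $k^\times$. In both, the formulae match the statement of the theorem on the nose, so no further manipulation is required.

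No new obstacle arises at this stage, since all the substantive calculations have been carried out earlier. The genuinely hard work was concentrated in Proposition~\ref{p:2.29}, whose proof applied the cocycle identity $\sigu{e_{1}}{e_{2}} \cdot \sigu{e_{1}e_{2}}{e_{3}} = \sigu{e_{1}}{e_{2}e_{3}} \cdot \sigu{e_{2}}{e_{3}}$ to carefully chosen triples $(e_1,e_2,e_3)$ in the big Bruhat cell, combined with the commutator formula of Lemma~\ref{l:comm} (which itself depended on the Hilbert product formula to reduce ramified extensions to the unramified and split cases). What remains here is purely an exercise in bookkeeping: verify that the five listed conditions are mutually exclusive and exhaustive, and cite the appropriate earlier result in each branch.
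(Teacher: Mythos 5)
Your proposal is correct and matches the paper exactly: the theorem is stated there as a summary of Proposition~\ref{p:2.23ii}, Remark~\ref{r:bainv} and Proposition~\ref{p:2.29}, with the diagonal case $\lambda,\mu\in k^{\times}$ handled by the identification $\ba{s}{t}\mapsto\hilkn{s}{t}$ of Proposition~\ref{p:baisom}, and the five cases are exhaustive for precisely the reason you give. No gaps; the bookkeeping you describe is all that the paper's own proof consists of.
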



\chapter{The 2-cocycle on the rest of $ \SU( k ) $} \label{c:rest}

Recall that we have set $ G = \SU $.  We have defined a section $ \delta \colon G( k ) \to \wtilde{G} $ in Section~\ref{s:sec}.  Hence, we can use this section and Theorem~\ref{t:sigtk} to find the universal 2-cocycle on $ G( k ) $.  Note that by \eqref{e:ba} and Theorem~\ref{t:sigtk}, we already know explicitly what $ \ba{-}{-} $ is on $ K^{\times} \times K^{\times} $ in terms of $ \hilkn{s}{t} $'s for $ s $, $ t \in k^{\times} $; so from this subsection onwards we will only use \hilkn{-}{-} as a function on $ k^{\times} \times k^{\times} $, and otherwise use $ \sigma_{u} $ as described by \eqref{e:ba}.

\section{The easy cases} \label{s:easysu}

To start with, let $ ( r, m ) $, $ ( r', m' ) \in A $, where $ A $ is defined as in \eqref{e:defA1}.  It is obvious that for $ g $, $ g' \in G( k ) $,
\begin{gather*}
	\sigu{\xa{r}{m} \cdot g}{g' \cdot \xa{r'}{m'}} = \sigu{g}{g'}, \\
	\sigu{g}{\xa{r}{m} \cdot g'} = \sigu{g \cdot \xa{r}{m}}{g'}, \\
	\sigu{g}{\xa{r}{m}} = \sigu{\xa{r}{m}}{g'} = 1.
\end{gather*}
Therefore, since for $ \lambda \in K^{\times} $,
	\[ \ha{\lambda} \cdot \xa{r}{m} = \xa{\frac{r \lambda^{2}}{\ol{\lambda}}}{m \norm{\lambda}} \cdot \ha{\lambda}, \]
the above implies that
\begin{gather*}
	\sigu{g}{\xa{r}{m} \cdot \ha{\lambda}} = \sigu{g}{\ha{\lambda} \cdot \xa{\frac{r \ol{\lambda}}{\lambda^{2}}}{\frac{m}{\norm{\lambda}}}} = \sigu{g}{\ha{\lambda}}, \\
	\sigu{\ha{\lambda} \cdot \xa{r}{m}}{g'} = \sigu{\xa{\frac{r \lambda^{2}} {\ol{\lambda}}}{m \norm{\lambda}} \cdot \ha{\lambda}}{g'} = \sigu{\ha{\lambda}}{g'}.
\end{gather*}

Also, if $ \mu \in k^{\times} $, by \eqref{e:delh},
	\[ \delh{\mu} = \twa{0}{\mu \thet} \cdot \twa{0}{\thet}^{- 1}. \]
Thus by \eqref{e:wainv},
\begin{align*}
	&\twa{0}{\thet} \cdot \delh{\mu} \cdot \twa{0}{\thet}^{-1} \\
	&= \twa{0}{\thet} \cdot \left[ \twa{0}{\mu \thet} \cdot \twa{0}{\thet}^{- 1} \right] \cdot \twa{0}{\thet}^{- 1} \\
	&= \left[ \twa{0}{- \mu \thet} \cdot \twa{0}{\thet}^{- 1} \right]^{- 1} \cdot \left[ \twa{0}{- \thet} \cdot \twa{0}{\thet}^{- 1} \right].
\end{align*}
Thus by \eqref{e:delh} again,
	\[ \twa{0}{\thet} \cdot \delh{\mu} \cdot \twa{0}{\thet}^{-1} = \delh{- \mu}^{- 1} \cdot \delh{- 1}; \]
and this implies by \eqref{e:delinv} that
\begin{equation} \label{e:muthet1}
	\twa{0}{\thet} \cdot \delh{\mu} \cdot \twa{0}{\thet}^{-1} = \delh{\frac{1}{\mu}}.
\end{equation}
As a result, for $ \lambda \in K^{\times} $,
\begin{align*}
	&\sigu{\ha{\lambda} \cdot \wa{0}{\thet}}{\ha{\mu} \cdot \wa{0}{\thet}} \\
	&=	\begin{aligned}[t]
			&\left[ \delh{\lambda} \cdot \twa{0}{\thet} \right] \cdot \left[ \delh{\mu} \cdot \twa{0}{\thet} \right] \\
			&\phantom{\ } \cdot \delta \left[ \ha{\lambda} \cdot \wa{0}{\thet} \cdot \ha{\mu} \cdot \wa{0}{\thet} \right]^{-1}
			\end{aligned} \\
	&= \delh{\lambda} \cdot \left[ \delh{\frac{1}{\mu}} \cdot \twa{0}{\thet} \right] \cdot \twa{0}{\thet} \cdot \delh{- \frac{\lambda}{\mu}}^{- 1}.
\end{align*}
(Note that for any $ \lambda $, $ \mu \in K^{\times} $, $ \ha{\lambda} \cdot \wa{0}{\thet} \cdot \ha{\mu} \cdot \wa{0}{\thet} = \ha{- \lambda / \ol{\mu}} $.)  By \eqref{e:delh}, we have
\begin{multline*}
	\sigu{\ha{\lambda} \cdot \wa{0}{\thet}}{\ha{\mu} \cdot \wa{0}{\thet}} \\ = \delh{\lambda} \cdot \delh{\frac{1}{\mu}} \cdot \delh{- 1}^{- 1} \cdot \delh{- \frac{\lambda}{\mu}}^{- 1}.
\end{multline*}
Thus by \eqref{e:ba},
\begin{align*}
	&\sigu{\ha{\lambda} \cdot \wa{0}{\thet}}{\ha{\mu} \cdot \wa{0}{\thet}} \\
	&=	\begin{aligned}[t]
			&\delh{\lambda} \cdot \delh{\frac{1}{\mu}} \cdot \left[ \delh{\frac{\lambda}{\mu}}^{- 1} \cdot \delh{\frac{\lambda}{\mu}} \right] \\
			&\phantom{\ } \cdot \delh{- 1}^{- 1} \cdot \delh{- \frac{\lambda}{\mu}}^{- 1}
		\end{aligned} \\
	&= \sigu{\ha{\lambda}}{\ha{\frac{1}{\mu}}} \cdot \sigu{\ha{- \frac{\lambda}{\mu}}}{\ha{- 1}}^{- 1}.
\end{align*}

If $ \mu \notin k^{\times} $, by \eqref{e:delh},
\begin{align*}
	\delh{\mu}^{- 1} &\cdot \twa{0}{\thet} \cdot \delh{\mu} \\
	&=	\begin{aligned}[t]
			&\left[ \twa{1}{\del{1}{\mu}} \cdot \twa{0}{\del{2}{\mu}}^{- 1} \right]^{- 1} \cdot \twa{0}{\thet} \\
			&\phantom{\ } \cdot \left[ \twa{1}{\del{1}{\mu}} \cdot \twa{0}{\del{2}{\mu}}^{- 1} \right].
		\end{aligned}
\end{align*}
Thus by \eqref{e:wainv},
\begin{multline*}
	\delh{\mu}^{- 1} \cdot \twa{0}{\thet} \cdot \delh{\mu} = \twa{0}{\del{2}{\mu}} \cdot \twa{- 1}{\ol{\del{1}{\mu}}} \cdot \twa{0}{\thet} \\ \cdot \twa{- 1}{\ol{\del{1}{\mu}}}^{- 1} \cdot \twa{0}{\del{2}{\mu}}^{- 1},
\end{multline*}
and by using Proposition~\ref{p:wa} twice,
\begin{align}
	\delh{\mu}^{- 1} \cdot \twa{0}{\thet} \cdot \delh{\mu}
	&=	\begin{aligned}[t]
			&\twa{0}{\del{2}{\mu}} \cdot \twa{0}{- \frac{\norm{\ol{\del{1}{\mu}}}}{\thet}} \\
			&\phantom{\ } \cdot \twa{0}{\del{2}{\mu}}^{- 1}
		\end{aligned} \label{e:muthet2} \\
	&= \twa{0}{\frac{\thet}{\norm{\mu}}}. \notag
\end{align}
Thus for $ \lambda \in K^{\times} $,
\begin{align*}
	&\sigu{\ha{\lambda} \cdot \wa{0}{\thet}}{\ha{\mu} \cdot \wa{0}{\thet}} \\
	&=	\begin{aligned}[t]
			&\delh{\lambda} \cdot \twa{0}{\thet} \cdot \delh{\mu} \cdot \twa{0}{\thet} \\
			&\phantom{\ } \cdot \delta[ \ha{\lambda} \cdot \wa{0}{\thet} \cdot \ha{\mu} \cdot \wa{0}{\thet} ]^{-1}
			\end{aligned} \\
	&= \delh{\lambda} \cdot \left[ \delh{\mu} \cdot \twa{0}{\frac{\thet}{\norm{\mu}}} \right] \cdot \twa{0}{\thet} \cdot \delh{- \frac{\lambda}{\ol{\mu}}}^{- 1}.
\end{align*}
Thus by \eqref{e:delh},
\begin{align*}
	&\sigu{\ha{\lambda} \cdot \wa{0}{\thet}}{\ha{\mu} \cdot \wa{0}{\thet}} \\
	&=	\begin{aligned}[t]
			&\delh{\lambda} \cdot \delh{\mu} \cdot \twa{0}{\frac{\thet}{\norm{\mu}}} \cdot \left[ \twa{0}{\thet}^{- 1} \cdot \twa{0}{\thet} \right] \cdot \twa{0}{\thet} \\
			&\phantom{\ } \cdot \delh{- \frac{\lambda}{\ol{\mu}}}^{- 1}
		\end{aligned} \\
	&= \delh{\lambda} \cdot \delh{\mu} \cdot \delh{\frac{1}{\norm{\mu}}} \cdot \delh{- 1}^{- 1} \cdot \delh{- \frac{\lambda}{\ol{\mu}}}^{- 1},
\end{align*}
hence by \eqref{e:ba},
\begin{align*}
	&\sigu{\ha{\lambda} \cdot \wa{0}{\thet}}{\ha{\mu} \cdot \wa{0}{\thet}} \\
	&=	\begin{aligned}[t]
			&\delh{\lambda} \cdot \delh{\mu} \cdot \left[ \delh{\lambda \mu}^{- 1} \cdot \delh{\lambda \mu} \right] \cdot \delh{\frac{1}{\norm{\mu}}} \\
			&\phantom{\ }\cdot \left[ \delh{\frac{\lambda}{\ol{\mu}}}^{- 1} \cdot \delh{\frac{\lambda}{\ol{\mu}}} \right] \cdot \delh{- 1}^{- 1} \cdot \delh{- \frac{\lambda}{\ol{\mu}}}^{- 1}
		\end{aligned} \\
	&= \sigu{\ha{\lambda}}{\ha{\mu}} \cdot \sigu{\ha{\lambda \mu}}{\ha{\frac{1}{\norm{\mu}}}} \cdot \sigu{\ha{- \frac{\lambda}{\ol{\mu}}}}{\ha{- 1}}^{- 1}.
\end{align*}

Similarly, since $ \ha{\lambda} \cdot \wa{0}{\thet} \cdot \ha{\mu} = \ha{\lambda / \ol{\mu}} \cdot \wa{0}{\thet} $,
\begin{multline*}
	\sigu{\ha{\lambda} \cdot \wa{0}{\thet}}{\ha{\mu}} \\
	= \delh{\lambda} \cdot \twa{0}{\thet} \cdot \delh{\mu} \cdot \left[ \delh{\frac{\lambda}{\ol{\mu}}} \cdot \twa{0}{\thet} \right] ^{- 1}.
\end{multline*}
Thus using \eqref{e:muthet1}, \eqref{e:muthet2} and \eqref{e:delh},
\begin{multline*}
	\sigu{\ha{\lambda} \cdot \wa{0}{\thet}}{\ha{\mu}} \\
	=	\begin{cases}
			\displaystyle \delh{\lambda} \cdot \delh{\frac{1}{\mu}} \cdot \delh{\frac{\lambda}{\mu}}^{- 1}, &\text{if $ \mu \in k^{\times} $;} \\
			\displaystyle \delh{\lambda} \cdot \delh{\mu} \cdot \delh{\frac{1}{\norm{\mu}}} \cdot \delh{\frac{\lambda}{\ol{\mu}}}^{- 1}, &\text{if $ \mu \notin k^{\times} $.}
		\end{cases}
\end{multline*}
Therefore by \eqref{e:ba},
\begin{multline*}
	\sigu{\ha{\lambda} \cdot \wa{0}{\thet}}{\ha{\mu}} \\
	=	\begin{cases}
			\displaystyle \sigu{\ha{\lambda}}{\ha{\frac{1}{\mu}}}, &\text{if $ \mu \in k^{\times} $;} \\
			\displaystyle \sigu{\ha{\lambda}}{\ha{\mu}} \cdot \sigu{\ha{\lambda \mu}}{\ha{\frac{1}{\norm{\mu}}}}, &\text{if $ \mu \notin k^{\times} $.}
		\end{cases}
\end{multline*}

Also, note that by the definition of our section,
	\[ \sigu{\ha{\lambda}}{\ha{\mu} \cdot \wa{0}{\thet}} = \sigu{\ha{\lambda}}{\ha{\mu}}. \]

\section{The difficult case} \label{s:diffsu}

We are now left with the most difficult case, i.e. the next proposition.

\begin{prop} \label{p:sigxa}
For $ \lambda $, $ \mu \in K^{\times} $, with $ ( r, m ) \in A $, where $ r = a + b \thet $, $ m = c + d \thet $ for $ a $, $ b $, $ c $, $ d \in k $,
\begin{multline*}
	\sigu{\ha{\lambda} \cdot \wa{0}{\thet}}{\xa{r}{m} \cdot \ha{\mu} \cdot \wa{0}{\thet}} \\
	= \Sigma_{u}( r, m ) \cdot \sigu{\ha{\lambda}}{\ha{\frac{\thet}{\ol{m}}}} \cdot \sigu{\ha{\frac{\lambda \thet}{\ol{m}}}}{\ha{\mu}},
\end{multline*}
where
\begin{multline*}
	\Sigma_{u}( r, m) \\
	=	\begin{cases}
			\displaystyle \hilkn{\frac{\thet}{\ol{m}}}{- 1}, &\text{if $ m \in k^{\times} \thet $;} \\
			\displaystyle \hilkn{- \frac{\norm{m}}{\thet^{2}}}{- \frac{r \thet}{\ol{m}}}, &\text{if $ \displaystyle - \frac{r \thet}{\ol{m}} \in k^{\times} $;} \\
			\displaystyle \hilkn{r}{- \frac{\thet^{2}}{m^{2}}}, &\text{if $ r $, $ m \in k^{\times} $;} \\
			\displaystyle \hilkn{\frac{a c + b d \thet^{2}}{c}}{\frac{- c^{2} \norm{r}}{b^{2} \norm{m} \thet^{2}}} \cdot \hilkn{\norm{r}}{\frac{- b \thet^{2}}{c}}, &\text{if $ b $, $ c \neq 0 $, $ \displaystyle - \frac{r \thet}{\ol{m}} \notin k^{\times} $;} \\
			\displaystyle \hilkn{r}{- \frac{\thet^{2}}{\norm{m}}}, &\text{otherwise.}
		\end{cases}
\end{multline*}
\end{prop}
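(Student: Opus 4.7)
The plan is to compute the Bruhat decomposition of the product $P := \ha{\lambda}\wa{0}{\theta}\cdot \xa{r}{m}\ha{\mu}\wa{0}{\theta}$ and then extract the claimed factorisation.  Since $(r,m)\in A$ forces $m\ne 0$, direct matrix multiplication shows that $P$ has non-zero $(3,1)$-entry $m/(\ol\lambda\,\ol\mu\,\theta^{2})$, so $P$ lies in the big Bruhat cell, and formula~\eqref{e:bruc} yields the explicit decomposition
\[
P = \xa{\lambda^{2}r\theta/(\ol\lambda\,\ol m)}{-\norm{\lambda}\theta^{2}/m}\cdot \ha{\lambda\mu\theta/\ol m}\cdot \wa{0}{\theta}\cdot \xa{-r\ol\mu^{2}\theta/(\mu m)}{-\norm{\mu}\theta^{2}/m}.
\]
Applying the section $\delta$ gives an expression for $\delta(P)$ in $\wtilde{G}$ whose torus factor is $\delh{\lambda\mu\theta/\ol m}$, and substituting this into the definition $\sigu{g_{1}}{g_{2}}=\delta(g_{1})\delta(g_{2})\delta(g_{1}g_{2})^{-1}$ reduces the computation of the 2-cocycle to manipulations in $\wtilde{G}$.

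The next step is to extract the torus-cocycle factor.  Two applications of \eqref{e:ba} give the identity
\[
\delh{\lambda}\delh{\theta/\ol m}\delh{\mu} = \sigu{\ha{\lambda}}{\ha{\theta/\ol m}}\cdot \sigu{\ha{\lambda\theta/\ol m}}{\ha{\mu}}\cdot \delh{\lambda\mu\theta/\ol m},
\]
and substituting the resulting expression for $\delh{\lambda\mu\theta/\ol m}^{-1}$ into $\delta(P)^{-1}$ produces, after using centrality of elements of $\pi_{1}$,
\[
\sigu{\ha{\lambda}\wa{0}{\theta}}{\xa{r}{m}\ha{\mu}\wa{0}{\theta}} = \sigu{\ha{\lambda}}{\ha{\theta/\ol m}}\cdot \sigu{\ha{\lambda\theta/\ol m}}{\ha{\mu}}\cdot R,
\]
where $R\in\pi_{1}$ is the residual product of unipotent and Weyl lifts.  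To see that $R$ is independent of $\lambda$ and $\mu$, I would apply the conjugation relations $\delh{\mu}^{-1}\txa{r}{m}\delh{\mu} = \txa{r\ol\mu/\mu^{2}}{m/\norm{\mu}}$ and $\twa{0}{\theta}\txa{r}{m}\twa{0}{\theta}^{-1} = \txam{-\ol r/\theta}{-m/\theta^{2}}$ from Proposition~\ref{p:wa} to pass all torus and Weyl lifts past the unipotent lifts; the $\lambda,\mu$ dependence in $\widetilde{u}_L,\widetilde{u}_R$ cancels against the dependence introduced by the conjugations.  Equivalently, specialising $\lambda=\mu=1$ collapses the torus-cocycle factor to $1$ and identifies $R = \Sigma_u(r,m) = \sigu{\wa{0}{\theta}}{\xa{r}{m}\wa{0}{\theta}}$, reducing the remaining task to this specific cocycle value.

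The explicit evaluation of $\Sigma_u(r,m)$ is then a calculation involving only $\wa{0}{\theta}$ and $\xa{r}{m}$.  Using the identity $\xa{r}{m}\xam{\ol r/\ol m}{1/\ol m} = \wa{r}{m}\cdot\xa{r\ol m/m}{m}^{-1}$ (from \eqref{e:warm}) together with the conjugation and commutation rules of Proposition~\ref{p:wa}, I would rewrite the computation as a product of the form $\twa{s_{1}}{n_{1}}^{-1}\twa{s_{1}/t}{n_{1}/\norm{t}}$, with $s_{1},n_{1},t$ explicit in $r,m,\theta$, to which Lemma~\ref{l:2.16} applies directly; the five cases in the statement correspond to the five sub-cases of Lemma~\ref{l:2.16}, supplemented by Lemma~\ref{l:2.18} and the scalar split-torus case.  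The main obstacle will be the generic case~(iv), where the Hilbert-symbol expression produced by Lemma~\ref{l:2.16} is long and must be pruned to the compact two-factor form $\hilkn{(ac+bd\theta^{2})/c}{-c^{2}\norm{r}/(b^{2}\norm{m}\theta^{2})}\cdot \hilkn{\norm{r}}{-b\theta^{2}/c}$ via the bilinearity \eqref{e:bastri} and identities \eqref{e:prop2i}--\eqref{e:prop4i}.  One must also track carefully the central factors arising from $\twa{0}{\theta}^{2}=\sigu{\wa{0}{\theta}}{\wa{0}{\theta}}\cdot\delh{-1}$ and from the alternative-representative identity $\twa{r}{m}=\twa{r\ol m/m}{m}$ of Proposition~\ref{p:wa}(1), both of which can contribute stray Hilbert symbols if not handled precisely.
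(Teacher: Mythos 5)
Your proposal is correct, and its computational engine coincides with the paper's: your Bruhat decomposition of $P$ (left unipotent $\xa{r\lambda^{2}\thet/(\ol{m}\,\ol{\lambda})}{-\norm{\lambda}\thet^{2}/m}$, torus factor $\ha{\lambda\mu\thet/\ol{m}}$, $(3,1)$-entry $m/(\ol{\lambda}\,\ol{\mu}\,\thet^{2})$) is exactly what the paper obtains from \eqref{e:de1e2} and \eqref{e:e1e2}, reducing the cocycle to $\delh{\lambda}\cdot\twa{-r\thet/\ol{m}}{-\thet^{2}/\ol{m}}\cdot\twa{0}{\thet}\cdot\delh{\mu}\cdot\delh{\lambda\mu\thet/\ol{m}}^{-1}$, and your residual product $\twa{s_{1}}{n_{1}}^{-1}\twa{s_{1}/t}{n_{1}/\norm{t}}$ with $n_{1}=-\thet^{2}/m$, $t=-r\thet/\ol{m}$ is precisely the paper's $z'(r,m)$, evaluated via Lemma~\ref{l:2.16}. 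Where you genuinely depart, to your advantage, is in organization: the paper carries $\lambda$, $\mu$ through the entire calculation, evaluating the crossing terms with Lemma~\ref{l:comm} and Theorem~\ref{t:sigtk}, whereas you first establish that the residual central factor is independent of $\lambda$, $\mu$ and then identify it by specializing $\lambda=\mu=1$, giving $\Sigma_{u}(r,m)=\sigu{\wa{0}{\thet}}{\xa{r}{m}\cdot\wa{0}{\thet}}$ and noticeably lighter bookkeeping. For the independence step there is a cleaner justification than your lift-shuffling: since $\wa{-r\thet/\ol{m}}{-\thet^{2}/\ol{m}}\cdot\wa{0}{\thet}=\ha{\thet/\ol{m}}$ in $G(k)$ (an identity appearing in the proof of Proposition~\ref{p:2.29}), the middle Weyl product equals $c\cdot\delh{\thet/\ol{m}}$ with $c\in\pi_{1}$ central and depending only on $(r,m)$, and your two-fold application of \eqref{e:ba} then yields the splitting with residual exactly $c$; the specialization is legitimate because $\delta(1)=1$ normalizes $\sigma_{u}$ and Theorem~\ref{t:sigtk} gives $\sigu{\ha{1}}{\ha{\thet/\ol{m}}}=\sigu{\ha{\thet/\ol{m}}}{\ha{1}}=1$. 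Two small corrections to your case bookkeeping: the case $m\in k^{\times}\thet$ (equivalently $r=0$) is handled directly from \eqref{e:delh} rather than through Lemma~\ref{l:2.16}, and the final case ($r\in k^{\times}$, $m\notin k^{\times}$, so $b=0$) is reached by inverting $z'(r,m)$, since a direct application of Lemma~\ref{l:2.16} there produces $q_{1}=0$; Lemma~\ref{l:2.18} plays no role in this proof.
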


\begin{proof}
Our proof will use similar methods used in Proposition~\ref{p:2.29}.  By \eqref{e:de1e2} and \eqref{e:e1e2},
\begin{multline*}
	\sigu{\ha{\lambda} \cdot \wa{0}{\thet}}{\xa{r}{m} \cdot \ha{\mu} \cdot \wa{0}{\thet}} \\
	=	\begin{aligned}[t]
			&\txa{\frac{r \lambda^{2} \thet}{\ol{m} \ol{\lambda}}}{- \frac{\norm{\lambda} \thet^{2}}{m}} \cdot \delh{\lambda} \cdot \twa{- \frac{r \thet}{\ol{m}}}{- \frac{\thet^{2}}{\ol{m}}} \cdot \twa{0}{\thet} \\
			&\phantom{\ } \cdot \delh{\mu} \cdot \delh{\frac{\lambda \mu \thet}{\ol{m}}}^{-1} \cdot \txa{\frac{r \lambda^{2} \thet}{\ol{m} \ol{\lambda}}}{- \frac{\norm{\lambda} \thet^{2}}{m}}^{- 1};
		\end{aligned}
\end{multline*}
hence simplifying the above gives
\begin{multline} \label{e:sigrm}
	\sigu{\ha{\lambda} \cdot \wa{0}{\thet}}{\xa{r}{m} \cdot \ha{\mu} \cdot \wa{0}{\thet}} \\
	= \delh{\lambda} \cdot \twa{- \frac{r \thet}{\ol{m}}}{- \frac{\thet^{2}}{\ol{m}}} \cdot \twa{0}{\thet} \cdot \delh{\mu} \cdot \delh{\frac{\lambda \mu \thet}{\ol{m}}}^{-1}.
\end{multline}
There are two cases to consider: when $ r = 0 $; and when $ r \neq 0 $.

When $ r = 0 $, this implies that $ m \in k^{\times} \thet $.  Thus, \eqref{e:sigrm} becomes
	\[ \twa{- \frac{r \thet}{\ol{m}}}{- \frac{\thet^{2}}{\ol{m}}} = \twa{0}{- \frac{\thet^{2}}{\ol{m}}}; \]
and by \eqref{e:delh},
\begin{align*}
	&\sigu{\ha{\lambda} \cdot \wa{0}{\thet}}{\xa{r}{m} \cdot \ha{\mu} \cdot \wa{0}{\thet}} \\
	&=	\begin{aligned}[t]
			&\delh{\lambda} \cdot \twa{0}{- \frac{\thet^{2}}{\ol{m}}} \cdot \left[ \twa{0}{\thet}^{- 1} \cdot \twa{0}{\thet} \right] \cdot \twa{0}{\thet} \\
			&\phantom{\ } \cdot \delh{\mu} \cdot \delh{\frac{\lambda \mu \thet}{\ol{m}}}^{-1}
		\end{aligned} \\
	&= \delh{\lambda} \cdot \delh{- \frac{\thet}{\ol{m}}} \cdot \delh{- 1}^{- 1} \cdot \delh{\mu} \cdot \delh{\frac{\lambda \mu \thet}{\ol{m}}}^{-1}.
\end{align*}
By \eqref{e:ba},
	\[ \hilkn{\frac{\thet}{\ol{m}}}{- 1}^{- 1} \cdot \delh{\frac{\thet}{\ol{m}}} = \delh{- \frac{\thet}{\ol{m}}} \cdot \delh{- 1}^{- 1}; \]
therefore the equation becomes
\begin{multline*}
	\sigu{\ha{\lambda} \cdot \wa{0}{\thet}}{\xa{r}{m} \cdot \ha{\mu} \cdot \wa{0}{\thet}} \\
	= \delh{\lambda} \cdot \left[ \hilkn{\frac{\thet}{\ol{m}}}{- 1}^{- 1} \cdot \delh{\frac{\thet}{\ol{m}}} \right] \cdot \delh{\mu} \cdot \delh{\frac{\lambda \mu \thet}{\ol{m}}}^{-1}.
\end{multline*}
By \eqref{e:ba} again,
\begin{align*}
	&\sigu{\ha{\lambda} \cdot \wa{0}{\thet}}{\xa{r}{m} \cdot \ha{\mu} \cdot \wa{0}{\thet}} \\
	&=	\begin{aligned}[t]
			&\hilkn{\frac{\thet}{\ol{m}}}{- 1}^{- 1} \cdot \delh{\lambda} \cdot \delh{\frac{\thet}{\ol{m}}} \cdot \Bigg[ \delh{\frac{\lambda \thet}{\ol{m}}}^{- 1} \\
			&\phantom{\ } \cdot \delh{\frac{\lambda \thet}{\ol{m}}} \Bigg] \cdot \delh{\mu} \cdot \delh{\frac{\lambda \mu \thet}{\ol{m}}}^{-1}
		\end{aligned} \\
	&= \hilkn{\frac{\thet}{\ol{m}}}{- 1} \cdot \sigu{\ha{\lambda}}{\ha{\frac{\thet}{\ol{m}}}} \cdot \sigu{\ha{\frac{\lambda \thet}{\ol{m}}}}{\ha{\mu}}.
\end{align*}
(Note that $ \hilkn{s}{- 1}^{- 1} = \hilkn{s}{- 1} $ for any $ s \in k^{\times} $ by \eqref{e:prop5}.)

Now let $ r \neq 0 $.  We want to consider
	\[ z'( r, m ) = \twa{- \frac{r \thet}{\ol{m}}}{- \frac{\thet^{2}}{\ol{m}}} \cdot \twa{1}{\frac{m}{\norm{r}}}^{- 1}. \]
The above implies that \eqref{e:sigrm} becomes
\begin{multline*}
	\sigu{\ha{\lambda} \cdot \wa{0}{\thet}}{\xa{r}{m} \cdot \ha{\mu} \cdot \wa{0}{\thet}} \\
	= \delh{\lambda} \cdot z'( r, m ) \cdot \twa{1}{\frac{m}{\norm{r}}} \cdot \twa{0}{\thet} \cdot \delh{\mu} \cdot \delh{\frac{\lambda \mu \thet}{\ol{m}}}^{-1};
\end{multline*}
and since by \eqref{e:wainv}, $ \twa{0}{\thet} = \twa{0}{- \thet}^{- 1} $, by \eqref{e:delh},
\begin{multline} \label{e:sigrm2}
	\sigu{\ha{\lambda} \cdot \wa{0}{\thet}}{\xa{r}{m} \cdot \ha{\mu} \cdot \wa{0}{\thet}} \\
	= \delh{\lambda} \cdot z'( r, m ) \cdot \delh{- \frac{m}{\norm{r} \thet}} \cdot \delh{\mu} \cdot \delh{\frac{\lambda \mu \thet}{\ol{m}}}^{-1}.
\end{multline}

We first note that
	\[ z'( r, m ) = \twa{\frac{r \thet}{\ol{m}}}{- \frac{\thet^{2}}{m}}^{- 1} \cdot \twa{- 1}{\frac{\ol{m}}{\norm{r}}} \]
by \eqref{e:wainv}.  Let
	\[ n_{1} = - \frac{\thet^{2}}{m}, \quad t = - \frac{r \thet}{\ol{m}}; \]
then
	\[ z'( r, m ) = \twa{\frac{r \thet}{\ol{m}}}{n_{1}}^{- 1} \cdot \twa{\frac{r \thet}{\ol{m} t}}{\frac{n_{1}}{\norm{t}}}. \]
By Lemma~\ref{l:2.16}, there are four cases to consider: $ t \in k^{\times} $; $ n_{1} \in k^{\times} $, $ t \in k^{\times} \thet $; $ n_{1} = a' + b' \thet $, $ t = c' + d' \thet $, $ a' $, $ d' \in k^{\times} $, $ b' $, $ c' \in k $, $ c' + b' d' \thet^{2} / a' \neq 0 $; and $ n_{1} = a' + b' \thet $, $ t = c' + d' \thet $, $ a' $, $ d' \in k^{\times} $, $ b' $, $ c' \in k $, $ c' + b' d' \thet^{2} / a' = 0 $.

Let $ t \in k^{\times} $, i.e. $ - r \thet / \ol{m} \in k^{\times} $.  By Lemma~\ref{l:2.16},
\begin{align*}
	z'( r, m )
	&= \delh{- \frac{\norm{n_{1}}}{\thet^{2}}} \cdot \delh{- \frac{\norm{n_{1}}}{t \thet^{2}}}^{- 1} \cdot \delh{t} \\
	&= \delh{- \frac{\thet^{2}}{\norm{m}}} \cdot \delh{\frac{\thet}{r m}}^{- 1} \cdot \delh{- \frac{r \thet}{\ol{m}}}.
\end{align*}
Using the above in \eqref{e:sigrm2},
\begin{multline*}
	\sigu{\ha{\lambda} \cdot \wa{0}{\thet}}{\xa{r}{m} \cdot \ha{\mu} \cdot \wa{0}{\thet}} \\
	=	\begin{aligned}[t]
			&\delh{\lambda} \cdot \left[ \delh{- \frac{\thet^{2}}{\norm{m}}} \cdot \delh{\frac{\thet}{r m}}^{- 1} \cdot \delh{- \frac{r \thet}{\ol{m}}} \right] \\
			&\phantom{\ } \cdot \delh{- \frac{m}{\norm{r} \thet}} \cdot \delh{\mu} \cdot \delh{\frac{\lambda \mu \thet}{\ol{m}}}^{-1}.
		\end{aligned}
\end{multline*}
By \eqref{e:ba},
	\[ \hilkn{\frac{\thet}{r m}}{- \frac{r^{2} m}{\ol{m}}}^{- 1} \cdot \delh{- \frac{r^{2} m}{\ol{m}}} = \delh{\frac{\thet}{r m}}^{- 1} \cdot \delh{- \frac{r \thet}{\ol{m}}}; \]
thus
\begin{multline*}
	\sigu{\ha{\lambda} \cdot \wa{0}{\thet}}{\xa{r}{m} \cdot \ha{\mu} \cdot \wa{0}{\thet}} \\
	=	\begin{aligned}[t]
			&\delh{\lambda} \cdot \delh{- \frac{\thet^{2}}{\norm{m}}} \cdot \left[ \hilkn{\frac{\thet}{r m}}{- \frac{r^{2} m}{\ol{m}}}^{- 1} \cdot \delh{- \frac{r^{2} m}{\ol{m}}} \right] \\
			&\phantom{\ } \cdot \delh{- \frac{m}{\norm{r} \thet}} \cdot \delh{\mu} \cdot \delh{\frac{\lambda \mu \thet}{\ol{m}}}^{-1}.
		\end{aligned}
\end{multline*}
By \eqref{e:prop5},
\begin{multline*}
	\sigu{\ha{\lambda} \cdot \wa{0}{\thet}}{\xa{r}{m} \cdot \ha{\mu} \cdot \wa{0}{\thet}} \\
	=	\begin{aligned}[t]
			&\hilkn{\frac{r m}{\thet}}{- \frac{r^{2} m}{\ol{m}}} \cdot \delh{\lambda} \cdot \delh{- \frac{\thet^{2}}{\norm{m}}} \cdot \delh{- \frac{r^{2} m}{\ol{m}}} \\
			&\phantom{\ } \cdot \delh{- \frac{m}{\norm{r} \thet}} \cdot \delh{\mu} \cdot \delh{\frac{\lambda \mu \thet}{\ol{m}}}^{-1}.
		\end{aligned}
\end{multline*}
Also, since $ t = - r \thet / \ol{m} \in k^{\times} $, $ t^{2} = - \norm{r} \thet^{2} / ( \norm{m} ) $, hence by \eqref{e:ba},
\begin{align*}
	&\sigu{\ha{\lambda} \cdot \wa{0}{\thet}}{\xa{r}{m} \cdot \ha{\mu} \cdot \wa{0}{\thet}} \\
	&=	\begin{aligned}[t]
			&\hilkn{\frac{r m}{\thet}}{- \frac{r^{2} m}{\ol{m}}} \cdot \delh{\lambda} \cdot \delh{- \frac{\thet^{2}}{\norm{m}}} \cdot \delh{- \frac{r^{2} m}{\ol{m}}} \\
			&\phantom{\ } \cdot \left[ \delh{- \frac{\norm{r} \thet^{2}}{\norm{m}}}^{- 1} \cdot \delh{- \frac{\norm{r} \thet^{2}}{\norm{m}}} \right] \cdot \delh{- \frac{m}{\norm{r} \thet}} \\
			&\phantom{\ } \cdot \left[ \delh{\frac{\thet}{\ol{m}}}^{- 1} \cdot \delh{\frac{\thet}{\ol{m}}} \right] \cdot \delh{\mu} \cdot \delh{\frac{\lambda \mu \thet}{\ol{m}}}^{-1}
		\end{aligned} \\
	&=	\begin{aligned}[t]
			&\hilkn{\frac{r m}{\thet}}{- \frac{r^{2} m}{\ol{m}}} \cdot \delh{\lambda} \cdot \hilkn{- \frac{\thet^{2}}{\norm{m}}}{- \frac{r^{2} m}{\ol{m}}} \\
			&\phantom{\ } \cdot \sigu{\ha{- \frac{\norm{r} \thet^{2}}{\norm{m}}}}{\ha{- \frac{m}{\norm{r} \thet}}} \cdot \delh{\frac{\thet}{\ol{m}}} \\
			&\phantom{\ } \cdot \left[ \delh{\frac{\lambda \thet}{\ol{m}}}^{- 1} \cdot \delh{\frac{\lambda \thet}{\ol{m}}} \right] \cdot \delh{\mu} \cdot \delh{\frac{\lambda \mu \thet}{\ol{m}}}^{-1}
		\end{aligned} \\
	&=	\begin{aligned}[t]
			&\hilkn{\frac{r m}{\thet}}{- \frac{r^{2} m}{\ol{m}}} \cdot \hilkn{- \frac{\thet^{2}}{\norm{m}}}{- \frac{r^{2} m}{\ol{m}}} \\
			&\phantom{\ } \cdot \sigu{\ha{- \frac{\norm{r} \thet^{2}}{\norm{m}}}}{\ha{- \frac{m}{\norm{r} \thet}}} \\
			&\phantom{\ } \cdot \sigu{\ha{\lambda}}{\ha{\frac{\thet}{\ol{m}}}} \cdot \sigu{\ha{\frac{\lambda \thet}{\ol{m}}}}{\ha{\mu}}.
		\end{aligned}
\end{align*}
By \eqref{e:bastr},
\begin{multline*}
	\sigu{\ha{\lambda} \cdot \wa{0}{\thet}}{\xa{r}{m} \cdot \ha{\mu} \cdot \wa{0}{\thet}} \\
	=	\begin{aligned}[t]
			&\hilkn{- \frac{r \thet}{\ol{m}}}{- \frac{r^{2} m}{\ol{m}}} \cdot \sigu{\ha{- \frac{\norm{r} \thet^{2}}{\norm{m}}}}{\ha{- \frac{m}{\norm{r} \thet}}} \\
			&\phantom{\ } \cdot \sigu{\ha{\lambda}}{\ha{\frac{\thet}{\ol{m}}}} \cdot \sigu{\ha{\frac{\lambda \thet}{\ol{m}}}}{\ha{\mu}}.
		\end{aligned}
\end{multline*}
By Theorem~\ref{t:sigtk}, \eqref{e:prop3} and \eqref{e:prop2},
\begin{multline} \label{e:sigrm3}
	\sigu{\ha{- \frac{\norm{r} \thet^{2}}{\norm{m}}}}{\ha{- \frac{m}{\norm{r} \thet}}} \\
	\begin{aligned}[t]
	&= \hilkn{- \frac{\norm{r} \thet^{2}}{\norm{m}}}{\frac{( - m / ( \norm{r} \thet ) ) \ol{\del{1}{- m / ( \norm{r} \thet )}}}{\thet}} \\
	&= \hilkn{- \frac{\norm{r} \thet^{2}}{\norm{m}}}{- \frac{\norm{m}}{( \norm{r} )^{2} \thet^{2}}} \\
	&= \hilkn{- \frac{\norm{r} \thet^{2}}{\norm{m}}}{- \frac{1}{\norm{r}}} \\
	&= \hilkn{- \frac{\thet^{2}}{\norm{m}}}{- \frac{1}{\norm{r}}} \\
	&= \hilkn{- \frac{1}{\norm{r}}}{- \frac{\norm{m}}{\thet^{2}}}.
	\end{aligned}
\end{multline}
Also, by \eqref{e:prop3},
	\[ \hilkn{- \frac{r \thet}{\ol{m}}}{- \frac{r^{2} m}{\ol{m}}} = \hilkn{- \frac{r \thet}{\ol{m}}}{- \frac{r^{2} m}{\ol{m}} \cdot \left( - \frac{\ol{m}}{r \thet} \right)^{2}} = \hilkn{- \frac{r \thet}{\ol{m}}}{- \frac{\norm{m}}{\thet^{2}}}. \]
Therefore by \eqref{e:bastr},
\begin{align*}
	&\hilkn{- \frac{r \thet}{\ol{m}}}{- \frac{r^{2} m}{\ol{m}}} \cdot \sigu{\ha{- \frac{\norm{r} \thet^{2}}{\norm{m}}}}{\ha{- \frac{m}{\norm{r} \thet}}} \\
	&= \hilkn{- \frac{r \thet}{\ol{m}}}{- \frac{\norm{m}}{\thet^{2}}} \cdot \hilkn{- \frac{1}{\norm{r}}}{- \frac{\norm{m}}{\thet^{2}}} \\
	&= \hilkn{\frac{\thet}{\ol{r} \ol{m}}}{- \frac{\norm{m}}{\thet^{2}}}.
\end{align*}
Since $ \thet / ( \ol{r} \ol{m} ) \in k^{\times} $, $ \thet / ( \ol{r} \ol{m} ) = - \thet / ( r m ) $, hence by \eqref{e:prop3},
\begin{align*}
	\hilkn{- \frac{r \thet}{\ol{m}}}{- \frac{r^{2} m}{\ol{m}}} &\cdot \sigu{\ha{- \frac{\norm{r} \thet^{2}}{\norm{m}}}}{\ha{- \frac{m}{\norm{r} \thet}}} \\
	&= \hilkn{\frac{\thet}{r m} \cdot \left( - \frac{\norm{m}}{\thet^{2}} \right)}{- \frac{\norm{m}}{\thet^{2}}} \\
	&= \hilkn{- \frac{\ol{m}}{r \thet}}{- \frac{\norm{m}}{\thet^{2}}}.
\end{align*}
Thus,
\begin{multline*}
	\hilkn{- \frac{r \thet}{\ol{m}}}{- \frac{r^{2} m}{\ol{m}}} \cdot \sigu{\ha{- \frac{\norm{r} \thet^{2}}{\norm{m}}}}{\ha{- \frac{m}{\norm{r} \thet}}} \\
	= \hilkn{- \frac{\norm{m}}{\thet^{2}}}{- \frac{r \thet}{\ol{m}}}
\end{multline*}
by \eqref{e:prop2}.  This implies that
\begin{multline*}
	\sigu{\ha{\lambda} \cdot \wa{0}{\thet}}{\xa{r}{m} \cdot \ha{\mu} \cdot \wa{0}{\thet}} \\
	= \hilkn{- \frac{\norm{m}}{\thet^{2}}}{- \frac{r \thet}{\ol{m}}} \cdot \sigu{\ha{\lambda}}{\ha{\frac{\thet}{\ol{m}}}} \cdot \sigu{\ha{\frac{\lambda \thet}{\ol{m}}}}{\ha{\mu}}.
\end{multline*}

If instead $ n_{1} \in k^{\times} $, $ t \in k^{\times} \thet $, i.e. $ - \thet^{2} / m \in k^{\times} $, $ - r \thet / \ol{m} \in k^{\times}{\thet} $, this implies that both $ r $, $ m \in k^{\times} $.  Also, by Lemma~\ref{l:2.16} and \eqref{e:prop2},
\begin{align*}
	z'( r, m)
	&= \hilkn{- \frac{n_{1}}{t \thet}}{\norm{t}} \cdot \delh{\norm{t}} \\
	&= \hilkn{- \frac{( - \thet^{2} / m )}{( - r \thet / m ) \thet}}{- \frac{r^{2} \thet^{2}}{m^{2}}} \cdot \delh{- \frac{r^{2} \thet^{2}}{m^{2}}} \\
	&= \hilkn{- \frac{1}{r}}{- \frac{r^{2} \thet^{2}}{m^{2}}} \cdot \delh{- \frac{r^{2} \thet^{2}}{m^{2}}} \\
	&= \hilkn{- \frac{r^{2} \thet^{2}}{m^{2}}}{- r} \cdot \delh{- \frac{r^{2} \thet^{2}}{m^{2}}}.
\end{align*}
Using the above in \eqref{e:sigrm2},
\begin{multline*}
	\sigu{\ha{\lambda} \cdot \wa{0}{\thet}}{\xa{r}{m} \cdot \ha{\mu} \cdot \wa{0}{\thet}} \\
	=	\begin{aligned}[t]
			&\delh{\lambda} \cdot \left[ \hilkn{- \frac{r^{2} \thet^{2}}{m^{2}}}{- r} \cdot \delh{- \frac{r^{2} \thet^{2}}{m^{2}}} \right] \cdot \delh{- \frac{m}{r^{2} \thet}} \\
			&\phantom{\ } \cdot \delh{\mu} \cdot \delh{\frac{\lambda \mu \thet}{m}}^{-1},
		\end{aligned}
\end{multline*}
and by \eqref{e:ba},
\begin{align*}
	&\sigu{\ha{\lambda} \cdot \wa{0}{\thet}}{\xa{r}{m} \cdot \ha{\mu} \cdot \wa{0}{\thet}} \\
	&=	 \begin{aligned}[t]
			&\hilkn{- \frac{r^{2} \thet^{2}}{m^{2}}}{- r} \cdot \delh{\lambda} \cdot \delh{- \frac{r^{2} \thet^{2}}{m^{2}}} \cdot \delh{- \frac{m}{r^{2} \thet}} \\
			&\phantom{\ } \cdot \left[ \delh{\frac{\thet}{m}}^{- 1} \cdot \delh{\frac{\thet}{m}} \right] \cdot \delh{\mu} \cdot \delh{\frac{\lambda \mu \thet}{m}}^{-1}
		\end{aligned} \\
	&=	\begin{aligned}[t]
			&\hilkn{- \frac{r^{2} \thet^{2}}{m^{2}}}{- r} \cdot \delh{\lambda} \cdot \sigu{\ha{- \frac{r^{2} \thet^{2}}{m^{2}}}}{\ha{- \frac{m}{r^{2} \thet}}} \\
			&\phantom{\ } \cdot \delh{\frac{\thet}{m}} \cdot \left[ \delh{\frac{\lambda \thet}{m}}^{- 1} \cdot \delh{\frac{\lambda \thet}{m}} \right] \cdot \delh{\mu} \\
			&\phantom{\ } \cdot \delh{\frac{\lambda \mu \thet}{m}}^{-1}
		\end{aligned} \\
	&=	\begin{aligned}[t]
			&\hilkn{- \frac{r^{2} \thet^{2}}{m^{2}}}{- r} \cdot \sigu{\ha{- \frac{r^{2} \thet^{2}}{m^{2}}}}{\ha{- \frac{m}{r^{2} \thet}}} \\
			&\phantom{\ } \cdot \sigu{\ha{\lambda}}{\ha{\frac{\thet}{m}}} \cdot \sigu{\ha{\frac{\lambda \thet}{m}}}{\ha{\mu}}.
		\end{aligned}
\end{align*}
By Theorem~\ref{t:sigtk} and \eqref{e:prop3}, we know that
\begin{align*}
	\sigu{\ha{- \frac{r^{2} \thet^{2}}{m^{2}}}}{\ha{- \frac{m}{r^{2} \thet}}}
	&= \hilkn{- \frac{r^{2} \thet^{2}}{m^{2}}}{\frac{( - m / ( r^{2} \thet ) ) \ol{\del{1}{- m / ( r^{2} \thet )}}}{\thet}} \\
	&= \hilkn{- \frac{r^{2} \thet^{2}}{m^{2}}}{- \frac{m^{2}}{r^{4} \thet^{2}}} \\
	&= \hilkn{- \frac{r^{2} \thet^{2}}{m^{2}}}{- \frac{1}{r^{2}}}.
\end{align*}
Hence by \eqref{e:bastr},
\begin{align*}
	&\sigu{\ha{\lambda} \cdot \wa{0}{\thet}}{\xa{r}{m} \cdot \ha{\mu} \cdot \wa{0}{\thet}} \\
	&=	\begin{aligned}[t]
			&\hilkn{- \frac{r^{2} \thet^{2}}{m^{2}}}{- r} \cdot \hilkn{- \frac{r^{2} \thet^{2}}{m^{2}}}{- \frac{1}{r^{2}}} \cdot \sigu{\ha{\lambda}}{\ha{\frac{\thet}{m}}} \\
			&\phantom{\ }\cdot \sigu{\ha{\frac{\lambda \thet}{m}}}{\ha{\mu}}
		\end{aligned} \\
	&= \hilkn{- \frac{r^{2} \thet^{2}}{m^{2}}}{\frac{1}{r}} \cdot \sigu{\ha{\lambda}}{\ha{\frac{\thet}{m}}} \cdot \sigu{\ha{\frac{\lambda \thet}{m}}}{\ha{\mu}}.
\end{align*}
But by \eqref{e:prop2},
	\[ \hilkn{- \frac{r^{2} \thet^{2}}{m^{2}}}{\frac{1}{r}} = \hilkn{r}{- \frac{r^{2} \thet^{2}}{m^{2}}}, \]
and by \eqref{e:prop3},
	\[ \hilkn{r}{- \frac{r^{2} \thet^{2}}{m^{2}}} = \hilkn{r}{- \frac{r^{2} \thet^{2}}{m^{2}} \cdot r^{- 2}} = \hilkn{r}{- \frac{\thet^{2}}{m^{2}}}. \]
Thus,
\begin{multline*}
	\sigu{\ha{\lambda} \cdot \wa{0}{\thet}}{\xa{r}{m} \cdot \ha{\mu} \cdot \wa{0}{\thet}} \\
	= \hilkn{r}{- \frac{\thet^{2}}{m^{2}}} \cdot \sigu{\ha{\lambda}}{\ha{\frac{\thet}{m}}} \cdot \sigu{\ha{\frac{\lambda \thet}{m}}}{\ha{\mu}}.
\end{multline*}

Otherwise, let $ r = a + b \thet $, $ m = c + d \thet $, where $ a $, $ b $, $ c $, $ d \in k $, with $ c \neq 0 $ since $ m \notin k^{\times} \thet $.  This implies that
	\[ n_{1} = - \frac{c \thet^{2}}{\norm{m}} + \frac{d \thet^{3}}{\norm{m}}, \quad t = - \frac{( a d + b c ) \thet^{2}}{\norm{m}} - \frac{( a c + b d \thet^{2} ) \thet}{\norm{m}}. \]
Then by Lemma~\ref{l:2.16}, the equation
\begin{align*}
	z'( r , m )
	&=	\begin{aligned}[t]
			&\bigg( - \frac{( - c \thet^{2} / \norm{m} )^{2} \norm{- r \thet / \ol{m}}}{( - ( a c + b d \thet^{2} ) / \norm{m} )^{2} \norm{- \thet^{2} / m} \thet^{2}}, \\
			&\phantom{\qquad \quad} - \frac{( a d + b c ) \thet^{2}}{\norm{m}} + \frac{( d \thet^{2} / \norm{m} ) ( - ( a c + b d \thet^{2} ) / \norm{m} ) \thet^{2}}{- c \thet^{2} / \norm{m}} \bigg)_{k, n} \\
			&\phantom{\ } \cdot \hilkn{- \frac{( - ( a c + b d \thet^{2} ) / \norm{m} ) \norm{- \thet^{2} / m}}{- c \thet^{2} / \norm{m}}}{\norm{- \frac{r \thet}{m}}} \\
			&\phantom{\ } \cdot \delh{\norm{- \frac{r \thet}{m}}}
		\end{aligned} \\
	&=	\begin{aligned}[t]
			&\hilkn{\frac{c^{2} \norm{r}}{( a c + b d \thet^{2} )^{2}}}{- \frac{b \thet^{2}}{c}} \cdot \hilkn{- \frac{( a c + b d \thet^{2} ) \thet^{2}}{c \norm{m}}}{- \frac{\norm{r} \thet^{2}}{\norm{m}}} \\
			&\phantom{\ } \cdot \delh{- \frac{\norm{r} \thet^{2}}{\norm{m}}}
		\end{aligned}
\end{align*}
is valid if we have $ b \neq 0 $.  Thus, assume $ b \neq 0 $.  By \eqref{e:bastr},
	\[ z'( r , m )
	=	\begin{aligned}[t]
			&\hilkn{\frac{c^{2}}{( a c + b d \thet^{2} )^{2}}}{- \frac{b \thet^{2}}{c}} \cdot \hilkn{\norm{r}}{- \frac{b \thet^{2}}{c}} \\
			&\phantom{\ } \cdot \hilkn{\frac{a c + b d \thet^{2}}{c}}{- \frac{\norm{r} \thet^{2}}{\norm{m}}} \cdot \hilkn{- \frac{\thet^{2}}{\norm{m}}}{- \frac{\thet^{2}}{\norm{m}}} \\
			&\phantom{\ } \cdot \hilkn{- \frac{\thet^{2}}{\norm{m}}}{\norm{r}} \cdot \delh{- \frac{\norm{r} \thet^{2}}{\norm{m}}};
		\end{aligned} \]
and by \eqref{e:prop2},
	\[ z'( r , m )
	=	\begin{aligned}[t]
			&\hilkn{\frac{( a c + b d \thet^{2} )^{2}}{c^{2}}}{- \frac{c}{b \thet^{2}}} \cdot \hilkn{\norm{r}}{- \frac{b \thet^{2}}{c}} \\
			&\phantom{\ } \cdot \hilkn{\frac{a c + b d \thet^{2}}{c}}{- \frac{\norm{r} \thet^{2}}{\norm{m}}} \cdot \hilkn{- \frac{\thet^{2}}{\norm{m}}}{- \frac{\thet^{2}}{\norm{m}}} \\
			&\phantom{\ } \cdot \hilkn{\norm{r}}{- \frac{\norm{m}}{\thet^{2}}} \cdot \delh{- \frac{\norm{r} \thet^{2}}{\norm{m}}}.
		\end{aligned} \]
By \eqref{e:prop5},
	\[ \hilkn{\frac{( a c + b d \thet^{2} )^{2}}{c^{2}}}{- \frac{c}{b \thet^{2}}} = \hilkn{\frac{a c + b d \thet^{2}}{c}}{\left( - \frac{c}{b \thet^{2}} \right)^{2}} = \hilkn{\frac{a c + b d \thet^{2}}{c}}{\frac{c^{2}}{b^{2} \thet^{4}}}; \]
and by \eqref{e:prop3},
	\[ \hilkn{- \frac{\thet^{2}}{\norm{m}}}{- \frac{\thet^{2}}{\norm{m}}} = \hilkn{- \frac{\thet^{2}}{\norm{m}}}{- 1}. \]
Therefore by \eqref{e:bastr},
	\[ z'( r , m )
	=	\begin{aligned}[t]
			&\hilkn{\frac{a c + b d \thet^{2}}{c}}{- \frac{c^{2} \norm{r}}{b^{2} \norm{m} \thet^{2}}} \cdot \hilkn{\norm{r}}{\frac{b \norm{m}}{c}} \\
			&\phantom{\ } \cdot \hilkn{- \frac{\thet^{2}}{\norm{m}}}{- 1} \cdot \delh{- \frac{\norm{r} \thet^{2}}{\norm{m}}}.
		\end{aligned} \]
Replacing the above in \eqref{e:sigrm2},
\begin{multline*}
	\sigu{\ha{\lambda} \cdot \wa{0}{\thet}}{\xa{r}{m} \cdot \ha{\mu} \cdot \wa{0}{\thet}} \\
	=	\begin{aligned}[t]
			&\delh{\lambda} \cdot \Bigg[ \hilkn{\frac{a c + b d \thet^{2}}{c}}{- \frac{c^{2} \norm{r}}{b^{2} \norm{m} \thet^{2}}} \cdot \hilkn{\norm{r}}{\frac{b \norm{m}}{c}} \\
			&\phantom{\ } \cdot \hilkn{- \frac{\thet^{2}}{\norm{m}}}{- 1} \cdot \delh{- \frac{\norm{r} \thet^{2}}{\norm{m}}} \Bigg] \cdot \delh{- \frac{m}{\norm{r} \thet}} \\
			&\phantom{\ } \cdot \delh{\mu} \cdot \delh{\frac{\lambda \mu \thet}{\ol{m}}}^{-1}.
		\end{aligned}
\end{multline*}
By \eqref{e:ba},
\begin{align*}
	&\sigu{\ha{\lambda} \cdot \wa{0}{\thet}}{\xa{r}{m} \cdot \ha{\mu} \cdot \wa{0}{\thet}} \\
	&=	\begin{aligned}[t]
			&\hilkn{\frac{a c + b d \thet^{2}}{c}}{- \frac{c^{2} \norm{r}}{b^{2} \norm{m} \thet^{2}}} \cdot \hilkn{\norm{r}}{\frac{b \norm{m}}{c}} \cdot \hilkn{- \frac{\thet^{2}}{\norm{m}}}{- 1}\\
			&\phantom{\ } \cdot \delh{\lambda} \cdot \delh{- \frac{\norm{r} \thet^{2}}{\norm{m}}} \cdot \delh{- \frac{m}{\norm{r} \thet}} \cdot \Bigg[ \delh{\frac{\thet}{\ol{m}}}^{- 1} \\
			&\phantom{\ } \cdot \delh{\frac{\thet}{\ol{m}}} \Bigg] \cdot \delh{\mu} \cdot \delh{\frac{\lambda \mu \thet}{\ol{m}}}^{-1}
		\end{aligned} \\
	&=	\begin{aligned}[t]
			&\hilkn{\frac{a c + b d \thet^{2}}{c}}{- \frac{c^{2} \norm{r}}{b^{2} \norm{m} \thet^{2}}} \cdot \hilkn{\norm{r}}{\frac{b \norm{m}}{c}} \cdot \hilkn{- \frac{\thet^{2}}{\norm{m}}}{- 1}\\
			&\phantom{\ } \cdot \delh{\lambda} \cdot \sigu{\ha{- \frac{\norm{r} \thet^{2}}{\norm{m}}}}{\ha{- \frac{m}{\norm{r} \thet}}} \cdot \delh{\frac{\thet}{\ol{m}}}\\
			&\phantom{\ } \cdot \left[ \delh{\frac{\lambda \thet}{\ol{m}}}^{- 1} \cdot \delh{\frac{\lambda \thet}{\ol{m}}} \right] \cdot \delh{\mu} \cdot \delh{\frac{\lambda \mu \thet}{\ol{m}}}^{-1}
		\end{aligned} \\
	&=	\begin{aligned}[t]
			&\hilkn{\frac{a c + b d \thet^{2}}{c}}{- \frac{c^{2} \norm{r}}{b^{2} \norm{m} \thet^{2}}} \cdot \hilkn{\norm{r}}{\frac{b \norm{m}}{c}} \cdot \hilkn{- \frac{\thet^{2}}{\norm{m}}}{- 1} \\
			&\phantom{\ } \cdot \sigu{\ha{- \frac{\norm{r} \thet^{2}}{\norm{m}}}}{\ha{- \frac{m}{\norm{r} \thet}}} \cdot \sigu{\ha{\lambda}}{\ha{\frac{\thet}{\ol{m}}}} \\
			&\phantom{\ } \cdot \sigu{\ha{\frac{\lambda \thet}{\ol{m}}}}{\ha{\mu}}.
		\end{aligned}
\end{align*}
By \eqref{e:sigrm3},
\begin{multline*}
	\sigu{\ha{\lambda} \cdot \wa{0}{\thet}}{\xa{r}{m} \cdot \ha{\mu} \cdot \wa{0}{\thet}} \\
	=	\begin{aligned}[t]
			&\hilkn{\frac{a c + b d \thet^{2}}{c}}{- \frac{c^{2} \norm{r}}{b^{2} \norm{m} \thet^{2}}} \cdot \hilkn{\norm{r}}{\frac{b \norm{m}}{c}} \cdot \hilkn{- \frac{\thet^{2}}{\norm{m}}}{- 1} \\
			&\phantom{\ } \cdot \hilkn{- \frac{1}{\norm{r}}}{- \frac{\norm{m}}{\thet^{2}}} \cdot \sigu{\ha{\lambda}}{\ha{\frac{\thet}{\ol{m}}}} \cdot \sigu{\ha{\frac{\lambda \thet}{\ol{m}}}}{\ha{\mu}}.
		\end{aligned}
\end{multline*}
But by \eqref{e:bastr},
	\[ \hilkn{- \frac{1}{\norm{r}}}{- \frac{\norm{m}}{\thet^{2}}} = \hilkn{- 1}{- \frac{\norm{m}}{\thet^{2}}} \cdot \hilkn{\frac{1}{\norm{r}}}{- \frac{\norm{m}}{\thet^{2}}}; \]
and by \eqref{e:prop2},
\begin{multline*}
	\hilkn{- 1}{- \frac{\norm{m}}{\thet^{2}}} \cdot \hilkn{\frac{1}{\norm{r}}}{- \frac{\norm{m}}{\thet^{2}}} \\
	= \hilkn{- \frac{\thet^{2}}{\norm{m}}}{- 1} \cdot \hilkn{\norm{r}}{- \frac{\thet^{2}}{\norm{m}}}.
\end{multline*}
This implies that
\begin{multline*}
	\sigu{\ha{\lambda} \cdot \wa{0}{\thet}}{\xa{r}{m} \cdot \ha{\mu} \cdot \wa{0}{\thet}} \\
	=	\begin{aligned}[t]
			&\hilkn{\frac{a c + b d \thet^{2}}{c}}{- \frac{c^{2} \norm{r}}{b^{2} \norm{m} \thet^{2}}} \cdot \hilkn{\norm{r}}{\frac{b \norm{m}}{c}} \cdot \hilkn{- \frac{\thet^{2}}{\norm{m}}}{- 1} \\
			&\phantom{\ } \cdot \left[ \hilkn{- \frac{\thet^{2}}{\norm{m}}}{- 1} \cdot \hilkn{\norm{r}}{- \frac{\thet^{2}}{\norm{m}}} \right] \cdot \sigu{\ha{\lambda}}{\ha{\frac{\thet}{\ol{m}}}} \\
			&\phantom{\ } \cdot \sigu{\ha{\frac{\lambda \thet}{\ol{m}}}}{\ha{\mu}}.
		\end{aligned}
\end{multline*}
By \eqref{e:bastr},
\begin{multline*}
	\sigu{\ha{\lambda} \cdot \wa{0}{\thet}}{\xa{r}{m} \cdot \ha{\mu} \cdot \wa{0}{\thet}} \\
	=	\begin{aligned}[t]
			&\hilkn{\frac{a c + b d \thet^{2}}{c}}{- \frac{c^{2} \norm{r}}{b^{2} \norm{m} \thet^{2}}} \cdot \hilkn{\norm{r}}{- \frac{b \thet^{2}}{c}} \cdot \hilkn{- \frac{\thet^{2}}{\norm{m}}}{- 1}^{2} \\
			&\phantom{\ } \cdot \sigu{\ha{\lambda}}{\ha{\frac{\thet}{\ol{m}}}} \cdot \sigu{\ha{\frac{\lambda \thet}{\ol{m}}}}{\ha{\mu}};
		\end{aligned}
\end{multline*}
thus by \eqref{e:prop5},
\begin{multline*}
	\sigu{\ha{\lambda} \cdot \wa{0}{\thet}}{\xa{r}{m} \cdot \ha{\mu} \cdot \wa{0}{\thet}} \\
	=	\begin{aligned}[t]
			&\hilkn{\frac{a c + b d \thet^{2}}{c}}{- \frac{c^{2} \norm{r}}{b^{2} \norm{m} \thet^{2}}} \cdot \hilkn{\norm{r}}{- \frac{b \thet^{2}}{c}} \cdot \sigu{\ha{\lambda}}{\ha{\frac{\thet}{\ol{m}}}} \\
			&\phantom{\ } \cdot \sigu{\ha{\frac{\lambda \thet}{\ol{m}}}}{\ha{\mu}}.
		\end{aligned}
\end{multline*}

If $ b = 0 $, then $ r \in k^{\times} $.  Consequently, $ d \neq 0 $, otherwise $ m \in k^{\times} $, which is a case we have already dealt with.  Consider
	\[ z'( r, m )^{- 1}
	= \twa{1}{\frac{m}{r^{2}}} \cdot \twa{- \frac{r \thet}{\ol{m}}}{- \frac{\thet^{2}}{\ol{m}}}^{- 1}. \]
By \eqref{e:wainv},
	\[ z'( r, m )^{- 1} = \twa{- 1}{\frac{\ol{m}}{r^{2}}}^{- 1} \cdot \twa{\frac{r \thet}{\ol{m}}}{- \frac{\thet^{2}}{m}}, \]
and thus by Lemma~\ref{l:2.16},
\begin{align*}
	z'( r, m )^{- 1}
	&=	\begin{aligned}[t]
			&\hilkn{- \frac{( c / r^{2} )^{2} ( - \norm{m} / ( r^{2} \thet^{2} ) )}{( - c / ( r \thet^{2} ) )^{2} ( \norm{m} / ( r ^{4} ) ) \thet^{2}}}{\frac{d}{r} + \frac{( - d / r^{2} ) ( - c / ( r \thet^{2} ) ) \thet^{2}}{c / r^{2}}} \\
			&\phantom{\ } \cdot \hilkn{- \frac{( - c / ( r \thet^{2} ) ) ( \norm{m} / ( r ^{4} ) )}{c / ( r^{2} )}}{- \frac{\norm{m}}{r^{2} \thet^{2}}} \cdot \delh{- \frac{\norm{m}}{r^{2} \thet^{2}}}
		\end{aligned} \\
	&= \hilkn{1}{\frac{2 d}{r}} \cdot \hilkn{\frac{\norm{m}}{r^{3} \thet^{2}}}{- \frac{\norm{m}}{r^{2} \thet^{2}}} \cdot \delh{- \frac{\norm{m}}{r^{2} \thet^{2}}},
\end{align*}
which is valid since $ d \neq 0 $. By using \eqref{e:prop3},
\begin{align*}
	z'( r, m )^{- 1}
	&= \hilkn{\frac{1}{r}}{- \frac{\norm{m}}{r^{2} \thet^{2}}} \cdot \delh{- \frac{\norm{m}}{r^{2} \thet^{2}}} \\
	&= \hilkn{\frac{1}{r}}{- \frac{\norm{m}}{\thet^{2}}} \cdot \delh{- \frac{\norm{m}}{r^{2} \thet^{2}}}.
\end{align*}
This implies that by \eqref{e:prop5},
\begin{align*}
	z'( r, m )
	&= \left[ \hilkn{\frac{1}{r}}{- \frac{\norm{m}}{\thet^{2}}} \cdot \delh{- \frac{\norm{m}}{r^{2} \thet^{2}}} \right]^{- 1} \\
	&= \hilkn{r}{- \frac{\norm{m}}{\thet^{2}}} \cdot \delh{- \frac{\norm{m}}{r^{2} \thet^{2}}}^{- 1}.
\end{align*}
Hence in \eqref{e:sigrm2},
\begin{multline*}
	\sigu{\ha{\lambda} \cdot \wa{0}{\thet}}{\xa{r}{m} \cdot \ha{\mu} \cdot \wa{0}{\thet}} \\
	=	\begin{aligned}[t]
			&\delh{\lambda} \cdot \left[ \hilkn{r}{- \frac{\norm{m}}{\thet^{2}}} \cdot \delh{- \frac{\norm{m}}{r^{2} \thet^{2}}}^{- 1} \right] \cdot \delh{- \frac{m}{r^{2} \thet}} \\
			&\phantom{\ } \cdot \delh{\mu} \cdot \delh{\frac{\lambda \mu \thet}{\ol{m}}}^{-1}.
		\end{aligned}
\end{multline*}
By \eqref{e:ba},
\begin{multline*}
	\sigu{\ha{- \frac{\norm{m}}{r^{2} \thet^{2}}}}{\ha{\frac{\thet}{\ol{m}}}}^{- 1} \cdot \delh{\frac{\thet}{\ol{m}}} \\ = \delh{- \frac{\norm{m}}{r^{2} \thet^{2}}}^{- 1} \cdot \delh{- \frac{m}{r^{2} \thet}}.
\end{multline*}
Therefore,
\begin{multline*}
	\sigu{\ha{\lambda} \cdot \wa{0}{\thet}}{\xa{r}{m} \cdot \ha{\mu} \cdot \wa{0}{\thet}} \\
	=	\begin{aligned}[t]
			&\hilkn{r}{- \frac{\norm{m}}{\thet^{2}}} \cdot \delh{\lambda} \cdot\Bigg[ \sigu{\ha{- \frac{\norm{m}}{r^{2} \thet^{2}}}}{\ha{\frac{\thet}{\ol{m}}}}^{- 1} \\
			&\phantom{\ } \cdot \delh{\frac{\thet}{\ol{m}}} \Bigg] \cdot \delh{\mu} \cdot \delh{\frac{\lambda \mu \thet}{\ol{m}}}^{-1}.
		\end{aligned}
\end{multline*}
Thus by \eqref{e:ba},
\begin{align*}
	&\sigu{\ha{\lambda} \cdot \wa{0}{\thet}}{\xa{r}{m} \cdot \ha{\mu} \cdot \wa{0}{\thet}} \\
	&=	\begin{aligned}[t]
			&\hilkn{r}{- \frac{\norm{m}}{\thet^{2}}} \cdot \sigu{\ha{- \frac{\norm{m}}{r^{2} \thet^{2}}}}{\ha{\frac{\thet}{\ol{m}}}}^{- 1} \cdot \delh{\lambda} \\
			&\phantom{\ } \cdot \delh{\frac{\thet}{\ol{m}}} \cdot \left[ \delh{\frac{\lambda \thet}{\ol{m}}}^{- 1} \cdot \delh{\frac{\lambda \thet}{\ol{m}}} \right] \cdot \delh{\mu} \\
			&\phantom{\ } \cdot \delh{\frac{\lambda \mu \thet}{\ol{m}}}^{-1}
		\end{aligned} \\
	&=	\begin{aligned}[t]
			&\hilkn{r}{- \frac{\norm{m}}{\thet^{2}}} \cdot \sigu{\ha{- \frac{\norm{m}}{r^{2} \thet^{2}}}}{\ha{\frac{\thet}{\ol{m}}}}^{- 1} \\
			&\phantom{\ } \cdot \sigu{\ha{\lambda}}{\ha{\frac{\thet}{\ol{m}}}} \cdot \sigu{\ha{\frac{\lambda \thet}{\ol{m}}}}{\ha{\mu}}.
		\end{aligned}
\end{align*}
But by Theorem~\ref{t:sigtk},
\begin{align*}
	\sigu{\ha{- \frac{\norm{m}}{r^{2} \thet^{2}}}}{\ha{\frac{\thet}{\ol{m}}}}^{- 1}
	&= \hilkn{- \frac{\norm{m}}{r^{2} \thet^{2}}}{\frac{( \thet / \ol{m} ) \ol{\del{1}{\thet / \ol{m}}}}{\thet}}^{- 1} \\
	&= \hilkn{- \frac{\norm{m}}{r^{2} \thet^{2}}}{\frac{\thet}{\ol{m}} \cdot \frac{\ol{m}}{r^{2}} \cdot \frac{1}{\thet}}^{- 1} \\
	&= \hilkn{- \frac{\norm{m}}{r^{2} \thet^{2}}}{\frac{1}{r^{2}}}^{- 1};
\end{align*}
and by \eqref{e:prop3},
	\[ \hilkn{- \frac{\norm{m}}{r^{2} \thet^{2}}}{\frac{1}{r^{2}}}^{- 1} = \hilkn{\frac{\norm{m}}{\thet^{2}}}{\frac{1}{r^{2}}}^{- 1}, \]
therefore by \eqref{e:prop5},
\begin{align*}
	\sigu{\ha{- \frac{\norm{m}}{r^{2} \thet^{2}}}}{\ha{\frac{\thet}{\ol{m}}}}^{- 1}
	&= \hilkn{\frac{\norm{m}}{\thet^{2}}}{\frac{1}{r^{2}}}^{- 1} \\
	&= \hilkn{\frac{\thet^{2}}{\norm{m}}}{\frac{1}{r^{2}}} \\
	&= \hilkn{\left( \frac{\thet^{2}}{\norm{m}} \right )^{2}}{\frac{1}{r}}.
\end{align*}
This implies that by \eqref{e:prop2},
	\[ \sigu{\ha{- \frac{\norm{m}}{r^{2} \thet^{2}}}}{\ha{\frac{\thet}{\ol{m}}}}^{- 1} = \hilkn{r}{\left( \frac{\thet^{2}}{\norm{m}} \right )^{2}}; \]
thus by \eqref{e:bastr},
\begin{align*}
	&\sigu{\ha{\lambda} \cdot \wa{0}{\thet}}{\xa{r}{m} \cdot \ha{\mu} \cdot \wa{0}{\thet}} \\
	&=	\begin{aligned}[t]
			&\hilkn{r}{- \frac{\norm{m}}{\thet^{2}}} \cdot \hilkn{r}{\left( \frac{\thet^{2}}{\norm{m}} \right )^{2}} \cdot \sigu{\ha{\lambda}}{\ha{\frac{\thet}{\ol{m}}}} \\
			&\phantom{\ } \cdot \sigu{\ha{\frac{\lambda \thet}{\ol{m}}}}{\ha{\mu}}
		\end{aligned} \\
	&= \hilkn{r}{- \frac{\thet^{2}}{\norm{m}}} \cdot \sigu{\ha{\lambda}}{\ha{\frac{\thet}{\ol{m}}}} \cdot \sigu{\ha{\frac{\lambda \thet}{\ol{m}}}}{\ha{\mu}}.
\end{align*}

Our result is proved.
\end{proof}


\chapter{The 2-cocycle of the double cover} \label{c:univ}

Recall that at the beginning of Chapter~\ref{c:t2coc}, we stated that
	\[ \sigma_{u}^{n / 2} = \sigma, \]
where $ \sigma \in H^{2}( G( k ), \mu_{2} ) $ corresponds to the non-trivial double cover and $ n $ is the number of roots of unity of $ k $.  Also, by Proposition~\ref{p:baisom}, we know that without loss of generality, we can replace \ba{s}{t} by \hilkn{s}{t} for all $ s $, $ t \in k^{\times} $.

We know that
	\[ \hilkn{s}{t}^{n / 2} = \hilk{s}{t}, \]
for all $ s $, $ t \in k^{\times} $ since $ n $ is always even for a local field $ k $ of characteristic zero.  Hence, by Proposition~\ref{p:baisom}, we have a homomorphism
\begin{align}
	\Psi \colon \ \qquad \pi_{1} &\to \mu_{2} \label{e:psi} \\
	\ba{s}{t} &\mapsto \hilk{s}{t}. \notag
\end{align}
This implies that if we let $ \Psi' $ be the map $ \hilkn{s}{t} \mapsto \hilk{s}{t} $ for all $ s $, $ t \in k^{\times} $, then we can state that $ \Psi'( \sigma_{u} ) = \sigma $.

Using the above as well as Theorem~\ref{t:sigtk} and Chapter~\ref{c:rest}, we have the following proposition:

\begin{prop} \label{p:sigma}
For $ \lambda $, $ \mu \in K^{\times} $,
\begin{multline*}
	\sig{\ha{\lambda}}{\ha{\mu}} \\
		=	\begin{cases}
				\displaystyle \hilk{\lambda}{\mu}, &\text{if $ \lambda $, $ \mu \in k^{\times} $;} \\
				\displaystyle \hilk{\mu}{- \del{2}{\lambda} / \thet}, &\text{if $ \lambda \notin k^{\times} $, $ \mu \in k^{\times} $;} \\
				\displaystyle \hilk{\lambda}{\mu \ol{\del{1}{\mu}} / \thet}, &\text{if $ \lambda \in k^{\times} $, $ \mu \notin k^{\times} $;} \\
				\displaystyle \hilk{- 1}{\norm{\lambda}} \cdot \hilk{- \frac{\lambda \ol{\del{1}{\lambda}}}{\thet}}{\lambda \mu}, &\text{if $ \lambda $, $ \mu \notin k^{\times} $, $ \lambda \mu \in k^{\times} $;} \\
				\displaystyle \hilk{\norm{\lambda}}{\norm{\mu}} \cdot \hilk{q}{\frac{\mu \ol{\del{1}{\mu}}}{\del{2}{\lambda}}}, &\text{otherwise,}
			\end{cases}
\end{multline*}
where, if $  \lambda = a + b \thet $, $ \mu = c + d \thet $, with $ a $, $ c \in k $, $ b $, $ d \in k^{\times} $,
	\[ q = a + \frac{b c}{d}. \]

Also, for $ ( r, m ) $, $ ( r', m' ) \in A $, $ g $, $ g' \in G $,
\begin{gather*}
	\sig{\xa{r}{m} \cdot g}{g' \cdot \xa{r}{m}} = \sig{g}{g'}, \\
	\sig{g}{\xa{r}{m} \cdot g'} = \sig{g \cdot \xa{r}{m}}{g'}, \\
	\sig{g}{\xa{r}{m}} = \sig{\xa{r}{m}}{g'} = 1;
\end{gather*}
and
\begin{enumerate}[\upshape (1)]
	\item
	$ \begin{aligned}[t]
		&\sig{\ha{\lambda} \cdot \wa{0}{\thet}}{\ha{\mu} \cdot \wa{0}{\thet}} \\
		&\qquad\qquad=	\displaystyle
					\begin{aligned}[t]
						&\sig{\ha{\lambda}}{\ha{\mu}} \cdot \sig{\ha{\lambda \mu}}{\ha{\frac{1}{\norm{\mu}}}} \\
						&\phantom{\ } \cdot \sig{\ha{- \frac{\lambda}{\ol{\mu}}}}{\ha{- 1}},
					\end{aligned}
	\end{aligned} $
	
	\item
	$ \begin{aligned}[t]
		&\sig{\ha{\lambda} \cdot \wa{0}{\thet}}{\ha{\mu}} \\
		&\qquad\qquad=	\displaystyle \sig{\ha{\lambda}}{\ha{\mu}} \cdot \sig{\ha{\lambda \mu}}{\ha{\frac{1}{\norm{\mu}}}},
	\end{aligned} $
	
	\item
	$ \sig{\ha{\lambda}}{\ha{\mu} \cdot \wa{0}{\thet}} = \sig{\ha{\lambda}}{\ha{\mu}} $,
	
	\item
	$ \begin{aligned}[t]
		&\sig{\ha{\lambda} \cdot \wa{0}{\thet}}{\xa{r}{m} \cdot \ha{\mu} \cdot \wa{0}{\thet}} \\
	&= \Sigma( r, m ) \cdot \sig{\ha{\lambda}}{\ha{\frac{\thet}{\ol{m}}}} \cdot \sig{\ha{\frac{\lambda \thet}{\ol{m}}}}{\ha{\mu}},
	\end{aligned} $ \\
where for $ r = a' + b' \thet $, $ m = c' + d' \thet $, $ a' $, $ b' $, $ c' $, $ d' \in k $,
\begin{multline*}
	\Sigma( r, m) \\
	=	\begin{cases}
			\displaystyle \hilk{\frac{\thet}{\ol{m}}}{- 1}, &\text{if $ m \in k^{\times} \thet $;} \\
			\displaystyle \hilk{- \frac{\norm{m}}{\thet^{2}}}{- \frac{r \thet}{\ol{m}}}, &\text{if $ \displaystyle - \frac{r \thet}{\ol{m}} \in k^{\times} $;} \\
			\displaystyle \hilk{r}{- \thet^{2}}, &\text{if $ r $, $ m \in k^{\times} $;} \\
			\displaystyle \hilk{\frac{a' c' + b' d' \thet^{2}}{c'}}{- \frac{\norm{r}}{\norm{m} \thet^{2}}} \cdot \hilk{\norm{r}}{- \frac{b' \thet^{2}}{c'}},
			&\begin{aligned}[t]
				&\text{if $ b' $, $ c' \neq 0 $,} \\
				&\text{$ - r \thet / \ol{m} \notin k^{\times} $;}
			\end{aligned} \\
			\displaystyle \hilk{r}{- \frac{\thet^{2}}{\norm{m}}}, &\text{otherwise.}
		\end{cases}
\end{multline*}
\end{enumerate}
\end{prop}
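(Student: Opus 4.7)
The plan is to derive Proposition~\ref{p:sigma} from the universal 2-cocycle formulas of Chapters~\ref{c:t2coc} and~\ref{c:rest} by applying the map $\Psi'$ that sends $\hilkn{s}{t}$ to $\hilk{s}{t}$, induced by $x\mapsto x^{n/2}$ under the identification of Proposition~\ref{p:baisom}. Since the double cover corresponds to $\sigma_u^{n/2}$ and $\hilkn{s}{t}^{n/2} = \hilk{s}{t}$, we have $\sigma = \Psi'(\sigma_u)$, so the task reduces to simplifying the quadratic analogue of each formula using $\hilk{s}{t^2} = 1$ from \eqref{e:prop5ii}.

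The first four torus cases of Theorem~\ref{t:sigtk} translate immediately. For the ``otherwise'' case, the factor $\Sigma'(\lambda,\mu)$ becomes trivial under $\Psi'$ since in each sub-case it is a Hilbert symbol whose first or second argument is a fourth power. Likewise $\hilk{x}{q^2}=1$, leaving
\[
	\sig{\ha{\lambda}}{\ha{\mu}}
	= \hilk{- \tfrac{\norm{\del{1}{\mu}}}{\norm{\del{1}{\lambda}}}}{\norm{\lambda}} \cdot \hilk{q}{\tfrac{\mu \ol{\del{1}{\mu}}}{\del{2}{\lambda}}}.
\]
It remains to show that the first factor equals $\hilk{\norm{\lambda}}{\norm{\mu}}$. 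Using $\mu = \del{1}{\mu}/\del{2}{\mu}$, together with $\del{2}{\mu} \in k^{\times}\thet$ so that $\norm{\del{2}{\mu}}$ is a square times $-\thet^{2}$, one gets $\hilk{\norm{\lambda}}{\norm{\del{2}{\mu}}} = \hilk{\norm{\lambda}}{-1}$. Combining with $\hilk{x}{x} = \hilk{x}{-1}$ from \eqref{e:prop3i}, one checks that $\hilk{\norm{\del{1}{\lambda}}}{\norm{\lambda}} = 1$, and then bimultiplicativity via \eqref{e:bastri} yields the claimed identity.

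For the off-torus assertions, the relations $\sig{g}{\xa{r}{m}} = \sig{\xa{r}{m}}{g'} = 1$ and the invariance of $\sigma$ under left and right multiplication by $\xa{r}{m}$ already hold at the level of $\sigma_u$, since the section $\delta$ restricts to a homomorphism on $N(k)$ (see Section~\ref{s:easysu}); these pass through $\Psi'$ unchanged. Items (1), (2), (3) follow by applying $\Psi'$ to the identities established in Section~\ref{s:easysu}, which involve only products and inverses of torus values with no extra Hilbert-symbol factors.

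Item (4) requires applying $\Psi'$ to Proposition~\ref{p:sigxa} and simplifying $\Sigma_u(r,m)$. The cases $m\in k^{\times}\thet$ and $-r\thet/\ol m \in k^{\times}$ pass through unchanged. In the case $r,m\in k^{\times}$ the factor $\hilkn{r}{-\thet^2/m^2}$ becomes $\hilk{r}{-\thet^2/m^2} = \hilk{r}{-\thet^2}$ since $m^{-2}$ is a square. In the case $b',c'\neq 0$ with $-r\thet/\ol m \notin k^{\times}$, the coefficients $c'^2$ and $b'^2$ in the arguments of $\Sigma_u(r,m)$ are squares, giving the cleaner form stated in the proposition. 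The main obstacle throughout is the careful bookkeeping of the signs arising from $\norm{\thet} = -\thet^{2}$ and from the identity $\hilk{x}{x} = \hilk{x}{-1}$, especially in the torus ``otherwise'' case.
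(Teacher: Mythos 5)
Your proposal is correct and follows essentially the same route as the paper: apply the map $\Psi'$ (i.e.\ raise to the power $n/2$) to Theorem~\ref{t:sigtk}, the identities of Section~\ref{s:easysu}, and Proposition~\ref{p:sigxa}, then simplify using $\hilk{s}{t^{2}}=1$ --- killing $\Sigma'(\lambda,\mu)$ and the factors in $q^{2}$, $m^{-2}$, $c'^{2}/b'^{2}$ exactly as the paper does, with your reduction of the first factor to $\hilk{\norm{\lambda}}{\norm{\mu}}$ being a harmless variant of the paper's use of $\norm{\del{2}{\lambda}}/\norm{\del{2}{\mu}}\in(k^{\times})^{2}$. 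The only terseness is in items (1)--(2), where the $\Psi'$-images of the Section~\ref{s:easysu} identities split into the cases $\mu\in k^{\times}$ and $\mu\notin k^{\times}$, and merging them into the single stated formulas needs the quadratic observations $\sigma=\sigma^{-1}$, $\sig{\ha{s}}{\ha{r}}=\sig{\ha{s}}{\ha{r^{-1}}}$ and $\sig{\ha{\lambda\mu}}{\ha{1/\mu^{2}}}=1$, which the paper spells out and your phrase ``products and inverses of torus values'' only implicitly covers.
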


\begin{proof}
Note that we will use the properties of the Hilbert symbol \eqref{e:prop2i} -- \eqref{e:norm} as stated in Section~\ref{s:hilbert}.  We first apply $ \Psi' $ to Theorem~\ref{t:sigtk}.  The first four cases cannot be further simplified after applying $ \Psi' $; but when we look at the last case, i.e. when $ \lambda $, $ \mu $, $ \lambda \mu \notin k^{\times} $, we see that by \eqref{e:prop5ii},
	\[ \Psi'( \Sigma'( \lambda, \mu ) ) = 1. \]
Also,
	\[ \Psi'\left( \hilkn{- \frac{\norm{\del{1}{\mu}}}{\norm{\del{1}{\lambda}}}}{\frac{\norm{\lambda}}{q^{2}}} \right) = \hilk{- \frac{\norm{\del{1}{\mu}}}{\norm{\del{1}{\lambda}}}}{\frac{\norm{\lambda}}{q^{2}}}. \]
By \eqref{e:bastri},
	\[ \Psi'\left( \hilkn{- \frac{\norm{\del{1}{\mu}}}{\norm{\del{1}{\lambda}}}}{\frac{\norm{\lambda}}{q^{2}}} \right) = \hilk{- \frac{\norm{\del{1}{\mu}}}{\norm{\del{1}{\lambda}}}}{\norm{\lambda}} \cdot \hilk{- \frac{\norm{\del{1}{\mu}}}{\norm{\del{1}{\lambda}}}}{\frac{1}{q^{2}}}; \]
and since $ \norm{\del{2}{\lambda}} / \norm{\del{2}{\mu}} \in ( k^{\times} )^{2} $, by \eqref{e:prop5ii},
	\[ \Psi'\left( \hilkn{- \frac{\norm{\del{1}{\mu}}}{\norm{\del{1}{\lambda}}}}{\frac{\norm{\lambda}}{q^{2}}} \right) = \hilk{- \frac{\norm{\del{1}{\mu}}}{\norm{\del{1}{\lambda}}}}{\norm{\lambda}} \cdot \hilk{\frac{\norm{\del{2}{\lambda}}}{\norm{\del{2}{\mu}}}}{\norm{\lambda}}. \]
Hence by \eqref{e:bastri},
	\[ \Psi'\left( \hilkn{- \frac{\norm{\del{1}{\mu}}}{\norm{\del{1}{\lambda}}}}{\frac{\norm{\lambda}}{q^{2}}} \right) = \hilk{-\frac{\norm{\mu}}{\norm{\lambda}}}{\norm{\lambda}}; \]
thus using \eqref{e:prop3i} and \eqref{e:prop2i},
	\[ \Psi'\left( \hilkn{- \frac{\norm{\del{1}{\mu}}}{\norm{\del{1}{\lambda}}}}{\frac{\norm{\lambda}}{q^{2}}} \right) = \hilk{\norm{\mu}}{\norm{\lambda}} = \hilk{\norm{\lambda}}{\norm{\mu}}. \]

Consequently, for $ \lambda $, $ \mu $, $ \lambda \mu \notin k^{\times} $,
\begin{align*}
	&\Psi' \left( \sigu{\ha{\lambda}}{\ha{\mu}} \right) \\
	&= \sig{\ha{\lambda}}{\ha{\mu}} \\
	&= \Psi'\left( \hilkn{- \frac{\norm{\del{1}{\mu}}}{\norm{\del{1}{\lambda}}}}{\frac{\norm{\lambda}}{q^{2}}} \right) \cdot \Psi'\left( \hilkn{q}{\frac{\mu \ol{\del{1}{\mu}}}{\del{2}{\lambda}}} \right) \cdot \Psi'( \Sigma'( \lambda, \mu ) ) \\
	&= \hilk{\norm{\lambda}}{\norm{\mu}} \cdot \hilk{q}{\frac{\mu \ol{\del{1}{\mu}}}{\del{2}{\lambda}}}.
\end{align*}

Clearly for $ g $, $ g' \in G( k ) $,
\begin{gather*}
	\sig{\xa{r}{m} \cdot g}{g' \cdot \xa{r}{m}} = \sig{g}{g'}, \\
	\sig{g}{\xa{r}{m} \cdot g'} = \sig{g \cdot \xa{r}{m}}{g'}, \\
	\sig{g}{\xa{r}{m}} = \sig{\xa{r}{m}}{g'} = 1.
\end{gather*}

At the beginning of Chapter~\ref{c:rest}, we showed that
\begin{multline*}
	\sigu{\ha{\lambda} \cdot \wa{0}{\thet}}{\ha{\mu} \cdot \wa{0}{\thet}} \\
	=	\begin{cases}
			\displaystyle \sigu{\ha{\lambda}}{\ha{\frac{1}{\mu}}} \cdot \sigu{\ha{- \frac{\lambda}{\mu}}}{\ha{- 1}}^{- 1}, &\text{if $ \mu \in k^{\times} $;} \\
			\displaystyle
				\begin{aligned}[b]
					&\sigu{\ha{\lambda}}{\ha{\mu}} \cdot \sigu{\ha{\lambda \mu}}{\ha{\frac{1}{\norm{\mu}}}} \\
					&\phantom{\ } \cdot \sigu{\ha{- \frac{\lambda}{\ol{\mu}}}}{\ha{- 1}}^{- 1},
				\end{aligned} &\text{if $ \mu \notin k^{\times} $.}
		\end{cases}
\end{multline*}
By applying $ \Psi' $ to the above, we see that
\begin{multline*}
	\sig{\ha{\lambda} \cdot \wa{0}{\thet}}{\ha{\mu} \cdot \wa{0}{\thet}} \\
	=	\begin{cases}
			\displaystyle \sig{\ha{\lambda}}{\ha{\frac{1}{\mu}}} \cdot \sig{\ha{- \frac{\lambda}{\mu}}}{\ha{- 1}}^{- 1}, &\text{if $ \mu \in k^{\times} $;} \\
			\displaystyle
				\begin{aligned}[b]
					&\sig{\ha{\lambda}}{\ha{\mu}} \cdot \sig{\ha{\lambda \mu}}{\ha{\frac{1}{\norm{\mu}}}} \\
					&\phantom{\ } \cdot \sig{\ha{- \frac{\lambda}{\ol{\mu}}}}{\ha{- 1}}^{- 1},
				\end{aligned} &\text{if $ \mu \notin k^{\times} $.}
		\end{cases}
\end{multline*}
But for any $ s \in K^{\times} $, $ r \in k^{\times} $,
	\[ \sig{\ha{s}}{\ha{r}}
	=	\begin{cases}
			\hilk{s}{r}, &\text{if $ s \in k^{\times} $;} \\
			\hilk{r}{- \del{2}{s} / \thet}, &\text{if $ s \notin k^{\times} $.}
		\end{cases} \]
Also, as $ \sigma \in H^{2}( G( k ), \mu_{2} ) $, $ \sigma = \sigma^{- 1} $.  Hence by \eqref{e:prop2i},
\begin{align*}
	\sig{\ha{s}}{\ha{r}}
	&= \sig{\ha{s}}{\ha{r}}^{- 1} \\
	&=	\begin{cases}
			\hilk{s}{r^{- 1}}, &\text{if $ s \in k^{\times} $;} \\
			\hilk{r^{- 1}}{- \del{2}{s} / \thet}, &\text{if $ s \notin k^{\times} $}
		\end{cases} \\
	&= \sig{\ha{s}}{\ha{r^{- 1}}}.
\end{align*}
Thus, for $ \mu \in k^{\times} $,
	\[ \sig{\ha{\lambda}}{\ha{\frac{1}{\mu}}} = \sig{\ha{\lambda}}{\ha{\mu}}. \]
Also, using previous results in this proof, for $ \mu \in k^{\times} $,
	\[ \sig{\ha{\lambda \mu}}{\ha{\frac{1}{\norm{\mu}}}} = \sig{\ha{\lambda \mu}}{\ha{\frac{1}{\mu^{2}}}} = 1. \]
This implies that we can state that for any $ \lambda $, $ \mu \in K^{\times} $,
\begin{multline*}
	\sig{\ha{\lambda} \cdot \wa{0}{\thet}}{\ha{\mu} \cdot \wa{0}{\thet}} \\
	= \sig{\ha{\lambda}}{\ha{\mu}} \cdot \sig{\ha{\lambda \mu}}{\ha{\frac{1}{\norm{\mu}}}} \cdot \sig{\ha{- \frac{\lambda}{\ol{\mu}}}}{\ha{- 1}}.
\end{multline*}

By the same token, it possible to show that for any $ \lambda $, $ \mu \in K^{\times} $,
	\[ \sig{\ha{\lambda} \cdot \wa{0}{\thet}}{\ha{\mu}} = \sig{\ha{\lambda}}{\ha{\mu}} \cdot \sig{\ha{\lambda \mu}}{\ha{\frac{1}{\norm{\mu}}}}, \]
and we already know by definition that
\begin{align*}
	\Psi'( \sigu{\ha{\lambda}}{\ha{\mu} \cdot \wa{0}{\thet}} )
	&= \sig{\ha{\lambda}}{\ha{\mu} \cdot \wa{0}{\thet}} \\
	&= \Psi'( \sigu{\ha{\lambda}}{\ha{\mu}} ) \\
	&= \sig{\ha{\lambda}}{\ha{\mu}}.
\end{align*}

Thus, all we need to do now is to show explicitly what 
	\[ \Psi'( \sigu{\ha{\lambda} \cdot \wa{0}{\thet}}{\xa{r}{m} \cdot \ha{\mu} \cdot \wa{0}{\thet}} ) \]
is, for $ ( r, m ) \in A $, where we let $ r = a' + b' \thet $, $ m = c' + d' \thet $, $ a' $, $ b' $, $ c' $, $ d' \in k $.  By Proposition~\ref{p:sigxa}, we know that
\begin{align*}
	&\Psi'( \sigu{\ha{\lambda} \cdot \wa{0}{\thet}}{\xa{r}{m} \cdot \ha{\mu} \cdot \wa{0}{\thet}} ) \\
	&= \sig{\ha{\lambda} \cdot \wa{0}{\thet}}{\xa{r}{m} \cdot \ha{\mu} \cdot \wa{0}{\thet}} \\
	&= \Psi'( \Sigma_{u}( r, m) ) \cdot \sig{\ha{\lambda}}{\ha{\frac{\thet}{\ol{m}}}} \cdot \sig{\ha{\frac{\lambda \thet}{\ol{m}}}}{\ha{\mu}},
\end{align*}
where $ \Sigma_{u}( r, m ) $ is as described by the said proposition.  We will need to look at $ \Psi'( \Sigma_{u}( r, m ) ) $, specifically in the cases in which the Hilbert symbols can be further simplified, i.e. when $ r $, $ m \in k^{\times} $; and when $ b' $, $ c' \neq 0 $, $ - r \thet / \ol{m} \notin k^{\times} $.

When $ r $, $ m \in k^{\times} $,
	\[ \Psi'( \Sigma_{u}( r, m ) ) = \Psi'\left( \hilkn{r}{- \frac{\thet^{2}}{m^{2}}} \right) = \hilk{r}{- \frac{\thet^{2}}{m^{2}}}. \]
Using \eqref{e:bastri},
	\[ \Psi'( \Sigma_{u}( r, m ) ) = \hilk{r}{- \thet^{2}} \cdot \hilk{r}{\frac{1}{m^{2}}}; \]
hence,
	\[ \Psi'( \Sigma_{u}( r, m ) ) = \hilk{r}{- \thet^{2}} \]
by \eqref{e:prop5ii}.  Also, when $ b' $, $ c' \neq 0 $, $ - r \thet / \ol{m} \notin k^{\times} $,
\begin{align*}
	\Psi'( \Sigma_{u}( r, m ) )
	&= \Psi'\left( \hilkn{\frac{a' c' + b' d' \thet^{2}}{c'}}{- \frac{c'^{2} \norm{r}}{b'^{2} \norm{m} \thet^{2}}} \cdot \hilkn{\norm{r}}{- \frac{b' \thet^{2}}{c'}} \right) \\
	&= \hilk{\frac{a' c' + b' d' \thet^{2}}{c'}}{- \frac{c'^{2} \norm{r}}{b'^{2} \norm{m} \thet^{2}}} \cdot \hilk{\norm{r}}{- \frac{b' \thet^{2}}{c'}}.
\end{align*}
By \eqref{e:bastri},
	\[ \Psi'( \Sigma_{u}( r, m ) )
	=	\begin{aligned}[t]
			&\hilk{\frac{a' c' + b' d' \thet^{2}}{c'}}{- \frac{\norm{r}}{\norm{m} \thet^{2}}} \cdot \hilk{\frac{a' c' + b' d' \thet^{2}}{c'}}{\frac{c'^{2}}{b'^{2}}} \\
			&\phantom{\ } \cdot \hilk{\norm{r}}{- \frac{b' \thet^{2}}{c'}};
		\end{aligned} \]
thus
	\[ \Psi'( \Sigma_{u}( r, m ) ) = \hilk{\frac{a' c' + b' d' \thet^{2}}{c'}}{- \frac{\norm{r}}{\norm{m} \thet^{2}}} \cdot \hilk{\norm{r}}{- \frac{b' \thet^{2}}{c'}} \]
by \eqref{e:prop5ii}.  Thus our result is proved, letting $ \Sigma( r, m ) = \Psi'( \Sigma_{u}( r, m ) ) $.
\end{proof}

\begin{rem} \label{r:trace}
We should note that the formulae for $ \sig{\ha{\lambda}}{\ha{\mu}} $ may be written in terms of norms and traces.  This is because for $ \lambda \in K^{\times} $, $ \lambda \notin k^{\times} $,
	\[ \frac{\del{2}{\lambda}}{\thet} = \frac{1}{\left( \ol{\lambda} - \lambda \right) \thet} = - \frac{1}{\trace{\lambda \thet}}; \]
hence by Proposition~\ref{p:sigma}, for $ \lambda $, $ \mu \in K^{\times} $,
\begin{align*}
	&\sig{\ha{\lambda}}{\ha{\mu}} \\
	&	=	\begin{cases}
				\displaystyle \hilk{\lambda}{\mu}, &\text{if $ \lambda $, $ \mu \in k^{\times} $;} \\
				\displaystyle \hilk{\mu}{\frac{1}{\trace{\lambda \thet}}}, &\text{if $ \lambda \notin k^{\times} $, $ \mu \in k^{\times} $;} \\
				\displaystyle \hilk{\lambda}{\frac{\norm{\mu}}{\trace{\mu \thet}}}, &\text{if $ \lambda \in k^{\times} $, $ \mu \notin k^{\times} $;} \\
				\displaystyle \hilk{- 1}{\norm{\lambda}} \cdot \hilk{- \frac{\norm{\lambda}}{\trace{\lambda \thet}}}{\lambda \mu}, &\text{if $ \lambda $, $ \mu \notin k^{\times} $, $ \lambda \mu \in k^{\times} $;} \\
				\displaystyle \hilk{\norm{\lambda}}{\norm{\mu}} \cdot \hilk{\frac{\trace{\lambda \mu \thet}}{\trace{\mu \thet}}}{- \frac{\trace{\lambda \thet} \norm{\mu}}{\trace{\mu \thet}}}, &\text{otherwise.}
			\end{cases}
\end{align*}
By \eqref{e:bastri} and \eqref{e:prop2i},
\begin{align*}
	&\sig{\ha{\lambda}}{\ha{\mu}} \\
	&=	\begin{cases}
			\displaystyle \hilk{\lambda}{\mu}, &\text{if $ \lambda $, $ \mu \in k^{\times} $;} \\
			\displaystyle \hilk{\trace{\lambda \thet}}{\mu}, &\text{if $ \lambda \notin k^{\times} $, $ \mu \in k^{\times} $;} \\
			\displaystyle \hilk{\lambda}{\norm{\mu}} \cdot \hilk{\lambda}{\trace{\mu \thet}}, &\text{if $ \lambda \in k^{\times} $, $ \mu \notin k^{\times} $;} \\
			\displaystyle \hilk{\norm{\lambda}}{- \lambda \mu} \cdot \hilk{- \trace{\lambda \thet}}{\lambda \mu}, &\text{if $ \lambda $, $ \mu \notin k^{\times} $, $ \lambda \mu \in k^{\times} $;} \\
			\begin{aligned}[b]
				&\hilk{\norm{\lambda}}{\norm{\mu}} \cdot \hilk{- \frac{\trace{\lambda \thet}\norm{\mu}}{\trace{\mu \thet}}}{\trace{\mu \thet}} \\
				&\phantom{\ } \cdot \hilk{\trace{\lambda \mu \thet}}{- \frac{\trace{\lambda \thet} \norm{\mu}}{\trace{\mu \thet}}},
			\end{aligned} &\text{otherwise.}
		\end{cases}
\end{align*}

When $ \lambda \in k^{\times} $, $ \mu \notin k^{\times} $, we have by \eqref{e:norm} that
	\[ \hilk{\lambda}{\norm{\mu}} = \hilbk{\lambda}{\mu}. \]
Also, when $ \lambda $, $ \mu \notin k^{\times} $, $ \lambda \mu \in k^{\times} $, by \eqref{e:norm} and \eqref{e:prop3i},
	\[ \hilk{\norm{\lambda}}{- \lambda \mu} = \hilbk{\lambda}{- \lambda \mu} = \hilbk{\lambda}{\mu}. \]
Thus with the above, \eqref{e:prop5ii} and \eqref{e:bastri},
\begin{multline*}
	\sig{\ha{\lambda}}{\ha{\mu}} \\
	=	\begin{cases}
			\displaystyle \hilk{\lambda}{\mu}, &\text{if $ \lambda $, $ \mu \in k^{\times} $;} \\
			\displaystyle \hilk{\trace{\lambda \thet}}{\mu}, &\text{if $ \lambda \notin k^{\times} $, $ \mu \in k^{\times} $;} \\
			\displaystyle \hilbk{\lambda}{\mu} \cdot \hilk{\lambda}{\trace{\mu \thet}}, &\text{if $ \lambda \in k^{\times} $, $ \mu \notin k^{\times} $;} \\
			\displaystyle \hilbk{\lambda}{\mu} \cdot \hilk{- \trace{\lambda \thet}}{\lambda \mu}, &\text{if $ \lambda $, $ \mu \notin k^{\times} $, $ \lambda \mu \in k^{\times} $;} \\
			\begin{aligned}[b]
				&\hilk{\norm{\lambda}}{\norm{\mu}} \cdot \hilk{\trace{\lambda \thet}\norm{\mu}}{\trace{\mu \thet}} \\
				&\phantom{\ } \cdot \hilk{\trace{\lambda \mu \thet}}{- \trace{\lambda \thet} \norm{\mu} \trace{\mu \thet}},
			\end{aligned} &\text{otherwise,}
		\end{cases}
\end{multline*}
which is the formula given in the Introduction.
\end{rem}

Proposition~\ref{p:sigma} may seem complicated and unwieldy, but in fact, it can be further simplified.  Let $ \gamma_{i} \in G( k ) $, where $ i = 1 $, $ 2 $, $ 3 $, with $ \gamma_{3} = \gamma_{1} \gamma_{2} $ and
	\[ \gamma_{i} = \begin{pmatrix} * & * & * \\ * & * & * \\ g_{i} & h_{i} & j_{i} \end{pmatrix}. \]
Also, let
	\[ \Xg{i} = \begin{cases} ( \ol{g_{i}} \thet )^{- 1}, &\text{if $ g_{i} \neq 0 $;} \\ \ol{j_{i}}^{- 1}, &\text{if $ g_{i} = 0 $,} \end{cases} \]
and for $ \lambda $, $ \mu \in K^{\times} $, let
	\[ \ug{\lambda}{\mu} = \begin{cases} \hilk{\lambda}{- \mu}, &\text{if $ \lambda $, $ \mu \in k^{\times} $;} \\ \hilk{\norm{\lambda}}{- \norm{\mu}}, &\text{otherwise.} \end{cases} \]
Then $ \sigma $ can be expressed in terms of Hilbert symbols involving $ X( \gamma_{i} ) $ and $ u( \lambda, \mu ) $ as shown in the following theorem.  It should be noted that $ X( \gamma_{i} ) $ is analogous to Kubota's $ X( \gamma ) $ ($ \gamma \in \SL_{2}( k ) $) as defined in \cite{kubota67}, which was used in the formula for his 2-cocycle on $ \SL_{2} $.

\begin{thm} \label{t:sigma}
If $ \Xg{3} / ( \Xg{1} \Xg{2} ) \in k^{\times} $, then
	\[ \sig{\gamma_{1}}{\gamma_{2}} =
		\begin{aligned}[t]
			&\ug{\frac{\Xg{3}}{\Xg{2}}}{\Xg{1} \Xg{2}} \cdot \hilk{\frac{\del{2}{\Xg{3}}}{\del{2}{\Xg{2}}}}{- \frac{\norm{\Xg{2}} \del{2}{\Xg{2}}}{\del{2}{\Xg{1}}}} \\
			&\phantom{\ } \cdot \hilk{\frac{\Xg{3}}{\Xg{1} \Xg{2}}}{\frac{\del{2}{\Xg{1}} \del{2}{\Xg{2}}}{\del{2}{\Xg{3}} \thet}}.
		\end{aligned} \]

If $ \Xg{3} / ( \Xg{1} \Xg{2} ) \notin k^{\times} $, let
	\[ r = r( \gamma_{1}, \gamma_{2} ) = \frac{h_{2} g_{3} - h_{3} g_{2}}{g_{1} \ol{g_{2}}}. \]
Then we have
	\[ \sig{\gamma_{1}}{\gamma_{2}} =
		\begin{aligned}[t]
			&\hilk{- \del{2}{- \frac{r \Xg{3}}{\Xg{1} \Xg{2}}} \thet^{- 1}}{\norm{- \frac{r \Xg{3}}{\Xg{1} \Xg{2}}}} \\
			&\phantom{\ } \cdot \hilk{\norm{r}}{\frac{\del{2}{r}}{\thet}} \cdot \ug{\Xg{1}}{\frac{\Xg{3}}{\Xg{2}}} \cdot \ug{\frac{\Xg{3}}{\Xg{2}}}{\Xg{3}} \\
			&\phantom{\ } \cdot \bigg( \frac{\del{2}{\Xg{3} / ( \Xg{1} \Xg{2} )}}{\del{2}{\Xg{3} / \Xg{2}}}, \\
			&\phantom{\del{2}{\Xg{3}}} - \frac{\norm{\Xg{3} / ( \Xg{1} \Xg{2} )} \del{2}{\Xg{3} / ( \Xg{1} \Xg{2} )}}{\del{2}{\Xg{1}}} \bigg)_{k, 2} \\
			&\phantom{\ } \cdot \hilk{\frac{\del{2}{\Xg{2}}}{\del{2}{\Xg{3}}}}{- \frac{\norm{\Xg{2}} \del{2}{\Xg{2}}}{\del{2}{\Xg{3} / \Xg{2}}}}.
		\end{aligned} \]
\end{thm}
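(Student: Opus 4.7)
The plan is to reduce $\sigma(\gamma_1,\gamma_2)$ to a computation on the torus and Weyl element by Bruhat-decomposing each $\gamma_i$, and then invoking Proposition~\ref{p:sigma} in the appropriate subcase. Using \eqref{e:bruc} and \eqref{e:bruf}, I write
\[
\gamma_i = n_i \cdot \ha{X(\gamma_i)} \cdot w_i \cdot n_i',
\]
where $n_i,n_i' \in N(k)$, and $w_i = \wa{0}{\thet}$ when $g_i \ne 0$ while $w_i = 1$ when $g_i = 0$. The triviality properties $\sig{g}{\xa{r}{m}} = \sig{\xa{r}{m}}{g'} = 1$, together with $\sig{g\cdot n}{g'} = \sig{g}{n\cdot g'}$ from Proposition~\ref{p:sigma}, let me eliminate $n_1$ and $n_2'$ entirely and merge $n_1',n_2$ into a single middle element $n \in N(k)$, reducing the problem to
\[
\sig{\gamma_1}{\gamma_2} = \sig{\ha{X_1}\,w_1}{n\cdot \ha{X_2}\,w_2}.
\]

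Next I would run a case analysis on $(w_1,w_2)$. If $w_1 = 1$, the conjugate $\ha{X_1} n \ha{X_1}^{-1}$ is again in $N(k)$, so the middle $n$ disappears and the expression collapses to $\sig{\ha{X_1}}{\ha{X_2}\,w_2}$, which is handled by parts (2) and (3) of Proposition~\ref{p:sigma}; inspection of the bottom row of $\gamma_3 = \gamma_1\gamma_2$ shows that $X_3 = X_1X_2$, so $X_3/(X_1X_2) = 1 \in k^\times$ and the first formula applies. The symmetric case $w_2 = 1$ is analogous, using $\ha{X_2}$-conjugation and $\sig{g}{h n} = \sig{g}{h}$. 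In both easy cases, the formula for $\sig{\gamma_1}{\gamma_2}$ reduces, after simplification via \eqref{e:norm} and \eqref{e:bastri}, to the compact expression $u(\tfrac{X_3}{X_2}, X_1X_2)$ multiplied by the two Hilbert-symbol correction factors in $\delta_2$.

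The hard case is $w_1 = w_2 = \wa{0}{\thet}$, where the middle unipotent $n = \xa{r_0}{m_0}$ cannot be absorbed; writing out $\ha{X_1}\wa{0}{\thet}\cdot\xa{r_0}{m_0}\cdot\ha{X_2}\wa{0}{\thet}$ via the identity \eqref{e:e1e2} (used earlier for the torus cocycle), one sees that the resulting Bruhat cell is the big cell precisely when $m_0 \ne 0$, which corresponds to $X_3/(X_1X_2) \notin k^\times$. In this situation the product matches the left-hand side of Proposition~\ref{p:sigma}(4), and part (4) with the function $\Sigma(r,m)$ gives exactly the expression claimed in the theorem. The key matching step is to show that the first coordinate of $n$ equals the invariant
\[
r(\gamma_1,\gamma_2) \;=\; \frac{h_2 g_3 - h_3 g_2}{g_1\,\ol{g_2}},
\]
which follows by directly computing $n_1' n_2$ from the Bruhat data of $\gamma_1,\gamma_2$ and comparing with the bottom row of $\gamma_3$ using the unitarity relations $\gamma_i^t J' \ol{\gamma_i} = J'$.

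The main obstacle will be the final algebraic consolidation. Proposition~\ref{p:sigma}(4) itself has five subcases for $\Sigma(r,m)$, and I will need to collapse all of them into the single uniform expression written in the theorem using only $X_1,X_2,X_3$ and (when needed) the scalar $r$. This relies on repeated application of \eqref{e:prop2i}--\eqref{e:norm}, especially $\hilk{\lambda}{\norm{\mu}} = \hilbk{\lambda}{\mu}$, and on rewriting quantities like $\trace{\lambda\thet}$ as $-1/\delta_2(\lambda)$ (Remark~\ref{r:trace}). Verifying that the resulting formula is independent of the auxiliary choices in the Bruhat decomposition (i.e.~of entries of $\gamma_i$ not appearing in the bottom row) is what forces the appearance of $u(\cdot,\cdot)$: the squared terms that would otherwise depend on such choices cancel by \eqref{e:prop5ii}, leaving only the $u$-factors that depend solely on norms.
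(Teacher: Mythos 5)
Your overall strategy coincides with the paper's own proof: reduce $\sig{\gamma_{1}}{\gamma_{2}}$ via the Bruhat decompositions \eqref{e:bruc}, \eqref{e:bruf} and the $N(k)$-invariance of $\sigma$ to $\sig{\ha{\Xg{1}}\cdot w_{1}}{\xa{r_{0}}{m_{0}} \cdot \ha{\Xg{2}}\cdot w_{2}}$, feed this into Proposition~\ref{p:sigma}, identify $r_{0}$ with $(h_{2}g_{3}-h_{3}g_{2})/(g_{1}\ol{g_{2}})$ by comparing with the bottom row of $\gamma_{3}$, and consolidate the case-by-case Hilbert-symbol expressions into the two uniform formulae. (The paper runs the consolidation in the opposite direction, rewriting each case of Proposition~\ref{p:sigma} as an $X$-based expression $F_{1},\dots,F_{5}$ and introducing $u_{1},u_{2},u_{3}$ before merging them into $\ug{-}{-}$, but that is presentational.) Your identification of $\ug{-}{-}$ as the residue left after squared, choice-dependent terms cancel by \eqref{e:prop5ii} is also the right mechanism.

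There is, however, one concrete error in your case split. In the hard case $w_{1}=w_{2}=\wa{0}{\thet}$ you assert that the product lies in the big cell ``precisely when $m_{0}\neq 0$, which corresponds to $\Xg{3}/(\Xg{1}\Xg{2})\notin k^{\times}$''. The first equivalence is correct, the second is false: since $\Xg{3}/(\Xg{1}\Xg{2})=\thet/\ol{m_{0}}$, the subcase $m_{0}\in k^{\times}\thet$, $m_{0}\neq 0$ (equivalently $r_{0}=0$, since $\trace{m_{0}}=-\norm{r_{0}}$ in the field case) lands in the big cell while $\Xg{3}/(\Xg{1}\Xg{2})\in k^{\times}$. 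In that subcase it is the theorem's \emph{first} formula that must be verified, not the second --- indeed the second is not even defined there, as it involves $\del{2}{r}$ and $\norm{r}$ with $r=0$. Proposition~\ref{p:sigma}(4) contributes $\Sigma(0,m_{0})=\hilk{\thet/\ol{m_{0}}}{-1}$ in this subcase, and one must check, as the paper does with $u_{3}$ in \eqref{e:siglwxmw}, that this combines with the two torus factors to reproduce $\ug{\Xg{3}/\Xg{2}}{\Xg{1}\Xg{2}}$ times the two $\delta_{2}$-corrections, including the degenerations $u_{3}=u_{1}$ when $\thet/\ol{m_{0}}=1$ or $1/\norm{\Xg{2}}$ and $u_{3}=u_{2}$ when $\thet/\ol{m_{0}}=-1/\norm{\Xg{2}}$. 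Your plan as written would try to match this subcase against the second formula and fail; the boundary must be redrawn as $\trace{m_{0}}\neq 0$ (equivalently $r\neq 0$) rather than $m_{0}\neq 0$. A smaller inaccuracy of the same kind: in your ``symmetric'' easy case $w_{2}=1$ one has $\Xg{3}/(\Xg{1}\Xg{2})=1/\norm{\Xg{2}}$ rather than $1$, so even there the first formula is being verified at a nontrivial value of $\Xg{3}/(\Xg{1}\Xg{2})$, which is exactly why the factor $\hilk{\Xg{3}/(\Xg{1}\Xg{2})}{\del{2}{\Xg{1}}\del{2}{\Xg{2}}/(\del{2}{\Xg{3}}\thet)}$ cannot be dropped in the collapse you describe.
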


\begin{proof}
We first note that we want to use the matrix entries of $ \gamma_{i} $ in the formula for our 2-cocycle much in the same way as Kubota did (see \cite{kubota67}) in his theorem for the 2-cocycle on the group $ \SL_{2} $.  The matrix entry used depended on the Bruhat decomposition and which Bruhat cell the matrix belonged to; the choice made was considered using the bottom row of the matrix only.

In our case, we can do the same thing as there are only two Bruhat cells to consider (see Section~\ref{s:bruh}), just like the $ \SL_{2} $ case.  Looking at \eqref{e:bruc} and \eqref{e:bruf}, it is clear that the Bruhat cell a matrix in \SU{} belongs to depends on whether the $ ( 3, 1 ) $-entry is non-zero, and a choice can be made as follows.  With
	\[ \gamma_{i} = \begin{pmatrix} * & * & * \\ * & * & * \\ g_{i} & h_{i} & j_{i} \end{pmatrix} \in G( k ), \]
i = $ 1 $, $ 2 $, $ 3 $, and $ \gamma_{3} = \gamma_{1} \gamma_{2} $, let
	\[ \Xg{i} = \begin{cases} ( \ol{g_{i}} \thet )^{- 1}, &\text{if $ g_{i} \neq 0 $;} \\ \ol{j_{i}}^{- 1} &\text{if $ g_{i} = 0 $.} \end{cases} \]
Note that for any $ \xa{s_{1}}{n_{1}} \in N( k ) $, $ \gamma \in G( k ) $,
	\[ X( \xa{s_{1}}{n_{1}} \cdot \gamma ) = X( \gamma ) = X( \gamma \cdot \xa{s_{1}}{n_{1}} ). \]

We will have to prove this theorem in a few steps.  We first find a simplified formula for the 2-cocycle on the torus $ T( k ) $, written in terms of $ \Xg{i} $'s.  We then look for similar formulae for each of the cases (1) -- (3) of Proposition~\ref{p:sigma}.  We will also look at case (4) of Proposition~\ref{p:sigma}, finding a simplification of $ \Sigma( r, m ) $.  It will then be apparent how the formulae coincide.

We will often use the properties of Hilbert symbols \eqref{e:prop2i} -- \eqref{e:prop5ii} to simplify expressions in terms of Hilbert symbols.  We will use these properties mostly without mention throughout the rest of this proof.

Throughout this proof, let $ \lambda $, $ \mu \in K^{\times} $, $ \lambda' = a + b \thet $, $ \mu' = c + d \thet $, where $ a $, $ b $, $ c $, $ d \in k^{\times} $ and $ \lambda' \mu' \notin k^{\times} $.  By Proposition~\ref{p:sigma}, we have
	\[ \sig{\ha{\lambda'}}{\ha{\mu'}} = \hilk{\norm{\lambda'}}{\norm{\mu'}} \cdot \hilk{q}{\frac{\mu' \ol{\del{1}{\mu'}}}{\del{2}{\lambda'}}}, \]
where $ q = a + b c / d $.  This implies that
	\[ q = \frac{\del{2}{\mu'}}{\del{2}{\lambda' \mu'}}, \]
and
	\[ \sig{\ha{\lambda'}}{\ha{\mu'}} = \hilk{\norm{\lambda'}}{\norm{\mu'}} \cdot \hilk{\frac{\del{2}{\mu'}}{\del{2}{\lambda' \mu'}}}{- \frac{\norm{\mu'} \del{2}{\mu'}}{\del{2}{\lambda'}}}. \]
For any $ \lambda $, $ \mu \in K^{\times} $, let
	\[ F_{1}( \lambda, \mu ) = \hilk{\norm{\lambda}}{\norm{\mu}} \cdot \hilk{\frac{\del{2}{\mu}}{\del{2}{\lambda \mu}}}{- \frac{\norm{\mu} \del{2}{\mu}}{\del{2}{\lambda}}}. \]
Then we have
\begin{multline*}
	F_{1}( \lambda, \mu ) \\
	=	\begin{cases}
			\displaystyle \hilk{\lambda^{2}}{\mu^{2}} \cdot \hilk{1}{- \mu^{2}}, &\text{if $ \lambda $, $ \mu \in k^{\times} $;} \\
			\displaystyle \hilk{\norm{\lambda}}{\mu^{2}} \cdot \hilk{\frac{\thet}{\del{2}{\lambda \mu}}}{- \frac{\mu^{2} \thet}{\del{2}{\lambda}}}, &\text{if $ \lambda \notin k^{\times} $, $ \mu \in k^{\times} $;} \\
			\displaystyle \hilk{\lambda^{2}}{\norm{\mu}} \cdot \hilk{\frac{\del{2}{\mu}}{\del{2}{\lambda \mu}}}{- \frac{\norm{\mu} \del{2}{\mu}}{\thet}}, &\text{if $ \lambda \in k^{\times} $, $ \mu \notin k^{\times} $;} \\
			\displaystyle \hilk{\norm{\lambda}}{\norm{\mu}} \cdot \hilk{\frac{\del{2}{\mu}}{\thet}}{- \frac{\norm{\mu} \del{2}{\mu}}{\del{2}{\lambda}}}, &\text{if $ \lambda $, $ \mu \notin k^{\times} $, $ \lambda \mu \in k^{\times} $;} \\
			\displaystyle \hilk{\norm{\lambda}}{\norm{\mu}} \cdot \hilk{\frac{\del{2}{\mu}}{\del{2}{\lambda \mu}}}{- \frac{\norm{\mu} \del{2}{\mu}}{\del{2}{\lambda}}}, &\text{if $ \lambda $, $ \mu $, $ \lambda \mu \notin k^{\times} $.}
		\end{cases}
\end{multline*}
Using \eqref{e:del2q} and the properties of Hilbert symbols \eqref{e:prop2i} -- \eqref{e:prop5ii}, we can simplify $ F_{1}( \lambda, \mu ) $.  Only the case where $ \lambda $, $ \mu \notin k^{\times} $, $ \lambda \mu \in k^{\times} $ deserves some elaboration.  Noting that
	\[ \mu = \frac{\lambda \mu}{\lambda} = \frac{\lambda \mu}{\norm{\lambda}} \cdot \ol{\lambda}, \]
we have
	\[ F_{1}( \lambda, \mu )
	=	\begin{aligned}[t]
			&\hilk{\norm{\lambda}}{\frac{( \lambda \mu )^{2}}{\norm{\lambda}}} \\
			&\phantom{\ } \cdot \hilk{\frac{( \norm{\lambda} / ( \lambda \mu ) ) \del{2}{\ol{\lambda}}}{\thet}}{- \frac{\left( ( \lambda \mu )^{2} / \norm{\lambda} \right) ( \norm{\lambda} / ( \lambda \mu ) ) \del{2}{\ol{\lambda}}}{\del{2}{\lambda}}}.
		\end{aligned} \]
Also, note that $ \del{2}{\ol{\lambda}} = - \del{2}{\lambda} $.  The above can then be simplified using the properties of Hilbert symbols.  We will get 
\begin{multline*}
	F_{1}( \lambda, \mu ) \\
	=	\begin{cases}
			1, &\text{if $ \lambda $, $ \mu \in k^{\times} $;} \\
			\displaystyle \hilk{\mu}{- \frac{\del{2}{\lambda}}{\thet}}, &\text{if $ \lambda \notin k^{\times} $, $ \mu \in k^{\times} $;} \\
			\displaystyle \hilk{\lambda}{- \frac{\norm{\mu} \del{2}{\mu}}{\thet}}, &\text{if $ \lambda \in k^{\times} $, $ \mu \notin k^{\times} $;} \\
			\displaystyle \hilk{- 1}{\norm{\lambda}} \cdot \hilk{- \frac{\norm{\lambda} \del{2}{\lambda}}{\thet}}{\lambda \mu}, &\text{if $ \lambda $, $ \mu \notin k^{\times} $, $ \lambda \mu \in k^{\times} $;} \\
			\displaystyle \hilk{\norm{\lambda}}{\norm{\mu}} \cdot \hilk{\frac{\del{2}{\mu}}{\del{2}{\lambda \mu}}}{- \frac{\norm{\mu} \del{2}{\mu}}{\del{2}{\lambda}}}, &\text{if $ \lambda $, $ \mu $, $ \lambda \mu \notin k^{\times} $.}
		\end{cases}
\end{multline*}
By comparing $ F_{1}( \lambda, \mu ) $ with Proposition~\ref{p:sigma}, we find that
	\[ \sig{\ha{\lambda}}{\ha{\mu}} = \begin{cases} \hilk{\lambda}{\mu}, &\text{if $ \lambda $, $ \mu \in k^{\times} $;} \\ F_{1}( \lambda, \mu ), &\text{otherwise.} \end{cases} \]
In fact, if we let
	\[ u_{1}( \lambda, \mu ) = \begin{cases} \hilk{\lambda}{\mu}, &\text{if $ \lambda $, $ \mu \in k^{\times} $;} \\ \hilk{\norm{\lambda}}{\norm{\mu}}, &\text{otherwise,} \end{cases} \]
then
	\[ \sig{\ha{\lambda}}{\ha{\mu}} = u_{1}( \lambda, \mu ) \cdot \hilk{\frac{\del{2}{\mu}}{\del{2}{\lambda \mu}}}{- \frac{\norm{\mu} \del{2}{\mu}}{\del{2}{\lambda}}}. \]
Note that
	\[ X( \ha{\lambda} ) = \lambda , \quad X( \ha{\mu} ) = \mu, \quad X( \ha{\lambda} \cdot \ha{\mu} ) = \lambda \mu, \]
so we truly get a formula in terms of $ \Xg{i} $ on the torus.  Also, by Proposition~\ref{p:sigma}, $ \sig{\ha{\lambda}}{\ha{\mu}} = \sig{\ha{\lambda}}{\ha{\mu} \cdot \wa{0}{\thet}} $.  We note that
	\[ X( \ha{\mu} \cdot \wa{0}{\thet} ) = \mu, \quad X( \ha{\lambda} \cdot \ha{\mu} \cdot \wa{0}{\thet} ) = \lambda \mu, \]
i.e. for $ \gamma_{1} \in T( k ) $, $ \gamma_{2} \in T( k ) \cdot W $,
\begin{equation} \label{e:siglm}
	\sig{\gamma_{1}}{\gamma_{2}} = u_{1}( \Xg{1}, \Xg{2} ) \cdot \hilk{\frac{\del{2}{\Xg{2}}}{\del{2}{\Xg{3}}}}{- \frac{\norm{\Xg{2}} \del{2}{\Xg{2}}}{\del{2}{\Xg{1}}}}.
\end{equation}
Note that \eqref{e:siglm} can also be applied to any $ \gamma_{1} $, $ \gamma_{2} \in G( k ) $ such that $ \Xg{3} = \Xg{1} \Xg{2} $.  This is due to the Bruhat decomposition and Proposition~\ref{p:sigma}.

We now consider case (2) of Proposition~\ref{p:sigma}, where
	\[ \sig{\ha{\lambda} \cdot \wa{0}{\thet}}{\ha{\mu}} = \sig{\ha{\lambda}}{\ha{\mu}} \cdot \sig{\ha{\lambda \mu}}{\ha{\frac{1}{\norm{\mu}}}}. \]
Using Proposition~\ref{p:sigma} and the properties of Hilbert symbols \eqref{e:prop2i} -- \eqref{e:prop5ii}, we get
\begin{align*}
	&\sig{\ha{\lambda} \cdot \wa{0}{\thet}}{\ha{\mu}} \\
	&=	\begin{cases}
			\displaystyle \hilk{\lambda}{\mu}, &\text{if $ \lambda $, $ \mu \in k^{\times} $;} \\
			\displaystyle \hilk{\mu}{- \frac{\del{2}{\lambda}}{\thet}}, &\begin{aligned}[t]
				\text{if}\ &\text{$ \lambda \notin k^{\times} $,} \\
				&\text{$ \mu \in k^{\times} $;}
					\end{aligned} \\
			\displaystyle \hilk{\frac{\lambda}{\norm{\mu}}}{- \frac{\del{2}{\mu}}{\thet}}, &\begin{aligned}[t]
				\text{if}\ &\text{$ \lambda \in k^{\times} $,} \\
				&\text{$ \mu \notin k^{\times} $;}
				\end{aligned} \\
			\displaystyle \hilk{- 1}{\norm{\lambda}} \cdot \hilk{\frac{\del{2}{\lambda}}{\thet}}{\lambda \mu}, &\begin{aligned}[t]
				\text{if}\ &\text{$ \lambda $, $ \mu \notin k^{\times} $,} \\
				&\text{$ \lambda \mu \in k^{\times} $;}
				\end{aligned} \\
			\displaystyle \hilk{- \frac{\norm{\lambda} \del{2}{\lambda \mu}}{\thet}}{\norm{\mu}} \cdot \hilk{\frac{\del{2}{\mu}}{\del{2}{\lambda \mu}}}{- \frac{\norm{\mu} \del{2}{\mu}}{\del{2}{\lambda}}}, &\text{if $ \lambda $, $ \mu $, $ \lambda \mu \notin k^{\times} $.}
		\end{cases}
\end{align*}
We have
\begin{gather*}
	X( \ha{\lambda} \cdot \wa{0}{\thet} ) = \lambda, \quad X( \ha{\mu} ) = \mu, \\
	X( \ha{\lambda} \cdot \wa{0}{\thet} \cdot \ha{\mu} ) = \lambda / \ol{\mu}.
\end{gather*}
Just like the 2-cocycle on the torus, we will find a formula for $ \sig{\gamma_{1}}{\gamma_{2}} $, $ \gamma_{1} \in T( k ) \cdot \wa{0}{\thet} $, $ \gamma_{2} \in T( k ) $, in terms of $ \Xg{i} $'s.  By \eqref{e:del2q}, we have
	\[ \del{2}{\lambda' \mu'} = \del{2}{\frac{\lambda'}{\ol{\mu'}} \cdot \norm{\mu'}} = \frac{\del{2}{\lambda' / \ol{\mu'}}}{\norm{\mu'}}, \]
which implies that
\begin{multline*}
	\sig{\ha{\lambda'} \cdot \wa{0}{\thet}}{\ha{\mu'}} \\
	= \hilk{- \frac{\norm{\lambda'} \del{2}{\lambda' / \ol{\mu'}}}{\norm{\mu'} \thet}}{\norm{\mu'}} \cdot \hilk{\frac{\norm{\mu'} \del{2}{\mu'}}{\del{2}{\lambda' / \ol{\mu'}}}}{- \frac{\norm{\mu'} \del{2}{\mu'}}{\del{2}{\lambda'}}}.
\end{multline*}
The above can be simplified using the properties of Hilbert symbols, so that we get
\begin{multline*}
	\sig{\ha{\lambda'} \cdot \wa{0}{\thet}}{\ha{\mu'}} \\
	= \hilk{\frac{\norm{\lambda'} \del{2}{\lambda'} \del{2}{\mu'}}{\del{2}{\lambda' / \ol{\mu'}} \thet}}{\norm{\mu'}} \cdot \hilk{\frac{\del{2}{\mu'}}{\del{2}{\lambda' / \ol{\mu'}}}}{- \frac{\norm{\mu'} \del{2}{\mu'}}{\del{2}{\lambda'}}}.
\end{multline*}
Let
	\[ F_{2}( \lambda, \mu ) = \hilk{\frac{\norm{\lambda} \del{2}{\lambda} \del{2}{\mu}}{\del{2}{\lambda / \ol{\mu}} \thet}}{\norm{\mu}} \cdot \hilk{\frac{\del{2}{\mu}}{\del{2}{\lambda / \ol{\mu}}}}{- \frac{\norm{\mu} \del{2}{\mu}}{\del{2}{\lambda}}}. \]
It is possible to check that when $ \lambda $, $ \mu \in k^{\times} $, $ F_{2}( \lambda, \mu ) = 1 $, and that
	\[ \sig{\ha{\lambda} \cdot \wa{0}{\thet}}{\ha{\mu}}
	=	\begin{cases}
			\hilk{\lambda}{\mu}, &\text{if $ \lambda $, $ \mu \in k^{\times} $;} \\
			F_{2}( \lambda, \mu ), &\text{otherwise.}
		\end{cases} \]
Thus we can get a similar formula
\begin{multline*}
	\sig{\ha{\lambda} \cdot \wa{0}{\thet}}{\ha{\mu}} \\
	= u_{1}( \lambda, \mu ) \cdot \hilk{\frac{\del{2}{\mu}}{\del{2}{\lambda / \ol{\mu}}}}{- \frac{\norm{\mu} \del{2}{\mu}}{\del{2}{\lambda}}} \cdot \hilk{\frac{1}{\norm{\mu}}}{\frac{\del{2}{\lambda} \del{2}{\mu}}{\del{2}{\lambda / \ol{\mu}} \thet}},
\end{multline*}
i.e. for $ \gamma_{1} \in T( k ) \cdot \wa{0}{\thet} $, $ \gamma_{2} \in T( k ) $,
\begin{equation} \label{e:siglwm}
	\sig{\gamma_{1}}{\gamma_{2}}
	=	\begin{aligned}[t]
			&u_{1}( \Xg{1}, \Xg{2} ) \cdot \hilk{\frac{\del{2}{\Xg{2}}}{\del{2}{\Xg{3}}}}{- \frac{\norm{\Xg{2}} \del{2}{\Xg{2}}}{\del{2}{\Xg{1}}}} \\
			&\phantom{\ } \cdot \hilk{\frac{\Xg{3}}{\Xg{1} \Xg{2}}}{\frac{\del{2}{\Xg{1}} \del{2}{\Xg{2}}}{\del{2}{\Xg{3}} \thet}}.
		\end{aligned}
\end{equation}
In fact, \eqref{e:siglm} can be subsumed into \eqref{e:siglwm}, since for $ \gamma_{1} \in T( k ) $, $ \gamma_{2} \in T( k ) \cdot W $, we have $ \Xg{3} / \Xg{1} \Xg{2} = 1; $
hence using \eqref{e:siglwm} to calculate $ \sig{\gamma_{1}}{\gamma_{2}} $ will give the same answer as \eqref{e:siglm}.  Also, it can be checked that \eqref{e:siglwm} can be applied to any $ \gamma_{1} $, $ \gamma_{2} $ such that $ \Xg{3} / ( \Xg{1} \Xg{2} ) = 1 $ or $ 1 / \norm{\Xg{2}} $, using the Bruhat decomposition and Proposition~\ref{p:sigma}.

We also have, by Proposition~\ref{p:sigma},
\begin{multline*}
	\sig{\ha{\lambda} \cdot \wa{0}{\thet}}{\ha{\mu} \cdot \wa{0}{\thet}} \\
	= \sig{\ha{\lambda}}{\ha{\mu}} \cdot \sig{\ha{\lambda \mu}}{\ha{\frac{1}{\norm{\mu}}}} \cdot \sig{\ha{- \frac{\lambda}{\ol{\mu}}}}{\ha{- 1}},
\end{multline*}
i.e.
\begin{align*}
	&\sig{\ha{\lambda} \cdot \wa{0}{\thet}}{\ha{\mu} \cdot \wa{0}{\thet}} \\
	&=	\begin{cases}
			\displaystyle \hilk{- \lambda}{- \mu}, &\text{if $ \lambda $, $ \mu \in k^{\times} $;} \\
			\displaystyle \hilk{- \mu}{\frac{\del{2}{\lambda}}{\thet}}, &\begin{aligned}[t]
				\text{if}\ &\text{$ \lambda \notin k^{\times} $,} \\
				&\text{$ \mu \in k^{\times} $;}
					\end{aligned} \\
			\displaystyle \hilk{- \frac{\lambda}{\norm{\mu}}}{\frac{\del{2}{\mu}}{\thet}}, &\begin{aligned}[t]
				\text{if}\ &\text{$ \lambda \in k^{\times} $,} \\
				&\text{$ \mu \notin k^{\times} $;}
				\end{aligned} \\
			\displaystyle \hilk{- 1}{- 1} \cdot \hilk{- \frac{\del{2}{\lambda}}{\thet}}{\lambda \mu}, &\begin{aligned}[t]
				\text{if}\ &\text{$ \lambda $, $ \mu \notin k^{\times} $,} \\
				&\text{$ \lambda \mu \in k^{\times} $;}
				\end{aligned} \\
			\begin{aligned}[b]
				&\hilk{\frac{\norm{\lambda} \del{2}{\lambda} \del{2}{\mu}}{\del{2}{\lambda / \ol{\mu}} \thet}}{\norm{\mu}} \\
				&\phantom{\ } \cdot \hilk{\frac{\del{2}{\mu}}{\del{2}{\lambda / \ol{\mu}}}}{- \frac{\norm{\mu} \del{2}{\mu}}{\del{2}{\lambda}}} \cdot \hilk{- 1}{- \frac{\del{2}{- \lambda / \ol{\mu}}}{\thet}},
			\end{aligned} &\text{if $ \lambda $, $ \mu $, $ \lambda \mu \notin k^{\times} $.}
		\end{cases}
\end{align*}
Note that
\begin{gather*}
	X( \ha{\lambda} \cdot \wa{0}{\thet} ) = \lambda, \quad X( \ha{\mu} \cdot \wa{0}{\thet} ) = \mu, \\
	X( \ha{\lambda} \cdot \wa{0}{\thet} \cdot \ha{\mu} \cdot \wa{0}{\thet} ) = - \lambda / \ol{\mu}.
\end{gather*}
By \eqref{e:del2q},
	\[ \del{2}{\frac{\lambda'}{\ol{\mu'}}} = - \del{2}{- \frac{\lambda'}{\ol{\mu'}}}, \]
hence we get
\begin{align*}
	&\sig{\ha{\lambda'} \cdot \wa{0}{\thet}}{\ha{\mu'} \cdot \wa{0}{\thet}} \\
	&\qquad=	\begin{aligned}[t]
			&\hilk{\norm{\lambda'}}{\norm{\mu'}} \cdot \hilk{\frac{\del{2}{\mu'}}{\del{2}{- \lambda' / \ol{\mu'}}}}{- \frac{\norm{\mu'} \del{2}{\mu'}}{\del{2}{\lambda'}}} \\
			&\phantom{\ } \cdot \hilk{- \frac{1}{\norm{\mu'}}}{\frac{\del{2}{\lambda'} \del{2}{\mu'}}{\del{2}{- \lambda' / \ol{\mu'}} \thet}}.
		\end{aligned}
\end{align*}
Let
	\[ F_{3}( \lambda, \mu ) = \begin{aligned}[t]
			&\hilk{\norm{\lambda}}{\norm{\mu}} \cdot \hilk{\frac{\del{2}{\mu}}{\del{2}{- \lambda / \ol{\mu}}}}{- \frac{\norm{\mu} \del{2}{\mu}}{\del{2}{\lambda}}} \\
			&\phantom{\ } \cdot \hilk{- \frac{1}{\norm{\mu}}}{\frac{\del{2}{\lambda} \del{\mu}{t}}{\del{2}{- \lambda / \ol{\mu}} \thet}}.
		\end{aligned} \]
It can be checked that $ F_{3}( \lambda, \mu ) = 1 $ when $ \lambda $, $ \mu \in k^{\times} $, and that
	\[ \sig{\ha{\lambda} \cdot \wa{0}{\thet}}{\ha{\mu} \cdot \wa{0}{\thet}}
	=	\begin{cases}
			\hilk{- \lambda}{- \mu}, &\text{if $ \lambda $, $ \mu \in k^{\times} $;} \\
			F_{3}( \lambda, \mu ), &\text{otherwise.}
		\end{cases} \]
In other words, if we let
	\[ u_{2}( \lambda, \mu ) = \begin{cases} \hilk{- \lambda}{- \mu}, &\text{if $ \lambda $, $ \mu \in k^{\times} $;} \\
			\hilk{\norm{\lambda}}{\norm{\mu}}, &\text{otherwise,}
		\end{cases} \]
then
\begin{multline*}
	\sig{\ha{\lambda} \cdot \wa{0}{\thet}}{\ha{\mu} \cdot \wa{0}{\thet}} \\
	= u_{2}( \lambda, \mu ) \cdot \hilk{\frac{\del{2}{\mu}}{\del{2}{- \lambda / \ol{\mu}}}}{- \frac{\norm{\mu} \del{2}{\mu}}{\del{2}{\lambda}}} \cdot \hilk{- \frac{1}{\norm{\mu}}}{\frac{\del{2}{\lambda} \del{2}{\mu}}{\del{2}{- \lambda / \ol{\mu}} \thet}},
\end{multline*}
i.e. for $ \gamma_{1} $, $ \gamma_{2} \in T( k ) \cdot \wa{0}{\thet} $,
\begin{equation} \label{e:siglwmw}
	\sig{\gamma_{1}}{\gamma_{2}}
	=	\begin{aligned}[t]
			&u_{2}( \Xg{1}, \Xg{2} ) \cdot \hilk{\frac{\del{2}{\Xg{2}}}{\del{2}{\Xg{3}}}}{- \frac{\norm{\Xg{2}} \del{2}{\Xg{2}}}{\del{2}{\Xg{1}}}} \\
			&\phantom{\ } \cdot \hilk{\frac{\Xg{3}}{\Xg{1} \Xg{2}}}{\frac{\del{2}{\Xg{1}} \del{2}{\Xg{2}}}{\del{2}{\Xg{3}} \thet}}.
		\end{aligned}
\end{equation}
Again it can be checked that for any $ \gamma_{1} $, $ \gamma_{2} \in G( k ) $ such that $ \Xg{3} / ( \Xg{1} \Xg{2} ) = - 1 / \norm{\Xg{2}} $, \eqref{e:siglwmw} can be applied.

We now look at case (4) of Proposition~\ref{p:sigma}, i.e. where
\begin{multline*}
	\sig{\ha{\lambda} \cdot \wa{0}{\thet}}{\xa{r}{m} \cdot \ha{\mu} \cdot \wa{0}{\thet}} \\
	= \Sigma( r, m ) \cdot \sig{\ha{\lambda}}{\ha{\frac{\thet}{\ol{m}}}} \cdot \sig{\ha{\frac{\lambda \thet}{\ol{m}}}}{\ha{\mu}}.
\end{multline*}
We have
\begin{gather*}
	X( \ha{\lambda} \cdot \wa{0}{\thet} ) = \lambda, \quad
	X( \xa{r}{m} \cdot \ha{\mu} \cdot \wa{0}{\thet} ) = \mu, \\
	X( \ha{\lambda} \cdot \wa{0}{\thet} \cdot \xa{r}{m} \cdot \ha{\mu} \cdot \wa{0}{\thet} ) = \frac{\lambda \mu \thet}{\ol{m}}.
\end{gather*}
In all the previous cases, it is apparent that for any $ \gamma_{1} $, $ \gamma_{2} \in T( k ) \cdot W $, we have $ \Xg{3} / \Xg{1} \Xg{2} \in k^{\times} $.  We want to find out if a simplified formula exists and be similar to \eqref{e:siglm}, \eqref{e:siglwm} and \eqref{e:siglwmw} if
	\[ \frac{X( \ha{\lambda} \cdot \wa{0}{\thet} \cdot \xa{r}{m} \cdot \ha{\mu} \cdot \wa{0}{\thet} )}{X( \ha{\lambda} \cdot \wa{0}{\thet} ) X( \xa{r}{m} \cdot \ha{\mu} \cdot \wa{0}{\thet} )} = \frac{\thet}{\ol{m}} \in k^{\times}. \]
So assume $ \thet / \ol{m} \in k^{\times} $.  This implies that $ r = 0 $ and $ m \in k^{\times} \thet $.  By Proposition~\ref{p:sigma} and after simplifying, we have
\begin{align*}
	&\sig{\ha{\lambda} \cdot \wa{0}{\thet}}{\xa{0}{m} \cdot \ha{\mu} \cdot \wa{0}{\thet}} \\
	&=	\begin{cases}
			\displaystyle \hilk{\frac{\lambda \thet}{\ol{m}}}{- \lambda \mu}, &\text{if $ \lambda $, $ \mu \in k^{\times} $;} \\
			\displaystyle \hilk{\frac{\mu \thet}{\ol{m}}}{- \frac{\del{2}{\lambda} \ol{m}}{\thet^{2}}}, &\begin{aligned}[t]
				\text{if}\ &\text{$ \lambda \notin k^{\times} $,} \\
				&\text{$ \mu \in k^{\times} $;}
					\end{aligned} \\
			\displaystyle \hilk{\frac{\lambda \thet}{\ol{m}}}{\frac{\norm{\mu} \del{2}{\mu}}{\lambda \thet}}, &\begin{aligned}[t]
				\text{if}\ &\text{$ \lambda \in k^{\times} $,} \\
				&\text{$ \mu \notin k^{\times} $;}
				\end{aligned} \\
			\begin{aligned}[b]
				&\hilk{- 1}{\norm{\lambda}} \cdot \hilk{\frac{\norm{\lambda} \del{2}{\lambda} \ol{m}}{\thet^{2}}}{\frac{\lambda \mu \thet}{\ol{m}}} \\
				&\phantom{\ } \cdot \hilk{\frac{\thet}{\ol{m}}}{\frac{\del{2}{\lambda}}{\thet}},
			\end{aligned} &\begin{aligned}[t]
				\text{if}\ &\text{$ \lambda $, $ \mu \notin k^{\times} $,} \\
				&\text{$ \lambda \mu \in k^{\times} $;}
				\end{aligned} \\
			\begin{aligned}[b]
				&\hilk{\norm{\lambda}}{\norm{\mu}} \cdot \hilk{\frac{\del{2}{\mu}}{\del{2}{\lambda \mu \thet / \ol{m}}}}{- \frac{\norm{\mu} \del{2}{\mu}}{\del{2}{\lambda}}} \\
				&\phantom{\ } \cdot \hilk{\frac{\thet}{\ol{m}}}{\frac{\del{2}{\lambda} \del{2}{\mu}}{\del{2}{\lambda \mu \thet / \ol{m}} \thet}},
			\end{aligned} &\text{if $ \lambda $, $ \mu $, $ \lambda \mu \notin k^{\times} $.}
		\end{cases}
\end{align*}
For any $ \lambda $, $ \mu \in K^{\times} $, $ m \in k^{\times} \thet $, let
	\[ F_{4}\left( \lambda, \mu, \frac{\thet}{\ol{m}} \right) = \begin{aligned}[t]
				&\hilk{\norm{\lambda}}{\norm{\mu}} \cdot \hilk{\frac{\del{2}{\mu}}{\del{2}{\lambda \mu \thet / \ol{m}}}}{- \frac{\norm{\mu} \del{2}{\mu}}{\del{2}{\lambda}}} \\
				&\phantom{\ } \cdot \hilk{\frac{\thet}{\ol{m}}}{\frac{\del{2}{\lambda} \del{2}{\mu}}{\del{2}{\lambda \mu \thet / \ol{m}} \thet}}.
			\end{aligned} \]
Then we have $ F_{4}( \lambda, \mu, \thet / \ol{m} ) = 1 $ for $ \lambda $, $ \mu \in k^{\times} $, and
\begin{multline*}
	\sig{\ha{\lambda} \cdot \wa{0}{\thet}}{\xa{0}{m} \cdot \ha{\mu} \cdot \wa{0}{\thet}} \\
	=	\begin{cases}
			\displaystyle \hilk{\frac{\lambda \thet}{\ol{m}}}{- \lambda \mu}, &\text{if $ \lambda $, $ \mu \in k^{\times} $;} \\
			\displaystyle F_{4}\left( \lambda, \mu, \frac{\thet}{\ol{m}} \right), &\text{otherwise.}
		\end{cases}
\end{multline*}
So if
	\[ u_{3}\left( \lambda, \mu, \frac{\thet}{\ol{m}} \right) = \begin{cases}
			\displaystyle \hilk{\frac{\lambda \thet}{\ol{m}}}{- \lambda \mu}, &\text{if $ \lambda $, $ \mu \in k^{\times} $;} \\
			\displaystyle \hilk{\norm{\lambda}}{\norm{\mu}}, &\text{otherwise,}
		\end{cases} \]
then
\begin{multline*}
	\sig{\ha{\lambda} \cdot \wa{0}{\thet}}{\xa{0}{m} \cdot \ha{\mu} \cdot \wa{0}{\thet}} \\
	= u_{3}\left( \lambda, \mu, \frac{\thet}{\ol{m}} \right) \cdot \hilk{\frac{\del{2}{\mu}}{\del{2}{\lambda \mu \thet / \ol{m}}}}{- \frac{\norm{\mu} \del{2}{\mu}}{\del{2}{\lambda}}} \cdot \hilk{\frac{\thet}{\ol{m}}}{\frac{\del{2}{\lambda} \del{2}{\mu}}{\del{2}{\lambda \mu \thet / \ol{m}} \thet}}.
\end{multline*}
In other words, if $ \gamma_{1} \in T( k ) \cdot \wa{0}{\thet} $, $ \gamma_{2} \in \xa{0}{m} \cdot T( k ) \cdot \wa{0}{\thet} $, $ m \in k^{\times} \thet $, then
\begin{multline} \label{e:siglwxmw}
	\sig{\gamma_{1}}{\gamma_{2}} \\
	=	\begin{aligned}[t]
			&u_{3}\left( \Xg{1}, \Xg{2}, \frac{\Xg{3}}{\Xg{1} \Xg{2}} \right) \cdot \hilk{\frac{\del{2}{\Xg{2}}}{\del{2}{\Xg{3}}}}{- \frac{\norm{\Xg{2}} \del{2}{\Xg{2}}}{\del{2}{\Xg{1}}}} \\
			&\phantom{\ } \cdot \hilk{\frac{\Xg{3}}{\Xg{1} \Xg{2}}}{\frac{\del{2}{\Xg{1}} \del{2}{\Xg{2}}}{\del{2}{\Xg{3}} \thet}},
		\end{aligned}
\end{multline}
and we observe that \eqref{e:siglwxmw} may be applied to any two elements $ \gamma_{1} $, $ \gamma_{2} \in G( k ) $ such that $ \Xg{3} / ( \Xg{1} \Xg{2} ) = \thet / \ol{m} \in k^{\times} $. (Note that using the properties of Hilbert symbols, we can show that if $ \thet / \ol{m} = 1 $ or $ 1 / \norm{\mu} $, then $ u_{3}( \lambda, \mu, \thet / \ol{m} ) = u_{1}( \lambda, \mu ) $, and if $ \thet / \ol{m} = - 1 / \norm{\mu} $, then $ u_{3}( \lambda, \mu, \thet / \ol{m} ) = u_{2}( \lambda, \mu ) $.)

As we can see, \eqref{e:siglwm}, \eqref{e:siglwmw} and \eqref{e:siglwxmw} each differ from each other by a factor, i.e. if $ \gamma_{1} $, $ \gamma_{2} \in G( k ) $ such that $ \Xg{3} / \Xg{1} \Xg{2} \in k^{\times} $, then we want a function
\begin{align*}
	&u'( \Xg{1}, \Xg{2}, \Xg{3} ) \\
	&=	\begin{cases}
			\displaystyle \hilk{\frac{\Xg{3}}{\Xg{2}}}{- \Xg{1} \Xg{2}},
				&\begin{aligned}[t]
					\text{if}\ &\text{$ \Xg{1} $, $ \Xg{2} $, $ \Xg{3} \in k^{\times} $;} \\
				\end{aligned} \\
			\displaystyle \hilk{\norm{\Xg{1}}}{\norm{\Xg{2}}}, &\text{otherwise,}
		\end{cases}
\end{align*}
so that
	\[ \sig{\gamma_{1}}{\gamma_{2}}
	=	\begin{aligned}[t]
			&u'( \Xg{1}, \Xg{2}, \Xg{3} ) \cdot \hilk{\frac{\del{2}{\Xg{2}}}{\del{2}{\Xg{3}}}}{- \frac{\norm{\Xg{2}} \del{2}{\Xg{2}}}{\del{2}{\Xg{1}}}} \\
			&\phantom{\ } \cdot \hilk{\frac{\Xg{3}}{\Xg{1} \Xg{2}}}{\frac{\del{2}{\Xg{1}} \del{2}{\Xg{2}}}{\del{2}{\Xg{3}} \thet}}.
		\end{aligned} \]
But we have, using the properties of Hilbert symbols,
	\[ \hilk{\norm{\frac{\Xg{3}}{\Xg{2}}}}{- \norm{\Xg{1} \Xg{2}}} = \hilk{\norm{\Xg{1}}}{\norm{\Xg{2}}} \]
for any $ \Xg{1} $, $ \Xg{2} $, $ \Xg{3} \in K^{\times} $ such that $ \Xg{3} / ( \Xg{1} \Xg{2} ) \in k^{\times} $.  These remarks bring us to define for any $ s $, $ t \in K^{\times} $,
	\[ \ug{s}{t} = \begin{cases} \hilk{s}{- t}, &\text{if $ s $, $ t \in k^{\times} $;} \\ \hilk{\norm{s}}{- \norm{t}}, &\text{otherwise,} \end{cases} \]
so that we have
\begin{equation} \label{e:sigk}
	\sig{\gamma_{1}}{\gamma_{2}}
	=	\begin{aligned}[t]
			&\ug{\frac{\Xg{3}}{\Xg{2}}}{\Xg{1} \Xg{2}} \cdot \hilk{\frac{\del{2}{\Xg{2}}}{\del{2}{\Xg{3}}}}{- \frac{\norm{\Xg{2}} \del{2}{\Xg{2}}}{\del{2}{\Xg{1}}}} \\
			&\phantom{\ } \cdot \hilk{\frac{\Xg{3}}{\Xg{1} \Xg{2}}}{\frac{\del{2}{\Xg{1}} \del{2}{\Xg{2}}}{\del{2}{\Xg{3}} \thet}}.
		\end{aligned}
\end{equation}
\eqref{e:sigk} obviously is only defined when $ \Xg{3} / ( \Xg{1} \Xg{2} ) \in k^{\times} $, which implies that we need to find a different formula for the case when $ \Xg{3} / ( \Xg{1} \Xg{2} ) \notin k^{\times} $.

We still have yet to look at
	\[ \sig{\ha{\lambda} \cdot \wa{0}{\thet}}{\xa{r}{m} \cdot \ha{\mu} \cdot \wa{0}{\thet}}, \]
where $ r \neq 0 $.  What we first need to do is to find a formula for $ \Sigma( r, m ) $, where $ \Sigma( r, m ) $ is as defined in Proposition~\ref{p:sigma}.  As $ \Sigma( r, m ) $ depends on whether $ r $, $ m $ and $ - r \thet / \ol{m} $ lie in the subfield $ k $, we should find a formula which only involves these three values.

Recall that for $ \spl{r'}{m'} \in A $ (see \eqref{e:defA1}), $ r' = a' + b' \thet $, $ m' = c' + d' \thet $, $ a' $, $ b' $, $ c' $, $ d' \in k $, $ b' $, $ c' \neq 0 $ and $ - r' \thet / \ol{m'} \notin k^{\times} $,
	\[ \Sigma( r', m' ) = \hilk{\frac{a' c' + b' d' \thet^{2}}{c'}}{- \frac{\norm{r'}}{\norm{m'} \thet^{2}}} \cdot \hilk{\norm{r'}}{- \frac{b' \thet^{2}}{c'}}. \]
Since $ c' = - \norm{r'} / 2 $, this implies that
	\[ \Sigma( r', m' ) = \hilk{- \frac{2 ( a' c' + b' d' \thet^{2} )}{\norm{r'}}}{- \frac{\norm{r'}}{\norm{m'} \thet^{2}}} \cdot \hilk{\norm{r'}}{\frac{2 b' \thet^{2}}{\norm{r'}}}. \]
Also, we have
	\[ \del{2}{r'} = - \frac{1}{2 b' \thet}, \quad \del{2}{- \frac{r' \thet}{\ol{m'}}} = \frac{\norm{m'}}{2 ( a' c' + b' d' \thet^{2} ) \thet}.  \]
Thus,
	\[ \Sigma( r', m' )
	=	\begin{aligned}[t]
			&\hilk{- \frac{\norm{m'}}{\norm{r'} \del{2}{- r' \thet / \ol{m'}} \thet}}{- \frac{\norm{r'}}{\norm{m'} \thet^{2}}} \\
			&\phantom{\ } \cdot \hilk{\norm{r'}}{- \frac{\thet}{\norm{r'} \del{2}{r'}}},
		\end{aligned} \]
and simplifying the above, we get
	\[ \Sigma( r', m' ) = \hilk{- \frac{\del{2}{- r' \thet / \ol{m'}}}{\thet}}{\norm{- \frac{r' \thet}{\ol{m'}}}} \cdot \hilk{\norm{r'}}{\frac{\del{2}{r'}}{\thet}}. \]
In fact, if we let $ \spl{s_{1}}{n_{1}} \in A $ with $ s_{1} \neq 0 $, and
	\[ F_{5}( s_{1}, n_{1} ) = \hilk{- \frac{\del{2}{- s_{1} \thet / \ol{n_{1}}}}{\thet}}{\norm{- \frac{s_{1} \thet}{\ol{n_{1}}}}} \cdot \hilk{\norm{s_{1}}}{\frac{\del{2}{s_{1}}}{\thet}}, \]
then we can check that
	\[ \Sigma( r, m ) = F_{5}( r, m ) \]
for any $ \spl{r}{m} \in A $, $ r \neq 0 $.

So this implies that by Proposition~\ref{p:sigma} and \eqref{e:sigk},
\begin{align}
	&\sig{\ha{\lambda} \cdot \wa{0}{\thet}}{\xa{r}{m} \cdot \ha{\mu} \cdot \wa{0}{\thet}} \label{e:siglwrmw} \\
	&= \Sigma( r, m ) \cdot \sig{\ha{\lambda}}{\ha{\frac{\thet}{\ol{m}}}} \cdot \sig{\ha{\frac{\lambda \thet}{\ol{m}}}}{\ha{\mu}} \notag \\
	&=	\begin{aligned}[t]
			&\hilk{- \frac{\del{2}{- r \thet / \ol{m}}}{\thet}}{\norm{- \frac{r \thet}{\ol{m}}}} \cdot \hilk{\norm{r}}{\frac{\del{2}{r}}{\thet}} \cdot \Bigg[ \ug{\lambda}{\frac{\lambda \thet}{\ol{m}}} \\
			&\phantom{\ } \cdot \hilk{\frac{\del{2}{\thet / \ol{m}}}{\del{2}{\lambda \thet / \ol{m}}}}{- \frac{\norm{\thet / \ol{m}} \del{2}{\thet / \ol{m}}}{\del{2}{\lambda}}} \Bigg] \cdot \Bigg[ \ug{\frac{\lambda \thet}{\ol{m}}}{\frac{\lambda \mu \thet}{\ol{m}}} \\
			&\phantom{\ } \cdot \hilk{\frac{\del{2}{\mu}}{\del{2}{\lambda \mu \thet / \ol{m}}}}{- \frac{\norm{\mu} \del{2}{\mu}}{\del{2}{\lambda \thet / \ol{m}}}} \Bigg]. \notag
		\end{aligned}
\end{align}

We want to show, for every $ \gamma_{1} $, $ \gamma_{2} \in G( k ) $ such that $ \Xg{3} / ( \Xg{1} \Xg{2} ) \notin k^{\times} $, that \sig{\gamma_{1}}{\gamma_{2}} can be calculated using \eqref{e:siglwrmw}.  We will also find an expression for \sig{\gamma_{1}}{\gamma_{2}} only in terms of the bottom row entries of $ \gamma_{1} $, $ \gamma_{2} $ and $ \gamma_{3} = \gamma_{1} \gamma_{2} $.

Assume that $ \Xg{3} / ( \Xg{1} \Xg{2} ) \notin k^{\times} $.  It is obvious that $ \Xg{i} $ for each $ i = 1 $, $ 2 $, $ 3 $ is never zero.  Let
	\[ \gamma_{i} = \begin{pmatrix} a_{i} & * & * \\ d_{i} & * & * \\ g_{i} & h_{i} & j_{i} \end{pmatrix}. \]
Then by the Bruhat decomposition \eqref{e:bruc},
	\[ \gamma_{i} = \xa{- \frac{\ol{d_{i}}}{\ol{g_{i}}}}{\frac{a_{i}}{g_{i}}} \cdot \ha{\frac{1}{\ol{g_{i}} \thet}} \cdot \wa{0}{\thet} \cdot \xa{\frac{h_{i}}{g_{i}}}{\frac{j_{i}}{g_{i}}}. \]
By Proposition~\ref{p:sigma},
\begin{align*}
	\sig{\gamma_{1}}{\gamma_{2}}
	&=	\begin{aligned}[t]
			&\sigma \bigg( \xa{- \frac{\ol{d_{1}}}{\ol{g_{1}}}}{\frac{a_{1}}{g_{1}}} \cdot \ha{\frac{1}{\ol{g_{1}} \thet}} \cdot \wa{0}{\thet} \cdot \xa{\frac{h_{1}}{g_{1}}}{\frac{j_{1}}{g_{1}}}, \\
			&\phantom{\sigma \bigg( \xa{r}{m} \bigg)} \xa{- \frac{\ol{d_{2}}}{\ol{g_{2}}}}{\frac{a_{2}}{g_{2}}} \cdot \ha{\frac{1}{\ol{g_{2}} \thet}} \cdot \wa{0}{\thet} \cdot \xa{\frac{h_{2}}{g_{2}}}{\frac{j_{2}}{g_{2}}} \bigg)
		\end{aligned} \\
	&=	\begin{aligned}[t]
			&\sigma \bigg( \ha{\frac{1}{\ol{g_{1}} \thet}} \cdot \wa{0}{\thet}, \\
			&\phantom{\sigma \bigg( \xa{r}{m} \bigg)} \xa{\frac{h_{1}}{g_{1}}}{\frac{j_{1}}{g_{1}}} \cdot \xa{- \frac{\ol{d_{2}}}{\ol{g_{2}}}}{\frac{a_{2}}{g_{2}}} \cdot \ha{\frac{1}{\ol{g_{2}} \thet}} \cdot \wa{0}{\thet} \bigg).
		\end{aligned}
\end{align*}
Let
	\[ \xa{r}{m} =  \xa{\frac{h_{1}}{g_{1}}}{\frac{j_{1}}{g_{1}}} \cdot \xa{- \frac{\ol{d_{2}}}{\ol{g_{2}}}}{\frac{a_{2}}{g_{2}}} = \xa{\frac{h_{1}}{g_{1}} - \frac{\ol{d_{2}}}{\ol{g_{2}}}}{\frac{j_{1}}{g_{1}} + \frac{h_{1} d_{2}}{g_{1} g_{2}} + \frac{a_{2}}{g_{2}}}. \]
Then we have
\begin{gather*}
	\gamma_{3} = \begin{pmatrix} * & * & * \\ * & * & * \\ m g_{1} g_{2} & - r g_{1} \ol{g_{2}} / g_{2} + m g_{1} h_{2} & g_{1} / \ol{g_{2}} + r g_{1} \ol{h_{2}} / g_{2} + m g_{1} j_{2} \end{pmatrix}, \\
	\Xg{3} = \frac{1}{\ol{m} \ol{g_{1}} \ol{g_{2}} \thet} = \frac{\Xg{1} \Xg{2} \thet}{\ol{m}}.
\end{gather*}

Since $ \Xg{3} / ( \Xg{1} \Xg{2} ) \notin k^{\times} $, this implies that $ \thet / \ol{m} \notin k^{\times} $, i.e. $ r \neq 0 $.   Hence we use \eqref{e:siglwrmw} so that
\begin{equation*} 
	\sig{\gamma_{1}}{\gamma_{2}}
	=	\begin{aligned}[t]
			&\hilk{- \frac{\del{2}{- r \Xg{3} / ( \Xg{1} \Xg{2} )}}{\thet}}{\norm{- \frac{r \Xg{3}}{\Xg{1} \Xg{2}}}} \\
			&\phantom{\ } \cdot \hilk{\norm{r}}{\frac{\del{2}{r}}{\thet}} \cdot \ug{\Xg{1}}{\frac{\Xg{3}}{\Xg{2}}} \cdot \ug{\frac{\Xg{3}}{\Xg{2}}}{\Xg{3}} \\
			&\phantom{\ } \cdot \bigg( \frac{\del{2}{\Xg{3} / ( \Xg{1} \Xg{2} )}}{\del{2}{\Xg{3} / \Xg{2}}}, \\
			&\phantom{\del{2}{\Xg{3}}} - \frac{\norm{\Xg{3} / ( \Xg{1} \Xg{2} )} \del{2}{\Xg{3} / ( \Xg{1} \Xg{2} )}}{\del{2}{\Xg{1}}} \bigg)_{k, 2} \\
			&\phantom{\ } \cdot \hilk{\frac{\del{2}{\Xg{2}}}{\del{2}{\Xg{3}}}}{- \frac{\norm{\Xg{2}} \del{2}{\Xg{2}}}{\del{2}{\Xg{3} / \Xg{2}}}}.
		\end{aligned}
\end{equation*}

We could just let $ r = h_{1} / g_{1} - \ol{d_{2}} / \ol{g_{2}} $, but there is another way to calculate $ r $ from just the bottom rows of the $ \gamma_{i} $'s.  We have
	\[ g_{3} = m g_{1} g_{2}, \quad h_{3} = - \frac{r g_{1} \ol{g_{2}}}{g_{2}} + m g_{1} h_{2}, \]
and rearranging the above,
	\[ r = \frac{h_{2} g_{3} - h_{3} g_{2}}{g_{1} \ol{g_{2}}}. \]
Thus our result is proved.
\end{proof}

\begin{rem} \label{r:comm2}
Recall that in Lemma~\ref{l:comm}, we established that the commutator of the 2-cocycle $ \sigma_{u} $ on the torus $ T( k ) $ was, for $ \lambda $, $ \mu \in K^{\times} $,
	\[ [ \lambda, \mu ]_{\sigma_{u}} = \frac{\sigu{\ha{\lambda}}{\ha{\mu}}}{\sigu{\ha{\mu}}{\ha{\lambda}}} = \hilbkn{\lambda}{\mu}^{2} \cdot \hilbkn{\lambda}{\ol{\mu}}^{- 1}. \]
Since $ \sigma_{u}^{n / 2} = \sigma $, we can calculate the commutator of the 2-cocycle $ \sigma $ on $ T( k ) $.  Since $ \hilbkn{s}{t}^{n / 2} = \hilbk{s}{t} $ for all $ s $, $ t \in K^{\times} $, we have
	\[ [ \lambda, \mu ]_{\sigma} = [ \lambda, \mu ]_{\sigma_{u}}^{n / 2} = \hilbk{\lambda}{\mu}^{2} \cdot \hilbk{\lambda}{\ol{\mu}}^{- 1}. \]
But using \eqref{e:prop2i} and \eqref{e:prop5ii}, we get
	\[ [ \lambda, \mu ]_{\sigma} = \hilbk{\lambda}{\ol{\mu}}. \]
\end{rem}


\part{The local Kubota symbol} \label{p:kubota}

\chapter{The compact open subgroup on which the quadratic 2-cocycle splits} \label{c:splsubgrp}

We have a 2-cocycle $ \sigma $ on $ G( k ) $, where $ k $ is a local field.  In the case that $ k $ is non-archimedean, there is a compact open subgroup $ \hat{\Gamma}_{\mathfrak{p}} $ on which $ \sigma $ splits, i.e.
	\[ \sigma|_{\hat{\Gamma}_{\mathfrak{p}}} = \partial \kappa, \]
where $ \kappa \colon \hat{\Gamma}_{\mathfrak{p}} \to \mu_{2} $ is a 1-cochain.  Note that $ \kappa $ is not quite unique, since it may be multiplied by a homomorphism $ \hat{\Gamma}_{\mathfrak{p}} \to \mu_{2} $.  The function $ \kappa $ is called a local Kubota symbol.  In this chapter we shall determine the compact open subgroup $ \hat{\Gamma}_{\mathfrak{p}} $ on which $ \sigma $ splits.

For the rest of this chapter, let $ k $ be a local field, and $ K = k( \thet ) $ be the quadratic extension of $ k $.  Also, we will let $ \mathfrak{p} $ denote the maximal ideal of $ k $.  $ \mathfrak{p} $ may be odd or even, depending on $ k $.  When $ K / k $ is ramified, we will assume that $ \thet $ is a prime element of $ K $.

In addition to unramified and ramified extensions, we will also need to consider split extensions, i.e. $ K \cong k \oplus k $, when establishing the compact open subgroup on which $ \sigma $ splits for a given extension $ K / k $.  This is so that we can consider the ad\`{e}le group in Chapter~\ref{c:general}.  We will obtain the following theorem:

\begin{thm} \label{t:compact}
Define a compact open subgroup $\hat{\Gamma}_{\mathfrak{p}}$ of $G(k)$ as follows:
	\[ \hat{\Gamma}_{\mathfrak{p}} =	\begin{cases}
				\displaystyle G( \Ok ), &\text{if $ \mathfrak{p} $ is odd and unramified (either inert or split) in $ K $;} \\
				\displaystyle G( \Ok, \thet ), &\text{if $ \mathfrak{p} $ is odd and ramified in $ K $;} \\
				\displaystyle G( \Ok, 4 ), &\text{if $ \mathfrak{p} $ is even and split in $ K $,}
			\end{cases} \]
where $ G( \Ok, \thet ) $ is defined as in \eqref{e:gthet} and $ G( \Ok, 4 ) $ is defined as in \eqref{e:geven}.
Then $\sigma$ splits on $\hat{\Gamma}_{\mathfrak{p}}$.
\end{thm}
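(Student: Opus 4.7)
The plan is to construct, in each of the three cases, an explicit 1-cochain $\kappa\colon\hat\Gamma_{\mathfrak{p}}\to\mu_{2}$ with $\partial\kappa=\sigma|_{\hat\Gamma_{\mathfrak{p}}}$; in fact I will argue that after making a convenient choice of normalization (using the Iwahori factorization), the restriction of $\sigma$ is trivial up to Hilbert symbols that all vanish on the chosen congruence subgroup. The key starting point is Proposition~\ref{p:iwahori}: any $g\in\hat\Gamma_{\mathfrak{p}}$ factors uniquely as $g=n\cdot h\cdot\bar n$ with $n\in N(\Ok)$, $h\in T(\Ok)$ and $\bar n\in\ol N(\mathfrak{a})$ for the appropriate ideal $\mathfrak{a}$. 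Combined with the property $\sigma(g,n)=\sigma(n,g)=1$ for $n\in N(k)$ (which is built into the section $\delta'$), this reduces the problem to computing $\sigma$ on products of the form $(h_{1}\bar n_{1})(h_{2}\bar n_{2})$, i.e.\ on elements of the $TW$--cell only through their $h$ and $\bar n$ parts. I would then feed these into Theorem~\ref{t:sigma} using $X(\gamma_{i})=\ol{j_{i}}^{-1}$ when $g_{i}=0$ and $X(\gamma_{i})=(\ol{g_{i}}\thet)^{-1}$ otherwise.

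For the first case ($\mathfrak{p}$ odd, $K/k$ unramified or split), the Iwahori data live in units: $\lambda\in\OK^{\times}$ and the bottom-row entries $g,h,j$ of any $\gamma\in G(\Ok)$ satisfy $g\in\OK$, $j\in\OK^{\times}$, etc. Every quantity that appears inside a Hilbert symbol in Proposition~\ref{p:sigma} and Theorem~\ref{t:sigma} (norms, traces $\trace{\lambda\thet}$, the auxiliary $q$, and the residues $\del{2}{\cdot}$) is, up to multiplication by $\thet$, either a unit or a power of $\thet$; since $\thet$ itself is a unit in $K$ in the unramified case (and in the split case one works component-wise), all tame Hilbert symbols vanish by Proposition~\ref{p:tame}. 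A short direct check that $\sigma|_{G(\Ok)}$ is a coboundary then follows by setting $\kappa\equiv 1$.

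For the second case ($\mathfrak{p}$ odd, $K/k$ ramified, $\thet$ a uniformizer in $K$), the congruence $g\equiv I_{3}\bmod\thet$ forces $\lambda\equiv 1\bmod\thet$ in the Iwahori decomposition, and the entries of $\bar n\in\ol N(\thet)$ satisfy $r,m\in\thet\OK$. The Hilbert symbols in Theorem~\ref{t:sigma} then involve pairs $(a,b)_{k,2}$ in which one of $a,b$ is a square modulo $\thet^{2}\cap k=\mathfrak{p}$ and the other is a unit in $\Ok$, so Proposition~\ref{p:tame} again makes them trivial. The $\Sigma(r,m)$ correction term from Proposition~\ref{p:sigma}(4) is handled the same way because $\norm{r},\norm{m}$ lie in $\mathfrak{p}^{2}$.

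The third (even split) case is where I expect the real work. With $K=k\oplus k$ the symbols $(-,-)_{K,2}$ factor as products of $(-,-)_{k,2}$, but at even primes Proposition~\ref{p:tame} fails and one must replace it by the classical fact that $1+4\Ok\subset(\Ok^{\times})^{2}$ (so $(1+4u,v)_{k,2}=1$ for any $v$). I would exploit this by substituting the Iwahori factorization of $\gamma\in G(\Ok,4)$ into Theorem~\ref{t:sigma}: each occurrence of $\lambda,\mu,$ the trace $\trace{\lambda\thet}$, the auxiliary $q$, and the $r(\gamma_{1},\gamma_{2})$ of the non-torus case is congruent to $1$ (or to a fixed unit times a square) modulo $4$, forcing every Hilbert symbol to be $1$. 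The main obstacle will be bookkeeping: one must verify that when $\gamma_{1},\gamma_{2}\in G(\Ok,4)$ but $\gamma_{3}=\gamma_{1}\gamma_{2}$ lies in the opposite Bruhat cell from $\gamma_{1}\gamma_{2}$'s factors, the ``$r$'' built from $(h_{2}g_{3}-h_{3}g_{2})/(g_{1}\ol{g_{2}})$ still satisfies $r\equiv 0\bmod 4$ and $\trace{m}=-\norm{r}$ lies deeply enough in $\mathfrak{p}$ for the relevant $\hilk{-}{-}$ factors to vanish. Once this congruence is established in all Bruhat strata, the conclusion in all three cases is that $\sigma|_{\hat\Gamma_{\mathfrak{p}}}$ is a coboundary, with an explicit $\kappa$ that will be computed in the next chapter.
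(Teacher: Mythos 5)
Your strategy rests on a conflation of two different statements: that $\sigma$ \emph{splits} on $\hat\Gamma_{\mathfrak{p}}$ (i.e.\ $\sigma|_{\hat\Gamma_{\mathfrak{p}}}=\partial\kappa$ for some $1$-cochain $\kappa$), and that $\sigma|_{\hat\Gamma_{\mathfrak{p}}}$ is \emph{identically} $1$ (so that $\kappa\equiv 1$ works). The latter is false, so the pointwise-vanishing argument cannot succeed in any of the three cases. If $\sigma$ were identically $1$ on $\hat\Gamma_{\mathfrak{p}}$, then every admissible $\kappa$ would be a group homomorphism; but the local Kubota symbols computed in Part~\ref{p:kubota} are visibly not homomorphisms: for $\mathfrak{p}$ odd and unramified, Proposition~\ref{p:kupha} gives $\kup{\ha{\lambda}}=\hilk{a}{b}$ for $\lambda=a+b\thet\in\OK^{\times}$ with $b\notin\Ok^{\times}$, and Proposition~\ref{p:kupxa} gives nontrivial values already on $\ol{N}(\Ok)$. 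The concrete failure in your tame-symbol step is that the \emph{arguments} of the Hilbert symbols in Proposition~\ref{p:sigma} and Theorem~\ref{t:sigma} need not be units even when all matrix entries are: the auxiliary quantity $q=a+bc/d$ can have positive valuation (whenever $ad+bc\equiv 0 \bmod{\mathfrak{p}}$), and likewise $\trace{\lambda\thet}$, the $\del{2}{\cdot}$ terms, and $r(\gamma_{1},\gamma_{2})$; Proposition~\ref{p:tame} then yields a residue character raised to an odd power, which is $-1$ in general, not $1$. Note also that $\sigma(g,n)=\sigma(n,g)=1$ holds only for the \emph{upper} unipotents $n\in N(k)$; there is no analogous triviality against $\ol{N}$, so the Iwahori factorisation $g=nh\bar n$ does not dispose of the $\bar n$ contributions. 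Finally, your key fact in the even case is false: $1+4\Ok\not\subset(\Ok^{\times})^{2}$ (only $1+8\Ok$ consists of squares, by Hensel, as the paper itself uses in Proposition~\ref{p:kupha2}); for instance $(5,2)_{\Qt,2}=-1$, so $(1+4u,v)_{k,2}$ certainly does not vanish for all $v$.

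The paper's proof is structural rather than computational, with a different input in each case. For $\mathfrak{p}$ odd and unramified it invokes Deligne's canonical central extension: $G$ is semi-simple and simply connected over $\Spec(\Ok)$ (Lemma~\ref{l:speco}), Deligne's class coincides with Deodhar's (Lemma~\ref{l:utce}), and Deligne's functoriality provides a splitting over $G(\Ok)$ when $n$ is prime to the residue characteristic. For $\mathfrak{p}$ odd and split it cites Kazhdan--Patterson for $\SL_{3}(\Ok)$. For the ramified case it uses only the \emph{a priori} fact that $\sigma$ splits on $G(\Ok,\mathfrak{P}^{N})$ for some large $N$, together with Serre's restriction lemma (Proposition~\ref{p:serre}) and the observation that each index $\left|G(\Ok,\mathfrak{P}^{m})/G(\Ok,\mathfrak{P}^{m+1})\right|$ is odd (a vector space over a field of odd characteristic), so the $2$-torsion class dies step by step down to level $\mathfrak{P}$ --- an argument that says nothing about pointwise values of $\sigma$. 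For the even split case it quotes Bass--Milnor--Serre's level computation (Proposition~\ref{p:bms}), which for $r=2$ gives level $4$. A self-contained computational proof along your lines would require exhibiting the (nontrivial) $\kappa$ explicitly and verifying $\partial\kappa=\sigma$ on all of $\hat\Gamma_{\mathfrak{p}}$ --- that is exactly the content of Part~\ref{p:kubota}, and it is carried out there only \emph{after} the splitting is known to exist.
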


The rest of this chapter is a proof of the above theorem.

\section{The odd primes} \label{s:odd}

$ K / k $ is either unramified, ramified or split.  We will look at each type extension in turn.




\subsection{The unramified extension} \label{ss:oddunram}

In \cite{deligne96}, Deligne constructed for any reductive group $ G $ over $ k $ a canonical central extension
	\[ 0 \lra H^{2}( k, \mathbb{Z} / n( 2 ) ) \lra E( c_{k} ) \lra G( k ) \lra 1. \]
In the above, $ \mathbb{Z} / n( 2 ) = \mu_{n}^{\otimes -2} $, and in the case that $ k $ contains an $ n $-th root of unity, $ H^{2}( k, \mathbb{Z} / n( 2 ) ) $ is canonically isomorphic to $ \mu_{n} $ (see 5.4 of \cite{deligne96}).

Suppose now that $ G $ is defined over the valuation ring $ \Ok $ in $ k $.  Deligne shows that when $ G $ is semi-simple and simply connected over $ \Spec( \Ok ) $
and $n$ is not a multiple of $\mathfrak{p}$, the functoriality for the map $ \Spec( k ) \to \Spec( \Ok ) $ reduces to a splitting
\begin{displaymath}
	\xymatrix{
	& & & G( \Ok ) \ar[dl] \ar@{^{(}->}[d] & \\
	0 \ar[r] & {H^{2}( k, \mathbb{Z} / n ( 2 ) )} \ar[r] & E( c_{k} ) \ar[r] & G( k ) \ar[r] & 0. }
\end{displaymath}

In the case that $ K / k $ is unramified, we shall show that $ G $ is semi-simple and simply connected over $ \Spec ( \Ok ) $, and that Deligne's extension is the same as Deodhar's when $ n $ is the number of roots of unity in $ k $.  Hence Deligne's splitting shows that we may take $ \hat{\Gamma}_{\mathfrak{p}} = G( \Ok ) $ in Theorem~\ref{t:compact}.

\begin{lem} \label{l:speco}
$ G $ is semi-simple and simply connected over $ \Spec( \Ok ) $.
\end{lem}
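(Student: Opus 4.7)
The plan is to exhibit $G$ over $\Spec(\Ok)$ as a quasi-split form of $\SL_3$ that becomes split over the unramified extension $\OK$, and then descend. Concretely, I will show $G_{\OK} \cong \SL_3$ as group schemes over $\OK$, after which semi-simplicity and simple-connectedness transfer back to $G/\Spec(\Ok)$ by faithfully flat (indeed étale) descent.

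First I would exploit the fact that $\OK/\Ok$ is a finite étale Galois extension of degree $2$ (this is precisely the unramified case). The multiplication map
\[
\OK \otimes_{\Ok} \OK \;\longrightarrow\; \OK \times \OK, \qquad a \otimes b \;\longmapsto\; (ab,\, a\bar b),
\]
is then an isomorphism of $\OK$-algebras, and the nontrivial Galois involution corresponds to swapping the two factors. For any $\OK$-algebra $B$, base change gives $B \otimes_{\Ok} \OK \cong B \times B$ with the same swap. Hence an element $\nu \in \SL_3(B \otimes_{\Ok} \OK)$ is a pair $(\nu_1,\nu_2) \in \SL_3(B) \times \SL_3(B)$ with $\bar\nu = (\nu_2,\nu_1)$, and the Hermitian condition $\nu^t J' \bar\nu = J'$ collapses to the single equation $\nu_1^t J' \nu_2 = J'$, which uniquely solves $\nu_2 = J'(\nu_1^{-1})^t J'$. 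Thus $\nu_1 \in \SL_3(B)$ may be chosen freely, giving $G_{\OK} \cong \SL_3$ over $\OK$ as group schemes.

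Now $\SL_3$ over $\OK$ is the standard split, semi-simple, simply connected Chevalley group scheme. Since $\Spec(\OK) \to \Spec(\Ok)$ is faithfully flat (étale of degree $2$), each of the properties in question descends: smoothness and affineness descend by faithfully flat descent; having connected reductive geometric fibers is checked fiber-by-fiber and is preserved under the surjective map on points of $\Spec$ induced by $\OK \to \Ok$; and simple-connectedness of a semi-simple group scheme can be read off the root datum, which is unchanged by base change. The relevant machinery is in SGA~3.

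The main (and really only) obstacle is the algebra identification $\OK \otimes_{\Ok} \OK \cong \OK \times \OK$; this is exactly where the hypothesis that $\mathfrak{p}$ is unramified in $K$ is used. The verification that the Hermitian condition reduces to $\nu_2 = J'(\nu_1^{-1})^t J'$ and the subsequent descent argument are then entirely formal. Note that the odd residue characteristic hypothesis plays no role in this particular lemma; it will only enter later when invoking Deligne's splitting and matching his extension with Deodhar's, where one needs $n$ coprime to the residue characteristic.
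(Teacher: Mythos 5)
Your proof is correct, and its engine is the same computation the paper uses: unramifiedness makes the quadratic algebra split into two factors swapped by conjugation, after which the Hermitian condition $\nu^t J' \ol{\nu} = J'$ pins down one component in terms of the other and projection yields $\SL_3$. Where you diverge is in the packaging. The paper never base-changes integrally: it unpacks ``semi-simple and simply connected over $\Spec(\Ok)$'' as the corresponding statement on the two fibers, uses invariance of these notions under field extension to pass to the algebraic closures $\ol{k}$ and $\ol{\Ok/\mathfrak{p}}$, and runs the splitting argument there --- over $\ol{k}$ it simply quotes $G \cong_{\ol{k}} \SL_3$, and over the residue field it carries out exactly your tensor-splitting for algebras over $\OK/(\mathfrak{p}\OK)$. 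You instead prove the stronger intermediate statement $G_{\OK} \cong \SL_3$ as group schemes over $\OK$, using that $\Ok \to \OK$ is finite \'etale Galois so that $\OK \otimes_{\Ok} \OK \cong \OK \times \OK$, and then descend along the faithfully flat (indeed \'etale, surjective) map $\Spec(\OK) \to \Spec(\Ok)$. The trade-off: the paper's route needs only field-level base-change invariance and no descent theory, while yours invokes SGA~3 machinery but buys a cleaner, uniform statement --- the integral isomorphism $G_{\OK} \cong \SL_3$ simultaneously handles both fibers and recovers the identification $G(k) \cong \SL_3(k)$ exploited elsewhere in the split case. Two small points in your favor worth keeping: your observation that $\nu_2 = J'(\nu_1^{-1})^t J'$ automatically has determinant $1$ (so the $\SL_3$ condition on the second factor is no extra constraint) makes the projection argument airtight, and your remark that the odd residue characteristic is irrelevant to this lemma --- entering only later, when matching Deligne's extension with Deodhar's --- is accurate, since unramified extensions of local fields are \'etale on rings of integers in any residue characteristic.
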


\begin{proof}
We recall that this means that $ G $ is semi-simple and simply connected both over $ k $ and over the residue field $ \Ok / \mathfrak{p} $.  The conditions of being semi-simple and simply connected over a field are unchanged when one passes to a field extension.  It is therefore sufficient to show that $ G $ is semi-simple and simply connected over the algebraic closures $ \ol{k} $ and $ \ol{\Ok / \mathfrak{p}} $.

Over $ \ol{k} $, we have an isomorphism of algebraic groups:
	\[ G \cong_{\ol{k}} \SL_{3}. \]
The same isomorphism holds over $ \ol{\Ok / \mathfrak{p}} $.  To see this, note that for any $ ( \Ok / \mathfrak{p} ) $-algebra $ A $, we have
	\[ G( A ) = \{ \nu \in \SL_{3}( A \otimes_{\mathbb{F}_{p}} \mathbb{F}_{p^{2}} ) \colon \nu^{t} J' \ol{\nu} = J' \}, \]
where $ J' $ is defined as in Section~\ref{s:sustruc}.  When $ A $ is an algebra over the quadratic extension $ \OK / ( \mathfrak{p} \OK ) $, the tensor product $ A \otimes_{\Ok / \mathfrak{p}} \OK / ( \mathfrak{p} \OK ) $ splits into a sum of two copies of $ A $, which are swapped by conjugation in $ K / k $.  Choosing one of these copies gives an isomorphism $ G( A ) \to \SL_{3}( A ) $.

Now since $ \SL_{3} $ is semi-simple and simply connected, it follows that $ G $ is also semi-simple and simply connected.
\end{proof}

Note that when $ \mathfrak{p} $ is ramified in $ K $, the group $ G $ over $ \Ok / \mathfrak{p} $ is not reductive, since the radical is
	\[ \{ \nu \in G \colon \nu \equiv I_{3} \pod{\mathfrak{P}} \} \subset G / ( \Ok / \mathfrak{p} ), \]
where $ \mathfrak{P} $ is the prime ideal of $ K $.

\begin{lem} \label{l:utce}
If $ n $ is the number of roots of unity in $ k $, then Deligne's extension $ E( c_{k} ) $ is the universal topological central extension.
\end{lem}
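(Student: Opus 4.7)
The plan is to identify Deligne's class $c_{k} \in H^{2}(G(k), \mu_{n})$ with a generator by restricting to a long-root $\SL_{2}$ subgroup and invoking the Prasad--Raghunathan restriction theorem together with the classical Matsumoto--Moore description of the metaplectic cover of $\SL_{2}(k)$.

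First, I would compile what is already known about $H^{2}(G(k), \mu_{n})$. By Proposition~\ref{p:prarag10.3}, Theorem~\ref{t:wigner}, and Theorem~\ref{t:prarag5.11}, one has
$$
	H^{2}(G(k), \mu_{n}) \;\cong\; \homo(\pi_{1}^{top}, \mu_{n}) \;\cong\; \homo(\mu_{n}, \mu_{n}) \;\cong\; \mathbb{Z}/n,
$$
and the universal topological central extension corresponds to the identity homomorphism, which is a generator of this cyclic group. Hence, to prove the lemma it suffices to show that Deligne's class $c_{k}$ is a generator of $H^{2}(G(k), \mu_{n})$.

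Second, I would use the functoriality of Deligne's construction to restrict $E(c_{k})$ to the $k$-subgroup $\mathcal{H} \subset G$ isomorphic to $\SL_{2}$ associated with the long root $2\alpha$. Theorem~\ref{t:prarag5.11} (promoted from $\mathbb{R}/\mathbb{Z}$-coefficients to $\mu_{n}$-coefficients via the inclusion $\mu_{n} \hookrightarrow \mathbb{R}/\mathbb{Z}$) gives that restriction
$$
	H^{2}(G(k), \mu_{n}) \;\xrightarrow{\ \sim\ }\; H^{2}(\mathcal{H}(k), \mu_{n})
$$
is an isomorphism. Thus it is enough to show that the restriction of Deligne's extension to $\mathcal{H}(k) \cong \SL_{2}(k)$ generates $H^{2}(\SL_{2}(k), \mu_{n})$.

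Third, I would appeal to Deligne's explicit construction in~\cite{deligne96} to identify the restriction of $E(c_{k})$ to $\mathcal{H}(k)$ with the Matsumoto central extension, whose 2-cocycle on the diagonal torus is given by the $n$-th power Hilbert symbol $(-,-)_{k,n}$. By the classical theorem of Matsumoto, combined with Moore's computation that the topological fundamental group of $\SL_{2}(k)$ is $\mu_{n}$, this is the universal topological central extension of $\SL_{2}(k)$, so its class generates $H^{2}(\SL_{2}(k), \mu_{n})$. Pulling back under the isomorphism above then shows that $c_{k}$ generates $H^{2}(G(k), \mu_{n})$ and hence that $E(c_{k})$ is the universal topological central extension of $G(k)$.

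The main obstacle is the third step: matching Deligne's canonical class to Matsumoto's cocycle on the long-root $\SL_{2}$. This requires a careful reading of Deligne's 5.4--5.6 in~\cite{deligne96}, where $c_{k}$ is described via the tame symbol, and checking that this construction does yield the symbol $(-,-)_{k,n}$ itself on the Steinberg generators of the long-root $\SL_{2}$, rather than some proper power of it. Once this identification is established, the restriction isomorphism and the Matsumoto--Moore theorem close the argument.
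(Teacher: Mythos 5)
Your proposal is correct and follows essentially the same route as the paper's proof: restrict to the long-root $\SL_{2}$-subgroup, use injectivity of the restriction map on $H^{2}$, and identify Deligne's class on $\SL_{2}(k)$ with Matsumoto's, which is universal by Moore's computation. The one ``obstacle'' you flag in your third step is exactly what the paper discharges by citing Proposition~3.7 of \cite{deligne96} (Deligne's class equals Matsumoto's for split semi-simple simply connected groups) together with the compatibility diagram~3.9.2 there; the paper merely packages the conclusion as ``Deligne's class equals Deodhar's'' instead of your equivalent ``$c_{k}$ generates the cyclic group $H^{2}(G(k),\mu_{n})$'' argument.
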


\begin{proof}
There is a $ k $-subgroup of $ G $ isomorphic to $ \SL_{2} $, and so we have a restriction map in continuous cohomology:
	\[ H^{2}( \SU( k ), \mu_{n} ) \to H^{2}( \SL_{2}( k ), \mu_{n} ). \]
Given a map from $ K_{2}( k ) $ to $ \mu_{n} $, we obtain elements of $ H^{2}( \SU( k ), \mu_{n} ) $ and $ H^{2}( \SL_{2}( k ), \mu_{n} ) $ constructed by Deodhar and Kubota (as well as Matsumoto, see \cite{matsumoto69}) respectively.  It is clear by inspection that the restriction of Deodhar's element to $\SL_{2}(k)$ is Kubota's element.  In particular, this restriction map is injective.

We also have elements of $ H^{2}( \SU( k ), \mu_{n} ) $ and $ H^{2}( \SL_{2}( k ), \mu_{n} ) $ constructed by Deligne.  Deligne shows in the commutative diagram~3.9.2 of \cite{deligne96} that the restriction of his element
of $ H^{2}( \SU( k ), \mu_{n} ) $ is the other element.
Deligne also proves in Proposition~3.7 of the same paper that for a semi-simple, simply connected split group (such as $ \SL_{2} $), his element is the same as Matsumoto's.  This implies that Deligne's element of $ H^{2}( \SU( k ), \mu_{n} ) $ is the same as Deodhar's.  It follows that $ E( c_{k} ) $ in the above diagram is a universal central extension.
\end{proof}

\subsection{The ramified extensions} \label{ss:oddram}

Let $ \mathfrak{p} $ be an odd ramified prime, and hence $ \mathfrak{p} \mid \thet^{2} $.
This implies that there exists a maximal ideal $ \mathfrak{P} \subset \OK $ such that $ \mathfrak{p} \OK = \mathfrak{P}^{2} $.  In fact, since $ K = k( \thet ) $, we have $ \mathfrak{P} = ( \thet ) $ by Theorem~\ref{t:ram} as $ \thet $ is a prime element of $K$.
We shall write $\mathbb{F}$ for the field $\mathcal{O}_{k}/\mathfrak{p}$, which is the same as
$\mathcal{O}_{K}/\mathfrak{P}$.

By Theorem~\ref{t:ram}, we have $ \OK = \Ok [ \thet ] $.  This implies that we can take $ \{ 1, \thet \} $ as an integral basis.

So we have $ \mathfrak{p} \OK = \mathfrak{P}^{2} $.
  For $ m \in \mathbb{N} $, let
\begin{equation} \label{e:gthet}
	 G\left( \Ok, \mathfrak{P}^{m} \right) = \{ \nu \in G( \Ok ) \colon \nu \equiv I_{3} \pod{\mathfrak{P}^{m}} \}.
\end{equation}

We know that our 2-cocycle $ \sigma $ splits on $ G( \Ok, \thet^{N} ) $ for sufficiently large $ N $, and we will show that we may take $N=1$.
We have
	\[ G( \Ok ) \supset G( \Ok, \mathfrak{P} ) \supset G( \Ok, \mathfrak{P}^{2} ) \supset \dots, \]
and the quotients are:
\begin{gather*}
	G( \Ok ) / G( \Ok, \mathfrak{P} ) \cong G( \OK / \mathfrak{P} ) = G( \mathbb{F} ), \\
	G\left( \Ok, \mathfrak{P}^{m} \right) / G\left( \Ok, \mathfrak{P}^{m + 1} \right) \cong \mathfrak{g}( \mathbb{F} ),
\end{gather*}
for $ m \geq 1 $ and where $ \mathfrak{g} $ is the Lie algebra of $ G $ (see Section~\ref{s:sustruc}).  This implies that $ \left| G\left( \Ok, \mathfrak{P}^{m} \right) / G\left( \Ok, \mathfrak{P}^{m + 1} \right) \right| $ is odd since $ \mathfrak{g}(\mathbb{F} ) $ is a vector space over $ \mathbb{F}$.

Proposition~9 of Chapter~I of \cite{serre02} states the following:

\begin{prop} \label{p:serre}
Let $ G $ be a profinite group and $ H $ be a closed subgroup of $ G $, with $ A $ an abelian group on which $ G $ acts continuously.  Then if $ ( G : H ) = n $, the kernel of $ \Res \colon H^{q}( G, A ) \to H^{q}( H, A ) $ is killed by $ n $.
\end{prop}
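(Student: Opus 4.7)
The plan is to establish the proposition via the standard corestriction argument. The key point is to construct a ``transfer'' or corestriction homomorphism $\Cor \colon H^{q}(H, A) \to H^{q}(G, A)$ going the opposite way from restriction, and to verify that the composition $\Cor \circ \Res$ equals multiplication by $n = (G : H)$ on $H^{q}(G, A)$. Once that identity is in hand, the proposition follows at once: if $x \in H^{q}(G, A)$ satisfies $\Res(x) = 0$, then $n \cdot x = \Cor(\Res(x)) = \Cor(0) = 0$.

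First I would construct corestriction at the level of continuous cochains. Since $H$ is closed of finite index $n$ in the profinite group $G$, we may fix a system of coset representatives $g_{1}, \dots, g_{n}$ for $G / H$, and the set $\{g_{i}\}$ may be taken to be a continuous section of $G \to G/H$ (as $G/H$ is discrete). For a continuous $q$-cocycle $f \colon H^{q} \to A$, I would define $\Cor(f)(x_{1}, \dots, x_{q})$ by the usual formula that sums over the coset representatives, using the action of $g_{i}$ to transport values of $f$ into $A$ and adjusting arguments by the permutation of cosets induced by $G$. One then checks that $\Cor$ sends cocycles to cocycles and coboundaries to coboundaries, hence descends to cohomology, and is independent of the choice of representatives.

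Second, I would verify the composition formula $\Cor \circ \Res = n \cdot \mathrm{id}$ on $H^{q}(G, A)$. For a cocycle $f$ on $G$, the restriction $\Res(f)$ is just $f|_{H^{q}}$; plugging this into the cochain formula for $\Cor$ and using $G$-equivariance of $f$ collapses each of the $n$ terms in the sum to $f(x_{1}, \dots, x_{q})$, yielding $n \cdot f$ on the nose. This is a direct computation, and is the step where the finiteness $(G : H) = n$ is used essentially.

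The main obstacle, and really the only delicate point, is the continuity of the cochains: one must ensure that the coset representatives can be chosen so that the defining sum for $\Cor$ produces a continuous cochain from a continuous one, and that the homotopy exhibiting $\Cor \circ \Res = n$ is continuous. Because $H$ is closed and of finite index, $G/H$ is a finite discrete quotient, so a continuous section exists and the permutation action of $G$ on $\{g_{i} H\}$ is locally constant; this makes the required continuity routine. With corestriction in place and the composition formula established, the statement that $\ker(\Res)$ is killed by $n$ is immediate.
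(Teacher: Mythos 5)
Your proposal is correct and is exactly the argument behind the statement: the paper itself gives no proof, quoting the result as Proposition~9 of Chapter~I of Serre's \emph{Galois Cohomology}, and Serre's proof there is precisely the restriction--corestriction argument you outline (corestriction built from coset representatives of the open subgroup $H$, together with the identity that corestriction composed with $\Res$ is multiplication by $n=(G:H)$). The only cosmetic imprecision is your appeal to ``$G$-equivariance of $f$'' in collapsing the $n$ terms --- cocycles are not equivariant, and the collapse comes from the explicit transfer formula (or, more cleanly, from checking the identity in degree $0$, where corestriction is the norm map, and extending to all $q$ by dimension shifting) --- but this does not affect the validity of the argument.
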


If we put $ G = G\left( \Ok, \mathfrak{P}^{m} \right) $, $ H = G\left( \Ok, \mathfrak{P}^{m + 1} \right) $ and $ A = \mu_{2} $ in the above proposition, then let $ \sigma_{1} $ be in the kernel of the restriction homomorphism 
	\[ \Res \colon H^{2}( G\left( \Ok, \mathfrak{P}^{m} \right), \mu_{2} ) \to H^{2}( G\left( \Ok, \mathfrak{P}^{m + 1} \right), \mu_{2} ).  \]
Writing the group operations in $ H^{2}( G\left( \Ok, \mathfrak{P}^{m} \right), \mu_{2} ) $ additively, we have
	\[ \left| G\left( \Ok, \mathfrak{P}^{m} \right) / G\left( \Ok, \mathfrak{P}^{m + 1} \right) \right| \cdot \sigma_{1} = 0. \]
But since $ 2 \cdot \sigma_{1} = 0 $, and $ \left| G\left( \Ok, \mathfrak{P}^{m} \right) / G\left( \Ok, \mathfrak{P}^{m + 1} \right) \right| $ is odd, this implies that $ \sigma_{1} = 0 $.  But if our 2-cocycle $ \sigma $ splits on $ G( \Ok, \mathfrak{P}^{N+1} ) $ for some $ N\ge 1 $, our result shows that it must split on $ G( \Ok, \mathfrak{P}^{N} ) $, hence our 2-cocycle $ \sigma $ splits on $ G( \Ok, \mathfrak{P} ) $.

\subsection{The split extensions} \label{ss:oddspl}

In the split case, we have $ K \cong k \oplus k $.  This implies that $ G( k ) \cong \SL_{3}( k ) $.
The $n$-fold cover of $\SL_{r}(k)$ was studied by Kazhdan and Patterson (see \cite{kazhpat84}).
They proved (Proposition 0.1.2) that if $n$ is not a multiple of $\mathfrak{p}$ then the
extension splits on the compact open subgroup $ \SL_{3}( \Ok ) $.
Since in our case $n=2$, this holds for all odd split primes.
Alternatively, one could get the same result from Deligne's paper as above.

\section{The even split primes} \label{s:even}



Now assume that $ \mathfrak{p} $ divides 2 and assume that $\mathfrak{p}$ splits in $K$.
As in the other split cases we have $ G( k ) \cong \SL_{3}( k ) $, and we may use results of
other authors on $\SL_{3}$.
For this purpose, choose another number field $l'$, which is totally complex, and which has
a local completion isomorphic to $k$.
The Kubota symbol on $\SL_{3}(\mathcal{O}_{l'})$ has been studied in \cite{bms67}
in connection with the congruence subgroup problem.
The level at which the Kubota symbol is defined tells us the compact open subgroup
on which the cocycle splits.
This level is established in Theorem~4.1 of \cite{bms67}, which may be paraphrased as follows:

\begin{prop} \label{p:bms}
Let $ m $ be the number of roots of unity in $\mathcal{O}_{l'}$ and let $\mu_{r}$
be a subgroup of $\mu_{m}$.
Then the Kubota symbol $\kappa:\SL_{3}(\mathcal{O}_{l'},\mathfrak{q}) \to \mu_{r}$
is defined at level $\mathfrak{q}$ as long as for each prime $p$ dividing $r$ we have
	\[ \ord_{p}( r ) \le \min_{\mathfrak{p} \mid \mathfrak{q}} \left[ \frac{\ord_{\mathfrak{p}}( \mathfrak{q} )}{\ord_{\mathfrak{p}}( p )} - \frac{1}{p - 1} \right].
	 \]
\end{prop}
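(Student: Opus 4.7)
The plan is to deduce this directly from Theorem~4.1 of \cite{bms67}, since the statement is essentially a paraphrase of that theorem.  For a self-contained argument, however, the strategy is to reduce the global question to a purely local one about Hilbert symbols and then invoke the classical analysis of the wild Hilbert pairing.

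First I would write the Kubota symbol $\kappa$ explicitly as a product of local contributions indexed by the finite primes $\mathfrak{p}$ of $l'$.  Each local contribution at $\mathfrak{p}$ is built from the $r$-th power Hilbert symbol $(-,-)_{l'_{\mathfrak{p}},r}$ applied to entries of the bottom row (and related quantities) of matrices in $\SL_{3}$, in close analogy with Kubota's original construction on $\SL_{2}$.  For $\kappa$ to descend to a well-defined function on the principal congruence subgroup $\SL_{3}(\mathcal{O}_{l'},\mathfrak{q})$, the relevant cocycle relations must be satisfied, and this reduces to showing that each local Hilbert symbol $(u,v)_{l'_{\mathfrak{p}},r}$ vanishes whenever $u \equiv 1 \pmod{\mathfrak{p}^{\ord_{\mathfrak{p}}(\mathfrak{q})}}$ and $v \in \mathcal{O}_{l'_{\mathfrak{p}}}^{\times}$.

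Next I would factor $r = \prod_{p \mid r} p^{\ord_{p}(r)}$ so that the $r$-th power symbol splits as a product of $p^{\ord_{p}(r)}$-th power symbols.  At primes $\mathfrak{p}$ coprime to $r$ the symbol is tame and vanishes automatically on units, so no condition is needed.  The substantive content lies in the wild case: for each $p \mid r$ and each $\mathfrak{p}$ above $p$, one must show that the $p^{\ord_{p}(r)}$-th power Hilbert symbol vanishes on $(1+\mathfrak{p}^{N}) \times \mathcal{O}_{l'_{\mathfrak{p}}}^{\times}$ for $N = \ord_{\mathfrak{p}}(\mathfrak{q})$.  The standard computation uses the $p$-adic logarithm, which converges on $1+\mathfrak{p}^{M}$ once $M > e/(p-1)$ (with $e = \ord_{\mathfrak{p}}(p)$) and in that range gives a bijection $u \mapsto u^{p}$ from $1+\mathfrak{p}^{M}$ onto $1+\mathfrak{p}^{M+e}$.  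Iterating, every element of $1+\mathfrak{p}^{M+ne}$ is a $p^{n}$-th power, so the $p^{n}$-th power Hilbert pairing is trivial on this subgroup.  Taking $n = \ord_{p}(r)$ and $N = M + ne$, the condition $M > e/(p-1)$ translates to
\[
	\ord_{\mathfrak{p}}(\mathfrak{q}) \;\ge\; e \cdot \ord_{p}(r) + \frac{e}{p-1},
\]
which upon dividing by $e = \ord_{\mathfrak{p}}(p)$ and minimising over $\mathfrak{p} \mid \mathfrak{q}$ reproduces the inequality in the proposition.

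The hard part of a fully self-contained proof is the precise determination of the conductor of the Hilbert pairing in the wild case; although the logarithm argument above captures the correct bound, making the cocycle calculation fully rigorous requires explicit local class field theory (or an Artin--Hasse exponential computation) to check that no off-diagonal terms shift the exponent.  Since \cite{bms67} executes this analysis in detail in their Section~4, the cleanest path is simply to cite their Theorem~4.1.
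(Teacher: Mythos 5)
Your bottom-line recommendation coincides exactly with the paper: the thesis gives no proof of this proposition at all, stating it explicitly as a paraphrase of Theorem~4.1 of \cite{bms67}, so citing that theorem is not merely the ``cleanest path'' but the only path the paper itself takes.

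The self-contained sketch, however, has a genuine off-by-one gap, and it occurs precisely in the case the thesis needs. The logarithm argument shows $\left( 1 + \mathfrak{p}^{M} \right)^{p^{n}} = 1 + \mathfrak{p}^{M + n e}$ only for integers $M$ with the \emph{strict} inequality $M > e / ( p - 1 )$; your translation to the non-strict bound $\ord_{\mathfrak{p}}( \mathfrak{q} ) \ge e \, \ord_{p}( r ) + e / ( p - 1 )$ silently drops the strictness, which is harmless when $( p - 1 ) \nmid e$ but costs exactly one when $( p - 1 ) \mid e$. Take $p = 2$, $e = 1$, $r = 2$: the proposition permits $\mathfrak{q} = ( 4 )$, yet $5 \equiv 1 \pmod{4}$ is not a square in $\mathbb{Q}_{2}$, so ``every element of $1 + \mathfrak{q}$ is an $r$-th power'' fails and your argument only justifies level $8$. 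The sharp bound holds because the correct local input is strictly weaker than being a $p^{n}$-th power: one needs $( u, v )_{k, p^{n}} = 1$ for all \emph{units} $v$, which by local class field theory says that $k\left( \sqrt[p^{n}]{u} \right) / k$ is unramified, and the conductor computation gives exactly the subgroup $1 + \mathfrak{p}^{n e + e / ( p - 1 )}$ (indeed $\mathbb{Q}_{2}( \sqrt{5} ) / \mathbb{Q}_{2}$ is the unramified quadratic extension). Relatedly, your claimed reduction --- triviality of $( u, v )$ for $u \equiv 1 \bmod \mathfrak{q}$ and $v$ a unit --- is not by itself sufficient for the cocycle bookkeeping: the reciprocity manipulations also produce symbols $( a, b )$ with $b \equiv 0 \bmod \mathfrak{q}$, where even at level $4$ one has $( 5, 8 )_{\mathbb{Q}_{2}, 2} = -1$; \cite{bms67} dispose of these contributions globally (using, as the thesis does, that the field is totally imaginary), which is why their Section~4 analysis is genuinely needed rather than a routine verification. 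So keep the citation as the proof; your sketch, as written, establishes a weaker proposition that would force level $8\mathfrak{p}$ rather than $4$ in the even split case.
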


In particular, in the case $r=2$ we may take $\mathfrak{q}=4$.
This shows that our cocycle splits at level $4$, i.e. our cocycle splits on
\begin{equation} \label{e:geven}
	G( \Ok, 4 ) = \{ \nu \in G( \Ok ) \colon \nu \equiv I_{3} \pod{4} \}.
\end{equation}

If we have a Kubota symbol $ \kappa \colon G( \Ok, 4 ) \to \mu_{2} $, then $ \sigma|_{G( \Ok, 4 )} = \partial \kappa $.  But we should note that this Kubota symbol is not unique on $ G( \Ok, 4 ) $; it is only unique on
	\[ G( \Ok, 4 \mathfrak{p} ) = \{ \nu \in G( \Ok ) \colon \nu \equiv I_{3} \pod{4 \mathfrak{p}} \}. \]
This is because $ G( \Ok, 4 ) / G( \Ok, 4 \mathfrak{p} ) \cong \mathfrak{g}( \Ok / \mathfrak{p} ) $, and
	\[ \homo( G( \Ok, 4 ) / G( \Ok, 4 \mathfrak{p} ), \mu_{2} ) \neq 0, \]
hence
	\[ \homo( G( \Ok, 4 ), \mu_{2} ) \neq 0, \]
and any $ \chi \in \homo( G( \Ok, 4 ), \mu_{2} ) $ would make $ \kappa \chi $ another choice for the Kubota symbol on $ G( \Ok, 4 ) $.
We will only calculate our local Kubota symbol on $ G( \Ok, 4 \mathfrak{p} ) $ in the non-split case.

\chapter{Calculation of the Kubota symbol} \label{c:general}

We will outline the method of calculating this local Kubota symbol in this chapter.  Let $ L / l $ be a global quadratic extension.  For consistency of notation, we will be using the same notation as in Part~\ref{p:su2coc}, i.e. we have $ k $ a local field with $ K $ a quadratic extension of $ k $.  Then $ k = \lp $ and $ K = \Lp $ (with notation as in Section~\ref{s:adele}).  Also, note that in the split case we will use \eqref{e:splmap} to identify an element of $ \lp( \thet ) $ with an element of $ \Lp $.

We have $ \mathfrak{p} = \mathfrak{p}_{k} $ as the maximal ideal of \Ok.  Let the local Kubota symbol be denoted by $ \kappa_{\mathfrak{p}} $ and using the notation from the previous chapter, let the subgroup of $ G( \Ok ) $ on which the quadratic 2-cocycle splits be called $ \hat{\Gamma}_{\mathfrak{p}} $.  This implies that $ \kappa_{\mathfrak{p}} $ is a map
	\[ \kappa_{\mathfrak{p}} \colon \hat{\Gamma}_{\mathfrak{p}} \to \mu_{2}, \]
where $ \mu_{2} = \{ 1, - 1 \} $, and for any $ g $, $ h \in \hat{\Gamma}_{\mathfrak{p}} $,
	\[ \sig{g}{h} = \frac{\kup{g} \kup{h}}{\kup{g h}}. \]
This implies that
\begin{equation} \label{e:kupsig}
	\kup{g h} = \kup{g} \kup{h} \sig{g}{h}.
\end{equation}
Thus, for any $ g \in \hat{\Gamma}_{\mathfrak{p}} $, since $ \kup{g}^{2} = 1 $, we have
\begin{equation} \label{e:kupsq}
	\kup{g^{2}} = \sig{g}{g}.
\end{equation}

As noted in Theorem~\ref{t:compact}, $ \hat{\Gamma}_{\mathfrak{p}} $ depends on $ K / k $.  Also, in the case where $ k $ is of even residue characteristic, we will assume that the 2-cocycle $ \sigma $ splits on the compact open subgroup $ \hat{\Gamma}_{\mathfrak{p}} = G( \Ok, 4 ) $ for every extension $ K / k $.  We have already noted that in Section~\ref{s:even} that the Kubota symbol on $ \hat{\Gamma}_{\mathfrak{p}} = G( \Ok, 4 ) $ is not unique, but it is unique on $ G( \Ok, 8 ) $.  What we do observe later is that the unipotent elements of $ G( \Ok, 8 ) $ are squares of unipotent elements of $ G( \Ok, 4 ) $, and it will be shown that the elements of $ T( k ) \cap G( \Ok, 8 ) $ are squares of elements of $ T( k ) \cap G( \Ok, 4 ) $.  This implies that we should use \eqref{e:kupsq} on the elements of $ G( \Ok, 4 ) $ in order to find the unique Kubota symbol on $ G( \Ok, 8 ) $.

In addition, we have already stated in the Introduction that in the case where $ K = k \oplus k $, Theorem~\ref{t:sigma} does not completely describe $ \sigma $.  Even so, since we are only interested in elements of $ G( l ) $ contained in the group $ G( k ) $, our formula for $ \sigma $ is sufficient to calculate the local Kubota symbol in the split case.

Hence, let us define $ \Gamma_{\mathfrak{p}} $ to be the subgroup of $ G( \Ok ) $ which we will be choosing to calculate the unique Kubota symbol on, i.e.
	\[ \Gamma_{\mathfrak{p}}
	=	\begin{cases}
			\displaystyle G( \Ok ), &\text{if $ \mathfrak{p} $ is odd and unramified in $ K $;} \\
			\displaystyle G( \Ok, \thet ), &\text{if $ \mathfrak{p} $ is odd and ramified in $ K $;} \\
			\displaystyle G( \Ok ) \cap G( l ), &\text{if $ \mathfrak{p} $ is odd and split in $ K $;} \\
			\displaystyle G( \Ok, 8 ), &\text{if $ \mathfrak{p} $ is even and not split in $ K $;} \\
			\displaystyle G( \Ok, 8 ) \cap G( l ), &\text{if $ \mathfrak{p} $ is even and split in $ K $.}
		\end{cases} \]

\section{The unipotent matrices of the compact open subgroup} \label{s:uni}

We now calculate the Kubota symbol on the elements of $ N( k ) \cap \Gamma_{\mathfrak{p}} $ and $ \ol{N}( k ) \cap \Gamma_{\mathfrak{p}} $.

\begin{prop} \label{p:kupxa1}
Let $ \xa{s_{1}}{n_{1}} \in \Gamma_{\mathfrak{p}} $.  Then
	\[ \kup{\xa{s_{1}}{n_{1}}} = 1. \]
More generally, for any $ g \in \Gamma_{\mathfrak{p}} $ we have
	\[ \kup{\xa{s_{1}}{n_{1}} \cdot g} = \kup{g \cdot \xa{s_{1}}{n_{1}}} = \kup{g}. \]
\end{prop}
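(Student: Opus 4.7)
The plan is to reduce both statements to the special case $\kup{\xa{s_1}{n_1}} = 1$. For the ``more generally'' part, Proposition~\ref{p:sigma} gives $\sig{\xa{s_1}{n_1}}{g} = \sig{g}{\xa{s_1}{n_1}} = 1$, and feeding this into the cocycle relation \eqref{e:kupsig} together with $\kup{\xa{s_1}{n_1}} = 1$ immediately yields $\kup{\xa{s_1}{n_1} \cdot g} = \kup{g \cdot \xa{s_1}{n_1}} = \kup{g}$.

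For the special case, my strategy is to exhibit $\xa{s_1}{n_1}$ as the square of an element of $N(k) \cap \hat{\Gamma}_{\mathfrak{p}}$. A direct matrix multiplication shows $\xa{r}{m}^{2} = \xa{2r}{2m - \norm{r}}$, so one is forced to take $r = s_1/2$ and $m = (n_1 + \norm{s_1}/4)/2$; the identity $\trace{n_1} = -\norm{s_1}$ readily confirms $\trace{m} = -\norm{r}$, so $(r, m) \in A$. Granted that $\xa{r}{m}$ lies in $\hat{\Gamma}_{\mathfrak{p}}$, combining \eqref{e:kupsq} with the vanishing $\sig{\xa{r}{m}}{\xa{r}{m}} = 1$ from Proposition~\ref{p:sigma} gives
\begin{equation*}
\kup{\xa{s_1}{n_1}} = \kup{\xa{r}{m}^{2}} = \sig{\xa{r}{m}}{\xa{r}{m}} = 1.
\end{equation*}

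What remains is a case analysis, guided by Theorem~\ref{t:compact}, verifying that the chosen $\xa{r}{m}$ really lands in $\hat{\Gamma}_{\mathfrak{p}}$. When $\mathfrak{p}$ is odd, $2 \in \Ok^{\times}$, so halving preserves integrality: $s_1, n_1 \in \OK$ forces $r, m \in \OK$ in the odd unramified and odd split cases, and in the odd ramified case the extra observation $\norm{r} \in \mathfrak{P}^{2} \subset \mathfrak{P}$ keeps both $r$ and $m$ inside $\mathfrak{P}$. The main obstacle is the even split case, since it is the only place where $2$-adic valuations actively matter: given $s_1, n_1 \in 8\OK$, one has $r = s_1/2 \in 4\OK$ and $\norm{r} = \norm{s_1}/4 \in 16\OK$, so $m = (n_1 + \norm{r})/2 \in 4\OK$, placing $\xa{r}{m}$ exactly in $\hat{\Gamma}_{\mathfrak{p}} = G(\Ok, 4)$ as required. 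This bookkeeping is precisely the reason $\Gamma_{\mathfrak{p}}$ must be defined at level $8$ rather than $4$ in the even case.
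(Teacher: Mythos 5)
Your proof is correct and follows essentially the same route as the paper: you exhibit $\xa{s_{1}}{n_{1}}$ as the square $\xa{s_{1}/2}{n_{1}/2 + \norm{s_{1}}/8}^{2}$ of an element of $N(k)\cap\hat{\Gamma}_{\mathfrak{p}}$ (identical to the paper's $\xa{z/2}{-\norm{z}/8 + t\thet/2}$ under $n_{1} = -\norm{z}/2 + t\thet$), apply \eqref{e:kupsq} together with the triviality of $\sigma$ on unipotents, and then deduce the general statement from \eqref{e:kupsig} exactly as the paper does. Your explicit integrality bookkeeping is a detail the paper leaves implicit, and it applies verbatim to the even non-split case (where $\Gamma_{\mathfrak{p}} = G(\Ok,8)$ and the cocycle is assumed to split on $G(\Ok,4)$), which your case list omits only in name.
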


\begin{proof}
Since $ \spl{s_{1}}{n_{1}} \in A $ (see \eqref{e:defA2}) where
	\[ A = \{ ( z, - \norm{z} / 2 + t \thet ) \in K \times K \colon t \in k, ( z, - \norm{z} / 2 + t \thet ) \neq ( 0, 0 ) \}, \]
for ease of use, let $ s_{1} = z $ and $ n_{1} = - \norm{z} / 2 + t \thet $.  We first note that
	\[ \xa{z}{- \frac{\norm{z}}{2} + t \thet} = \xa{\frac{z}{2}}{- \frac{\norm{z}}{8} + \frac{t \thet}{2}}^{2}, \]
where $ \xa{z / 2}{- \norm{z} / 8 + t \thet / 2} \in \hat{\Gamma}_{\mathfrak{p}} $, i.e. $ \kup{\xa{z / 2}{- \norm{z} / 8 + t \thet / 2}} $ exists.

By \eqref{e:kupsq},
\begin{multline*}
	\kup{\xa{z}{- \frac{\norm{z}}{2} + t \thet}} \\
	= \sig{\xa{\frac{z}{2}}{- \frac{\norm{z}}{8} + \frac{t \thet}{2}}}{\xa{\frac{z}{2}}{- \frac{\norm{z}}{8} + \frac{t \thet}{2}}}.
\end{multline*}
But by Theorem~\ref{t:sigma}, we have
	\[ \kup{\xa{z}{- \frac{\norm{z}}{2} + t \thet}} = 1. \]

Now let $ g \in \Gamma_{\mathfrak{p}} $.  By \eqref{e:kupsig},
	\[ \kup{\xa{s_{1}}{n_{1}} \cdot g} = \kup{\xa{s_{1}}{n_{1}}} \cdot \kup{g} \cdot \sig{\xa{s_{1}}{n_{1}}}{g}. \]
Hence using the above and Theorem~\ref{t:sigma}, we have
	\[ \kup{\xa{s_{1}}{n_{1}} \cdot g} = \kup{g}. \]
We can similarly get $ \kup{g \cdot \xa{s_{1}}{n_{1}}} = \kup{g} $.
\end{proof}

\begin{prop} \label{p:kupxa}
Let $ \xam{s_{1}}{n_{1}} \in \Gamma_{\mathfrak{p}} $.  Then
	\[ \kup{\xam{s_{1}}{n_{1}}} = \kxam{s_{1}} \cdot \kxam{- \frac{s_{1} \thet}{n_{1}}}, \]
where
	\[ \kxam{s_{1}}
	=	\begin{cases}
			\displaystyle \hilk{- \trace{s_{1}}}{\norm{s_{1} \thet}}, &\text{if $ \trace{s_{1}} \neq 0 $;} \\
			\displaystyle 1, &\text{otherwise.}
		\end{cases} \]

\end{prop}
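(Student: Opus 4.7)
The plan is to realise $\xam{s_1}{n_1}$ as a perfect square in $\ol{N}(k)\cap \hat{\Gamma}_{\mathfrak{p}}$ and then apply \eqref{e:kupsq}. From the multiplication law
$$
\xam{r_1}{m_1}\xam{r_2}{m_2}=\xam{r_1+r_2}{m_1+m_2-\ol{r_1}r_2}
$$
one verifies directly that, setting $\hat{n}:=n_1/2+\norm{s_1}/8$,
$$
A^2=\xam{s_1}{n_1},\qquad A:=\xam{s_1/2}{\hat{n}},
$$
and that $\trace{\hat{n}}=-\norm{s_1}/4=-\norm{s_1/2}$, so $A\in\ol{N}(k)$. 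The hypothesis $\xam{s_1}{n_1}\in\Gamma_{\mathfrak{p}}$ together with Theorem~\ref{t:compact} ensures $A\in\hat{\Gamma}_{\mathfrak{p}}$: for odd $\mathfrak{p}$, $2$ is a unit and the relevant congruences are preserved; for even split $\mathfrak{p}$, $\Gamma_{\mathfrak{p}}=G(\Ok,8)$ halves to $\hat{\Gamma}_{\mathfrak{p}}=G(\Ok,4)$. Hence \eqref{e:kupsq} gives
$$
\kup{\xam{s_1}{n_1}}=\sig{A}{A}.
$$

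Next, I would compute $\sig{A}{A}$ via Theorem~\ref{t:sigma}. With $\gamma_1=\gamma_2=A$ and $\gamma_3=A^2$, the bottom-row invariants are
$$
\Xg{1}=\Xg{2}=(\ol{\hat{n}}\thet)^{-1},\qquad \Xg{3}=(\ol{n_1}\thet)^{-1},
$$
and
$$
\Xg{3}/(\Xg{1}\Xg{2})=\ol{\hat{n}}^{\,2}\,\thet/\ol{n_1}
$$
is generically not in $k^{\times}$, placing us in the second branch of the theorem. Using $g_1=g_2=\hat{n}$, $g_3=n_1$, $h_2=-\ol{s_1}/2$, $h_3=-\ol{s_1}$, and the identity $\hat{n}-n_1/2=\norm{s_1}/8$, one finds the auxiliary
$$
r(A,A)=\frac{\ol{s_1}\,\norm{s_1}}{8\,\norm{\hat{n}}}.
$$
Substitution produces an explicit product of six quadratic Hilbert symbols on $k$.

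The main obstacle is then the algebraic simplification of that product into $\kxam{s_1}\cdot\kxam{-s_1\thet/n_1}$. My approach is to apply repeatedly the bilinearity and Steinberg relations \eqref{e:prop2i}--\eqref{e:prop5ii} together with \eqref{e:norm}, trading between $\hat{n}$ and $n_1$ via the identity $2\hat{n}-\norm{s_1}/4=n_1$, and to organise the symbols into two families: those whose arguments depend only on $s_1$ (combining into $\kxam{s_1}=\hilk{-\trace{s_1}}{\norm{s_1\thet}}$) and those depending on the ratio $-s_1\thet/n_1$ (combining into $\kxam{-s_1\thet/n_1}$). This is the delicate, bookkeeping-heavy part of the argument.

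Finally, the degenerate cases are handled separately. If $n_1=0$ then $\trace{n_1}=-\norm{s_1}$ forces $s_1=0$, and both sides equal $1$. If $s_1=0$ the element reduces to $\xam{0}{t\thet}=\xam{0}{t\thet/2}^{2}$; now all $\Xg{i}$ lie in $k^{\times}$, so Theorem~\ref{t:sigma} falls into its \emph{first} branch, and a short direct calculation (using that $(2,-1)_{k,2}=1$ under the hypotheses defining $\Gamma_{\mathfrak{p}}$) yields $1=\kxam{0}\kxam{0}$. The remaining sub-cases $\trace{s_1}=0$ or $\trace{-s_1\thet/n_1}=0$ are exactly those in which the relevant Hilbert-symbol argument becomes a square, consistent with $\kxam{\cdot}=1$ in those regimes.
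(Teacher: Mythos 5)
Your proposal is correct and is in essence the paper's own strategy: the square root you construct, $A=\xam{s_{1}/2}{n_{1}/2+\norm{s_{1}}/8}$, is literally the paper's element $\xam{z/2}{-\norm{z}/8+t\thet/2}$ once one writes $n_{1}=-\norm{z}/2+t\thet$, and the paper likewise reduces everything to \eqref{e:kupsq} and Theorem~\ref{t:sigma}. Where you genuinely differ is in organisation: the paper never evaluates $\sig{A}{A}$ in one shot for general $(s_{1},n_{1})$. It first treats the two special squares $\xam{0}{t\thet}=\xam{0}{t\thet/2}^{2}$ (first branch of Theorem~\ref{t:sigma}, giving $\hilk{1/2}{-1}=1$ via \eqref{e:prop4i}; note $(2,-1)_{k,2}=1$ holds in \emph{any} field, e.g.\ from $1^{2}+1^{2}=2\cdot 1^{2}$, so your parenthetical hypothesis on $\Gamma_{\mathfrak{p}}$ is unnecessary) and $\xam{z}{-\norm{z}/2}=\xam{z/2}{-\norm{z}/8}^{2}$ (second branch, with $r=8/z$, yielding \eqref{e:xamz}), and then assembles the general case from the factorisation $\xam{z}{-\norm{z}/2+t\thet}=\xam{z}{-\norm{z}/2}\cdot\xam{0}{t\thet}$ via \eqref{e:kupsig}, at the cost of one cross-term $\sigma$ but with each Hilbert-symbol computation kept smaller. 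Your one-shot route would work, and your data are right: $r(A,A)=\ol{s_{1}}\norm{s_{1}}/(8\norm{\hat{n}})$ specialises to the paper's $8/z$ at $t=0$, and your ``generically'' can be sharpened to ``always'': $\Xg{3}/(\Xg{1}\Xg{2})=\ol{\hat{n}}^{2}\thet/\ol{n_{1}}$ lies in $k^{\times}$ precisely when $\trace{n_{1}\ol{\hat{n}}^{2}}=-\norm{s_{1}}^{3}/64$ vanishes, i.e.\ only when $s_{1}=0$, so the second branch applies in every nondegenerate case. The one caveat is that you have deferred the simplification itself, and that is where almost all of the paper's proof actually lives: it runs to several pages and needs, besides \eqref{e:prop2i}--\eqref{e:norm}, the auxiliary square-removal identities \eqref{e:delsq} and \eqref{e:delsq2} for $\delta_{2}$ of rescaled arguments, together with the device of packaging the answer as a function $F(s_{1},n_{1},t)$ and checking it is independent of the auxiliary parameter $t$; your proposed sorting into an $s_{1}$-family and a $(-s_{1}\thet/n_{1})$-family is the correct target structure, but as written it is a plan for that computation rather than the computation.
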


\begin{proof}
Again, since $ \spl{s_{1}}{n_{1}} \in A $ (see \eqref{e:defA2}), for ease of use, let $ s_{1} = z $ and $ n_{1} = - \norm{z} / 2 + t \thet $.  We have
	\[ \xam{z}{- \frac{\norm{z}}{2} + t \thet} = \xam{\frac{z}{2}}{- \frac{\norm{z}}{8} + \frac{t \thet}{2}}^{2}, \]
where $ \xam{z / 2}{- \norm{z} / 8 + t \thet / 2} \in \hat{\Gamma}_{\mathfrak{p}} $, i.e. $ \kup{\xam{z / 2}{- \norm{z} / 8 + t \thet / 2}} $ exists.

We have three cases to consider: $ z = 0 $, $ t \neq 0 $; $ z \neq 0 $, $ t = 0 $; and $ z $, $ t \neq 0 $.

We first note that $ z \in \OK $ and $ t \in \Ok $.  If $ z = 0 $ and $ t \neq 0 $, then by \eqref{e:kupsq}, we have
	\[ \kup{\xam{0}{t \thet}} = \sig{\xam{0}{\frac{t \thet}{2}}}{\xam{0}{\frac{t \thet}{2}}}. \]
By Theorem~\ref{t:sigma}, since
\begin{gather*}
	X( \xam{0}{t \thet}) = - \frac{1}{t \thet^{2}}, \quad X\left( \xam{0}{\frac{t \thet}{2}} \right) = - \frac{2}{t \thet^{2}}, \\
	\frac{X( \xam{0}{t \thet})}{X( \xam{0}{t \thet / 2})^{2}} = - \frac{t \thet^{2}}{4} \in k^{\times},
\end{gather*}
we have
\begin{align*}
	\kup{\xam{0}{t \thet}}
	&= \ug{\frac{X( \xam{0}{t \thet})}{X( \xam{0}{t \thet / 2})}}{X\left( \xam{0}{\frac{t \thet}{2}} \right)^{2}} \\
	&= \hilk{\frac{1}{2}}{- \left( - \frac{2}{t \thet^{2}} \right)^{2}};
\end{align*}
and using the properties of Hilbert symbols (Section~\ref{s:hilbert}), by \eqref{e:bastri} and \eqref{e:prop5ii},
	\[ \kup{\xam{0}{t \thet}} = \hilk{\frac{1}{2}}{- 1} \cdot \hilk{\frac{1}{2}}{\left( - \frac{2}{t \thet^{2}} \right)^{2}} = \hilk{\frac{1}{2}}{- 1}. \]
Hence by \eqref{e:prop4i},
\begin{equation} \label{e:xamt}
	\kup{\xam{0}{t \thet}} = \hilk{\frac{1 - ( - 1 )}{2}}{- 1} = 1.
\end{equation}

If $ z \neq 0 $, $ t = 0 $, then by \eqref{e:kupsq},
	\[ \kup{\xam{z}{- \frac{\norm{z}}{2}}} = \sig{\xam{\frac{z}{2}}{- \frac{\norm{z}}{8}}}{\xam{\frac{z}{2}}{- \frac{\norm{z}}{8}}}. \]
Thus by Theorem~\ref{t:sigma}, we have
\begin{gather*}
	X\left( \xam{z}{- \frac{\norm{z}}{2}} \right) = - \frac{2}{\norm{z} \thet}, \quad X\left( \xam{\frac{z}{2}}{- \frac{\norm{z}}{8}} \right) = - \frac{8}{\norm{z} \thet}, \\
	\frac{X( \xam{z}{- \norm{z} / 2} )}{X( \xam{z / 2}{- \norm{z} / 8} )} = \frac{1}{4}, \\
	\frac{X( \xam{z}{- \norm{z} / 2} )}{X( \xam{z / 2}{- \norm{z} / 8} )^{2}} = - \frac{\norm{z} \thet}{32} \notin k^{\times}.
\end{gather*}
Hence,
\begin{multline*}
	r\left( \xam{\frac{z}{2}}{- \frac{\norm{z}}{8}}, \xam{\frac{z}{2}}{- \frac{\norm{z}}{8}} \right) \\
	= \frac{( - \ol{z} / 2 ) ( - \norm{z} / 2 ) - ( - \ol{z} ) ( - \norm{z} / 8 )}{( - \norm{z} / 8 )^{2}} = \frac{8}{z},
\end{multline*}
and therefore
\begin{align*}
	&\kup{\xam{z}{- \frac{\norm{z}}{2}}} \\
	&= \sig{\xam{\frac{z}{2}}{- \frac{\norm{z}}{8}}}{\xam{\frac{z}{2}}{- \frac{\norm{z}}{8}}} \\
	&=	\begin{aligned}[t]
			&\hilk{- \del{2}{- \frac{8}{z} \left( - \frac{\norm{z} \thet}{32} \right)} \thet^{- 1}}{\norm{- \frac{8}{z} \left( - \frac{\norm{z} \thet}{32} \right)}} \\
			&\phantom{\ } \cdot \hilk{\norm{\frac{8}{z}}}{\frac{\del{2}{8 / z}}{\thet}} \cdot \ug{- \frac{8}{\norm{z} \thet}}{\frac{1}{4}} \cdot \ug{\frac{1}{4}}{- \frac{2}{\norm{z} \thet}} \\
			&\phantom{\ } \cdot \hilk{\frac{\del{2}{- \norm{z} \thet / 32}}{\del{2}{1 / 4}}}{- \frac{\norm{- \norm{z} \thet / 32} \del{2}{- \norm{z} \thet / 32}}{\del{2}{- 8 / ( \norm{z} \thet )}}} \\
			&\phantom{\ } \cdot \hilk{\frac{\del{2}{- 8 / ( \norm{z} \thet )}}{\del{2}{- 2 / ( \norm{z} \thet )}}}{- \frac{\norm{- 8 / ( \norm{z} \thet )} \del{2}{- 8 / ( \norm{z} \thet )}}{\del{2}{1 / 4}}}
		\end{aligned} \\
	&=	\begin{aligned}[t]
			&\hilk{- \frac{\del{2}{\ol{z} \thet / 4}}{\thet}}{- \frac{\norm{z} \thet^{2}}{16}} \cdot \hilk{\frac{64}{\norm{z}}}{\frac{\del{2}{8 / z}}{\thet}} \cdot \hilk{- \frac{64}{\norm{z}^{2} \thet^{2}}}{- \frac{1}{16}} \\
			&\phantom{\ } \cdot \hilk{\frac{1}{16}}{- \frac{4}{\norm{z}^{2} \thet^{2}}} \cdot \hilk{\frac{16}{\norm{z} \thet^{2}}}{\frac{1}{4}} \cdot \hilk{\frac{1}{4}}{- \frac{4}{\norm{z} \thet^{2}}}.
		\end{aligned}
\end{align*}
By the properties of Hilbert symbols, we have by \eqref{e:bastri},
\begin{multline*}
	\kup{\xam{z}{- \frac{\norm{z}}{2}}} \\
	=	\begin{aligned}[t]
			&\hilk{\frac{\del{2}{\ol{z} \thet / 4}}{\thet}}{- \norm{z} \thet^{2}} \cdot \hilk{- 1}{- \norm{z} \thet^{2}} \cdot \hilk{- \frac{\del{2}{\ol{z} \thet / 4}}{\thet}}{\frac{1}{16}} \\
			&\phantom{\ } \cdot \hilk{\frac{1}{\norm{z}}}{\frac{\del{2}{8 / z}}{\thet}} \cdot \hilk{64}{\frac{\del{2}{8 / z}}{\thet}} \cdot \hilk{- \frac{1}{\thet^{2}}}{- 1} \\
			&\phantom{\ } \cdot \hilk{\frac{64}{\norm{z}^{2}}}{- 1} \cdot \hilk{- \frac{64}{\norm{z}^{2} \thet^{2}}}{\frac{1}{16}} \cdot \hilk{\frac{1}{16}}{- \frac{4}{\norm{z}^{2} \thet^{2}}} \\
			&\phantom{\ } \cdot \hilk{\frac{16}{\norm{z} \thet^{2}}}{\frac{1}{4}} \cdot \hilk{\frac{1}{4}}{- \frac{4}{\norm{z} \thet^{2}}}.
		\end{aligned}
\end{multline*}
By \eqref{e:prop5ii},
	\[ \kup{\xam{z}{- \frac{\norm{z}}{2}}}
	=	\begin{aligned}[t]
			&\hilk{\frac{\del{2}{\ol{z} \thet / 4}}{\thet}}{- \norm{z} \thet^{2}} \cdot \hilk{- 1}{- \norm{z} \thet^{2}} \\
			&\phantom{\ } \cdot \hilk{\frac{1}{\norm{z}}}{\frac{\del{2}{8 / z}}{\thet}} \cdot \hilk{- \frac{1}{\thet^{2}}}{- 1},
		\end{aligned} \]
and by \eqref{e:prop2i},
	\[ \kup{\xam{z}{- \frac{\norm{z}}{2}}}
	=	\begin{aligned}[t]
			&\hilk{\frac{\del{2}{\ol{z} \thet / 4}}{\thet}}{- \norm{z} \thet^{2}} \cdot \hilk{- 1}{- \norm{z} \thet^{2}} \\
			&\phantom{\ } \cdot \hilk{\frac{\del{2}{8 / z}}{\thet}}{\norm{z}} \cdot \hilk{- 1}{- \frac{1}{\thet^{2}}}.
		\end{aligned} \]
This implies that by \eqref{e:bastri},
\begin{align*}
	&\kup{\xam{z}{- \frac{\norm{z}}{2}}} \\
	&= \hilk{\frac{\del{2}{\ol{z} \thet / 4}}{\thet}}{- \norm{z} \thet^{2}} \cdot \hilk{- 1}{\norm{z}} \cdot \hilk{\frac{\del{2}{8 / z}}{\thet}}{\norm{z}} \\
	&= \hilk{\frac{\del{2}{\ol{z} \thet / 4}}{\thet}}{- \norm{z} \thet^{2}} \cdot \hilk{- \frac{\del{2}{8 / z}}{\thet}}{\norm{z}}.
\end{align*}

If $ a $, $ b $, $ c \in k^{\times} $, $ d \in K^{\times} $, we have by \eqref{e:bastri}, \eqref{e:prop5ii} and \eqref{e:del2q} that
	\[ \hilk{\frac{b \del{2}{d / c^{2}}}{\thet}}{a} = \hilk{\frac{b \del{2}{d / c^{2}}}{c^{2} \thet}}{a} = \begin{cases} \hilk{b}{a}, &\text{if $ d \in k^{\times} $;} \\ \displaystyle \hilk{\frac{b \del{2}{d}}{\thet}}{a}, &\text{if $ d \notin k^{\times} $.} \end{cases} \]
But since $ \del{2}{d} = \thet $ when $ d \in k^{\times} $, the above implies that
\begin{equation} \label{e:delsq}
	\hilk{\frac{b \del{2}{d / c^{2}}}{\thet}}{a} = \hilk{\frac{b \del{2}{d}}{\thet}}{a},
\end{equation}
for any $ a $, $ b $, $ c \in k^{\times} $ and $ d \in K^{\times} $.  We should also note that if instead we have $ a = \norm{e} $ for some $ e \in K^{\times} $, and $ c = f \thet $ for some $ f \in k^{\times} $, with $ b $, $ d $ remaining the same, we have by \eqref{e:delsq} and \eqref{e:del2q},
\begin{align*}
	\hilk{\frac{b \del{2}{d / c^{2}}}{\thet}}{a}
	&= \hilk{\frac{b \del{2}{d / ( f \thet )^{2}}}{\thet}}{\norm{e}} \\
	&= \begin{cases} \displaystyle \hilk{b}{\norm{e}}, &\text{if $ d \in k^{\times} $;} \\ \displaystyle \hilk{b \del{2}{d} \thet}{\norm{e}}, &\text{if $ d \notin k^{\times} $.} \end{cases}
\end{align*}
But since by \eqref{e:norm} and \eqref{e:prop5ii},
	\[ \hilk{\thet^{- 2}}{\norm{e}} = \hilbk{\thet^{- 2}}{e} = 1, \]
this implies, for $ d \notin k^{\times} $, that by \eqref{e:bastri},
	\[ \hilk{b \del{2}{d} \thet}{\norm{e}} = \hilk{b \del{2}{d} \thet}{\norm{e}} \cdot \hilk{\thet^{- 2}}{\norm{e}} = \hilk{\frac{b \del{2}{d}}{\thet}}{\norm{e}}. \]
Thus we have
\begin{equation} \label{e:delsq2}
	\hilk{\frac{b \del{2}{d / ( f \thet )^{2}}}{\thet}}{\norm{e}} = \hilk{\frac{b \del{2}{d}}{\thet}}{\norm{e}}
\end{equation}
for any $ d $, $ e \in K^{\times} $, $ b $, $ f \in k^{\times} $.

Using \eqref{e:delsq} in our equation for $ \kup{\xam{z}{- \norm{z} / 2}} $, we have
\begin{align}
	\kup{\xam{z}{- \frac{\norm{z}}{2}}}
	&= \hilk{\frac{\del{2}{\ol{z} \thet / 4}}{\thet}}{- \norm{z} \thet^{2}} \cdot \hilk{- \frac{\del{2}{8 / z}}{\thet}}{\norm{z}} \label{e:xamz} \\
	&= \hilk{\frac{\del{2}{\ol{z} \thet}}{\thet}}{\norm{\ol{z} \thet}} \cdot \hilk{- \frac{\del{2}{2 / z}}{\thet}}{\norm{z}}. \notag
\end{align}

We now consider the case $ z $, $ t \neq 0 $.  Since
	\[ \xam{z}{- \frac{\norm{z}}{2} + t \thet} = \xam{z}{- \frac{\norm{z}}{2}} \cdot \xam{0}{t \thet}, \]
and $ \xam{z}{- \norm{z} / 2} $, $ \xam{0}{t \thet} \in \Gamma_{\mathfrak{p}} $, we have, by \eqref{e:kupsig},
\begin{multline*}
	\kup{\xam{z}{- \frac{\norm{z}}{2} + t \thet}} \\
	= \kup{\xam{z}{- \frac{\norm{z}}{2}}} \cdot \kup{\xam{0}{t \thet}} \cdot \sig{\xam{z}{- \frac{\norm{z}}{2}}}{\xam{0}{t \thet}}.
\end{multline*}
Consider $ \sig{\xam{z}{- \norm{z} / 2}}{\xam{0}{t \thet}} $.  By Theorem~\ref{t:sigma},
\begin{gather*}
	X\left( \xam{z}{- \frac{\norm{z}}{2} + t \thet} \right) = - \frac{2}{( \norm{z} + 2 t \thet ) \thet}, \\
	X\left( \xam{z}{- \frac{\norm{z}}{2}} \right) = - \frac{2}{\norm{z} \thet},\quad X\left( \xam{0}{t \thet} \right) = - \frac{1}{t \thet^{2}}, \\
	\frac{X( \xam{z}{- \norm{z} / 2 + t \thet} )}{X( \xam{z}{- \norm{z} / 2} ) \cdot X ( \xam{0}{t \thet} )} = - \frac{\norm{z} t \thet^{2}}{\norm{z} + 2 t \thet} \notin k^{\times}.
\end{gather*}
This implies that we have
	\[ r\left( \xam{z}{- \frac{\norm{z}}{2}}, \xam{0}{t \thet} \right) = \frac{0 \cdot ( - \norm{z} / 2 + t \thet ) - ( - \ol{z} ) \cdot t \thet}{( - \norm{z} / 2 ) ( - t \thet )} = \frac{2}{z}, \]
so that
\begin{align*}
	&\sig{\xam{z}{- \frac{\norm{z}}{2}}}{\xam{0}{t \thet}} \\
	&=	\begin{aligned}[t]
			&\hilk{- \del{2}{- \frac{2}{z} \cdot \left( - \frac{\norm{z} t \thet^{2}}{\norm{z} + 2 t \thet} \right)} \thet^{- 1}}{\norm{- \frac{2}{z} \cdot \left( - \frac{\norm{z} t \thet^{2}}{\norm{z} + 2 t \thet} \right)}} \\
			&\phantom{\ } \cdot \hilk{\norm{\frac{2}{z}}}{\frac{\del{2}{2 / z}}{\thet}} \cdot \ug{- \frac{2}{\norm{z} \thet}}{\frac{2 t \thet}{\norm{z} + 2 \thet}} \\
			&\phantom{\ } \cdot \ug{\frac{2 t \thet}{\norm{z} + 2 \thet}}{- \frac{2}{( \norm{z} + 2 t \thet ) \thet}} \\
			&\phantom{\ } \cdot \Bigg( \frac{\del{2}{- \norm{z} t \thet^{2} / ( \norm{z} + 2 t \thet )}}{\del{2}{2 t \thet / ( \norm{z} + 2 t \thet )}}, \\
			&\phantom{\qquad\qquad} - \frac{\norm{- \norm{z} t \thet^{2} / ( \norm{z} + 2 t \thet )} \del{2}{- \norm{z} t \thet^{2} / ( \norm{z} + 2 t \thet )}}{\del{2}{- 2 / ( \norm{z} \thet )}} \Bigg)_{k, 2} \\
			&\phantom{\ } \cdot \hilk{\frac{\del{2}{- 1 / ( t \thet^{2} )}}{\del{2}{- 2 / ( ( \norm{z} + 2 t \thet ) \thet )}}}{- \frac{\norm{- 1 / ( t \thet^{2} )} \del{2}{- 1 / ( t \thet^{2} )}}{\del{2}{2 t \thet / ( \norm{z} + 2 t \thet )}}}
		\end{aligned} \\
	&=	\begin{aligned}[t]
			&\hilk{- \del{2}{\frac{2 \ol{z} t \thet^{2}}{\norm{z} + 2 t \thet}} \thet^{- 1}}{\frac{4 \norm{z} t^{2} \thet^{4}}{\norm{\norm{z} + 2 t \thet}}} \cdot \hilk{\frac{4}{\norm{z}}}{\frac{\del{2}{2 / z}}{\thet}} \\
			&\phantom{\ } \cdot \hilk{- \frac{4}{\norm{z}^{2} \thet^{2}}}{\frac{4 t^{2} \thet^{2}}{\norm{\norm{z} + 2 t \thet}}} \\
			&\phantom{\ } \cdot \hilk{- \frac{4 t^{2} \thet^{2}}{\norm{\norm{z} + 2 t \thet}}}{\frac{4}{\norm{\norm{z} + 2 t \thet} \thet^{2}}} \cdot \hilk{\frac{1}{t \thet^{2}}}{1} \\
			&\phantom{\ } \cdot \hilk{\frac{\norm{z}}{\norm{\norm{z} + 2 t \thet}}}{\frac{4 \norm{z}}{\norm{\norm{z} + 2 t \thet} t \thet^{2}}}.
		\end{aligned}
\end{align*}
Similar to the calculation of $ \kup{\xam{z}{- \norm{z} / 2}} $, we can simplify the above using the properties of Hilbert symbols.  After simplifying, we get
\begin{multline*}
	\sig{\xam{z}{- \frac{\norm{z}}{2}}}{\xam{0}{t \thet}} \\
	= \hilk{- \del{2}{\frac{2 \ol{z} t \thet^{2}}{\norm{z} + 2 t \thet}} \frac{t}{\thet}}{\frac{\norm{z}}{\norm{\norm{z} + 2 t \thet}}} \cdot \hilk{- \frac{\del{2}{2 / z}}{\thet}}{\norm{z}}.
\end{multline*}
Inserting this result into our equation for $ \kup{\xam{z}{- \norm{z} / 2 + t \thet}} $ along with \eqref{e:xamt} and \eqref{e:xamz}, we have
\begin{multline*}
	\kup{\xam{z}{- \frac{\norm{z}}{2} + t \thet}} \\
	=	\begin{aligned}[t]
			&\left[ \hilk{\frac{\del{2}{\ol{z} \thet}}{\thet}}{\norm{\ol{z} \thet}} \cdot \hilk{- \frac{\del{2}{2 / z}}{\thet}}{\norm{z}} \right] \cdot 1 \\
			&\phantom{\ } \cdot \left[ \hilk{- \del{2}{\frac{2 \ol{z} t \thet^{2}}{\norm{z} + 2 t \thet}} \frac{t}{\thet}}{\frac{\norm{z}}{\norm{\norm{z} + 2 t \thet}}} \cdot \hilk{- \frac{\del{2}{2 / z}}{\thet}}{\norm{z}} \right].
		\end{aligned}
\end{multline*}
Thus simplifying the above using \eqref{e:prop5ii}, we have
\begin{multline} \label{e:xamzt}
	\kup{\xam{z}{- \frac{\norm{z}}{2} + t \thet}} \\
	= \hilk{\frac{\del{2}{\ol{z} \thet}}{\thet}}{\norm{\ol{z} \thet}} \cdot \hilk{- \del{2}{\frac{2 \ol{z} t \thet^{2}}{\norm{z} + 2 t \thet}} \frac{t}{\thet}}{\frac{\norm{z}}{\norm{\norm{z} + 2 t \thet}}}.
\end{multline}

Recall that we had put $ s_{1} = z $, $ n_{1} = - \norm{z} / 2 + t \thet $.  
If $ t \neq 0 $, i.e. $ n_{1} \neq - \norm{s_{1}} / 2 $, by \eqref{e:xamzt} we have
	\[ \kup{\xam{s_{1}}{n_{1}}} = \hilk{\frac{\del{2}{\ol{s_{1}} \thet}}{\thet}}{\norm{\ol{s_{1}} \thet}} \cdot \hilk{- \del{2}{\frac{- \ol{s_{1}} t \thet^{2}}{\ol{n_{1}}}} \frac{t}{\thet}}{\frac{\norm{s_{1}}}{4 \norm{n_{1}}}}. \]
But by \eqref{e:delsq2} and the properties of Hilbert symbols, the above becomes
	\[ \kup{\xam{s_{1}}{n_{1}}} = \hilk{\frac{\del{2}{\ol{s_{1}} \thet}}{\thet}}{\norm{\ol{s_{1}} \thet}} \cdot \hilk{- \del{2}{\frac{- \ol{s_{1}} t}{\ol{n_{1}}}} \frac{t}{\thet}}{\frac{\norm{s_{1}}}{\norm{n_{1}}}}. \]
Let $ \spl{s'}{n'} \in A $ (see \eqref{e:defA2}) such that $ s' \neq 0 $.  Also, let $ t' \in k^{\times} $ and
	\[ F( s', n', t' ) = \hilk{\frac{\del{2}{\ol{s'} \thet}}{\thet}}{\norm{\ol{s'} \thet}} \cdot \hilk{- \del{2}{\frac{- \ol{s'} t'}{\ol{n'}}} \frac{t'}{\thet}}{\frac{\norm{s'}}{\norm{n'}}}. \]
This implies that
	\[ \kup{\xam{z}{- \frac{\norm{z}}{2} + t \thet}} = F\left( z, - \frac{\norm{z}}{2} + t \thet, t \right), \]
and in fact, it can also be checked that
	\[ \kup{\xam{z}{- \frac{\norm{z}}{2}}} = F\left( z, - \frac{\norm{z}}{2}, 1 \right). \]

We will now look at the function $ F $ more closely.  Let $ \spl{s_{1}}{n_{1}} \in A $, where $ s_{1} \neq 0 $, and $ \xam{s_{1}}{n_{1}} \in \Gamma_{\mathfrak{p}} $.  We have
\begin{align*}
	\frac{\del{2}{\ol{s_{1}} \thet}}{\thet}
	&=	\begin{cases}
			\displaystyle 1, &\text{if $ \ol{s_{1}} \thet \in k^{\times} $;} \\
			\displaystyle ( - s_{1} \thet - \ol{s_{1}} \thet )^{- 1} / \thet, &\text{if $ \ol{s_{1}} \thet \notin k^{\times} $}
		\end{cases} \\
	&=	\begin{cases}
			\displaystyle 1, &\text{if $ \trace{s_{1}} = 0 $;} \\
			\displaystyle - \left[ \trace{s_{1}} \thet^{2} \right]^{- 1}, &\text{if $ \trace{s_{1}} \neq 0 $.}
		\end{cases}
\end{align*}
By using \eqref{e:prop2i} and \eqref{e:norm}, this implies that
	\[ \hilk{\frac{\del{2}{\ol{s_{1}} \thet}}{\thet}}{\norm{\ol{s_{1}} \thet}}
	=	\begin{cases}
			\displaystyle 1, &\text{if $ \trace{s_{1}} = 0 $;} \\
			\displaystyle \hilk{- \trace{s_{1}}}{\norm{s_{1} \thet}}, &\text{if $ \trace{s_{1}} \neq 0 $.}
		\end{cases} \]
Let $ t \in k^{\times} $.  Then
\begin{align*}
	- \del{2}{- \frac{\ol{s_{1}} t}{\ol{n}}} \frac{t}{\thet}
	&=	\begin{cases}
			\displaystyle - t, &\text{if $ \ol{s_{1}} t / \ol{n_{1}} \in k^{\times} $;} \\
			\displaystyle - \left[ - \frac{s_{1} t}{n_{1}} + \frac{\ol{s_{1}} t}{\ol{n}} \right]^{- 1} \frac{t}{\thet}, &\text{if $ \ol{s_{1}} t / \ol{n_{1}} \notin k^{\times} $}
		\end{cases} \\
	&=	\begin{cases}
			\displaystyle - t, &\text{if $ \trace{- s_{1} \thet / n_{1}} = 0 $;} \\
			\displaystyle - \left[ \trace{\frac{- s_{1} \thet}{n_{1}}} \right]^{- 1}, &\text{if $ \trace{- s_{1} \thet / n_{1}} \neq 0 $.}
		\end{cases}
\end{align*}
This implies that
\begin{align*}
	&\hilk{- \del{2}{\frac{- \ol{s_{1}} t}{\ol{n_{1}}}} \frac{t}{\thet}}{\frac{\norm{s_{1}}}{\norm{n_{1}}}} \\
	&=	\begin{cases}
			\displaystyle \hilk{- t}{\left( \frac{s_{1}}{n_{1}} \right)^{2}}, &\text{if $ \trace{- s_{1} \thet / n_{1}} = 0 $;} \\
			\displaystyle \hilk{- \left[ \trace{- \frac{s_{1} \thet}{n_{1}}} \right]^{- 1}}{\norm{\frac{s_{1}}{n_{1}}}}, &\text{if $ \trace{- s_{1} \thet / n_{1}} \neq 0 $.}
		\end{cases}
\end{align*}
Thus by \eqref{e:bastri}, \eqref{e:prop2i} and \eqref{e:prop5ii}, we have
\begin{align*}
	&\hilk{- \del{2}{\frac{- \ol{s_{1}} t}{\ol{n_{1}}}} \frac{t}{\thet}}{\frac{\norm{s_{1}}}{\norm{n_{1}}}} \\
	&=	\begin{cases}
			\displaystyle 1, &\text{if $ \trace{- s_{1} \thet / n_{1}} = 0 $;} \\
			\displaystyle \hilk{- \trace{- \frac{s_{1} \thet}{n_{1}}}}{\norm{- \frac{s_{1} \thet^{2}}{n_{1}}}}, &\text{if $ \trace{- s_{1} \thet / n_{1}} \neq 0 $.}
		\end{cases}
\end{align*}
Hence, we have
	\[ F( s_{1}, n_{1}, t ) = \kxam{s_{1}} \cdot \kxam{- \frac{s_{1} \thet}{n_{1}}}, \]
where
	\[ \kxam{s_{1}}
	=	\begin{cases}
			\displaystyle \hilk{- \trace{s_{1}}}{\norm{s_{1} \thet}}, &\text{if $ \trace{s_{1}} \neq 0 $;} \\
			\displaystyle 1, &\text{otherwise.}
		\end{cases} \]
Note that $ \rho $ and hence $ F $ is independent of $ t $; therefore, since
\begin{gather*}
	\kup{\xam{z}{- \frac{\norm{z}}{2}}} = F\left( z, - \frac{\norm{z}}{2}, 1 \right), \\
	\kup{\xam{z}{- \frac{\norm{z}}{2} + t \thet}} = F\left( z, - \frac{\norm{z}}{2} + t \thet, t \right)
\end{gather*}
for $ z \neq 0 $, $ t \in k^{\times} $, we may combine the above result so that for $ s_{1} \neq 0 $,
	\[ \kup{\xam{s_{1}}{n_{1}}} = \kxam{s_{1}} \cdot \kxam{- \frac{s_{1} \thet}{n}}. \]

We also note that when $ s_{1} = 0 $, the above formula for $ s_{1} \neq 0 $ is still applicable.  Thus our result is proved.
\end{proof} 

\begin{rem} \label{r:kupxa}
Note that we also have
	\[ \xam{z}{- \frac{\norm{z}}{2} + t \thet} = \xam{0}{t \thet} \cdot \xam{z}{- \frac{\norm{z}}{2}}, \]
since $ \xam{0}{t \thet} $ is a central element of $ \ol{N}( k ) $.  We can use Theorem~\ref{t:sigma} and the properties of Hilbert symbols to show that
	\[ \frac{\sig{\xam{z}{- \norm{z} / 2}}{\xam{0}{t \thet}}}{\sig{\xam{0}{t \thet}}{\xam{z}{- \norm{z} / 2}}} = 1. \]
This implies that we will ultimately get the same result if we had chosen to calculate $ \kup{\xam{z}{- \norm{z} / 2 + t \thet}} $ by using \eqref{e:kupsig}, so that
\begin{multline*}
	\kup{\xam{z}{- \frac{\norm{z}}{2} + t \thet}} \\
	= \kup{\xam{0}{t \thet}} \cdot \kup{\xam{z}{- \frac{\norm{z}}{2}}} \cdot \sig{\xam{0}{t \thet}}{\xam{z}{- \frac{\norm{z}}{2}}}.
\end{multline*}
\end{rem}

\section{The elements of the torus} \label{s:torus}

In this section, we want to find the local Kubota symbol on the elements of $ T( k ) \cap \Gamma_{\mathfrak{p}} $, which we will denote by $ \hat{T} $.

We have two cases to consider, depending on whether there are matrices in
$ \Gamma_{\mathfrak{p}} $ whose $ ( 3, 1 ) $-entry is a unit.

We know that when $ \mathfrak{p} $ is odd and ramified in $ K $, or if $ \mathfrak{p} $ is even in $ K $, then the $ ( 3, 1 ) $-entry is never a unit in $ \OK $.  This implies that the Bruhat decomposition (see Section~\ref{s:bruh}) of an element of $ \Gamma_{\mathfrak{p}} $ when the $ ( 3, 1 ) $-entry is non-zero in these cases will be a product of elements which are not in $ \Gamma_{\mathfrak{p}} $ (see \eqref{e:bruc}).

When $ \mathfrak{p} $ is odd and either unramified or split in $ K $, this does not pose a problem, and we can use the Bruhat decomposition in these cases.  We also note that in $ G( k ) $, we have from Section~\ref{s:suprop} that for any $ \lambda \in K^{\times} $,
\begin{equation} \label{e:halam}
	\ha{\lambda} = \wa{y( \lambda )}{\del{1}{\lambda}} \cdot \wa{0}{\del{2}{\lambda}}^{- 1},
\end{equation}
where
	\[ y( \lambda ) =	\begin{cases}
						0, &\text{if $ \lambda \in k^{\times} $;} \\
						1, &\text{otherwise.}
					\end{cases} \]
As $ \mathfrak{p} $ is odd and either unramified or split in $ K $, this implies that $ 2 $, $ \thet \in \OK^{\times} $, hence there exists $ \lambda \in K^{\times} $ such that $ \ha{\lambda} \in \hat{T} $ where $ \del{2}{\lambda} \in \OK^{\times} $, since $ \del{2}{\lambda} \in k^{\times} \thet $.  Therefore $ \wa{y( \lambda )}{\del{1}{\lambda}} $ and $ \wa{0}{\del{2}{\lambda}} \in \Gamma_{\mathfrak{p}} $.

\begin{prop} \label{p:kupwa}
If $ \mathfrak{p} $ is odd and either unramified or split in $ K $, then we have $ \wa{0}{b \thet} \in \Gamma_{\mathfrak{p}} $, where $ b \in \Ok^{\times} $, with
	\[ \kup{\wa{0}{b \thet}} = 1. \]
Also, if $ a \in \Ok $ such that $ - 1 / 2 + a \thet \in \OK^{\times} $, then $ \wa{1}{- 1 / 2 + a \thet} \in \Gamma_{\mathfrak{p}} $ and
	\[ \kup{\wa{1}{- 1 / 2 + a \thet}} = 1. \]
\end{prop}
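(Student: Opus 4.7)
The plan is to factor each $w$-element as a product of three unipotent elements, and then use Propositions~\ref{p:kupxa1} and~\ref{p:kupxa} to reduce the computation to a single Hilbert symbol which vanishes tamely. The key input is formula~\eqref{e:warm}, which gives
\[
	\wa{r}{m}
	=
	\xa{r}{m}
	\cdot
	\xam{\ol{r}/\ol{m}}{1/\ol{m}}
	\cdot
	\xa{r\ol{m}/m}{m}.
\]

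First I would verify that all three factors lie in $\Gamma_{\mathfrak{p}}$. In both hypotheses we have $m \in \OK^{\times}$, and for odd unramified or split $\mathfrak{p}$ we have $\thet \in \OK^{\times}$ (in the split case $\thet$ is a unit since $-d$ is a unit square in $\lp$). Hence each of $m$, $1/\ol{m}$, $r\ol{m}/m$ lies in $\OK$, and the unipotent relations $\trace{m}+\norm{r}=0$ and $\trace{1/\ol{m}}+\norm{\ol{r}/\ol{m}}=0$ are easily checked. Applying Proposition~\ref{p:kupxa1} twice (absorbing the outer $N$-factors on both sides) we get
\[
	\kup{\wa{r}{m}}
	=
	\kup{\xam{\ol{r}/\ol{m}}{1/\ol{m}}}.
\]

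For $\wa{0}{b\thet}$, the middle factor is $\xam{0}{-1/(b\thet)}$, and Proposition~\ref{p:kupxa} gives $\kxam{0}\cdot\kxam{0}=1$ since $\trace{0}=0$. For $\wa{1}{m}$ with $m=-1/2+a\thet$, the middle factor is $\xam{1/\ol{m}}{1/\ol{m}}$, and Proposition~\ref{p:kupxa} gives
\[
	\kup{\xam{1/\ol{m}}{1/\ol{m}}}
	=
	\kxam{1/\ol{m}}
	\cdot
	\kxam{-\thet}.
\]
The second factor equals $1$ because $\trace{-\thet}=0$. For the first, $\trace{1/\ol{m}}=\trace{m}/\norm{m}=-1/\norm{m}\neq 0$ and $\norm{(1/\ol{m})\thet}=\norm{\thet}/\norm{m}=d/\norm{m}$, so
\[
	\kxam{1/\ol{m}}
	=
	\hilk{1/\norm{m}}{d/\norm{m}}.
\]

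The final step is to observe that both arguments above are units in $\Ok$: $\norm{m}\in\Ok^{\times}$ because $m\in\OK^{\times}$, and $d=-\thet^{2}\in\Ok^{\times}$ because $\thet\in\OK^{\times}$. Since $\mathfrak{p}$ is odd, Proposition~\ref{p:tame} (the tame Hilbert symbol) immediately gives $\hilk{1/\norm{m}}{d/\norm{m}}=1$. This handles both the unramified and split cases uniformly. There is no serious obstacle here: the only subtlety is confirming that in the split case one genuinely has $\thet\in\OK^{\times}$ (so that the factorisation stays in $\Gamma_{\mathfrak{p}}$ and the Hilbert symbol remains tame), which follows from the convention fixed in Subsection~\ref{ss:descad}.
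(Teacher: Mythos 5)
Your proof is correct and follows the paper's own argument almost verbatim: the same factorisation \eqref{e:warm}, the same absorption of the outer unipotent factors via Proposition~\ref{p:kupxa1}, and the same evaluation of the middle factor via Proposition~\ref{p:kupxa} (your $\hilk{1/\norm{m}}{d/\norm{m}}$ is exactly the paper's $\hilk{4/\norm{1+2a\thet}}{-4\thet^{2}/\norm{1+2a\thet}}$, since $\norm{1+2a\thet}=4\norm{m}$ and $d=-\thet^{2}$). The only deviation is the final line: the paper disposes of this symbol with the norm identity \eqref{e:norm} and $\hilk{s}{-s}=1$ (an argument valid in any residue characteristic), whereas you invoke the tame symbol (Proposition~\ref{p:tame}), which is equally valid here because oddness of $\mathfrak{p}$ is part of the hypothesis.
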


\begin{proof}
For any $ \spl{r}{m} \in A $, we have, by \eqref{e:warm} that
	\[ \wa{r}{m} = \xa{r}{m} \cdot \xam{\frac{\ol{r}}{\ol{m}}}{\frac{1}{\ol{m}}} \cdot \xa{r \cdot \frac{\ol{m}}{m}}{m}. \]
This implies that if $ \xa{r}{m} $, $ \xam{\ol{r} / \ol{m}}{1 / \ol{m}} $ and $ \xa{r \ol{m} / m}{m} \in \Gamma_{\mathfrak{p}} $, then we have $ \wa{r}{m} \in \Gamma_{\mathfrak{p}} $ and by Proposition~\ref{p:kupxa1},
	\[ \kup{\wa{r}{m}} = \kup{\xam{\frac{\ol{r}}{\ol{m}}}{\frac{1}{\ol{m}}}}. \]
Since $ \xa{0}{b \thet} $, $ \xam{0}{- 1 / b \thet} \in \Gamma_{\mathfrak{p}} $ for $ b \in \Ok^{\times} $, by Proposition~\ref{p:kupxa},
	\[ \kup{\wa{0}{b \thet}} = \kup{\xam{0}{- \frac{1}{b \thet}}} = 1. \]
Also, for $ a \in \Ok $ with $ - 1 / 2 + a \thet \in \OK^{\times} $, we have the elements $ \xa{1}{- 1/ 2 + a \thet}$ , $ \xam{- 2 / ( 1 + 2 a \thet )}{- 2 / ( 1 + 2 a \thet )} $, $\xa{( 1 + 2 a \thet ) / ( 1 - 2 a \thet )}{- 1 / 2 + a \thet} \in \Gamma_{\mathfrak{p}} $, hence
	\[ \kup{\wa{1}{- \frac{1}{2} + a \thet}} = \kup{\xam{- \frac{2}{1 + 2 a \thet}}{- \frac{2}{1 + 2 a \thet}}}. \]
So by Proposition~\ref{p:kupxa}, we have
\begin{align*}
	\kup{\wa{1}{- \frac{1}{2} + a \thet}}
	&= \hilk{- \left( - \frac{2}{1 + 2 a \thet} - \frac{2}{1 - 2 a \thet} \right)}{\norm{- \frac{2 \thet}{1 + 2 a \thet}}} \cdot 1 \\
	&= \hilk{\frac{4}{\norm{1 + 2 a \thet}}}{- \frac{4 \thet^{2}}{\norm{1 + 2 a \thet}}}.
\end{align*} 
By \eqref{e:bastri}, \eqref{e:norm} and \eqref{e:prop5ii},
	\[ \kup{\wa{1}{- \frac{1}{2} + a \thet}} = \hilk{\frac{4}{\norm{1 + 2 a \thet}}}{- \frac{4}{\norm{1 + 2 a \thet}}}, \]
and thus by \eqref{e:prop3i},
	\[ \kup{\wa{1}{- \frac{1}{2} + a \thet}} = 1. \qedhere \]
\end{proof}

\begin{prop} \label{p:kupha}
If $ \mathfrak{p} $ is odd and either unramified or split in $ K $, then for $ \lambda \in \OK^{\times} $ such that $ \ha{\lambda} \in \hat{T} $, we have
	\[ \kup{\ha{\lambda}} = \begin{cases} \displaystyle \hilk{a}{b}, &\text{if $ \lambda = a + b \thet $, $ a $, $ b \neq 0 $ and $ b \notin \Ok^{\times} $;} \\
	\displaystyle 1, & \text{otherwise.} \end{cases} \]
\end{prop}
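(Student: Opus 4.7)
\bigskip

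\noindent\textbf{Proposal of proof.} The plan is to reduce the computation of $\kup{\ha{\lambda}}$ to Hilbert symbols via the decomposition \eqref{e:halam}, namely
$$\ha{\lambda} = \wa{y(\lambda)}{\del{1}{\lambda}}\cdot \wa{0}{\del{2}{\lambda}}^{-1},$$
exploiting Propositions~\ref{p:kupwa} and~\ref{p:kupxa1} whenever both Weyl-like factors already lie in $\Gamma_{\mathfrak{p}}$, and falling back on multiplicativity in the torus plus a Hensel-style square-root argument when they do not. Throughout, the fact that $\mathfrak{p}$ is odd with $K/k$ unramified or split means that $2$, $d$ and $\thet$ are all units in $\OK$, so every Hilbert symbol between two units of $\Ok$ (or of $\OK$) is trivial by tameness (Proposition~\ref{p:tame}); this will make the bookkeeping possible.

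First I would dispose of the \emph{easy} cases, where the right-hand side is~$1$: namely $\lambda\in\Ok^{\times}$ (i.e.\ $b=0$) and $\lambda=a+b\thet$ with $b\in\Ok^{\times}$. In the first, $\del{1}{\lambda}=\lambda\thet$ and $\del{2}{\lambda}=\thet$, so $\ha{\lambda}=\wa{0}{\lambda\thet}\cdot\wa{0}{-\thet}$ with both factors of the form in Proposition~\ref{p:kupwa} and hence carrying trivial Kubota symbol. Then \eqref{e:kupsig} gives $\kup{\ha{\lambda}}=\sig{\wa{0}{\lambda\thet}}{\wa{0}{-\thet}}$; Theorem~\ref{t:sigma} reduces this to an expression in $X(\gamma_i)=\lambda,-1,\lambda$, in which every Hilbert symbol collapses using \eqref{e:prop3i} and tameness. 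In the second case, $\del{1}{\lambda}=-\lambda/(2b\thet)=-\tfrac{1}{2}+(a/(2bd))\thet$ and $\del{2}{\lambda}=-1/(2b\thet)=(1/(2bd))\thet$; since $2,b,d\in\Ok^{\times}$, both factors lie in $\Gamma_{\mathfrak{p}}$ and their Kubota symbols are again $1$ by Proposition~\ref{p:kupwa}. A short calculation with Theorem~\ref{t:sigma} (first clause, since the ratio of $X$'s is a unit) then makes the residual $\sig{}{}$ collapse to $1$.

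The \emph{hard} case is $a\in\Ok^{\times}$, $b\in\mathfrak{p}\setminus\{0\}$, where $\del{1}{\lambda}\notin\OK^{\times}$, so the decomposition \eqref{e:halam} no longer lives in $\Gamma_{\mathfrak{p}}$. My plan is to use multiplicativity in the torus: writing $\mu=1+(b/a)\thet$, we have $\ha{\lambda}=\ha{a}\ha{\mu}$, and \eqref{e:kupsig} gives
$$\kup{\ha{\lambda}}=\kup{\ha{a}}\,\kup{\ha{\mu}}\,\sig{\ha{a}}{\ha{\mu}}.$$
The factor $\kup{\ha{a}}$ is $1$ by the easy case. To handle $\kup{\ha{\mu}}$, I use that $\mu\in 1+\mathfrak{p}\OK$; because $\mathfrak{p}$ is odd, Hensel's lemma (Theorem~\ref{t:hensel}) produces $\nu\in\OK^{\times}$ with $\nu^{2}=\mu$. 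Then \eqref{e:kupsq} gives $\kup{\ha{\mu}}=\kup{\ha{\nu}^{2}}=\sig{\ha{\nu}}{\ha{\nu}}$. Using Proposition~\ref{p:sigma} (in the form of Remark~\ref{r:trace}) and the fact that $\norm{\nu}$, $\trace{\nu}$ and $\nu\ol{\del{1}{\nu}}/\del{2}{\nu}=-\norm{\nu}$ are all products of units in $\Ok^{\times}$, every resulting Hilbert symbol is trivial, so $\kup{\ha{\mu}}=1$. Finally $\sig{\ha{a}}{\ha{\mu}}$ is computed from the third clause of Proposition~\ref{p:sigma}:
$$\sig{\ha{a}}{\ha{\mu}}=\hilk{a}{\mu\ol{\del{1}{\mu}}/\thet}=\hilk{a}{-\norm{\mu}/(2(b/a)d)},$$
which I simplify via \eqref{e:bastri}, \eqref{e:norm}, $\hilbk{a}{\mu}=1$ (both units), and $\hilk{a}{2d}=1$, ending at $\hilk{a}{b}$.

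The main obstacle is the hard case, specifically the verification that $\kup{\ha{\mu}}=1$ for $\mu\equiv1\pmod{\mathfrak{p}\OK}$. Getting this right requires (i) seeing that Hensel produces a square root $\nu$ inside $\OK^{\times}$, not merely in some larger field, and (ii) carefully tracking the cocycle $\sig{\ha{\nu}}{\ha{\nu}}$ across the different clauses of Proposition~\ref{p:sigma} depending on whether $\nu$ and $\nu^{2}$ lie in $k^{\times}$. The final reduction to $\hilk{a}{b}$ likewise depends on a clean use of \eqref{e:norm} together with tameness of units; any ramification or even residue characteristic would spoil this, which is precisely why the statement of the proposition splits into two separate regimes at odd versus even or ramified primes.
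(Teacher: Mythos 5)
Your argument is correct, and while your easy cases coincide with the paper's, your treatment of the crucial case ($a \in \Ok^{\times}$, $b \in \mathfrak{p} \setminus \{ 0 \}$) takes a genuinely different route. For $b = 0$ or $b \in \Ok^{\times}$ both you and the paper use \eqref{e:halam}, Proposition~\ref{p:kupwa}, and a Theorem~\ref{t:sigma} computation that collapses by tameness, so there is nothing to compare there. In the hard case the paper factors $\ha{\lambda} = \ha{b + a / \thet} \cdot \ha{\thet}$: since $b + a / \thet = b - ( a / d ) \thet$ has unit $\thet$-coefficient, $\kup{\ha{b + a / \thet}} = 1$ falls under the case just proved, $\kup{\ha{\thet}} = 1$ is the $\Ok^{\times} \thet$ case, and the single cocycle $\sig{\ha{\lambda / \thet}}{\ha{\thet}} = \hilk{a}{b} \cdot \hilk{- \thet^{2}}{- \norm{\lambda}}$ delivers the answer once Proposition~\ref{p:tame} kills the second symbol. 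You instead factor $\lambda = a \mu$ with $\mu = 1 + ( b / a ) \thet \equiv 1 \pmod{\mathfrak{p} \OK}$, dispose of $\kup{\ha{\mu}}$ via the Hensel square root together with \eqref{e:kupsq} --- exactly the mechanism the paper reserves for Proposition~\ref{p:kupha2} at ramified and even primes --- and then extract $\hilk{a}{b}$ from the third clause of Proposition~\ref{p:sigma}, namely $\sig{\ha{a}}{\ha{\mu}} = \hilk{a}{\mu \ol{\del{1}{\mu}} / \thet} = \hilk{a}{- \norm{\mu} a / ( 2 b d )} = \hilk{a}{b}$. Your sub-steps check out: the fifth-clause evaluation gives $\sig{\ha{\nu}}{\ha{\nu}} = \hilk{\norm{\nu}}{\norm{\nu}} \cdot \hilk{\trace{\nu}}{- \norm{\nu}}$, which is trivial by tameness since all entries are units; Hensel applies componentwise in the split case, so $\nu \in \OK^{\times}$ indeed, and $\nu \notin k^{\times}$ whenever $\mu \notin k^{\times}$, so your clause-tracking is sound. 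What your route buys is uniformity: the same principal-unit square argument now underlies both Propositions~\ref{p:kupha} and~\ref{p:kupha2}, making it transparent that $\hilk{a}{b}$ is the only obstruction surviving when $b$ is a non-unit. What the paper's route buys is economy: it never leaves the $\wa{r}{m}$-bookkeeping and its own previously established cases, so no new square root, no appeal to Hensel, and no fresh case analysis in Proposition~\ref{p:sigma} are needed.
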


\begin{proof}
We first assume that for $ \lambda \in \OK^{\times} $, $ \ha{\lambda} \in \hat{T} $, we have $ \del{2}{\lambda} \in \OK^{\times} $.

If $ \lambda \in \Ok^{\times} $, then this implies that $ \del{2}{\lambda} = \thet \in \OK^{\times} $, hence we have, by \eqref{e:halam} and \eqref{e:kupsig}, that
	\[ \kup{\ha{\lambda}} = \kup{\wa{0}{\lambda \thet}} \cdot \kup{\wa{0}{\thet}^{- 1}} \cdot \sig{\wa{0}{\lambda \thet}}{\wa{0}{\thet}^{- 1}}. \]
By Theorem~\ref{t:sigma}, we have
	\[ \sig{\wa{0}{\lambda \thet}}{\wa{0}{\thet}^{- 1}} = \ug{\frac{\lambda}{- 1}}{\lambda ( - 1 )} = \hilk{- \lambda}{\lambda}; \]
hence by \eqref{e:prop3i}, we have
	\[ \sig{\wa{0}{\lambda \thet}}{\wa{0}{\thet}^{- 1}} = 1. \]
Thus, by the above and Proposition~\ref{p:kupwa}, we have
	\[ \kup{\ha{\lambda}} = 1. \]

If $ \lambda \in \Ok^{\times} \thet $, then $ \del{2}{\lambda} = - 1 / ( 2 \lambda ) \in \OK^{\times} $.  By \eqref{e:halam} and \eqref{e:kupsig},
	\[ \kup{\ha{\lambda}}
	=	\begin{aligned}[t]
			&\kup{\wa{1}{- \frac{1}{2}}} \cdot \kup{\wa{0}{- \frac{1}{2 \lambda}}^{- 1}} \\
			&\phantom{\ } \cdot \sig{\wa{1}{- \frac{1}{2}}}{\wa{0}{- \frac{1}{2 \lambda}}^{- 1}}.
		\end{aligned} \]
By Theorem~\ref{t:sigma}, we have
\begin{align*}
	&\sig{\wa{1}{- \frac{1}{2}}}{\wa{0}{- \frac{1}{2 \lambda}}^{- 1}} \\
	&= \sig{\wa{1}{- \frac{1}{2}}}{\wa{0}{\frac{1}{2 \lambda}}} \\
	&=	\begin{aligned}[t]
			&\ug{\frac{\lambda}{1 / ( 2 \lambda \thet )}}{\left( - \frac{1}{2 \thet} \right) \left( \frac{1}{2 \lambda \thet} \right)} \\
			&\phantom{\ } \cdot \hilk{\frac{\del{2}{\lambda}}{\del{2}{1 / ( 2 \lambda \thet )}}}{- \frac{\norm{1 / ( 2 \lambda \thet )} \del{2}{1 / ( 2 \lambda \thet )}}{\del{2}{- 1 / ( 2 \thet )}}} \\
			&\phantom{\ } \cdot \hilk{\frac{\lambda}{( - 1 / ( 2 \thet ) ) ( 1 / ( 2 \lambda \thet ) )}}{\frac{\del{2}{- 1 / ( 2 \thet )} \del{2}{1 / ( 2 \lambda \thet )}}{\del{2}{\lambda} \thet}}
		\end{aligned} \\
	&=	\begin{aligned}[t]
			&\hilk{- 4 \norm{\lambda}^{2} \thet^{2}}{- \frac{1}{16 \norm{\lambda} \thet^{4}}} \cdot \hilk{- \frac{1}{2 \lambda \thet}}{- \left( \frac{1}{2 \lambda \thet} \right)^{2}} \\
			&\phantom{\ } \cdot \hilk{- 4 \lambda^{2} \thet^{2}}{- 2 \lambda \thet}.
		\end{aligned}
\end{align*}
Simplifying the above using the properties of Hilbert symbols \eqref{e:bastri}, \eqref{e:prop3i} and \eqref{e:prop5ii}, we get
	\[ \sig{\wa{1}{- \frac{1}{2}}}{\wa{0}{- \frac{1}{2 \lambda}}^{- 1}} = 1. \]
Thus by the above and Proposition~\ref{p:kupwa},
	\[ \kup{\ha{\lambda}} = 1. \]

If $ \lambda = a + b \thet \in \OK^{\times} $ such that $ a $, $ b \in k^{\times} $ and $ \ha{\lambda} \in \hat{T} $, then $ \del{2}{\lambda} \in \OK^{\times} $ when $ b \in \Ok^{\times} $.  So assume that $ b \in \Ok^{\times} $.  By \eqref{e:halam} and \eqref{e:kupsig},
	\[ \kup{\ha{\lambda}}
	=	\begin{aligned}[t]
			&\kup{\wa{1}{- \frac{1}{2} - \frac{a}{2 b \thet}}} \cdot \kup{\wa{0}{- \frac{1}{2 b \thet}}^{- 1}} \\
			&\phantom{\ } \cdot \sig{\wa{1}{- \frac{1}{2} - \frac{a}{2 b \thet}}}{\wa{0}{- \frac{1}{2 b \thet}}^{- 1}}.
		\end{aligned} \]
Then by Theorem~\ref{t:sigma},
\begin{align*}
	&\sig{\wa{1}{- \frac{1}{2} - \frac{a}{2 b \thet}}}{\wa{0}{- \frac{1}{2 b \thet}}^{- 1}} \\
	&= \sig{\wa{1}{- \frac{\lambda}{2 b \thet}}}{\wa{0}{\frac{1}{2 b \thet}}} \\
	&=	\begin{aligned}[t]
			&\ug{\frac{\lambda}{1 / ( 2 b \thet^{2} )}}{\left( - \frac{\lambda}{2 b \thet^{2}} \right) \left( \frac{1}{2 b \thet^{2}} \right)} \\
			&\phantom{\ } \cdot \hilk{\frac{\del{2}{\lambda}}{\del{2}{1 / ( 2 b \thet^{2} )}}}{- \frac{\norm{1 / ( 2 b \thet^{2} )} \del{2}{1 / ( 2 b \thet^{2} )}}{\del{2}{- \lambda / ( 2 b \thet^{2} )}}} \\
			&\phantom{\ } \cdot \hilk{\frac{\lambda}{( - \lambda / ( 2 b \thet^{2} ) ) ( 1 / ( 2 b \thet^{2} ) )}}{\frac{\del{2}{- \lambda / ( 2 b \thet^{2} )} \del{2}{1 / ( 2 b \thet^{2} )}}{\del{2}{\lambda} \thet}}
		\end{aligned} \\
	&= \hilk{4 \norm{\lambda} b^{2} \thet^{4}}{- \frac{\norm{\lambda}}{16 b^{4} \thet^{8}}} \cdot \hilk{- \frac{1}{2 b \thet^{2}}}{- \left( \frac{1}{2 b \thet^{2}} \right)} \cdot \hilk{- 4 b^{2} \thet^{4}}{- 2 b \thet^{2}}.
\end{align*}
Simplifying the above using the properties of Hilbert symbols \eqref{e:prop5ii} and \eqref{e:bastri}, we get
	\[ \sig{\wa{1}{- \frac{1}{2} - \frac{a}{2 b \thet}}}{\wa{0}{- \frac{1}{2 b \thet}}^{- 1}} = 1. \]
Hence, by the above and Proposition~\ref{p:kupwa},
	\[ \kup{\ha{\lambda}} = 1. \]

If $ b \notin \Ok^{\times} $, then $ a \in \Ok^{\times} $ since $ \lambda \in \OK^{\times} $.  But we already know that
	\[ \kup{\ha{b + a / \thet}} = 1. \]
Since
	\[ \ha{\lambda} = \ha{b + a / \thet} \cdot \ha{\thet}, \]
by \eqref{e:kupsig},
	\[ \kup{\ha{\lambda}} = \kup{\ha{b + a / \thet}} \cdot \kup{\ha{\thet}} \cdot \sig{\ha{b + a / \thet}}{\ha{\thet}}. \]
By Theorem~\ref{t:sigma},
\begin{align*}
	\sig{\ha{b + a / \thet}}{\ha{\thet}}
	&= \sig{\ha{\frac{\lambda}{\thet}}}{\ha{\thet}} \\
	&= \ug{\frac{\lambda}{\thet}}{\lambda} \cdot \hilk{\frac{\del{2}{\lambda}}{\del{2}{\thet}}}{- \frac{\norm{\thet} \del{2}{\thet}}{\del{2}{\lambda / \thet}}} \\
	&= \hilk{- \frac{\norm{\lambda}}{\thet^{2}}}{- \norm{\lambda}} \cdot \hilk{\frac{1}{b}}{a}.
\end{align*}
By the properties of Hilbert symbols \eqref{e:prop2i} and \eqref{e:prop3i}, we have
	\[ \sig{\ha{b + a / \thet}}{\ha{\thet}} = \hilk{a}{b} \cdot \hilk{- \thet^{2}}{- \norm{\lambda}}. \]
But the characteristic of the residue field $ \Ok / \mathfrak{p}_{k} $ is not $ 2 $.  Thus we may use Proposition~\ref{p:tame}, as $ - \thet^{2} $, $ \norm{\lambda} \in \Ok^{\times} $, so that we have
	\[ \hilk{- \thet^{2}}{- \norm{\lambda}} = 1. \]
Applying the above to our equation, we get
	\[ \kup{\ha{\lambda}} = \hilk{a}{b}. \]
This completes our proof.
\end{proof}

\begin{rem} \label{r:kupha}
We should note that if $ \mathfrak{p} $ is odd and either unramified or split in $ K $, then from Propositions~\ref{p:kupwa} and~\ref{p:kupha}, we have that $ \hat{T} $ is generated by elements of $ N( k ) \cap G( k ) $ and $ \ol{N}( k ) \cap G( k ) $.
\end{rem}

We now look at the cases where either $ \mathfrak{p} $ is odd and ramified in $ K $, or $ \mathfrak{p} $ is even in $ K $.

\begin{prop} \label{p:kupha2}
If $ \mathfrak{p} $ is odd and ramified in $ K $, or $ \mathfrak{p} $ is even in $ K $, then if $ \ha{\lambda} \in \hat{T} $, there exists $ \ha{\mu} \in T( k ) \cap \hat{\Gamma}_{\mathfrak{p}} $ such that $ \ha{\lambda} = \ha{\mu}^{2} $, and
	\[ \kup{\ha{\lambda}} = 1. \]
\end{prop}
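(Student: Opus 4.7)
The plan is to write $\ha{\lambda} = \ha{\mu}^{2}$ for some $\ha{\mu} \in T(k) \cap \hat{\Gamma}_{\mathfrak{p}}$, at which point \eqref{e:kupsq} reduces the claim to $\sig{\ha{\mu}}{\ha{\mu}} = 1$. To produce $\mu$, I would apply Hensel's lemma (Theorem~\ref{t:hensel}) to $f(X) = X^{2} - \lambda$ at $X_{0} = 1$. Unpacking $\ha{\lambda} \in \hat{T}$ and using $\ol{\thet} = -\thet$ gives $\lambda \equiv 1 \pod{\thet}$ in the odd ramified case and $\lambda \equiv 1 \pod{8}$ in the even cases; in both regimes $|f(1)| = |1 - \lambda| < |4| = |f'(1)|^{2}$ (vacuously when $\mathfrak{p}$ is odd, since then $|4|=1$), so Hensel applies and the root $\mu \in \OK^{\times}$ satisfies $|\mu - 1| \leq |1 - \lambda|/|2|$, landing at level $\thet$ in the odd case and at level $4$ in the even case. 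In particular $\ha{\mu} \in T(k) \cap \hat{\Gamma}_{\mathfrak{p}}$.

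Next I would evaluate $\sig{\ha{\mu}}{\ha{\mu}}$ using Proposition~\ref{p:sigma}. When $\mu \in k^{\times}$ the first branch gives $\hilk{\mu}{\mu} = \hilk{\mu}{-1}$ by \eqref{e:prop3i}. When $\mu = a + b\thet \notin k^{\times}$, the congruence $\mu \equiv 1$ forces $a \neq 0$, whence $\mu^{2} \notin k^{\times}$, so the ``otherwise'' branch applies with $q = 2a$; a short computation from the definitions in Proposition~\ref{p:deltas} yields $\mu \, \ol{\del{1}{\mu}} / \del{2}{\mu} = -\norm{\mu}$, collapsing the formula to $\hilk{\norm{\mu}}{-1} \cdot \hilk{2a}{-\norm{\mu}}$, which by bilinearity is $\hilk{2a \norm{\mu}}{-1} \cdot \hilk{2a}{\norm{\mu}}$.

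The final step is to show these symbols are trivial. In the odd ramified case every argument lies in $\Ok^{\times}$, and Proposition~\ref{p:tame} returns $1$ at once. In the even cases, $\mu \equiv 1 \pod{4}$ in $\OK$ forces $\norm{\mu} \equiv 1 \pod{8}$ in $\Ok^{\times}$; since $8$ sits in a sufficiently high power of $\mathfrak{p}_{k}$, Hensel's lemma produces a square root of $\norm{\mu}$ in $k$, trivialising both $\hilk{\norm{\mu}}{-1}$ and $\hilk{2a}{\norm{\mu}}$. What remains is a symbol of the shape $\hilk{x}{-1}$ with $x \in 1 + 4\Ok$ in the scalar case or $x \in 2 + 8\Ok$ otherwise; each vanishes because $1 + 4\Ok$ lies in the norm group from $k(\sqrt{-1})$ and because $2$ is a norm from $k(\sqrt{-1})$ in any $2$-adic $k$, both standard consequences of the conductor of $k(\sqrt{-1})/k$ dividing $4$.

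The main obstacle is this last class-field-theoretic verification in the even residue characteristic case. Everything else is routine: Hensel supplies $\mu$, Proposition~\ref{p:sigma} does the bookkeeping, and the tame symbol dispatches the odd ramified case immediately. A minor nuisance in the even split case is the extra intersection with $G(l)$ built into $\Gamma_{\mathfrak{p}}$, but since Hensel is local the root $\mu$ sits in $\OK$ automatically and the argument goes through unchanged, using that $\ha{\mu} \in \hat{\Gamma}_{\mathfrak{p}} = G(\Ok, 4)$ is enough for \eqref{e:kupsq} to be applied.
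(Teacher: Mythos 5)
Your overall plan is the paper's own: produce $\mu$ with $\ha{\lambda}=\ha{\mu}^{2}$ by Hensel's lemma at $X_{0}=1$, reduce to $\sig{\ha{\mu}}{\ha{\mu}}$ via \eqref{e:kupsq}, and evaluate via the torus cocycle (your collapse of the ``otherwise'' branch using $q=2a$ and $\mu\,\ol{\del{1}{\mu}}/\del{2}{\mu}=-\norm{\mu}$ is correct and agrees with the paper's expression $\hilk{\norm{\mu}}{-2c}\cdot\hilk{c}{-1}$, and your tame-symbol disposal of the odd ramified case is exactly Proposition~\ref{p:tame} as in the paper). However, there is a genuine gap in your even-case endgame: the claim that $\mu\equiv 1\pod{4}$ in $\OK$ forces $\norm{\mu}\equiv 1\pod{8}$. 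Writing $\mu=1+4\nu$ gives $\norm{\mu}=1+4\trace{\nu}+16\norm{\nu}$, so you need $\trace{\nu}$ to be even. This holds when $\OK=\Ok[\thet]$, but fails whenever $\OK$ contains integral elements of odd trace --- precisely the even unramified case $\OK=\Ok\bigl[(1+\thet)/2\bigr]$ and the even split case $\OK\cong\Ok\oplus\Ok$. A concrete counterexample in the split case with $k=\Qt$: take $\mu=(5,1)$ under \eqref{e:splmap}, so $\lambda=\mu^{2}=(25,1)\equiv 1\pod{8}$; then $\norm{\mu}=5\equiv 5\pod{8}$ is \emph{not} a square in $k$, $2a=\trace{\mu}=6\notin 2+8\Ok$, and indeed $\hilk{2a}{\norm{\mu}}=(6,5)_{k,2}=-1$, so your claimed trivialisation of that factor is false. (The full product $\hilk{\norm{\mu}}{-1}\cdot\hilk{2a}{-\norm{\mu}}$ still equals $1$ here, but not for your reasons: two of your three intermediate evaluations are individually wrong in this regime.)

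This is exactly the sub-case the paper singles out: with $\mu=1+4a'+2b'+2b'\thet$ and $b'\in\Ok^{\times}$ it finds $\norm{\mu}\equiv 5\pod{8}$ and $c\equiv -1\pod{4}$, rewrites the residual symbol as $\hilk{\norm{\mu}}{-1}\cdot\hilk{2c}{-\norm{\mu}}$, and kills the second factor via \eqref{e:prop4i}, $\hilk{2c}{-\norm{\mu}}=\hilk{2c}{-(1-2c)\norm{\mu}}$, together with the congruence $-(1-2c)\norm{\mu}\equiv 1\pod{8}$, which \emph{is} a square; you would need to add this computation (or an equivalent) to close the hole. Aside from this, your argument is sound; in fact your class-field-theoretic treatment of $\hilk{x}{-1}$ --- triviality on $1+4\Ok$ because the conductor of $k(\sqrt{-1})/k$ divides $(4)$, and $\hilk{2}{-1}=1$ because $2=N(1+i)$ --- is a legitimate and somewhat slicker substitute for the paper's explicit isotropic-vector trick $\hilk{1+4a'}{-1}=\hilk{1+4a'}{4a'}$ with solution $\left(1,1,\sqrt{1+8a'}\right)$, and your Hensel step matches the paper's (which also verifies convergence of the binomial series for the square root).
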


\begin{proof}
We first consider the case where $ \mathfrak{p} $ is odd and ramified in $ K $.  In this case, $ \Gamma_{\mathfrak{p}} = \hat{\Gamma}_{\mathfrak{p}} = G( \Ok, \thet ) $.  Thus elements of $ \hat{T} $ may be described by
	\[ \ha{1 + \lambda' \thet} = \begin{pmatrix} 1 + \lambda' \thet & 0 & 0 \\ 0 & ( 1 - \ol{\lambda'} \thet ) / ( 1 + \lambda' \thet ) & 0 \\ 0 & 0 & ( 1 - \ol{\lambda'} \thet )^{- 1} \end{pmatrix}, \]
where $ \lambda' \in \OK $.  We want to show that for every $ \lambda' \in \OK $, $ \ha{1 + \lambda' \thet} $ is a square of an element in $ \hat{T} $.  We will use Hensel's Lemma (Theorem~\ref{t:hensel}).  We have
	\[ f( X ) = X^{2} - ( 1 + \lambda' \thet ) \in \OK[ X ], \]
and hence the formal derivative $ f'( X ) $ is
	\[ f'( X ) = 2 X. \]
Let the prime above $ \mathfrak{p} $ in $ K $ be denoted by $ \mathfrak{P} $.  Then by Lemma~\ref{l:extval},
\begin{gather*}
	| f( 1 ) |_{\mathfrak{P}} = \left| - \norm{\lambda'} \thet^{2} \right|_{\mathfrak{p}} < 1, \\
	| f'( 1 ) |_{\mathfrak{P}} = \left| 2^{2} \right|_{\mathfrak{p}} = 1.
\end{gather*}
This implies by Hensel's Lemma that since $ | f( 1 ) |_{\mathfrak{P}} < | f'( 1 ) |_{\mathfrak{P}}^{2} $, there exists a solution for $ f( X ) $, i.e. there exists $ a \in \OK $ such that
	\[ f( a ) = 0, \quad | a - 1 |_{\mathfrak{P}} \leq \frac{| f( 1 ) |_{\mathfrak{P}}}{| f'( 1 ) |_{\mathfrak{P}}}. \]

We have by the binomial theorem that for any $ \lambda' \in \OK $,
	\[ ( 1 + \lambda' \thet )^{1 / 2} = 1 + \frac{\lambda' \thet}{2} + \frac{( 1 / 2 ) ( 1 / 2 - 1 )}{2!} \cdot ( \lambda' \thet )^{2} + \cdots \]
and
	\[ \left| 1 - ( 1 + \lambda' \thet )^{1 / 2} \right|_{\mathfrak{P}} = \left| \frac{\lambda' \thet}{2} \right|_{\mathfrak{P}} \leq \frac{| f( 1 ) |_{\mathfrak{P}}}{| f'( 1 ) |_{\mathfrak{P}}}, \]
thus by Hensel's Lemma, this expansion converges.  Hence, there is a canonical choice for $ 1 + \mu' \thet $, $ \mu' \in \OK $ for every $ 1 + \lambda' \thet $, $ \lambda' \in \OK $ such that $ 1 + \lambda' \thet = ( 1 + \mu' \thet )^{2} $, and hence
	\[ \ha{1 + \lambda' \thet} = \ha{1 + \mu' \thet}^{2}, \]
where $ \ha{1 + \mu' \thet} \in \hat{T} $.

As for the case where $ \mathfrak{p} $ is even, $ \Gamma_{\mathfrak{p}} = G( \Ok, 8 ) $, $ \hat{\Gamma}_{\mathfrak{p}} = G( \Ok, 4 ) $ and the elements of $ \hat{T} $ are of the form
	\[ \ha{1 + 8 \lambda''} = \begin{pmatrix} 1 + 8 \lambda'' & 0 & 0 \\ 0 & \left( 1 + 8 \ol{\lambda''} \right) / ( 1 + 8 \lambda'' ) & 0 \\ 0 & 0 & \left( 1 + 8 \ol{\lambda''} \right)^{- 1} \end{pmatrix}, \]
where $ \lambda'' \in \OK $.  Similar to the case where $ \mathfrak{p} $ is odd and ramified in $ K $, we will use Hensel's Lemma to show that every element in $ \hat{T} $ is a square of an element in $ T( k ) \cap \hat{\Gamma}_{\mathfrak{p}} $.  We have
	\[ f_{1}( X ) = X^{2} - ( 1 + 8 \lambda'' ), \]
with formal derivative
	\[ f_{1}'( X ) = 2 X. \]
Let $ \mathfrak{P} $ be a prime ideal in $ K $ such that $ \mathfrak{P} \mid \mathfrak{p} $.  By Lemma~\ref{l:extval},
\begin{gather*}
	| f_{1}( 1 ) |_{\mathfrak{P}} = \left| 8^{2} \norm{\lambda''} \right|_{\mathfrak{p}} = \left| 2^{6} \norm{\lambda''} \right|_{\mathfrak{p}}, \\
	| f_{1}'( 1 ) |_{\mathfrak{P}} = \left| 2^{2} \right|_{\mathfrak{p}},
\end{gather*}
thus we have $ | f_{1}( 1 ) |_{\mathfrak{P}} <  | f_{1}'( 1 ) |_{\mathfrak{P}}^{2} $.  (Note that for the split case we will use the isomorphism \eqref{e:splmap}.)  This implies by Hensel's Lemma that there exists a solution for $ f_{1}( X ) $, i.e. there exists $ b \in \OK $ such that
	\[ f_{1}( b ) = 0, \quad | b - 1 |_{\mathfrak{P}} \leq \frac{| f_{1}( 1 ) |_{\mathfrak{P}}}{| f_{1}'( 1 ) |_{\mathfrak{P}}}. \]
	
By the binomial theorem, for any $ \lambda'' \in \OK $,
	\[ ( 1 + 8 \lambda'' )^{1 / 2} = 1 + 4 \lambda'' + \frac{( 1 / 2 ) ( 1 / 2 - 1 )}{2!} ( 8 \lambda'' )^{2} + \cdots \]
and
	\[ \left| 1 - ( 1 + 8 \lambda'' )^{1 / 2} \right|_{\mathfrak{P}} = \left| 4 \lambda'' \right|_{\mathfrak{P}} \leq \frac{| f_{1}( 1 ) |_{\mathfrak{P}}}{| f_{1}'( 1 ) |_{\mathfrak{P}}}, \]
so by Hensel's Lemma, the expansion converges.  So there is a canonical choice for $ 1 + 4 \mu'' $, $ \mu'' \in \OK $, for every $ 1 + 8 \lambda'' $, $ \lambda'' \in \OK $, such that $ 1 + 8 \lambda'' = ( 1 + 4 \mu'' )^{2} $ and
	\[ \ha{1 + 8 \lambda''} = \ha{1 + 4 \mu''}^{2}. \]
So this verifies that $ \ha{1 + 4 \mu''} \in T( k ) \cap \hat{\Gamma}_{\mathfrak{p}} $.

Now that we have established that every element of $ \hat{T} $ is a square of an element in $ T( k ) \cap \hat{\Gamma}_{\mathfrak{p}} $, let us now calculate the local Kubota symbol.  Whether $ \mathfrak{p} $ is odd and ramified in $ K $, or even, let $ \ha{\lambda} \in \hat{T} $ and $ \ha{\mu} \in T( k ) \cap \hat{\Gamma}_{\mathfrak{p}} $ such that $ \ha{\lambda} = \ha{\mu}^{2} $.  Then by \eqref{e:kupsq},
	\[ \kup{\ha{\lambda}} = \sig{\ha{\mu}}{\ha{\mu}}. \]
There are only two cases for $ \mu $ to consider: either $ \mu \in k^{\times} $, or $ \mu = c + d \thet $, $ c $, $ d \in k^{\times} $.  In the first case, we have
	\[ \kup{\ha{\lambda}} = \ug{\frac{\mu^{2}}{\mu}}{\mu^{2}} = \hilk{\mu}{- \mu^{2}} \]
by Theorem~\ref{t:sigma}.  Simplifying the above using the Hilbert symbol property \eqref{e:prop3i},
	\[ \kup{\ha{\lambda}} = \hilk{\mu}{- 1}. \]

This implies, in the case where $ \mathfrak{p} $ is odd and ramified in $ K $, that $ \mu = 1 + b \thet^{2} $, where $ b \in \Ok $.  By Hensel's Lemma, $ X^{2} - ( 1 + b \thet^{2} ) $ has a solution with approximate root $ 1 $.  Thus $ 1 + b \thet^{2} $ is a square in $ k^{\times} $, hence by the property of Hilbert symbols \eqref{e:prop5ii},
	\[ \kup{\ha{\lambda}} = 1. \]

In the case where $ \mathfrak{p} $ is even, if $ \mu \in k^{\times} $, then $ \mu = 1 + 4 a' $ for $ a' \in \Ok $.  Thus we have
	\[ \kup{\ha{\lambda}} = \hilk{1 + 4 a'}{- 1}. \]
If $ 1 + 4 a' \in 1 + 8 \Ok $, then $ 1 + 4 a' $ is a square in $ k^{\times} $ (we showed this is true by Hensel's Lemma earlier in the proof ) and hence by \eqref{e:prop5ii},
	\[ \hilk{1 + 4 a'}{- 1} = 1. \]
If $ 1 + 4 a' \notin 1 + 8 \Ok $, we can show that $ \hilk{1 + 4a'}{- 1} = 1 $ as well.  In Section~\ref{s:hilbert}, we established that
	\[ \hilk{a}{b} = 1 \iff aX^{2} + bY^{2} - Z^{2} = 0 \text{ has a non-trivial solution $ ( X, Y, Z ) $ in $ k^{3} $}. \]
By \eqref{e:prop4i},
	\[ \hilk{1 + 4 a'}{- 1} = \hilk{1 + 4 a'}{- ( 1 - ( 1 + 4 a' ) )} = \hilk{1 + 4 a'}{4 a'}. \]
Since we have a solution $ \left( 1, 1, \sqrt{1 + 8 a'} \right) $ for $ \hilk{1 + 4 a'}{4 a'} $, this implies that $ \hilk{1 + 4 a'}{- 1} = 1 $ for all $ a' \in \Ok $.  Therefore for all elements of the form $ \ha{1 + 4 a'} $, $ a' \in \Ok $, $ \mathfrak{p} $ even,
	\[ \kup{\ha{\lambda}} = 1. \]


If $ \mu \notin k^{\times} $, then $ \mu = c + d \thet $ with $ c $, $ d \in k^{\times} $ and by Theorem~\ref{t:sigma},
\begin{align*}
	\kup{\ha{\lambda}}
	&= \ug{\frac{\mu^{2}}{\mu}}{\mu^{2}} \cdot \hilk{\frac{\del{2}{\mu^{2}}}{\del{2}{\mu}}}{- \frac{\norm{\mu} \del{2}{\mu}}{\del{2}{\mu}}} \\
	&= \hilk{\norm{\mu}}{- \norm{\mu}^{2}} \cdot \hilk{\frac{1}{2 c}}{- \norm{\mu}}.
\end{align*}
Simplifying the above using the properties of Hilbert symbols \eqref{e:prop2i} -- \eqref{e:bastri},
	\[ \kup{\ha{\lambda}} = \hilk{\norm{\mu}}{- 2 c} \cdot \hilk{c}{- 1}. \]

When $ \mathfrak{p} $ is odd and ramified in $ K $, $ \mu = 1 + ( a + b \thet ) \thet $, for $ a $, $ b \in \Ok $ with $ a \neq 0 $.  Thus $ c = 1 + b \thet^{2} \in \Ok^{\times} $.  By Proposition~\ref{p:tame}, since $ \norm{\mu} $, $ - 1 $, $ 2 $, $ c \in \Ok^{\times} $, we have $ \hilk{\norm{\mu}}{- 2 c} $, $ \hilk{2 c}{- 1} = 1 $.  Therefore we have $ \kup{\ha{\lambda}} = 1 $.

When $ \mathfrak{p} $ is even, we first consider the case where $ \OK = \Ok[ \thet ] $.  Then $ \mu = 1 + 4 ( a' + b' \thet ) $, for $ a' $, $ b' \in \Ok $ with $ b' \neq 0 $.  This implies that $ c = 1 + 4 a' \in \OK^{\times} $.  We have already shown that $ \hilk{1 + 4 a'}{- 1} = 1 $ in the case where $ \mu \in k^{\times} $.  As for $ \hilk{\norm{\mu}}{- 2 c} $, we see that $ \norm{\mu} \in 1 + 8 \Ok $, i.e. $ \norm{\mu} $ is a square in $ k^{\times} $.  Hence by \eqref{e:prop5ii}, $ \hilk{\norm{\mu}}{- 2 c} = 1 $.  Therefore $ \kup{\ha{\lambda}} = 1 $.

If instead $ \OK = \Ok[ ( 1 + \thet ) / 2 ] $ when $ \mathfrak{p} $ is even (i.e. $ \mathfrak{p} $ is unramified in $ K $), then we have
	\[ \mu = 1 + 4 \left( a' + b' \left[ \frac{1 + \thet}{2} \right] \right) = 1 + 4 a' + 2 b' + 2 b' \thet, \]
where $ a' $, $ b' \in \Ok $ with $ b' \neq 0 $, with $ c = 1 + 4 a' + 2 b' $, and $ d = 2 b' $.  This implies that when $ b' \notin \Ok^{\times} $, i.e. $ 2 \mid b' $, we are in the same case as when $ \OK = \Ok[ \thet ] $, since $ c \equiv 1 \pod{4} $, which implies that $ \hilk{c}{- 1} = 1 $ by our previous result, and $ \norm{\mu} \in 1 + 8 \Ok $, so $ \hilk{\norm{\mu}}{- 2 c} = 1 $.  Hence when $ 2 \mid b' $, $ \kup{\ha{\lambda}} = 1 $.

We can use the properties of Hilbert symbols \eqref{e:prop4i} and \eqref{e:bastri} to show that
	\[ \kup{\ha{\lambda}} = \hilk{\norm{\mu}}{- 1} \cdot \hilk{2 c}{- \norm{\mu}}. \]
The above formulation makes it easier to calculate the Kubota symbol in the case where $ b' \in \Ok^{\times} $.  We have $ \norm{\mu} \equiv 1 \pod{4} $, hence $ \hilk{\norm{\mu}}{- 1} = 1 $.  Also, $ \norm{\mu} \equiv 5 \pod{8} $ and $ c \equiv - 1 \pod{4} $.  So let $ \norm{\mu} = 5 + 8 e $ and $ c = - 1 + 4 f $, where $ e $, $ f \in \Ok $.  Then by \eqref{e:prop4i},
	\[ \hilk{2 c}{- \norm{\mu}} = \hilk{2 c}{- ( 1 - 2 c ) \norm{\mu}}. \]
But
	\[ - ( 1 - 2 c ) \norm{\mu} = - ( 3 - 8 f ) ( 5 + 8 e ) \equiv 1 \pod 8, \]
i.e. $ - ( 1 - 2 c ) \norm{\mu} $ is a square in $ k^{\times} $, and hence by \eqref{e:prop5ii}, $ \hilk{2 c}{- \norm{\mu}} = 1 $.  Thus we have shown that
	\[ \kup{\ha{\lambda}} = 1 \]
in all cases.
\end{proof}

\section{Other elements of the compact open subgroup} \label{s:other}

We are now ready to show how to find the local Kubota symbol on any element of $ \Gamma_{\mathfrak{p}} $.

By \eqref{e:bruc}, it is clear that the Bruhat decomposition of a matrix of $ G $ with a non-zero $( 3, 1 )$-entry depends only on the first column and last row of the matrix.  The following proposition establishes an important property of this type of matrix when $ \mathfrak{p} $ is odd and unramified in $ K $:

\begin{prop} \label{p:unit}
Let $ \mathfrak{p} $ is odd and unramified (but not split) in $ K $.  Also, let $ a $, $ b $, $ c $, $ d $, $ e \in \OK $, such that
	\[ a \ol{c} + \ol{a} c = - \norm{b},\ e \ol{c} + \ol{e} c = - \norm{d}, \]
and
	\[ \frac{a d}{c} + \frac{\ol{b}}{c},\ \frac{b d}{c} - \frac{\ol{c}}{c},\ \frac{b e}{c} + \frac{\ol{d}}{c},\ \frac{a e}{c} - \frac{\ol{b d}}{\norm{c}} + \frac{1}{\ol{c}} \in \OK. \]
Then
	\[ \beta = \begin{pmatrix}
					a & a d / c + \ol{b} / c & a e / c - \ol{b d} / ( \norm{c} ) + \ol{c}^{- 1} \\ 
					b & b d / c - \ol{c} / c & b e / c + \ol{d} / c \\ 
					c & d & e
				\end{pmatrix} \in \Gamma_{\mathfrak{p}}, \]
and either $ c $ or $ e \in \OK^{\times} $, or possibly both.
\end{prop}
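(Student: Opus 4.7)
My plan is to split the proof into two separate claims: first, that $\beta \in \Gamma_{\mathfrak{p}}$, and second, that at least one of $c,e$ lies in $\OK^\times$.

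For the first claim, I would show $\beta \in G(k)$ by exhibiting it as the Bruhat product
\[
	\beta = \xa{-\ol{b}/\ol{c}}{a/c} \cdot \ha{1/(\ol{c}\thet)} \cdot \wa{0}{\thet} \cdot \xa{d/c}{e/c},
\]
which is the decomposition given in \eqref{e:bruc}. The two hypotheses $a\ol{c}+\ol{a}c=-\norm{b}$ and $e\ol{c}+\ol{e}c=-\norm{d}$ are exactly the conditions $\trace{m}=-\norm{r}$ required to ensure that the two leftmost $x_\alpha$ factors genuinely lie in $N(k)$, and a direct block computation recovers every entry of $\beta$ as written (in particular, the intricate $(1,3)$-entry arises from combining $1/\ol{c}$ with the products $-\ol{b}\ol{d}/(\ol{c}c)$ and $ae/c$). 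Once $\beta \in G(k)$ is established, the hypothesis that each of the four quotients $ad/c+\ol{b}/c$, $bd/c-\ol{c}/c$, $be/c+\ol{d}/c$, $ae/c-\ol{bd}/\norm{c}+\ol{c}^{-1}$ lies in $\OK$ (together with $a,b,c,d,e\in\OK$) means every entry of $\beta$ is in $\OK$, so $\beta \in G(\OK) = \Gamma_{\mathfrak{p}}$ by Theorem~\ref{t:compact}.

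For the second claim, I would argue by contradiction. Let $\mathfrak{P}=\mathfrak{p}\OK$ be the unique prime of $\OK$ above $\mathfrak{p}$ (which is prime because $\mathfrak{p}$ is unramified and not split). Suppose both $c\notin\OK^\times$ and $e\notin\OK^\times$, so that $v_{\mathfrak{P}}(c)\ge 1$ and $v_{\mathfrak{P}}(e)\ge 1$. The relation $e\ol{c}+\ol{e}c=-\norm{d}$ then forces $v_{\mathfrak{P}}(\norm{d})\ge 2$; since $\OK/\mathfrak{P}$ is a field and the norm map on it is multiplicative and nondegenerate (i.e.\ $\norm{\bar d}=0$ implies $\bar d=0$), this gives $v_{\mathfrak{P}}(d)\ge 1$. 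But then the bottom row $(c,d,e)$ of $\beta$ reduces to $(0,0,0)$ modulo $\mathfrak{P}$, which contradicts $\det\beta = 1$ in the residue field.

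There is no serious obstacle here: the first part is essentially a verification of a known Bruhat decomposition, and the second part uses only the Hermitian relation on the last column together with reduction modulo $\mathfrak{P}$. The only mild care needed is in the Bruhat multiplication, where one must track the conjugations $\ol{\,\cdot\,}$ carefully to see that the top-right entry really equals $ae/c - \ol{bd}/\norm{c} + 1/\ol{c}$ rather than something merely resembling it.
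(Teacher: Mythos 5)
Your proof is correct, and its first half coincides with the paper's: the paper also establishes $\beta \in \Gamma_{\mathfrak{p}}$ by appeal to the Bruhat decomposition \eqref{e:bruc}, exactly as you do (indeed it dismisses this part as ``easily established''). Your second half, however, takes a genuinely different route. The paper works throughout with the Hermitian form $\langle u,v\rangle = u^{t}J'\ol{v}$ preserved by $\beta$: the vanishing self-pairing of the first column gives $\mathfrak{p}\mid c \Rightarrow \mathfrak{p}\mid b$, the vanishing self-pairing of the third column gives $\mathfrak{p}\mid e \Rightarrow \mathfrak{p}\mid d$, and the contradiction then comes from the pairing of the first column against the third, which equals $1$ but would be divisible by $\mathfrak{p}$ if both $c$ and $e$ were non-units. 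You instead read off $\mathfrak{P}\mid d$ directly from the hypothesis $e\ol{c}+\ol{e}c = -\norm{d}$ --- so you never have to manipulate the complicated third column at all --- and you derive the contradiction from $\det\beta = 1$, since a bottom row lying entirely in $\mathfrak{P}$ forces $\det\beta\in\mathfrak{P}$. Both arguments hinge on the same arithmetic input, namely that the norm is anisotropic on the residue field ($\norm{\ol{d}}=0$ implies $\ol{d}=0$), which is precisely where ``unramified and not split'' is used; you state this explicitly, whereas the paper invokes it silently in steps of the form ``$\mathfrak{p}\mid\norm{b}$, hence $\mathfrak{p}\mid b$''. Your determinant route is slightly leaner (it never needs the entry $b$, nor the unitarity pairing of distinct columns), while the paper's stays entirely within the unitary structure and avoids appealing to the determinant. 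Two cosmetic points: $v_{\mathfrak{P}}(\norm{d})\ge 1$ already suffices for your conclusion, so the ``$\ge 2$'' is unnecessary; and in the paper's notation the group you land in is written $G(\Ok)$ (its matrix entries lie in $\OK$), not $G(\OK)$.
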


\begin{proof}
The first part is easily established by \eqref{e:bruc}.  As for the second part, consider the Hermitian form $ \langle - , - \rangle $ defined by
	\[ \langle u, v \rangle = u^{t} \begin{pmatrix} 0 & 0 & 1 \\ 0 & 1 & 0 \\ 1 & 0 & 0 \end{pmatrix} \ol{v}, \]
where $ u $, $ v \in K^{3} $.  This implies that for all $ \gamma \in \SU( k ) $,
	\[ \langle \gamma u, \gamma v \rangle = \langle u, v \rangle. \]
If
	\[ u = \begin{pmatrix} u_{1} \\ u_{2} \\ u_{3} \end{pmatrix},\ v = \begin{pmatrix} v_{1} \\ v_{2} \\ v_{3} \end{pmatrix}, \]
then
	\[ \langle u, v \rangle = u_{3} \ol{v_{1}} + u_{2} \ol{v_{2}} + u_{1} \ol{v_{3}}. \]

Since
	\[ \left\langle \begin{pmatrix} a \\ b \\ c \end{pmatrix}, \begin{pmatrix} a \\ b \\ c \end{pmatrix} \right\rangle = \ol{a} c + \norm{b} + a \ol{c} = 0, \]
if $ \mathfrak{p} \mid c $, then $ \mathfrak{p} \mid b $.  Similarly, since
	\[ \left\langle \begin{pmatrix} a e / c - \ol{b d} / ( \norm{c} ) + \ol{c}^{- 1} \\ b e / c + \ol{d} / c \\ e \end{pmatrix}, \begin{pmatrix} a e / c - \ol{b d} / ( \norm{c} ) + \ol{c}^{- 1} \\ b e / c + \ol{d} / c \\ e \end{pmatrix} \right\rangle = 0, \]
if $ \mathfrak{p} \mid e $, then $ \mathfrak{p} \mid ( b e / c + \ol{d} / c ) $, i.e. $ \mathfrak{p} \mid d $.

For any $ \lambda \in \OK^{\times} $, $ \mathfrak{p} \nmid \lambda $.  As
\begin{align*}
	\left\langle \begin{pmatrix} a \\ b \\ c \end{pmatrix}, \begin{pmatrix} a e / c - \ol{b d} / ( \norm{c} ) + \ol{c}^{- 1} \\ b e / c + \ol{d} / c \\ e \end{pmatrix} \right\rangle
	&= \left\langle \beta \begin{pmatrix} 1 \\ 0 \\ 0 \end{pmatrix}, \beta \begin{pmatrix} 0 \\ 0 \\ 1 \end{pmatrix} \right\rangle \\
	&= \left\langle \begin{pmatrix} 1 \\ 0 \\ 0 \end{pmatrix}, \begin{pmatrix} 0 \\ 0 \\ 1 \end{pmatrix} \right\rangle \\
	&= 1,
\end{align*}
if $ \mathfrak{p} \mid c $ and $ \mathfrak{p} \mid e $, then $ \mathfrak{p} \mid 1 $, which is a contradiction.  Hence either $ c $ or $ e $ is a unit in $ \OK $, or possibly both.
\end{proof}

In the split case, i.e. when $ \mathfrak{p} $ is split in $ K $, we have to consider the fact that sometimes, none of the entries in the bottom row of an element in $ \Gamma_{\mathfrak{p}} $ is a unit.  This does not occur in the other cases.  The proposition below shows the existence of a transformation to a matrix of this form so that the resulting matrix has a unit in the $ ( 3, 3 ) $-entry.

\begin{prop} \label{p:splunit}
Let $ \mathfrak{p} $ be a split prime in $ K $.  If
	\[ \begin{pmatrix} a & b & c \\ d & e & f \\ g & h & j \end{pmatrix} \in \Gamma_{\mathfrak{p}}, \]
with neither $ g $ nor $ j $ a unit in $ \OK $, then there is always an element $ \xa{s_{1}}{n_{1}} \in N( k ) \cap \Gamma_{\mathfrak{p}} $ such that
	\[ \begin{pmatrix} g & h & j \end{pmatrix} \cdot \xa{s_{1}}{n_{1}} = \begin{pmatrix} g & h' & j' \end{pmatrix} \]
with $ j' \in \OK^{\times} $.
\end{prop}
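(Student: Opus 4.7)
The plan is to dispose of the even split case as vacuous, reduce to odd split $ \mathfrak{p} $, and then use a coordinate‑wise case analysis followed by weak approximation. When $ \mathfrak{p} $ is even and split, $ \Gamma_{\mathfrak{p}} = G(\Ok,8) \cap G(l) $, so any matrix in $ \Gamma_{\mathfrak{p}} $ is congruent to $ I_{3} $ modulo $ 8 $; in particular $ j \equiv 1 \pmod{8} $, forcing $ j \in \OK^{\times} $ and contradicting the hypothesis. So from now on I would assume $ \mathfrak{p} $ is odd and split.

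For odd split $ \mathfrak{p} $, I would identify $ K = \Lp \cong \lp \oplus \lp $ and write entries in coordinates: $ g = (g_{1},g_{2}) $, $ h = (h_{1},h_{2}) $, $ j = (j_{1},j_{2}) $, with conjugation $ \overline{(x,y)} = (y,x) $. Under the resulting isomorphism $ G(k) \cong \SL_{3}(k) $ sending $ \nu $ to its first coordinate $ \nu_{1} $ (with the second coordinate $ \nu_{2} = J' (\nu_{1}^{t})^{-1} J' $ determined by $ \nu_{1} $), $ \nu \in G(\Ok) $ is equivalent to $ \nu_{1} \in \SL_{3}(\Ok) $. Consequently both the bottom row $ (g_{1},h_{1},j_{1}) $ of $ \nu_{1} $ and the first column of $ \nu_{1}^{-1} $, which equals $ (j_{2}, h_{2}, g_{2}) $, are primitive vectors in $ \Ok^{3} $; in particular, if $ g_{i}, j_{i} \in \mathfrak{p} $ then $ h_{i} \in \Ok^{\times} $. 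The assumption that neither $ g $ nor $ j $ is a unit in $ \OK $ translates to: at least one $ g_{i} $ lies in $ \mathfrak{p} $ and at least one $ j_{i} $ lies in $ \mathfrak{p} $. Writing $ s_{1} = (x,y) $, $ n_{1} = (u,v) $, the constraint $ \trace{n_{1}} = -\norm{s_{1}} $ reads $ u + v = -xy $, and direct computation of the new bottom row yields
\begin{equation*}
	j_{1}' = g_{1} u - h_{1} y + j_{1}, \qquad j_{2}' = g_{2} v - h_{2} x + j_{2}.
\end{equation*}
A short case analysis then produces an explicit $ (x,y,u,v) \in \Ok^{4} $ with $ u + v = -xy $ making both $ j_{i}' $ units: when $ g_{i} \in \Ok^{\times} $, use the free parameter $ u $ (respectively $ v $) to force $ j_{i}' \equiv 1 \pmod{\mathfrak{p}} $; when $ g_{i} \in \mathfrak{p} $, choose $ y \in \{0,1\} $ (respectively $ x \in \{0,1\} $) according to whether $ j_{i} $ is already a unit, or whether instead primitivity forces $ h_{i} \in \Ok^{\times} $ so that $ j_{i}' \equiv -h_{i} \pmod{\mathfrak{p}} $.

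The main obstacle is the final step: the local choice $ (s_{1},n_{1}) \in \Olp \times \Olp $ must be realized by a global pair $ (\wtilde{s_{1}}, \wtilde{n_{1}}) \in L \times L $ with $ \trace{\wtilde{n_{1}}} + \norm{\wtilde{s_{1}}} = 0 $, so that $ \xa{\wtilde{s_{1}}}{\wtilde{n_{1}}} $ genuinely lies in $ G(l) \cap G(\Ok) = \Gamma_{\mathfrak{p}} $. I would first pick $ \wtilde{s_{1}} \in \mathcal{O}_{L} $ with image at $ \mathfrak{p} $ close to $ (x,y) $ using the Chinese Remainder Theorem (as $ \mathfrak{p} $ splits as $ \mathfrak{P}_{1} \mathfrak{P}_{2} $ in $ L $), and parametrize solutions of the trace equation as $ \wtilde{n_{1}} = -\norm{\wtilde{s_{1}}}/2 + t \thet $ with $ t \in l $. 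Since $ \mathfrak{p} $ is odd split, $ \thet $ maps to a unit $ \thet_{1} \in \Ok^{\times} $ at $ \mathfrak{P}_{1} $ (and to $ -\thet_{1} $ at $ \mathfrak{P}_{2} $), and matching the image of $ \wtilde{n_{1}} $ to $ (u,v) $ pins $ t $ down to be close to $ (2u + xy)/(2 \thet_{1}) \in \Ok $; such $ t \in \mathcal{O}_{l} $ exists by density. Because the unit conditions on $ j_{1}' $ and $ j_{2}' $ are open, this approximation is enough. The essential content of the argument is the combinatorial case analysis; the local‑to‑global passage is routine but has to be spelled out carefully.
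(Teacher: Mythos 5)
Your proposal is correct, but it takes a visibly different route from the thesis. The paper stays intrinsic: writing $\mathfrak{p}\OK = \mathfrak{p}_{1}\mathfrak{p}_{2}$, it first uses the Hermitian relation $g\ol{j} + \norm{h} + \ol{g}j = 0$ together with coprimality of $g,h,j$ to show that if \emph{both} primes divide $g$ then $j$ is forced to be a unit (so under the hypothesis exactly one of $\mathfrak{p}_{1},\mathfrak{p}_{2}$ divides $g$, and $h$ is a unit modulo that prime), and then reaches the conclusion in \emph{two} successive unipotent multiplications: first $\xa{z'}{-\norm{z'}/2}$ to make the middle entry $h' = gz'+h$ a unit, which via the Hermitian relation forces $j_{1}$ to be a unit modulo $\mathfrak{p}_{1}$, and then $\xa{0}{t'\thet}$ to repair the $\mathfrak{p}_{2}$-coordinate of $j'$. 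You instead pass to $\SL_{3}$-coordinates $\nu = (\nu_{1},\nu_{2})$ with $\nu_{2} = J'(\nu_{1}^{t})^{-1}J'$, read off primitivity from the bottom row of $\nu_{1}$ and the first column of $\nu_{1}^{-1}$ (which is indeed $(j_{2},h_{2},g_{2})$), and solve for a \emph{single} unipotent $(x,y,u,v)$ with $u+v=-xy$; this goes through because $g$ not being a unit means at most one of $g_{1},g_{2}$ is a unit, so at most one of $u,v$ is needed as a free parameter, and the opposite coordinate of $j'$ is insensitive to it modulo $\mathfrak{p}$ — the constraint $u+v=-xy$ never bites. Your write-up also supplies two things the paper elides: the observation that the even split case is vacuous (the congruence $\equiv I_{3} \bmod 8$ forces $j \in \OK^{\times}$), and the explicit weak-approximation step producing a genuinely global pair $(\wtilde{s_{1}},\wtilde{n_{1}}) \in L \times L$ — the paper simply picks $z' \in K$ and $t' \in \Ok$ and asserts membership in $N(k)\cap\Gamma_{\mathfrak{p}}$, leaving the rationality over $l$ implicit, so your care here is a real improvement. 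One small inaccuracy, which is harmless: the claim that $\thet$ maps to a unit $\thet_{1} \in \Ok^{\times}$ at an odd split prime (and hence that $t$ is pinned near an element of $\Ok$) requires $d$ to be prime to $\mathfrak{p}$; this holds for square-free $d$ over $\mathbb{Q}$ but is not automatic for arbitrary $d \in \mathcal{O}_{l}$. Your argument survives regardless, since all that is used is that $t\thet$ can be made close to the required integral value and that the unit conditions on $j_{1}', j_{2}'$ are open.
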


\begin{proof}
Let $ \mathfrak{p} \OK = \mathfrak{p}_{1} \mathfrak{p}_{2} $, where $ \mathfrak{p}_{1} $, $ \mathfrak{p}_{2} \subset \OK $.  We first note that $ g $, $ h $ and $ j $ are coprime to each other, by the same principle as in the proof of Proposition~\ref{p:unit}.

If $ \mathfrak{p} \mid g $, then without loss of generality, we may assume that $ \mathfrak{p}_{1} \mid h $, which in turn implies that $ \mathfrak{p}_{1} \nmid j $.  Using the Hermitian form from the proof of Proposition~\ref{p:unit}, since
	\[ \left\langle \begin{pmatrix} g \\ h \\ j \end{pmatrix}, \begin{pmatrix} g \\ h \\ j \end{pmatrix} \right\rangle = g \ol{j} + \norm{h} + \ol{g} j = 0, \]
and $ \mathfrak{p} \mid \norm{h} $, we have that $ \mathfrak{p}_{2} \nmid j $, i.e. $ j \in \OK^{\times} $.

Now let $ g $, $ j \notin \OK^{\times} $, with $ \mathfrak{p}_{1} \mid g $, $ \mathfrak{p}_{2} \nmid g $.  Assume that $ \mathfrak{p}_{1} \mid h $.  Then $ \mathfrak{p}_{1} \nmid j $, and this implies that $ \mathfrak{p}_{2} \nmid \ol{j} $.  But $ \mathfrak{p} \mid \norm{h} $, hence $ \mathfrak{p} \mid \trace{g \ol{j}} $, i.e. $ \mathfrak{p}_{2} \mid \trace{g \ol{j}} $.  Therefore, since $ \mathfrak{p}_{2} \nmid g \ol{j} $, we have $ \mathfrak{p}_{2} \nmid \ol{g} j $, and hence $ \mathfrak{p}_{2} \nmid j $ (since $ \mathfrak{p}_{2} \mid \ol{g} $).  But this means that $ j \in \OK^{\times} $ since $ j $ is a unit mod $ \mathfrak{p}_{1} $ and mod $ \mathfrak{p}_{2} $, which contradicts our assumption that $ j $ is not a unit in $ \OK $.  Hence if $ \mathfrak{p}_{1} \mid g $ and $ \mathfrak{p}_{2} \nmid g $, then $ \mathfrak{p}_{1} \nmid h $.

Since $ g $ is a unit mod $ \mathfrak{p}_{2} $ and $ h $ is a unit mod $ \mathfrak{p}_{1} $, we can always choose a $ z' \in K $ such that
	\[ \begin{pmatrix} g & h & j \end{pmatrix} \cdot \xa{z'}{- \frac{\norm{z'}}{2}} = \begin{pmatrix} g & h' & j_{1} \end{pmatrix}, \]
with $ h' = g z' + h $ a unit mod $ \mathfrak{p}_{2} $, i.e. $ h' \in \OK^{\times} $.  So now we have $ \mathfrak{p} \nmid \norm{h'} $, hence $ \mathfrak{p} \nmid \trace{g \ol{j_{1}}} $.  So $ \mathfrak{p}_{1} \nmid ( g \ol{j_{1}} + \ol{g} j_{1} ) $, i.e. $ \mathfrak{p}_{1} \nmid \ol{g} j_{1} $.  This implies that $ \mathfrak{p}_{1} \nmid j_{1} $, i.e. $ j_{1} $ is a unit mod $ \mathfrak{p}_{1} $.  So we can choose $ t' \in \Ok $ such that
	\[ \begin{pmatrix} g & h' & j_{1} \end{pmatrix} \cdot \xa{0}{t' \thet} = \begin{pmatrix} g & h' & j' \end{pmatrix}, \]
with $ j' = ( t' \thet ) g + j_{1} $ a unit mod $ \mathfrak{p}_{2} $, i.e. $ j' \in \OK^{\times} $.

Therefore we can always choose some $ \xa{z'}{- \norm{z'} / 2 + t' \thet} \in N( k ) \cap \Gamma_{\mathfrak{p}} $ and
	\[ \begin{pmatrix} g & h & j \end{pmatrix} \cdot \xa{z'}{- \frac{\norm{z}}{2} + t' \thet} = \begin{pmatrix} g & h' & j' \end{pmatrix}, \]
with $ j' \in \OK^{\times} $.
\end{proof}

With the above two propositions, we can now show how to obtain the local Kubota symbol on any element of $ \Gamma_{\mathfrak{p}} $.

\begin{thm} \label{t:kupgen}
Let
	\[ \gamma = \begin{pmatrix} a & b & c \\ d & e & f \\ g & h & j \end{pmatrix} \in \Gamma_{\mathfrak{p}}. \]
Then
	\[ \kup{\gamma} \\
	=	\begin{cases}
			\displaystyle \kup{\ha{\ol{j}^{-1}}}, &\text{if $ g = 0 $;} \\
			\displaystyle \kup{\ha{( \ol{g} \thet )^{- 1}}}, &\text{if $ g \in \OK^{\times} $;} \\
			\begin{aligned}[b]
				&\kup{\ha{\ol{j}^{- 1}}} \cdot \kup{\xam{- \frac{\ol{h}}{\ol{j}}}{\frac{g}{j}}} \\
				&\phantom{\ } \cdot \sig{\ha{\ol{j}^{- 1}}}{\ha{\frac{\ol{j}}{\ol{g} \thet}}},
			\end{aligned}
			&\text{if $ g \neq 0 $, $ g \notin \OK^{\times} $.}
		\end{cases} \]
If $ \mathfrak{p} $ is split, then when both $ g $ and $ j $ are not units in $ \OK $, we can always choose some $ \xa{s_{1}}{n_{1}} \in N( k ) \cap \Gamma_{\mathfrak{p}} $ such that
	\[ \begin{pmatrix} g & h & j \end{pmatrix} \cdot \xa{s_{1}}{n_{1}} = \begin{pmatrix} g & h' & j' \end{pmatrix}, \]
with $ j' \in \OK $.  Then
	\[ \kup{\gamma} = \kup{\gamma \cdot \xa{s_{1}}{n_{1}}}, \]
and we may apply the above to $ \kup{\gamma \cdot \xa{s_{1}}{n_{1}}} $ to calculate our result.
\end{thm}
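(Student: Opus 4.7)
The plan is to use the Bruhat or Iwahori decomposition of $\gamma$ according to the shape of its bottom row, then collapse the resulting product using the cocycle relation \eqref{e:kupsig} together with the invariance $\kup{xg}=\kup{gx}=\kup{g}$ for $x\in N(k)\cap\Gamma_{\mathfrak{p}}$ (Proposition~\ref{p:kupxa1}) and the values of $\kappa_{\mathfrak{p}}$ already computed on the torus (Propositions~\ref{p:kupha} and~\ref{p:kupha2}), on $\ol{N}(k)\cap\Gamma_{\mathfrak{p}}$ (Proposition~\ref{p:kupxa}), and on the Weyl element (Proposition~\ref{p:kupwa}).

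When $g=0$, the Hermitian-form condition $\gamma^{t}J'\ol{\gamma}=J'$ forces $d=0$, so $\gamma$ is upper-triangular and decomposes via \eqref{e:bruf} as $\ha{\ol{j}^{-1}}\cdot\xa{r'}{m'}$ with the unipotent factor in $N(k)\cap\Gamma_{\mathfrak{p}}$; Proposition~\ref{p:kupxa1} then gives the first case. When $g\in\OK^{\times}$ (which can only occur when $\mathfrak{p}$ is odd and unramified or split, so that $\theta$ is a unit), the Bruhat decomposition \eqref{e:bruc} writes $\gamma$ as $\xa{\cdot}{\cdot}\cdot\ha{(\ol{g}\theta)^{-1}}\cdot\wa{0}{\theta}\cdot\xa{\cdot}{\cdot}$ with both outer factors in $N(k)\cap\Gamma_{\mathfrak{p}}$. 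Stripping the outer factors by Proposition~\ref{p:kupxa1}, then expanding the central product via \eqref{e:kupsig} and invoking $\kup{\wa{0}{\theta}}=1$ from Proposition~\ref{p:kupwa} together with the identity $\sig{\ha{\lambda}}{\wa{0}{\theta}}=1$ established in Section~\ref{s:easysu}, collapses the answer to $\kup{\ha{(\ol{g}\theta)^{-1}}}$. When $g\neq 0$ and $g\notin\OK^{\times}$, the constraint $\gamma\in\Gamma_{\mathfrak{p}}$ forces $j\in\OK^{\times}$: in the odd unramified case this is Proposition~\ref{p:unit}, while in the ramified and even cases it follows directly from the defining congruences. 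The Iwahori factorisation (Proposition~\ref{p:iwahori}) then gives
\[
\gamma=\xa{-\ol{f}/\ol{j}}{c/j}\cdot\ha{\ol{j}^{-1}}\cdot\xam{-\ol{h}/\ol{j}}{g/j},
\]
with the outer factors lying in the appropriate unipotent pieces of $\Gamma_{\mathfrak{p}}$. Removing the leading $\xa{\cdot}{\cdot}$ by Proposition~\ref{p:kupxa1} and expanding the remainder via \eqref{e:kupsig} produces the theorem's formula up to replacing the cocycle $\sig{\ha{\ol{j}^{-1}}}{\xam{-\ol{h}/\ol{j}}{g/j}}$ with $\sig{\ha{\ol{j}^{-1}}}{\ha{\ol{j}/(\ol{g}\theta)}}$. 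This identification is the crux of the argument: both second arguments have the same $X$-value $\ol{j}/(\ol{g}\theta)$, and in each of the two products the quotient $X(\gamma_{3})/(X(\gamma_{1})X(\gamma_{2}))$ equals $1\in k^{\times}$, so Theorem~\ref{t:sigma} in this regime expresses $\sigma$ purely in terms of the three $X$-values, forcing the two cocycle values to coincide. The split-case addendum is then handled by Proposition~\ref{p:splunit}, which produces $\xa{s_{1}}{n_{1}}\in N(k)\cap\Gamma_{\mathfrak{p}}$ such that right-multiplication makes the $(3,3)$-entry a unit, combined again with Proposition~\ref{p:kupxa1}.

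The main obstacle I anticipate is the bookkeeping needed to verify, in each of the unramified, ramified, odd-split and even-split regimes separately, that the torus element $\ha{\ol{j}^{-1}}$ and (in case~2) the combination $\ha{(\ol{g}\theta)^{-1}}\cdot\wa{0}{\theta}$ genuinely lie in $\Gamma_{\mathfrak{p}}$ rather than merely in $G(\Ok)$, so that their Kubota symbols are defined; this is routine but requires care with the explicit congruence conditions and with the diagonal form of $h_{\alpha}$.
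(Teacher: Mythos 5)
Your proposal is correct and follows essentially the same route as the paper's proof: \eqref{e:bruf} and \eqref{e:bruc} for the cases $g=0$ and $g\in\OK^{\times}$, the Iwahori factorisation of Proposition~\ref{p:iwahori} for the remaining case, stripping unipotent factors via Proposition~\ref{p:kupxa1}, the values $\kup{\wa{0}{\thet}}=1$ and the torus/unipotent computations from Propositions~\ref{p:kupwa}, \ref{p:kupha}, \ref{p:kupha2} and \ref{p:kupxa}, and Proposition~\ref{p:splunit} for the split addendum. Your justification of the key replacement $\sig{\ha{\ol{j}^{-1}}}{\xam{-\ol{h}/\ol{j}}{g/j}}=\sig{\ha{\ol{j}^{-1}}}{\ha{\ol{j}/(\ol{g}\thet)}}$ via the fact that Theorem~\ref{t:sigma} depends only on the three $X$-values (which agree, with $\Xg{3}/(\Xg{1}\Xg{2})=1\in k^{\times}$ in both products) is exactly the mechanism the paper invokes through Proposition~\ref{p:sigma}, so this is the same argument rather than a genuinely different one.
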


\begin{proof}
Consider the case where $ g = 0 $.  By \eqref{e:bruf},
	\[ \gamma = \begin{pmatrix} a & b & c \\ d & e & f \\ g & h & j \end{pmatrix} = \ha{\ol{j}^{- 1}} \cdot \xa{b \ol{j}}{c \ol{j}}, \]
hence by Proposition~\ref{p:kupxa1},
	\[ \kup{\gamma} = \kup{\ha{\ol{j}^{- 1}}}. \]

As noted at the start of Section~\ref{s:torus}, we will only have $ g \in \OK^{\times} $ when $ \mathfrak{p} $ is odd and either unramified or split in $ K $.  In this case, $ \xa{- \ol{d} / \ol{g}}{a / g} $, $ \ha{( \ol{g} \thet )^{-1}} $, $ \xa{h / g}{j / g} \in \Gamma_{\mathfrak{p}} $.  Therefore, since by \eqref{e:bruc},
	\[ \gamma = \xa{- \frac{\ol{d}}{\ol{g}}}{\frac{a}{g}} \cdot \ha{( \ol{g} \thet )^{-1}} \cdot \wa{0}{\thet} \cdot \xa{\frac{h}{g}}{\frac{j}{g}}, \]
by Proposition~\ref{p:kupxa1},
	\[ \kup{\gamma} = \kup{\ha{( \ol{g} \thet )^{-1}} \cdot \wa{0}{\thet}}. \]
Hence by \eqref{e:kupsig},
	\[ \kup{\gamma} = \kup{\ha{( \ol{g} \thet )^{-1}}} \cdot \kup{\wa{0}{\thet}} \cdot \sig{\ha{( \ol{g} \thet )^{-1}}}{\wa{0}{\thet}}. \]
So by Proposition~\ref{p:kupwa} and Theorem~\ref{t:sigma},
	\[ \kup{\gamma} = \kup{\ha{( \ol{g} \thet )^{-1}}}. \]

If $ g \neq 0 $, $ g \notin \OK^{\times} $, then $ j \in \OK^{\times} $ in most cases by Proposition~\ref{p:unit} (we deal with the case $ g $, $ j \notin \OK^{\times} $ later in the proof).  Also, using the Hermitian form established in the proof of Proposition~\ref{p:unit}, we have that
\begin{gather*}
	\left\langle \begin{pmatrix} a \\ d \\ g \end{pmatrix}, \begin{pmatrix} a \\ d \\ g \end{pmatrix} \right\rangle = a \ol{g} + \norm{d} + \ol{a} g = 0, \\
	\left\langle \begin{pmatrix} g \\ h \\ j \end{pmatrix}, \begin{pmatrix} g \\ h \\ j \end{pmatrix} \right\rangle = g \ol{j} + \norm{h} + \ol{g} j = 0.
\end{gather*}
We now have a few cases to consider.  We will use the notation in Section~\ref{s:iwahori}.  If $ \mathfrak{p} $ is odd and unramified in $ K $, then since $ \mathfrak{p} \mid g $, this implies that $ \mathfrak{p} \mid d $ and $ \mathfrak{p} \mid h $.   Thus, we have $ \mathfrak{a} = \mathfrak{p} $ and
	\[ \gamma \in G( \Ok )_{0}( \mathfrak{p} ) = N( \Ok ) \cdot T( \Ok ) \cdot \ol{N}( \mathfrak{p} ). \]
If $ \mathfrak{p} $ is odd and ramified in $ K $, we already have that $ \mathfrak{a} = ( \thet ) $ in Proposition~\ref{p:iwahori}, and
	\[ \Gamma_{\mathfrak{p}} = G( \Ok )_{1}( ( \thet ) ), \]
which implies that there exists an Iwahori factorisation of $ \Gamma_{\mathfrak{p}} $, with
	\[ \Gamma_{\mathfrak{p}} = N( ( \thet ) ) \cdot T( ( \thet ) ) \cdot \ol{N}( ( \thet ) ). \]
If instead $ \mathfrak{p} $ is even, we have $ \mathfrak{a} = ( 8 ) $ in Proposition~\ref{p:iwahori} and
	\[ \Gamma_{\mathfrak{p}} = G( \Ok )_{1}( ( 8 ) ) =  N( ( 8 ) ) \cdot T( ( 8 ) ) \cdot \ol{N}( ( 8 ) ). \]

So we have an Iwahori factorisation for all elements of $ \Gamma_{\mathfrak{p}} $ with $ g \neq 0 $, $ g \notin \OK^{\times} $.  Hence, we have
	\[ \gamma = \xa{- \frac{\ol{f}}{\ol{j}}}{\frac{c}{j}} \cdot \ha{\ol{j}^{- 1}} \cdot \xam{- \frac{\ol{h}}{\ol{j}}}{\frac{g}{j}}, \]
with $ \xa{- \ol{f} / \ol{j}}{c / j} $, $ \ha{\ol{j}^{- 1}} $, $ \xam{- \ol{h} / \ol{j}}{g / j} \in \Gamma_{\mathfrak{p}} $.  This implies that by Proposition~\ref{p:kupxa1},
	\[ \kup{\gamma} = \kup{\ha{\ol{j}^{- 1}} \cdot \xam{- \frac{\ol{h}}{\ol{j}}}{\frac{g}{j}}}, \]
and by \eqref{e:kupsig} and Proposition~\ref{p:sigma},
	\[ \kup{\gamma} = \kup{\ha{\ol{j}^{- 1}}} \cdot \kup{\xam{- \frac{\ol{h}}{\ol{j}}}{\frac{g}{j}}} \cdot \sig{\ha{\ol{j}^{- 1}}}{\ha{\frac{\ol{j}}{\ol{g} \thet}}}. \]

We have one last case to consider.  We have already noted that there will be elements in the split case where all the bottom row entries are non-units.  By Proposition~\ref{p:splunit}, we can apply a transformation by multiplying the matrix on the right by a unipotent upper triangular matrix $ \xa{s_{1}}{n_{1}} \in \Gamma_{\mathfrak{p}} $, so that
	\[ \begin{pmatrix} g & h & j \end{pmatrix} \cdot \xa{s_{1}}{n_{1}} = \begin{pmatrix} g & h' & j' \end{pmatrix} \]
with $ j' \in \OK^{\times} $.  But by Proposition~\ref{p:kupxa1},
	\[ \kup{\gamma \cdot \xa{s_{1}}{n_{1}}} = \kup{\gamma}. \]
Hence we can use the calculation on $ \kup{\gamma \cdot \xa{s_{1}}{n_{1}}} $ to get $ \kup{\gamma} $.

This concludes our proof.
\end{proof}


\part{The half-integral weight multiplier system} \label{p:weight}


\chapter{The global Kubota symbol} \label{c:global}

As previously stated in the Introduction, in order to construct the half-integral weight multiplier system, we need to calculate the global Kubota symbol $ \kappa $.  In Section~3 of \cite{kubota69}, the global Kubota symbol on a chosen arithmetic subgroup of $ \GL_{2}(F) $ for some field $ F $ was given in terms of quadratic Legendre symbols.  We want to write down a similar formula in terms of quadratic Legendre symbols for our arithmetic subgroup.

Let $l$ be an arbitrary number field, and let $ \mathfrak{p} $ run through all the primes of $ l $.  Let $ L $ be a quadratic extension of $ l $, so that the matrix entries of an element in $ G( l ) $ lie in $ L $.  On the arithmetic subgroup $ \Gamma = \prod_{\mathfrak{p}} \Gamma_{\mathfrak{p}} \cap G( l ) $, we have
\begin{equation} \label{e:kappa}
	\kappa( \gamma ) = \prod_{\mathfrak{p} < \infty} \kup{\gamma},
\end{equation}
for $ \gamma \in \Gamma $.

We can prove that the above product converges -- it is just a matter of using the product formula (Theorem~\ref{t:product}).  Let $ \sigma_{\mathfrak{p}} \in H^{2}( G( \lp ), \mu_{2} ) $, where $ \sigma_{\mathfrak{p}} = \sigma $ when $ k = \lp $ and $ K = \Lp $, as described in Chapter~\ref{c:univ}.  We know that for $ g $, $ g' \in G( l ) $,
	\[ \prod_{\mathfrak{p}} \sigma_{\mathfrak{p}}( g, g' ) = 1, \]
where the product is over all primes $ \mathfrak{p} $ of $ l $, with only finitely many non-trivial terms.  We can extend the definition of $ \kappa_{\mathfrak{p}} $ in some arbitrary way to the whole of $ G( \lp ) $.  Define $ H $ to be the set of elements $ h \in G( l ) $ such that the product over all finite $ \mathfrak{p} $,
	\[ \prod_{\mathfrak{p}} \kup{h}, \]
converges.  We make the following claim:

\begin{claim} \label{cl:converge}
$ H = G( l ) $.
\end{claim}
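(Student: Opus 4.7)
The plan is to establish that $H$ is a subgroup of $G(l)$ containing both $N(l)$ and $\ol{N}(l)$. Since Theorem~\ref{t:quasi} shows $G(l)$ is generated by these unipotent subgroups, this will force $H=G(l)$. Throughout, note that a product of values in $\mu_{2}$ converges precisely when it has cofinite support, so ``convergence'' just means ``equal to $1$ for all but finitely many $\mathfrak{p}$''.

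First I would verify $H$ is a subgroup. For $g,h\in G(l)$ and any finite prime $\mathfrak{p}$, the cocycle identity satisfied by the chosen extension of $\kappa_{\mathfrak{p}}$ gives
$$
  \kappa_{\mathfrak{p}}(gh)
  = \kappa_{\mathfrak{p}}(g)\,\kappa_{\mathfrak{p}}(h)\,\sigma_{\mathfrak{p}}(g,h)^{-1}.
$$
The 2-cocycle $\sigma_{\mathfrak{p}}$ is expressed in Theorem~\ref{t:sigma} as a product of quadratic Hilbert symbols over $\lp$ applied to rational expressions in the matrix entries of $g$ and $h$. For $g,h\in G(l)$, all these arguments lie in $\Olp^{\times}$ for almost all finite $\mathfrak{p}$, at which point $\mathfrak{p}$ also has odd residue characteristic, and Proposition~\ref{p:tame} (tame Hilbert symbol) forces $\sigma_{\mathfrak{p}}(g,h)=1$. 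Combined with the assumption that $\kappa_{\mathfrak{p}}(g)=\kappa_{\mathfrak{p}}(h)=1$ for almost all $\mathfrak{p}$, this yields $\kappa_{\mathfrak{p}}(gh)=1$ for almost all $\mathfrak{p}$, so $gh\in H$. Setting $h=g^{-1}$ and using $\kappa_{\mathfrak{p}}(e)=1$ gives $\kappa_{\mathfrak{p}}(g^{-1})=\sigma_{\mathfrak{p}}(g,g^{-1})/\kappa_{\mathfrak{p}}(g)$, hence $g^{-1}\in H$ as well.

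Next I would show $N(l)\subset H$ and $\ol{N}(l)\subset H$. For $n\in N(l)$, the matrix entries are $l$-rational, so $n\in \Gamma_{\mathfrak{p}}$ for almost all finite $\mathfrak{p}$, and the first of the three ``local Kubota symbol'' propositions in the excerpt gives $\kappa_{\mathfrak{p}}(n)=1$ at these primes. For $\ol{n}=\xam{s_{1}}{n_{1}}\in \ol{N}(l)$, the explicit formula $\kappa_{\mathfrak{p}}(\xam{s_{1}}{n_{1}})=\rho(s_{1})\rho(-s_{1}\thet/n_{1})$ expresses $\kappa_{\mathfrak{p}}(\ol{n})$ as a product of quadratic Hilbert symbols of the form $(-\trace{\lambda},\norm{\lambda\thet})_{\lp,2}$. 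For almost all finite $\mathfrak{p}$, both arguments are $\lp$-units at a prime of odd residue characteristic, so Proposition~\ref{p:tame} again forces $\kappa_{\mathfrak{p}}(\ol{n})=1$. Hence both $N(l)$ and $\ol{N}(l)$ lie in $H$.

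Since $H$ is a subgroup of $G(l)$ containing the generating set $N(l)\cup\ol{N}(l)$ of Theorem~\ref{t:quasi}, we conclude $H=G(l)$. The main obstacle is the $\ol{N}(l)$ step: unlike the upper-triangular case, $\kappa_{\mathfrak{p}}$ is not identically trivial on $\ol{N}(\lp)\cap\Gamma_{\mathfrak{p}}$, so one really must use the explicit formula for $\rho$ and reduce the vanishing for almost all $\mathfrak{p}$ to the tameness of the quadratic Hilbert symbol at primes where the arguments become units.
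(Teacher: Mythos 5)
Your proof is correct, but it takes a genuinely different route from the paper at the decisive step. The paper also begins by showing $H$ is a subgroup (closure under products and inverses, exactly as you do), but it then avoids examining the generators' Kubota values altogether: since $\kup{g}^{2}=1$, equation~\eqref{e:kupsq} gives $\kup{g^{2}}=\sig{g}{g}$ (with $\sigma=\sigma_{\mathfrak{p}}$) for almost all $\mathfrak{p}$, and the right-hand side is $1$ for almost all $\mathfrak{p}$ by the finite support of the cocycle; hence \emph{every} square of $G(l)$ lies in $H$, and since $G(l)$ is generated by unipotent elements, each of which is visibly a square (e.g.\ $\xa{r}{m}$ is the square of $\xa{r/2}{\cdot}$, as used in the proof of Proposition~\ref{p:kupxa1}), one concludes $H=G(l)$. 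Your argument instead verifies the generators directly: trivially on $N(l)$ via Proposition~\ref{p:kupxa1}, and on $\ol{N}(l)$ via the explicit formula $\kup{\xam{s_{1}}{n_{1}}}=\kxam{s_{1}}\cdot\kxam{-s_{1}\thet/n_{1}}$ of Proposition~\ref{p:kupxa} together with tameness (Proposition~\ref{p:tame}) at the almost-all primes where the arguments are units of odd residue characteristic. What each buys: the paper's argument is softer and shorter, using only $\kappa_{\mathfrak{p}}^{2}=1$ and the finite support of $\sigma_{\mathfrak{p}}$, so it would go through even without the computations of the local Kubota symbols on $\ol{N}$; yours is heavier but more informative, since it pinpoints exactly why each local factor becomes trivial at the generators. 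One small correction to your write-up: the displayed identity $\kup{gh}=\kup{g}\kup{h}\sig{g}{h}^{-1}$ does \emph{not} hold at ``any finite prime'' for the arbitrary extension of $\kappa_{\mathfrak{p}}$ to $G(\lp)$ --- the splitting relation is only guaranteed on $\hat{\Gamma}_{\mathfrak{p}}$ --- but since $g$, $h$ and $gh$ all lie in $\Gamma_{\mathfrak{p}}$ for almost all $\mathfrak{p}$, your almost-all conclusion is unaffected; the paper phrases this step with exactly that restriction.
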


\begin{proof}
For any $ g $, $ g' \in G( l ) $, we have $ g $, $ g' \in \Gamma_{\mathfrak{p}} $ for almost all $ \mathfrak{p} $.  Hence for almost all $ \mathfrak{p} $, we have
	\[ \kup{gg'} = \sigma_{\mathfrak{p}} ( g, g' ) \cdot \kup{g} \cdot \kup{g'}. \]
Hence if $ g $, $ g' \in H $, then $ g g' \in H $.

Similarly, we already have $ 1 \in H $, and
	\[ \kup{g \cdot g^{- 1}} = \kup{1} = \sigma_{\mathfrak{p}}\left( g, g^{- 1} \right) \cdot \kup{g} \cdot \kup{g^{- 1}} \]
for almost all $ \mathfrak{p} $.  This implies that if $ g \in H $, then $ g^{- 1} \in H $.

Lastly, by \eqref{e:kupsq},
	\[ \kup{g^{2}} = \sigma_{\mathfrak{p}} ( g, g ) \]
for almost all primes $ \mathfrak{p} $.  This shows that $ g^{2} \in H $ for all $ g \in G( l ) $, therefore H is a subgroup of $ G( l ) $ containing all the squares of $ G( l ) $.  However, $ G( l ) $ is generated by unipotent elements, and these are all squares, so we have $ H = G( l ) $.
\end{proof}

Unfortunately, with our formulae for the local Kubota symbol $ \kappa_{\mathfrak{p}} $ (see Theorem~\ref{t:kupgen}), it is difficult to find a ``nice'' formula for the global Kubota symbol for all elements of $ \Gamma $.  What we can do is find a formula on the Borel subgroup of $ \Gamma $ (i.e. the group of upper triangular matrices of $ \Gamma $).  Recall that for elements $a,b\in\mathcal{O}_{l}$ with $b$ coprime to $2a$, the quadratic Legendre symbol is defined by
	\[ \left(\frac{a}{b}\right)_{l,2} = \prod_{\mathfrak{p} | b} (a,b)_{\lp,2}. \]

We have the following proposition:

\begin{prop} \label{e:globkub}
Let
	\[ \begin{pmatrix} f & g & h \\ 0 & \ol{f} / f & - \ol{g} / f \\ 0 & 0 & \ol{f}^{-1} \end{pmatrix} \in \Gamma. \]
Then
	\[ \kappa\left( \begin{pmatrix} f & g & h \\ 0 & \ol{f} / f & - \ol{g} / f \\ 0 & 0 & \ol{f}^{-1} \end{pmatrix} \right)
	=	\begin{cases}
			\left( \dfrac{b}{a} \right)_{l, 2} \cdot\prod_{\mathfrak{p}|\infty} ( a, b )_{\lp, 2}, &\begin{aligned}[t]
				\text{if}\ &\text{$ f = a + b \thet $,} \\
				&\text{$ a $, $ b \in l $, $ b \neq 0 $;}
				\end{aligned} \\
			1, &\text{otherwise.}
		\end{cases} \]
\end{prop}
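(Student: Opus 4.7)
The plan is to reduce $\kappa(\gamma)$ to $\prod_{\mathfrak p<\infty}\kup{\ha f}$, evaluate each local factor using the formulas already available on the torus, and rearrange via the Hilbert symbol product formula. First, since the bottom row of $\gamma$ is $(0,0,\ol f^{-1})$, the case $g=0$ of Theorem~\ref{t:kupgen} gives $\kup{\gamma}=\kup{\ha f}$ at every finite $\mathfrak p$ (because $\ol{(\ol f^{-1})}^{-1}=f$), and hence $\kappa(\gamma)=\prod_{\mathfrak p<\infty}\kup{\ha f}$.

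If $b=0$, then $f=a\in l^\times$ and Propositions~\ref{p:kupha} and~\ref{p:kupha2} both give $\kup{\ha f}=1$ at every finite prime, so $\kappa(\gamma)=1$. If $b\neq 0$, the same propositions yield $\kup{\ha f}=1$ at every odd prime ramified in $K$ and at every even prime; $\kup{\ha f}=1$ at odd unramified or split primes where $b\in\Olp^\times$; and $\kup{\ha f}=(a,b)_{\lp,2}$ at odd unramified or split primes dividing $b$. Consequently
$$\kappa(\gamma)\;=\;\prod_{\substack{\mathfrak p\text{ odd, unramified or split in }K\\ \mathfrak p\mid b}} (a,b)_{\lp,2}.$$

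To identify this with $\left(\tfrac{b}{a}\right)_{l,2}\cdot\prod_{\mathfrak p\mid\infty}(a,b)_{\lp,2}$, I would invoke the product formula $\prod_{\mathfrak p}(a,b)_{\lp,2}=1$ (Theorem~\ref{t:product}). By the tame Hilbert symbol formula (Proposition~\ref{p:tame}) the only primes carrying a potentially nontrivial factor are those dividing $ab$, those above $2$, and the archimedean primes. The congruence conditions built into $\Gamma$ force $a\equiv 1\pmod 8$ at each even prime and $a\equiv 1\pmod{\mathfrak p}$ at each odd ramified prime, so by Hensel's lemma (Theorem~\ref{t:hensel}) $a$ is a local square at all such primes, making $(a,b)_{\lp,2}=1$ there. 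Moreover, a prime $\mathfrak p$ odd and unramified or split in $K$ cannot divide both $a$ and $b$: otherwise $f=a+b\thet\in\mathfrak p\OLp$ would fail to be a unit, contradicting $\ha f\in\Gamma_{\mathfrak p}$.

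With these vanishings the product formula collapses to
$$1\;=\;\kappa(\gamma)\cdot\prod_{\substack{\mathfrak p\mid a\\ \mathfrak p\nmid b}}(a,b)_{\lp,2}\cdot\prod_{\mathfrak p\mid\infty}(a,b)_{\lp,2},$$
and by the exclusion just noted the finite middle product equals $\prod_{\mathfrak p\mid a}(a,b)_{\lp,2}=\left(\tfrac{b}{a}\right)_{l,2}$, using the symmetry of the quadratic Hilbert symbol. Since $\kappa(\gamma)\in\mu_2$ is its own inverse, rearranging yields the desired identity. I expect the principal obstacle to be the bookkeeping at the exceptional primes---checking that the precise level defining $\Gamma$ really does force $a$ to be a local square at every ramified or even prime, and that the unit condition on $f$ rules out simultaneous $\mathfrak p$-divisibility of $a$ and $b$---both of which are what make the product-formula manipulation go through cleanly.
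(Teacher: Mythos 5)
Your proposal is correct and follows essentially the same route as the paper's own proof: the same reduction to $\prod_{\mathfrak{p}<\infty}\kup{\ha{f}}$ via the $g=0$ case of Theorem~\ref{t:kupgen}, the same local evaluations from Propositions~\ref{p:kupha} and~\ref{p:kupha2}, and the same product-formula rearrangement using the tame symbol at odd primes and Hensel's lemma at the even primes. If anything, you are slightly more explicit than the paper at the exceptional primes (spelling out that $a\equiv 1\bmod{\mathfrak{p}}$ at odd ramified primes and that an odd unramified or split prime cannot divide both $a$ and $b$ because $f$ is a local unit), but this is a tightening of the same argument rather than a different one.
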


\begin{proof}
Using \eqref{e:kappa}, we have by Theorem~\ref{t:kupgen} that
	\[ \kappa\left( \begin{pmatrix} f & g & h \\ 0 & \ol{f} / f & - \ol{g} / f \\ 0 & 0 & \ol{f}^{-1} \end{pmatrix} \right) = \prod_{\mathfrak{p} < \infty} \kup{\ha{f}}. \]
But by Propositions~\ref{p:kupha} and~\ref{p:kupha2}, if $ f \in l $, then $ \kup{\ha{f}} = 1 $ for all $ \mathfrak{p} $.  In this case, we have
	\[ \kappa\left( \begin{pmatrix} f & g & h \\ 0 & \ol{f} / f & - \ol{g} / f \\ 0 & 0 & \ol{f}^{-1} \end{pmatrix} \right) = 1. \]

In the other case, $ f \notin k^{\times} $, i.e. $ f = a + b \thet $, where $ a $, $ b \in l $ and $ b \neq 0 $.  Then if $ \mathfrak{p} $ is odd, and either unramified or split, Proposition~\ref{p:kupha} states that if $ b \notin \Olp^{\times} $,
	\[ \kup{\ha{f}} = ( a, b )_{\lp, 2}. \]
Otherwise, $ \kup{\ha{f}} = 1 $ by Propositions~\ref{p:kupha} and~\ref{p:kupha2}.  Hence by \eqref{e:kappa}, we have
	\[ \kappa\left( \begin{pmatrix} f & g & h \\ 0 & \ol{f} / f & - \ol{g} / f \\ 0 & 0 & \ol{f}^{-1} \end{pmatrix} \right) = \prod_{\mathfrak{p} < \infty} \kup{\ha{f}} = \prod_{\mathfrak{p} \mid b} ( a, b )_{\lp, 2}, \]
where the product is over all $ \mathfrak{p} $ which are unramified or split.  Hence by the product formula (Theorem~\ref{t:product}) and \eqref{e:prop2i}, we get
	\[ \kappa\left( \begin{pmatrix} f & g & h \\ 0 & \ol{f} / f & - \ol{g} / f \\ 0 & 0 & \ol{f}^{-1} \end{pmatrix} \right) = \prod_{\mathfrak{p} \nmid b} ( a, b )_{\lp, 2}^{- 1} = \prod_{\mathfrak{p} \nmid b} ( a, b )_{\lp, 2}. \]

There are now a few cases to consider for each $ \mathfrak{p} \nmid b $:
\begin{itemize}
	\item If $ \mathfrak{p} $ is even, since $ a \equiv 1 \pod{8} $, this implies that $ a $ is a square in $ \lp $ by Hensel's Lemma (Theorem~\ref{t:hensel}), hence $ ( a, b )_{\lp, 2} $ is trivial by \eqref{e:prop5ii}.
	\item Suppose $ \mathfrak{p} $ is finite and odd.  Then the Hilbert symbol is the tame symbol (Proposition~\ref{p:tame}).  In this case if $ \mathfrak{p} \nmid a $, then we also have $ ( a, b )_{\lp, 2} = 1 $.
\end{itemize}

Thus we are left with the finite primes $ \mathfrak{p} $ which divide $ a $ and the infinite primes.
Our equation becomes
	\[ \kappa\left( \begin{pmatrix} f & g & h \\ 0 & \ol{f} / f & - \ol{g} / f \\ 0 & 0 & \ol{f}^{-1} \end{pmatrix} \right)
	=
	\prod_{\mathfrak{p} \mid a} ( a, b )_{\lp, 2} \cdot \prod_{\mathfrak{p}|\infty} ( a, b )_{\lp, 2}. \]
We can now use the quadratic Legendre symbol formula on the above, hence
	\[ \kappa\left( \begin{pmatrix} f & g & h \\ 0 & \ol{f} / f & - \ol{g} / f \\ 0 & 0 & \ol{f}^{-1} \end{pmatrix} \right) = \left( \frac{b}{a} \right)_{l, 2} \cdot\prod_{\mathfrak{p}|\infty} ( a, b )_{\lp, 2}. \]

Thus our result is proved.
\end{proof}



\chapter{A section for the 2-cocycle on $ \SU( \mathbb{R} ) $} \label{c:real}

Let $ k = \mathbb{R} $, $ \thet = \sqrt{- d} $ for some positive real number $d$.
Thus, $ K = k( \thet ) = \mathbb{C} $.
We shall determine a section for the 2-cocycle on $ \SU( \mathbb{R} ) = G( \mathbb{R} ) $
as described in the Introduction.
Recall that we have a Hermitian form $ \langle - , - \rangle $ on $\mathbb{C}^{3}$ defined by
	\[ \langle u, v \rangle = u^{t} \begin{pmatrix} 0 & 0 & 1 \\ 0 & 1 & 0 \\ 1 & 0 & 0 \end{pmatrix} \ol{v}, \]
and we let
\begin{align*}
	X^{-}
	&= \left\{ [ v ] \in \mathbb{P}^{2}( \mathbb{C} ) \colon \left\langle v , v \right\rangle < 0 \right\} \\
	&= \left\{ \begin{bmatrix} \tau_{1} \\ \tau_{2} \\ 1 \end{bmatrix} \in \mathbb{P}^{2}( \mathbb{C} ) \colon \norm{\tau_{2}} + \trace{\tau_{1}} < 0 \right\}.
\end{align*}
Here $ [ v ] $ means the image of a vector $ v \in \mathbb{C}^{3} $ in projective space.  We want our modular form to be defined on
	\[ \HC = \left\{ \begin{pmatrix} \tau_{1} \\ \tau_{2} \end{pmatrix} \in \mathbb{C}^{2} \colon \norm{\tau_{2}} + \trace{\tau_{1}} < 0 \right\}. \]
We will use the abbreviation $\tau = \begin{pmatrix} \tau_{1} \\ \tau_{2} \end{pmatrix}$
 for an element of \HC.
Let
	\[ g = ( g_{i j} )_{1 \leq i, j \leq 3} = \begin{pmatrix} g_{11} & g_{12} & g_{13} \\ g_{21} & g_{22} & g_{23} \\ g_{31} & g_{32} & g_{33} \end{pmatrix} \in G( \mathbb{R} ), \]
and let
	\[ A = \begin{pmatrix} g_{11} & g_{12} \\ g_{21} & g_{22} \end{pmatrix},\ B = \begin{pmatrix} g_{13} \\ g_{23} \end{pmatrix},\ C = \begin{pmatrix} g_{31} & g_{32} \end{pmatrix},\ D = g_{33}. \]
This implies that
\begin{equation} \label{e:abcd}
	g = \begin{pmatrix} A & B \\ C & D \end{pmatrix}.
\end{equation}

Note that
	\[ g \begin{bmatrix} \tau \\ 1 \end{bmatrix} \in X^{-}. \]
Hence, we have
	\[ g \begin{bmatrix} \tau \\ 1 \end{bmatrix} = \begin{bmatrix} A \tau + B \\ C \tau + D \end{bmatrix} = \begin{bmatrix} \frac{A \tau + B}{C \tau + D} \\ 1 \end{bmatrix}, \]
and in particular $ C \tau + D \neq 0 $.
So we can define an action of $ G( \mathbb{R} ) $ on \HC{} by
	\[ g ( \tau ) = \frac{A \tau + B}{C \tau + D}. \]
(Note that $ C \tau + D $ is a scalar.)

We will write $ \wtilde{G}( \mathbb{R} ) $ for the connected double cover of $G(\mathbb{R})$.
Note that the fundamental group $ \pi_{1}( G( \mathbb{R} ) )$ is isomorphic to $\mathbb{Z} $, so there is a unique connected $ n $-fold cover for any $ n $.

One way of constructing the group $ \wtilde{G}( \mathbb{R} ) $ is as follows:
the elements of $ \wtilde{G}( \mathbb{R} ) $ are pairs
	\[ \left( g, \phi ( \tau ) \right), \]
where $g\in G(\mathbb{R})$ and $\phi$ denotes a continuous function on \HC{} satisfying
	\[ \phi ( \tau )^{2} = C \tau + D. \]
For two elements $ \left( g_{1}, \phi_{1}( \tau ) \right) $,
 $\left( g_{2}, \phi_{2}( \tau ) \right) \in \wtilde{G}( \mathbb{R} ) $, we define their product by
	\[ \left( g_{1}, \phi_{1}( \tau ) \right) \cdot \left( g_{2}, \phi_{2}( \tau ) \right) = \left( g_{1} g_{2}, \phi_{1}( g_{2} ( \tau ) ) \phi_{2}( \tau ) \right). \]
(Later, we will show that $ \wtilde{G}( \mathbb{R} ) $ truly is the unique connected double cover of $ G( \mathbb{R} ) $.)
Recall that we also have another group which we have been referring to as $ \wtilde{G}( \mathbb{R} ) $, and which is defined by the 2-cocycle $\sigma$.
We will now relate these two constructions of $ \wtilde{G}( \mathbb{R} ) $.

Suppose for each $ g \in G( \mathbb{R} ) $ we have chosen a function $ \phi_{g} $ with $ \phi_{g} ( \tau )^{2} = C \tau + D $ (where $ C $, $ D $ are as in the notation above).  Then the map
	\[ g \mapsto \wtilde{g} = \left( g, \phi_{g}( \tau ) \right) \]
defines a section of $ G( \mathbb{R} ) $ to $ \wtilde{G}( \mathbb{R} ) $.  Corresponding to this section, we have a 2-cocycle on $ G( \mathbb{R} ) $ written as
	\[ \Sig{g_{1}}{g_{2}} = \left( g_{1}, \phi_{g_{1}}( \tau ) \right) \cdot \left( g_{2}, \phi_{g_{2}}( \tau ) \right) \cdot \left( g_{1} g_{2}, \phi_{g_{1} g_{2}}( \tau ) \right)^{- 1} \]
for $ g_{1} $, $ g_{2} \in G( \mathbb{R} ) $, i.e.
	\[ \Sig{g_{1}}{g_{2}} = \frac{\left( \phi_{g_{1}} \circ g_{2} \right) \phi_{g_{2}}}{\phi_{g_{1} g_{2}}}. \]
Let
	\[ \ess{g_{1}}{g_{2}} = \frac{\Sig{g_{1}}{g_{2}}}{\sig{g_{1}}{g_{2}}}, \]
where we assume that the 2-cocycle $ \sigma $ has the same formula as that in Theorem~\ref{t:sigma} with $ k = \mathbb{R} $.  Since $ \Sigma $, $ \sigma \in Z^{2}( G( \mathbb{R} ), \mu_{2} ) $ and they both represent the unique non-trivial double cover, this implies that $ S $ is a 2-coboundary.  Our choice of $ \phi_{g} $ will be made carefully so that $ S = 1 $, i.e. $ \sigma = \Sigma $.

\begin{rem} \label{r:unisec}
There is only one choice of section with this property: any two would differ by a continuous homomorphism from $ G( \mathbb{R} ) \to \mu_{2} $; however $ G( \mathbb{R} ) $ is connected.
\end{rem}

We will first determine $ \phi_{g} $ for $ g \in T( \mathbb{R} ) $.  For any $ \lambda \in \mathbb{C}^{\times} $, we have
	\[ \ha{\lambda} = \begin{pmatrix} \lambda & 0 & 0 \\ 0 & \ol{\lambda} / \lambda & 0 \\ 0 & 0 & \ol{\lambda}^{- 1} \end{pmatrix}. \]
Choose
	\[ \phi_{\ha{\lambda}}( \tau ) = \ol{\lambda}^{- 1 / 2}, \]
where $ \arg\left( \ol{\lambda}^{- 1 / 2} \right) \in ( - \pi / 2, \pi / 2 ] $.  (Note that $ \phi_{\ha{\lambda}} $ is a constant function.)  We may check for any $ \lambda $, $ \mu \in \mathbb{C}^{\times} $ that
	\[ \ess{\ha{\lambda}}{\ha{\mu}} = 1 \]
using Theorem~\ref{t:sigma}.  Since $ T( \mathbb{R} ) $ is connected, this is the correct choice of $ \phi_{g} $ on $ T( \mathbb{R} ) $.

\begin{claim} \label{cl:doubcov}
$ \wtilde{G}( \mathbb{R} ) $ is the unique connected double cover of $ G( \mathbb{R} ) $.
\end{claim}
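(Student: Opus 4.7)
The plan is to decompose the claim into three sub-statements and verify each in turn: (a) the projection $\pi\colon \wtilde{G}(\mathbb{R}) \to G(\mathbb{R})$, $(g,\phi)\mapsto g$, is a topological double cover; (b) $\wtilde{G}(\mathbb{R})$ is connected; and (c) any connected double cover of $G(\mathbb{R})$ is isomorphic to $\wtilde{G}(\mathbb{R})$. Uniqueness in (c) is the easiest part: it was noted earlier in the excerpt that $\pi_{1}^{\mathrm{top}}(G(\mathbb{R})) = \mathbb{Z}$, so connected covers correspond to subgroups of $\mathbb{Z}$, and there is a unique subgroup of index $2$, namely $2\mathbb{Z}$.

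For (a), the first step is to check that the fibres of $\pi$ have cardinality exactly $2$. Since $\HC$ is connected (in fact contractible, as a bounded symmetric domain), any two continuous square roots of the nowhere-vanishing function $\tau\mapsto C\tau+D$ differ by a locally constant sign, hence by a global sign. Thus the fibre over $g$ is $\{(g,\phi_{g}),(g,-\phi_{g})\}$. To obtain local trivialisations, I would use that $\HC$ is simply connected: for $g_{0}\in G(\mathbb{R})$, pick a contractible neighbourhood $U$ of $g_{0}$ and a continuous function $\Phi\colon U\times\HC \to \mathbb{C}^{\times}$ with $\Phi(g,\tau)^{2}=C(g)\tau+D(g)$, which exists by the monodromy theorem since $C(g)\tau+D(g)$ is non-vanishing and $U\times\HC$ is simply connected. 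This provides a trivialisation $\pi^{-1}(U)\cong U\times\mu_{2}$, making $\pi$ a double cover. Continuity of the group operations $(g,\phi)(g',\phi')=(gg',(\phi\circ g')\phi')$ is clear from the formula, so $\wtilde{G}(\mathbb{R})$ is a topological group.

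The heart of the argument is (b). Here I would show non-triviality of the cover by exhibiting a loop in $G(\mathbb{R})$ whose lift to $\wtilde{G}(\mathbb{R})$ fails to close up. The natural candidate is the circle $\gamma(\theta)=\ha{e^{-i\theta}}$ for $\theta\in[0,2\pi]$, which is a loop in the maximal compact torus based at the identity. A continuous square root of $C(\gamma(\theta))\tau+D(\gamma(\theta))=e^{-i\theta}$ along this path is $\phi_{\theta}(\tau)=e^{-i\theta/2}$. Starting from $(\gamma(0),\phi_{0})=(1,1)$, continuous transport around the loop ends at $(\gamma(2\pi),\phi_{2\pi})=(1,-1)$. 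Hence $(1,1)$ and $(1,-1)$ lie in the same path component of $\wtilde{G}(\mathbb{R})$, so both sheets over the connected base $G(\mathbb{R})$ are joined, proving connectedness. The main obstacle I anticipate is ensuring the lift is a legitimate continuous path in $\wtilde{G}(\mathbb{R})$ rather than just a family of sign choices; this is handled by the local trivialisation from (a) and path-lifting for covering maps.

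Finally, combining (a)--(c): the cover $\wtilde{G}(\mathbb{R})\to G(\mathbb{R})$ is a connected double cover, and any other such cover must coincide with it by the classification of covers of a connected, locally path-connected, semi-locally simply connected space via subgroups of $\pi_{1}^{\mathrm{top}}(G(\mathbb{R}))=\mathbb{Z}$. The identification with the metaplectic cover can then be made by comparing with the restriction to the $\SL_{2}(\mathbb{R})$-subgroup embedded in $G(\mathbb{R})$ (as in the Introduction), where the construction $(g,\phi)$ with $\phi^{2}=c\tau+d$ is the classical metaplectic group, itself the unique non-trivial connected double cover of $\SL_{2}(\mathbb{R})$; this ensures the restriction of $\wtilde{G}(\mathbb{R})$ to $\SL_{2}(\mathbb{R})$ is non-trivial and thus matches the local factor at infinity of the metaplectic group.
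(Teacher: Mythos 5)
Your proof is correct, but it runs along a genuinely different track from the paper's. The paper's proof is a short algebraic argument with the section it has just fixed on the torus: assuming the cover is trivial, the cocycle $\Sigma$ would be a coboundary $\partial\nu$ for some $1$-cochain $\nu\colon G(\mathbb{R})\to\mu_{2}$; the branch choice $\phi_{\ha{\lambda}}(\tau)=\ol{\lambda}^{-1/2}$ with argument in $(-\pi/2,\pi/2]$ gives $\Sig{\ha{-1}}{\ha{-1}}=-1$, forcing $\nu(I_{3})=-1$, while $\Sig{\ha{-1}}{I_{3}}=1$ forces $\nu(I_{3})=1$ --- a contradiction, so the extension does not split and the cover is connected (uniqueness having been disposed of earlier via $\pi_{1}(G(\mathbb{R}))\cong\mathbb{Z}$). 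You instead prove connectedness topologically, by lifting the loop $\theta\mapsto\ha{e^{-i\theta}}$ and observing the monodromy $e^{-i\theta/2}$ carries $(I_{3},1)$ to $(I_{3},-1)$; this is really the same nontrivial element seen differently, since the paper's identity $\Sig{\ha{-1}}{\ha{-1}}=-1$ says exactly that a lift of $\ha{-1}$ squares to $(I_{3},-1)$, i.e.\ that your loop has holonomy $-1$. What your route buys is self-containedness: you actually verify that $(g,\phi)\mapsto g$ is a covering map (two-point fibres from connectedness of $\HC$, local trivialisations from simple connectedness of $U\times\HC$), a point the paper takes for granted along with the topology on $\wtilde{G}(\mathbb{R})$, and you never need the branch convention or the formula for $\sigma$. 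What the paper's route buys is brevity and the form of statement it actually needs downstream --- that $\Sigma$ is not a coboundary, which is the relevant fact for arranging $\sigma=\Sigma$. One caution on your final paragraph: the identification with the archimedean factor of the metaplectic cover is not part of this claim, and your sketch via restriction to $\SL_{2}(\mathbb{R})$ would need the uniqueness statement for $G(\mathbb{R})$ itself (which you have) rather than non-triviality of the restriction alone; as written it is harmless but superfluous.
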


\begin{proof}
Suppose that $ \wtilde{G}( \mathbb{R} ) $ is the trivial double cover.  Then we have
	\[ \Sigma = \partial \nu \]
for some 1-cochain $ \nu \in C^{1}\left( G( \mathbb{R} ), \mu_{2} \right) $.  By our choice for $ \phi_{\ha{- 1}} $, we have
	\[ \Sig{\ha{- 1}}{\ha{- 1}} = - 1. \]
Also,
	\[ \Sig{\ha{- 1}}{\ha{- 1}} = \frac{\nu\left( \ha{- 1} \right)^{2}}{\nu\left( \ha{- 1}^{2} \right)} = \nu\left( I_{3} \right)^{- 1}. \]
This implies that $ \nu\left( I_{3} \right) = - 1 $.  On the other hand,
	\[ \Sig{\ha{- 1}}{I_{3}} = 1 \]
and
	\[ \Sig{\ha{- 1}}{I_{3}} = \frac{\nu\left( \ha{- 1} \right) \cdot \nu\left( I_{3} \right)}{\nu\left( \ha{- 1} \cdot I_{3} \right)} = \nu\left( I_{3} \right), \]
which implies that $ \nu\left( I_{3} \right) = 1 $.  This is a contradiction, thus $ \wtilde{G}( \mathbb{R} ) $ is the unique connected double cover of $ G( \mathbb{R} ) $.
\end{proof}

Without loss in generality, let $ W = \{ 1, \wa{0}{i} \} $, be the set of Weyl group representatives of $ G( \mathbb{R} ) $.  For ease of notation, let 
	\[ w = \wa{0}{i} = \begin{pmatrix} 0 & 0 & i \\ 0 & 1 & 0 \\ i & 0 & 0 \end{pmatrix}. \]
Every element in $ W \cdot T( \mathbb{R} ) $ can be chosen to have the form $ h_{1} $ or $ w \cdot h_{1} $ for $ h_{1} \in T( \mathbb{R} ) $.  Let $ \wtilde{W \cdot T}( \mathbb{R} ) $ be the restriction of $ \wtilde{G}( \mathbb{R} ) $ to elements of the form $ ( w' \cdot h_{1}, \phi_{w' \cdot h_{1}} ) $ where $ w' \in W $, $ h_{1} \in T( \mathbb{R} ) $.  Note that there are two sections $ W \cdot T( \mathbb{R} ) \to \wtilde{W \cdot T}( \mathbb{R} ) $ whose 2-cocycle is $ \sigma $.  This is because there is a non-trivial homomorphism
	\[ W \cdot T( \mathbb{R} ) \to \mathbb{Z} / 2 \to \mu_{2}. \]
For the element $ w $, we have $ C \tau + D = i \tau_{1} $, and $ \trace{\tau_{1}} < 0 $.  This implies that $ C \tau + D $ is in the lower half-plane.  We choose $ \phi_{w} $ such that
$ \arg\left( \phi_{w}( \tau ) \right) \in ( - \pi / 2, 0 ) $.  We will later show that our choice for $ \phi_{w} $ is correct.

For $ h_{1} \in T( \mathbb{R} ) $, $ \phi_{w \cdot h_{1}} $ can be determined from $ \phi_{w} $ as follows.  We have
	\[ ( w \cdot h_{1}, \phi_{w \cdot h_{1}}( \tau ) ) = \Sig{w}{h_{1}} \cdot ( w, \phi_{w}( \tau ) ) \cdot ( h, \phi_{h}( \tau ) ). \]
Since we want $ \sigma = \Sigma $, and $ \sig{w}{h_{1}} = 1 $ for any $ h_{1} \in T( \mathbb{R} ) $ by Theorem~\ref{t:sigma}, we find that
	\[ ( w \cdot h_{1}, \phi_{w \cdot h_{1}}( \tau ) ) = ( w, \phi_{w}( \tau ) ) \cdot ( h, \phi_{h}( \tau ) ) = ( w \cdot h_{1}, \phi_{w}( h_{1} ( \tau ) ) \phi_{h_{1}}( \tau ) ). \]
(It may also be checked that $ \ess{h_{1}}{w} = 1 $.)

Now recall the Bruhat decomposition (see Section~\ref{s:bruh})
	\[ G( \mathbb{R} ) = T( \mathbb{R} ) \cdot N( \mathbb{R} ) \sqcup N( \mathbb{R} ) \cdot w \cdot T( \mathbb{R} ) \cdot N( \mathbb{R} ). \] 
Since $ \sigma $ is trivial on $ N( \mathbb{R} ) $ and $ N( \mathbb{R} ) $ is connected,
	\[ \phi_{n_{1}}( \tau ) = 1 \]
for any $ n_{1} \in N( \mathbb{R} ) $ (i.e. $ \phi_{n_{1}} $ is a constant function).  Similarly, since $ \sig{n_{1}}{g} = 1 $ for any $ g \in G( \mathbb{R} ) $, and we have
	\[ ( n_{1} \cdot g, \phi_{n_{1} \cdot g}( \tau ) ) = \Sig{n_{1}}{g} \cdot ( n_{1}, \phi_{n_{1}}( \tau ) ) \cdot ( g, \phi_{g}( \tau ) ), \]
with $ \sig{n_{1}}{g} = \Sig{n_{1}}{g} $, this implies that
	\[ \phi_{n_{1} \cdot g}( \tau ) = \phi_{g}( \tau ). \]
Also, we have
	\[ ( g \cdot n_{1}, \phi_{g \cdot n_{1}}( \tau ) ) = \Sig{g}{n_{1}} \cdot ( g, \phi_{g}( \tau ) ) \cdot ( n_{1}, \phi_{n_{1}}( \tau ) ), \]
and since $ \sig{g}{n_{1}} = 1 $ and $ \sig{g}{n_{1}} = \Sig{g}{n_{1}} $, 
	\[ \phi_{g \cdot n_{1}}( \tau ) = \phi_{g}( n_{1} ( \tau ) ). \]
Hence, once we establish the choice for $ \phi_{w} $, we can determine $ ( g, \phi_{g} ) $ for any $ g \in G( \mathbb{R} ) $.

\begin{claim} \label{cl:phiw}
Our choice for $ \phi_{w} $ is the correct choice.
\end{claim}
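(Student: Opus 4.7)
The plan is to show $\Sigma=\sigma$ throughout $G(\mathbb{R})\times G(\mathbb{R})$. By Remark~\ref{r:unisec} together with Claim~\ref{cl:doubcov}, there is at most one section of $\wtilde{G}(\mathbb{R})$ over $G(\mathbb{R})$ producing the nontrivial cocycle $\sigma$ (two such would differ by a continuous homomorphism $G(\mathbb{R})\to\mu_{2}$, which is trivial since $G(\mathbb{R})$ is connected and perfect), so I only need existence: the specified $g\mapsto(g,\phi_{g})$ really does yield $\sigma$.

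By construction, $\Sigma$ already equals $\sigma$ on a large family of pairs. First, the choice $\phi_{\ha{\lambda}}(\tau)=\ol{\lambda}^{-1/2}$ was tested to give $\Sigma(\ha{\lambda},\ha{\mu})=\sigma(\ha{\lambda},\ha{\mu})$. Secondly, $\phi_{n}\equiv 1$ for $n\in N(\mathbb{R})$ together with the identities $\sig{n}{g}=\sig{g}{n}=1$ from Proposition~\ref{p:sigma} gives $\Sigma=\sigma$ whenever a factor lies in $N(\mathbb{R})$. Thirdly, the defining rule $\phi_{wh_{1}n_{1}}(\tau)=\phi_{w}((h_{1}n_{1})\tau)\phi_{h_{1}n_{1}}(\tau)$ and its extension by $\phi_{n_{2}\cdot g}=\phi_{g}$ force $\Sigma(w,h_{1})=1=\sigma(w,h_{1})$, and the analogous identity $\ha{\lambda}w=w\ha{\ol{\lambda}^{-1}}$ together with the torus computation handles $\Sigma(h_{1},w)=\sigma(h_{1},w)$.

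The principal new test is $\Sigma(w,w)$. Since $w^{2}=\ha{-1}$ and $(-1)^{-1/2}$ with argument in $(-\pi/2,\pi/2]$ equals $i$, we have $\phi_{w^{2}}(\tau)=i$. Now $w(\tau)$ has first coordinate $1/\tau_{1}$, and $\trace{\tau_{1}}<0$ forces both $\tau_{1}$ and $1/\tau_{1}$ to lie in the open left half-plane, so both $i\tau_{1}$ and $i/\tau_{1}$ lie in the open lower half-plane, consistent with the chosen range for $\phi_{w}$. Then
\[
(\phi_{w}(w\tau)\phi_{w}(\tau))^{2}=(i\cdot w\tau_{1})(i\tau_{1})=i^{2}\tau_{1}\cdot(1/\tau_{1})=-1,
\]
while $\arg(\phi_{w}(w\tau))+\arg(\phi_{w}(\tau))\in(-\pi,0)$, so the product must equal $-i$. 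Hence $\Sigma(w,w)=-i/i=-1$, which matches $\sig{w}{w}=\hilk{-1}{-1}=-1$ computed via case~(1) of Proposition~\ref{p:sigma}.

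The main obstacle is that $\Sigma(w,w)=-1$ is insensitive to the global sign of $\phi_{w}$, since negating $\phi_{w}$ leaves the product $\phi_{w}(w\tau)\phi_{w}(\tau)$ unchanged. To pin down the sign, I would test a pair $(w,n_{0}w)$ with $n_{0}=\xa{r}{m}\in N(\mathbb{R})$ nontrivial, whose product lies in the big Bruhat cell with decomposition
\[
wn_{0}w\;=\;\xa{ir/\ol{m}}{1/m}\cdot\ha{i/\ol{m}}\cdot w\cdot\xa{-ir/m}{1/m}
\]
(obtained from \eqref{e:bruc}). Expanding $\phi_{wn_{0}w}(\tau)$ through this decomposition and carefully tracking the argument of $\phi_{w}$ evaluated at the translated first coordinate, one verifies that the convention $\arg(\phi_{w})\in(-\pi/2,0)$ yields exactly the Hilbert-symbol value $\sig{w}{n_{0}w}$ predicted by Proposition~\ref{p:sigma}, whereas the opposite convention would flip the sign. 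This pins down $\phi_{w}$ uniquely on a nonempty open set of the big cell, and by the uniqueness statement at the start of the plan the equality $\Sigma=\sigma$ extends to all of $G(\mathbb{R})\times G(\mathbb{R})$, completing the proof.
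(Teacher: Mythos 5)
Your logical skeleton is exactly the paper's: uniqueness of the compatible section via connectedness of $G(\mathbb{R})$, triviality of $\Sigma/\sigma$ on the pairs built from $T(\mathbb{R})$, $N(\mathbb{R})$ and the defining propagation rules, the (correct) observation that $\Sigma(w,w)=-1=\sig{w}{w}$ is blind to the sign of $\phi_{w}$ because negating $\phi_{w}$ changes the numerator twice and the denominator not at all, and finally a sign-sensitive test in the big Bruhat cell. Your auxiliary computations check out: $\phi_{w^{2}}=\phi_{\ha{-1}}=i$, the argument bookkeeping giving $\phi_{w}(w(\tau))\phi_{w}(\tau)=-i$, and your Bruhat decomposition is correct (the bottom row of $w\,\xa{r}{m}\,w$ is $(-m,\ ir,\ -1)$, which via \eqref{e:bruc} gives exactly $\xa{ir/\ol{m}}{1/m}\cdot\ha{i/\ol{m}}\cdot w\cdot\xa{-ir/m}{1/m}$). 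The parity count is also right: $\phi_{w\,n_{0}\,w}$ carries one factor of $\phi_{w}$ while $\phi_{w}((n_{0}w)(\tau))\,\phi_{n_{0}w}(\tau)$ carries two, so the test $(w,\,n_{0}w)$ genuinely distinguishes the two sign conventions.

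The gap is that the proof stops exactly where the claim begins. The entire content of Claim~\ref{cl:phiw} is the sign verification, and at that point you write ``one verifies that the convention $\arg(\phi_{w})\in(-\pi/2,0)$ yields exactly the Hilbert-symbol value $\sig{w}{n_{0}w}$'' without performing the verification. That computation is not formal bookkeeping: for general $n_{0}=\xa{r}{m}$ you must evaluate $\sig{w}{n_{0}w}$ through case~(4) of Proposition~\ref{p:sigma} (it reduces to $\Sigma(r,m)$, which itself has several subcases) and simultaneously track $\arg\phi_{w}$ at the translated points --- and nothing you have written commits you to which sign comes out. Moreover, your own uniqueness argument shows the generality is unnecessary: since the two candidate sections differ by a constant sign, a \emph{single} sign-sensitive identity checked at a \emph{single} point suffices; there is no need for an open set of the big cell. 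This is precisely how the paper finishes: using $\xam{0}{i}=\xa{0}{-i}\cdot w\cdot\xa{0}{-i}$ and $\xam{0}{i}\cdot w=w\cdot\xa{0}{i}$ together with $\sig{\xam{0}{i}}{w}=1$, the claim reduces to the one identity $\phi_{w}(\xa{0}{i}(\tau))=\phi_{w}((\xa{0}{-i}\,w)(\tau))\,\phi_{w}(\tau)$, which is then evaluated at the point $\tau_{1}=-1$, $\tau_{2}=0$, both sides equalling $e^{-3i\pi/8}$ (the wrong convention would flip only the left-hand side). Replacing your final paragraph by such a one-point evaluation --- e.g.\ taking your $n_{0}=\xa{0}{i}$ and computing both sides of the cocycle relation at a concrete $\tau$ --- would close the gap.
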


\begin{proof}
Let
	\[ g_{1} = \xam{0}{i} = \begin{pmatrix} 1 & 0 & 0 \\ 0 & 1 & 0 \\ i & 0 & 1 \end{pmatrix}, \quad g_{2} = \begin{pmatrix} 0 & 0 & i \\ 0 & 1 & 0 \\ i & 0 & - 1 \end{pmatrix}. \]
So we have $ g_{1} \cdot w = g_{2} $.  By \eqref{e:bruc} (using $ \thet = i $),
	\[ g_{1} = \xa{0}{- i} \cdot w \cdot \xa{0}{- i}, \quad g_{2} = w \cdot \xa{0}{i}. \]

We want to show that with our choice for $ \phi_{w} $,
	\[ ( g_{2}, \phi_{g_{2}}( \tau ) ) = ( g_{1} \cdot w, \phi_{g_{1} \cdot w}( \tau ) ) = \sig{g_{1}}{w} \cdot ( g_{1}, \phi_{g_{1}}( \tau ) ) \cdot ( w, \phi_{w}( \tau ) ). \]
Since by Theorem~\ref{t:sigma}, $ \sig{g_{1}}{w} = 1 $, the above equation can be simplified to	\[ \phi_{w}( \xa{0}{i} ( \tau ) ) = \phi_{w}( ( \xa{0}{- i} \cdot w ) ( \tau ) ) \cdot \phi_{w}( \tau ). \]
Hence, if our choice for $ \phi_{w} $ is wrong, this would imply that the left-hand side would not equal the right-hand side.

We evaluate both sides at the point
	\[ \tau' = \begin{pmatrix} - 1 \\ 0 \end{pmatrix} \in \HC. \]
For the left-hand side, we have
	\[ \phi_{w}( \xa{0}{i} ( \tau' ) ) = \phi_{w}\left( \begin{pmatrix} - 1 + i \\ 0 \end{pmatrix} \right) = e^{- 3 i \pi / 8} \]
with our choice for $ \phi_{w} $.  As for the right-hand side,
\begin{align*}
	\phi_{w}( ( \xa{0}{- i} \cdot w ) ( \tau' ) ) \cdot \phi_{w}( \tau' )
	&= \phi_{w}\left( \begin{pmatrix} - 1 - i \\ 0 \end{pmatrix} \right) \cdot e^{- i \pi / 4} \\
	&= e^{- i \pi / 8} \cdot e^{- i \pi / 4} \\
	&= e^{- 3 i \pi / 8}.
\end{align*}
Since both sides concur, our choice for $ \phi_{w} $ is correct.
\end{proof}

We have thus proved the following, as first mentioned in the Introduction:

\begin{thm} \label{t:wtmultsys}
Let $ \tau \in \HC $.  For each $ g \in G( \mathbb{R} ) $ as defined in \eqref{e:abcd}, choose a function $ \phi_{g} $ such that $ \phi_{g}( \tau )^{2} = C \tau + D $ as follows:
\begin{itemize}
	\item $ \phi_{h \cdot n_{1}}( \tau ) = \ol{\lambda}^{- 1 / 2} $, where $ \arg\left( \ol{\lambda}^{- 1 / 2} \right) \in ( - \pi / 2, \pi / 2 ] $;
	\item $ \arg( \phi_{w}( \tau ) ) \in ( - \pi / 2, 0 ) $;
	\item $ \phi_{n_{2} \cdot w \cdot h \cdot n_{1}}( \tau ) = \phi_{w}( ( h \cdot n_{1} ) ( \tau ) ) \phi_{h \cdot n_{1}}(\tau ) $;
\end{itemize}
where $ n_{1} $, $ n_{2} \in N( \mathbb{R} ) $, $ w = \wa{0}{i} $ and $ h = \ha{\lambda} \in T( \mathbb{R} ) $.  Then we have a half-integral weight multiplier system
	\[ j( \gamma, \tau ) = \kappa( \gamma ) \phi_{\gamma}( \tau ), \]
where $ \gamma \in \Gamma $, the congruence subgroup on which the global Kubota symbol $ \kappa $ is defined on.
\end{thm}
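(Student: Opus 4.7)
The plan is to reduce the theorem to two identities already in hand. A weight $1/2$ multiplier system on $\Gamma$ must satisfy the squaring condition $j(\gamma,\tau)^{2} = C\tau+D$ and the cocycle condition $j(\gamma\gamma',\tau) = j(\gamma,\gamma'(\tau))\cdot j(\gamma',\tau)$. The squaring condition is immediate from the definition of $j$: the factor $\kappa(\gamma)\in\mu_{2}$ squares to $1$, and by construction $\phi_{\gamma}(\tau)^{2} = C\tau+D$. So the only substantive point is the cocycle condition, together with the continuity (in fact holomorphy) of $j(\gamma,\cdot)$.

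For the cocycle condition I would combine the following two identities. First, the section $g\mapsto(g,\phi_{g})$ constructed in Chapter~\ref{c:real} is chosen precisely so that for all $g,g'\in G(\mathbb{R})$,
\[
\phi_{g}(g'(\tau))\,\phi_{g'}(\tau) \;=\; \sigma_{\infty}(g,g')\,\phi_{gg'}(\tau);
\]
this is the defining property identifying $\Sigma = \sigma_{\infty}$ verified in Claim~\ref{cl:phiw}. Second, the introduction records, for $\gamma,\gamma'\in\Gamma = \SU(\mathbb{Z},8\thet)$,
\[
\sigma_{\infty}(\gamma,\gamma') \;=\; \frac{\kappa(\gamma)\kappa(\gamma')}{\kappa(\gamma\gamma')},
\]
which is ultimately a repackaging of the product formula for the Hilbert symbol (Theorem~\ref{t:product}): the ad\`elic cocycle $\sigma_{\mathbb{A}}$ splits on $\SU(\mathbb{Q})$, and on $\Gamma$ the finite-prime factors $\sigma_{p}/\partial\kappa_{p}$ all equal $1$, leaving $\sigma_{\infty}(\gamma,\gamma')\cdot\partial\kappa(\gamma,\gamma')=1$. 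Combining,
\begin{align*}
j(\gamma,\gamma'(\tau))\,j(\gamma',\tau)
 &= \kappa(\gamma)\kappa(\gamma')\,\phi_{\gamma}(\gamma'(\tau))\,\phi_{\gamma'}(\tau)\\
 &= \kappa(\gamma)\kappa(\gamma')\,\sigma_{\infty}(\gamma,\gamma')\,\phi_{\gamma\gamma'}(\tau)\\
 &= \frac{\kappa(\gamma)^{2}\kappa(\gamma')^{2}}{\kappa(\gamma\gamma')}\,\phi_{\gamma\gamma'}(\tau)
 \;=\; \kappa(\gamma\gamma')\,\phi_{\gamma\gamma'}(\tau) \;=\; j(\gamma\gamma',\tau),
\end{align*}
where at the last step we use $\kappa(\gamma)^{2}=\kappa(\gamma')^{2}=1$ and $\kappa(\gamma\gamma')=\kappa(\gamma\gamma')^{-1}$. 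Holomorphy of $j(\gamma,\cdot)$ is automatic: each $\phi_{\gamma}$ is a continuous square root of the non-vanishing holomorphic function $\tau\mapsto C\tau+D$ on the simply connected domain $\HC$, so is itself holomorphic, and $\kappa(\gamma)$ is a constant in $\mu_{2}$.

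Before this final calculation is legitimate one must know that $\kappa(\gamma)$ is defined on $\Gamma$. The hypothesis that $2$ splits in $\mathbb{Q}(\thet)$, combined with Theorem~\ref{t:compact}, guarantees that $\Gamma\subset\prod_{p<\infty}\Gamma_{p}$, so that each $\kappa_{p}(\gamma)$ is defined; Claim~\ref{cl:converge} then gives the convergence of the product $\prod_{p<\infty}\kappa_{p}(\gamma)$. The main obstacle is therefore not this final assembly, which is essentially formal, but the upstream machinery feeding into it: the explicit computation of $\sigma$ and its splittings $\kappa_{p}$ at every finite prime (done in Parts~\ref{p:su2coc} and~\ref{p:kubota}), and the delicate identification in Chapter~\ref{c:real} of the section $g\mapsto\phi_{g}$ whose associated cocycle is $\sigma_{\infty}$ (in particular the sign choice $\arg\phi_{w}(\tau)\in(-\pi/2,0)$). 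Once all of these are in place, as they now are, the theorem follows from the three-line display above.
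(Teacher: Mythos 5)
Your proposal is correct and takes essentially the same route as the paper: there the theorem is proved precisely by the construction in Chapter~\ref{c:real} of the section $g\mapsto\phi_{g}$ satisfying $\phi_{g}(g'(\tau))\phi_{g'}(\tau)=\sigma_{\infty}(g,g')\phi_{gg'}(\tau)$, combined with the formal assembly from the Introduction, namely $\sigma_{\infty}(\gamma,\gamma')=\kappa(\gamma)\kappa(\gamma')/\kappa(\gamma\gamma')$ on $\Gamma$ (a consequence of the local splittings $\sigma_{p}=\partial\kappa_{p}$ and the product formula), which is exactly your three-line display. Your supplementary remarks on holomorphy of $\phi_{\gamma}$ and on the convergence of $\prod_{p<\infty}\kappa_{p}(\gamma)$ match what the paper handles via Claim~\ref{cl:converge} or leaves implicit, so there is no gap.
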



\bibliographystyle{amsplain}
\bibliography{ref}

\end{document}